\documentclass[11pt]{article}
\usepackage{geometry}
\usepackage[english]{babel}
\usepackage{amsfonts,amsthm,amsmath,amssymb,amscd,mathrsfs,mathtools}
\usepackage{enumitem}
\usepackage{microtype}
\usepackage{titlesec}
\usepackage[numbers,sort&compress]{natbib}
\usepackage{hyperref}
\hypersetup{hidelinks,pdftitle={On the index map for AF-by-discrete groupoids},pdfauthor={Zheng Kuang},pdfsubject={2020 MSC: Primary 22A22, 20F65; Secondary 37B05, 20J05},pdfkeywords={AF-by-discrete groupoids, topological full groups, index map, groupoid homology, groups of germs, persistent boundary, almost finiteness, AH conjecture}}

\usepackage{mathtools}

\newcommand\addtag{\refstepcounter{equation}\tag{\theequation}}
\makeatletter
\newcommand*{\rom}[1]{\expandafter\@slowromancap\romannumeral #1@}
\makeatother

\titleformat{\subsection}[runin]{\normalfont\bfseries}{\thesubsection}{0.5em}{}[.]
\theoremstyle{plain}
\newtheorem{thm}{Theorem}[section]
\newtheorem{mainthm}{Theorem}

\newtheorem{prop}[thm]{Proposition}
\newtheorem{cor}[thm]{Corollary}
\newtheorem{lemma}[thm]{Lemma}
\theoremstyle{definition}
\newtheorem{defn}[thm]{Definition}
\newtheorem{rmk}[thm]{Remark}
\newtheorem{exmp}[thm]{Example}
\newtheorem{problem}[thm]{Problem}
\newcommand{\G}{\mathfrak{G}}
\newcommand{\T}{\mathfrak{T}_{\mathsf{B}}}
\newcommand{\F}{\mathsf{F}}
\newcommand{\Sym}{\mathsf{S}}
\newcommand{\Alt}{\mathsf{A}}
\newcommand{\Der}{\mathsf{D}}
\newcommand{\Id}{\operatorname{Id}}
\newcommand{\ab}{\mathrm{ab}}
\newcommand{\Dom}{\operatorname{Dom}}
\newcommand{\Ran}{\operatorname{Ran}}
\newcommand{\Tor}{\operatorname{Tor}}
\newcommand{\sg}{\mathsf{s}}
\newcommand{\rg}{\mathsf{r}}
\makeatletter
\newcommand{\keepwithstatement}{\par\nobreak\AddToHookNext{cmd/deferred@thm@head/before}{\@beginparpenalty=10000\relax}}
\makeatother
\numberwithin{equation}{section}
\title{On the index map for AF-by-discrete groupoids}
\author{Zheng Kuang\thanks{School of Mathematics, South China University of Technology, 381 Wushan Rd, Tianhe District, Guangzhou 510641, China; Email: kzkzkzz@scut.edu.cn}}
\date{}
\begin{document}
\maketitle
\begin{abstract}
We give an explicit geometric and algebraic description of the index map for AF-by-discrete groupoids. The index records transport between infinite tiles and abelianized returns in the groups of germs, subject to a defect in the dimension group of the associated Bratteli diagram. We determine the kernel of the index map and construct transposition factorizations of zero-index elements with trivial germs at singleton orbits. In particular, in every minimal case, this kernel equals the subgroup generated by dynamical transpositions. We also characterize approximation by the fixed AF core and prove almost finiteness for minimal systems that are compactly generated. We make higher homology explicit in terms of the groups of germs. We express the secondary parity differential by lifted surface relations and by an extension class constructed from balanced boundary data. Under minimality and comparison, lifting finite relations gives a split AH sequence. Explicit boundary and matrix calculations, together with a nonminimal example with nonzero parity differential, illustrate the theory.
\end{abstract}

\medskip
\noindent\textbf{2020 Mathematics Subject Classification.} Primary 22A22, 20F65; Secondary 37B05, 20J05.

\noindent\textbf{Keywords.} AF-by-discrete groupoids, topological full groups, index map, groupoid homology, groups of germs, persistent boundary, almost finiteness, AH conjecture.

\tableofcontents
\newpage

\section{Introduction}
Let $\G$ be an effective AF-by-discrete groupoid whose unit space is the space of infinite paths $\Omega(\mathsf{B})$, assumed to be a Cantor space, of a Bratteli diagram $\mathsf{B}$, and let $\T\subseteq\G$ be the tail groupoid of $\mathsf{B}$, fixed throughout as the AF core. The subgroupoid $\T$ is open and AF, while $\G\setminus\T$ is discrete. We write $\F(\G)$ for the topological full group and
\begin{equation*}
I:\F(\G)\longrightarrow H_1(\G)
\end{equation*}
for the index map. Here $H_1(\G)$ denotes first groupoid homology, and $H_1(\G,\T)$ denotes the relative homology of the inclusion $\T\subseteq\G$, recalled in Subsection~\ref{subsec:AFbd-homology}. 

Relative to the fixed core $\T$, every $g\in\F(\G)$ has only finitely many non-AF germs. We organize their contribution in terms of the exceptional boundary data: transport and normalized returns, defined geometrically in Subsection~\ref{subsec:boundary-returns}, determine a relative homology class, while the source--range defect in the dimension group associated with $\mathsf{B}$ determines when that class comes from $H_1(\G)$. Under the hypotheses of Subsection~\ref{subsec:hypothesis-scope} that make the boundary quotient finite, these data reduce to finitely many persistent configurations. We use them to determine $\ker I$, identify the secondary parity differential in the AH sequence, and obtain the results on almost finiteness.

Historically, the index map first appeared in the work of Giordano, Putnam, and Skau \cite[Section~5]{GPS99}. For a minimal homeomorphism of a Cantor space, they introduced an integer-valued homomorphism on the topological full group and called it the index map. It records the ``net migration'' of an element over the orbit cut and sends the generating homeomorphism to $1$. They showed, in particular, that this homomorphism is canonical up to normalization, that its kernel still determines the system up to flip conjugacy, and that the normalizer of the topological full group can be described in terms of this kernel \cite[Section~5]{GPS99}. Already in the original setting, the index separates the global direction of motion from a large zero-index subgroup that retains substantial dynamical information.

Matui subsequently placed this construction in groupoid homology: for an essentially principal \'etale groupoid on a Cantor space, a full bisection defines a class in first homology and thus a homomorphism from the topological full group to that homology group \cite[Definition~7.1 and Remark~7.2]{Matui12}. For \emph{almost finite} groupoids in the Hausdorff setting of \cite{Matui12}, he proved that the index map is surjective and that every zero-index element is a product of four elements of finite order. He also related the index to the $K_1$-group of the groupoid $C^*$-algebra \cite[Theorems~7.5 and~7.13 and Corollary~7.15]{Matui12}. For minimal Cantor systems, the kernel of the index, together with the signature with values in the dimension group modulo two introduced in \cite{Matui06}, became a basic tool for describing the normal subgroup structure and the abelianization of the full group. In later work, Matui formulated the AH conjecture \cite[Conjecture~2.9]{Matui16}, in which the index is the map from the abelianization of the full group onto $H_1$, while the remaining contribution comes from parity in $H_0(-;\mathbb{Z}/2\mathbb{Z})$.

The dynamical symmetric and alternating groups $\Sym(\G)$ and $\Alt(\G)$ were introduced by Nekrashevych in \cite[Subsection~3.2]{nnek19}. They are the groupoid analogues of the finite symmetric and alternating groups, defined from multisections. Nekrashevych later incorporated these groups, together with the index map, AF subgroupoids, relative homology, and almost finiteness, into the general structure theory of full groups of \'etale groupoids \cite[Sections~5.4--5.5]{nek22}. In particular, his calculation with one cut in \cite[Section~5.5]{nek22} gives a geometric interpretation of the index as signed transport across the cut, while the almost-finite theory relates the kernel of the index to the subgroup generated by dynamical transpositions. More recently, Li placed the AH sequence in a stable homotopy-theoretic framework and proved Matui's AH conjecture for minimal ample groupoids with comparison \cite[Corollary~6.14]{Li25}. Li and Miller subsequently developed a general discretisation and independent-resolution framework for ample groupoids \cite{LiMiller26}. These developments explain why the index map is central in the subject: it is simultaneously a dynamical measure of transport, a homological invariant, and a map that identifies the quotient by the zero-index subgroup with its image in homology.

The class of \'etale groupoids studied here, that is, AF-by-discrete groupoids, was isolated and named by Nekrashevych in \cite[Subsection~5.2.4]{nek22}. He observes there that most examples in the remaining part of his book have this form, and gives groups generated by bounded automata as a basic family. The corresponding picture for systems of bounded type appeared earlier in the work of Juschenko, Nekrashevych, and de la Salle \cite{JNS16}: homeomorphisms of bounded type on Bratteli path spaces have only finitely many non-AF germs, and the same framework contains both Bratteli--Vershik models of minimal Cantor systems and groups acting on rooted trees by bounded automorphisms. In \cite{nek22}, the AF-by-discrete language is developed in Chapter~5, while Chapter~6 uses the same AF core and exceptional boundary in the study of growth and amenability.

This places AF-by-discrete groupoids at the intersection of two classical sources of topological full groups. On the one hand, a minimal Cantor system admits a Bratteli--Vershik model in which the tail equivalence relation is the AF core and the adic transformation differs from it only at the orbit cut. On the other hand, for a group generated by bounded automata, the AF core is the groupoid of finitary germs and the non-AF behavior is confined to persistent boundary points on the infinite tiles, in the sense made precise in Definition~\ref{def:persistent-boundary-point}. The Grigorchuk group, whose intermediate growth was established in Grigorchuk’s paper~\cite{gri84}, is the paradigmatic example of this family. Hence the same structure appears on both sides: the AF core controls the regular part of the dynamics, while the exceptional boundary records how the AF pieces are connected, returned to, and transported between. In this sense, the exceptional germs do not merely describe the dynamics outside the AF core. They organize the way in which the regular pieces assemble into the global dynamics. We therefore take the exceptional boundary, rather than the AF core alone, as the basic object to be computed.

The boundary data below are relative to the specified AF core $\T$: replacing $\T$ can change $H_1(\G,\T)$, the return and transport coordinates, and the defect map into $H_0(\T)$. By contrast, $H_\bullet(\G)$, the index map $I$, and $\ker I$ are intrinsic to $\G$. Thus the AF core supplies coordinates for intrinsic objects rather than an additional invariant of the groupoid.

The main results of the paper consist of four theorems. The first three concern the index map and the AH sequence, while the fourth concerns almost finiteness. The first gives the geometric form of the index (Theorems~\ref{thm:relative-normal-form} and~\ref{thm:geometric-index}). For each exceptional $\G$-orbit $\mathcal{O}$, choose a basepoint $x_{\mathcal{O}}$, let $H_{\mathcal{O}}=\G_{x_{\mathcal{O}}}^{x_{\mathcal{O}}}$ be its group of germs, and write $H_{\mathcal{O}}^{\ab}$ for its abelianization. Let $L_{\mathcal{O}}$ be the transport lattice of Definition~\ref{def:transport-lattice} for the set of $\T$-orbits in $\mathcal{O}$, that is, the integer vectors of finite support whose coordinates sum to zero.

\begin{mainthm}[Geometric index theorem]
\label{thm:introduction-index}
There is a noncanonical decomposition
\begin{equation*}
H_1(\G,\T)\cong
\bigoplus_{\mathcal{O}}
\left(H_{\mathcal{O}}^{\ab}\oplus L_{\mathcal{O}}\right),
\end{equation*}
where the sum runs over the exceptional $\G$-orbits. Under the injection $\iota:H_1(\G)\hookrightarrow H_1(\G,\T)$, the index of $g\in\F(\G)$ is obtained by summing the finitely many non-AF germs of $g$. Each such germ contributes the class of its normalized return in $H_{\mathcal{O}}^{\ab}$ and the difference between the $\T$-orbits of its range and source. The connecting map to the dimension group $H_0(\T)$ is the corresponding source--range defect. Under the hypotheses of Subsection~\ref{subsec:hypothesis-scope} that make the boundary quotient finite, connectors adapted to a tree express transport as signed net migration across its cuts.
\end{mainthm}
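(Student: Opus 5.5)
The plan is to deduce the statement from the long exact sequence of the pair $\T\subseteq\G$,
\begin{equation*}
H_1(\T)\to H_1(\G)\to H_1(\G,\T)\xrightarrow{\ \partial\ } H_0(\T)\to H_0(\G)\to\cdots,
\end{equation*}
recalled in Subsection~\ref{subsec:AFbd-homology}. Since $\T$ is AF, $H_1(\T)=0$, so $\iota:H_1(\G)\to H_1(\G,\T)$ is injective. Its image is $\ker\partial$, and $\partial$ is the connecting map into the dimension group $H_0(\T)$.

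The first and main step is to compute $H_1(\G,\T)$. Because $\G\setminus\T$ is discrete, I will model relative chains on the exceptional germs: a relative $1$-chain is a finitely supported function on $\G\setminus\T$, and chains on $\T$ are killed. I will then split along exceptional $\G$-orbits $\mathcal{O}$. Within one orbit, the $\T$-orbits contained in $\mathcal{O}$ act as vertices. For each of them, pick an arbitrary connector germ (any element of $\G$ from $x_{\mathcal{O}}$ into it, modulo $\T$). Every exceptional germ $\gamma$ with source in the $\T$-orbit $[a]$ and range in $[b]$ then yields a normalized return $c_b^{-1}\gamma c_a\in H_{\mathcal{O}}$, well defined modulo $\T$-germs, together with the transport vector $e_{[b]}-e_{[a]}\in L_{\mathcal{O}}$.

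The map $\gamma\mapsto([c_b^{-1}\gamma c_a],e_{[b]}-e_{[a]})$ should be shown to respect the relative boundary relations $[\gamma\delta]=[\gamma]+[\delta]$. This yields a homomorphism $H_1(\G,\T)\to\bigoplus_{\mathcal{O}}(H_{\mathcal{O}}^{\ab}\oplus L_{\mathcal{O}})$. Its inverse sends a return $h$ to $[h]$ and sends $e_{[b]}-e_{[a]}$ to the class of $c_bc_a^{-1}$. Noncanonicity comes exactly from the choice of connectors and basepoints: changing them conjugates returns and shears the $L_{\mathcal{O}}$-coordinate by return classes. The delicate point, which I expect to be the main obstacle, is showing that relative homology sees germs only modulo $\T$ on both sides and that no further relations appear. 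This amounts to a Shapiro-type identification of the relative complex on a single orbit with the group homology of $H_{\mathcal{O}}$ plus the graph homology of the $\T$-orbit set. I will try to make this precise through a homotopy equivalence that contracts each $\T$-orbit to its connector, using that $\T$ is open and AF so that its own contribution is acyclic in degrees $\ge1$.

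Next I will check the index formula. By Matui's definition, $I(g)$ is the class of the full bisection $g$. Cover $g$ by compact open bisections: all but finitely many germs lie in $\T$, since $g$ has finitely many non-AF germs. The $\T$-parts vanish in relative homology, so $\iota I(g)$ is the sum of the contributions of the finitely many exceptional germs, each giving its normalized return and its range-minus-source transport. The connecting map $\partial$ sends a relative class to the boundary $\sg-\rg$ of its chain in $H_0(\T)$, which is the source--range defect. For $g\in\F(\G)$ this defect cancels, consistent with $\iota I(g)\in\ker\partial$.

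Finally, for the tree statement under the finiteness hypotheses of Subsection~\ref{subsec:hypothesis-scope}, the $\T$-orbits in each $\mathcal{O}$ are finitely many persistent configurations. Choose a spanning tree and let the connectors be the products of the germs along tree paths. The lattice $L_{\mathcal{O}}$ is then freely generated by the edge vectors. Writing $e_{[b]}-e_{[a]}$ as a signed sum of tree edges shows that the coefficient of each edge equals the signed count of germs crossing the corresponding cut, that is, the net migration across it.
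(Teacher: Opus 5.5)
Your proposal is correct and follows essentially the paper's route: the presentation of $H_1(\G,\T)$ by symbols of exceptional arrows with $[\alpha\beta]=[\alpha]+[\beta]$, the orbitwise map $[\gamma]\mapsto([p_y^{-1}\gamma p_x]_{\ab},e_{\mathcal{T}_y}-e_{\mathcal{T}_x})$ with connectors extended by the unique tail arrows, its explicit inverse, the restriction of $1_{U_g}$ to $U_g\setminus\T$, the isolating-bisection formula for the defect, and cut functionals for tree-adapted connectors. The Shapiro-type homotopy argument you flag as the main obstacle is not needed in degree one: discreteness of exceptional arrows and of exceptional composable pairs gives the presentation, and then the identity $[\gamma]=[p_y^{-1}\gamma p_x]+[c_j]-[c_i]$ (using principality of $\T$, so tail arrows have zero class and trivial normalized return) already shows your two maps are mutually inverse; the paper uses the homotopy-equivalence argument only for degrees $n\geq2$.
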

The injectivity used here is special to the AF inclusion: since $H_1(\T)=0$, the long exact sequence of the pair gives the injection $\iota$ in~\eqref{eq:relative}. Hence, under $\iota$, $H_1(\G)$ consists precisely of the relative homology classes with zero defect in the dimension group, with cancellation allowed between different exceptional orbits. The exceptional orbits are independent at the level of relative homology, but become globally coupled through their defect classes in $H_0(\T)$. This is a description of absolute first homology. Surjectivity of the index map $I:\F(\G)\to H_1(\G)$ is a separate assertion, obtained under minimality of the AF core in Corollary~\ref{cor:index-surjective}. The theorem recovers the one-cut formula in \cite[Section~5.5]{nek22} for minimal Cantor systems and extends it to arbitrary finite boundary configurations.

The second principal result determines exactly what can remain in the kernel after the relative index class has vanished (Theorem~\ref{thm:singleton-kernel}).
\begin{mainthm}[Kernel theorem]
\label{thm:introduction-kernel}
Germ evaluation gives an exact sequence
\begin{equation*}
1\longrightarrow\Sym(\G)\longrightarrow\ker I
\longrightarrow\bigoplus_{\substack{x\in\Omega(\mathsf{B})\\\G x=\{x\}}}
[\G_x^x,\G_x^x]\longrightarrow1.
\end{equation*}
The sum is the restricted direct product. Hence $\ker I=\Sym(\G)$ if and only if every group of germs at a singleton $\G$-orbit is abelian. In particular, this equality holds when there are no singleton orbits, and thus when $\G$ is minimal.
\end{mainthm}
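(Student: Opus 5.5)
We first define the germ-evaluation map. A singleton orbit $\{x\}$ is $\G$-invariant. Hence every $g\in\F(\G)$ fixes $x$ and has a well-defined germ $[g]_x\in\G_x^x$.

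\textbf{Finite support.} Only finitely many such germs are nontrivial. A singleton orbit is exceptional, since a $\T$-orbit of a point of the Cantor space $\Omega(\mathsf{B})$ is infinite. Any nontrivial germ at $x$ lies in $\G\setminus\T$, because $\T_x^x$ is trivial for the principal groupoid $\T$. By the finiteness of non-AF germs of elements of $\F(\G)$ relative to $\T$, only finitely many singleton points carry a nontrivial germ of $g$.

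\textbf{Homomorphism and landing in commutators.} We thus get a homomorphism $\gamma:\F(\G)\to\bigoplus_x\G_x^x$. Next we use the geometric index theorem (Theorem~\ref{thm:geometric-index}) together with the injection $\iota$. For a singleton orbit $\mathcal{O}=\{x\}$ we have $H_{\mathcal{O}}=\G_x^x$ and $L_{\mathcal{O}}=0$, and the normalized return of the germ at $x$ is the germ itself. Therefore the $\mathcal{O}$-component of $\iota I(g)$ is the image of $[g]_x$ in $(\G_x^x)^{\ab}$. Since the decomposition is a direct sum over orbits, $I(g)=0$ forces $[g]_x\in[\G_x^x,\G_x^x]$ for every singleton $x$. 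So $\gamma$ restricts to $\ker I\to\bigoplus[\G_x^x,\G_x^x]$.

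\textbf{Surjectivity.} It suffices to realize a single commutator $[a,b]$ in one $\G_x^x$ by an element supported near $x$ with trivial germs elsewhere. Choose bisections $U\ni a$ and $V\ni b$ with source and range in a small clopen neighbourhood $W$ of $x$. Since $x$ is fixed by both germs, we can shrink $W$ so that $U$ and $V$ define homeomorphisms between clopen subsets of $W$. The standard trick is to pass to a clopen $W'\subset W$ containing $x$ that is mapped into $W$ by $U,U^{-1},V,V^{-1}$. If needed we extend by the identity after using the Cantor structure to make these partial maps full elements $\tilde a,\tilde b\in\F(\G)$. This extension step needs care. If the partial homeomorphisms cannot be extended directly, we use instead that $[\tilde a,\tilde b]$ only requires $\tilde a,\tilde b$ to be defined on nested neighbourhoods, and complete them using AF transport inside $W\setminus\{x\}$. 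The commutator $[\tilde a,\tilde b]$ lies in $[\F(\G),\F(\G)]\subseteq\ker I$, has germ $[a,b]$ at $x$, and can be arranged to have trivial germs at all other singleton points by choosing $W$ to avoid the finitely many other relevant ones. Products of such elements give surjectivity.

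\textbf{Exactness in the middle: the main obstacle.} The containment $\Sym(\G)\subseteq\ker\gamma\cap\ker I$ is easy. Dynamical transpositions are products of commutator-like involutions, so they lie in $\ker I$. Their supports are multisections whose orbits have at least two points, so they have trivial germ at any singleton orbit. The converse is the real content: every $g\in\ker I$ with trivial germs at all singleton orbits lies in $\Sym(\G)$. We plan to use the relative normal form (Theorem~\ref{thm:relative-normal-form}), taking the following steps in order.
\begin{enumerate}
\item Write $g=\tau\cdot h$ with $h$ in the AF full group $\F(\T)$ and $\tau$ encoding the finitely many non-AF germs. Note that $\F(\T)$ is locally finite-symmetric, so $\F(\T)\subseteq\Sym(\G)$.
\item Since $\iota I(g)=0$, in each non-singleton exceptional orbit the transport vector vanishes and the normalized returns multiply to a commutator in $H_{\mathcal{O}}^{\ab}$.
\item Cancel the transport by composing with transpositions that swap small clopen neighbourhoods of boundary points in different $\T$-orbits of the same $\G$-orbit.
\item Cancel the returns by writing each commutator of germs at a point $y$ as a product of transpositions. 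For this we move $y$ to another point $y'$ of its (non-singleton) orbit, so that a germ $\alpha\in\G_y^y$ is conjugated to the corresponding germ in $\G_{y'}^{y'}$ by a transposition, and then apply $[\alpha,\beta]=\alpha\beta\alpha^{-1}\beta^{-1}$ realized by transposition products.
\end{enumerate}
This last step, expressing arbitrary germ commutators at non-singleton points by transpositions, is where we expect the difficulty to lie. Finally, the ``in particular'' statements follow immediately: if every $\G_x^x$ at a singleton orbit is abelian, the right-hand term vanishes, and there are no singleton orbits when $\G$ is minimal.
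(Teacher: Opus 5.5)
Your construction of $\Lambda$, and your argument that it lands in $\bigoplus_x[\G_x^x,\G_x^x]$, are fine. However, the core of the theorem, $\ker\Lambda\subseteq\Sym(\G)$, is only outlined, and the outline misses the mechanism that makes it work.

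\textbf{Middle exactness.} Zero transport does not ask you to cancel anything by swapping neighbourhoods across different $\T$-orbits; that would create new exceptional arrows. What it gives is the count $|\Sigma(g)\cap\mathcal{T}|=|g(\Sigma(g))\cap\mathcal{T}|$ for every $\T$-orbit $\mathcal{T}$. So a bijection $g(\Sigma(g))\to\Sigma(g)$ inside $\T$-orbits can be realized by some $a\in\F(\T)$ (Lemma~\ref{lemma:finite-AF-interpolation}). Then $ag$ preserves the finite set $\Sigma(g)$ and has the same exceptional support. Since $g$ has identity germ at singleton orbits, every orbit meeting $\Sigma(g)$ has a second point. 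When the orbit meets $\Sigma(g)$ only once, you adjoin that point and fix it with identity germ by a second AF correction. This is exactly where the hypothesis enters; your step~1 presupposes this construction rather than providing it.

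For step~4, the key fact is that a single isotropy germ need not have a full-group representative, but a \emph{swap} always does. For distinct marked points $u,v$ and any arrow between them, a small bisection gives a dynamical transposition $t_{uv}(h)$ that preserves the marked set $P$, is AF off $P$, and carries normalized labels $h,h^{-1}$ on the two exchanged points (Lemma~\ref{lemma:marked-transposition}). The procedure is then:
\begin{enumerate}
\item Split off the permutation part into label-free swaps $t_{uv}(1)$.
\item Use $D_{uv}(h)=t_{uv}(h)t_{uv}(1)$, which puts $h$ at $u$ and $h^{-1}$ at $v$, to collect all labels at one point.
\item Use $D_{uv}(a)D_{uv}(b)D_{uv}(a^{-1}b^{-1})$, which puts $[a,b]$ at $u$ and $1$ at $v$.
\end{enumerate}
You then still need the remainder step. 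The product of lifted transpositions and the balanced element both preserve $P$, are AF off $P$, and agree on $P$, so they differ by an element of $\F(\T)$. That element is a product of at most two transpositions (Lemmas~\ref{lemma:marked-AF-remainder} and~\ref{lemma:AF-two-transpositions}). Without this control of germs off $P$, composing transpositions can create new exceptional germs and the argument does not close.

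\textbf{Surjectivity.} Your argument needs $\tilde a,\tilde b\in\F(\G)$ with arbitrary prescribed germs $a,b\in\G_x^x$, and these need not exist. An isotropy germ $a$ has a full representative that is AF elsewhere only if its defect $\delta[a]=\lambda_{\{x\}}([a]_{\ab})$ vanishes, and it need not. For example, take $\{\infty\}\cup(\mathbb{N}_0\times Y)$ with AF core the finite block permutations preserving $y$. The partial shift $(n,y)\mapsto(n+1,y)$ fixing $\infty$ has a non-AF germ at $\infty$ whose defect is the class of one block, which is nonzero in $H_0(\T)$. If $\{x\}$ is the only exceptional orbit, no element of $\F(\G)$ has such a germ at all. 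In that situation, commutators of realizable germs can generate a proper subgroup of $[\G_x^x,\G_x^x]$, and completing ``by AF transport'' is possible only for bisections of zero defect.

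Work instead with $h\in[\G_x^x,\G_x^x]$ directly. Its class in the singleton summand $(\G_x^x)^{\ab}$ of $H_1(\G,\T)$ is zero. Hence an isolating bisection $V\ni h$ satisfies $[1_{\sg(V)}]=[1_{\rg(V)}]$ in $H_0(\T)$. Lemma~\ref{lemma:AF-clopen-completion} then completes $V$ by AF arrows to some $u_h\in\F(\G)$ whose only exceptional arrow is $h$. Finally, $\iota I(u_h)=[h]=0$ together with injectivity of $\iota$ gives $I(u_h)=0$.

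\textbf{Two smaller errors.} First, $\T$-orbits need not be infinite; if they were, there could be no singleton $\G$-orbits. Your finite-support step only needs principality of $\T$. Second, transpositions lie in $\ker I$ not because they are involutions; the involution of Example~\ref{exmp:AF-by-discrete-not-almost-finite} has nonzero index. They lie in $\ker I$ because their exceptional arrows come in pairs $\gamma,\gamma^{-1}$ with $[\gamma]+[\gamma^{-1}]=0$.
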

Here $\Sym(\G)$ is the dynamical symmetric group, recalled in Subsection~\ref{subsec:full-groups-generation}. The proof gives an explicit transposition factorization from finite matchings inside $\T$ and commutator expressions in the groups of germs, thereby identifying the entire quotient $\ker I/\Sym(\G)$ directly from the AF-by-discrete structure. Li's general kernel reduction \cite[Corollary~6.17]{Li25} assumes minimality and comparison. Maps obtained by abelianizing germs and by recording transport between orbits occur in the finite germ extensions of Belk, Hyde, and Matucci \cite{BHM24}. Our construction retains the defect in the dimension group and uses swaps between two points in place of their localization hypothesis at a single singularity. 

The third principal result identifies the part of the AH sequence that is invisible to the index map. Higher homology is concentrated in the exceptional groups of germs. Under this identification, the secondary parity differential is represented by balanced returns at the boundary. The AF parity homomorphism of the AF core used in this formulation is recalled in Subsection~\ref{subsec:AFbd-homology}. The higher relative reduction is already implicit in \cite[Section~5.4]{nek22}. The point here is to make the exceptional summands and the differential explicit. More precisely, Li's exact sequence in low degrees contains the Atiyah--Hirzebruch differential $d^2_{2,0}$ \cite[Theorem~6.12 and its proof]{Li25}. We denote this secondary differential by
\begin{equation*}
\theta_{\G}:H_2(\G;\mathbb{Z})\longrightarrow H_0(\G;\mathbb{Z}/2\mathbb{Z}).
\end{equation*}
We write $\F(\G)_{\ab}$ for the abelianization of the full group.

\begin{mainthm}[Parity of balanced returns]
\label{thm:introduction-parity}
For every abelian group $A$ with trivial $\G$-action and every $n\geq2$,
\begin{equation*}
H_n(\G;A)\cong
\bigoplus_{\mathcal{O}}H_n(H_{\mathcal{O}};A),
\end{equation*}
where the sum runs over the exceptional $\G$-orbits. If $\mathcal{O}$ contains at least three points and $c\in H_2(H_{\mathcal{O}};\mathbb{Z})$, then $\theta_{\G}(c)$ is the image in $H_0(\G;\mathbb{Z}/2\mathbb{Z})$ of the AF parity of a surface relation representing $c$, after it is balanced and lifted. In the minimal case, every exceptional orbit is infinite, and thus these parity maps obtained from balanced returns determine the whole differential. If finite relations can be lifted at the exceptional orbits, then $\theta_{\G}=0$ and the stable AH sequence splits. Under minimality and comparison, the corresponding sequence for $\F(\G)_{\ab}$ splits as well.
\end{mainthm}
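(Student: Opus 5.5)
The plan is to derive all four assertions of Theorem~\ref{thm:introduction-parity} from the long exact sequence of the pair $\T\subseteq\G$, together with an excision of the relative homology onto the exceptional orbits. The injectivity of $\iota$ in Theorem~\ref{thm:introduction-index} came from exactly this sequence, and I will reuse it in higher degrees.

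\textbf{Step 1: the isomorphism for $n\geq2$.} Since $\T$ is AF, $H_n(\T;A)=0$ for all $n\geq1$. This holds for arbitrary trivial coefficients $A$, because $\T$ is an increasing union of elementary groupoids and homology commutes with such unions. The long exact sequence of the pair therefore gives $H_n(\G;A)\cong H_n(\G,\T;A)$ for every $n\geq2$.

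I then compute the relative groups orbit by orbit, as in the degree-one decomposition of Theorem~\ref{thm:relative-normal-form}. Outside the finitely many exceptional germs, $\G$ agrees with $\T$. The relative complex can therefore be replaced by the relative complex of the restriction of $\G$ to each exceptional orbit $\mathcal{O}$, viewed as a discrete groupoid relative to its AF part. This restriction is a discrete transitive groupoid, equivalent to $H_{\mathcal{O}}$. Its $\T$-part is a disjoint union of principal transitive pieces, one for each $\T$-orbit, and these have no higher homology. Hence
\begin{equation*}
H_n(\G,\T;A)\cong\bigoplus_{\mathcal{O}}H_n(H_{\mathcal{O}};A)\qquad(n\geq2).
\end{equation*}
Making this excision rigorous is the main obstacle. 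One has to pass from the topological groupoid to its discrete exceptional part, for instance by filtering $\G$ by the AF-by-discrete approximations of \cite[Section~5.4]{nek22}, or by a Mayer--Vietoris argument along clopen neighbourhoods of the persistent boundary. This reduction is said to be implicit in \cite{nek22}, so I would follow it.

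\textbf{Step 2: the formula for $\theta_{\G}$.} I take the description of $d^2_{2,0}$ from \cite[proof of Theorem~6.12]{Li25}. For a class given by a relation among full-group elements whose product of commutators is trivial, $d^2_{2,0}$ evaluates the $\mathbb{Z}/2$ parity of the corresponding product in the kernel of the index. Given $c\in H_2(H_{\mathcal{O}};\mathbb{Z})$, I represent it by a surface relation $\prod_i[a_i,b_i]=1$ in $H_{\mathcal{O}}$. I then lift each $a_i,b_i$ to elements of $\F(\G)$ whose only non-AF germs lie on $\mathcal{O}$ and are the prescribed ones. Balancing the returns uses at least three points in $\mathcal{O}$, so that the transport can be cancelled by AF swaps.

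After lifting, the product of commutators has trivial non-AF germs and lies in the AF full group $\F(\T)$. Its AF parity in $H_0(\T;\mathbb{Z}/2)$, pushed forward to $H_0(\G;\mathbb{Z}/2)$, is then $\theta_{\G}(c)$. Independence of the choice of lift should follow because changing a lift changes the product by a commutator of AF elements, or by an even permutation, and parity kills both.

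\textbf{Step 3: the remaining assertions.} In the minimal case there are no finite orbits. Every exceptional orbit is therefore infinite and the hypothesis of at least three points is automatic, so by Step~1 the parity maps of Step~2 determine $\theta_{\G}$ on all of $H_2(\G;\mathbb{Z})$.

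If finite relations lift exactly, the lifted product is the identity, so its parity vanishes and $\theta_{\G}=0$. The stable AH sequence is then a short exact sequence $0\to H_0(\G;\mathbb{Z}/2)\to\cdot\to H_1(\G)\to0$. For the splitting I would construct a section by choosing lifts as in Step~2. Finally, under minimality and comparison, \cite[Corollary~6.14]{Li25} identifies $\F(\G)_{\ab}$ with the stable group, and the splitting transfers to it.
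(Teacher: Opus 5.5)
Your Step~1 follows the paper's route (Theorem~\ref{thm:higher-germ-homology}). The excision you flag is done directly in Lemma~\ref{lem:relative-nerve}. Exceptional composable tuples form a closed discrete subset of $\G^{(n)}$, so restriction to them identifies $C_c(\G^{(n)};A)/C_c(\T^{(n)};A)$ with finitely supported functions on those tuples, compatibly with the faces. Note that there need not be finitely many exceptional germs, since compact generation is not assumed.

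\textbf{Step 2 is the main gap.} The identity $\theta_{\G}(c)=q_2(\varepsilon_{\mathsf{B}}(r))$ is the heart of the theorem. You attribute it to Li's proof of Theorem~6.12, which contains no such formula; and as you phrase it, with the lifted relation holding, the parity would simply be zero. In the paper the lifts do \emph{not} satisfy the relation: the residual $r=\prod_i[u_i,v_i]\in\F(\T)$ is in general nontrivial. Lemma~\ref{lem:marked-surface-parity} proves the formula in four moves.
\begin{itemize}
\item The group $L$ generated by the lifts acts coherently, giving a spectrum map $\Sigma^\infty(BL)_+\to\mathscr{E}_{\G}$ compatible with Li's homology comparison (Lemma~\ref{lem:coherent-parity}).
\item The punctured surface gives a class in $H_2(BL,BN;\mathbb{Z})$, where $N=L\cap\F(\T)$, with boundary $r$.
\item The transgression lemma for the cofiber sequence $\mathscr{E}_{\T}\to\mathscr{E}_{\G}$ (Lemma~\ref{lem:relative-spectrum-transgression}) is then applied.
\item Under $\pi_1(\mathscr{E}_{\T})\cong\mathsf{P}_{\mathsf{B}}$, the loop $r$ goes to its AF parity.
\end{itemize}

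Your balancing step also targets the wrong obstruction. Isotropy germs carry no transport. The obstruction to lifting one germ $h$ at one point to a full-group element that is AF elsewhere is the isotropy defect $\lambda_{\mathcal{O}}([h]_{\ab})\in H_0(\T)$ (Lemma~\ref{lemma:AF-clopen-completion}), and no AF swap changes it. The missing idea is Lemma~\ref{lem:three-surface-balancing}: there are homeomorphisms $\varphi_\pm$ of $\Sigma_g$ of degrees $\pm1$ with $\operatorname{id}+(\varphi_+)_*+(\varphi_-)_*=0$ on $H_1(\Sigma_g;\mathbb{Z})$. Put the labels of $f$, $f\circ\varphi_+$ and $f\circ\varphi_-$ at three points. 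Every generator triple then has zero relative class, hence is AF-completable, while the marked classes add up to $c+c-c=c$. With two points, the natural choice $f\circ\varphi$ with $\varphi_*=-\operatorname{id}$ forces $\varphi$ to preserve orientation. It then represents $2c$, whose parity is automatically zero.

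\textbf{Step 3 has a second gap, in the splitting.} Choosing lifts gives only a set-theoretic section, and the balanced lifts of Step~2 even have index zero. After $\theta_{\G}=0$, an abelian extension of $H_1(\G)$ by the exponent-two group $H_0(\G;\mathbb{Z}/2\mathbb{Z})$ can still fail to split; compare the $C_{2n}$ example after Proposition~\ref{prop:extension-parity-certificate}. By Lemma~\ref{lem:exponent-two-splitting}, what you must show is that every class of $H_1(\G)$ killed by two has a lift killed by two. Since the transport part is torsion-free, such a class lies in $\bigoplus_{\mathcal{O}}H_{\mathcal{O}}^{\ab}$. For $[h]_{\ab}$ with $h^2=\prod_i[a_i,b_i]$, the paper realizes the one-relator group $\langle z,u_1,v_1,\ldots,u_r,v_r\mid z^2=\prod_i[u_i,v_i]\rangle$ locally by the finite-relation property. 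Then $2[\rho(z)]=0$ in the abelianization while $I(\rho(z))=([h]_{\ab},0)$.

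Finally, derive $\theta_{\G}=0$ under finite-relation lifting from an exact \emph{one-point} local action (Lemmas~\ref{lem:finite-presentation-cycle} and~\ref{lem:coherent-parity}), not from your three-point formula. This assertion is made without minimality, so it must also cover exceptional orbits with fewer than three points, such as singleton orbits.
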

Theorem C combines three results proved in the body of the paper. The higher-homology decomposition is Theorem~\ref{thm:higher-germ-homology}. The formula for the parity of balanced returns is Theorem~\ref{thm:balanced-surface-return-parity}, together with Corollary~\ref{cor:minimal-balanced-parity}. The splitting statement under the lifting property for finite relations is Theorem~\ref{thm:finite-relations-split-AH}.

The lifting property for finite relations is defined in Definition~\ref{def:finite-relation-lifting}. Conceptually, it means that any finitely presented system of relations among germs can be realized by an actual local action on one clopen neighborhood of the basepoint, with all nonbasepoint germs there lying in the AF core. The stable and unstabilized AH sequences are distinguished at the beginning of Section~\ref{sec:parity}. The resulting parity is obtained by pairing a surface class with an extension class from the boundary. More precisely, Section~\ref{sec:boundary-extensions} associates, after choosing three marked points and connector coordinates in an exceptional orbit, a balanced boundary extension class
\begin{equation*}
[\omega_{\mathcal{O}}]\in H^2(B_3(H_{\mathcal{O}});\mathsf{P}_{\mathsf{B}}),
\qquad \mathsf{P}_{\mathsf{B}}=H_0(\T;\mathbb{Z}/2\mathbb{Z}).
\end{equation*}
Here $B_3(H_{\mathcal{O}})$ is the group of triples whose abelianized coordinates sum to zero (Definition~\ref{def:balanced-triple-group}). The class is defined for the chosen marked points and connectors. After applying the natural map $q_2:\mathsf{P}_{\mathsf{B}}\to H_0(\G;\mathbb{Z}/2\mathbb{Z})$, its evaluations on balanced surface classes are intrinsic and equal to $\theta_{\G}$. Under minimality and comparison, the strong AH property (Definition~\ref{def:AH-terms}) is therefore equivalent to the vanishing of these pairings. Vanishing of the entire pushed-forward cohomology class is a sufficient, generally stronger, condition.

The connection with the Kapoudjian obstruction is made explicit following \cite{Cornulier19}. The group $\mathbb{Z}^2$ occurs as a group of germs both in a minimal sparse model with zero differential and in a nonminimal Houghton--Cantor model with nonzero differential. Therefore, $\theta_{\G}$ depends on how return relations are realized topologically at the boundary, beyond the abstract group of germs.

The same exceptional boundary also controls almost finiteness (Theorems~\ref{thm:AF-exhaustion-criterion} and~\ref{thm:minimal-AF-by-discrete-almost-finite}).
\begin{mainthm}[Almost finiteness from the exceptional boundary]
\label{thm:introduction-approximation}
Every increasing elementary exhaustion of $\T$ gives almost-finite approximations to $\G$ if and only if every non-AF arrow has source in an infinite $\T$-orbit. Consequently, every minimal AF-by-discrete groupoid on a Cantor space that is compactly generated is almost finite, even when its AF core is not minimal.
\end{mainthm}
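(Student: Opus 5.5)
The plan is to prove the two assertions separately, working throughout with Matui's definition of almost finiteness: for every compact $C\subseteq\G$ and $\varepsilon>0$ there is an elementary subgroupoid $K$ with $|CKx\setminus Kx|<\varepsilon|Kx|$ for all $x$. I would fix an increasing elementary exhaustion $\T=\bigcup_n K_n$, where $K_n$ is the equivalence relation of agreeing beyond level $n$ on paths of length $n$ to the same vertex. Since $\G\setminus\T$ is discrete, every compact $C\subseteq\G$ is the union of a compact subset of $\T$ and finitely many non-AF arrows. A compact subset of $\T$ already lies in some $K_m$, so for $n\ge m$ it contributes nothing to the boundary term. The whole problem therefore reduces to the finitely many non-AF arrows $\gamma_1,\dots,\gamma_k$ in $C$. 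Such an arrow $\gamma$ with source $y$ makes $Cx$ leave $K_nx$ only when $y\in K_nx$, and then it contributes at most one point. The bound needed is $1/|K_n y|<\varepsilon$, and $|K_ny|$ is the number of level-$n$ paths ending at the vertex $y_n$ of $y$.

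For the direction ``$\Leftarrow$'': if every non-AF source $y$ lies in an infinite $\T$-orbit, then $|K_ny|\to\infty$. Since only finitely many sources $y_1,\dots,y_k$ occur, some $n$ gives $|CK_nx\setminus K_nx|\le k\cdot\max_i 1/|K_ny_i|\cdot|K_nx|$. The factor $k$ sits on the wrong side of the inequality, so I would count more carefully: the $K_n$-class of $x$ contains at most $k$ sources, and each contributes at most one escaping point. This gives $|CK_nx\setminus K_nx|\le k$ when $x$ lies in the class of some $y_i$, and $0$ otherwise. The required bound is then $k<\varepsilon|K_ny_i|$, which holds for large $n$. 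This argument also needs to be uniform over all later terms of the exhaustion, which is immediate from monotonicity.

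For ``$\Rightarrow$'': suppose some non-AF arrow $\gamma$ has source $y$ in a finite $\T$-orbit. I would take $C=\{\gamma\}\cup\Omega(\mathsf B)$ and show that no $K_n$ approximates. The class $K_ny$ stays bounded, equal to the finite $\T$-orbit for large $n$. Effectiveness together with $\gamma\notin\T$ should force $\rg(\gamma)\notin\T y$. I need to check this step: if the range lies in $\T y$, I would compose $\gamma$ with an element of $\T$ to obtain a nontrivial isotropy germ outside $\T$, and since the orbit is finite this is an isolated point. Once the range is outside, $\rg(\gamma)\notin K_ny$, so the escaping proportion is at least $1/|\T y|$, which is bounded below.

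For the consequence: minimality of $\G$ with a Cantor unit space rules out finite $\G$-orbits. However, a $\T$-orbit can still be finite, for instance an isolated finite tail class on non-minimal cores. Compact generation would be used here. It gives a compact generating set, and hence finitely many non-AF arrows, so up to $\T$ only finitely many exceptional sources occur. I would then replace the given exhaustion by a different elementary subgroupoid: merge each finite $\T$-orbit with its exceptional neighbours, using minimality to find large $K$-classes. Equivalently, I would telescope or modify the diagram near the finitely many finite tiles so that they become absorbed. This modification is the main obstacle. My first attempt would be to show that finite $\T$-orbits carry no invariant-measure obstruction, and then to invoke the criterion through a new AF core $\T'\supseteq\T$ in which those finitely many arrows are included, so that $\T'$ is again AF and the criterion applies to it.
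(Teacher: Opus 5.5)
\textbf{Sufficiency.} Your argument rests on a false reduction. It is true that $C\setminus\T$ is finite, but $C\cap\T$ is in general not compact. An exceptional arrow $\gamma$ in a compact open bisection $U\subseteq C$ is a non-isolated point of $U$, approximated by the AF arrows of $U$ near it, and these need not lie in any single stage. So classes containing no exceptional source can still escape.

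For the binary odometer $a$, the standard stages and $C=U_a$, every class $\{pz:|p|=n\}$ has exactly one escaping arrow, namely the one at $1^nz$, and it is AF whenever $z\neq1^\omega$. Your bound (at most $k$ on the classes of the $y_i$, and $0$ otherwise) is therefore wrong. The pointwise fact $|K_ny_i|\to\infty$ does not control these escapes.

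The missing step is a \emph{uniform} density estimate. Cover $C$ by compact open bisections $U_1,\ldots,U_q$ and take a clopen neighbourhood $A$ of their finitely many exceptional sources. Each $U_i|_{\Omega(\mathsf{B})\setminus A}$ is then a compact subset of $\T$, hence lies in one stage. Escapes can only occur at points of $A$, so $|C(K_n)_x\setminus(K_n)_x|\le q\,|A\cap K_n(x)|$. One then needs $|A\cap K_n(x)|<\eta\,|K_n(x)|$ for \emph{all} $x$ and all large $n$, and this is where the infinite-orbit hypothesis is really used.

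The paper gets this estimate from uniform averaging over $M(\T)$ and Dini's theorem (Lemmas~\ref{lemma:uniform-AF-averaging} and~\ref{lemma:exceptional-neighborhoods}). For the standard stages a direct count also works: for $n\geq m$, the proportion of any $K_n$-class lying in $[e_m]$ is at most $1/|K_m(e)|$. The statement quantifies over every increasing elementary exhaustion, and you only treat the standard one. For an arbitrary exhaustion one can use $d$ AF bisections with common source $A$ and pairwise disjoint ranges.

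\textbf{Necessity.} The claim that effectiveness forces $\rg(\gamma)\notin\T y$ is false. The arrow $\gamma$ is isolated only in $\G\setminus\T$, not in $\G$, so there is no conflict with effectiveness. In Example~\ref{exmp:AF-by-discrete-not-almost-finite} the exceptional arrow $(b,\xi)$ is isotropy at a singleton $\T$-orbit. The range is irrelevant anyway. Definition~\ref{def:almost-finite} counts arrows, $\gamma=\gamma(\Id,y)$ lies in $C(K_n)_y\setminus(K_n)_y$ because $\gamma\notin\T$, and $|(K_n)_y|\le|\T y|$.

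\textbf{The consequence.} This part is only a plan, and the obstacle you flag is the actual content. Producing an open AF core $\T'\supseteq\T$ that absorbs the finite orbits requires building a compatible elementary exhaustion, which you do not do. The paper's replacement core need not even contain $\T$ (Corollary~\ref{cor:minimal-core-replacement}). The paper's argument instead runs as follows.
\begin{enumerate}
\item Every $\T$-orbit is finite or dense (Lemma~\ref{lemma:AF-orbit-closures}, which uses discreteness of $\G\setminus\T$).
\item Minimality and compact generation make the union $Z$ of finite $\T$-orbits finite, because each such orbit must meet the finite exceptional source set of a generating family.
\item A nonempty clopen $A$ disjoint from $Z$ is full and $\T|_A$ is minimal, so the criterion applies to $\G|_A$.
\item Elementary approximations lift from a full clopen reduction back to $\Omega(\mathsf{B})$ (Lemma~\ref{lemma:full-clopen-AF-lift}).
\end{enumerate}
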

The proof combines uniform averaging over $\T$ with uniformly small neighborhoods of the exceptional sources of a compact bisection. The thin-set mechanism goes back to Phillips \cite{Phillips05}. Proposition~5.5.2 of \cite{nek22} proves almost finiteness for systems of bounded type with infinite orbits. The argument here extends this conclusion to minimal compactly generated AF-by-discrete groupoids. No assumption of bounded type or finite isotropy is required. The fixed AF exhaustion and almost finiteness of the whole groupoid are kept separate: when the original core has finite orbits, the proof passes to a full clopen reduction and lifts elementary approximations back to the original unit space.

The paper is organized as follows. Section~\ref{sec:preliminaries} reviews basic notions that will be used in this paper. Section~\ref{sec:boundary} defines boundary paths and returns and classifies persistent boundary configurations. Section~\ref{sec:germs} proves that the groups of germs are finitely generated under compact generation and distinguishes the length of a boundary word from the finitary depth of a section. It treats both general Bratteli models and rooted-tree models. Section~\ref{sec:index} proves the relative normal form, the formulas for the defect at finite levels, and the geometric index theorem. Section~\ref{sec:kernel} studies approximation by the fixed AF core and proves the almost-finiteness results, while Section~\ref{sec:constructive-kernel} gives a direct cancellation argument at the boundary and proves the complete kernel theorem. Section~\ref{sec:certified} develops periodic boundary certificates and carries out explicit computations from verified boundary data. Sections~\ref{sec:parity} and~\ref{sec:finite-return-relations} identify higher homology, analyze coherent realizations and the lifting property for finite relations, prove splitting results, and construct the nonminimal example with nonzero secondary differential. Section~\ref{sec:surface-parity} gives the formula from a single return and the formula obtained after balancing three surface returns for $\theta_{\G}$ and formulates the remaining problem of strong AH in the minimal case as the existence of a boundary signature. Section~\ref{sec:boundary-extensions} packages the same obstruction as an extension class constructed from balanced boundary data. Section~\ref{sec:applications} records applications to near full groups and minimal Cantor systems and collects the remaining questions.

\section{Preliminaries}
\label{sec:preliminaries}
Let $\mathsf{B}$ be a Bratteli diagram (recalled in Subsection~\ref{subsec:bratteli-core}). Denote by $\Omega(\mathsf{B})$ its space of infinite paths, by $\Omega_n(\mathsf{B})$ its set of paths of length $n$, and by $\Omega^*(\mathsf{B})$ its set of finite paths. In rooted-tree specializations with alphabet $\mathsf{X}$, the boundary is denoted $\mathsf{X}^{\omega}$. Throughout, $\G$ denotes an effective AF-by-discrete groupoid with unit space $\Omega(\mathsf{B})$, and $\T\subseteq\G$ denotes the fixed AF core associated with $\mathsf{B}$. For $x\in\Omega(\mathsf{B})$, we write $\G x$ for its orbit and $\G_x^x$ for its isotropy group. We write $\F(\G)$ for the topological full group, $\Sym(\G)$ and $\Alt(\G)$ for the dynamical symmetric and alternating groups, and $H^{\ab}$ for the abelianization of a group $H$. The relative homology of the inclusion $\T\subseteq\G$ is denoted $H_\bullet(\G,\T)$. When a finite inverse-closed generating family of compact open bisections is chosen, it is denoted $\mathcal{S}$ and its elements are denoted $F$. Identity germs are written $(\Id,x)$ or $(\Id,\xi)$. We write $I:\F(\G)\to H_1(\G)$ for the index map, recalled below.

All topological groupoids are locally compact and \'etale, and the arrow space is allowed to be non-Hausdorff. Auxiliary groupoids with other unit spaces will be specified when they are used. We retain the notation for generators, germs, and tiles from \cite{kua26,kua25,kua26a,kua26b}. Words of transformations act from right to left, while Bratteli paths are read from left to right. Thus, $F_n\cdots F_1$ applies $F_1$ first.

\subsection{Partial homeomorphisms, germs, and groupoids}
\label{subsec:germs-prelim}
A \emph{partial homeomorphism} of $\Omega(\mathsf{B})$ is a homeomorphism $f:\Dom(f)\to\Ran(f)$ between open subsets. An \emph{inverse semigroup of partial homeomorphisms} is a collection closed under composition and inverses. We use compact open domains and ranges when specifying finite generating families. Restricting the maps to smaller open sets gives the same local transformations and the same groupoid of germs. These conventions agree with \cite[Section~3.1]{nek22}.

We use the standard germ construction in \cite[Section~3.1]{nek22}.
\begin{defn}[Germ and groupoid of germs]
\label{def:germ-groupoid}
For $x\in\Dom(f)$, the \emph{germ} of $f$ at $x$, denoted $(f,x)$, is its equivalence class under local agreement: $(f,x)=(h,y)$ means that $x=y$ and that $f$ and $h$ coincide on some open neighborhood of $x$ contained in both domains.

Assume that the domains of the partial maps cover the unit space. Their local identity maps provide a unit germ at every point. The \emph{groupoid of germs} of an inverse semigroup $G_0$ consists of all these germs. Its unit at $x$ is $(\Id,x)$, and its operations are
\begin{equation}
\label{eq:germ-operations-prelim}
(f,h(x))(h,x)=(fh,x),\qquad (f,x)^{-1}=(f^{-1},f(x)).
\end{equation}
For a group action, we write $\operatorname{Germ}(G\curvearrowright\Omega(\mathsf{B}))$ for its groupoid of germs.
\end{defn}
The groupoid of germs differs from the transformation groupoid $G\ltimes\Omega(\mathsf{B})$: the latter retains the group label, whereas the former identifies labels that agree on a neighborhood of the source.

We write $\sg,\rg:\G\to\G^{(0)}$ for the source and range maps, which should be distinguished from those for Bratteli diagrams discussed below, where we use $\mathbf{s},\mathbf{r}$. The product $\alpha\beta$ is defined when $\sg(\alpha)=\rg(\beta)$ and applies $\beta$ first. For a point $x$, we use
\begin{equation*}
\G_x=\sg^{-1}(x),\qquad
\G x=\rg(\G_x),\qquad
\G_x^x=\{\gamma\in\G:\sg(\gamma)=\rg(\gamma)=x\}.
\end{equation*}
For a groupoid of germs, the isotropy group is also called the \emph{group of germs at $x$}. For a group action, it is canonically
$H_x=\G_x^x\cong G_x/G_{(x)}$, where here $G_x=\{g:g(x)=x\}$ is the stabilizer in the acting group and $G_{(x)}=\{g:g\text{ is the identity on a neighborhood of }x\}$. The point $x$ is \emph{regular} if $H_x=\{(\Id,x)\}$, and \emph{singular} otherwise. An arrow $c:x\to y$ identifies $H_x$ with $H_y$ by $h\mapsto chc^{-1}$, and thus regularity and singularity are constant on $\G$-orbits.

The topology on a groupoid of germs has basic sets $U(f,A)=\{(f,x):x\in A\}$, where $A\subseteq\Dom(f)$ is open. A \emph{bisection} is a subset on which both source and range are injective. For an open bisection, these maps identify it homeomorphically with open subsets of the unit space. The set $U(f,A)$ is an open bisection, and is compact when $A$ is compact. For a bisection $U$ and $A\subseteq\sg(U)$, write $U|_A=U\cap\sg^{-1}(A)$. The induced partial homeomorphism is $\theta_U=\rg\circ(\sg|_U)^{-1}:\sg(U)\to\rg(U)$. For a clopen set $A\subseteq\G^{(0)}$, write
$1_A=\{(\Id,x):x\in A\}$ for the corresponding unit bisection. In a chain group, the same symbol $1_A$ denotes the indicator function of $A$. The meaning is determined by context. In the present Cantor setting, every arrow has a compact open bisection neighborhood after shrinking an \'etale chart. The arrow space may be non-Hausdorff even though every compact open bisection is Hausdorff, being homeomorphic to its source in the Cantor space.

We use the standard terminology for principal and effective \'etale groupoids as in \cite[Chapter~3]{nek22}.
\begin{defn}[Principal and effective groupoids]
\label{def:principal-effective}
A groupoid is \emph{principal} if every isotropy group is trivial. It is \emph{effective} if the interior of its isotropy bundle consists precisely of unit arrows.
\end{defn}
In the effective setting, an arrow is determined by the germ of the local homeomorphism induced by any bisection containing it. We thus use germ language also for an abstract effective groupoid in the present setting. The groupoid $\G$ is assumed to be effective unless a statement explicitly dispenses with this assumption.

\subsection{Full groups, compact generation, and comparison}
\label{subsec:full-groups-generation}
Topological full groups of minimal Cantor systems go back to Giordano--Putnam--Skau \cite{GPS99}. We use the multisection formulation and the dynamical symmetric and alternating groups introduced by Nekrashevych in \cite[Subsection~3.2]{nnek19}. See also \cite[Section~5.1]{nek22}.
\begin{defn}[Topological full group and dynamical symmetric and alternating groups]
\label{def:full-symmetric-alternating}
A \emph{full bisection} $U$ has $\sg(U)=\rg(U)=\G^{(0)}$. The \emph{topological full group} $\F(\G)$ consists of the homeomorphisms induced by compact open full bisections. A \emph{multisection of degree $d$} is a family of compact open bisections $(V_{ij})_{1\leq i,j\leq d}$ with pairwise disjoint diagonal sets $V_{ii}\subseteq\G^{(0)}$, source $\sg(V_{ij})=V_{jj}$, range $\rg(V_{ij})=V_{ii}$, and multiplication $V_{ij}V_{jk}=V_{ik}$. Each permutation $\pi\in\operatorname{Sym}(d)$ gives an element of the full group with bisection $\bigcup_j V_{\pi(j),j}$ on these sets and the identity elsewhere. If $V$ has disjoint source and range, its \emph{dynamical transposition} has bisection
\begin{equation}
\label{eq:transposition-prelim}
t_V=V\cup V^{-1}\cup1_{\Omega(\mathsf{B})\setminus(\sg(V)\cup\rg(V))}.
\end{equation}
The \emph{dynamical symmetric group} $\Sym(\G)$ is generated by these transpositions, and the \emph{dynamical alternating group} $\Alt(\G)$ is generated by multisection $3$-cycles.
\end{defn}
Effectiveness makes the representing full bisection $U_g$ unique. For a group action, an element $g\in\F(\G)$ agrees on each piece of a finite clopen partition with an element of the acting group. We write $\Der(\G)=[\F(\G),\F(\G)]$ for the derived subgroup. A finite-order element may have a nontrivial germ at a fixed point and need not belong to $\Sym(\G)$. If $\mathfrak{F}$ is AF, however, then $\Sym(\mathfrak{F})=\F(\mathfrak{F})$: every full bisection is contained in an elementary stage, where it acts by finite permutations of multisection levels and hence is generated by dynamical transpositions. Lemma~\ref{lemma:AF-two-transpositions} gives the stronger factorization into at most two such transpositions. In particular, $\Sym(\T)=\F(\T)$. We write $C_n=\mathbb{Z}/n\mathbb{Z}$ for the cyclic group of order $n$. For any group $H$, write $H^{\ab}=H/[H,H]$. For an abelian group $A$, $\Tor A$ denotes its subgroup of finite-order elements.

We use compact generation in the form of \cite[Section~5]{nnek19} and \cite[Section~3.5]{nek22}.
\begin{defn}[Compact generation]
\label{def:compact-generation}
In the present Cantor \'etale setting, the groupoid $\G$ is \emph{compactly generated} if there is a finite family $\mathcal{S}$ of compact open bisections, closed under inverses, such that every arrow of $\G$ belongs to a finite composable product of elements of $\mathcal{S}$. Let $\operatorname{Bis}_c(\G)$ be the inverse semigroup of compact open bisections under product and inverse. For such a family $\mathcal{S}\subseteq\operatorname{Bis}_c(\G)$, write $\langle\mathcal{S}\rangle_{\mathrm{inv}}$ for the inverse subsemigroup it generates. We say that $\mathcal{S}$ \emph{generates $\G$} if every arrow of $\G$ belongs to a bisection in $\langle\mathcal{S}\rangle_{\mathrm{inv}}$.
\end{defn}
Compact generation concerns generation of the arrows, not finite generation of $\F(\G)$. Closure under products and restrictions is imposed on the generated semigroup and pseudogroup, respectively, rather than on $\mathcal{S}$. Restrictions to compact open sets form a basis for the associated pseudogroup of partial homeomorphisms, obtained by allowing restrictions to open sets and unions of compatible partial maps. In the effective setting, we identify arrows with the germs of these local transformations, as explained above. A finitely generated group action is the special case in which the members of $\mathcal{S}$ are full bisections coming from global homeomorphisms.

For a chosen finite symmetric generating family $\mathcal{S}$, the \emph{$\mathcal{S}$-Cayley graph of $\G$ at $x$} has vertex set $\G_x$ and edges $\gamma\to(F,\rg(\gamma))\gamma$ whenever $F\in\mathcal{S}$ is defined at $\rg(\gamma)$. See \cite[Section~3.5]{nek22}. In our setting of groupoids of germs, this is the \emph{graph of germs at $x$}. The range map projects it to the \emph{$\mathcal{S}$-orbital graph $\Gamma_x$} on the orbit $\G x$, with edges $y\to F(y)$. If the group of germs is trivial, the two graphs are isomorphic. Otherwise, the range projection identifies distinct germ vertices. Throughout, an orbit $\mathcal{O}=\G x$ means the unit-space orbit. We write 
\[\mathcal{T}(\mathcal{O}):=\mathcal{O}/\T=\{\T x:x\in\mathcal{O}\}\addtag\label{orbitquotient}\] 
for the set of $\T$-orbits contained in $\mathcal{O}$. The tile terminology below refers to subgraphs of the orbital graph: the $\T$-orbits in $\mathcal{O}$ are the vertex sets of its infinite tiles. When isotropy is nontrivial, the graph of germs may contain several lifted copies of a given tile. Subsection~\ref{subsec:boundary-quotient} describes this distinction.

For the notion of comparison, we use the groupoid formulation in \cite[Subsection~5.5.2]{nek22}.
\begin{defn}[Reduction, minimality, and comparison]
\label{def:minimality-comparison}
For $A\subseteq\G^{(0)}$, the \emph{reduction} $\G|_A$ consists of arrows with both endpoints in $A$. The set $A$ is an \emph{orbit transversal} if it meets every orbit. The groupoid is \emph{minimal} if every orbit is dense. A Borel probability measure $\mu$ on the unit space is \emph{$\G$-invariant} if $\mu(\sg(V))=\mu(\rg(V))$ for every compact open bisection $V$. Write $M(\G)$ for the set of such measures. We say that $\G$ has \emph{comparison} if, whenever clopen sets $A,B$, with $B\ne\varnothing$, satisfy $\mu(A)<\mu(B)$ for every $\mu\in M(\G)$, there is a compact open bisection with source $A$ and range contained in $B$.
\end{defn}
Minimality and comparison will be imposed when we use the unstabilized AH sequence of \cite[Corollary~6.14]{Li25}.

\subsection{Bratteli diagrams, AF cores, and exceptional arrows}
\label{subsec:bratteli-core}
Bratteli diagrams were introduced by Bratteli in \cite{Bratteli72}. We use the standard dynamical terminology recalled in \cite[Section~2]{Matui06}.

\begin{defn}[Bratteli diagram]
\label{def:Bratteli-diagram}
A \emph{Bratteli diagram} $\mathsf{B}=(V_n,E_n,\mathbf{s},\mathbf{r})$ consists of finite vertex sets $V_n$, finite edge sets $E_n$, and source and range maps $\mathbf{s}:E_n\to V_n$, $\mathbf{r}:E_n\to V_{n+1}$. We assume that every vertex has an outgoing edge and that the source and range maps between successive levels are onto. A path $p=e_1\cdots e_n$ starts in $V_1$ and satisfies $\mathbf{r}(e_i)=\mathbf{s}(e_{i+1})$. The set of paths of length $n$ is denoted $\Omega_n(\mathsf{B})$, and the space of finite paths is denoted $\Omega^*(\mathsf{B})$. An element of $\Omega_n(\mathsf{B})$ ends in $V_{n+1}$. The space of infinite paths is denoted $\Omega(\mathsf{B})$. For $p\in\Omega_n(\mathsf{B})$, the cylinder $[p]\subseteq\Omega(\mathsf{B})$ consists of infinite paths beginning with $p$. The diagram is \emph{simple} if for every level there is a later level to which every vertex of the former is connected to every vertex of the latter.
\end{defn}
The tail groupoid of a simple Bratteli diagram is minimal. We retain the Cantor assumption, since simplicity alone also allows a finite path space. \emph{Telescoping} retains selected levels and replaces each intervening path by one edge while preserving tail equivalence. A \emph{stationary} diagram has repeating incidence data after level identifications. The incidence homomorphism $M_n:\mathbb{Z}^{V_n}\to\mathbb{Z}^{V_{n+1}}$ sends the basis vector $e_v$ to $\sum_{\mathbf{s}(e)=v}e_{\mathbf{r}(e)}$. We use the same integer matrix with other abelian coefficients.

For paths $p,q\in\Omega_n(\mathsf{B})$ with $\mathbf{r}(p)=\mathbf{r}(q)$, let $[q,p]$ be the bisection of the \emph{prefix replacement} $pz\mapsto qz$, leaving the infinite continuation $z$ unchanged. We write $\mathfrak{T}_{\mathsf{B},n}$ for the groupoid of all such arrows and $\T=\bigcup_n\mathfrak{T}_{\mathsf{B},n}$ for the \emph{tail groupoid}. The orbit $\T x$ consists exactly of paths eventually equal to a tail of $x$. Between any two points of this orbit, there is a unique tail arrow. This is the principality used in boundary-return calculations. A compact open bisection contained in $\T$ admits, after a sufficiently deep cylinder refinement, a finite table of these prefix replacements. We call such a table \emph{finitary}. Its depth will be defined precisely in Subsection~\ref{subsec:general-sections}.

We use the AF-groupoid terminology of \cite[Section~5.2]{nek22}.
\begin{defn}[Elementary and AF groupoids]
\label{def:elementary-AF-groupoid}
An \emph{elementary groupoid} on the Cantor space is a compact open principal groupoid admitting a finite decomposition into multisections that cover its unit space. An \emph{AF groupoid} is an increasing union of elementary subgroupoids with the same unit space.
\end{defn}
The elementary stages $\mathfrak{T}_{\mathsf{B},n}$ give the standard exhaustion of $\T$. Conversely, we use a Bratteli presentation for the AF subgroupoid under consideration, as in \cite[Section~5.2]{nek22}.

The terminology \emph{AF-by-discrete} is from \cite[Subsection~5.2.4]{nek22}.
\begin{defn}[AF-by-discrete inclusion, exceptional arrows, and exceptional orbits]
\label{def:AF-by-discrete}
\label{def:exceptional-arrow-orbit}
An inclusion $\T\subseteq\G$ with the same unit space is \emph{AF-by-discrete} if $\T$ is an open AF subgroupoid and $\G\setminus\T$ is discrete in the arrow topology. We call the specified subgroupoid $\T$ the \emph{AF core}. An arrow in $\G\setminus\T$ is \emph{exceptional} or \emph{non-AF}. A $\G$-orbit is \emph{exceptional} if it contains the source of an exceptional arrow.
\end{defn}

The source and range of an arrow lie in the same $\G$-orbit, and thus an exceptional orbit equivalently contains the range of an exceptional arrow. We always consider an AF-by-discrete groupoid together with a fixed AF core unless another core is explicitly introduced. Exceptionality and singularity are different notions: an exceptional arrow can join different tail orbits at regular points. We use ``persistent boundary point'' only after specifying a generating family and a compatible tile presentation in Section~\ref{sec:boundary}.

The discreteness assumption gives the following finite-support and isolation property for compact open bisections.
\keepwithstatement
\begin{lemma}
\label{lemma:finite-exceptional-support}
For every compact open bisection $U\subseteq\G$, the set $U\setminus\T$ is finite. Each of its elements has a compact open neighborhood $V\subseteq U$ such that $V\setminus\T$ consists of that element alone.
\end{lemma}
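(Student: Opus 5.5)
The plan is to combine compactness of $U$ with the two topological hypotheses: $\T$ is open in $\G$, and $\G\setminus\T$ is discrete. Fix a compact open bisection $U$. Since $\T$ is open, $U\setminus\T=U\cap(\G\setminus\T)$ is closed in $U$. A closed subset of a compact space is compact, so this set is compact. We do not use Hausdorffness of $\G$. We only use that $U$ is compact and that a relatively closed subset of a compact space is compact, which holds in any topological space.

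Next I use discreteness of $\G\setminus\T$ in the subspace topology. Each $\gamma\in U\setminus\T$ has an open set $W_\gamma\subseteq\G$ with $W_\gamma\cap(\G\setminus\T)=\{\gamma\}$. The sets $W_\gamma\cap U$ form an open cover of the compact set $U\setminus\T$ in which each member contains exactly one point of that set. A finite subcover therefore forces $U\setminus\T$ to be finite. This proves the first assertion.

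For the isolating neighbourhood, fix $\gamma\in U\setminus\T$ and take $W_\gamma$ as above. Since $U$ is an open bisection, $\sg|_U$ is a homeomorphism onto the compact open set $\sg(U)$ in the Cantor space $\Omega(\mathsf{B})$. The set $\sg(W_\gamma\cap U)$ is an open neighbourhood of $\sg(\gamma)$. Choose a clopen cylinder $A$ with $\sg(\gamma)\in A\subseteq\sg(W_\gamma\cap U)$, and set $V=U|_A=(\sg|_U)^{-1}(A)$. Then $V$ is open in $U$, hence open in $\G$, and it is homeomorphic to the compact set $A$, so $V$ is compact. Since $V\subseteq W_\gamma$, we get $V\setminus\T=\{\gamma\}$.

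The only delicate point is non-Hausdorffness. Compact subsets of $\G$ need not be closed, so I avoid any argument of the form ``compact, hence closed''. Compactness of $U\setminus\T$ comes from its being closed in $U$, and compactness of $V$ comes from the homeomorphism $V\cong A$ with a clopen subset of the Cantor space.
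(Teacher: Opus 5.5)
Your proof is correct and follows the same route as the paper. Closedness of $U\setminus\T$ in the compact bisection $U$, together with discreteness of $\G\setminus\T$, gives finiteness, and restricting $U$ to a small clopen source neighbourhood gives the isolating bisection; the only difference is that you shrink inside the discreteness neighbourhood $W_\gamma$, whereas the paper separates the finitely many exceptional sources by pairwise disjoint clopen sets in $\Omega(\mathsf{B})$, and either choice works.
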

\begin{proof}
Since $\T$ is open, $U\setminus\T$ is closed in the compact space $U$, and is thus compact. It is a subspace of the discrete complement, and thus it is finite. The source map identifies $U$ with a compact open subspace of the Hausdorff Cantor space $\Omega(\mathsf{B})$. Pairwise disjoint clopen source neighborhoods of the finitely many exceptional arrows give the required restrictions of $U$. 
\end{proof}
The proof applies equally when the arrow space of $\G$ is non-Hausdorff.

\begin{defn}[Isolating bisection]
\label{def:isolating-bisection}
Let $\gamma\in\G\setminus\T$. An \emph{isolating bisection for $\gamma$} is a compact open bisection $V\ni\gamma$ such that $V\setminus\T=\{\gamma\}$.
\end{defn}
A finite family of bisections that generates $\G$ has only finitely many exceptional arrows among its generators. These supply the finite exceptional boundary data and the finite generating set of each exceptional group of germs in Sections~\ref{sec:boundary}--\ref{sec:germs}. They also control the finite tail orbits in the proof of Theorem~\ref{thm:minimal-AF-by-discrete-almost-finite}. Individual bisections still have finite exceptional support without compact generation. The set of exceptions for an entire generating family need not then be finite.

Minimality of $\T$ is stronger than minimality of $\G$. Adding exceptional arrows can join a finite $\T$-orbit to a dense one. Lemmas~\ref{lemma:AF-orbit-closures} and~\ref{lemma:finitely-many-finite-AF-orbits} below give the precise restrictions on this phenomenon. 

\subsection{Compatible tile inflation and the scope of generators}
\label{subsec:tile-inflation}
\label{subsec:hypothesis-scope}
We recall the part of the tile inflation construction needed here. See \cite[Subsection~2.1]{kua26} and \cite{Bon}. A finite level-$n$ tile $\mathcal{T}_{v,n}$, indexed by $v\in V_{n+1}$, has vertex set $\{p\in\Omega_n(\mathsf{B}):\mathbf{r}(p)=v\}$. It is a finite directed labeled graph with inverse edges. A \emph{boundary edge} is a labeled edge based at a vertex on the tile whose opposite endpoint has not yet been identified with a vertex of the same finite tile. It therefore records a defined local transformation that has not yet been realized internally at that level. A vertex incident to such an edge is a \emph{boundary point} of the finite tile. Correspondingly, we call the labeled edges that are not boundary edges \emph{internal}. The labels obey the partial-bijection rule: for each label, there is at most one incoming and one outgoing internal edge with the same label at any vertex.

To inflate from level $n$ to level $n+1$, for each edge $e\in E_{n+1}$ ending at $w$ take a copy of $\mathcal{T}_{\mathbf{s}(e),n}$, with vertices renamed $pe$. Keep the internal labeled edges in each copy. The inflation rule then joins selected compatible boundary edges of these copies, with their inverse edges, to form new internal edges in $\mathcal{T}_{w,n+1}$. These joining instructions are the \emph{connectors at finite levels}. Boundary edges not joined at this stage remain unresolved. This describes both the inclusions of old tiles and the new connections. A boundary point may become internal at a later inflation.

The labels in a tile presentation are most naturally compact open bisections, hence partial homeomorphisms with specified source and range. We fix a finite family $\mathcal{S}\subseteq\operatorname{Bis}_c(\G)$ that is closed under inverses and generates $\G$ in the sense of Subsection~\ref{subsec:full-groups-generation}. The inverse semigroup $\langle\mathcal{S}\rangle_{\mathrm{inv}}$ and its restrictions to compact open sets supply the local transformations seen in the tiles. No global group action is assumed. If the groupoid comes from a finitely generated group action, the full bisections coming from the group generators form one special choice of $\mathcal{S}$.

\begin{defn}[Compatible tile presentation]
\label{def:compatible-tile-presentation}
A tile presentation is \emph{compatible with the AF core $\T$ and the bisection family $\mathcal{S}$} if the following hold. An internal edge labeled $F$ from $p$ to $q$ means $[p]\subseteq\sg(F)$ and $F(pz)=qz$ for every continuation $z$, where $F\in\mathcal{S}$ denotes both the bisection and its induced partial homeomorphism. The inverse edge is internal, and under refinement the corresponding edges from $pe$ to $qe$ remain internal. Conversely, every generator germ $(F,x)\in\T$ is represented by an internal edge at some sufficiently deep level. \end{defn}

Throughout the boundary-geometric part of the paper, we additionally assume that the finite tiles obtained by inflation are connected. The source and range of each generator are clopen. A sufficiently deep cylinder partition resolves their domains. A label that is undefined on an entire cylinder gives no edge there. Only defined labels can create boundary points. A boundary edge records a defined generator whose action on that cylinder has not yet become internal. It need not yield a non-AF arrow at every path in the cylinder.

Write $\mathcal{T}_{v,n}^{\circ}$ for the graph obtained from the finite tile $\mathcal{T}_{v,n}$ by deleting its boundary edges. Thus $\mathcal{T}_{v,n}$ is a \emph{graph with boundary} (see \cite[Definition 2.1]{kua26}), while $\mathcal{T}_{v,n}^{\circ}$ is a graph without boundary. With respect to the internal prefix-replacement bisections, $\mathcal{T}_{v,n}^{\circ}$ is the Cayley graph of the elementary subgroupoid $\mathfrak{T}_{\mathsf{B},n}$ at a point with the corresponding level-$n$ prefix. See \cite[Proposition~2.20]{kua26a}. The boundary edges belong to the finite-level tile-inflation data and are not edges of this elementary Cayley graph. The two graphs have the same vertex set.

For $\xi=e_1e_2\cdots$, the maps $p\mapsto pe_{n+1}$ embed the finite tile containing $\xi_n=e_1\cdots e_n$ into the next one. The inductive-limit graph $\mathcal{T}_\xi$ has vertex set $\T\xi$: a prefix $p$ at level $n$ represents the infinite path $pe_{n+1}e_{n+2}\cdots$. We call this an \emph{infinite tile}. Its vertex set need not be infinite if the chosen AF core has finite orbits. Like a finite tile, an infinite tile is a graph with boundary. Its internal edges are represented by generator edges that become internal at some finite stage, while a compatible generator edge that remains a boundary edge at every deeper stage gives a boundary edge of the infinite tile. Thus, the boundary points on an infinite tile are exactly the persistent boundary points introduced in Definition~\ref{def:persistent-boundary-point}. Deleting the boundary edges leaves the subgraph on $\T\xi$. Whenever cardinality matters, we say explicitly that the $\T$-orbit is infinite or finite. Its internal paths represent tail arrows, and connectedness makes every tail arrow realizable by such a path. Thus the internal edges of an infinite tile form a subgraph of the orbital graph. The non-AF generator arrows are precisely the persistent boundary connections that glue these tile pieces, possibly as loops or as connections between distinct tiles. Their lifts to the graph of germs will be distinguished in Subsection~\ref{subsec:boundary-quotient}.

The geometric results with finite boundary use the finite family $\mathcal{S}$ of bisections just specified. The general homology and kernel theorems depend only on the AF-by-discrete inclusion. Rooted-tree and automaton arguments later impose a genuine group action, while the general Bratteli-cylinder statements use partial bisections as in Subsection~\ref{subsec:general-sections}.

We retain the definition of bounded type from \cite[Definition~3.1]{kua26a}, in the tile setting of \cite{kua26}. The individual quantitative hypotheses will be stated separately when a result needs only some of them.
\begin{defn}[Bounded type]
\label{def:bounded-type}
Write $\partial\mathcal{T}_{v,n}$ for the set of boundary points of the finite tile, without counting boundary labels with multiplicity. The tile inflation is \emph{of bounded type} if there are only finitely many persistent boundary points, in the sense of Definition~\ref{def:persistent-boundary-point}, and
\begin{equation*}
\sup_{v,n}|\partial\mathcal{T}_{v,n}|<\infty,\qquad
\sup_n|V_n|<\infty,\qquad
\sup_n|E_n|<\infty.
\end{equation*}
\end{defn}
Bounded Bratteli rank means $\sup_n|V_n|<\infty$. The cylinder-count estimate in Subsection~\ref{subsec:bounded-defects} uses this rank bound and the uniform bound on the boundaries of finite tiles, but not the bound on $|E_n|$. The boundary-to-volume criterion in Section~\ref{sec:kernel} is independent of bounded rank. AF-by-discreteness itself imposes none of these uniform finite-level bounds.

\subsection{Groupoid homology}
\label{subsec:groupoid-homology}
Homology for {\'e}tale groupoids was introduced by Crainic and Moerdijk in \cite{CrainicMoerdijk00}. We use Matui's compact-open formulation for groupoids on totally disconnected spaces \cite[Section~3]{Matui12}. See also \cite[Subsection~5.4.1]{nek22}. For non-Hausdorff arrow spaces, see \cite[Subsection~2.3.1]{Li25}. When an abelian group $A$ is used as coefficients, we regard it as the constant coefficient system $\mathfrak{G}^{(0)}\times A\to\mathfrak{G}^{(0)}$ with \emph{trivial $\mathfrak{G}$-action}, meaning that $g\cdot(\mathsf{s}(g),a)=(\mathsf{r}(g),a)$ for every $g\in\mathfrak{G}$ and $a\in A$.

\begin{defn}[Groupoid homology]
\label{def:groupoid-homology}
Let $A$ be a discrete abelian group with trivial $\G$-action. For $n\geq1$, let $\G^{(n)}$ be the space of composable $n$-tuples $(\gamma_1,\ldots,\gamma_n)$, and put $\G^{(0)}=\Omega(\mathsf{B})$. The face maps $d_i:\G^{(n)}\to\G^{(n-1)}$ are
\begin{equation*}
 d_i(\gamma_1,\ldots,\gamma_n)=
 \begin{cases}
 (\gamma_2,\ldots,\gamma_n),&i=0,\\
 (\gamma_1,\ldots,\gamma_i\gamma_{i+1},\ldots,\gamma_n),&1\leq i\leq n-1,\\
 (\gamma_1,\ldots,\gamma_{n-1}),&i=n,
 \end{cases}
\end{equation*}
with $d_0(\gamma)=\sg(\gamma)$ and $d_1(\gamma)=\rg(\gamma)$ for $n=1$. Together with the degeneracy maps that insert unit arrows, these spaces form the \emph{nerve} of $\G$. For an \'etale map $\pi:Y\to Z$ and a compactly supported function $f:Y\to A$, put $\pi_*f(z)=\sum_{\pi(y)=z}f(y)$. Let $C_c(\G^{(n)};A)$ be the group generated by $A$-valued functions that are continuous on a compact open Hausdorff subset of $\G^{(n)}$ and vanish outside that subset. The boundary operator is $\partial_n=\sum_{i=0}^n(-1)^i(d_i)_*$ for $n\geq1$, with $\partial_0=0$, and
\begin{equation*}
H_n(\G;A)=\ker\partial_n/\operatorname{im}\partial_{n+1}.
\end{equation*}
\end{defn}
The chain group in Definition~\ref{def:groupoid-homology} is appropriate also to a non-Hausdorff arrow space. In the Hausdorff case, it is the usual group of locally constant compactly supported functions. In particular, $\partial_1(1_V)=1_{\sg(V)}-1_{\rg(V)}$ for a compact open bisection $V$. We write $H_n(\G)=H_n(\G;\mathbb{Z})$ and omit integer coefficients throughout the paper.

For a discrete group $H$ and an abelian group $A$ with trivial $H$-action, the notation $H_n(H;A)$ means the homology of its one-object groupoid with coefficients in $A$. The corresponding \emph{bar complex} has symbols $[h_1|\cdots|h_n]$ and the same alternating face rule. In its normalized version, symbols containing an identity entry are set to zero. In particular, $\partial_2[h|k]=[k]-[hk]+[h]$ and $H_1(H;\mathbb{Z})=H^{\ab}$. Its nerve has geometric realization $BH$, the classifying space used in the surface calculations below.

In degree zero, $H_0(\G)$ is generated by the classes $[1_C]$ of compact open subsets $C\subseteq\G^{(0)}$, subject to additivity under clopen decompositions and the relations $[1_{\sg(U)}]=[1_{\rg(U)}]$ for compact open bisections $U$ \cite[Proposition~5.4.1]{nek22}. If $g\in\F(\G)$ and $U_g$ is its full bisection, then $1_{U_g}$ is a $1$-cycle, since $\sg(U_g)=\rg(U_g)=\Omega(\mathsf{B})$.

\begin{defn}[Index map]
\label{def:index-map}
The \emph{index map} of $\G$ is the homomorphism
\begin{equation}
\label{eq:index-map-prelim}
 I:\F(\G)\longrightarrow H_1(\G),\qquad I(g)=[1_{U_g}],
\end{equation}
where $U_g$ is the full bisection representing $g$.
\end{defn}
See \cite[Definition~7.1]{Matui12}.

\subsection{Homology for AF groupoids and the AF-by-discrete groupoids}
\label{subsec:AFbd-homology}
We now apply groupoid homology to the fixed inclusion of Subsection~\ref{subsec:bratteli-core}. The long exact sequence below uses only that $\T$ is AF, while the explicit relative arrow presentation of Nekrashevych additionally uses the AF-by-discrete hypothesis that $\G\setminus\T$ is discrete.

For an AF groupoid, homology in positive degrees vanishes: $H_n(\T;A)=0$ for every $n\geq1$ and every abelian group $A$ with trivial $\T$-action. See \cite[Theorems~4.10 and~4.11]{Matui12} and \cite[Proposition~5.4.9]{nek22}. For the tail groupoid of $\mathsf{B}$, degree zero is the direct limit
\begin{equation}
\label{eq:AF-H0-direct-limit}
H_0(\T;A)\cong\varinjlim\left(A^{V_n},M_n\right).
\end{equation}
With integer coefficients, this is the \emph{dimension group} of $\mathsf{B}$. Its positive cone consists of classes having a nonnegative vector representative at some level, and its distinguished order unit is $[1_{\Omega(\mathsf{B})}]$. Here $A^{V_n}$ means the direct sum of copies of $A$ indexed by $V_n$, and the same incidence map $M_n$ is used with coefficients in $A$.

\begin{defn}[AF parity homomorphism]
\label{def:AF-parity}
Put $\mathsf{P}_{\mathsf{B}}=H_0(\T;\mathbb{Z}/2\mathbb{Z})$. For $f\in\F(\T)$, choose a sufficiently deep Bratteli level at which $f$ is represented by permutations of equal-length cylinders. At each terminal vertex, record the sign of the corresponding finite permutation in $\mathbb{Z}/2\mathbb{Z}$. The resulting vector defines a class in the direct limit $\mathsf{P}_{\mathsf{B}}$. The homomorphism $\varepsilon_{\mathsf{B}}:\F(\T)\to\mathsf{P}_{\mathsf{B}}$ so obtained is the \emph{AF parity homomorphism}.
\end{defn}
Refinement sends the sign vector through the incidence map modulo two, and thus the class in Definition~\ref{def:AF-parity} is independent of the chosen level. Equivalently, a transposition exchanging two cylinders has parity equal to the class of either cylinder. This is the usual signature of the finite permutation representing the element of $\F(\T)$. See \cite[Section~5.4]{nek22}. It will measure the parity of the element of $\F(\T)$ left after a return relation is lifted in Section~\ref{sec:surface-parity}.

Nekrashevych's homology calculation for AF-by-discrete groupoids \cite[Subsection~5.4.4]{nek22} applies the same chain complex to the pair $\T\subseteq\G$. In degree $n$, the relative chain group is the quotient
$C_c(\G^{(n)};A)/C_c(\T^{(n)};A)$, with the boundary induced from the groupoid complex. We denote the resulting homology by $H_n(\G,\T;A)$. The long exact sequence of the pair, together with the vanishing of the homology in positive degrees of $\T$, gives
$H_n(\G;A)\cong H_n(\G,\T;A)$ for $n\geq2$. With integer coefficients, its part in low degrees is
\begin{equation}
\label{eq:relative}
0\longrightarrow H_1(\G)\xrightarrow{\iota}H_1(\G,\T)
\xrightarrow{\delta}H_0(\T)\longrightarrow H_0(\G)\longrightarrow0.
\end{equation}

The discussion in \cite[Section~5.4]{nek22} gives the following presentation. We derive it in low degrees to record where discreteness enters. Since $\T$ and $\G$ have the same unit space, the relative degree-zero chain group is zero. Restriction to $\G\setminus\T$ identifies the relative degree-one chain group with the free abelian group of finitely supported functions on the exceptional arrows: Lemma~\ref{lemma:finite-exceptional-support} gives finite exceptional support on compact bisections and an isolating bisection for each exceptional arrow, while the kernel of restriction consists of chains supported in $\T$. The same restriction argument in degree two gives the free abelian group on exceptional composable pairs. Choose sufficiently small compact open bisections around the two coordinates of such a pair. Their set of composable pairs contains no other exceptional pair, since one exceptional coordinate can be isolated and composability together with the bisection property then fixes the other coordinate. Thus no additional topological relation survives in degrees one and two. With the convention of Subsection~\ref{subsec:groupoid-homology}, the relative boundary of a composable pair $(\alpha,\beta)$ is $[\beta]-[\alpha\beta]+[\alpha]$, where symbols belonging to $\T$ are zero. Consequently $H_1(\G,\T)$ is generated by arrow symbols $[\gamma]$, with $[\gamma]=0$ for $\gamma\in\T$, and the degree-one relations are exactly
$[\alpha\beta]=[\alpha]+[\beta]$ for composable arrows. The all-degree version of the restriction argument is Lemma~\ref{lem:relative-nerve}. These are precisely the relations used in the orbitwise normal form of Theorem~\ref{thm:relative-normal-form}.

The same calculation identifies the connecting homomorphism in~\eqref{eq:relative}. If $\gamma$ is exceptional and $V$ is an isolating bisection for $\gamma$, that is, $\gamma\in V$ and $V\setminus\T=\{\gamma\}$, then, with Nekrashevych's sign convention,
\begin{equation}
\label{eq:relative-defect-prelim}
\delta[\gamma]=[1_{\sg(V)}]-[1_{\rg(V)}]\in H_0(\T).
\end{equation}
The right-hand side is independent of the chosen isolating bisection. Thus $H_1(\G)$ is the kernel of this connecting map inside the relative group, while $H_0(\G)$ is the quotient of the dimension group by its image. In Subsection~\ref{subsec:defect-map} we call $\delta$ the defect map and resolve it into isotropy and transport coordinates. Sections~\ref{sec:boundary}--\ref{sec:index} express these generators, relations, and the connecting map in persistent-boundary data.

\section{Persistent boundary points and boundary configurations}
\label{sec:boundary}

Throughout this section, we keep fixed the finite symmetric family $\mathcal{S}$ and the compatible connected tile presentation from Subsections~\ref{subsec:full-groups-generation} and~\ref{subsec:tile-inflation}. Put $G_0=\langle\mathcal{S}\rangle_{\mathrm{inv}}$. A word $g=F_n\cdots F_1$, with $F_j\in\mathcal{S}$, is understood on the points for which the successive partial maps are defined, equivalently as the compact open product bisection $F_n\cdots F_1$.

\subsection{Boundary points on finite tiles and persistence}\label{subsec:persistence}
Let $\xi=e_1e_2\ldots\in \Omega(\mathsf{B})$ and write $\xi_n=e_1\ldots e_n$. The compatible finite tiles containing the prefixes $\xi_n$ form the increasing sequence whose inductive limit is the infinite tile $\mathcal{T}_\xi$. Recall from Subsection~\ref{subsec:tile-inflation} that a \emph{boundary point on a finite tile} is a vertex incident to a boundary edge. Such a point may become internal after inflation. Thus a finite-level boundary edge records a defined generator label that is not yet realized internally at that level. It does not, by itself, say that the resulting germ lies outside the AF core.

\begin{defn}[Persistent boundary point]
\label{def:persistent-boundary-point}
We say that $\xi\in\Omega(\mathsf{B})$ is a \emph{boundary point on an infinite tile}, or a \emph{persistent boundary point}, if $\xi_n$ is a boundary point on the corresponding finite tile for every $n$. We denote the set of persistent boundary points by $\partial_\infty(\mathsf{B},\mathcal{S})$.
\end{defn}

\begin{prop}
\label{prop:persistence-criterion}
Let $F\in\mathcal{S}$, let $\xi\in\sg(F)$, and suppose that the $F$-edge at $\xi_n$ is a boundary edge at some level. Then $(F,\xi)\in\T$ if and only if this edge becomes internal at some deeper level. Equivalently, $(F,\xi)\notin\T$ if and only if the compatible $F$-edge remains a boundary edge at every deeper level.
\end{prop}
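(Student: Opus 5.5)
We prove the two directions of the first equivalence; the second sentence is its contrapositive. Along the way we use that a compatible edge which has become internal stays internal under refinement (Definition~\ref{def:compatible-tile-presentation}).

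\emph{Internal edge implies a tail germ.} Suppose the $F$-edge at $\xi_m$ is internal at some level $m\ge n$, running from $p=\xi_m$ to $q$. Compatibility gives $[p]\subseteq\sg(F)$ and $F(pz)=qz$ for every continuation $z$. Both $p$ and $q$ are vertices of the same finite tile $\mathcal{T}_{v,m}$, so $\mathbf{r}(p)=\mathbf{r}(q)=v$. Hence the prefix replacement $[q,p]$ is a compact open bisection contained in $\mathfrak{T}_{\mathsf{B},m}\subseteq\T$.

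The bisections $F|_{[p]}$ and $[q,p]$ induce the same partial homeomorphism on the open set $[p]\ni\xi$. Since $\G$ is effective, an arrow is determined by the germ of the local homeomorphism of any bisection containing it. Therefore $(F,\xi)$ coincides with the germ of $[q,p]$ at $\xi$, and so $(F,\xi)\in\T$.

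\emph{Tail germ implies the edge becomes internal.} Suppose $(F,\xi)\in\T$. Since $\T=\bigcup_m\mathfrak{T}_{\mathsf{B},m}$, the arrow $(F,\xi)$ lies in some elementary stage. It is therefore a prefix replacement $[q,\xi_m]$ at some level $m$, with $\mathbf{r}(q)=\mathbf{r}(\xi_m)$; we may take $m\ge n$ by refining both prefixes along the tail of $\xi$.

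Both $F$ and $[q,\xi_m]$ are open bisections containing this arrow. Their intersection is an open neighbourhood of the arrow inside $F$, so there is a clopen neighbourhood $W\ni\xi$ on which $F$ agrees with $[q,\xi_m]$. Enlarging $m$ so that $[\xi_m]\subseteq W$ (and replacing $q$ by $q e_{m'}\cdots$ accordingly), we get $F(\xi_m z)=qz$ for every continuation $z$.

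The last condition in Definition~\ref{def:compatible-tile-presentation} says every generator germ in $\T$ is represented by an internal edge at some sufficiently deep level. Applied here, it gives an internal $F$-edge at $\xi_{m'}$ for some $m'$.

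It remains to identify this internal edge with the compatible $F$-edge at $\xi_{n'}$ for the relevant level. By the partial-bijection rule there is at most one outgoing $F$-edge at each vertex. The $F$-edges at $\xi_k$ for the various $k$ are all the refinements of the one compatible edge, since both describe the action of $F$ on the cylinders $[\xi_k]$. So the boundary edge at $\xi_n$ has become internal at level $m'$.

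\emph{Main obstacle.} The delicate point is this last identification: that the internal edge provided by compatibility is the refinement of the boundary edge at $\xi_n$, and not some other edge. I would handle it with uniqueness of the $F$-label at each vertex, together with the fact that a boundary edge at a vertex records exactly the defined action of $F$ on that cylinder. If the compatibility clause is read only as asserting that some internal edge represents the germ, I would instead construct the edge directly. Once $F(\xi_mz)=qz$ with $\mathbf{r}(q)=\mathbf{r}(\xi_m)$, the vertices $\xi_m$ and $q$ lie in the same tile $\mathcal{T}_{\mathbf{r}(\xi_m),m}$, so the unresolved $F$-edge at $\xi_m$ has its opposite endpoint inside that tile. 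It is therefore joined by the connectors at level $m$ or below, which makes it internal.
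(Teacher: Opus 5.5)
Your proposal is correct and follows the paper's proof: the forward direction reads off a prefix replacement in $\T$ from the internal edge, and the converse combines openness of $F$ and $\T$ with the exhaustion clause of Definition~\ref{def:compatible-tile-presentation}, exactly as the paper does. The fallback in your last paragraph is unnecessary and is not justified by that definition, which only guarantees that the edge becomes internal at some sufficiently deep level, not already at level $m$.
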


\begin{proof}
If the edge becomes internal at level $m$, then the restriction of the bisection $F$ to the level-$m$ cylinder containing $\xi$ is a prefix replacement, hence $(F,\xi)\in\T$. Conversely, if $(F,\xi)\in\T$, openness of both $F$ and $\T$ gives a compact open tail bisection containing $(F,\xi)$ and contained in a sufficiently small restriction of $F$. The exhaustion property of the compatible tile presentation then makes the corresponding edge internal at a sufficiently deep level. This is the bisection form of the criterion at finite levels in \cite[Remark~2.22]{kua26a}.
\end{proof}

The \emph{exceptional generator set} of the fixed tile presentation is
\begin{equation}
\label{eq:ES}
\mathcal{E}_{\mathcal{S}}=\{(F,x):F\in\mathcal{S},\ x\in\sg(F),\ (F,x)\notin\T\}.
\end{equation}
Its elements are the \emph{exceptional generator arrows}.

Proposition~\ref{prop:persistence-criterion} identifies the support of this set geometrically. The source and range of every arrow in $\mathcal{E}_{\mathcal{S}}$ are persistent boundary points, and conversely every persistent boundary point is the source or range of an arrow in $\mathcal{E}_{\mathcal{S}}$. For the converse, if every generator edge defined at such a point became internal at some level, finiteness of $\mathcal{S}$ would give one common deeper level at which the point is no longer on the boundary.

\begin{prop}
\label{prop:finite-persistent-boundary}
The set $\mathcal{E}_{\mathcal{S}}$ is finite. Consequently, $\partial_\infty(\mathsf{B},\mathcal{S})$ is finite, only finitely many infinite tiles contain persistent boundary points, and only finitely many $\G$-orbits contain germs outside $\T$.
\end{prop}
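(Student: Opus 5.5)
The plan is to reduce everything to Lemma~\ref{lemma:finite-exceptional-support} applied to the finitely many generators. For each $F\in\mathcal{S}$, the set of exceptional generator arrows carried by $F$ is
\[
\{(F,x):x\in\sg(F),\ (F,x)\notin\T\}=F\setminus\T .
\]
Since $F$ is a compact open bisection, this set is finite by Lemma~\ref{lemma:finite-exceptional-support}. Here I use the identification of arrows with germs in the effective setting: the germ $(F,x)$ is the unique arrow of $F$ with source $x$. Hence $\mathcal{E}_{\mathcal{S}}=\bigcup_{F\in\mathcal{S}}(F\setminus\T)$ is a finite union of finite sets, because $\mathcal{S}$ is finite.

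For the boundary points, I would invoke the characterization recorded right after \eqref{eq:ES}, which rests on Proposition~\ref{prop:persistence-criterion}. Every persistent boundary point is the source or range of some arrow in $\mathcal{E}_{\mathcal{S}}$. Therefore
\[
\partial_\infty(\mathsf{B},\mathcal{S})\subseteq\sg(\mathcal{E}_{\mathcal{S}})\cup\rg(\mathcal{E}_{\mathcal{S}}),
\]
and the right-hand side is finite. The step I would double-check is the converse direction. If a point $\xi$ has every defined generator edge internal at some level, then, since there are finitely many labels, one common deeper level makes $\xi_n$ interior. I must make sure this uses compatibility correctly: refinement keeps internal edges internal. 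Once that is granted, finiteness of $\partial_\infty$ follows. Each infinite tile containing such a point is $\mathcal{T}_\xi$ for one of these finitely many $\xi$, so only finitely many infinite tiles contain persistent boundary points.

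The main obstacle is the last claim, that only finitely many $\G$-orbits contain germs outside $\T$. Here an arbitrary arrow must be reduced to generator arrows. Let $\gamma\in\G\setminus\T$. Because $\mathcal{S}$ generates $\G$, write $\gamma=\gamma_k\cdots\gamma_1$ with each $\gamma_j$ a germ of some $F_j\in\mathcal{S}$. If every $\gamma_j$ lay in $\T$, then $\gamma$ would lie in $\T$, since $\T$ is a subgroupoid. So some $\gamma_j\in\mathcal{E}_{\mathcal{S}}$. The source of $\gamma_j$ lies in the same $\G$-orbit as $\sg(\gamma)$, because the whole product is composable along one orbit. Hence every exceptional orbit contains the source of an element of the finite set $\mathcal{E}_{\mathcal{S}}$, and there are at most $|\mathcal{E}_{\mathcal{S}}|$ such orbits.
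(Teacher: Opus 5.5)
Your proposal is correct and follows the paper's proof essentially step for step. You get finiteness of $\mathcal{E}_{\mathcal{S}}=\bigcup_{F\in\mathcal{S}}(F\setminus\T)$ from Lemma~\ref{lemma:finite-exceptional-support}, use the support observation after \eqref{eq:ES} to place $\partial_\infty(\mathsf{B},\mathcal{S})$ inside the finite source--range set of $\mathcal{E}_{\mathcal{S}}$, and factor an exceptional arrow into generator germs to find an exceptional factor in its orbit; the converse step you flagged is handled exactly as you suggest, since compatibility keeps internal edges internal under refinement.
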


\begin{proof}
For $F\in\mathcal{S}$, the bisection $F$ is compact. By Lemma~\ref{lemma:finite-exceptional-support}, $F\setminus\T$ is finite. Here closedness inside the bisection is essential. Taking the union over the finite family $\mathcal{S}$ proves that $\mathcal{E}_{\mathcal{S}}$ is finite. By the preceding support observation, $\partial_\infty(\mathsf{B},\mathcal{S})$ is the union of the sources and ranges of these finitely many arrows, and hence is finite as well. Finally, let an orbit contain an arbitrary arrow of $\G\setminus\T$. Represent that arrow by a composable word in $\mathcal{S}$. At least one factor lies outside $\T$, since $\T$ is a subgroupoid. That factor belongs to $\mathcal{E}_{\mathcal{S}}$. Thus every exceptional orbit meets the finite source-range support of $\mathcal{E}_{\mathcal{S}}$, proving the remaining assertions.
\end{proof}

This finiteness is weaker than bounded type. For the fixed finite family $\mathcal{S}$, AF-by-discreteness gives only finitely many persistent boundary points. Without finite generation, there need not be one finite persistent set for the whole groupoid. Bounded type requires, in particular, a uniform bound on the number of boundary points on every finite tile. A compactly generated AF-by-discrete groupoid may thus have complicated finite-level boundaries despite having finitely many persistent boundary points relative to $\mathcal{S}$.

\begin{cor}
\label{cor:boundaryless-tile}
If an infinite tile contains no persistent boundary point, then it coincides with an orbital graph and every point of the infinite tile is regular. It follows that all nontrivial boundary phenomena occur on the finitely many infinite tiles meeting $\partial_\infty(\mathsf{B},\mathcal{S})$.
\end{cor}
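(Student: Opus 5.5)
Let $\xi$ be a point whose infinite tile $\mathcal{T}_\xi$ contains no persistent boundary point. The argument has three steps: the orbit equals the tail orbit, the internal edges exhaust the orbital graph, and the points are regular.

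\textbf{Step 1: the $\G$-orbit is the tail orbit.} Take any arrow $\gamma\in\G$ with $\sg(\gamma)=\xi'\in\T\xi$. By compact generation, $\gamma$ lies in a composable product $F_k\cdots F_1$ of members of $\mathcal{S}$. We follow the germs $(F_1,\xi'),(F_2,F_1\xi'),\dots$ in turn. Suppose the source of $(F_j,\cdot)$ lies in $\T\xi$. That source is not a persistent boundary point, so by the support observation after~\eqref{eq:ES} the germ is not in $\mathcal{E}_{\mathcal{S}}$, and therefore lies in $\T$. Its range then again lies in $\T\xi$. By induction, every factor lies in $\T$, so $\gamma\in\T$. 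Hence $\G\xi=\T\xi$, and no arrow with source in this orbit is exceptional.

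\textbf{Step 2: the tile is the orbital graph.} Every $\mathcal{S}$-edge $y\to F(y)$ of $\Gamma_\xi$ has germ $(F,y)\in\T$. By Proposition~\ref{prop:persistence-criterion} and the compatibility in Definition~\ref{def:compatible-tile-presentation}, such an edge is internal at some level, so it is an edge of $\mathcal{T}_\xi$. Conversely, every internal edge of $\mathcal{T}_\xi$ is an edge of $\Gamma_\xi$. The vertex sets agree by Step 1, so the infinite tile coincides with the orbital graph.

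\textbf{Step 3: regularity.} Every isotropy arrow at $\xi$ lies in $\T$ by Step 1. Since $\T$ is principal, with a unique tail arrow between any two points of a tail orbit, the isotropy group $\G_\xi^\xi$ is trivial. The same holds at every point of $\T\xi=\G\xi$, so every point of the tile is regular.

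\textbf{The final assertion.} Any exceptional arrow, and any singular point, must occur on a tile that meets $\partial_\infty(\mathsf{B},\mathcal{S})$. By Proposition~\ref{prop:finite-persistent-boundary}, only finitely many infinite tiles do. The only delicate point is in Step 1: the induction must use the support description of $\mathcal{E}_{\mathcal{S}}$ at every intermediate point of the word, not only at $\xi$. This is what keeps the whole chain of germs inside the tail orbit.
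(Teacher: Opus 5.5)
Your proposal is correct and follows the same route as the paper. Generator germs based on the tile lie in $\T$, so every composable generator word starting there stays in the tile with all factors in $\T$; hence the reductions of $\G$ and $\T$ to the tile coincide, and principality of $\T$ gives regularity. Your Step~2 only spells out, via the exhaustion clause of Definition~\ref{def:compatible-tile-presentation}, the identification of the tile with the orbital graph that the paper states in one line.
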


\begin{proof}
Every generator germ with source in the infinite tile lies in $\T$. A composable generator word starting there stays in the tile, and all of its factors lie in $\T$. Since $\mathcal{S}$ generates $\G$, the reductions of $\G$ and $\T$ to this tile coincide. It follows that the tile is an entire orbital graph and, by principality of $\T$, every point of it is regular.
\end{proof}

\subsection{Persistent boundary connections}
Every exceptional generator arrow has persistent boundary endpoints. We now classify such arrows according to what remains after the AF motion inside an infinite tile is ignored. Let $\gamma=(F,\xi)\in\mathcal{E}_{\mathcal{S}}$ and put $\eta=F(\xi)=\rg(\gamma)$. Denote by $\mathcal{T}_\xi$ and $\mathcal{T}_\eta$ the infinite tiles containing $\xi$ and $\eta$. Since $\mathcal{S}$ is symmetric, $(F^{-1},\eta)=\gamma^{-1}$ is again an exceptional generator arrow.

In the terms \emph{intra-tile} and \emph{inter-tile} below, ``tile'' always means the infinite tile, whose vertex set is a $\T$-orbit. Thus this classification takes place in the orbital graph. The graph of germs has a finer decomposition into lifted tiles, discussed after Definition~\ref{def:boundary-quotient}.

\begin{defn}[Persistent boundary connections]
\label{def:persistent-boundary-connections}
An arrow $\gamma=(F,\xi)\in\mathcal{E}_{\mathcal{S}}$ is called a \emph{persistent boundary connection}. Writing $\eta=F(\xi)$, it is
\begin{enumerate}[label=(\roman*)]
\item an \emph{isotropy boundary connection} if $\eta=\xi$;
\item an \emph{intra-tile boundary connection} if $\eta\neq\xi$ and $\mathcal{T}_\eta=\mathcal{T}_\xi$;
\item an \emph{inter-tile boundary connection} if $\mathcal{T}_\eta\neq\mathcal{T}_\xi$.
\end{enumerate}
\end{defn}

An intra-tile connection produces isotropy after we remove the unique AF motion between its endpoints.

\begin{prop}
\label{prop:intratile-implies-isotropy}
Let $\gamma:\xi\to\eta$ be an intra-tile boundary connection. If $\alpha:\xi\to\eta$ is the unique arrow of $\T$ with the same source and range, then $\alpha^{-1}\gamma$ is a nontrivial element of $\G_\xi^\xi$.
\end{prop}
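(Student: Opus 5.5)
Since $\mathcal{T}_\eta=\mathcal{T}_\xi$, the points $\xi$ and $\eta$ lie in the same $\T$-orbit. By principality of the tail groupoid (Subsection~\ref{subsec:bratteli-core}), there is a unique tail arrow $\alpha:\xi\to\eta$ in $\T$: it is the prefix replacement $[q,p]$ with $\xi=pz$, $\eta=qz$ for a common tail $z$ and $\mathbf{r}(p)=\mathbf{r}(q)$. Hence $\alpha^{-1}:\eta\to\xi$ is composable with $\gamma:\xi\to\eta$. The product $\alpha^{-1}\gamma$ has source $\sg(\gamma)=\xi$ and range $\rg(\alpha^{-1})=\xi$, so it lies in the isotropy group $\G_\xi^\xi$.

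It remains to show nontriviality, which I would argue by contradiction. Suppose $\alpha^{-1}\gamma=(\Id,\xi)$. Then $\gamma=\alpha\cdot(\alpha^{-1}\gamma)=\alpha$. Since $\alpha\in\T$, this gives $\gamma\in\T$. But $\gamma\in\mathcal{E}_{\mathcal{S}}$ means $\gamma=(F,\xi)\notin\T$ by~\eqref{eq:ES}, a contradiction. More generally, $\T$ is a subgroupoid, so if $\alpha^{-1}\gamma$ were any element of $\T$, then $\gamma=\alpha(\alpha^{-1}\gamma)$ would also lie in $\T$. Thus $\alpha^{-1}\gamma$ is in fact a non-AF element of $\G_\xi^\xi$. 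This is stronger than nontriviality, since $\T$ is principal and its only isotropy at $\xi$ is the unit.

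The only point needing care is the identification of arrows with germs. Here $\G$ is effective, and the products are germ products as in~\eqref{eq:germ-operations-prelim}. Consequently $\alpha^{-1}\gamma$ is the germ at $\xi$ of $[p,q]\circ F$, which fixes $\xi$. The equality $\gamma=\alpha$ used above is equality of germs at $\xi$, which is exactly equality in $\G$. The hypothesis $\eta\neq\xi$ is not needed for the conclusion; it only separates this case from isotropy connections, where $\alpha$ is the unit and the statement is trivial. I expect no real obstacle; the key input is simply that $\T$ is a principal subgroupoid, combined with the definition of exceptional generator arrows.
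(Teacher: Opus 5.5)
Your proof is correct and follows the paper's argument: $\alpha$ exists by tail equivalence and is unique by principality, and $\alpha^{-1}\gamma=(\Id,\xi)$ would force $\gamma=\alpha\in\T$, contradicting $\gamma\in\mathcal{E}_{\mathcal{S}}$. Your added observation that $\alpha^{-1}\gamma\notin\T$ is a harmless strengthening obtained from the same subgroupoid argument.
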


\begin{proof}
The arrow $\alpha$ exists since $\xi$ and $\eta$ are tail equivalent and is unique since $\T$ is principal. If $\alpha^{-1}\gamma=(\Id,\xi)$, then $\gamma=\alpha\in\T$, contradicting $\gamma\in\mathcal{E}_{\mathcal{S}}$.
\end{proof}

\begin{cor}
\label{cor:regular-boundary-intertile}
If $\xi$ is regular, every persistent boundary connection with source $\xi$ is inter-tile.
\end{cor}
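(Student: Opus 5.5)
This follows from Definition~\ref{def:persistent-boundary-connections} and Proposition~\ref{prop:intratile-implies-isotropy} by excluding the other two types; I will argue by contraposition. Let $\gamma=(F,\xi)\in\mathcal{E}_{\mathcal{S}}$ have regular source $\xi$, so that $\G_\xi^\xi=\{(\Id,\xi)\}$, and put $\eta=F(\xi)$. By Definition~\ref{def:persistent-boundary-connections}, $\gamma$ is an isotropy, an intra-tile, or an inter-tile connection. It therefore suffices to rule out the first two cases.

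\emph{Isotropy case.} Suppose $\eta=\xi$. Then $\gamma$ itself lies in $\G_\xi^\xi$, so regularity forces $\gamma=(\Id,\xi)$. But the unit arrow belongs to $\T$, since $\T$ is a subgroupoid with the same unit space. This contradicts $\gamma\in\mathcal{E}_{\mathcal{S}}$, which requires $\gamma\notin\T$.

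\emph{Intra-tile case.} Suppose $\eta\neq\xi$ and $\mathcal{T}_\eta=\mathcal{T}_\xi$. Then Proposition~\ref{prop:intratile-implies-isotropy} provides a nontrivial element $\alpha^{-1}\gamma\in\G_\xi^\xi$, where $\alpha$ is the unique tail arrow from $\xi$ to $\eta$. This contradicts regularity of $\xi$.

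Only the inter-tile case remains, which proves the claim. There is no real obstacle here. The only point requiring care is the isotropy case, where one must note that unit arrows lie in $\T$, so that a trivial isotropy arrow cannot be exceptional.
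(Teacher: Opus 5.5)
Your proof is correct and is essentially the argument the paper intends: the corollary is stated without proof as an immediate consequence of Proposition~\ref{prop:intratile-implies-isotropy}, with the isotropy case excluded because an exceptional loop at $\xi$ is itself a nonidentity element of $\G_\xi^\xi$ (unit arrows lie in $\T$). The only cosmetic point is that you announce a contraposition but actually carry out a case-by-case contradiction, which is harmless.
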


The converse is false: a composition of inter-tile connections, joined by tail arrows and closed at its starting point, can have a nontrivial isotropy germ. Such a closed composition is a boundary return in the precise sense of Definition~\ref{def:boundary-path-return} below. Regularity will be tested by all of these closed compositions, not by individual boundary edges.

\subsection{The boundary quotient of an exceptional orbit}\label{subsec:boundary-quotient}
Let $\mathcal{O}$ be an exceptional $\G$-orbit in the sense of Definition~\ref{def:exceptional-arrow-orbit}. We now collapse the AF motion inside each $\T$-orbit of $\mathcal{O}$ to a vertex, while retaining every exceptional generator connection together with its actual boundary endpoints. This is the sense in which the construction below is a quotient: only the tiles are collapsed. The labels, multiplicities, and marked endpoints of the persistent connections are not forgotten. For $\xi\in\mathcal{O}$ and $F\in\mathcal{S}$ with $\xi\in\mathsf{s}(F)$, the corresponding generator edge in the orbital graph has source $\xi$, range $F(\xi)$, and is represented by the germ $(F,\xi)\colon \xi\to F(\xi)$.

\begin{defn}[Boundary quotient]\label{def:boundary-quotient}
 The \emph{boundary quotient} $Q_{\mathcal{S}}(\mathcal{O})$ is the directed multigraph obtained from the orbital graph of $\mathcal{O}$ by collapsing each infinite tile to a single vertex and retaining the boundary generator edges. 
\end{defn}

 Thus $V\bigl(Q_{\mathcal{S}}(\mathcal{O})\bigr)=\mathcal{T}(\mathcal{O})$ as in \eqref{orbitquotient}, and every boundary generator germ $(F,\xi)\colon \xi\to F(\xi)$ determines an edge
\[e(F,\xi)\colon \mathcal{T}_{\xi}\longrightarrow \mathcal{T}_{F(\xi)},\]
marked by its source $\xi$, range $F(\xi)$, and germ $(F,\xi)$. Distinct boundary generator edges are retained as distinct edges even when they have the same endpoints. If $\mathcal{T}_{\xi}=\mathcal{T}_{F(\xi)}$, then $e(F,\xi)$ is a loop. This quotient records the tile geometry of the exceptional orbit by a finite connected graph. If two different labels determine the same germ, they remain distinct labeled edges of the quotient even though they represent the same arrow of $\mathcal{E}_{\mathcal{S}}$ and the same generator in the relative homology presentation.

For the moment, index the $\T$-orbits in $\mathcal{O}$ by a set $J$, writing them as $\mathcal{T}_i$, $i\in J$, and put $B_i=\partial_\infty(\mathsf{B},\mathcal{S})\cap\mathcal{T}_i$. 

\begin{prop}\label{prop:finite-boundary-quotient}
For every exceptional orbit $\mathcal{O}$, the graph $Q_{\mathcal{S}}(\mathcal{O})$ is finite and connected. Every vertex is incident to an edge. In particular, there are finitely many $\T$-orbits in $\mathcal{O}$, and every $B_i$ is nonempty.
\end{prop}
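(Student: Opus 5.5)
The plan is to derive all four assertions from the finiteness of $\mathcal{E}_{\mathcal{S}}$ (Proposition~\ref{prop:finite-persistent-boundary}) together with the fact that $\mathcal{S}$ generates $\G$.

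\emph{Finiteness of the edge set.} By Definition~\ref{def:boundary-quotient}, the edges of $Q_{\mathcal{S}}(\mathcal{O})$ are the boundary generator edges $e(F,\xi)$ with $(F,\xi)\in\mathcal{E}_{\mathcal{S}}$ and $\xi\in\mathcal{O}$. Each such edge is indexed by a pair consisting of a label $F\in\mathcal{S}$ and an arrow of $\mathcal{E}_{\mathcal{S}}$. Since both sets are finite, the edge set is finite.

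\emph{Connectedness.} Fix $\xi,\zeta\in\mathcal{O}$ and an arrow $\gamma:\xi\to\zeta$. Because $\mathcal{S}$ generates $\G$, I write $\gamma=(F_k,x_{k-1})\cdots(F_1,x_0)$ with $x_0=\xi$ and $x_k=\zeta$. Each factor lying in $\T$ keeps the point inside its $\T$-orbit, so it is invisible in the quotient. Each factor outside $\T$ lies in $\mathcal{E}_{\mathcal{S}}$ and gives an edge $e(F_j,x_{j-1})$ from $\mathcal{T}_{x_{j-1}}$ to $\mathcal{T}_{x_j}$. Reading the word therefore produces a walk in $Q_{\mathcal{S}}(\mathcal{O})$ from $\mathcal{T}_\xi$ to $\mathcal{T}_\zeta$. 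This shows that the graph is connected, where connectedness is taken in the undirected sense, which is automatic anyway since $\mathcal{S}$ is symmetric.

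\emph{Every vertex is incident to an edge.} Since $\mathcal{O}$ is exceptional, some exceptional arrow has its source in $\mathcal{O}$. By the argument in Proposition~\ref{prop:finite-persistent-boundary}, one of its generator factors lies in $\mathcal{E}_{\mathcal{S}}$, so the graph has at least one edge. Now take any vertex $\mathcal{T}_i$, choose $\zeta\in\mathcal{T}_i$, and apply connectedness to a point $\xi$ that is an endpoint of this edge. If $\mathcal{T}_i\neq\mathcal{T}_\xi$, the walk from $\mathcal{T}_\xi$ to $\mathcal{T}_i$ is nontrivial, and its final step is an edge entering $\mathcal{T}_i$. If $\mathcal{T}_i=\mathcal{T}_\xi$, the chosen edge is already incident to $\mathcal{T}_i$.

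\emph{Finiteness of the vertex set and nonemptiness of each $B_i$.} A connected graph with finitely many edges in which every vertex is incident to an edge has finitely many vertices: each edge has at most two endpoints, and every vertex occurs among them. Hence $\mathcal{T}(\mathcal{O})$ is finite. Finally, if $\mathcal{T}_i$ is incident to $e(F,\xi)$, then $\mathcal{T}_i$ contains either $\xi$ or $F(\xi)$. Both are persistent boundary points by the support observation following \eqref{eq:ES}, so $B_i\neq\varnothing$.

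The only delicate step is the incidence claim, specifically the case where the orbit has a single vertex. There, exceptionality of $\mathcal{O}$ is exactly what is needed to rule out an edgeless one-vertex graph.
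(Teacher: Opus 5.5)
Your proof is correct and follows the paper's argument: the edge set is finite by Proposition~\ref{prop:finite-persistent-boundary} and the finite label set, connectedness comes from collapsing the AF blocks of a generating word, and finiteness of the vertex set and $B_i\neq\varnothing$ follow by reading off the endpoints of the finitely many edges. The one small variation is the incidence step: you combine connectedness with the existence of at least one edge, whereas the paper observes directly that an edgeless vertex would be a $\G$-invariant $\T$-orbit containing no exceptional arrow; both versions rest on the same use of generation by $\mathcal{S}$ together with exceptionality of $\mathcal{O}$.
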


\begin{proof}
Only finitely many exceptional generator arrows occur by Proposition~\ref{prop:finite-persistent-boundary}, and the finite family of labels can only produce finitely many labeled edges, and thus the edge set of $Q_{\mathcal{S}}(\mathcal{O})$ is finite. To prove connectedness, take two $\T$-orbits in $\mathcal{O}$ and choose points in them. A composable word in $\mathcal{S}$ joins these points. Collapse each maximal consecutive block of factors lying in $\T$. The remaining non-AF factors are persistent boundary connections and give a path in the quotient between the two vertices. Hence $Q_{\mathcal{S}}(\mathcal{O})$ is connected. Finally, a vertex with no incident edge would be invariant under every generator modulo $\T$ and hence would itself be a whole $\G$-orbit containing no exceptional arrow, contrary to the choice of $\mathcal{O}$. Thus every vertex is incident to an edge, and thus the finite edge set has only finitely many endpoints. This proves finiteness of the vertex set and also shows $B_i\neq\varnothing$ for every $i$.
\end{proof}

Since $J$ is finite by the last proposition, write $J=\{1,\ldots,k\}$. Denote $b_i=|B_i|$. We call the tuple $(k;b_1,\ldots,b_k)$, considered up to permutation of the infinite tiles, the \emph{boundary profile} of $\mathcal{O}$. It records the numbers of tiles and of persistent boundary points on each tile. It forgets the incidence data of the quotient graph and all relations carried by closed paths, and thus is not a classification invariant up to groupoid isomorphism. Nevertheless, the following four coarse incidence patterns are useful:

\begin{enumerate}
\item $k=1$ and $b_1=1$: one infinite tile with one persistent boundary point;
\item $k=1$ and $b_1>1$: several persistent boundary points on one infinite tile;
\item $k>1$ and $b_i=1$ for every $i$: several infinite tiles, one persistent boundary point on each;
\item $k>1$ and $b_i>1$ for some $i$: inter-tile transport and multiple boundary points on at least one infinite tile occur simultaneously.
\end{enumerate}

In the first case, every persistent boundary connection is an isotropy loop. In the second case, every nonloop boundary connection is intra-tile and produces a nontrivial isotropy germ by Proposition~\ref{prop:intratile-implies-isotropy}. The third case may be regular, as for the binary odometer orbit cut, or singular when a nonloop cycle has nontrivial germ, for example one formed by distinct boundary edges with the same source and range tiles. The fourth case is the general configuration with finite boundary.

Here $k$ counts infinite tiles, equivalently the $\T$-orbits contained in $\mathcal{O}$. The graph of germs has a finer decomposition into lifted tiles. Fix $x\in\mathcal{O}$ and write $H_{\mathcal{O}}=\G_x^x$. The range projection from the source fiber $\G_x$ to $\mathcal{O}$ has an $H_{\mathcal{O}}$-torsor over each point: after choosing one arrow $x\to y$, all arrows $x\to y$ are obtained by right multiplication by $H_{\mathcal{O}}$. Consequently, over each tile, there is one $\T$-component for every element of $H_{\mathcal{O}}$. Principality of $\T$ prevents two such components from merging. We call these components the \emph{lifted infinite tiles}. Each lifted tile retains the boundary data of the corresponding infinite tile, but the completed graph of germs is a graph without boundary: its lifted tiles are glued along their boundary points by the exceptional generator germs. After connectors are chosen, the isotropy gluings are organized by the Cayley graph of the group of germs $H_{\mathcal{O}}$. The orbital graph is obtained by gluing the tiles along the persistent boundary connections. Hence, when $H_{\mathcal{O}}$ is finite, the graph of germs has $k\cdot|H_{\mathcal{O}}|$ lifted infinite tiles, with the analogous cardinal statement in the infinite case.

Since $\mathcal{O}$ is exceptional, the number of lifted tiles is at least two. If $k=1$, any exceptional arrow has endpoints in the same $\T$-orbit. Composing it with the inverse of the unique tail arrow having the same endpoints gives nontrivial isotropy, and thus $H_{\mathcal{O}}$ cannot be trivial. For example, in the regular unique-boundary configuration consisting of one boundary connection between two regular tiles, $k=2$ and $H_{\mathcal{O}}=1$, and the graph of germs has exactly two lifted tiles. At the other extreme, if $\mathcal{O}$ contains one persistent boundary point and $H_{\mathcal{O}}\cong C_2$, then $k=1$ but the graph of germs again has two lifted tiles, now lying over the same tile and joined by the nontrivial boundary germ. Thus a one-tile boundary profile describes the orbital-graph decomposition. It never means that the graph of germs itself consists of one lifted tile.

\subsection{Boundary paths, based returns, and regularity}
\label{subsec:boundary-returns}
Fix an exceptional orbit $\mathcal{O}$ and choose a persistent boundary point $x_0\in\partial_\infty(\mathsf{B},\mathcal{S})\cap\mathcal{O}$ as basepoint. A boundary path alternates two kinds of motion: arbitrary motion inside a tile through $\T$, and crossings of persistent boundary connections from $\mathcal{E}_{\mathcal{S}}$. A tail arrow between two points exists exactly when they belong to the same $\T$-orbit and is then unique, although it may be represented by many internal generator paths in the common tile. 

\begin{defn}[Boundary path and boundary return]
\label{def:boundary-path-return}
Let $x,y\in\mathcal{O}$. A \emph{boundary path from $x$ to $y$} is a finite composable list with product
\begin{equation}
\label{eq:boundary-path-product}
\alpha_m\gamma_m\alpha_{m-1}\cdots\alpha_1\gamma_1\alpha_0,
\end{equation}
where the arrows $\gamma_j:\xi_j\to\eta_j$, for $j=1,\cdots,m$, belong to $\mathcal{E}_{\mathcal{S}}$. The arrows $\alpha_0:x\to\xi_1$, $\alpha_j:\eta_j\to\xi_{j+1}$, for $1\leq j<m$, and $\alpha_m:\eta_m\to y$ belong to $\T$. The order of traversal is $\alpha_0,\gamma_1,\alpha_1,\ldots,\gamma_m,\alpha_m$. Identity tail arrows are allowed. We also allow $m=0$, meaning a single tail arrow $x\to y$ and an empty exceptional part. A \emph{boundary return based at $x_0$} is such a path with $x=y=x_0$. Its evaluated product is its \emph{return germ} in $H_{x_0}=\G_{x_0}^{x_0}$.
\end{defn}

The definition concerns a composable list of actual groupoid arrows, not just its drawing in the orbital graph. In particular, a return closes at the same point $x_0$, not merely in the same tile. The list is the boundary path. Its product is one germ. Two different lists may have the same product, and a path that crosses exceptional edges can still have the identity return germ. The case $m=0$ gives only the identity return, since $\T$ is principal.

For a prescribed list of exceptional arrows, the intermediate tail arrows in~\eqref{eq:boundary-path-product} exist precisely when $\eta_j$ and $\xi_{j+1}$ lie in the same tile. The first and last tail arrows impose the analogous endpoint conditions. When they exist, they are unique. Consequently, a based closed walk in $Q_{\mathcal{S}}(\mathcal{O})$ determines a boundary return after its actual boundary endpoints and the basepoint have been retained. For example, if $\gamma:\xi\to\eta$ is an intra-tile boundary connection and $\alpha:\xi\to\eta$ is the tail arrow, then $\alpha^{-1}\gamma$ is the return at $\xi$ obtained by crossing $\gamma$ and coming back internally. Proposition~\ref{prop:intratile-implies-isotropy} says that this germ is nontrivial. In contrast, crossing any connection and immediately crossing its inverse gives the identity return. Therefore, closedness alone is weaker than singularity. The following proposition shows that evaluating boundary returns detects all isotropy.
\keepwithstatement
\begin{prop}
\label{prop:return-criterion}
Every element of $H_{x_0}$ is the evaluated germ of a boundary return based at $x_0$. In particular, these return germs generate $H_{x_0}$, and $\mathcal{O}$ is regular if and only if every boundary return has germ $(\Id,x_0)$.
\end{prop}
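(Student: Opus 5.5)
The plan is to take an arbitrary $h\in H_{x_0}$ and factor it into generators, then group the factors into the alternating form~\eqref{eq:boundary-path-product}. Since $\mathcal{S}$ generates $\G$ in the sense of Definition~\ref{def:compact-generation}, the arrow $h$ lies in a bisection of $\langle\mathcal{S}\rangle_{\mathrm{inv}}$. It can therefore be written as a composable product of generator germs
\[
h=(F_n,y_{n-1})\cdots(F_2,y_1)(F_1,y_0),\qquad y_0=y_n=x_0,\quad y_j=F_j(y_{j-1}).
\]
Each factor either lies in $\T$ or belongs to $\mathcal{E}_{\mathcal{S}}$ by~\eqref{eq:ES}.

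First I will collapse each maximal consecutive block of factors lying in $\T$ into a single arrow of $\T$; this is possible because $\T$ is a subgroupoid. An empty block is replaced by the identity tail arrow at the relevant point. The result is a list $\alpha_m\gamma_m\cdots\gamma_1\alpha_0$ with $\gamma_j\in\mathcal{E}_{\mathcal{S}}$ and $\alpha_j\in\T$. It is composable, its product is still $h$, and it starts and ends at $x_0$. This is exactly a boundary return in the sense of Definition~\ref{def:boundary-path-return}, with return germ $h$. The case in which no factor is exceptional gives $m=0$, and then $h\in\T$ is the identity germ by principality of $\T$. This proves the first assertion, and the generation statement is immediate.

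For the regularity criterion, I will use that regularity is constant on $\G$-orbits (Subsection~\ref{subsec:germs-prelim}), so $\mathcal{O}$ is regular exactly when $H_{x_0}$ is trivial. If every boundary return based at $x_0$ has germ $(\Id,x_0)$, then the first part shows that $H_{x_0}$ is trivial. Conversely, if $H_{x_0}$ is trivial, every return germ is trivial. The phrase ``every boundary return'' in the statement refers to returns based at $x_0$; returns based at another point $x$ correspond to $H_x$, which is conjugate to $H_{x_0}$, so nothing changes.

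The only delicate point is the factorization step: one must check that a germ belonging to a bisection of $\langle\mathcal{S}\rangle_{\mathrm{inv}}$ really factors as a composable product of generator germs at the appropriate intermediate points. This follows by evaluating a product word $F_n\cdots F_1$ at $x_0$. Restrictions do not change germs, and inverses are already in $\mathcal{S}$. I do not expect any substantive obstacle beyond this bookkeeping, together with the observation that the grouping yields actual composable arrows rather than just walks in $Q_{\mathcal{S}}(\mathcal{O})$.
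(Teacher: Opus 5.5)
Your proposal is correct and follows essentially the same route as the paper. In both arguments you write $h$ as a composable product of generator germs along a word in $\mathcal{S}$, collapse each maximal block of AF steps into one arrow of $\T$ (the paper adds that, by principality, this arrow is the unique tail arrow between the block's endpoints), and settle the $m=0$ case and the regularity criterion using principality and conjugacy of isotropy groups along the orbit.
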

\begin{proof}
Let $h\in H_{x_0}$ and represent it by a composable bisection word $g=F_\ell\cdots F_1$ with $F_j\in\mathcal{S}$. Put $z_0=x_0$, $z_j=F_j(z_{j-1})$, and $\gamma_j=(F_j,z_{j-1})$. Since $h$ is isotropy, $z_\ell=x_0$. Each $\gamma_j$ either lies in $\T$ or belongs to $\mathcal{E}_{\mathcal{S}}$. Decompose the word into maximal consecutive blocks of AF arrows separated by the exceptional steps. Every AF block has a product in $\T$, and since $\T$ is principal that product is exactly the unique tail arrow between its endpoints. Replacing each maximal AF block by this unique arrow produces a boundary return of the form~\eqref{eq:boundary-path-product}, without changing the evaluated product. Hence its return germ is $h$. If no exceptional step occurs, the whole word lies in $\T$ and principality forces $h=(\Id,x_0)$. Finally, the isotropy groups at points of one orbit are conjugate, and thus $H_{x_0}$ is trivial exactly when the entire orbit is regular.
\end{proof}

The choice of a boundary point as basepoint is convenient, but not essential. For $y\in\mathcal{O}$ and any arrow $c:x_0\to y$, conjugation sends the above return germs to the isotropy germs at $y$. A word representing $c$ gives the necessary boundary path to and from the chosen basepoint. A regularity test at one boundary point tests the whole exceptional orbit.

More generally, choose arrows $p_z:x_0\to z$ for the endpoints under consideration.

\begin{defn}[Normalized return]
\label{def:normalized-return}
Let $\gamma:x\to y$ be an arrow in $\mathcal{O}$, and choose connectors $p_x:x_0\to x$ and $p_y:x_0\to y$. The \emph{normalized return} of $\gamma$ with respect to these connectors is $p_y^{-1}\gamma p_x\in H_{x_0}$.
\end{defn}

The normalized return depends on the chosen connectors. In particular, if every connector is replaced simultaneously by $p_z h$ for one fixed $h\in H_{x_0}$, then every normalized return is conjugated by $h^{-1}$, and thus its class in $H_{x_0}^{\ab}$ is unchanged. More general choices of connectors are fixed systematically by the construction of the spanning tree in Section~\ref{sec:germs}. Section~\ref{sec:index} then uses the resulting abelianized normalized returns in the index formula. A boundary return in the present sense is distinct from a \emph{traverse} of a finite tile or a \emph{return word} in the growth argument in \cite{kua26,kua25}. These trajectory-related notions are defined and compared with normalized returns in Subsection~\ref{subsec:word-traverses}.

\subsection{Singularities visible from the generators and their separation types}\label{subsec:singularity-types}\label{subsec:separation-types}
For $\xi\in \Omega(\mathsf{B})$, put $H_\xi=\G_\xi^\xi$. Recall that the point $\xi$ is \emph{regular} if $H_\xi$ is trivial and \emph{singular} otherwise. These properties are intrinsic and constant on $\G$-orbits.

\begin{defn}[Generator-visible singularities]
\label{def:generator-visible-singularities}
Fix the family $\mathcal{S}$ of generating bisections.
\begin{enumerate}[label=(\roman*)]
\item A point $\xi$ is \emph{$\mathcal{S}$-singular} if some $F\in\mathcal{S}$ is defined at $\xi$, satisfies $F(\xi)=\xi$, and has $(F,\xi)\neq(\Id,\xi)$.
\item A \emph{nonloop cycle} is a composable generator path $z_0,z_1,\ldots,z_m=z_0$, with $m\geq2$, such that $z_0,\ldots,z_{m-1}$ are pairwise distinct and the $j$th step is $(F_j,z_{j-1})$ for some $F_j\in\mathcal{S}$. Its \emph{cycle germ} is the product of these generator arrows.
\item A singular point $\xi$ is \emph{germ-defining relative to $\mathcal{S}$} if it is the unique $\mathcal{S}$-singular point in its orbit and every nonloop cycle in that orbit has identity cycle germ.
\item An orbit $\mathcal{O}$ has a \emph{nontrivial nonloop return germ} if one of its nonloop cycles has nonidentity cycle germ.
\end{enumerate}
For an exceptional orbit $\mathcal{O}$, write $d_{\mathcal{S}}(\mathcal{O})$ for the number of its $\mathcal{S}$-singular points.
\end{defn}
A nontrivial isotropy germ occurring in item~(i) is necessarily non-AF since $\T$ is principal.

The notion of $\mathcal{S}$-singularity depends on the chosen labeled generating family and is not an intrinsic orbit invariant. Every $\mathcal{S}$-singular point lies in the finite source-range support of $\mathcal{E}_{\mathcal{S}}$, and thus $d_{\mathcal{S}}(\mathcal{O})$ is finite. If $\mathcal{S}$ consists of the full bisections associated with a group generating set $S$, germ-defining relative to $\mathcal{S}$ is exactly the condition of \cite[Definition~3.4]{kua26a}. Thus germ-defining is a property of the labeled bisection presentation together with the orbit. It is not determined merely by the abstract group $H_\xi$. Generator loops and nonloop cycles give the following five regimes.
\keepwithstatement
\begin{prop}\label{prop:classification-singular-boundary}
Exactly one of the following cases occurs for an exceptional orbit $\mathcal{O}$.
\begin{enumerate}
\item\label{caseo1} $d_{\mathcal{S}}(\mathcal{O})=0$ and every nonloop cycle has trivial germ. Then $\mathcal{O}$ is regular.
\item\label{caseo2} $d_{\mathcal{S}}(\mathcal{O})=0$ and there is a nontrivial nonloop return germ. Then $\mathcal{O}$ is singular, but the isotropy is not visible as a nontrivial generator loop.
\item\label{caseo3} $d_{\mathcal{S}}(\mathcal{O})=1$ and every nonloop cycle has trivial germ. Then the unique $\mathcal{S}$-singular point is germ-defining relative to $\mathcal{S}$.
\item\label{caseo4} $d_{\mathcal{S}}(\mathcal{O})\geq2$ and every nonloop cycle has trivial germ. Then the orbit has several generator-visible singular points and no point is germ-defining relative to $\mathcal{S}$.
\item\label{caseo5} $d_{\mathcal{S}}(\mathcal{O})\geq1$ and there is a nontrivial nonloop return germ. Then generator-visible singularities coexist with nontrivial return germs, and no point is germ-defining relative to $\mathcal{S}$.
\end{enumerate}
\end{prop}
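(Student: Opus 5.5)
The five cases are determined by two independent pieces of data attached to $\mathcal{O}$: the count $d_{\mathcal{S}}(\mathcal{O})\in\{0,1,2,\dots\}$, which is finite by the remark after Definition~\ref{def:generator-visible-singularities}, and the boolean $N$ = ``some nonloop cycle in $\mathcal{O}$ has nonidentity cycle germ''. The plan is first to check that the listed cases partition the possibilities, and then to verify the stated consequence in each case.

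\emph{Partition.} The pairs $(d,N)$ fall into the following cases:
\begin{itemize}
\item $(0,\text{false})$ is case~\ref{caseo1};
\item $(0,\text{true})$ is case~\ref{caseo2};
\item $(1,\text{false})$ is case~\ref{caseo3};
\item $(\ge2,\text{false})$ is case~\ref{caseo4};
\item $(\ge1,\text{true})$ is case~\ref{caseo5}.
\end{itemize}
These cover every pair exactly once, so exactly one case occurs.

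\emph{Case~\ref{caseo1} (regularity).} This is the only substantive step, and I would reduce it to Proposition~\ref{prop:return-criterion}. Let $h\in H_{x_0}$ be represented by a closed generator path $z_0=x_0,z_1,\dots,z_\ell=x_0$ with steps $(F_j,z_{j-1})$. Induct on $\ell$. If all $z_j$ with $j<\ell$ are distinct, there are two possibilities:
\begin{itemize}
\item if $\ell\ge2$, the path is a nonloop cycle, so its germ is trivial by hypothesis;
\item if $\ell=1$, then $F_1(x_0)=x_0$ and $d_{\mathcal{S}}=0$ forces $(F_1,x_0)=(\Id,x_0)$.
\end{itemize}
Otherwise take the first repetition $z_i=z_j$ with $i<j$ and $j$ minimal. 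The sub-path from $z_i$ to $z_j$ is either a single generator loop ($j=i+1$) or a nonloop cycle, so its germ is the identity at $z_i$. Deleting it leaves a shorter closed composable path with the same product. By induction every element of $H_{x_0}$ is trivial, so $\mathcal{O}$ is regular, since isotropy is conjugation-invariant along the orbit. The main care needed here is that the germ product is unchanged when a closed sub-path with identity germ is excised. This holds because the germ of a composable product is the product of germs and the sub-path returns to the same point.

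\emph{Case~\ref{caseo2}.} A nonidentity cycle germ is a nontrivial element of $\G_{z_0}^{z_0}$, so $\mathcal{O}$ is singular. Since $d_{\mathcal{S}}=0$, no generator loop is a nontrivial germ.

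\emph{Case~\ref{caseo3}.} The conclusion is exactly Definition~\ref{def:generator-visible-singularities}(iii). The point is singular because its nontrivial generator loop germ lies in its isotropy group. It is the unique $\mathcal{S}$-singular point of the orbit, and every nonloop cycle has trivial germ by hypothesis.

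\emph{Cases~\ref{caseo4} and~\ref{caseo5}.} Germ-defining requires both uniqueness of the $\mathcal{S}$-singular point and triviality of all nonloop cycle germs. In case~\ref{caseo4} uniqueness fails, and in case~\ref{caseo5} cycle triviality fails. So no point is germ-defining, and the coexistence statements are immediate from the hypotheses.
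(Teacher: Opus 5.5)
Your proposal is correct and follows essentially the same route as the paper: the same partition by the pair ($d_{\mathcal{S}}(\mathcal{O})$, existence of a nontrivial nonloop return germ), and for case~1 the same cut-at-the-first-repetition argument. The only difference is in case~1. The paper first writes an arbitrary isotropy germ as a product of conjugates of simple closed-path germs (loops or nonloop cycles). You instead excise identity-germ subcycles directly, which is equivalent under the case~1 hypotheses and slightly more economical.
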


\begin{proof}
The two pieces of data are the finite integer $d_{\mathcal{S}}(\mathcal{O})$ and the existence or nonexistence of a nontrivial nonloop return germ. Separating according to the three possibilities $d_{\mathcal{S}}(\mathcal{O})=0$, $d_{\mathcal{S}}(\mathcal{O})=1$, and $d_{\mathcal{S}}(\mathcal{O})\geq2$, and then according to the second datum, gives the five listed alternatives. When $d_{\mathcal{S}}(\mathcal{O})=1$ or $d_{\mathcal{S}}(\mathcal{O})\geq2$, the alternatives with a nontrivial nonloop return germ are combined in case~(\ref{caseo5}). Hence, the cases are disjoint and exhaustive.

It remains to justify what these combinatorial alternatives say about isotropy. Consider a closed path
\[
z_0,z_1,\ldots,z_m=z_0
\]
in the orbital graph labeled by generators $F_1,\ldots,F_m\in\mathcal{S}$, so that the $j$th edge is represented by the germ $(F_j,z_{j-1})$. If an intermediate vertex is repeated, cut the path at the first repetition and continue inductively. The corresponding isotropy germ is then a product of conjugates of simple closed paths labeled by the generators. Each such simple closed path is either a one-edge isotropy loop or a nonloop cycle in the sense of Definition~\ref{def:generator-visible-singularities}. Thus all isotropy germs are generated by these two types of closed paths.

In case~(\ref{caseo1}), there are no nontrivial one-edge isotropy loops since $d_{\mathcal{S}}(\mathcal{O})=0$, and all nonloop cycle germs are trivial by hypothesis. Every isotropy germ is therefore trivial, and thus the orbit is regular. In case~(\ref{caseo2}), the specified nonloop cycle gives nontrivial isotropy, while no generator loop detects it. In case~(\ref{caseo3}), the unique $\mathcal{S}$-singular point is singular by its nontrivial generator loop, and the absence of nontrivial nonloop return germs is exactly the remaining condition in the definition of germ-defining relative to $\mathcal{S}$. In case~(\ref{caseo4}), there is more than one $\mathcal{S}$-singular point, and thus no point can be germ-defining even though every nonloop cycle has trivial germ. Finally, in case~(\ref{caseo5}), the existence of a nontrivial nonloop return germ violates the defining condition for a germ-defining point, independently of how many generator-visible singular points occur.
\end{proof}

\begin{cor}
\label{cor:single-boundary-germ-defining}
If an exceptional orbit contains exactly one persistent boundary point $\xi$, then $\xi$ is singular and germ-defining relative to $\mathcal{S}$.
\end{cor}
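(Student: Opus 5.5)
The plan is to reduce everything to the boundary quotient. Let $\mathcal{O}$ be the exceptional orbit with unique persistent boundary point $\xi$. By Proposition~\ref{prop:finite-boundary-quotient}, every vertex of $Q_{\mathcal{S}}(\mathcal{O})$ is incident to an edge, and every $B_i$ is nonempty. Since $\partial_\infty(\mathsf{B},\mathcal{S})\cap\mathcal{O}=\{\xi\}$, every $B_i$ is $\{\xi\}$. Hence $k=1$ and $b_1=1$, which is the first coarse pattern.

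\emph{Step 1: every exceptional generator arrow in $\mathcal{O}$ is an isotropy loop at $\xi$.} By the support observation after \eqref{eq:ES}, sources and ranges of arrows in $\mathcal{E}_{\mathcal{S}}$ are persistent boundary points. So any $\gamma\in\mathcal{E}_{\mathcal{S}}$ in $\mathcal{O}$ satisfies $\sg(\gamma)=\rg(\gamma)=\xi$, that is, $\gamma=(F,\xi)$ with $F(\xi)=\xi$. Such a $\gamma$ is not the identity germ, since $(\Id,\xi)\in\T$.

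\emph{Step 2: $\xi$ is singular and $\mathcal{S}$-singular.} $\mathcal{O}$ is exceptional, so it contains an arrow outside $\T$. By the proof of Proposition~\ref{prop:finite-persistent-boundary}, some factor of a word representing that arrow lies in $\mathcal{E}_{\mathcal{S}}$ and lies in $\mathcal{O}$. By Step~1 it is a nontrivial loop $(F,\xi)$. This gives $H_\xi\neq1$, and $\xi$ is $\mathcal{S}$-singular. Conversely, any $\mathcal{S}$-singular point of $\mathcal{O}$ carries a nontrivial generator loop. That loop is non-AF because $\T$ is principal, so the point is persistent and must equal $\xi$. Thus $d_{\mathcal{S}}(\mathcal{O})=1$.

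\emph{Step 3: every nonloop cycle has identity germ.} This is the main point. Take a nonloop cycle $z_0,\dots,z_m=z_0$ with $m\ge2$ and pairwise distinct $z_0,\dots,z_{m-1}$. Its steps are of two kinds.
\begin{itemize}
\item A non-AF step would by Step~1 be a loop at $\xi$, forcing $z_{j-1}=z_j=\xi$. For $m\ge2$ this contradicts distinctness: either $z_j=z_{j-1}$ with $j<m$, or $j=m$ and $z_{m-1}=z_0$.
\item So all steps lie in $\T$, and the cycle germ is a closed product in $\T$. Principality makes it $(\Id,z_0)$.
\end{itemize}
Proposition~\ref{prop:classification-singular-boundary}, case~(\ref{caseo3}), then states that $\xi$ is germ-defining relative to $\mathcal{S}$.

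The only delicate point is the distinctness argument excluding a loop step inside a nonloop cycle. The case where the loop is the final step $z_{m-1}\to z_m=z_0$ has to be checked separately, and there $z_{m-1}=z_0$ again contradicts distinctness.
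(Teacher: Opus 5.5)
Your proof is correct and follows the paper's argument essentially step for step. Every exceptional generator arrow in the orbit is forced to be a loop at $\xi$, which gives singularity and uniqueness of the $\mathcal{S}$-singular point, and a nonloop cycle cannot use such a loop, so its germ lies in the principal core and is trivial. The preliminary reduction to $k=1$, $b_1=1$ via the boundary quotient is not needed, and your separate check of the closing step $j=m$ spells out what the paper leaves implicit.
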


\begin{proof}
Since the orbit is exceptional, it contains an exceptional generator arrow. Its source and range are persistent boundary points, and thus both are equal to $\xi$. Hence, $\xi$ carries a nontrivial generator loop and is $\mathcal{S}$-singular. Any other $\mathcal{S}$-singular point would also be a persistent boundary point, and thus $\xi$ is the unique one. Moreover, every non-AF generator arrow in the orbit is a loop at $\xi$. A nonloop cycle, whose successive vertices are distinct, cannot use such a loop. Therefore, all of its steps lie in $\T$, and its cycle germ is the identity since $\T$ is principal. Definition~\ref{def:generator-visible-singularities} now shows that $\xi$ is germ-defining relative to $\mathcal{S}$.
\end{proof}

\begin{cor}
\label{cor:intratile-not-germ-defining}
If an exceptional orbit contains an intra-tile boundary connection between two distinct persistent boundary points, then it has a nontrivial nonloop return germ. In particular, no point in that orbit is germ-defining relative to $\mathcal{S}$.
\end{cor}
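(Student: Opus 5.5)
The plan is to produce the required nonloop cycle directly from the intra-tile connection, using the construction behind Proposition~\ref{prop:intratile-implies-isotropy}, and then quote Definition~\ref{def:generator-visible-singularities}(iii).

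Let $\gamma=(F,\xi)\in\mathcal{E}_{\mathcal{S}}$ be the intra-tile connection, with $\eta=F(\xi)\neq\xi$ and $\mathcal{T}_\eta=\mathcal{T}_\xi$. Since the infinite tile is connected and its internal paths realize every tail arrow, I choose an internal generator path from $\eta$ back to $\xi$. Its steps $(F_j,z_{j-1})$ all lie in $\T$, and its product is the unique tail arrow $\alpha^{-1}:\eta\to\xi$.

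Next I shorten this path to a simple one by cutting out loops at repeated vertices. Each excised closed subpath consists of AF steps, so by principality of $\T$ its product is an identity germ. The product of the whole path is therefore unchanged, and it still equals $\alpha^{-1}$. The resulting path has pairwise distinct vertices $\eta=z_1,\ldots,z_{m-1}$ and ends at $\xi$. It avoids $\xi$ before the end, because we may cut at the first visit to $\xi$.

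Now prepend the step $(F,\xi):\xi\to\eta$. This gives a closed generator path $z_0=\xi,z_1=\eta,\ldots,z_m=\xi$ whose vertices $z_0,\ldots,z_{m-1}$ are pairwise distinct. Since $\eta\ne\xi$, we have $m\geq2$, so this is a nonloop cycle in the sense of Definition~\ref{def:generator-visible-singularities}(ii).

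Its cycle germ is $\alpha^{-1}\gamma$. By Proposition~\ref{prop:intratile-implies-isotropy}, this germ is nontrivial. Hence the orbit has a nontrivial nonloop return germ, and by Definition~\ref{def:generator-visible-singularities}(iii) no point of the orbit is germ-defining relative to $\mathcal{S}$.

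The only delicate point is the simplification step. Removing loops must not alter the product, and this is exactly where principality of $\T$ is used: it makes every internal closed path evaluate to an identity germ.
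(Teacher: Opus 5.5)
Your proof is correct and follows the same route as the paper: a simple internal path from $\eta$ back to $\xi$, concatenated with $\gamma$, gives a nonloop cycle with germ $\alpha^{-1}\gamma$, which is nontrivial by Proposition~\ref{prop:intratile-implies-isotropy}. Your extra justification of the simplification step is valid, and could be shortened: any internal path from $\eta$ to $\xi$ has product in $\T$ with those endpoints, so principality alone forces that product to be $\alpha^{-1}$.
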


\begin{proof}
Let $\gamma:\xi\to\eta$ be the intra-tile boundary connection. Since the common infinite tile is connected, choose a simple internal $\mathcal{S}$-path from $\eta$ back to $\xi$. Its product is the unique AF arrow $\alpha^{-1}:\eta\to\xi$, where $\alpha:\xi\to\eta$ is the tail arrow. Concatenating this simple internal path with $\gamma$ gives a nonloop cycle, and its cycle germ is $\alpha^{-1}\gamma$, which is nontrivial by Proposition~\ref{prop:intratile-implies-isotropy}.
\end{proof}

There is a second, topological classification of a singular point that is independent of the boundary profile and of the generator-visible classification above. We call a singular point $\xi$ \emph{Hausdorff} if no nontrivial isotropy germ at $\xi$ is accumulated by identity germs, and \emph{non-Hausdorff} otherwise. This terminology concerns separation of germs near the singular point. It does not assert that the whole arrow space of $\G$ is Hausdorff. Following \cite{nekrash18,nek22}, a singular point is \emph{purely non-Hausdorff} if, for every nontrivial $h\in\G_\xi^\xi$ and every local bisection representative $U\ni h$, the interior of the fixed-point set of the induced partial homeomorphism $\theta_U$ accumulates on $\xi$. Conjugation by an arrow preserves these properties. Hence, the Hausdorff, non-Hausdorff but not purely non-Hausdorff, and purely non-Hausdorff types are orbit invariants.

This separation type is not determined by the abstract group $H_\xi$. The same finite group can occur as a Hausdorff group of germs in one action and as a purely non-Hausdorff group of germs in another. For the index calculation, the abstract abelianization enters the relative group. For constructions of localized elements in a prescribed acting subgroup, as in \cite{kua26a,kua26b}, the separation type carries additional information. None of these separation conditions alone guarantees the required localized homeomorphisms. The cancellation in $\F(\G)$ in Section~\ref{sec:constructive-kernel} will instead use the marked transpositions of Lemma~\ref{lemma:marked-transposition}.

\subsection{Nested isolation and persistent boundary data}\label{subsec:persistent-data}
Nested clopen neighborhoods isolate a persistent germ at arbitrarily deep levels and separate the boundary points involved in a return.

\keepwithstatement
\begin{prop}
\label{prop:nested-isolation}
Let $\gamma\in\G\setminus\T$ have source $\xi$, and let $V$ be a compact open bisection containing $\gamma$. Then there exists $n_0$ such that for every $n\geq n_0$ there is a clopen neighborhood $U_n\subseteq[\xi_n]\cap\sg(V)$ with $U_{n+1}\subseteq U_n$, $\xi\in U_n$, and
$(V|_{U_n})\cap(\G\setminus\T)=\{\gamma\}.$ If finitely many distinct persistent boundary points are involved in one return calculation, then the neighborhoods may be chosen pairwise disjoint after increasing the level.
\end{prop}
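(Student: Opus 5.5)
The plan is to reduce everything to Lemma~\ref{lemma:finite-exceptional-support} and the fact that cylinders form a neighbourhood basis of the Cantor space. Since $V$ is a compact open bisection, the set $V\setminus\T$ is finite, say $\{\gamma,\gamma_1,\ldots,\gamma_r\}$. The source map is injective on $V$, so the sources $\sg(\gamma_i)$ are points of $\Omega(\mathsf{B})$ different from $\xi$. Because $\Omega(\mathsf{B})$ is Hausdorff and the cylinders $[\xi_n]$ shrink to $\xi$, there is $n_1$ with $\sg(\gamma_i)\notin[\xi_n]$ for all $i$ and all $n\geq n_1$. Since $\sg(V)$ is an open neighbourhood of $\xi$, there is also $n_2$ with $[\xi_n]\subseteq\sg(V)$ for $n\geq n_2$. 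Take $n_0=\max(n_1,n_2)$ and put $U_n=[\xi_n]$ for $n\geq n_0$.

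These sets are clopen, nested because $[\xi_{n+1}]\subseteq[\xi_n]$, and contain $\xi$. The restriction $V|_{U_n}=V\cap\sg^{-1}(U_n)$ is a compact open bisection. Its exceptional arrows are exactly the elements of $V\setminus\T$ whose source lies in $U_n$, and by the choice of $n_1$ only $\gamma$ qualifies. This gives $(V|_{U_n})\cap(\G\setminus\T)=\{\gamma\}$. One could replace $U_n$ by smaller clopen sets, but cylinders already suffice, so no further choice is needed.

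For the second assertion, let $x_1,\ldots,x_s$ be the finitely many distinct persistent boundary points involved in the return calculation, each carrying its own germ $\gamma^{(j)}$ and bisection $V^{(j)}$. Since the $x_j$ are distinct, their infinite paths differ at some finite coordinate. So there is a level $n_3$ beyond which the prefixes $(x_j)_n$ are pairwise distinct, and then the cylinders $[(x_j)_n]$ are pairwise disjoint: two cylinders of the same length are either equal or disjoint. Raising the common level to the maximum of $n_3$ and the individual thresholds $n_0^{(j)}$, and taking $U^{(j)}_n=[(x_j)_n]$, keeps every property from the first part.

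Nothing here is a real obstacle. The only point needing care is that the isolation comes from injectivity of $\sg$ on $V$: the finitely many other exceptional arrows have sources different from $\xi$, so shrinking the source cylinder removes them. This is the same mechanism as in the proof of Lemma~\ref{lemma:finite-exceptional-support}, and Hausdorffness of the arrow space is never used.
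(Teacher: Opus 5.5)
Your argument is correct and is essentially the paper's proof. Both use the finiteness of $V\setminus\T$ from Lemma~\ref{lemma:finite-exceptional-support}, injectivity of $\sg$ on $V$, and the facts that the cylinders $[\xi_n]$ shrink to $\xi$ and that deep cylinders separate distinct paths; the only cosmetic difference is that you take $U_n=[\xi_n]$ directly, while the paper first picks a clopen neighborhood avoiding the other exceptional sources and then intersects it with the cylinders $[\xi_n]$.
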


\begin{proof}
By Lemma~\ref{lemma:finite-exceptional-support}, the compact bisection $V$ has only finitely many exceptional arrows. Choose a clopen source neighborhood of $\xi$ avoiding the sources of all other exceptional arrows of $V$, and intersect it with successively deeper cylinders $[\xi_n]$. After a sufficiently deep level, every restriction $V|_{U_n}$ is therefore an isolating bisection for $\gamma$ in the sense of Definition~\ref{def:isolating-bisection}. Distinct infinite paths lie in disjoint sufficiently deep cylinders, and thus the same construction can be made simultaneously with pairwise disjoint neighborhoods for any finite collection of persistent boundary points.
\end{proof}

Hence, a nontrivial boundary return may be tested repeatedly on nested finite levels. The distinct persistent boundary points involved in a return lie in disjoint sufficiently deep cylinders, while their common prefixes record the levels at which they are still indistinguishable. This topological isolation is used later in the trajectory interpretation of the index.

\smallskip
The boundary profile and the quotient graph do not by themselves determine the groupoid. We will use the following package.

\begin{defn}[Persistent boundary data]
\label{def:persistent-boundary-data}
For an exceptional orbit $\mathcal{O}$, its \emph{persistent boundary data relative to the fixed presentation $(\mathsf{B},\mathcal{S})$} consist of:
\begin{enumerate}
\item the infinite tiles $\mathcal{T}_1,\ldots,\mathcal{T}_k$ and the marked sets $B_i$ of persistent boundary points;
\item the directed labeled multigraph $Q_{\mathcal{S}}(\mathcal{O})$, including the source and range boundary point of every edge;
\item the groups of germs at the boundary points and their Hausdorff, non-Hausdorff, or purely non-Hausdorff type;
\item the relations represented by boundary returns;
\item the compatible restrictions at finite levels of the persistent boundary connections.
\end{enumerate}
\end{defn}

The first two items are geometric. The third and fourth record the isotropy and its relations. The last item records how the same marked boundary configuration persists through deeper finite tiles and is the input for arguments with sections along paths. The package depends on the chosen compatible presentation. The groups of germs are intrinsic to $\G$ up to the basepoint identifications below, whereas the relative homology classes belong to the fixed pair $(\G,\T)$. We will distinguish these objects from their presentation-dependent coordinates.

Under the finite singular germ condition of \cite[Definition~3.5]{kua26a}, every exceptional orbit has one persistent boundary point, that point is germ-defining, and its group of germs is finite. That condition also requires a decomposition into finitary and singular generators, with each singular generator fixing its unique exceptional source and having AF germs everywhere else. Beyond that case, persistent boundary data may exhibit regular transport between distinct infinite tiles, several persistent boundary points on one tile, several tiles in one exceptional orbit, nontrivial return germs, or infinite groups of germs. Section~\ref{sec:germs} takes the boundary data constructed here and turns these return germs into a finite generating system for the isotropy groups, while separating finite from infinite germ behavior.

\section{Boundary returns and finite or infinite groups of germs}
\label{sec:germs}

We retain the finite family $\mathcal{S}$ and the compatible tile presentation from Section~\ref{sec:boundary}. Boundary returns give finite generating sets for the groups of germs. We then distinguish the resulting word length from two lengths attached to a section: the prefix length at which it is taken and its additional finitary depth.

In the rooted-tree setting, a theorem of Juschenko--Nekrashevych--de la Salle gives local finiteness from bounded finitary depth. The converse fails. With a common invariant clopen basis, finiteness of the group of germs is instead equivalent to finiteness of the image of a sufficiently deep restriction homomorphism.

\subsection{Finite generation from the boundary quotient}
Fix an exceptional orbit $\mathcal{O}$ and choose a persistent boundary point $x_0\in\partial_\infty(\mathsf{B},\mathcal{S})\cap\mathcal{O}$ in the sense of Definition~\ref{def:persistent-boundary-point}. Put $H_{\mathcal{O}}=\G_{x_0}^{x_0}$.
If $x\in\mathcal{O}$ and $p_x:x_0\to x$ is any arrow, then
\begin{equation*}
\G_x^x\longrightarrow H_{\mathcal{O}},\qquad h\longmapsto p_x^{-1}hp_x,
\end{equation*}
is an isomorphism. Replacing $p_x$ changes this identification by an inner automorphism of $H_{\mathcal{O}}$. The group itself therefore depends on the chosen basepoint only up to isomorphism. The induced identifications on its abelianization and on group homology with trivial coefficients are independent of the connecting arrow, since inner automorphisms act trivially on these groups. Here trivial coefficients means that $H_{\mathcal{O}}$ acts trivially on the coefficient group. We use the single group $H_{\mathcal{O}}$ to express return germs based at different persistent boundary points in common coordinates.

\begin{defn}[Spanning tree and fundamental boundary cycle]
\label{def:spanning-tree-fundamental-cycle}
A \emph{spanning tree} of $Q_{\mathcal{S}}(\mathcal{O})$ is a spanning tree of the underlying undirected multigraph obtained by deleting loops, identifying each pair of edges inverse to each other as one edge, and forgetting orientations.

Let $e\colon \mathcal{T}_{\xi}\to \mathcal{T}_{F(\xi)}$ be a boundary edge. Suppose first that $e$ is not a loop and that its corresponding undirected edge is not contained in the chosen spanning tree. There is then a unique path in the spanning tree from the range tile $\mathcal{T}_{F(\xi)}$ back to the source tile $\mathcal{T}_{\xi}$. The \emph{fundamental boundary cycle} associated with $e$ is the closed path in the orbital graph obtained by traversing $e$, then following in the required orientations the boundary edges corresponding to this path in the spanning tree, and inserting the unique $\T$-arrow inside each tile whenever the actual range of one boundary edge differs from the actual source of the next. The final $\T$-arrow returns to $\xi$. If $e$ is a loop, its fundamental boundary cycle consists of $e$ followed by the unique $\T$-arrow from its actual range $F(\xi)$ back to its actual source $\xi$.
\end{defn}

Choose one representative point in each tile. Starting with the tile containing $x_0$, use the chosen spanning tree to choose connectors from $x_0$ to these representatives. At each step, use the marked boundary endpoints of the relevant boundary edge and the unique $\T$-arrows inside the tiles. Extend the connectors to all points of each tile by the unique $\T$-arrows. With these choices, the normalized return of every boundary edge whose corresponding undirected edge belongs to the chosen spanning tree is the identity. For every loop and every boundary edge whose corresponding undirected edge is not contained in the spanning tree, the normalized return is the germ represented by its fundamental boundary cycle.
\keepwithstatement
\begin{prop}
\label{prop:germ-fg}
For every exceptional orbit $\mathcal{O}$, the group $H_{\mathcal{O}}$ is finitely generated. Choose a spanning tree of $Q_{\mathcal{S}}(\mathcal{O})$ and the connectors constructed above. For every loop $e$, and for every boundary edge $e\colon \mathcal{T}_{\xi}\to \mathcal{T}_{F(\xi)}$ whose corresponding undirected edge is not contained in the chosen spanning tree, let $h_e=p_{F(\xi)}^{-1}(F,\xi)p_{\xi}\in H_{\mathcal{O}}$ be the normalized return of the corresponding exceptional generator arrow. These elements generate $H_{\mathcal{O}}$. Since two boundary edges represented by $(F,\xi)$ and $(F^{-1},F(\xi))$ have normalized returns that are inverse to each other, it is enough to choose one edge from each pair of edges inverse to each other.
\end{prop}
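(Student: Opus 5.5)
The plan is to combine Proposition~\ref{prop:return-criterion} with a telescoping rewrite of boundary returns along the chosen connectors. By that proposition, every $h\in H_{\mathcal{O}}=\G_{x_0}^{x_0}$ is the product of a boundary return
\[
h=\alpha_m\gamma_m\alpha_{m-1}\cdots\alpha_1\gamma_1\alpha_0,
\]
where $\gamma_j:\xi_j\to\eta_j$ lie in $\mathcal{E}_{\mathcal{S}}$ and the $\alpha_j$ are tail arrows. We may take each $\gamma_j$ to be a generator germ $(F_j,\xi_j)$, since the proof of Proposition~\ref{prop:return-criterion} produces exactly such factors. Finiteness of the proposed generating set follows from Proposition~\ref{prop:finite-boundary-quotient}, because $Q_{\mathcal{S}}(\mathcal{O})$ has finitely many edges. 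So finite generation reduces to showing that the listed $h_e$ generate.

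The key step is to insert $p_zp_z^{-1}$ at each intermediate point. This rewrites $h$ as the product of the normalized returns $p_{\rg(\beta)}^{-1}\beta p_{\sg(\beta)}$ of its successive factors $\beta$, using $p_{x_0}=(\Id,x_0)$ at the two ends. This is legitimate because the connectors were constructed starting from the tile of $x_0$ with the representative chosen there. If that representative is not $x_0$ itself, we extend by the unique tail arrow, so $p_{x_0}$ is still the identity.

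For each tail factor $\alpha:u\to v$ with $u,v$ in the same tile, the connectors were extended within a tile by the unique $\T$-arrows. Hence $p_v=\alpha' p_u$, where $\alpha'$ is the tail arrow $u\to v$. By principality of $\T$, $\alpha'=\alpha$, so the normalized return of $\alpha$ is trivial. Each exceptional factor $(F_j,\xi_j)$ is a boundary edge of $Q_{\mathcal{S}}(\mathcal{O})$, and there are three cases:
\begin{enumerate}[label=(\roman*)]
\item If its undirected edge lies in the spanning tree, its normalized return is the identity, by the construction stated before the proposition.
\item If it is a loop or a nontree edge, its normalized return is $h_e$ by definition.
\item If it is the inverse of a chosen edge, its normalized return is the inverse of the corresponding $h_e$.
\end{enumerate}
The last case uses the identity $p_\xi^{-1}(F^{-1},F(\xi))p_{F(\xi)}=\bigl(p_{F(\xi)}^{-1}(F,\xi)p_\xi\bigr)^{-1}$, which is immediate from $(F,x)^{-1}=(F^{-1},F(x))$. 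Hence $h$ is a word in the $h_e^{\pm1}$, and one edge from each inverse pair suffices.

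The main obstacle is step (i): making sure the normalized returns of tree edges really vanish. This needs some care with the orientation and with the marked endpoints. The connector to a new tile $\mathcal{T}'$ reached from an already treated tile $\mathcal{T}$ via a tree edge $(F,\xi):\xi\to\eta$ is $p_\eta=(F,\xi)p_\xi$, extended inside $\mathcal{T}'$ by tail arrows. Therefore $p_\eta^{-1}(F,\xi)p_\xi$ is trivial. Several parallel edges may correspond to the same tree edge, and only the edge actually used to build the connector is trivialized. I would therefore read the spanning tree as selecting a specific labeled edge. Any other parallel edge is then treated as a nontree edge, which is the convention implicit in Definition~\ref{def:spanning-tree-fundamental-cycle}. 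Finally, the fundamental-cycle description of $h_e$ follows by unwinding the connectors along the tree path.
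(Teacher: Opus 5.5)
Your proposal is correct and follows the paper's own proof: it invokes Proposition~\ref{prop:return-criterion}, then substitutes $\gamma_j=p_{\eta_j}h_{\gamma_j}p_{\xi_j}^{-1}$ so that the connector terms telescope, and handles tail factors, tree edges, and inverse pairs exactly as you describe. Your remark on parallel edges matches the paper's convention that the spanning tree is taken in the underlying multigraph, so only the specific labeled edge used to build a connector has trivial normalized return.
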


\begin{proof}
For every exceptional generator arrow $\gamma\colon x\to y$ in $\mathcal{O}$, put $h_{\gamma}=p_y^{-1}\gamma p_x\in H_{\mathcal{O}}$. By the choice of the connectors, $h_{\gamma}=(\Id,x_0)$ whenever the corresponding undirected edge belongs to the chosen spanning tree.

Let $h\in H_{\mathcal{O}}$. By Proposition~\ref{prop:return-criterion}, the germ $h$ is represented by a boundary return based at $x_0$. Write this return as $\alpha_m\gamma_m\alpha_{m-1}\cdots\alpha_1\gamma_1\alpha_0$, where each $\gamma_j\colon \xi_j\to\eta_j$ is an exceptional generator arrow and each $\alpha_j$ belongs to $\T$. The connectors were extended inside each tile by the unique arrows of $\T$. Hence, whenever an arrow of $\T$ joins two consecutive boundary endpoints, the connector at its range is obtained by composing that arrow with the connector at its source. Substituting $\gamma_j=p_{\eta_j}h_{\gamma_j}p_{\xi_j}^{-1}$ into the boundary return makes all connector terms cancel. Therefore, $h=h_{\gamma_m}\cdots h_{\gamma_1}$. Thus $H_{\mathcal{O}}$ is generated by the normalized returns of the exceptional generator arrows.

If the corresponding undirected edge belongs to the chosen spanning tree, its normalized return is the identity. Every remaining boundary edge is either a loop or has a corresponding undirected edge not contained in the spanning tree. There are only finitely many such edges since $Q_{\mathcal{S}}(\mathcal{O})$ is finite. If two boundary edges are represented by $(F,\xi)$ and $(F^{-1},F(\xi))$, their normalized returns are inverse to each other. Hence one edge from each pair of edges inverse to each other suffices.

For every loop and every boundary edge whose corresponding undirected edge is not contained in the spanning tree, the associated fundamental boundary cycle gives a closed path in the orbital graph. The isotropy germ represented by that closed path, conjugated to the basepoint $x_0$ by the chosen connector, is exactly the corresponding element $h_e$.
\end{proof}

\keepwithstatement
\begin{cor}
\label{cor:compact-generation-germs}
Suppose that $\T\subseteq\G$ is AF-by-discrete and that $\G$ is compactly generated. Then there are only finitely many exceptional $\G$-orbits, every exceptional $\G$-orbit contains only finitely many $\T$-orbits, and every group of germs is finitely generated. The groups of germs on nonexceptional orbits are trivial.
\end{cor}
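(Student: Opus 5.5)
The plan is to reduce everything to the finite-presentation results of Section~\ref{sec:boundary} and to Proposition~\ref{prop:germ-fg}. Compact generation provides a finite inverse-closed family $\mathcal{S}$ of compact open bisections generating $\G$ in the sense of Definition~\ref{def:compact-generation}. Every arrow lies in a finite composable product of elements of $\mathcal{S}$. I would therefore work with the finite set of exceptional generator arrows
\begin{equation*}
\mathcal{E}_{\mathcal{S}}=\bigcup_{F\in\mathcal{S}}(F\setminus\T).
\end{equation*}
This set is finite by Lemma~\ref{lemma:finite-exceptional-support}, since each $F$ is compact and $\mathcal{S}$ is finite. The argument should use only this bisection-level finiteness, so that it does not rely on the tile presentation.

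First, every exceptional orbit meets the finite set $\sg(\mathcal{E}_{\mathcal{S}})\cup\rg(\mathcal{E}_{\mathcal{S}})$, exactly as in the last step of Proposition~\ref{prop:finite-persistent-boundary}. Indeed, an exceptional arrow is a product of generator germs, and since $\T$ is a subgroupoid, at least one factor is exceptional. Hence there are only finitely many exceptional orbits. Second, for each exceptional orbit $\mathcal{O}$, every $\T$-orbit in $\mathcal{O}$ contains an endpoint of some element of $\mathcal{E}_{\mathcal{S}}$. To see this, take $y\in\mathcal{O}$ and an exceptional arrow with endpoints in $\mathcal{O}$. Join $y$ to its source by a generator word. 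If all factors of that word were in $\T$, then $y$ would already be $\T$-equivalent to an endpoint of an exceptional generator. Otherwise, the first exceptional factor of the word has its source in $\T y$. This gives finiteness of $\mathcal{T}(\mathcal{O})$, in line with Proposition~\ref{prop:finite-boundary-quotient}.

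Third, finite generation of $H_{\mathcal{O}}$ follows from the connector argument in the proof of Proposition~\ref{prop:germ-fg}. Fix a basepoint $x_0$ and connectors $p_z$ from $x_0$ to each point of the finitely many $\T$-orbits, extended inside each $\T$-orbit by the unique tail arrows. Write an isotropy element as a generator word and collapse its maximal AF blocks using principality of $\T$. The result expresses the element as a product of the normalized returns $p_{\rg(\gamma)}^{-1}\gamma\,p_{\sg(\gamma)}$, one for each exceptional factor $\gamma\in\mathcal{E}_{\mathcal{S}}\cap\G|_{\mathcal{O}}$. Because $\mathcal{E}_{\mathcal{S}}$ is finite, these finitely many normalized returns generate $H_{\mathcal{O}}$.

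Finally, on a nonexceptional orbit, every arrow is a product of generator germs whose endpoints stay in that orbit. None of these germs is exceptional, since an exceptional one would make the orbit exceptional. Every arrow of the orbit therefore lies in $\T$, and principality of $\T$ makes the isotropy trivial. The main point needing care is that the tile-presentation hypotheses of Section~\ref{sec:boundary}, such as connected compatible tiles, are not assumed in the corollary. I would therefore phrase every step, as above, directly through $\mathcal{E}_{\mathcal{S}}$ and principality of $\T$, instead of invoking the boundary quotient.
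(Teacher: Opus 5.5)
Your proposal is correct and follows the paper's proof essentially step for step: finiteness of $\mathcal{E}_{\mathcal{S}}$ from Lemma~\ref{lemma:finite-exceptional-support}, and the first non-AF step of a generator word to bound both the exceptional orbits and the tail orbits inside each. The rest also matches: telescoping connectors give finitely many normalized returns as generators, and principality of the AF core handles nonexceptional orbits. One small tightening is needed in your second step: choose the exceptional arrow in $\mathcal{E}_{\mathcal{S}}$ (possible by your first step), so that its source really is an endpoint of an exceptional generator.
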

\begin{proof}
Choose a finite symmetric family $\mathcal{S}$ of compact open bisections generating $\G$. By~\eqref{eq:ES} and Lemma~\ref{lemma:finite-exceptional-support}, the exceptional generator set $\mathcal{E}_{\mathcal{S}}=\bigcup_{F\in\mathcal{S}}(F\setminus\T)$ is finite. Every exceptional $\G$-orbit contains the source or range of an arrow in $\mathcal{E}_{\mathcal{S}}$: represent an exceptional arrow by a word in $\mathcal{S}$ and take the first non-AF step. Hence there are only finitely many exceptional $\G$-orbits.

Fix one such orbit. Every $\T$-orbit inside it also meets the source or range of $\mathcal{E}_{\mathcal{S}}$. Indeed, join a point of the chosen $\T$-orbit to an exceptional source by a word in $\mathcal{S}$. If the word lies entirely in $\T$, the exceptional source is in the original $\T$-orbit. Otherwise, the source of the first non-AF step is still in that $\T$-orbit and belongs to the source or range set of $\mathcal{E}_{\mathcal{S}}$. Since this set is finite, only finitely many $\T$-orbits occur.

Choose one representative in each of these $\T$-orbits and connectors from a basepoint to the representatives, and extend the connectors using the unique arrows of $\T$. Normalizing a word representing an isotropy arrow kills every AF generator step and expresses the isotropy arrow as a product of the finitely many normalized returns associated with $\mathcal{E}_{\mathcal{S}}$. Thus every exceptional group of germs is finitely generated. On a nonexceptional orbit every arrow belongs to the principal groupoid $\T$, and thus the isotropy is trivial.
\end{proof}

Let $R_{\mathcal{O}}$ be the finite symmetric generating set of $H_{\mathcal{O}}$ obtained from the normalized returns in Proposition~\ref{prop:germ-fg} by adjoining their inverses. The \emph{boundary word length} $\ell_\partial(h)$ is the least number of letters of $R_{\mathcal{O}}$ whose product is $h$. Prefix length and finitary depth are separate quantities, defined for section representatives below.

\subsection{Sections and finitary depth on Bratteli cylinders}
\label{subsec:general-sections}
For $v\in V_{n+1}$, put
\[
\Omega_v^{(n)}(\mathsf{B})=\{e_{n+1}e_{n+2}\cdots:\mathbf{s}(e_{n+1})=v\}.
\]
Let $p\in\Omega_n(\mathsf{B})$ be such that $\mathbf{r}(p)=v$, the \emph{prefix chart} is
\[
\kappa_p:\Omega_v^{(n)}(\mathsf{B})\longrightarrow[p],\qquad z\longmapsto pz.
\]

\begin{defn}[Section in prefix charts]
\label{def:bratteli-section}
Let $f$ be the partial homeomorphism induced by a bisection of $\G$, and let $p,q\in\Omega_n(\mathsf{B})$. The \emph{section of $f$ in the prefix charts $(p,q)$} is the partial map
\begin{equation}
\label{eq:bratteli-section}
f_{p,q}=\kappa_q^{-1}f\kappa_p,\qquad
\Dom(f_{p,q})=\{z:pz\in\Dom(f),\ f(pz)\in[q]\}.
\end{equation}
\end{defn}
We use only nonempty sections. For a compact open bisection, the source and range of a section are clopen in the corresponding path spaces.

A general section is therefore a partial map from $\Omega_{\mathbf{r}(p)}^{(n)}(\mathsf{B})$ to $\Omega_{\mathbf{r}(q)}^{(n)}(\mathsf{B})$. There may be several target prefixes $q$ meeting $f([p])$. If $u,v$ are paths of length $k$ such that $pu$ and $qv$ are defined, further restriction gives $(f_{p,q})_{u,v}=f_{pu,qv}$. Likewise, $(fh)_{p,r}=f_{q,r}h_{p,q}$ on the domain on which the intermediate image under $h$ has prefix $q$. These are identities of partial maps with the indicated domains. If $f([p])=[q]$ and both cylinders lie in the relevant domains, then the section is a homeomorphism from $\Omega_{\mathbf{r}(p)}^{(n)}(\mathsf{B})$ to $\Omega_{\mathbf{r}(q)}^{(n)}(\mathsf{B})$ and may be written $f|_p$ once $q$ is understood. For an arbitrary partial section, we retain both chart indices.

We extend the convention for finite prefixes of \cite[Subsection~4.1]{JNS16} to the prefix charts above.

\begin{defn}[Finitary depth]
\label{def:finitary-depth}
A compact open section $\sigma=f_{p,q}$ has \emph{finitary depth at most $d$} if its domain and range admit finite partitions into cylinder sets determined by paths of length $d$ such that on each piece
\begin{equation}
\label{eq:partial-finitary-depth}
\sigma(uz)=vz,\qquad |u|=|v|=d,\quad \mathbf{r}(pu)=\mathbf{r}(qv).
\end{equation}
Here $z$ is an arbitrary infinite path beginning at the common terminal vertex. The \emph{finitary depth} $d_{\mathrm{fin}}(\sigma)$ is the least such $d$, or $\infty$ if no such finite table exists.
\end{defn}

Depth is counted from the chosen charts $(p,q)$. If $|p|=|q|=n$, a table of prefix replacements giving depth at most $d$ uses total prefix length $n+d$ for $f$. For a partial section, the depth includes the refinement needed to resolve its clopen source and range. The prefix length $n$ records the location of the section. The additional depth $d$ records the complexity of the local AF action after that location has been removed.

\begin{lemma}
\label{lem:finitary-section-AF}
A compact open section has finite finitary depth if and only if the corresponding restricted bisection of $\G$ is contained in $\T$.
\end{lemma}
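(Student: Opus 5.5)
The plan is to translate between the finite prefix-replacement table of a section and the prefix-replacement bisections $[q',p']$ that generate $\T=\bigcup_n\mathfrak{T}_{\mathsf{B},n}$. Throughout, let $\sigma=f_{p,q}$ with $|p|=|q|=n$. The corresponding restricted bisection of $\G$ is $W=U|_{\kappa_p(\Dom\sigma)}$, where $U$ is the compact open bisection inducing $f$. Thus $W$ consists of the germs $(f,pz)$ with $z\in\Dom(\sigma)$. By effectiveness, these germs are determined by the induced partial homeomorphism.

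\emph{Finite depth implies containment in $\T$.} Suppose $\sigma$ has a depth-$d$ table as in~\eqref{eq:partial-finitary-depth}. On each piece $[pu]$ we then have $f(puz)=qvz$, and $\mathbf{r}(pu)=\mathbf{r}(qv)$. This is exactly the prefix replacement $[qv,pu]$ at level $n+d$, restricted to $[pu]$. Hence every germ of $W$ with source in $[pu]$ coincides with a germ of $[qv,pu]\subseteq\mathfrak{T}_{\mathsf{B},n+d}\subseteq\T$. The pieces cover $\Dom(\sigma)$, so $W\subseteq\T$.

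\emph{Containment in $\T$ implies finite depth.} Suppose $W\subseteq\T$. For each $x\in\sg(W)$, the germ of $W$ at $x$ lies in some $\mathfrak{T}_{\mathsf{B},m}$. Therefore there is a cylinder $[x_m]$ on which $W$ agrees with a single prefix replacement $[q',x_m]$. By effectiveness, agreement of germs gives agreement of the partial maps on a neighbourhood, and after deepening $m$ we may take that neighbourhood to be a cylinder. We may also arrange $m\ge n$, and deepen further so that $[x_m]\subseteq\sg(W)$; this is possible because $\sg(W)$ is clopen. The cylinders obtained this way cover the compact set $\sg(W)$. Take a finite subcover and let $M$ be the maximum level appearing in it.

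Refining every cylinder in the subcover to level $M$ preserves the prefix-replacement form: if $[q',p']$ acts on $[p']$ and $p'w$ is a refinement, then $[q',p']$ acts on $[p'w]$ as $[q'w,p'w]$. The refined pieces of $\kappa_p(\Dom\sigma)$ all begin with $p$, and their images lie in $[q]$. So each piece has the form $[pu]$ with $|u|=M-n$, its image is $[qv]$, and $\mathbf{r}(pu)=\mathbf{r}(qv)$. The images $[qv]$ partition $\Ran(\sigma)$ because $f$ is injective. Hence $d=M-n$ is a valid finitary depth, and $d_{\mathrm{fin}}(\sigma)<\infty$.

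The main point requiring care is the passage from germs to maps in the second direction. Membership of a germ in $\T$ means only that some tail bisection has the same germ. I will use effectiveness, together with the fact that germs agree exactly when the maps coincide near the source point, to obtain agreement on an entire cylinder. The rest is the compactness argument above.
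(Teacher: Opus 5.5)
Your proposal is correct and follows the paper's proof: a depth-$d$ table is a finite union of level-$(n+d)$ prefix replacements, and conversely compactness gives a finite cover of $\sg(W)$ by cylinders on which $W$ is a prefix replacement, which you then refine to one common level $M$. Your version spells out what the paper leaves implicit: the passage from germs to maps, which could equally be done without effectiveness since two open bisections sharing an arrow coincide near it, and the check that the refined pieces have the form $[pu]\to[qv]$ with $|u|=|v|=M-n$.
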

\begin{proof}
A table of the form~\eqref{eq:partial-finitary-depth} is a finite union of prefix replacements with unchanged infinite continuation, hence consists of tail arrows and lies in $\T$. Conversely, let the restricted bisection lie in $\T$. Compactness gives a finite cover by elementary prefix-replacement bisections. Refining their source and range cylinder partitions to one common deeper level gives a table of the form~\eqref{eq:partial-finitary-depth}. Thus the section has finite depth.
\end{proof}

The common level of these prefix replacements in the converse can be deeper than the least elementary stage containing the arrows, since the required prefix replacement may itself depend on the continuation. In particular, an AF germ at one point becomes finitary only after restriction to a suitable compact open neighborhood. It need not make an initially chosen larger section finitary.

Fix $g\in\F(\G)$ and a persistent boundary point $\xi$. For every $n$, take $p=\xi_n$ and $q=(g(\xi))_n$. We may then pass to smaller source and range cylinders and take sections $g_{pu,qv}$, including sections whose source cylinders do not contain $\xi$. A \emph{uniform bound on the finitary sections along $\xi$}, for this chart family and this fixed element $g$, is a constant $D$ such that $d_{\mathrm{fin}}(g_{pu,qv})\leq D$ for every one of these sections that is finitary. The bound is independent of the initial prefix length and of the further choices $u,v$. If $(g,\xi)\notin\T$, no section containing that germ is finitary, although finitary sections may occur on cylinders arbitrarily close to $\xi$ that do not contain $\xi$.

For fixed source and target vertices at one level, and fixed depth, only finitely many partial tables~\eqref{eq:partial-finitary-depth} occur. Maps with different source and target charts, however, form only a finite collection of partial bijections. They are not one group unless the domains and ranges are identified and invariant. Uniform finitary depth alone therefore does not make taking sections a group homomorphism in an arbitrary Bratteli or inverse-semigroup model. The exact stabilization criterion later in this section needs a common invariant neighborhood basis.

\subsection{Sections on rooted trees and bounded finitary depth}
\label{subsec:rooted-depth}
Suppose that the action is by automorphisms of a rooted tree with level alphabets $E_1,E_2,\ldots$ of uniformly bounded cardinality. Its boundary is the path space of the Bratteli diagram having one vertex at each level and edge set $E_n$ at level $n$. An automorphism $g$ permutes prefixes at each level. Its section at a prefix $p$ is the unique automorphism of the rooted tree determined by the levels below $p$ satisfying $g(pz)=g(p)(g|_p)(z)$. Thus $g|_p=g_{p,g(p)}$ in Definition~\ref{def:bratteli-section}, and
\begin{equation}
\label{eq:rooted-section-rules}
g|_{pu}=(g|_p)|_u,\qquad (gh)|_p=g|_{h(p)}\,h|_p.
\end{equation}
For a fixed prefix, the section map is not a homomorphism on the whole acting group. It becomes a homomorphism on the subgroup fixing that prefix.

An automorphism $a$ of such a rooted tree is \emph{finitary of depth at most $d$} when $a|_u=1$ for every prefix $u$ of length $d$. This is exactly Definition~\ref{def:finitary-depth} in the rooted-tree charts: after the first $d$ letters, the continuation is left unchanged. The identity has depth zero. For an element of the full group, choose a finite clopen table of elements of the acting group. On sufficiently deep cylinders contained in its pieces, the same section notation is defined by the corresponding local action. Thus we may first take $g|_{\xi_n}$ for a long prefix and then a further section $(g|_{\xi_n})|_u$ that is finitary. Its finitary depth is counted \emph{after} the prefix $\xi_nu$ has been removed. It is neither $n$ nor $|u|$.

For a fixed rooted automorphism $g$, the bounded-depth condition used below is
\begin{equation}
\label{eq:uniform-finitary-sections}
\sup\{d_{\mathrm{fin}}(g|_p):p\text{ a finite prefix and }g|_p\text{ finitary}\}<\infty.
\end{equation}
It quantifies over finitary sections at all prefix lengths. By~\eqref{eq:rooted-section-rules}, the same bound applies to every finitary section of every $g|_{\xi_n}$. Different generators may have different bounds. A finite generating family gives their maximum. No bound over the entire full group is intended.

Recall also the \emph{bounded-activity} condition. Put $\alpha_n(g)=|\{p:|p|=n,\ g|_p\neq1\}|$. The automorphism $g$ is \emph{bounded} if $\sup_n\alpha_n(g)<\infty$. This controls the number of active sections at a level, whereas~\eqref{eq:uniform-finitary-sections} controls how much farther a finitary section can act. For a constant alphabet, $g$ is \emph{finite-state} if only finitely many distinct sections $g|_p$ occur. These are Definitions~4.5, 4.6, and 4.10 of \cite{JNS16}. The next proposition is the local-finiteness part of \cite[Proposition~4.9]{JNS16}. That proposition also derives amenability of the acting group, which is not needed here.

\begin{prop}
\label{prop:jns-depth}
Let $G$ act on a rooted tree with $\sup_i|E_i|<\infty$ and be generated by bounded automorphisms. Assume that, for every generator, the depths of all of its finitary sections are uniformly bounded. Then every group of germs on the boundary is locally finite.
\end{prop}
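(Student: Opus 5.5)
To prove Proposition~\ref{prop:jns-depth}, we show that every finitely generated subgroup of a group of germs $H_\xi=G_\xi/G_{(\xi)}$ is finite. Let $g_1,\dots,g_r\in G_\xi$ represent the finitely many germs. Each $g_i$ is a word in the bounded generators, so we first record that boundedness and the uniform bound on finitary depths pass to finite products. For boundedness this follows from the section rule~\eqref{eq:rooted-section-rules}: $(gh)|_p=g|_{h(p)}h|_p$ is nontrivial only if $h|_p\neq1$ or $g|_{h(p)}\neq1$, so $\alpha_n(gh)\le\alpha_n(g)+\alpha_n(h)$. For the depth bound, the plan is to argue as in \cite[Proposition~4.9]{JNS16}. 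A bounded automorphism has, at each level, only boundedly many active prefixes. The active prefixes that are not eventually finitary lie along finitely many ``directed'' infinite paths. Every other active section is finitary, of depth at most the generator bound $D$.

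The main step is to pass to sections along $\xi$. Put $S_n=\{g|_{\xi_n}: g\in\langle g_1,\dots,g_r\rangle\}$. Since every element fixes $\xi$, it fixes $\xi_n$, and the section map at $\xi_n$ is a homomorphism on this subgroup. We pick $n$ large enough that, for each generator word entering the $g_i$, the prefix $\xi_n$ separates $\xi$ from all other directed paths of all letters involved. After this, the section $g|_{\xi_n}$ of any letter acts, away from the single directed ray continuing $\xi$, by finitary pieces of depth $\le D$.

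The crux is to show that the image of $\langle g_1,\dots,g_r\rangle$ in germs at $\xi$ is determined by a finite amount of data. We intend to use the JNS ``sausage'' or rigid-subtree argument. Consider the sections of the letters along the ray $\xi$ from level $n$ on. Each letter either acts trivially near $\xi$ beyond some level, or its activity near $\xi$ consists of directed sections together with finitary sections of depth $\le D$ hanging off the ray. Any product of such letters then acts on the neighborhood $[\xi_m]$, for suitable $m$, through the permutations it induces on the finite set of vertices within distance $D$ of the ray. Because $\sup_i|E_i|<\infty$, the number of local configurations is bounded. Finite-state-type pigeonholing along the ray then shows that two elements with the same configuration data at a sufficiently deep level have the same germ at $\xi$. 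The germ group therefore embeds in a finite set of configurations, and so it is finite.

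The main obstacle is this last step. The germ is defined by behaviour on a whole neighborhood of $\xi$, not only at $\xi$. We must ensure that the depth bound, which applies only to finitary sections, actually controls the neighborhood uniformly across all products, and not just across single letters. We would first try to prove by induction on word length that every finitary section of a product has depth at most $D$. This uses the fact that a finitary section of depth $\le D$ composed with one of depth $\le D$ on the same cylinder has depth $\le D$. Directed sections only meet finitary ones along the finitely many rays. If that induction fails, we would instead cite the local-finiteness part of \cite[Proposition~4.9]{JNS16} directly, after checking that its hypotheses coincide with~\eqref{eq:uniform-finitary-sections} and bounded activity.
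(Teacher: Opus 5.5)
The paper does not prove this proposition itself; it quotes it as the local-finiteness part of \cite[Proposition~4.9]{JNS16}, so your fallback is exactly the paper's route. Your self-contained sketch, however, does not prove the statement. There are two concrete gaps.

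First, the induction you propose is false as stated. On the binary tree, let $f$ be finitary of depth exactly $D$. Let $g(0^m1w)=0^m1f(w)$ for all $m\geq0$. Let $h(0^m1w)=0^m1f^{-1}(w)$ for $m\neq M$, with $h$ the identity on $[0^M1]$, and let both fix $0^\omega$. Both maps are bounded and all their finitary sections have depth at most $D$. Yet $gh$ is supported on $[0^M1]$ and is itself finitary of depth $M+1+D$. Cancellation of directed sections along a ray thus creates finitary sections of unbounded depth.

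What is true, and is all you need, is a bound off an exceptional set. For a word $g=s_\ell\cdots s_1$ fixing $\xi$, consider the vertices $v$ such that some letter $s_j$ is applied at a non-finitary vertex $s_{j-1}\cdots s_1(v)$. These form a prefix-closed leafless subtree of bounded width, hence beyond some level a finite union of rays. At every other vertex, $g|_v$ is a product of finitary sections of depth at most $D$ (the maximum over the finitely many letters and their inverses), so it has depth at most $D$. Apply this to the representatives $g_1,\dots,g_r$, not to individual letters: the letters do not fix $\xi$, so their sections at $\xi_n$ are not values of a homomorphism.

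Second, and more seriously, the crux does not work. The set of vertices within distance $D$ of the ray is infinite. The configuration at one deep level does not determine the germ, which depends on all levels beyond some point. Since the elements need not be finite-state, there is nothing to pigeonhole. The missing idea is a product structure.

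For $n$ large, every $a=g_i|_{\xi_n}$ fixes the continuation ray $x_{n+1}x_{n+2}\cdots$ of $\xi$, and all its sections at vertices off that ray are finitary of depth at most $D$. At each level $k$, record the permutation of $a$ at $x_{n+1}\cdots x_k$ together with the depth-$D$ truncations of its sections at the other children. By~\eqref{eq:rooted-section-rules}, these data multiply in a finite permutational wreath product $F_k$. Its order is bounded independently of $k$ because $\sup_i|E_i|<\infty$. The automorphisms of this form make up a group isomorphic to $\prod_kF_k$, and those with trivial germ correspond exactly to $\bigoplus_kF_k$.

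Every finitely generated subgroup $K\leq\prod_kF_k$ is finite. The kernels of the coordinate maps $K\to F_k$ have uniformly bounded index, and a finitely generated group has only finitely many subgroups of bounded index. Their intersection is therefore a finite intersection with finite index, and it is trivial. Note that bounded exponent alone would not suffice here. Hence the image of $\langle g_1,\dots,g_r\rangle$ in $H_\xi$ is finite.
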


Together with Corollary~\ref{cor:compact-generation-germs}, this gives the following.

\keepwithstatement
\begin{cor}
\label{cor:bounded-depth-finite-germs}
Let $\G$ be the groupoid of germs of an action, generated by bounded automorphisms, on a rooted tree with uniformly bounded level alphabets. Suppose that $\G$ is AF-by-discrete and compactly generated. If, for every generator, the depths of its finitary sections are uniformly bounded, then all groups of germs are finite.
\end{cor}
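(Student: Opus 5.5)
The plan combines the two preceding results. By Corollary~\ref{cor:compact-generation-germs}, compact generation of the AF-by-discrete groupoid $\G$ gives two facts: every group of germs is finitely generated, and the groups of germs on nonexceptional orbits are trivial. By Proposition~\ref{prop:jns-depth}, the hypotheses give local finiteness of every group of germs on the boundary. These hypotheses are that the action is generated by bounded automorphisms, that the tree has uniformly bounded level alphabets, and that for each generator the finitary sections have uniformly bounded depth. A finitely generated locally finite group is finite, because the whole group is the subgroup generated by its finite generating set. Hence every group of germs is finite.

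The order of steps is as follows.

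First, fix a point $x$ of the boundary. If its orbit is nonexceptional, then $\G_x^x$ is trivial by Corollary~\ref{cor:compact-generation-germs}, and there is nothing to prove.

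Second, suppose the orbit of $x$ is exceptional. Then $\G_x^x\cong H_{\mathcal{O}}$ is finitely generated, again by Corollary~\ref{cor:compact-generation-germs}; concretely, it is generated by the normalized returns of Proposition~\ref{prop:germ-fg}.

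Third, check that the group of germs in Proposition~\ref{prop:jns-depth} is the same object as $\G_x^x$. The groupoid $\G$ is the groupoid of germs of the group action, so $\G_x^x\cong G_x/G_{(x)}$, as recalled in Subsection~\ref{subsec:germs-prelim}. This is exactly the group of germs to which Proposition~\ref{prop:jns-depth} applies.

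Fourth, conclude: a finitely generated locally finite group is finite.

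The only point requiring care is the third step. One must make sure that the generating set used for the bounded-depth hypothesis is the generating set of the acting group $G$. It is not, in general, the bisection family $\mathcal{S}$ used in Corollary~\ref{cor:compact-generation-germs}.

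Compact generation of $\G$ is, however, intrinsic: the corollary only needs some finite symmetric generating family of compact open bisections. If $G$ is finitely generated, the full bisections of its generators form such a family. Otherwise compactness supplies one, and the corollary's conclusion of finite generation of $H_{\mathcal{O}}$ does not depend on which family is chosen.

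Likewise, local finiteness in Proposition~\ref{prop:jns-depth} concerns the abstract group of germs. So the two conclusions can be combined with no compatibility issue between the different generating sets, and no further computation is needed.
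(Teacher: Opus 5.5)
Your proposal is correct and is essentially the paper's own argument: Proposition~\ref{prop:jns-depth} gives local finiteness, Corollary~\ref{cor:compact-generation-germs} gives finite generation, and a finitely generated locally finite group is finite. Your extra remarks on identifying the two notions of group of germs and on the choice of generating family are harmless. The finite family of bisections comes from the compact-generation hypothesis itself, not from compactness.
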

\begin{proof}
Proposition~\ref{prop:jns-depth} makes every group of germs locally finite, while Corollary~\ref{cor:compact-generation-germs} makes every group of germs finitely generated. A finitely generated locally finite group is finite.
\end{proof}

Here compact generation supplies finite generation of the groups of germs. The converse of Corollary~\ref{cor:bounded-depth-finite-germs} fails: a finite group of germs can have representatives with unbounded finitary depth on cylinders that do not contain the singular point.

In the examples, the \emph{$q$-ary odometer} on $\{0,\ldots,q-1\}^{\omega}$ adds one with the first letter read as the least significant digit: $a(iw)=(i+1)w$ for $i<q-1$, and $a((q-1)w)=0a(w)$. It sends $(q-1)^\omega$ to $0^\omega$. Its depth-$j$ truncation adds one modulo $q^j$ to the first $j$ digits and leaves the remaining continuation fixed. It therefore has order $q^j$ and finitary depth $j$.

\begin{exmp}[Unbounded depth with finite or infinite groups of germs]
\label{exmp:depth-counterexamples}
Let $\Omega(\mathsf{B})=\{0,1\}^{\omega}$, let $\xi=0^\omega$, and choose integers $N_j$ with $N_{j+1}>N_j+j+2$. Put $U_j=[0^{N_j}1]$. On $U_j$, let $b(0^{N_j}1z)=0^{N_j}1f_j(z)$, where $f_j$ is a finitary automorphism of the rooted tree below $0^{N_j}1$, and let $b$ be the identity outside the union of the $U_j$. Thus $b|_{0^{N_j}1}=f_j$: the section is taken at prefix length $N_j+1$, while its additional finitary depth will be $j$. The map $b$ fixes $\xi$ and has a nontrivial germ there since nontrivial supports occur in every neighborhood of $\xi$.

First, choose $f_j$ to interchange the two children of the single vertex $0^{j-1}$ and act trivially elsewhere. It is an involution of finitary depth $j$ with at most one nontrivial section at each level. Then $b^2=1$, and thus the cyclic group generated by the germ $(b,\xi)$ is $C_2$, although the finitary depths of the sections on the cylinders $U_j$ are unbounded. Alternatively, let $f_j$ be the binary adding machine truncated at depth $j$. It has order $2^j$ and again at most one nontrivial section at each level. For every $m>0$, the automorphisms $f_j^m$ are nontrivial for all sufficiently large $j$. Hence, $b^m$ is nontrivial on every neighborhood of $\xi$. The germ $(b,\xi)$ therefore has infinite order. The spacing condition on the $N_j$ ensures that at each level there is at most one nontrivial section outside the prefix of $\xi$, and thus $b$ is a bounded automorphism. Thus exactly the same unbounded-depth geometry is compatible with a finite group of germs and with an infinite cyclic group of germs.
\end{exmp}

Adjoin the binary odometer $a$. Every equal-length binary prefix replacement is locally a power of $a$, and thus the resulting groupoid contains the binary tail groupoid. The only non-AF generator germs are the carry from $1^\omega$ to $0^\omega$, its inverse, and the germ of $b$ at $0^\omega$ together with its inverse. The exceptional orbital graph consequently contains two infinite tiles. Choosing one edge from each pair of edges inverse to each other, its boundary quotient has one edge between the two tiles and one loop represented by $(b,0^\omega)$. Proposition~\ref{prop:germ-fg} then shows that the whole group of germs at $0^\omega$ is generated by $(b,0^\omega)$. It is respectively $C_2$ or $\mathbb{Z}$. Relative homology records the abelianized return germ, rather than the depths of the finitary sections on the cylinders $U_j$.

We finish the rooted-tree case with a criterion using the groups generated by the sections along a ray. The ray stabilizer fixes each prefix, and thus the section at that prefix defines a homomorphism.

\keepwithstatement
\begin{prop}
\label{prop:tail-stabilization}
Let $G$ act by automorphisms of a rooted tree. Let $\xi=v_1v_2\ldots$ be a boundary ray, and suppose that $H_\xi$ is generated by the germs of $r_1,\ldots,r_d\in G_\xi$. Put $Q=\langle r_1,\ldots,r_d\rangle$. For the length-$n$ prefix $\xi_n$, let $Q_n=\langle r_1|_{\xi_n},\ldots,r_d|_{\xi_n}\rangle$, where $Q_n$ acts on the whole rooted subtree below $\xi_n$. Then $H_\xi$ is finite if and only if $Q_N$ is finite for some $N$. If $H_\xi$ is finite, the restriction maps identify $Q_n$ with $H_\xi$ for all sufficiently large $n$.
\end{prop}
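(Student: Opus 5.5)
The plan is to realize $H_\xi$ as the direct limit of the groups $Q_n$ along surjective restriction maps, and then to use finiteness on either side to force that limit to stabilize.

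\emph{Section homomorphisms.} Since every element of $G_\xi$ fixes each prefix $\xi_n$, the rule~\eqref{eq:rooted-section-rules} shows that $\rho_n\colon G_\xi\to\operatorname{Aut}(\text{subtree below }\xi_n)$, $g\mapsto g|_{\xi_n}$, is a homomorphism. By definition $Q_n=\rho_n(Q)$. Every element of $Q_n$ is a product of sections of the $r_i$, which fix $\xi$, so it fixes the letter $v_{n+1}$. Hence taking the section at $v_{n+1}$ gives a homomorphism $\pi_n\colon Q_n\to Q_{n+1}$. By $g|_{pu}=(g|_p)|_u$ it satisfies $\pi_n\rho_n=\rho_{n+1}$ on $Q$, and it is therefore surjective.

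\emph{Germs as the limit.} The germ map $Q\to H_\xi$ is surjective by hypothesis. An element $q\in Q$ has trivial germ at $\xi$ exactly when $q$ is the identity on some cylinder $[\xi_n]$, that is, when $\rho_n(q)=1$. So the kernel is the increasing union $K=\bigcup_n\ker\rho_n$, and
\[
H_\xi\cong Q/K\cong\varinjlim(Q_n,\pi_n).
\]
Concretely, $\ker\rho_n\subseteq\ker\rho_{n+1}$, and $Q_n\cong Q/\ker\rho_n$ surjects onto $H_\xi$ for every $n$.

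\emph{If $Q_N$ is finite.} The chain $Q_N\to Q_{N+1}\to\cdots$ consists of surjections of finite groups, so $|Q_n|$ is nonincreasing. It is therefore eventually constant, and from some $n_0$ on every $\pi_n$ is an isomorphism. Then $\ker\rho_n=K$ for $n\ge n_0$. Consequently $H_\xi\cong Q_n$ is finite, and the restriction maps identify $Q_n$ with $H_\xi$ for all $n\ge n_0$.

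\emph{If $H_\xi$ is finite.} This is the step needing an actual argument rather than bookkeeping, since an increasing union of kernels need not stabilize in general. Here finite presentability of finite groups supplies compactness. Let $\Phi\colon F_d\to Q$ send the free generators to the $r_i$. The composite $F_d\to H_\xi$ has kernel normally generated by finitely many words $w_1,\dots,w_m$, because a finite group is finitely presented. Since $\Phi$ is onto, $K=\Phi(\ker(F_d\to H_\xi))$ is the normal closure in $Q$ of $\Phi(w_1),\dots,\Phi(w_m)$. Each $\Phi(w_j)$ lies in some $\ker\rho_{n_j}$. Because the kernels increase, all of them lie in $\ker\rho_N$ with $N=\max_j n_j$, and $\ker\rho_N$ is normal. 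Hence $K\subseteq\ker\rho_N\subseteq K$, so $Q_N\cong Q/K\cong H_\xi$ is finite. The previous step then gives the identification for all larger $n$.

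I expect the only delicate point to be the check that $\pi_n$ is well defined on all of $Q_n$, and not merely on sections of elements of $Q$ fixing the next vertex. This holds because the generators of $Q_n$ fix the whole ray below $\xi_n$, so $Q_n$ lies in the stabilizer of $v_{n+1}$, where taking sections is multiplicative.
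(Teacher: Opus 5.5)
Your proof is correct and follows essentially the paper's argument. Both identify $H_\xi$ with $Q$ modulo the increasing union of the section kernels $\ker\rho_n$, and both force that union to be attained at a finite stage when $H_\xi$ is finite: the paper uses Schreier's lemma (the finite-index kernel is finitely generated), whereas you use finite presentability of $H_\xi$ (the kernel is finitely normally generated and $\ker\rho_N$ is normal), and your monotone-cardinality argument for the converse is a slightly more direct route to the identification than the paper's remark that $H_\xi$ is a quotient of $Q_N$.
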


\begin{proof}
Since every element of $Q$ fixes $\xi$, taking the section at $\xi_n$ defines a homomorphism $Q\to Q_n$. Let $N_n$ be its kernel. The section identities imply $N_n\subseteq N_{n+1}$. Moreover, $N_\infty=\bigcup_nN_n$ is exactly the kernel of the germ homomorphism $Q\to H_\xi$: an element has trivial germ at $\xi$ precisely when it is the identity on one sufficiently deep prefix cylinder. Hence $H_\xi=Q/N_\infty$.

If $H_\xi$ is finite, then $N_\infty$ has finite index in the finitely generated group $Q$ and is therefore finitely generated. A finite generating set of $N_\infty$ is contained in one $N_N$, and thus $N_N=N_\infty$. It follows that $Q_N=Q/N_N\cong H_\xi$, and the same equality of kernels holds for every $n\geq N$. Conversely, if $Q_N=Q/N_N$ is finite for some $N$, then its quotient $Q/N_\infty=H_\xi$ is finite.
\end{proof}

The groups $Q_n$ act on the whole rooted subtrees below the corresponding prefixes. Replacing them by finite truncations would make every image finite. The criterion requires some $Q_N$ to be finite. Stabilization of the kernels alone does not imply this, since the stabilized quotient may be infinite. In the finite case of Example~\ref{exmp:depth-counterexamples}, each $\langle b|_{0^n}\rangle$ has order at most two, despite the unbounded depths of the sections $b|_{0^{N_j}1}$. Section~\ref{sec:finite-return-relations} uses invariant cylinders to lift finite relations, without requiring groups obtained by restriction to fixed cylinders to be finite.

\subsection{Stabilization along invariant neighborhoods}\label{subsec:tail-stabilization}
For general local homeomorphisms, restriction to shrinking neighborhoods does not automatically define group homomorphisms. The rooted-tree argument works since the representatives fixing $\xi$ preserve every prefix cylinder around $\xi$. The correct replacement is therefore a common invariant clopen basis. Singularity, the condition $H_x\neq\{(\Id,x)\}$, and the germ-defining condition of the finite-singular-germ setting do not by themselves provide such a basis.

When $H\leq\G_x^x$ is finite, its germs can be represented simultaneously on one invariant clopen neighborhood of $x$. This elementary fact will also be used later to obtain coherent realizations when the relevant groups of germs are finite.

\begin{lemma}
\label{lem:finite-germ-coherent-lift}
Let $H\leq\G_x^x$ be finite. There are a clopen neighborhood $U$ of $x$ and a homomorphism $\rho:H\to\F(\G|_U)$ such that $\rho(h)$ fixes $x$, has germ $h$ at $x$, and is AF at every other point of $U$. Extension by the identity outside $U$ gives a homomorphism $H\to\F(\G)$ with no exceptional sources outside $\{x\}$.
\end{lemma}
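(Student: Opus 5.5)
The plan is the standard averaging argument, carried out inside compact open bisections. For each $h\in H$, effectiveness lets us choose a compact open bisection $V_h\ni h$. By Lemma~\ref{lemma:finite-exceptional-support}, after shrinking we may assume that $V_h$ is an isolating bisection for $h$ whenever $h\notin\T$. Its source and range are clopen neighborhoods of $x$, and $\theta_{V_h}(x)=x$. Write $\theta_h$ for the induced partial homeomorphism. Since $H$ is finite and the germ of $\theta_h\theta_k$ at $x$ equals the germ of $\theta_{hk}$, there is a clopen neighborhood $W$ of $x$ with the following properties: for all $h,k\in H$, the maps $\theta_h\theta_k$ and $\theta_{hk}$ are defined and agree on $W$, and $\theta_e$ is the identity on $W$. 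Here we use that two bisections containing the same arrow agree on a neighborhood of its source, and that there are finitely many pairs $(h,k)$. We also take $\theta_e=\Id$ from the start.

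Next we produce an invariant neighborhood by putting $U=\bigcap_{h\in H}\theta_h^{-1}(W)$. We must check that $U$ is clopen, contains $x$, and that $\theta_k(U)=U$ with $\theta_h\theta_k=\theta_{hk}$ on $U$. For $y\in U$ and $k\in H$, we have $\theta_h(\theta_k(y))=\theta_{hk}(y)$ whenever $\theta_k(y)\in W$. This holds because $k=ek$ and $y\in\theta_k^{-1}(W)$. The value $\theta_{hk}(y)$ lies in $W$, so $\theta_k(y)\in\theta_h^{-1}(W)$ for every $h$, that is, $\theta_k(U)\subseteq U$. Applying the same argument to $k^{-1}$, together with $\theta_{k^{-1}}\theta_k=\Id$ on $U\subseteq W$, gives equality. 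This is the main point to check carefully: the multiplicativity on $W$ must be used on points $\theta_k(y)$, which lie in $W$ but are not a priori in $U$. The chosen order of intersection is what handles this. We then define $\rho(h)=\theta_h|_U\in\F(\G|_U)$. By construction it is a homomorphism, it fixes $x$, and its germ at $x$ is $h$.

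It remains to verify the non-exceptionality. Since $V_h$ isolates $h$ (or lies in $\T$ when $h\in\T$, which forces $h=e$ by principality), every germ of $\rho(h)$ at a point $y\neq x$ of $U$ lies in $V_h\setminus\{h\}\subseteq\T$. Extending by the identity on the clopen complement $\Omega(\mathsf{B})\setminus U$ gives a full bisection $1_{\Omega(\mathsf{B})\setminus U}\cup V_h|_U$. The extension is still a homomorphism because $U$ is invariant under each $\rho(h)$, and its only exceptional source is $x$. If needed, one can additionally shrink $U$ to lie in prescribed cylinders by intersecting $W$ with them beforehand.
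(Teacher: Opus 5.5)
Your argument is correct and is essentially the paper's proof: isolating (or unit) bisections, a single clopen $W$ on which the finitely many identities $\theta_h\theta_k=\theta_{hk}$ hold, and an intersection of translates of $W$ to obtain an invariant $U$. The only difference is cosmetic: the paper intersects the images $f_h(W)$, while you intersect the preimages $\theta_h^{-1}(W)$. In your invariance step the identity $\theta_h\theta_k=\theta_{hk}$ is applied at $y\in U\subseteq W$ itself, so the caveat you flag about using multiplicativity at the points $\theta_k(y)$ is not actually needed.
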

\begin{proof}
For every nonidentity $h\in H$, choose an isolating bisection containing $h$, as in Definition~\ref{def:isolating-bisection}. For the identity use the unit bisection at a clopen neighborhood of $x$. Let $f_h$ be the associated local homeomorphisms. Since $H$ is finite and the equalities $f_kf_h=f_{kh}$ hold as germs at $x$, there is one clopen neighborhood $W$ of $x$ on which all $f_h$ are defined, all products that occur are defined, and all these multiplication identities hold.

Put $U=\bigcap_{h\in H}f_h(W)$. This is a clopen neighborhood of $x$ contained in $W$. For every $k\in H$, injectivity and the multiplication relations give $f_k(U)=\bigcap_{h\in H}f_kf_h(W)=\bigcap_{h\in H}f_{kh}(W)=U$. Thus the restrictions $f_h|_U$ form an actual action of $H$ on $U$. The chosen isolating bisections are AF away from $x$, and that property is preserved by restriction and by extension with the identity outside $U$.
\end{proof}

In particular, after choosing any decreasing clopen basis $V_n$ at $x$ inside $U$, the sets $U_n=\bigcap_{h\in H}\rho(h)(V_n)$ form a common decreasing $H$-invariant clopen basis. For arbitrary finitely generated groups of germs, we therefore state the stabilization criterion under the existence of such a basis.

\keepwithstatement
\begin{prop}
\label{prop:invariant-neighborhood-stabilization}
Let $r_1,\ldots,r_d$ be local representatives fixing $x$ whose germs generate $H_x$. Suppose they preserve a common decreasing clopen neighborhood basis $U_1\supseteq U_2\supseteq\cdots$ at $x$, and are defined on $U_1$. Put $Q=\langle r_1|_{U_1},\ldots,r_d|_{U_1}\rangle$ and $Q_n=\{q|_{U_n}:q\in Q\}$. Then $H_x$ is finite if and only if some $Q_N$ is finite. In that case, restriction identifies $Q_n$ with $H_x$ for every sufficiently large $n$.
\end{prop}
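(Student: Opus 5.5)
We follow the proof of Proposition~\ref{prop:tail-stabilization} and replace prefix sections by restriction to the invariant sets $U_n$.

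First we make $Q$ and the $Q_n$ into groups with homomorphisms between them. Each $r_i$ preserves $U_1$ and is injective, and the $U_n$ are clopen, so $r_i(U_1)=U_1$ whenever $r_i(U_1)\subseteq U_1$. If the hypothesis is only read as $r_i(U_n)\subseteq U_n$, we also use that $r_i^{-1}$ preserves $U_n$. Either way each $r_i|_{U_1}$ is a homeomorphism of $U_1$, so $Q$ is a group of homeomorphisms of $U_1$. Every element of $Q$ preserves every $U_n$, since each generator and its inverse does. Hence restriction $q\mapsto q|_{U_n}$ is a group homomorphism $\pi_n:Q\to Q_n$, with $Q_n$ regarded as a group of homeomorphisms of $U_n$. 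Let $N_n=\ker\pi_n$, the elements of $Q$ that are the identity on $U_n$. Since $U_{n+1}\subseteq U_n$, we get $N_n\subseteq N_{n+1}$. Put $N_\infty=\bigcup_n N_n$.

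Next we identify $H_x$ with $Q/N_\infty$. Taking germs at $x$ gives a homomorphism $Q\to\G_x^x=H_x$, because every $q\in Q$ fixes $x$ and germs compose via~\eqref{eq:germ-operations-prelim}. It is surjective, since the germs of the $r_i$ generate $H_x$ and products and inverses of representatives represent the corresponding products and inverses of germs. An element $q$ has germ $(\Id,x)$ exactly when it is the identity on some open neighborhood of $x$. Because $(U_n)$ is a neighborhood basis at $x$, this holds exactly when $q$ is the identity on some $U_n$. Thus the kernel is $N_\infty$ and $H_x\cong Q/N_\infty$. This step depends on $(U_n)$ being a basis and not just a decreasing family, and it is the only place where the topology of germs enters. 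It is the point to check carefully.

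Finally we prove the equivalence. If $H_x$ is finite, then $N_\infty$ has finite index in the finitely generated group $Q$. By Schreier's lemma $N_\infty$ is finitely generated. A finite generating set lies in some $N_N$ because the $N_n$ increase, so $N_n=N_\infty$ for all $n\geq N$. Therefore $Q_n\cong Q/N_n=Q/N_\infty\cong H_x$ for all such $n$, and the isomorphism is compatible with the germ map $q|_{U_n}\mapsto$ germ of $q$ at $x$. In particular $Q_N$ is finite. Conversely, if $Q_N\cong Q/N_N$ is finite, then $H_x\cong Q/N_\infty$ is a quotient of it, so $H_x$ is finite. We expect no real obstacle beyond the invariance bookkeeping in the first step, which is what turns restriction into a homomorphism.
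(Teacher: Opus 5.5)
Your proposal is correct and is essentially the paper's proof: invariance makes restriction $Q\to Q_n$ a homomorphism, the increasing kernels $N_n$ exhaust the germ kernel because $(U_n)$ is a neighborhood basis, and finite index plus Schreier's lemma forces $N_n$ to stabilize. One side remark in your first step is false and should be dropped. An injective continuous self-map of a clopen set need not be onto: for example, $w\mapsto 0w$ on $\{0,1\}^{\omega}$ maps every $[0^n]$ strictly into itself. So ``preserve'' has to be read as $r_i(U_n)=U_n$, which is the paper's intended invariance and exactly what your fallback assumes.
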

\begin{proof}
Invariance makes every restriction $Q\to Q_n$ a group homomorphism. Let $N_n$ be its kernel. The kernels increase, and their union is exactly the kernel of the germ homomorphism $Q\to H_x$, since the $U_n$ form a neighborhood basis. If $H_x$ is finite, this union has finite index in the finitely generated group $Q$ and hence is finitely generated. Its finite generating set is contained in one $N_N$, and thus $N_n=\ker(Q\to H_x)$ for all $n\geq N$. Hence $Q_n\cong H_x$ eventually. Conversely, $H_x$ is a quotient of every $Q_n$, and thus finiteness of one $Q_N$ implies finiteness of $H_x$.
\end{proof}

The invariant basis is essential: without it, restriction need not be a homomorphism, and the pointwise stabilizer of a neighborhood need not be normal in $Q$. Finitary depth from Definition~\ref{def:finitary-depth} supplies no replacement for this invariance. Section~\ref{sec:finite-return-relations} returns to the same invariant-neighborhood issue: for a finitely presented group $L$ and a homomorphism $\phi:L\to H_x$, one chooses a clopen neighborhood $U\ni x$ and a homomorphism $\rho:L\to\mathsf{F}(\mathfrak{G}|_U)$ such that the germ of $\rho(l)$ at $x$ is $\phi(l)$ for every $l\in L$.

An infinite perfect group of germs has $H^{\ab}=0$, while a finite group of germs may have nontrivial torsion in its abelianization. The next section therefore uses the actual abelianizations $H_{\mathcal{O}}^{\ab}$, keeping finite $H_{\mathcal{O}}$, infinite $H_{\mathcal{O}}$ with finite abelianization, and $H_{\mathcal{O}}^{\ab}$ of positive rank separate. Neither prefix length nor finitary depth enters the relative-homology decomposition used in the next section.

\section{A structural description of the index map}
\label{sec:index}

\subsection{Relative homology and the orbit normal form}\label{subsec:relative-normal-form}
By Subsection~\ref{subsec:AFbd-homology}, $H_1(\G,\T)$ is the relative group generated by classes determined by exceptional arrows, and it fits into the exact sequence~\eqref{eq:relative}. We now put coordinates on this group and on the connecting map $\delta$. The source and range neighborhoods in~\eqref{eq:relative-defect-prelim} are essential: their classes in the dimension group are not determined by the incidence data of the boundary quotients $Q_{\mathcal{S}}(\mathcal{O})$ alone.

Fix an exceptional orbit $\mathcal{O}$ in the sense of Definition~\ref{def:exceptional-arrow-orbit}, and write $\mathcal{T}_i$ for the $\T$-orbits contained in it. In a compatible tile presentation, these are the infinite tiles. The normal-form calculation also allows finite $\T$-orbits. Choose $x_i\in\mathcal{T}_i$ and distinguish $x_0$ as basepoint. These are arbitrary orbit representatives and need not be persistent boundary points for a generating set. Put $H_{\mathcal{O}}=\G_{x_0}^{x_0}$. Choose arrows $c_i:x_0\to x_i$, with $c_0=(\Id,x_0)$. These \emph{connectors} are groupoid arrows between chosen representatives, as distinct from the instructions for connectors at finite levels in the tile inflation. For every $x\in\mathcal{T}_i$, let $a_x:x_i\to x$ be the unique tail arrow and put $p_x=a_xc_i$.

Recall from Subsection~\ref{subsec:full-groups-generation} that $\mathcal{T}(\mathcal{O})$ denotes the set of $\T$-orbits contained in $\mathcal{O}$. We write $\mathbb{Z}[\mathcal{T}(\mathcal{O})]$ for the free abelian group on $\mathcal{T}(\mathcal{O})$, equivalently the group of finitely supported integer-valued functions on this set, with basis vectors $e_{\mathcal{T}_i}$.

Let $\operatorname{aug}_{\mathcal{O}}:\mathbb{Z}[\mathcal{T}(\mathcal{O})]\to\mathbb{Z}$ send every basis vector to $1$. Explicitly,
$\operatorname{aug}_{\mathcal{O}}(\sum_i n_i e_{\mathcal{T}_i})=\sum_i n_i$. This is the \emph{augmentation}: it sends a vector to the sum of its coefficients.

\begin{defn}[Transport lattice]
\label{def:transport-lattice}
The \emph{transport lattice} of $\mathcal{O}$ is
\begin{equation}
\label{eq:transport-lattice}
L_{\mathcal{O}}=\ker\left(\operatorname{aug}_{\mathcal{O}}:\mathbb{Z}[\mathcal{T}(\mathcal{O})]\longrightarrow\mathbb{Z}\right).
\end{equation}
\end{defn}

The vectors in $L_{\mathcal{O}}$ have total coefficient zero. If $|\mathcal{T}(\mathcal{O})|=k<\infty$, then $L_{\mathcal{O}}\cong\mathbb{Z}^{k-1}$. The lattice $L_{\mathcal{O}}$ records transport among the tiles, while the $H_{\mathcal{O}}^{\ab}$ coordinate in the normal form below records the abelianized normalized return germ.

Nekrashevych's presentation separates the relative group by exceptional orbits. Within one orbit, endpoints give the transport vector, while closing an arrow with the chosen connectors gives an isotropy germ. The next theorem makes this separation explicit and gives a noncanonical direct-sum decomposition.

\begin{thm}[Relative normal form]
\label{thm:relative-normal-form}
There is a noncanonical isomorphism
\begin{equation}
\label{eq:relative-normal-form}
H_1(\G,\T)\cong\bigoplus_{\mathcal{O}}
\left(H_{\mathcal{O}}^{\ab}\oplus L_{\mathcal{O}}\right),
\end{equation}
where the sum runs over all exceptional $\G$-orbits. If $\mathcal{T}(\mathcal{O})$ is infinite, vectors in $\mathbb{Z}[\mathcal{T}(\mathcal{O})]$ have finite support. Under the hypothesis of generation by finitely many bisections, there are only finitely many exceptional orbits and finitely many $\T$-orbits in each. For an arrow $\gamma:x\to y$ contained in an exceptional orbit, its coordinate is
\begin{equation}
\label{eq:normal-coordinate}
[\gamma]\longmapsto
\left([p_y^{-1}\gamma p_x]_{\ab},\ e_{\mathcal{T}_y}-e_{\mathcal{T}_x}\right).
\end{equation}
\end{thm}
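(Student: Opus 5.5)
We work from the presentation recalled in Subsection~\ref{subsec:AFbd-homology}. The group $H_1(\G,\T)$ is the abelian group generated by symbols $[\gamma]$ for $\gamma\in\G$, with $[\gamma]=0$ for $\gamma\in\T$ and $[\alpha\beta]=[\alpha]+[\beta]$ for composable pairs. Equivalently, it is the abelianization of the groupoid $\G$ relative to $\T$. Since every relation involves arrows inside a single $\G$-orbit, the group splits as a direct sum over orbits. Nonexceptional orbits contribute zero, because all their arrows lie in $\T$. It therefore suffices to fix one exceptional orbit $\mathcal{O}$, with the representatives $x_i$, connectors $c_i$ and extended connectors $p_x=a_xc_i$ already chosen, and to prove that the relative group $R_{\mathcal{O}}$ of the restriction $\G|_{\mathcal{O}}$ is isomorphic to $H_{\mathcal{O}}^{\ab}\oplus L_{\mathcal{O}}$ via~\eqref{eq:normal-coordinate}. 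The finiteness remarks under compact generation are exactly Corollary~\ref{cor:compact-generation-germs}.

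\textbf{Step 1: the map $\Phi$ is well defined.} Define $\Phi[\gamma]=([p_y^{-1}\gamma p_x]_{\ab},\,e_{\mathcal{T}_y}-e_{\mathcal{T}_x})$ for $\gamma:x\to y$.
\begin{itemize}
\item Additivity on composable pairs: the first coordinate satisfies $p_z^{-1}\alpha\beta p_x=(p_z^{-1}\alpha p_y)(p_y^{-1}\beta p_x)$, and the second coordinate telescopes.
\item Vanishing on $\T$: if $\gamma\in\T$, then $x$ and $y$ lie in the same tile $\mathcal{T}_i$ and $\gamma=a_ya_x^{-1}$ by principality. Hence $p_y^{-1}\gamma p_x=c_i^{-1}a_y^{-1}a_ya_x^{-1}a_xc_i$ is the identity, and the transport term vanishes.
\end{itemize}
Thus $\Phi$ descends to a homomorphism $R_{\mathcal{O}}\to H_{\mathcal{O}}^{\ab}\oplus L_{\mathcal{O}}$.

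\textbf{Step 2: an inverse $\Psi$.}
\begin{itemize}
\item On $H_{\mathcal{O}}$, the map $h\mapsto[h]$ is a homomorphism into an abelian group, so it factors through $H_{\mathcal{O}}^{\ab}$.
\item On $L_{\mathcal{O}}$, send the basis $e_{\mathcal{T}_i}-e_{\mathcal{T}_0}$ to $[c_i]$. These vectors form a basis because vectors in $L_{\mathcal{O}}$ have finite support and coefficient sum zero.
\end{itemize}
Let $\Psi$ be the sum of these two maps. Then $\Phi\Psi=\mathrm{id}$. Indeed, $\Phi[h]=([h]_{\ab},0)$ since $p_{x_0}=c_0$ is the identity. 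Also $\Phi[c_i]=([p_{x_i}^{-1}c_i]_{\ab},\,e_{\mathcal{T}_i}-e_{\mathcal{T}_0})=(0,\,e_{\mathcal{T}_i}-e_{\mathcal{T}_0})$, because $p_{x_i}=c_i$ (the tail arrow $a_{x_i}$ is the identity).

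For surjectivity of $\Psi$, write any $\gamma:x\to y$ as
\[\gamma=p_y\,(p_y^{-1}\gamma p_x)\,p_x^{-1}.\]
Since $[p_x]=[a_x]+[c_i]=[c_i]$, this gives
\[[\gamma]=[p_y^{-1}\gamma p_x]+[c_{j}]-[c_i].\]
This lies in the image of $\Psi$, and it is exactly $\Psi\Phi[\gamma]$. Hence $\Psi\Phi=\mathrm{id}$ on generators, and so on all of $R_{\mathcal{O}}$. Therefore $\Phi$ is an isomorphism.

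\textbf{Noncanonicity and the expected difficulty.} The isomorphism depends on the choices of $x_0$ and the $c_i$. Changing $c_i$ by an element of $H_{\mathcal{O}}$ shears the $L_{\mathcal{O}}$ coordinate into $H_{\mathcal{O}}^{\ab}$, which is why the decomposition is noncanonical. The main point to handle with care is the input identity: that $H_1(\G,\T)$ really is the relative abelianization, with no extra topological relations. This is supplied by the discreteness argument recalled before~\eqref{eq:relative}, namely that relative chains are free on exceptional arrows and exceptional composable pairs. The remaining caution concerns the orbitwise splitting when there are infinitely many orbits or tiles. Since all chains have finite support, the direct sum is the restricted one.
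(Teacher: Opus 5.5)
Your proposal is correct and follows the paper's proof step for step. It uses the same orbitwise splitting of Nekrashevych's relative presentation and the same map $\Phi$, checked on composable pairs and on $\T$ via principality. It also uses the same inverse $\Psi$, sending $[h]_{\ab}\mapsto[h]$ and $e_{\mathcal{T}_i}-e_{\mathcal{T}_0}\mapsto[c_i]$, verified through $[\gamma]=[p_y^{-1}\gamma p_x]+[c_j]-[c_i]$.
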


\begin{proof}
We start from the relative presentation in \cite[Section~5.4]{nek22}, in the form recalled in Subsection~\ref{subsec:AFbd-homology}. The relative group is generated by arrow symbols $[\gamma]$, with $[\gamma]=0$ for $\gamma\in\T$ and $[\alpha\beta]=[\alpha]+[\beta]$ whenever $\alpha$ and $\beta$ are composable. These relations never mix different $\G$-orbits, and thus $H_1(\G,\T)$ is the direct sum of its orbitwise contributions. Fix one exceptional orbit $\mathcal{O}$ and denote the corresponding summand by $H_1(\G,\T)_{\mathcal{O}}$.

For an arrow $\gamma:x\to y$ in $\mathcal{O}$, set $h_\gamma=p_y^{-1}\gamma p_x\in H_{\mathcal{O}}$. This is exactly the normalized return of Definition~\ref{def:normalized-return} for the connector system fixed above. Define $\Phi_{\mathcal{O}}([\gamma])=([h_\gamma]_{\ab},e_{\mathcal{T}_y}-e_{\mathcal{T}_x})$. We verify that this map respects the relations in Nekrashevych's presentation. If $\beta:x\to y$ and $\alpha:y\to z$ are composable, then $h_{\alpha\beta}=h_\alpha h_\beta$, while the middle transport terms cancel. Hence $\Phi_{\mathcal{O}}([\alpha\beta])=\Phi_{\mathcal{O}}([\alpha])+\Phi_{\mathcal{O}}([\beta])$. If $\gamma\in\T$, then $x$ and $y$ lie in the same $\T$-orbit $\mathcal{T}_i$. The arrows $\gamma a_x$ and $a_y$ are both tail arrows from $x_i$ to $y$. Principality of $\T$ gives $\gamma a_x=a_y$. Therefore $h_\gamma=(\Id,x_0)$ and the transport vector is also zero. Thus $\Phi_{\mathcal{O}}$ kills the AF relations and descends to a homomorphism
$H_1(\G,\T)_{\mathcal{O}}\to H_{\mathcal{O}}^{\ab}\oplus L_{\mathcal{O}}$.

The inverse of $\Phi_{\mathcal{O}}$ is supplied by the same connectors. For $h\in H_{\mathcal{O}}$, send $[h]_{\ab}$ to the relative class $[h]\in H_1(\G,\T)_{\mathcal{O}}$. This is well defined since the relation $[hk]=[h]+[k]$ makes the isotropy subgroup abelian inside $H_1(\G,\T)_{\mathcal{O}}$. For the transport lattice, use the basis $e_{\mathcal{T}_i}-e_{\mathcal{T}_0}$ and send it to $[c_i]$. Denote the resulting homomorphism by $\Psi_{\mathcal{O}}$.

It remains only to check that the two maps are inverse. If $\gamma:x\to y$ with $x\in\mathcal{T}_i$ and $y\in\mathcal{T}_j$, then
$\gamma=p_yh_\gamma p_x^{-1}$. Since $p_x=a_xc_i$ and $p_y=a_yc_j$ with $a_x,a_y\in\T$, Nekrashevych's relations give $[p_x]=[c_i]$ and $[p_y]=[c_j]$, and hence
\begin{equation*}
[\gamma]=[h_\gamma]+[c_j]-[c_i].
\end{equation*}
This is precisely $\Psi_{\mathcal{O}}\Phi_{\mathcal{O}}([\gamma])$. Conversely, $\Phi_{\mathcal{O}}([h])=([h]_{\ab},0)$ for isotropy at $x_0$, and $\Phi_{\mathcal{O}}([c_i])=(0,e_{\mathcal{T}_i}-e_{\mathcal{T}_0})$, and thus $\Phi_{\mathcal{O}}\Psi_{\mathcal{O}}$ is the identity on the two summands. This proves the orbitwise isomorphism and then the direct sum in~\eqref{eq:relative-normal-form}.

Under compact generation, Corollary~\ref{cor:compact-generation-germs} gives finitely many exceptional $\G$-orbits and finitely many $\T$-orbits in each.
\end{proof}

\begin{rmk}
\label{rmk:normal-form-scope}
The proof uses only the AF-by-discrete inclusion and not the finite family of generating bisections. Without that generating hypothesis, the same formula holds over all exceptional orbits, with $\mathbb{Z}[\mathcal{T}(\mathcal{O})]$ understood as the free abelian group of finitely supported vectors and $L_{\mathcal{O}}$ as its augmentation kernel. The summands and their groups of germs can then be infinitely generated. The finite-generation conclusions and the finite-matrix stabilization below retain the additional hypotheses from Subsection~\ref{subsec:hypothesis-scope}.
\end{rmk}

\begin{prop}
\label{prop:connector-shear}
Keep the base point and the representatives $x_i$ fixed. Write $c_i'=c_i u_i$, where $u_i\in H_{\mathcal{O}}$ and $u_0=(\Id,x_0)$, and define $u_\#:L_{\mathcal{O}}\to H_{\mathcal{O}}^{\ab}$ by $u_\#(e_{\mathcal{T}_i}-e_{\mathcal{T}_0})=[u_i]_{\ab}$. The coordinates of the same relative class in the new decomposition are
\begin{equation}
\label{eq:connector-shear}
(a,l)\longmapsto(a-u_\#(l),l).
\end{equation}
\end{prop}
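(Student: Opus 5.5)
The plan is to compute both coordinate systems on the generators of $H_1(\G,\T)_{\mathcal{O}}$ and compare them. By Theorem~\ref{thm:relative-normal-form}, a relative class is determined by its coordinates, so it suffices to check the formula on arrow symbols $[\gamma]$, $\gamma:x\to y$ in $\mathcal{O}$. Both sides of \eqref{eq:connector-shear} are additive in $(a,l)$, and the arrow symbols generate the summand, so this is enough.

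\textbf{Step 1: effect on the connectors.} Let $x\in\mathcal{T}_i$ and $y\in\mathcal{T}_j$. With the new connectors we have $p_x'=a_xc_iu_i=p_xu_i$, and likewise $p_y'=p_yu_j$. The new normalized return is therefore
\[
h_\gamma'=(p_y')^{-1}\gamma p_x'=u_j^{-1}h_\gamma u_i.
\]
In $H_{\mathcal{O}}^{\ab}$ this gives $[h_\gamma']_{\ab}=[h_\gamma]_{\ab}-[u_j]_{\ab}+[u_i]_{\ab}$.

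\textbf{Step 2: the transport coordinate.} The transport coordinate $e_{\mathcal{T}_j}-e_{\mathcal{T}_i}$ depends only on the endpoints and is therefore unchanged.

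\textbf{Step 3: the shear term.} By linearity of $u_\#$ and the normalization $u_0=(\Id,x_0)$,
\[
u_\#(e_{\mathcal{T}_j}-e_{\mathcal{T}_i})=u_\#(e_{\mathcal{T}_j}-e_{\mathcal{T}_0})-u_\#(e_{\mathcal{T}_i}-e_{\mathcal{T}_0})=[u_j]_{\ab}-[u_i]_{\ab}.
\]
Combining this with Step 1 gives exactly $a'=a-u_\#(l)$, with $l$ unchanged.

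\textbf{Obstacles.} The only point needing care is that the identification of $H_{\mathcal{O}}^{\ab}$ is the same in both decompositions. This holds because the basepoint $x_0$ is kept fixed, so $H_{\mathcal{O}}=\G_{x_0}^{x_0}$ itself is unchanged and no inner automorphism intervenes. Apart from that, the argument is a direct bookkeeping computation, and I expect no step to be a genuine obstacle.
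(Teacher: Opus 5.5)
Your proposal is correct and follows essentially the same route as the paper. Both arguments write $p_x'=p_xu_i$ and $p_y'=p_yu_j$, abelianize the new normalized return $u_j^{-1}h_\gamma u_i$ to get the shift $[u_j]_{\ab}-[u_i]_{\ab}=u_\#(e_{\mathcal{T}_j}-e_{\mathcal{T}_i})$ with the transport coordinate unchanged, and conclude because arrow classes generate the relative group.
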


\begin{proof}
If $\gamma:x\to y$ runs from $\mathcal{T}_i$ to $\mathcal{T}_j$, then $p_x'=p_xu_i$ and $p_y'=p_yu_j$. Its new normalized return is $u_j^{-1}(p_y^{-1}\gamma p_x)u_i$. Abelianizing subtracts $[u_j]_{\ab}-[u_i]_{\ab}=u_\#(e_{\mathcal{T}_j}-e_{\mathcal{T}_i})$, while transport is unchanged. The relative classes determined by exceptional arrows generate the relative group, proving the formula.
\end{proof}

The relative group is intrinsic to the pair $(\G,\T)$, whereas the decomposition into return and transport coordinates depends on the connectors. In contrast, $H_1(\G)$ and the index map belong to $\G$. Replacing the AF core is not a change of generators or a telescoping of the same pair.

\subsection{The defect map}\label{subsec:defect-map}

The connecting map in~\eqref{eq:relative} measures the obstruction to lifting a class in $H_1(\G,\T)$ to $H_1(\G)$.

\begin{defn}[Defect map]
\label{def:defect-map}
The connecting homomorphism
$\delta:H_1(\G,\T)\to H_0(\T)$ in~\eqref{eq:relative} is called the \emph{defect map}, and $\delta(z)$ is the \emph{defect} of a relative class $z\in H_1(\G,\T)$. If $z=[\gamma]$ for an exceptional arrow, we also call $\delta[\gamma]$ the \emph{defect of $\gamma$}.

After the identification of Theorem~\ref{thm:relative-normal-form}, we use the same symbol for the coordinate form
\begin{equation}
\label{eq:defect-map}
\delta:
\bigoplus_{\mathcal{O}}\left(H_{\mathcal{O}}^{\ab}\oplus L_{\mathcal{O}}\right)
\longrightarrow H_0(\T).
\end{equation}
For an exceptional orbit $\mathcal{O}$, the restriction of $\delta$ to $H_{\mathcal{O}}^{\ab}$ is the \emph{isotropy defect} $\lambda_{\mathcal{O}}$, and its restriction to $L_{\mathcal{O}}$ is the \emph{transport defect} $\tau_{\mathcal{O}}$.
\end{defn}

If $V\ni\gamma$ is an isolating bisection (Definition~\ref{def:isolating-bisection}) for an exceptional arrow $\gamma$, Nekrashevych's formula~\eqref{eq:relative-defect-prelim} gives
$\delta[\gamma]=[1_{\sg(V)}]-[1_{\rg(V)}]$. In the coordinates of Definition~\ref{def:defect-map},
$\delta((a_{\mathcal{O}},l_{\mathcal{O}})_{\mathcal{O}})=\sum_{\mathcal{O}}(\lambda_{\mathcal{O}}(a_{\mathcal{O}})+\tau_{\mathcal{O}}(l_{\mathcal{O}}))$.
All summands take values in the same dimension group. Under Proposition~\ref{prop:connector-shear}, $\lambda_{\mathcal{O}}'=\lambda_{\mathcal{O}}$ and $\tau_{\mathcal{O}}'=\tau_{\mathcal{O}}+\lambda_{\mathcal{O}}u_\#$. The defect of the total relative class is unchanged. Equation~\eqref{eq:relative} gives the structural description
\begin{equation}
\label{eq:H1-kernel}
H_1(\G)\cong\ker\delta.
\end{equation}

The map $\lambda_{\mathcal{O}}$ need not vanish on an abelianized isotropy class. A local bisection containing an isotropy germ can have source and range with different classes in the dimension group. Likewise, defects coming from different exceptional orbits can cancel in $H_0(\T)$. This is why $H_1$ is not, in general, the direct sum of the individual kernels.

\begin{prop}
\label{prop:torsion-isotropy-zero-defect}
The homomorphism $\lambda_{\mathcal{O}}$ vanishes on $\Tor H_{\mathcal{O}}^{\ab}$. In particular, if $H_{\mathcal{O}}^{\ab}$ is finite, then $\lambda_{\mathcal{O}}=0$.
\end{prop}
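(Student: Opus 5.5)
The key point is that $\lambda_{\mathcal{O}}$ is a homomorphism from $H_{\mathcal{O}}^{\ab}$ into $H_0(\T)$, and the dimension group of a Bratteli diagram is torsion-free. Granting this, the proof is immediate. If $a\in\Tor H_{\mathcal{O}}^{\ab}$ with $na=0$ for some $n\geq1$, then $n\lambda_{\mathcal{O}}(a)=\lambda_{\mathcal{O}}(na)=0$, so $\lambda_{\mathcal{O}}(a)$ is a torsion element of $H_0(\T)$ and must vanish. When $H_{\mathcal{O}}^{\ab}$ is finite it equals its own torsion subgroup, and hence $\lambda_{\mathcal{O}}=0$. That $\lambda_{\mathcal{O}}$ is a homomorphism follows from its definition as the restriction of the connecting map $\delta$ of~\eqref{eq:relative} to the summand $H_{\mathcal{O}}^{\ab}$ in the decomposition of Theorem~\ref{thm:relative-normal-form}. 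No geometric input about isolating bisections is needed beyond what already enters $\delta$.

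It remains to justify torsion-freeness of $H_0(\T)$, and I will do this directly from~\eqref{eq:AF-H0-direct-limit}. With integer coefficients, $H_0(\T)\cong\varinjlim(\mathbb{Z}^{V_n},M_n)$. Take a class represented by $v\in\mathbb{Z}^{V_n}$ and suppose $nx=0$ in the limit. Then there is some $m\geq n$ with $M_{m-1}\cdots M_n(nv)=0$ in $\mathbb{Z}^{V_m}$. Since the matrices are integer and linear, this says $n\cdot M_{m-1}\cdots M_n v=0$ in the free abelian group $\mathbb{Z}^{V_m}$. Free abelian groups are torsion-free, so $M_{m-1}\cdots M_n v=0$ and the class is zero. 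In short, a direct limit of torsion-free groups is torsion-free.

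I expect no real obstacle here. The only point needing care is that the relevant target is the integral dimension group $H_0(\T;\mathbb{Z})$, and not one with finite coefficients such as $\mathsf{P}_{\mathsf{B}}$, which does have torsion. The statement concerns $\delta$ with integer coefficients as in~\eqref{eq:relative}, so the argument above applies.
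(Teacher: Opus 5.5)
Your proof is correct and is essentially the paper's argument: the dimension group $\varinjlim(\mathbb{Z}^{V_n},M_n)$ is torsion-free because a relation $m[z]=0$ becomes $m\,M_{N,n}z=0$ at some finite level, where $m$ can be cancelled in the free abelian group, and so the homomorphism $\lambda_{\mathcal{O}}$, being a restriction of $\delta$, kills all torsion. The only blemish is that you use $n$ both for the Bratteli level and for the torsion order, which you should rename.
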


\begin{proof}
Recall that (see \eqref{eq:H0-direct-limit} below)
\[
H_0(\T)=\varinjlim(\mathbb{Z}^{V_n},M_n)
\]
is the dimension group associated with the Bratteli diagram $\mathsf{B}$; see
\cite{HPS92}. In particular, it is torsion-free. Indeed, if
$m[z_N]=0$ in $H_0(\T)$ for some nonzero integer $m$, then, by the
definition of the direct limit, there exists $n\geq N$ such that $mM_{N,n}z_N=0$ in $\mathbb{Z}^{V_n}$. Since $\mathbb{Z}^{V_n}$ is torsion-free, we have
$M_{N,n}z_N=0$, and hence $[z_N]=0$ in $H_0(\T)$. Therefore $\delta$
vanishes on $\Tor H_1(\G,\T)$.
\end{proof}

\subsection{Cylinder coordinates at finite levels}
The cylinder-count construction uses only the Bratteli model of the AF core. The interpretation in terms of paths inside tiles in the following subsection additionally uses the tile presentation compatible with the AF core. We use the level indexing fixed in Subsection~\ref{subsec:bratteli-core}: $\Omega_m(\mathsf{B})$ consists of paths of length $m$ ending in $V_{m+1}$, and $[p]$ denotes the cylinder determined by $p$. Cylinder-count vectors will be indexed by the terminal vertex level. The coordinates indexed by $V_n$ count paths in $\Omega_{n-1}(\mathsf{B})$, and the corresponding finite tiles have level $n-1$. We use the incidence homomorphism
\begin{equation}
\label{eq:incidence-map}
M_n:\mathbb{Z}^{V_n}\longrightarrow\mathbb{Z}^{V_{n+1}},\qquad
M_ne_v=\sum_{\substack{e\in E_n\\\mathbf{s}(e)=v}}e_{\mathbf{r}(e)}.
\end{equation}
With this convention,
\begin{equation}
\label{eq:H0-direct-limit}
H_0(\T)\cong\varinjlim(\mathbb{Z}^{V_n},M_n).
\end{equation}

If $A\subseteq \Omega(\mathsf{B})$ is clopen, then for all sufficiently large $n$, it is a union of cylinders ending in $V_n$. Define its \emph{cylinder-count vector at vertex level $n$} by
\begin{equation}
\label{eq:cylinder-count}
\mathbf{c}_n(A)=\sum_{\substack{p\in\Omega_{n-1}(\mathsf{B}),\ [p]\subseteq A}}e_{\mathbf{r}(p)}\in\mathbb{Z}^{V_n}.
\end{equation}
The vectors satisfy $M_n\mathbf{c}_n(A)=\mathbf{c}_{n+1}(A)$ and represent the class $[1_A]\in H_0(\T)$.

Let $\gamma\in\G\setminus\T$ and choose an isolating bisection $V$ as in Definition~\ref{def:isolating-bisection}. Then
\begin{equation}
\label{eq:isolating-bisection}
V\setminus\{\gamma\}\subseteq\T.
\end{equation}
Put $A=\sg(V)$ and $B=\rg(V)$. For every sufficiently large $n$, both $A$ and $B$ are unions of cylinders ending in $V_n$. Define
\begin{equation}
\label{eq:defect-vector}
\mathbf{d}_n(V)=\mathbf{c}_n(A)-\mathbf{c}_n(B)\in\mathbb{Z}^{V_n}.
\end{equation}

\begin{prop}
\label{prop:finite-level-defect-vector}
Let $V\ni\gamma$ be a compact open bisection with $V\setminus\T=\{\gamma\}$, as above. The vectors in \eqref{eq:defect-vector} satisfy
\begin{equation}
\label{eq:defect-vector-compatibility}
M_n\mathbf{d}_n(V)=\mathbf{d}_{n+1}(V)
\end{equation}
for all sufficiently large $n$, and their direct-limit class is the defect of $\gamma$ relative to the fixed AF core:
\begin{equation}
\label{eq:defect-vector-class}
\delta[\gamma]=[\mathbf{d}_n(V)]\in H_0(\T).
\end{equation}
If $W\ni\gamma$ is another compact open bisection with $W\setminus\T=\{\gamma\}$, then the two compatible sequences represent the same element of the direct limit and agree at every sufficiently deep common level. Equality is not asserted at the initial levels.
\end{prop}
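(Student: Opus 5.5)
The proof reduces to three facts already recorded in the paper: the cylinder-count vectors are compatible with the incidence maps, they represent the degree-zero classes, and the defect is given by Nekrashevych's formula~\eqref{eq:relative-defect-prelim}.

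First, fix an isolating bisection $V$ with $A=\sg(V)$ and $B=\rg(V)$. Choose $n_0$ so that for $n\geq n_0$ both clopen sets are unions of cylinders $[p]$ with $p\in\Omega_{n-1}(\mathsf{B})$. Refining a cylinder $[p]$ ending at $v\in V_n$ gives the disjoint union of the cylinders $[pe]$ over edges $e\in E_n$ with $\mathbf{s}(e)=v$. Each such piece contributes $e_{\mathbf{r}(e)}$. Summing over the cylinders in $A$ gives $M_n\mathbf{c}_n(A)=\mathbf{c}_{n+1}(A)$, and likewise for $B$. Subtracting yields~\eqref{eq:defect-vector-compatibility}, because $M_n$ is a homomorphism.

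Second, identify the class. Under~\eqref{eq:H0-direct-limit}, the class $[1_{[p]}]\in H_0(\T)$ for $p\in\Omega_{n-1}(\mathsf{B})$ is the image of $e_{\mathbf{r}(p)}\in\mathbb{Z}^{V_n}$. This is the standard identification for the tail groupoid. Any two cylinders ending at the same vertex are exchanged by a prefix replacement in $\mathfrak{T}_{\mathsf{B},n-1}$, so they have equal classes. This identification is also compatible with refinement by the first step. Additivity of $H_0$ over clopen decompositions then gives $[1_A]=[\mathbf{c}_n(A)]$ and $[1_B]=[\mathbf{c}_n(B)]$. Combining these with~\eqref{eq:relative-defect-prelim} gives
\[
\delta[\gamma]=[1_A]-[1_B]=[\mathbf{d}_n(V)],
\]
which is~\eqref{eq:defect-vector-class}.

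Third, let $W$ be another isolating bisection. Its compatible sequence also represents $\delta[\gamma]$, and $\delta[\gamma]$ is independent of the isolating bisection, as noted after~\eqref{eq:relative-defect-prelim}. So the two sequences define the same element of the direct limit. By the definition of the direct limit, two compatible sequences with the same limit class satisfy $M_{n,m}(\mathbf{d}_n(V)-\mathbf{d}_n(W))=0$ for some $m\geq n$. Compatibility then gives $\mathbf{d}_m(V)=\mathbf{d}_m(W)$, and this equality persists at all deeper levels. At the initial levels the vectors may still differ, because kernels of the maps $M_n$ can be nontrivial.

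The main point requiring care is the identification in the second step, namely that the abstract $H_0(\T)$ agrees with the direct limit through cylinder classes. I take this from~\eqref{eq:AF-H0-direct-limit} and the cited references rather than reproving it.
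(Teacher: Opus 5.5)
Your proposal is correct and follows essentially the same route as the paper: the refinement identity gives compatibility, Nekrashevych's formula \eqref{eq:relative-defect-prelim} together with \eqref{eq:H0-direct-limit} identifies the class, and equality in the direct limit means equality after a long enough connecting map, which compatibility then propagates to all deeper levels. The only difference is that the paper proves independence of the isolating bisection by shrinking $V$ and $W$ to a common neighborhood of $\gamma$, noting that the discarded parts lie in $\T$ and so contribute zero source-minus-range class. You instead apply the same formula to $W$ and use that $\delta[\gamma]$ is well defined, which is equally valid.
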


\begin{proof}
Compatibility follows immediately from the refinement identity for cylinder-count vectors. By definition of the connecting map in \eqref{eq:relative},
$\delta[\gamma]=[1_{\sg(V)}]-[1_{\rg(V)}]=[1_A]-[1_B]$, and \eqref{eq:defect-vector-class} follows from \eqref{eq:H0-direct-limit}. If $W$ is another isolating bisection, shrink both bisections to a common compact open neighborhood of $\gamma$. The parts removed from either bisection lie in $\T$, and thus their source-minus-range classes vanish in $H_0(\T)$. It follows that the direct-limit class is independent of the isolating bisection. Equality in an inductive limit of abelian groups means equality after applying a sufficiently long connecting map. Compatibility then gives equality at all later levels.
\end{proof}

In a compatible tile presentation, the contributions of trajectories that use only internal edges cancel in these vectors. The remaining terms come from
boundary exits.

\subsection{Transition vectors at the boundary}
Let $F=F_\ell\cdots F_1\in G_0=\langle\mathcal{S}\rangle_{\mathrm{inv}}$ be a product of bisections $F_1,\ldots,F_\ell\in\mathcal{S}$. Let $A\subseteq\sg(F)$ be clopen and put $V=F|_A$, with $B=\rg(V)=F(A)$. Suppose that $A$ and $B$ are unions of cylinders ending in $V_n$. Let $p\in\Omega_{n-1}(\mathsf{B})$ satisfy $[p]\subseteq A$. If the trajectory from $p$ labeled by $F_1,\ldots,F_\ell$ is defined and remains internal at every step, then $F$ gives a prefix replacement that leaves the infinite continuation unchanged:
\begin{equation}
\label{eq:internal-cylinder-map}
F|_{[p]}:[p]\longrightarrow[q]
\end{equation}
for a unique path $q$ ending in $V_n$ with $\mathbf{r}(q)=\mathbf{r}(p)$. Let $\partial_n^+(F,A)$ be the set of paths $p\in\Omega_{n-1}(\mathsf{B})$ with $[p]\subseteq A$ for which this trajectory meets a boundary edge, and put
\begin{equation}
\label{eq:negative-boundary-exits}
\partial_n^-(F,A)=\partial_n^+(F^{-1},B).
\end{equation}
These two sets record the source and range cylinders whose trajectories meet the boundary. The contributions of trajectories that use only internal edges cancel, giving
the following expression for the defect.
\keepwithstatement
\begin{prop}
\label{thm:boundary-transition-formula}
Let $V=F|_A$ be as above. For every sufficiently large $n$,
\begin{equation}
\label{eq:boundary-transition-vector}
\mathbf{d}_n(V)=
\sum_{p\in\partial_n^+(F,A)}e_{\mathbf{r}(p)}-
\sum_{q\in\partial_n^-(F,A)}e_{\mathbf{r}(q)}.
\end{equation}
Hence, the defect is computed entirely by the trajectories at finite levels that meet the boundary of a finite tile.
\end{prop}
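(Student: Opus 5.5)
The identity will follow by partitioning $A$ and $B$ into cylinders at level $n$ and cancelling every cylinder whose trajectory stays internal. By definition, $\mathbf{d}_n(V)=\mathbf{c}_n(A)-\mathbf{c}_n(B)$, where $\mathbf{c}_n(A)$ sums $e_{\mathbf{r}(p)}$ over the level cylinders $[p]\subseteq A$. For large $n$, write $A$ as a disjoint union $A_{\mathrm{int}}\sqcup A_\partial$. Here $A_\partial$ is the union of the cylinders $[p]$ with $p\in\partial_n^+(F,A)$, and $A_{\mathrm{int}}$ is the union of those whose labeled trajectory stays internal. Split $B$ in the same way, using the trajectories of $F^{-1}$, into $B_{\mathrm{int}}\sqcup B_\partial$, with $B_\partial$ indexed by $\partial_n^-(F,A)=\partial_n^+(F^{-1},B)$. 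The formula then amounts to $\mathbf{c}_n(A_{\mathrm{int}})=\mathbf{c}_n(B_{\mathrm{int}})$.

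\textbf{Step 1: matching the internal cylinders.} By~\eqref{eq:internal-cylinder-map}, each internal $p$ is sent by $F$ onto a cylinder $[q]$ with $\mathbf{r}(q)=\mathbf{r}(p)$, through a prefix replacement. I will show that the trajectory of $F^{-1}$ from $q$ is the reversed trajectory and is again internal. Compatibility (Definition~\ref{def:compatible-tile-presentation}) makes the inverse of an internal edge internal, so each step reversed from $q$ uses the inverse internal edge, and $q$ is not in $\partial_n^-$. Conversely, an internal trajectory of $F^{-1}$ from $q$ reverses to an internal trajectory of $F$ landing at $q$. Since $F$ is a bisection, $p\mapsto q$ is therefore a bijection between the internal cylinders of $A$ and those of $B$ that preserves terminal vertices. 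Hence $\mathbf{c}_n(A_{\mathrm{int}})=\mathbf{c}_n(B_{\mathrm{int}})$, and the displayed formula follows.

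\textbf{Step 2: the choice of $n$ and the final sentence.} The level $n$ must be large enough that $A$ and $B$ are unions of level cylinders. It must also be large enough that the trajectory from a cylinder is well defined as a sequence of edges of finite tiles. For this I refine until each intermediate image $F_j\cdots F_1([p])$ is a single cylinder, or else the step is recorded as a boundary edge. This is possible because internal edges carry cylinders to cylinders and refinement preserves internality. With this choice, the classification into internal and boundary-meeting trajectories is exhaustive and well defined. Combined with Proposition~\ref{prop:finite-level-defect-vector}, the formula then expresses $\delta[\gamma]$ through boundary-meeting trajectories alone, which is the final assertion of the statement.

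\textbf{Main obstacle.} I expect Step 1's reversal claim to be the delicate point. It needs a trajectory that is internal at level $n$ to map the whole cylinder $[p]$ onto a cylinder, and it needs the reversed path to be exactly the $F^{-1}$-trajectory from $q$. Since $F^{-1}=F_1^{-1}\cdots F_\ell^{-1}$ as a word, the reversed path is read in the matching order of letters, which should make this hold. The other concern is that a boundary-meeting cylinder of $A$ might map to an internal one of $B$. This is excluded by the same bijection: any internal $q$ has an internal preimage trajectory.
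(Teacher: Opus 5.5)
Your proposal is correct and follows the same route as the paper: cancel each cylinder whose trajectory is internal against its prefix-replacement image from \eqref{eq:internal-cylinder-map}, and use reversal of internal trajectories (inverse edges are internal) to see that this pairing is a bijection onto the internal cylinders of $B$. The leftover cylinders are then exactly $\partial_n^+(F,A)$ and $\partial_n^-(F,A)$; your extra care about the choice of $n$ and the order of the letters of $F^{-1}$ makes explicit what the paper's short argument leaves implicit.
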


\begin{proof}
Consider the cylinders ending in $V_n$ contained in $A$. If the trajectory from $p$ is internal at every step, then \eqref{eq:internal-cylinder-map} pairs its source cylinder with a range cylinder having the same terminal Bratteli vertex. Hence its contribution $e_{\mathbf{r}(p)}$ to $\mathbf{c}_n(A)$ cancels the contribution $e_{\mathbf{r}(q)}$ to $\mathbf{c}_n(B)$. Reversing a trajectory that uses only internal edges gives a trajectory for $F^{-1}$ that also uses only internal edges. Thus the corresponding source and range cylinders are paired bijectively. After all such pairs are cancelled, the remaining source cylinders are precisely $\partial_n^+(F,A)$ and the remaining range cylinders are precisely $\partial_n^-(F,A)$. This gives \eqref{eq:boundary-transition-vector}.
\end{proof}

\begin{rmk}
\label{rmk:BNS-transition}
For Cantor systems with finitely many minimal components, Bezuglyi--Niu--Sun encode source-range incidence vectors by transition graphs of a $k$-simple Bratteli diagram. See \cite[Theorem~7.5]{BNS21}. Their index is the connecting map of a $C^*$-algebra extension with values in the dimension group of an AF ideal. Both maps record source--range incidence, but belong to different exact sequences. Their bounded subgroup is defined using uniform $\ell^\infty$ bounds \cite[Definition~7.8]{BNS21}. Below we use an $\ell^1$ variant: the two boundedness conditions are equivalent under bounded Bratteli rank, but should not be identified without that hypothesis. Formula~\eqref{eq:boundary-transition-vector} concerns exits at finite levels of the selected boundary return or transport bisection and also allows nontrivial groups of germs.
\end{rmk}

\subsection{Bounded type and bounded classes of defects}\label{subsec:bounded-defects}
Assume the tile presentation compatible with the AF core has uniformly bounded cardinalities of the boundaries of finite tiles. For this subsection, we also assume bounded Bratteli rank: fix constants $C_\partial$ and $r_{\mathsf{B}}$ such that every finite tile has at most $C_\partial$ boundary points and $|V_n|\leq r_{\mathsf{B}}$ for every $n$. The proof uses these two bounds separately. Controlling each tile individually gives no uniform bound on the total number of boundary points over a level.

\begin{prop}
\label{prop:uniform-boundary-defect}
Let $F=F_\ell\cdots F_1$ be a bisection word with $F_j\in\mathcal{S}$. For every clopen restriction $V=F|_A$ and every sufficiently large $n$,
\begin{equation}
\label{eq:uniform-boundary-count}
|\partial_n^+(F,A)|\leq \ell C_\partial r_{\mathsf{B}},\qquad
|\partial_n^-(F,A)|\leq \ell C_\partial r_{\mathsf{B}}.
\end{equation}
Consequently,
\begin{equation}
\label{eq:uniform-defect-norm}
\|\mathbf{d}_n(V)\|_1\leq 2\ell C_\partial r_{\mathsf{B}}.
\end{equation}
\end{prop}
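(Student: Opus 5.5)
The bound on $|\partial_n^+(F,A)|$ comes from a charging argument. Each path in $\partial_n^+(F,A)$ is assigned to the first step of its trajectory at which a boundary edge is met. Since $\partial_n^-(F,A)=\partial_n^+(F^{-1},B)$ and $F^{-1}=F_1^{-1}\cdots F_\ell^{-1}$ is again a word of length $\ell$ in the symmetric family $\mathcal{S}$, the same argument will give the second inequality. Inequality~\eqref{eq:uniform-defect-norm} then follows from Proposition~\ref{thm:boundary-transition-formula} and the triangle inequality. Each $e_{\mathbf{r}(p)}$ has $\ell^1$-norm one, so $\|\mathbf{d}_n(V)\|_1\leq|\partial_n^+|+|\partial_n^-|$.

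For the main bound, fix $n$ large enough that $A$, $B$ and all intermediate images $F_j\cdots F_1(A)$ are unions of cylinders ending in $V_n$. Let $p\in\partial_n^+(F,A)$, and let $j\in\{1,\ldots,\ell\}$ be the least index at which the trajectory from $p$ meets a boundary edge. Up to step $j-1$ the trajectory is internal. Internal edges are prefix replacements of the same length that preserve the terminal vertex and the finite tile (Definition~\ref{def:compatible-tile-presentation}). Hence the trajectory arrives at a vertex $p_{j-1}$ of the same level-$(n-1)$ tile $\mathcal{T}_{\mathbf{r}(p),n-1}$. This vertex is a boundary point of that tile, and the offending edge carries label $F_j$.

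The charging map $p\mapsto(j,p_{j-1})$ is injective. The prefix $p$ is recovered from $p_{j-1}$ by applying the inverse internal edges $F_1^{-1}\cdots F_{j-1}^{-1}$, and internal edges obey the partial-bijection rule, so this inverse path is uniquely determined. The target of the charging map is contained in $\{1,\ldots,\ell\}$ times the union over $v\in V_n$ of $\partial\mathcal{T}_{v,n-1}$. The union has at most $r_{\mathsf{B}}C_\partial$ elements, because there is one tile for each terminal vertex, at most $r_{\mathsf{B}}$ vertices, and at most $C_\partial$ boundary points on each tile. Thus $|\partial_n^+(F,A)|\leq\ell C_\partial r_{\mathsf{B}}$.

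The main obstacle is to justify the two structural facts used above: that internal steps stay within a single finite tile, and that they are reversible step by step. Both should follow from the definition of internal edges as prefix replacements of the same length together with the partial-bijection rule. The only care needed is the following point. Definition~\ref{def:bounded-type} counts boundary points without multiplicity, whereas a single vertex may carry several boundary labels. Recording the pair $(j,p_{j-1})$, rather than the boundary edge itself, removes this issue: the label $F_j$ is determined by $j$, so no multiplicity factor from the labels enters the bound.
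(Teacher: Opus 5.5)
Your proposal is correct and follows essentially the same argument as the paper: take the first non-internal step $j$ of the trajectory, use injectivity of the internal partial permutation up to step $j-1$, and count $\ell$ choices of $j$ times at most $r_{\mathsf{B}}$ tiles with at most $C_\partial$ boundary points each. Then apply the argument to $F^{-1}$ and use Proposition~\ref{thm:boundary-transition-formula}; your single injection $p\mapsto(j,p_{j-1})$ just repackages the paper's count for fixed $j$ and fixed tile.
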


\begin{proof}
Suppose that the trajectory from a vertex $p$ of a level-$(n-1)$ finite tile meets a boundary edge. Let $j$ be the first such step. The first $j-1$ steps are internal, hence define an injective partial permutation inside the level-$(n-1)$ finite tiles. For fixed $j$ and fixed finite tile, the possible starting vertices thus inject into the boundary points of that finite tile, of which there are at most $C_\partial$. There are $\ell$ choices of $j$ and at most $r_{\mathsf{B}}$ finite tiles, proving the first inequality. Apply the same argument to $F^{-1}$ for the second. Equation~\eqref{eq:uniform-defect-norm} follows from Proposition~\ref{thm:boundary-transition-formula}.
\end{proof}

Recall that $H_0(\T)=\varinjlim(\mathbb{Z}^{V_n},M_n)$. The next subgroup is the $\ell^1$ analogue, for the displayed Bratteli presentation, of the bounded $\ell^\infty$ vectors in \cite[Definition~7.8]{BNS21}.

\begin{defn}[$\ell^1$-bounded subgroup of the dimension group]
\label{def:l1-bounded-dimension-subgroup}
The \emph{$\ell^1$-bounded subgroup} associated with the displayed Bratteli presentation is
\begin{equation}
\label{eq:l1-bounded-dimension-subgroup}
H_0(\T)_{\mathrm{bd},1}=
\{z=[z_N]\in H_0(\T):\sup_{n\geq N}\|M_{N,n}z_N\|_1<\infty\},
\end{equation}
where $M_{N,n}=M_{n-1}\cdots M_N$.
\end{defn}

The condition is independent of the sufficiently late representative of $z$, but it is relative to the displayed Bratteli presentation. If $\mathsf{B}^{\prime}$ is a telescoping, the canonical identification of the dimension groups gives $H_0(\T)_{\mathrm{bd},1}\subseteq H_0(\mathfrak{T}_{\mathsf{B}^{\prime}})_{\mathrm{bd},1}$, since telescoping retains only a subsequence of the representatives. The reverse inclusion can fail.

\begin{exmp}[Bounded representatives can appear after telescoping]
\label{exmp:bounded-telescoping}
Consider a rank-two Bratteli diagram whose successive incidence maps, after its root level, occur in pairs $A_j,B_j$, where $N_j\geq3$, $N_j\to\infty$, and
\begin{equation}
A_j=\begin{pmatrix}N_j&0\\0&N_j-1\end{pmatrix},\qquad
B_j=\begin{pmatrix}1&1\\N_j-2&N_j-1\end{pmatrix}.
\end{equation}
Both maps are injective and $B_jA_j$ has strictly positive entries. Choosing a root connected to both vertices gives a simple Bratteli diagram with Cantor path space. For $z=(1,-1)^{\mathsf{t}}$ we have $A_jz=(N_j,1-N_j)^{\mathsf{t}}$ and $B_jA_jz=z$. Its nonzero direct-limit class has representatives of norm $2N_j-1$ at the intermediate levels, hence is not in $H_0(\T)_{\mathrm{bd},1}$. Every other representative agrees with this sequence eventually. Telescoping after each pair retains only the vector $z$, and thus the same class belongs to $H_0(\mathfrak{T}_{\mathsf{B}^{\prime}})_{\mathrm{bd},1}$. Boundedness on all displayed levels is thus presentation dependent.
\end{exmp}

\begin{cor}
\label{cor:bounded-type-defect}
Under the bounds on the cardinalities of boundaries of finite tiles and on the Bratteli rank in Proposition~\ref{prop:uniform-boundary-defect},
\begin{equation}
\label{eq:defect-in-l1-bounded-subgroup}
\delta(H_1(\G,\T))\subseteq H_0(\T)_{\mathrm{bd},1}.
\end{equation}
\end{cor}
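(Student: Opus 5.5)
We prove Corollary~\ref{cor:bounded-type-defect} by reducing to the exceptional generator arrows and then applying Proposition~\ref{prop:uniform-boundary-defect}. The key structural observation is that $H_0(\T)_{\mathrm{bd},1}$ is a subgroup: if $z=[z_N]$ and $z'=[z'_{N'}]$ have bounded representatives, then after moving both to a common level $\max(N,N')$, the triangle inequality bounds $\|M_{N,n}(z_N+z'_N)\|_1$. Since $\delta$ is a homomorphism, it therefore suffices to show that $\delta$ maps a generating set of $H_1(\G,\T)$ into $H_0(\T)_{\mathrm{bd},1}$.

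First, we choose the generating set. By the relative presentation recalled in Subsection~\ref{subsec:AFbd-homology}, the group $H_1(\G,\T)$ is generated by the arrow symbols $[\gamma]$ with $\gamma\in\G\setminus\T$, subject to additivity. Every exceptional arrow is a composable word in $\mathcal{S}$. Collapsing the AF factors, which have zero class, and using $[\alpha\beta]=[\alpha]+[\beta]$, we see that $[\gamma]$ is a sum of classes $[\gamma_j]$ with $\gamma_j\in\mathcal{E}_{\mathcal{S}}$. Thus it suffices to treat a single exceptional generator arrow $\gamma=(F,\xi)$ with $F\in\mathcal{S}$. Alternatively, one could apply the argument directly to any word $F_\ell\cdots F_1$ representing $\gamma$, which gives the bound $2\ell C_\partial r_{\mathsf{B}}$ with the word length $\ell$.

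Next, we build an isolating bisection of the required form. By Lemma~\ref{lemma:finite-exceptional-support} (or Proposition~\ref{prop:nested-isolation}), there is a clopen $A\subseteq\sg(F)$ containing $\xi$ such that $V=F|_A$ satisfies $V\setminus\T=\{\gamma\}$. This $V$ has the form required in Proposition~\ref{prop:uniform-boundary-defect}, with word length $\ell=1$. By Proposition~\ref{prop:finite-level-defect-vector}, there is an $N$ such that for all $n\geq N$ the vectors $\mathbf{d}_n(V)$ are compatible, $M_n\mathbf{d}_n(V)=\mathbf{d}_{n+1}(V)$, and represent $\delta[\gamma]$. Hence $M_{N,n}\mathbf{d}_N(V)=\mathbf{d}_n(V)$ for all $n\geq N$. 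Proposition~\ref{prop:uniform-boundary-defect} gives $\|\mathbf{d}_n(V)\|_1\leq 2C_\partial r_{\mathsf{B}}$ for all sufficiently large $n$. After enlarging $N$ if necessary, the supremum in Definition~\ref{def:l1-bounded-dimension-subgroup} is therefore finite, and so $\delta[\gamma]\in H_0(\T)_{\mathrm{bd},1}$.

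The main point to check is the phrase ``for every sufficiently large $n$'' in Proposition~\ref{prop:uniform-boundary-defect}. The threshold may depend on $A$, since $A$ and $F(A)$ must be unions of level-$n$ cylinders. This causes no difficulty, because Definition~\ref{def:l1-bounded-dimension-subgroup} allows an arbitrarily late representative. We only need to take $N$ beyond both the compatibility threshold of Proposition~\ref{prop:finite-level-defect-vector} and the threshold of Proposition~\ref{prop:uniform-boundary-defect}. Finally, we note that the independence of the representative, stated after Definition~\ref{def:l1-bounded-dimension-subgroup}, ensures that membership in $H_0(\T)_{\mathrm{bd},1}$ does not depend on this choice of $N$.
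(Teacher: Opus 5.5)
Your proof is correct and follows the same route as the paper. Both arguments take a generating set of $H_1(\G,\T)$, restrict a word bisection in $\mathcal{S}$ to an isolating neighborhood, bound the compatible defect vectors by Proposition~\ref{prop:uniform-boundary-defect}, and conclude because $H_0(\T)_{\mathrm{bd},1}$ is a subgroup. The only difference is the generating set: you use the exceptional generator arrows in $\mathcal{E}_{\mathcal{S}}$, so $\ell=1$, while the paper uses the boundary returns and transport connectors from Proposition~\ref{prop:germ-fg} and Theorem~\ref{thm:relative-normal-form}; your choice is slightly more direct and does not need the generating set to be finite.
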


\begin{proof}
By Proposition~\ref{prop:germ-fg} and Theorem~\ref{thm:relative-normal-form}, the relative group is generated by the classes of finitely many boundary returns and transport arrows. Represent each of them by a product of bisections in $\mathcal{S}$ and restrict that product to an isolating neighborhood of the chosen arrow. Proposition~\ref{prop:uniform-boundary-defect} gives a uniform $\ell^1$ bound for the compatible defect vectors of each generator. Since $H_0(\T)_{\mathrm{bd},1}$ is a subgroup, the conclusion follows.
\end{proof}

A \emph{normalized state} on the dimension group is a positive homomorphism $\rho:H_0(\T)\to\mathbb{R}$ with $\rho([1_{\Omega(\mathsf{B})}])=1$. It corresponds to a $\T$-invariant probability measure through $\rho([1_A])=\mu(A)$ for clopen $A$. The \emph{infinitesimal subgroup} is standard in the ordered-dimension-group approach to minimal Cantor dynamics. See \cite{HPS92,Putnam10}. We write $\operatorname{Inf}(H_0(\T))$ for the infinitesimal subgroup, which is defined to be the intersection of the kernels of all normalized states. If $\T$ is minimal, every invariant probability measure is atomless. Hence an $\ell^1$-bounded compatible vector has zero value under every state: if $z=[z_n]\in H_0(\T)_{\mathrm{bd},1}$ and $\|z_n\|_1\leq C$, then
\begin{equation}
\label{eq:bounded-state-estimate}
|\rho(z)|\leq C\max_{v\in V_n}\rho(e_v),
\end{equation}
where $\rho(e_v)$ is the common measure of the cylinders ending in $V_n$ terminating at $v$. The maximum tends to zero as $n\to\infty$. We obtain the following combinatorial refinement of Proposition~\ref{prop:defect-infinitesimal} below.

\begin{cor}
\label{cor:l1-bounded-subgroup-infinitesimal}
If $\T$ is minimal, then
$H_0(\T)_{\mathrm{bd},1}\subseteq\operatorname{Inf}(H_0(\T)).$
In particular, in the case of bounded type, \eqref{eq:defect-in-l1-bounded-subgroup} gives a proof at finite levels that every defect is infinitesimal.
\end{cor}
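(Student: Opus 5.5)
The corollary has two parts. The first is the inclusion $H_0(\T)_{\mathrm{bd},1}\subseteq\operatorname{Inf}(H_0(\T))$ under minimality of $\T$. The second applies it to \eqref{eq:defect-in-l1-bounded-subgroup}.

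For the inclusion, fix $z\in H_0(\T)_{\mathrm{bd},1}$ with a representative $z_N$ such that $\|M_{N,n}z_N\|_1\leq C$ for all $n\geq N$. Fix a normalized state $\rho$ and let $\mu$ be the associated $\T$-invariant probability measure, so that $\rho([1_A])=\mu(A)$ for clopen $A$. By $\T$-invariance, all cylinders $[p]$ with $p\in\Omega_{n-1}(\mathsf{B})$ and $\mathbf{r}(p)=v$ have the same measure, because prefix replacements between them are tail bisections. Call this common value $\mu_n(v)$. Then $\rho(e_v)=\mu_n(v)$ at vertex level $n$. Writing $z=[M_{N,n}z_N]$ gives $\rho(z)=\sum_v (M_{N,n}z_N)_v\,\mu_n(v)$, and hence the estimate \eqref{eq:bounded-state-estimate}: $|\rho(z)|\leq C\max_{v\in V_n}\mu_n(v)$.

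The main step is to show that $\max_{v\in V_n}\mu_n(v)\to0$. I would argue this as follows.
\begin{enumerate}
\item Minimality of $\T$ together with the Cantor hypothesis makes every $\T$-orbit infinite: a finite orbit would be a closed, hence equal to the whole, orbit closure, contradicting that $\Omega(\mathsf{B})$ is infinite. Since $\mu$ is invariant and each point has infinitely many distinct orbit-mates of equal mass, $\mu$ has no atoms.
\item Suppose instead that there are $\varepsilon>0$ and cylinders $[p_n]$ of depth tending to infinity with $\mu([p_n])\geq\varepsilon$.
\item A standard compactness argument then produces a point $x$ all of whose neighbourhoods have measure at least $\varepsilon$. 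Concretely, pass to a subsequence along which the cylinders are nested in some path, using König's lemma on the finitely branching tree of prefixes. Alternatively, take the decreasing closed sets obtained from the finitely many cylinders of mass $\geq\varepsilon$ at each level, which must eventually meet.
\item Continuity of measure along the decreasing intersection then gives $\mu(\{x\})\geq\varepsilon$, contradicting atomlessness.
\end{enumerate}
This uniformity over $v$ is the only step requiring care. The nesting is handled by noting that the number of cylinders of mass $\geq\varepsilon$ at any level is at most $1/\varepsilon$. The union of these cylinders forms a decreasing sequence of nonempty compact sets, since a heavy cylinder at level $n+1$ lies inside a heavy cylinder at level $n$. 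The intersection is therefore nonempty and consists of at most $1/\varepsilon$ points, each of mass $\geq\varepsilon$.

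Letting $n\to\infty$ gives $\rho(z)=0$ for every normalized state, so $z\in\operatorname{Inf}(H_0(\T))$.

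For the final assertion, in the bounded-type case Corollary~\ref{cor:bounded-type-defect} gives $\delta(H_1(\G,\T))\subseteq H_0(\T)_{\mathrm{bd},1}$, and composing with the inclusion just proved shows every defect is infinitesimal. The bounds used are the finite-level counts of Proposition~\ref{prop:uniform-boundary-defect}, so the argument is at finite levels as claimed.
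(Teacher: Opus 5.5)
Your proof is correct and follows the paper's argument: minimality forces $\T$-invariant measures to be atomless, the estimate \eqref{eq:bounded-state-estimate} bounds $|\rho(z)|$ by $C\max_{v}\rho(e_v)$, this maximum tends to zero, and Corollary~\ref{cor:bounded-type-defect} then gives the bounded-type assertion. The paper only asserts that the maximum tends to zero, and your compactness argument with the decreasing unions of cylinders of mass at least $\varepsilon$ correctly supplies that step.
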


\subsection{Defect matrices and stabilization}
We now impose the hypothesis of generation by finitely many bisections from Subsection~\ref{subsec:hypothesis-scope}, or, more generally, assume directly that $H_1(\G,\T)$ is finitely generated. Its vectors at finite levels can then be assembled into a matrix that represents the whole map $\delta$. 
By Theorem~\ref{thm:relative-normal-form}, $\Tor H_1(\G,\T)=\bigoplus_{\mathcal{O}}\Tor H_{\mathcal{O}}^{\ab}$. Choose a splitting
\begin{equation}
\label{eq:relative-free-splitting}
H_1(\G,\T)\cong \Tor H_1(\G,\T)\oplus\mathbb{Z}^m
\end{equation}
and a basis $z_1,\ldots,z_m$ adapted to the individual isotropy and transport summands. With this choice, each $z_j$ can be represented by an isotropy return or by one of the connectors in Theorem~\ref{thm:relative-normal-form}. For a general change of free basis, a column is instead the corresponding integral linear combination of these defect vectors. Since the target $H_0(\T)$ is torsion-free, $\delta$ vanishes on $\Tor H_1(\G,\T)$.

Choose isolating bisections for $z_1,\ldots,z_m$. After increasing the level once, let $\mathbf{d}_{j,n}\in\mathbb{Z}^{V_n}$ be their compatible defect vectors and put
\begin{equation}
\label{eq:defect-matrix}
D_n=\begin{bmatrix}\mathbf{d}_{1,n}&\cdots&\mathbf{d}_{m,n}\end{bmatrix}:
\mathbb{Z}^m\longrightarrow\mathbb{Z}^{V_n}.
\end{equation}
Then
\begin{equation}
\label{eq:defect-matrix-compatibility}
M_nD_n=D_{n+1}.
\end{equation}

The next theorem computes $H_1(\G)$ from the relative torsion and the eventual kernel of these matrices.
\keepwithstatement
\begin{thm}
\label{thm:finite-level-defect}
Under the splitting \eqref{eq:relative-free-splitting}, let $\overline\delta:\mathbb{Z}^m\to H_0(\T)$ be the restriction of $\delta$ to the free summand. For every sufficiently large $n$ and every $a\in\mathbb{Z}^m$,
\begin{equation}
\label{eq:defect-matrix-computes}
\overline\delta(a)=[D_na]\in\varinjlim(\mathbb{Z}^{V_n},M_n).
\end{equation}
Moreover, the subgroups
\begin{equation}
\label{eq:matrix-kernels}
K_n=\ker(D_n:\mathbb{Z}^m\to\mathbb{Z}^{V_n})
\end{equation}
form an increasing sequence and eventually stabilize. If $n$ is sufficiently large, then
\begin{equation}
\label{eq:H1-stable-matrix}
H_1(\G)\cong \Tor H_1(\G,\T)\oplus K_n.
\end{equation}
In particular, after the free relative generators and their isolating bisections have been chosen, the first homology is determined by their cylinder-count defect matrix at one sufficiently deep level.
\end{thm}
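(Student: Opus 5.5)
The plan is to combine the exact sequence~\eqref{eq:relative} with Proposition~\ref{prop:finite-level-defect-vector} and then exploit the Noetherian property of $\mathbb{Z}^m$.

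\emph{Step 1: the matrix formula~\eqref{eq:defect-matrix-computes}.} For each basis vector $e_j$, Proposition~\ref{prop:finite-level-defect-vector} gives $\overline\delta(z_j)=[\mathbf{d}_{j,n}]$ for all sufficiently large $n$. When $z_j$ is represented by a composite of exceptional arrows rather than a single arrow, I would first use additivity of $\delta$, writing $z_j$ as a sum of arrow classes, and define its column as the corresponding sum of isolating defect vectors. The compatibility~\eqref{eq:defect-vector-compatibility} then holds column by column, which gives~\eqref{eq:defect-matrix-compatibility} beyond some level $n_0$. Linearity of $\overline\delta$ and of $a\mapsto[D_na]$ then yields $\overline\delta(a)=[D_na]$ for all $n\ge n_0$.

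\emph{Step 2: the kernels $K_n$ increase and stabilize.} From $M_nD_n=D_{n+1}$ we get $K_n\subseteq K_{n+1}$. Since $\mathbb{Z}^m$ is Noetherian, the chain stabilizes at some $K_N$. Moreover, $\bigcup_nK_n=\ker\overline\delta$: indeed, $[D_na]=0$ in the direct limit precisely when $M_{n,n'}D_na=D_{n'}a=0$ for some $n'\ge n$, by the definition of the direct limit. Hence $K_n=\ker\overline\delta$ for every $n\ge N$.

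\emph{Step 3: assembling $H_1(\G)$.} By~\eqref{eq:H1-kernel}, $H_1(\G)\cong\ker\delta$. Under the splitting~\eqref{eq:relative-free-splitting}, $\delta(t,a)=\delta(t)+\overline\delta(a)$. By Proposition~\ref{prop:torsion-isotropy-zero-defect}, or equivalently by the torsion-freeness of $H_0(\T)$, we have $\delta(t)=0$ for every torsion element $t$. Therefore $\ker\delta=\Tor H_1(\G,\T)\oplus\ker\overline\delta=\Tor H_1(\G,\T)\oplus K_n$ for $n\ge N$, which is~\eqref{eq:H1-stable-matrix}.

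\emph{Main obstacle.} The only delicate point is Step 1: one must ensure that a single level works simultaneously for all columns, including both the compatibility condition and the representation of the class. Finitely many columns each require only ``sufficiently large $n$'', so taking the maximum of these thresholds suffices. It should also be noted that the statement ``for every sufficiently large $n$'' in~\eqref{eq:H1-stable-matrix} means $n\ge\max(n_0,N)$.
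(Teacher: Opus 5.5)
Your proposal is correct and follows essentially the same route as the paper. Both arguments apply Proposition~\ref{prop:finite-level-defect-vector} columnwise at one common level, identify $\ker\overline\delta=\bigcup_n K_n$ via $M_{q-1}\cdots M_nD_n=D_q$, invoke Noetherian stabilization of the $K_n$, and use that $\delta$ vanishes on torsion because $H_0(\T)$ is torsion-free. Your extra remark about generators given as integral combinations of arrow classes matches the paper's own convention that such a column is the corresponding combination of defect vectors.
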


\begin{proof}
Choose $N$ sufficiently large so that the defect vectors $\mathbf{d}_{j,n}$ are
defined and satisfy \eqref{eq:defect-vector-compatibility} for every $j$ and
every $n\geq N$. Let $a=(a_1,\ldots,a_m)^{\mathsf{t}}\in\mathbb{Z}^m$. By
Proposition~\ref{prop:finite-level-defect-vector}, we have
$\delta(z_j)=[\mathbf{d}_{j,n}]$ for every $n\geq N$. Since
$\overline\delta$ is the restriction of $\delta$ to the free summand in
\eqref{eq:relative-free-splitting}, it follows that
\[
\overline\delta(a)
=\sum_{j=1}^m a_j[\mathbf{d}_{j,n}]
=\left[\sum_{j=1}^m a_j\mathbf{d}_{j,n}\right]
=[D_na].
\]
This proves \eqref{eq:defect-matrix-computes}.

We next identify $\ker\overline\delta$. By
\eqref{eq:defect-matrix-compatibility}, if $a\in K_n$, then
$D_{n+1}a=M_nD_na=0$, and hence $K_n\subseteq K_{n+1}$. Thus the subgroups
$K_n$ form an increasing sequence.

For $n\geq N$, the class $[D_na]$ is zero in
$\varinjlim(\mathbb{Z}^{V_n},M_n)$ if and only if there exists $q\geq n$
such that
\[
M_{q-1}\cdots M_nD_na=0
\]
in $\mathbb{Z}^{V_q}$. Repeated use of
\eqref{eq:defect-matrix-compatibility} gives
$M_{q-1}\cdots M_nD_n=D_q$, so the preceding condition is equivalent to
$D_qa=0$, that is, to $a\in K_q$. Consequently,
\begin{equation}
\label{eq:kernel-union}
\ker\overline\delta=\bigcup_{n\geq N}K_n.
\end{equation}

Since $\mathbb{Z}^m$ is Noetherian as a $\mathbb{Z}$-module, every increasing
sequence of its subgroups stabilizes. Hence there exists $n_0\geq N$ such that
$K_n=K_{n_0}$ for every $n\geq n_0$. Together with
\eqref{eq:kernel-union}, this gives
$\ker\overline\delta=K_n$ for every $n\geq n_0$.

Finally, under the splitting \eqref{eq:relative-free-splitting}, every element
of $H_1(\G,\T)$ has a unique expression $t+a$ with
$t\in\Tor H_1(\G,\T)$ and $a\in\mathbb{Z}^m$. Since $H_0(\T)$ is
torsion-free, $\delta$ vanishes on $\Tor H_1(\G,\T)$. Therefore
$\delta(t+a)=\overline\delta(a)$, and hence, for every $n\geq n_0$,
\[
\ker\delta=\Tor H_1(\G,\T)\oplus K_n.
\]
Equation~\eqref{eq:H1-kernel} now gives
\[
H_1(\G)\cong\Tor H_1(\G,\T)\oplus K_n,
\]
which is \eqref{eq:H1-stable-matrix}.
\end{proof}

\begin{rmk}
\label{rmk:stabilization-not-effective}
Theorem~\ref{thm:finite-level-defect} proves that the kernels stabilize but gives no effective bound on the stabilization level. Equality of consecutive kernels over any finite interval is insufficient to certify final stabilization. For instance, the positive matrix $P=\left(\begin{smallmatrix}2&1\\1&2\end{smallmatrix}\right)$ preserves $z=(1,-1)^{\mathsf{t}}$, while $Q=\left(\begin{smallmatrix}1&1\\1&1\end{smallmatrix}\right)$ sends it to zero. A compatible one-column system can apply $P$ for any prescribed number of levels before applying $Q$. Its kernel changes only at that late step. This example concerns only compatible matrices. It does not assert that they are realized by a boundary bisection. Effective computation requires sufficient germ relations, explicit clopen representatives, and a separate certificate for the stabilization level.
\end{rmk}

\begin{cor}
\label{cor:lambda-tau-computable}
For each exceptional orbit, the maps $\lambda_{\mathcal{O}}$ and $\tau_{\mathcal{O}}$ are computed by the cylinder-count procedure of Proposition~\ref{prop:finite-level-defect-vector}. Choose generators of the free part of $H_{\mathcal{O}}^{\ab}$ and isolating bisections for representing isotropy arrows. Their columns give $\lambda_{\mathcal{O}}$. Choose the basis $e_{\mathcal{T}_i}-e_{\mathcal{T}_0}$ of $L_{\mathcal{O}}$ and isolating bisections for the connectors $c_i$. Their columns give $\tau_{\mathcal{O}}$. Torsion generators have zero direct-limit defect by Proposition~\ref{prop:torsion-isotropy-zero-defect}. For a torsion generator, the defect vector of a chosen representative may be nonzero at the initial level but becomes zero after a sufficiently long incidence map. For finitely many torsion generators, a common such level can be chosen.
\end{cor}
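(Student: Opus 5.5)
The plan is to obtain the corollary by restricting Theorem~\ref{thm:finite-level-defect} and Proposition~\ref{prop:finite-level-defect-vector} to one orbit summand at a time, using the coordinate form of $\delta$ from Definition~\ref{def:defect-map}. By Theorem~\ref{thm:relative-normal-form}, the summand $H_{\mathcal{O}}^{\ab}$ is spanned by the relative classes $[h]$ of isotropy arrows $h\in H_{\mathcal{O}}$. The summand $L_{\mathcal{O}}$ has basis $e_{\mathcal{T}_i}-e_{\mathcal{T}_0}$, and $\Psi_{\mathcal{O}}$ sends this basis vector to $[c_i]$. So $\lambda_{\mathcal{O}}$ is determined by the values $\delta[h]$ on the chosen free generators, and $\tau_{\mathcal{O}}$ by the values $\delta[c_i]$.

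First I would fix representatives. For each free generator of $H_{\mathcal{O}}^{\ab}$, choose an isotropy arrow $h$. If $h$ is exceptional, choose an isolating bisection for it via Lemma~\ref{lemma:finite-exceptional-support} and Definition~\ref{def:isolating-bisection}. If $h$ lies in $\T$, then $h$ is the identity by principality, so its class and its defect both vanish. The same applies to each connector $c_i$, where $c_i\in\T$ is only possible when $\mathcal{T}_i=\mathcal{T}_0$, which excludes $i\neq0$. Proposition~\ref{prop:finite-level-defect-vector} then gives $\delta[h]=[\mathbf{d}_n(V)]$ and $\delta[c_i]=[\mathbf{d}_n(V_i)]$, with compatible vectors at all sufficiently deep levels. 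These vectors are the required columns, and extending linearly gives the maps on the free part.

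One subtlety needs care. A product of generators is a generic element of $H_{\mathcal{O}}$, and it may not be represented by a single exceptional arrow. Its class is nevertheless the sum of classes by the relation $[\alpha\beta]=[\alpha]+[\beta]$. Additivity of $\delta$ therefore reduces everything to the chosen generators, so only finitely many columns are needed.

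For torsion generators, Proposition~\ref{prop:torsion-isotropy-zero-defect} gives $\delta=0$ in the direct limit. By the definition of equality in $\varinjlim(\mathbb{Z}^{V_n},M_n)$, a chosen representative $\mathbf{d}_{n}$ satisfies $M_{n,q}\mathbf{d}_n=0$ for some $q$. Compatibility~\eqref{eq:defect-vector-compatibility} then gives $\mathbf{d}_{q'}=0$ for all $q'\geq q$. Taking the maximum of these levels over finitely many torsion generators gives a common level.

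The main obstacle is mild. It is to justify that the value of the column on the torsion part is intrinsic, independent of the chosen isolating bisection. Proposition~\ref{prop:finite-level-defect-vector} supplies this, but only at sufficiently deep levels, which is why the statement does not claim vanishing at the initial level.
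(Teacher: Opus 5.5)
Your proposal is correct and follows the justification the paper intends. The paper gives no separate proof: the corollary is an immediate consequence of applying Proposition~\ref{prop:finite-level-defect-vector} to isolating bisections of the chosen generators (which are genuinely exceptional by principality of $\T$, as you note), together with Proposition~\ref{prop:torsion-isotropy-zero-defect} and the eventual-equality description of the direct limit already used in the proof of Theorem~\ref{thm:finite-level-defect}. One slip is harmless: every nonidentity element of $H_{\mathcal{O}}$ is itself a single exceptional arrow, so your ``subtlety'' paragraph is unnecessary, though the additivity argument it invokes is valid.
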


\begin{cor}
\label{cor:finite-singular-H1}
Assume the finite-singular-germ condition of \cite{kua26a}. Let $\Xi=\{\xi_1,\ldots,\xi_r\}$ be representatives of the exceptional orbits. Then
\begin{equation}
\label{eq:finite-singular-H1}
H_1(\G)\cong\bigoplus_{i=1}^r H_{\xi_i}^{\ab}.
\end{equation}
\end{cor}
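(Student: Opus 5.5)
The plan is to read off \eqref{eq:finite-singular-H1} from Theorem~\ref{thm:relative-normal-form} together with the exact sequence~\eqref{eq:relative}. Under the finite-singular-germ condition, the defect map should vanish identically and every transport lattice should be zero.

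First I will collect what the condition gives, as recalled at the end of Subsection~\ref{subsec:persistent-data}. The generating family $\mathcal{S}$ is finite and splits into finitary and singular generators. Each singular generator fixes its unique exceptional source and is AF elsewhere. Every exceptional orbit $\mathcal{O}$ contains exactly one persistent boundary point $\xi$, which is germ-defining, and $H_{\xi}$ is finite. By Proposition~\ref{prop:finite-persistent-boundary}, or by Corollary~\ref{cor:compact-generation-germs} since $\mathcal{S}$ is finite, there are only finitely many exceptional orbits. I label them by representatives $\xi_1,\ldots,\xi_r$, taken to be the unique persistent boundary points.

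Next I will show that $L_{\mathcal{O}}=0$ for each such orbit. By Proposition~\ref{prop:finite-boundary-quotient}, every $\T$-orbit in $\mathcal{O}$ is a vertex of $Q_{\mathcal{S}}(\mathcal{O})$ incident to an edge, so it contains a persistent boundary point; thus every $B_i$ is nonempty. Since $\mathcal{O}$ has only one persistent boundary point, the profile is $(k;b_1)=(1;1)$. Hence $\mathcal{T}(\mathcal{O})$ is a single vertex, and the augmentation kernel $L_{\mathcal{O}}$ is zero. Theorem~\ref{thm:relative-normal-form} then gives $H_1(\G,\T)\cong\bigoplus_{i=1}^r H_{\xi_i}^{\ab}$, with basepoint $x_0=\xi_i$ in each orbit. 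Changing the basepoint within an orbit changes these groups only by canonical isomorphisms on abelianizations, since inner automorphisms act trivially there.

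Finally I will show that $\delta=0$. Each $H_{\xi_i}$ is finite, so each $H_{\xi_i}^{\ab}$ is a finite abelian group. Proposition~\ref{prop:torsion-isotropy-zero-defect} then gives $\lambda_{\mathcal{O}}=0$, and there is no transport component. By Definition~\ref{def:defect-map}, $\delta$ is zero on all of $H_1(\G,\T)$. The exact sequence~\eqref{eq:relative}, equivalently~\eqref{eq:H1-kernel}, makes $\iota$ an isomorphism, which yields~\eqref{eq:finite-singular-H1}.

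The only step needing care is the identification $k=1$. It requires that every $\T$-orbit in an exceptional orbit meets the persistent boundary, which Proposition~\ref{prop:finite-boundary-quotient} supplies. It also requires matching the cited condition's ``unique exceptional source'' with the persistent boundary points here, which uses the support observation after~\eqref{eq:ES}. No analysis at finite levels is needed, because torsion-freeness of the dimension group kills the defect outright.
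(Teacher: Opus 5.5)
Your proposal is correct and follows the paper's own argument. Each exceptional orbit is a single $\T$-orbit, so every $L_{\mathcal{O}}=0$; the relative normal form then gives $H_1(\G,\T)\cong\bigoplus_i H_{\xi_i}^{\ab}$, torsion-freeness of the dimension group forces $\delta=0$, and the exact sequence~\eqref{eq:relative} finishes. The only cosmetic difference is how you get $k=1$: you use Proposition~\ref{prop:finite-boundary-quotient} together with uniqueness of the persistent boundary point, whereas the paper observes directly that every non-AF generator arrow is a loop at its designated point, so no generator arrow joins distinct infinite tiles.
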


\begin{proof}
Under the cited finite-singular-germ condition, every non-AF generator arrow fixes its designated persistent boundary point, and all other generator arrows are AF. It follows that no generator arrow joins distinct infinite tiles, each exceptional orbit contains only one infinite tile, and all transport lattices vanish. The graph of germs over an exceptional orbit can nevertheless have several lifted tiles, indexed by its group of germs. The corresponding isotropy is recorded in $H_{\mathcal{O}}^{\ab}$ rather than in $L_{\mathcal{O}}$. The group $H_1(\G,\T)$ is thus the direct sum of the finite groups $H_{\xi_i}^{\ab}$. Since $H_0(\T)$ is the dimension group associated with $\mathsf{B}$ and is torsion-free, the homomorphism $\delta:H_1(\G,\T)\to H_0(\T)$ is zero. Equation~\eqref{eq:relative} gives the result.
\end{proof}

\begin{cor}
\label{cor:torsion-rank}
Under the finite-generation hypothesis used above, put $k_{\mathcal{O}}=|\mathcal{T}(\mathcal{O})|$. Then
\begin{equation}
\label{eq:torsion-H1}
\Tor H_1(\G)\cong\bigoplus_{\mathcal{O}}\Tor H_{\mathcal{O}}^{\ab}.
\end{equation}
Moreover,
\begin{equation}
\label{eq:rank-H1}
\operatorname{rank}H_1(\G)=
\sum_{\mathcal{O}}\operatorname{rank}H_{\mathcal{O}}^{\ab}
+\sum_{\mathcal{O}}(k_{\mathcal{O}}-1)
-\operatorname{rank}\delta(H_1(\G,\T)).
\end{equation}
\end{cor}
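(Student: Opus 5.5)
The plan is to read both formulas off the injection $\iota:H_1(\G)\hookrightarrow H_1(\G,\T)$ in \eqref{eq:relative}, which identifies $H_1(\G)$ with $\ker\delta$ as in \eqref{eq:H1-kernel}. The relative group itself is described by the normal form of Theorem~\ref{thm:relative-normal-form}. Under the finite-generation hypothesis, each $H_{\mathcal{O}}^{\ab}$ is a finitely generated abelian group, $L_{\mathcal{O}}\cong\mathbb{Z}^{k_{\mathcal{O}}-1}$, and only finitely many summands occur. Hence $H_1(\G,\T)$ is finitely generated, with
\[
\Tor H_1(\G,\T)=\bigoplus_{\mathcal{O}}\Tor H_{\mathcal{O}}^{\ab},
\]
since each transport lattice is free.

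For the torsion statement \eqref{eq:torsion-H1}, I would use that $\delta$ takes values in the torsion-free dimension group $H_0(\T)$. This was shown in the proof of Proposition~\ref{prop:torsion-isotropy-zero-defect}. It follows that $\Tor H_1(\G,\T)\subseteq\ker\delta$. Conversely, $\Tor\ker\delta\subseteq\Tor H_1(\G,\T)$ holds trivially. Therefore $\Tor H_1(\G)\cong\Tor\ker\delta=\Tor H_1(\G,\T)$, which gives \eqref{eq:torsion-H1}. The decomposition \eqref{eq:H1-stable-matrix} of Theorem~\ref{thm:finite-level-defect} also yields this directly, because $K_n\subseteq\mathbb{Z}^m$ is free.

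For the rank formula \eqref{eq:rank-H1}, I would apply additivity of rank to the short exact sequence
\[
0\longrightarrow\ker\delta\longrightarrow H_1(\G,\T)\longrightarrow\delta(H_1(\G,\T))\longrightarrow0
\]
of finitely generated abelian groups. The image $\delta(H_1(\G,\T))$ is a finitely generated subgroup of a torsion-free group, so it is free and its rank is well defined. Rank is additive because tensoring with $\mathbb{Q}$ is exact. This gives
\[
\operatorname{rank}H_1(\G)=\operatorname{rank}H_1(\G,\T)-\operatorname{rank}\delta(H_1(\G,\T)),
\]
and the normal form gives
\[
\operatorname{rank}H_1(\G,\T)=\sum_{\mathcal{O}}\bigl(\operatorname{rank}H_{\mathcal{O}}^{\ab}+k_{\mathcal{O}}-1\bigr).
\]

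There is no real obstacle here; the argument is bookkeeping. The only point that needs care is to confirm that the hypothesis gives finitely many orbits and finite $k_{\mathcal{O}}$, so that the ranks are finite and subtraction is legitimate. Corollary~\ref{cor:compact-generation-germs} supplies this under compact generation. Alternatively, it follows directly from finite generation of $H_1(\G,\T)$ via the normal form, since each summand $L_{\mathcal{O}}$ with $k_{\mathcal{O}}\geq2$ contributes positive rank.
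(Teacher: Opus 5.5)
Your proposal is correct and follows the paper's proof: torsion-freeness of $H_0(\T)$ puts $\Tor H_1(\G,\T)$ inside $\ker\delta\cong H_1(\G)$, and the rank formula comes from the normal form together with $H_1(\G)\cong\ker\delta$; you additionally spell out the additivity of rank, which the paper leaves implicit. One small precision: your alternative justification from finite generation of $H_1(\G,\T)$ only shows that all but finitely many orbit summands vanish, not that there are finitely many exceptional orbits, since orbits with $k_{\mathcal{O}}=1$ and perfect $H_{\mathcal{O}}$ contribute nothing to the normal form; this weaker conclusion is all the formula needs.
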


\begin{proof}
Each $H_{\mathcal{O}}^{\ab}$ is a direct summand of $H_1(\G,\T)$ by Theorem~\ref{thm:relative-normal-form}, and is thus finitely generated under the present hypothesis. The target $H_0(\T)$ is torsion-free, and thus all torsion in $H_1(\G,\T)$ lies in $\ker\delta$, and no additional torsion can occur in a subgroup of the finitely generated abelian group $H_1(\G,\T)$. The rank formula follows from \eqref{eq:relative-normal-form} and \eqref{eq:H1-kernel}.
\end{proof}

\subsection{Minimal AF cores and infinitesimal defects}
Suppose now that $\T$ is minimal. We use the infinitesimal subgroup recalled in Subsection~\ref{subsec:bounded-defects}, but do not impose the uniform boundary and vertex-rank bounds of that subsection.

\begin{prop}
\label{prop:defect-infinitesimal}
Every $\T$-invariant probability measure on $\Omega(\mathsf{B})$ is $\G$-invariant. Consequently,
\begin{equation}
\label{eq:defect-infinitesimal}
\delta(H_1(\G,\T))\subseteq\operatorname{Inf}(H_0(\T)).
\end{equation}
In particular, if $H_0(\T)$ has no nonzero infinitesimals, then $\delta=0$ and
\begin{equation}
\label{eq:no-infinitesimals}
H_1(\G)\cong\bigoplus_{\mathcal{O}}
\left(H_{\mathcal{O}}^{\ab}\oplus L_{\mathcal{O}}\right).
\end{equation}
\end{prop}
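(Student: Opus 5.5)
The plan has two steps: first show that every $\T$-invariant probability measure is automatically $\G$-invariant, then evaluate normalized states on defects.

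\emph{Step 1: $\T$-invariant measures are $\G$-invariant.} Let $\mu$ be $\T$-invariant and let $V$ be a compact open bisection of $\G$. By Lemma~\ref{lemma:finite-exceptional-support}, $V\setminus\T$ is a finite set $\{\gamma_1,\dots,\gamma_r\}$. For each $n$, choose clopen neighborhoods $W_n\subseteq\sg(V)$ of the finitely many sources $\sg(\gamma_j)$, shrinking to these points as $n\to\infty$. Then $V|_{\sg(V)\setminus W_n}$ is a compact open bisection contained in the open set $\T$. Cover it by finitely many compact open tail bisections and disjointify their sources to get a finite partition. Invariance under $\T$ then gives
\[
\mu(\sg(V)\setminus W_n)=\mu(\rg(V|_{\sg(V)\setminus W_n})).
\]
So $\mu(\sg(V))-\mu(\rg(V))=\mu(W_n)-\mu(\rg(V|_{W_n}))$, and it remains to show that both terms on the right tend to zero. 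They decrease to $\mu$ of a finite set of points, namely the sources $\sg(\gamma_j)$ and the ranges $\rg(\gamma_j)$. So the crux is that $\mu$ has no atoms at these points.

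This is the step I expect to be the main obstacle. Under minimality of $\T$ it is immediate: orbits are infinite and dense, and a $\T$-invariant probability measure gives the same mass to all points of an infinite $\T$-orbit, hence zero mass. This is the assumption of this subsection, and it is what I would use. Without minimality I would argue orbitwise instead: an atom on a finite $\T$-orbit is a point mass equidistributed on that orbit, and one checks separately that the same cancellation occurs. I would not pursue this and would keep the minimal hypothesis.

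\emph{Step 2: defects are infinitesimal.} Let $\rho$ be a normalized state on $H_0(\T)$, so that $\rho([1_A])=\mu(A)$ for a $\T$-invariant probability measure $\mu$. For an exceptional arrow $\gamma$ with isolating bisection $V$, formula \eqref{eq:relative-defect-prelim} gives
\[
\rho(\delta[\gamma])=\mu(\sg(V))-\mu(\rg(V))=0
\]
by Step 1. Since the classes $[\gamma]$ generate $H_1(\G,\T)$, every defect lies in the kernel of every normalized state. This gives \eqref{eq:defect-infinitesimal}.

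\emph{Step 3: the final assertion.} If $\operatorname{Inf}(H_0(\T))=0$, then $\delta=0$. Exactness of \eqref{eq:relative} then makes $\iota$ an isomorphism onto $H_1(\G,\T)$, and Theorem~\ref{thm:relative-normal-form} yields \eqref{eq:no-infinitesimals}.
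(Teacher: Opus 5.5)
Your argument is correct and is essentially the paper's proof. Minimality of $\T$ makes every $\T$-invariant measure atomless, the finitely many exceptional endpoints of a compact open bisection are therefore negligible, and $\T$-invariance handles the rest. You use shrinking clopen neighbourhoods and continuity from above where the paper uses a countable clopen partition and countable additivity. Your Steps 2 and 3, on states and on exactness plus Theorem~\ref{thm:relative-normal-form}, match the paper.

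Drop the aside about the nonminimal case, though, because it is false. If an exceptional arrow $\gamma:x\to y$ leaves a finite $\T$-orbit $Y$, the uniform measure on $Y$ is $\T$-invariant. But a small isolating bisection $V\ni\gamma$ has $\sg(V)\cap Y=\{x\}$ and $\rg(V)\cap Y=\varnothing$, so that measure is not $\G$-invariant. The minimality hypothesis on $\T$ is therefore genuinely needed.
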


\begin{proof}
A $\T$-invariant probability measure is atomless since $\T$ is minimal on the Cantor space. Let $U$ be a compact open bisection of $\G$. The set $U\setminus\T$ is finite by Lemma~\ref{lemma:finite-exceptional-support}. Removing the finitely many source and range points of these exceptional arrows preserves the measure. The source of the remaining part is an open subspace of the second-countable zero-dimensional space $\Omega(\mathsf{B})$. A countable disjoint clopen partition on which the remaining bisection lies in $\T$ decomposes that part into compact open bisections contained in $\T$. Countable additivity and $\T$-invariance give equal measures to its source and range. Hence the measure is $\G$-invariant. The source-minus-range defect of every relative generator is thus annihilated by all states on $H_0(\T)$, proving \eqref{eq:defect-infinitesimal}. The last statement is immediate.
\end{proof}

Equality under invariant measures is weaker than equality in the dimension group when infinitesimals are present. In examples with no infinitesimals, however, the entire first homology is read directly from the abelianized groups of germs and the numbers $k_{\mathcal{O}}$ of infinite tiles in the exceptional orbits.

\subsection{The boundary formula for the index map}

\begin{defn}[Exceptional support of a full-group element]
\label{def:exceptional-support}
For $g\in\F(\G)$, its \emph{exceptional support relative to $\T$} is
\begin{equation}
\label{eq:sigma-g}
\Sigma(g)=\{x\in \Omega(\mathsf{B}):(g,x)\notin\T\}.
\end{equation}
We also call $\Sigma(g)$ the exceptional set of $g$ when the AF core is fixed.
\end{defn}

The set $\Sigma(g)$ is finite by Lemma~\ref{lemma:finite-exceptional-support}. The index is determined by the return and transport contributions of these exceptional germs.
\keepwithstatement
\begin{thm}
\label{thm:index-formula}
Under the injection $H_1(\G)\hookrightarrow H_1(\G,\T)$ in \eqref{eq:relative},
\begin{equation}
\label{eq:index}
\iota I(g)=\sum_{x\in\Sigma(g)}[(g,x)].
\end{equation}
After choosing the connectors used in Theorem~\ref{thm:relative-normal-form}, the contribution of $(g,x):x\to g(x)$ is
\begin{equation}
\label{eq:index-coordinate}
\left([p_{g(x)}^{-1}(g,x)p_x]_{\ab},\ e_{\mathcal{T}_{g(x)}}-e_{\mathcal{T}_x}\right).
\end{equation}
Hence the index is obtained by summing the abelianized boundary returns and the transport vectors of the finitely many exceptional arrows of $g$.
\end{thm}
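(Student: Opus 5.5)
The plan is to compute $\iota I(g)$ directly at the chain level and then read off coordinates from Theorem~\ref{thm:relative-normal-form}. By Definition~\ref{def:index-map}, $I(g)$ is the class of the $1$-cycle $1_{U_g}$. The injection $\iota$ of \eqref{eq:relative} is induced by the quotient map $C_c(\G^{(1)})\to C_c(\G^{(1)})/C_c(\T^{(1)})$. So $\iota I(g)$ is the image of $1_{U_g}$ in the relative chain group. The task is therefore to identify that image in the presentation of Subsection~\ref{subsec:AFbd-homology}.

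The first step is to split the full bisection $U_g$. By Lemma~\ref{lemma:finite-exceptional-support}, $U_g\setminus\T=\{(g,x):x\in\Sigma(g)\}$ is finite, and each such arrow has an isolating bisection $V_x\subseteq U_g$ with $V_x\setminus\T=\{(g,x)\}$. Shrinking their sources, we may take these $V_x$ pairwise disjoint. The remainder $W=U_g\setminus\bigcup_xV_x$ is compact open and contained in $\T$. This gives
\[
1_{U_g}=\sum_{x\in\Sigma(g)}1_{V_x}+1_W .
\]
The term $1_W$ lies in $C_c(\T^{(1)})$ and vanishes in the relative group.

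The second step is to identify each $1_{V_x}$ with the arrow symbol. Under the restriction isomorphism onto finitely supported functions on $\G\setminus\T$, the relative class of $1_{V_x}$ is the indicator of the single exceptional arrow $(g,x)$. That indicator is exactly the generator $[(g,x)]$. Summing over $x\in\Sigma(g)$ yields \eqref{eq:index}. The term-by-term identification is legitimate because $1_{U_g}$ is a cycle, so its relative class lies in relative homology. The individual pieces need only be relative chains, since the relative degree-zero chain group vanishes, and therefore each $1_{V_x}$ is automatically a relative cycle.

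The last step applies the isomorphism \eqref{eq:normal-coordinate} to each summand $[(g,x)]$, with $\gamma=(g,x):x\to g(x)$. This gives the coordinate \eqref{eq:index-coordinate}: the abelianized normalized return together with the transport vector. Contributions from different exceptional orbits land in different summands of \eqref{eq:relative-normal-form}.

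The only delicate point, and the main obstacle, is the identification in the second step of the relative chain $1_{V_x}$ with the abstract generator $[\gamma]$. This requires that the presentation of Subsection~\ref{subsec:AFbd-homology} is obtained precisely through restriction to exceptional arrows, and that the identification does not depend on the choice of isolating bisection. Both facts follow because two isolating bisections differ by a compact open set contained in $\T$. In the non-Hausdorff case I would note that every $V_x$ is a compact open Hausdorff set, so each $1_{V_x}$ is an admissible chain.
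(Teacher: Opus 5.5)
Your proposal is correct and follows the paper's route: represent $\iota I(g)$ by the image of $1_{U_g}$ in the relative chain group, identify it through restriction to $U_g\setminus\T=\{(g,x):x\in\Sigma(g)\}$ with $\sum_{x\in\Sigma(g)}[(g,x)]$, and then read off coordinates from the relative normal form. Your explicit splitting $1_{U_g}=\sum_{x}1_{V_x}+1_W$ with $W\subseteq\T$ is a concrete version of the paper's restriction step.
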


\begin{proof}
Let $U_g$ be the full bisection representing $g$. By Definition~\ref{def:index-map}, $I(g)=[1_{U_g}]$ in $H_1(\G)$. The injection $\iota$ in~\eqref{eq:relative} is induced by the quotient map from the chain complex of $\G$ to the relative chain complex of the pair
$(\G,\T)$. Thus $\iota I(g)$ is represented by the image of $1_{U_g}$ in the relative degree-one chain group.

By the relative chain description in Subsection~\ref{subsec:AFbd-homology}, this image is obtained by restricting $1_{U_g}$ to $U_g\setminus\T$. Since $U_g$ is the bisection of $g$, we have
\[
U_g\setminus\T=\{(g,x):x\in\Sigma(g)\}.
\]
The set $\Sigma(g)$ is finite by Lemma~\ref{lemma:finite-exceptional-support}. Moreover, distinct points of $\Sigma(g)$ give distinct arrows because their sources are distinct. Therefore, the restriction of $1_{U_g}$ to $\G\setminus\T$ is the finitely supported sum of the corresponding exceptional arrow symbols. Passing to relative homology gives
\[
\iota I(g)=\sum_{x\in\Sigma(g)}[(g,x)],
\]
which proves~\eqref{eq:index}.

Now fix the connectors used in Theorem~\ref{thm:relative-normal-form}. For $x\in\Sigma(g)$, let $\mathcal{T}_x$ and $\mathcal{T}_{g(x)}$ denote the $\T$-orbits containing
$x$ and $g(x)$, respectively. The relative normal form sends the class of the arrow $(g,x):x\to g(x)$ to
\[
\left(
 [p_{g(x)}^{-1}(g,x)p_x]_{\ab},
 e_{\mathcal{T}_{g(x)}}-e_{\mathcal{T}_x}
\right).
\]
This is exactly~\eqref{eq:index-coordinate}. Since the decomposition of Theorem~\ref{thm:relative-normal-form} is a homomorphism, summing these coordinates over $x\in\Sigma(g)$ gives the return and transport coordinates of $\iota I(g)$.

For completeness, the resulting relative class has zero defect. Indeed, $\partial_1(1_{U_g})=1_{\sg(U_g)}-1_{\rg(U_g)}=0$, since
$\sg(U_g)=\rg(U_g)=\Omega(\mathsf{B})$. Hence $1_{U_g}$ is an absolute $1$-cycle, and exactness of~\eqref{eq:relative} gives $\delta(\iota I(g))=0$, as required.
\end{proof}

The relative class $\iota(I(g))$ represented by the formula is independent of the chosen connectors. Its return coordinates transform as in Proposition~\ref{prop:connector-shear}, while its transport coordinates are unchanged. Isotropy and intra-tile returns contribute to the germ coordinates, while genuine motion between different infinite tiles contributes to the transport coordinates. Regular boundary connections can thus contribute nonzero transport to the index of an element of the full group even though their endpoint groups of germs are trivial.

\subsection{Geometric transport coordinates and boundary cuts}
\label{subsec:geometric-index}
For the geometric statements in this subsection, assume the hypotheses of Subsection~\ref{subsec:hypothesis-scope} under which the boundary quotient of Definition~\ref{def:boundary-quotient} is finite. We now express transport coordinates by signed counts of crossings across cuts in the boundary quotient. This is the analogue, for a finite boundary, of the picture with one border in \cite[Section~5.5]{nek22}.

Fix an exceptional orbit $\mathcal{O}$ and its boundary quotient $Q_{\mathcal{S}}(\mathcal{O})$. Choose a spanning tree $\mathscr{R}_{\mathcal{O}}$ in the underlying undirected graph after deleting loops, root it at the base infinite tile $\mathcal{T}_0$, and orient every tree edge away from the root. If $e$ is oriented from $\mathcal{T}_i$ to $\mathcal{T}_j$, put
\begin{equation}
\label{eq:tree-boundary}
\partial_{\mathcal{O}}e=e_{\mathcal{T}_j}-e_{\mathcal{T}_i}.
\end{equation}
Extending linearly gives a homomorphism
\begin{equation}
\label{eq:tree-boundary-map}
\partial_{\mathcal{O}}:C_1(\mathscr{R}_{\mathcal{O}};\mathbb{Z})\longrightarrow L_{\mathcal{O}}.
\end{equation}

\begin{lemma}
\label{lemma:tree-transport}
The map \eqref{eq:tree-boundary-map} is an isomorphism. For two infinite tiles $\mathcal{T}_i,\mathcal{T}_j$, its inverse sends $e_{\mathcal{T}_j}-e_{\mathcal{T}_i}$ to the signed edge chain of the unique tree path from $\mathcal{T}_i$ to $\mathcal{T}_j$.
\end{lemma}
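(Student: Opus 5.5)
The plan is to verify the statement directly as a fact about finite trees. Since Proposition~\ref{prop:finite-boundary-quotient} makes $Q_{\mathcal{S}}(\mathcal{O})$ finite and connected with vertex set $\mathcal{T}(\mathcal{O})=\{\mathcal{T}_0,\ldots,\mathcal{T}_{k-1}\}$, the spanning tree $\mathscr{R}_{\mathcal{O}}$ has exactly $k-1$ edges. Thus $C_1(\mathscr{R}_{\mathcal{O}};\mathbb{Z})\cong\mathbb{Z}^{k-1}$, while $L_{\mathcal{O}}\cong\mathbb{Z}^{k-1}$ by the remark after Definition~\ref{def:transport-lattice}. First I check that $\partial_{\mathcal{O}}$ lands in $L_{\mathcal{O}}$: each $\partial_{\mathcal{O}}e=e_{\mathcal{T}_j}-e_{\mathcal{T}_i}$ has augmentation zero.

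For injectivity and surjectivity together, I would use the rooted orientation. Every non-root tile $\mathcal{T}_j$ has a unique parent edge $e_j$, the last edge of the tree path from $\mathcal{T}_0$, oriented into $\mathcal{T}_j$. This gives a bijection $j\mapsto e_j$ between non-root tiles and tree edges. With respect to the basis $\{e_j\}$ of $C_1$ and the basis $\{e_{\mathcal{T}_j}-e_{\mathcal{T}_0}\}_{j\neq0}$ of $L_{\mathcal{O}}$, the matrix of $\partial_{\mathcal{O}}$ is unitriangular, if the tiles are ordered by distance from the root. Indeed, $\partial_{\mathcal{O}}e_j=(e_{\mathcal{T}_j}-e_{\mathcal{T}_0})-(e_{\mathcal{T}_{\mathrm{par}(j)}}-e_{\mathcal{T}_0})$, where the second term is absent when the parent is the root. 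A unitriangular integer matrix is invertible over $\mathbb{Z}$, so $\partial_{\mathcal{O}}$ is an isomorphism.

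For the explicit inverse, let $\mathcal{T}_i=\mathcal{T}_{v_0},\mathcal{T}_{v_1},\ldots,\mathcal{T}_{v_r}=\mathcal{T}_j$ be the unique tree path. Define its signed edge chain as $\sum_s\epsilon_s f_s$, where $f_s$ is the tree edge joining $\mathcal{T}_{v_{s-1}}$ and $\mathcal{T}_{v_s}$, and $\epsilon_s=\pm1$ according as the fixed orientation of $f_s$ agrees with the direction of traversal. Then $\partial_{\mathcal{O}}(\epsilon_sf_s)=e_{\mathcal{T}_{v_s}}-e_{\mathcal{T}_{v_{s-1}}}$, and the sum telescopes to $e_{\mathcal{T}_j}-e_{\mathcal{T}_i}$. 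Because $\partial_{\mathcal{O}}$ is already known to be bijective, this chain is the inverse image.

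I do not expect a real obstacle. The only points needing care are bookkeeping ones. The loops and the inverse-pair identification in Definition~\ref{def:spanning-tree-fundamental-cycle} must not affect the edge count, since the tree has $k-1$ edges regardless. Finiteness of $k$ must be cited from Proposition~\ref{prop:finite-boundary-quotient}; under the standing hypothesis of the subsection this is what makes the dimension count available.
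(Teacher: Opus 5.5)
Your proposal is correct and is essentially the paper's argument. The paper likewise compares ranks ($k_{\mathcal{O}}-1$ tree edges versus $L_{\mathcal{O}}\cong\mathbb{Z}^{k_{\mathcal{O}}-1}$), uses that the root-to-$\mathcal{T}_i$ path chains map onto the basis $e_{\mathcal{T}_i}-e_{\mathcal{T}_0}$, and gets the general path formula by subtracting two root paths; your unitriangular matrix in the parent-edge basis just makes bijectivity explicit (so the rank count becomes redundant), and your telescoping is that same subtraction written edge by edge.
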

\begin{proof}
The tree has $k_{\mathcal{O}}-1$ edges, and thus its first chain group and $L_{\mathcal{O}}$ are free abelian groups of the same rank. The boundary of the tree path from the root to $\mathcal{T}_i$ is $e_{\mathcal{T}_i}-e_{\mathcal{T}_0}$, and these elements form a basis of $L_{\mathcal{O}}$. Hence \eqref{eq:tree-boundary-map} is an isomorphism, and the path formula follows by subtraction.
\end{proof}

For an oriented tree edge $e$, let $B_e$ be the component not containing the root after $e$ is removed and let $A_e$ be its complement. Define
\begin{equation}
\label{eq:cut-functional}
\kappa_e(l)=\sum_{\mathcal{T}_i\in B_e} l_i,
\qquad l=\sum_i l_i e_{\mathcal{T}_i}\in L_{\mathcal{O}}.
\end{equation}
The value $\kappa_e(e_{\mathcal{T}_j}-e_{\mathcal{T}_i})$ is $1$ when the ordered pair $(\mathcal{T}_i,\mathcal{T}_j)$ crosses the cut $A_e\mid B_e$ in the direction of $e$, is $-1$ when it crosses in the opposite direction, and is $0$ otherwise. Equivalently, $\kappa_e(l)$ is the coefficient of $e$ in $\partial_{\mathcal{O}}^{-1}(l)$.

The connectors in the relative normal form of
Theorem~\ref{thm:relative-normal-form} can be chosen so that every boundary edge selected for the tree has identity normalized return. For a tree edge $e$ represented by a persistent boundary arrow $\eta_e:\xi_e\to\zeta_e$ from infinite tile $\mathcal{T}_i$ to infinite tile $\mathcal{T}_j$, let $a_{\xi_e}:x_i\to\xi_e$ and $a_{\zeta_e}:x_j\to\zeta_e$ be the unique AF arrows and put
\begin{equation}
\label{eq:tree-connector-arrow}
d_e=a_{\zeta_e}^{-1}\eta_e a_{\xi_e}:x_i\longrightarrow x_j.
\end{equation}
Starting with $c_0=(\Id,x_0)$, define $c_j=d_ec_i$ recursively along the rooted tree.

\keepwithstatement
\begin{prop}
\label{prop:tree-adapted-connectors}
With the connectors above, every exceptional generator arrow corresponding to an edge of the spanning tree has identity normalized return. Under the identification of Lemma~\ref{lemma:tree-transport}, its transport coordinate is that oriented edge. For a boundary edge outside the spanning tree, the transport coordinate is the signed chain along the tree path from its source tile to its range tile. Its return coordinate is the abelianized germ of the fundamental boundary cycle, conjugated to $x_0$ by the chosen connector.
\end{prop}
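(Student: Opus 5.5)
The plan is to compute the normalized returns directly from the recursive definition of the connectors and then read off the transport coordinate through Lemma~\ref{lemma:tree-transport}. Recall that in Theorem~\ref{thm:relative-normal-form} the connector at a point $x\in\mathcal{T}_i$ is $p_x=a_xc_i$, with $a_x:x_i\to x$ the unique tail arrow.

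\textbf{Tree edges.} Let $e$ be a tree edge oriented from $\mathcal{T}_i$ to $\mathcal{T}_j$, represented by $\eta_e:\xi_e\to\zeta_e$. Then
\begin{equation*}
p_{\zeta_e}^{-1}\eta_e p_{\xi_e}=c_j^{-1}a_{\zeta_e}^{-1}\eta_e a_{\xi_e}c_i=c_j^{-1}d_ec_i.
\end{equation*}
This is the identity because $c_j=d_ec_i$. Two small checks are needed here.
\begin{enumerate}[label=(\roman*)]
\item The recursion is well defined, since every nonroot tile has a unique parent edge in the rooted tree.
\item If the chosen representative arrow of the undirected tree edge points toward the root, we use the inverse arrow $\eta_e^{-1}$. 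Its normalized return is the inverse of the one above, hence also the identity, and its transport vector is the negative oriented edge.
\end{enumerate}
The transport coordinate of $\eta_e$ is $e_{\mathcal{T}_j}-e_{\mathcal{T}_i}=\partial_{\mathcal{O}}e$. So under Lemma~\ref{lemma:tree-transport} it corresponds exactly to the oriented edge $e$.

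\textbf{Edges outside the tree.} Let $\gamma:\xi\to\eta$ go from $\mathcal{T}_i$ to $\mathcal{T}_j$. Its transport vector is $e_{\mathcal{T}_j}-e_{\mathcal{T}_i}$, and Lemma~\ref{lemma:tree-transport} sends this to the signed chain of the tree path from $\mathcal{T}_i$ to $\mathcal{T}_j$.

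For the return coordinate, write the normalized return $h_\gamma=p_\eta^{-1}\gamma p_\xi$. Expand $p_\eta^{-1}$ along the tree path from $\mathcal{T}_j$ back to the root, and $p_\xi$ along the path from the root to $\mathcal{T}_i$. Each $c$ becomes a product of the arrows $d_{e'}$ and their inverses. The portions of the two paths shared near the root cancel, leaving the tree path from $\mathcal{T}_j$ to $\mathcal{T}_i$. Substituting $d_{e'}=a^{-1}\eta_{e'}a$ writes $h_\gamma$ as
\begin{equation*}
h_\gamma=c_i^{-1}a_{\xi}^{-1}\,\Gamma\,a_{\xi}c_i,
\end{equation*}
where $\Gamma$ is the closed boundary path at $\xi$ described next, and $a_\xi:x_i\to\xi$ is the tail arrow. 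In $\Gamma$, one first crosses $\gamma$, then follows the tree boundary edges back to $\mathcal{T}_i$. Consecutive tail pieces combine into the unique $\T$-arrows between the actual endpoints, by principality of $\T$. A final tail arrow returns to $\xi$. This is exactly the fundamental boundary cycle of Definition~\ref{def:spanning-tree-fundamental-cycle}.

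Since $a_\xi c_i=p_\xi$, the formula says that $h_\gamma$ is the germ of that cycle conjugated to $x_0$ by the chosen connector. Abelianizing gives the return coordinate. Loops are the case $i=j$ with an empty tree path, so only the final tail arrow appears. This agrees with the loop clause of Definition~\ref{def:spanning-tree-fundamental-cycle}.

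\textbf{Main obstacle.} The expected difficulty is the bookkeeping in the telescoping product for edges outside the tree. We must check that the orientations of the tree edges along the return path, which may point toward or away from the root, each produce $d_{e'}^{\pm1}$ with the correct sign. We must also check that the intermediate tail arrows merge into the single $\T$-arrows prescribed in the definition of the fundamental cycle. We would handle this by induction on the length of the tree path, using uniqueness of tail arrows at every junction.
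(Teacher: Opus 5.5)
Your proposal is correct and follows the paper's proof. The tree-edge computation $p_{\zeta_e}^{-1}\eta_e p_{\xi_e}=c_j^{-1}d_ec_i=(\Id,x_0)$ is identical, and for edges outside the tree the paper simply cites Definition~\ref{def:spanning-tree-fundamental-cycle} and conjugates by the connector, so your identity $h_\gamma=p_\xi^{-1}\Gamma p_\xi$ with $\Gamma=a_\xi c_ic_j^{-1}a_\eta^{-1}\gamma$ just makes that step explicit (and the bookkeeping you flag is routine, since $c_ic_j^{-1}$ cancels exactly along the common root segment and each junction product of tail arrows is the unique tail arrow between the actual endpoints).
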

\begin{proof}
For the chosen arrow $\eta_e:\xi_e\to\zeta_e$, we have $p_{\xi_e}=a_{\xi_e}c_i$ and $p_{\zeta_e}=a_{\zeta_e}c_j$. Hence
\begin{equation*}
p_{\zeta_e}^{-1}\eta_ep_{\xi_e}
=c_j^{-1}a_{\zeta_e}^{-1}\eta_ea_{\xi_e}c_i
=c_j^{-1}d_ec_i=(\Id,x_0).
\end{equation*}
The second assertion follows from the relative normal form. For a boundary edge outside the spanning tree, Definition~\ref{def:spanning-tree-fundamental-cycle} closes the corresponding arrow by the tree path and the required arrows inside the tiles. Conjugating this isotropy germ to $x_0$ gives its normalized return.
\end{proof}

\subsection{The geometric index theorem}
For $x\in\Sigma(g)\cap\mathcal{O}$, let $\mathcal{T}_x$ and $\mathcal{T}_{g(x)}$ be the source and range infinite tiles. With connectors adapted to the tree, define the normalized return contribution
\begin{equation}
\label{eq:return-flux}
\operatorname{Ret}_{\mathcal{O}}(g)
=\sum_{x\in\Sigma(g)\cap\mathcal{O}}
[p_{g(x)}^{-1}(g,x)p_x]_{\ab}
\in H_{\mathcal{O}}^{\ab}
\end{equation}
and the chain representing the transport coordinate
\begin{equation}
\label{eq:transport-current}
J_{\mathcal{O}}(g)
=\sum_{x\in\Sigma(g)\cap\mathcal{O}}
\partial_{\mathcal{O}}^{-1}
(e_{\mathcal{T}_{g(x)}}-e_{\mathcal{T}_x})
\in C_1(\mathscr{R}_{\mathcal{O}};\mathbb{Z}).
\end{equation}
For an oriented tree edge $e$, its coefficient in $J_{\mathcal{O}}(g)$ is
\begin{align}
\langle e,J_{\mathcal{O}}(g)\rangle
={}&|\{x\in\Sigma(g)\cap\mathcal{O}:
\mathcal{T}_x\in A_e,\ \mathcal{T}_{g(x)}\in B_e\}| \notag\\
&-|\{x\in\Sigma(g)\cap\mathcal{O}:
\mathcal{T}_x\in B_e,\ \mathcal{T}_{g(x)}\in A_e\}|.
\label{eq:edge-flux-count}
\end{align}

\begin{thm}[Geometric index theorem]
\label{thm:geometric-index}
For every $g\in\F(\G)$, the image of its index in the relative normal form, written with the connectors above and the transport coordinates of Lemma~\ref{lemma:tree-transport}, is
\begin{equation}
\label{eq:geometric-index}
\iota I(g)=
\bigoplus_{\mathcal{O}}
\left(\operatorname{Ret}_{\mathcal{O}}(g),
J_{\mathcal{O}}(g)\right).
\end{equation}
Hence, the transport part of the index is the signed net migration across the cuts of the rooted tree chosen in the boundary quotient, and the remaining part is the sum of the normalized returns around the persistent boundary, taken after abelianization.
\end{thm}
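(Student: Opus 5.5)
The plan is to derive Theorem~\ref{thm:geometric-index} directly from Theorem~\ref{thm:index-formula}, choosing the connectors adapted to the tree that are built before Proposition~\ref{prop:tree-adapted-connectors}. Theorem~\ref{thm:index-formula} gives $\iota I(g)=\sum_{x\in\Sigma(g)}[(g,x)]$. The relative normal form of Theorem~\ref{thm:relative-normal-form} is a homomorphism that is computed orbit by orbit. Summing the coordinates~\eqref{eq:index-coordinate} over $x\in\Sigma(g)\cap\mathcal{O}$ therefore places the classes $[p_{g(x)}^{-1}(g,x)p_x]_{\ab}$ in the $H_{\mathcal{O}}^{\ab}$ summand. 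By definition~\eqref{eq:return-flux}, their sum is $\operatorname{Ret}_{\mathcal{O}}(g)$. The sum is finite because $\Sigma(g)$ is finite by Lemma~\ref{lemma:finite-exceptional-support}. Points of $\Sigma(g)$ in different exceptional orbits land in different summands, which yields the direct sum over $\mathcal{O}$.

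For the transport summand, the normal form produces the vector $\sum_{x}(e_{\mathcal{T}_{g(x)}}-e_{\mathcal{T}_x})\in L_{\mathcal{O}}$. We transport it to $C_1(\mathscr{R}_{\mathcal{O}};\mathbb{Z})$ using the isomorphism $\partial_{\mathcal{O}}^{-1}$ of Lemma~\ref{lemma:tree-transport}. Since $\partial_{\mathcal{O}}^{-1}$ is additive, the result is exactly $J_{\mathcal{O}}(g)$ as defined in~\eqref{eq:transport-current}. This proves~\eqref{eq:geometric-index}, once the coordinates are interpreted as stated.

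The cut interpretation is the remaining geometric assertion, namely the edge formula~\eqref{eq:edge-flux-count}. We first observe that the coefficient of a tree edge $e$ in $\partial_{\mathcal{O}}^{-1}(l)$ equals $\kappa_e(l)$. Both sides are homomorphisms in $l$, so it suffices to check this on the basis $e_{\mathcal{T}_j}-e_{\mathcal{T}_0}$. The tree path from the root to $\mathcal{T}_j$ traverses $e$, necessarily in its outward orientation, precisely when $\mathcal{T}_j\in B_e$. In that case the coefficient is $1=\kappa_e(e_{\mathcal{T}_j}-e_{\mathcal{T}_0})$, because $\mathcal{T}_0\in A_e$. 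Otherwise both sides are $0$. Next we apply $\kappa_e$ to each summand $e_{\mathcal{T}_{g(x)}}-e_{\mathcal{T}_x}$. The value is $+1$ if the source tile lies in $A_e$ and the range tile lies in $B_e$, it is $-1$ in the reverse situation, and it is $0$ otherwise. Summing over $x$ gives~\eqref{eq:edge-flux-count}, which is the signed net migration across the cut $A_e\mid B_e$.

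I expect the only delicate point to be bookkeeping rather than mathematics. The return coordinates depend on the connectors, as Proposition~\ref{prop:connector-shear} shows, so the statement must be read with the tree-adapted connectors fixed. With that choice, Proposition~\ref{prop:tree-adapted-connectors} also gives the description of the summands. Germs of $g$ along tree edges contribute identity return, while other germs contribute their fundamental-cycle germs. This justifies the phrase ``normalized returns around the persistent boundary''. Here one uses that every exceptional germ of $g$ lies in $\mathcal{O}$ for some exceptional orbit, because sources of non-AF arrows lie in exceptional orbits by definition. The finiteness of $Q_{\mathcal{S}}(\mathcal{O})$ (Proposition~\ref{prop:finite-boundary-quotient}) is needed only to ensure that the spanning tree exists and that $L_{\mathcal{O}}$ has finite rank.
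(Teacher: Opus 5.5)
Your proposal is correct and follows the paper's proof essentially step for step: apply Theorem~\ref{thm:index-formula}, sum the normal-form coordinates orbit by orbit to get $\operatorname{Ret}_{\mathcal{O}}(g)$, push the transport vector through $\partial_{\mathcal{O}}^{-1}$ to get $J_{\mathcal{O}}(g)$, read off~\eqref{eq:edge-flux-count} from the cut functionals, and use Proposition~\ref{prop:tree-adapted-connectors} for the interpretation. Your basis check that the coefficient of $e$ in $\partial_{\mathcal{O}}^{-1}(l)$ equals $\kappa_e(l)$ supplies a step the paper only asserts. One caveat on the interpretive remark: a general exceptional germ $(g,x)$ need not be a single boundary generator edge, so its normalized return is a product of the edge returns $h_e$, not literally the identity or a single fundamental-cycle germ.
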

\begin{proof}
By Theorem~\ref{thm:index-formula},
\[
\iota I(g)=\sum_{x\in\Sigma(g)}[(g,x)]
\]
in $H_1(\G,\T)$. Since $\Sigma(g)$ is finite, we may group these terms according to the exceptional $\G$-orbits that meet $\Sigma(g)$.

Fix such an orbit $\mathcal{O}$. Under the relative normal form of
Theorem~\ref{thm:relative-normal-form}, the class of an exceptional arrow
$(g,x)$ with $x\in\mathcal{O}$ has coordinates
\[
\left(
[p_{g(x)}^{-1}(g,x)p_x]_{\ab},
e_{\mathcal{T}_{g(x)}}-e_{\mathcal{T}_x}
\right).
\]
Therefore the return coordinate of the contribution from $\mathcal{O}$ is
\[
\sum_{x\in\Sigma(g)\cap\mathcal{O}}
[p_{g(x)}^{-1}(g,x)p_x]_{\ab}=\operatorname{Ret}_{\mathcal{O}}(g)
\]
by~\eqref{eq:return-flux}.

The transport coordinate is
\[
\sum_{x\in\Sigma(g)\cap\mathcal{O}}
\left(e_{\mathcal{T}_{g(x)}}e_{\mathcal{T}_x}\right)\in L_{\mathcal{O}}.
\]
By Lemma~\ref{lemma:tree-transport}, the inverse of
$\partial_{\mathcal{O}}$ sends this vector to
\[
\sum_{x\in\Sigma(g)\cap\mathcal{O}}
\partial_{\mathcal{O}}^{-1}
\left(e_{\mathcal{T}_{g(x)}}-e_{\mathcal{T}_x}\right)
=
J_{\mathcal{O}}(g).
\]
Thus the contribution of $\mathcal{O}$ to $\iota I(g)$ is
$\bigl(\operatorname{Ret}_{\mathcal{O}}(g),J_{\mathcal{O}}(g)\bigr)$.
Summing over the exceptional orbits gives~\eqref{eq:geometric-index}.

It remains to identify the coefficients of the transport chain. Let $e$ be
an oriented edge of $\mathscr{R}_{\mathcal{O}}$, and let
$A_e\mid B_e$ be the cut obtained by removing $e$, with $B_e$ the component
not containing the root. By~\eqref{eq:cut-functional}, the coefficient of
$e$ in
$\partial_{\mathcal{O}}^{-1}
(e_{\mathcal{T}_{g(x)}}-e_{\mathcal{T}_x})$
is $1$ when $\mathcal{T}_x\in A_e$ and
$\mathcal{T}_{g(x)}\in B_e$, is $-1$ when
$\mathcal{T}_x\in B_e$ and $\mathcal{T}_{g(x)}\in A_e$, and is $0$
otherwise. Summing over $x\in\Sigma(g)\cap\mathcal{O}$ gives exactly
\eqref{eq:edge-flux-count}. Hence the transport coordinate records the
signed net migration across every tree cut.

Finally, Proposition~\ref{prop:tree-adapted-connectors} shows that the
chosen tree edges have identity normalized return. Therefore, with these
connectors, the complementary coordinate records precisely the sum of the
normalized returns contributed by the exceptional arrows, after
abelianization. This proves the stated geometric interpretation.
\end{proof}

The tree and connectors supply coordinates for the intrinsic class $I(g)$. Changing the spanning tree changes the basis of $L_{\mathcal{O}}$, while changing the connectors transforms the return coordinate as in Proposition~\ref{prop:connector-shear}. Once transport vanishes, the return coordinate is independent of the connector system.

\begin{defn}[Boundary current]
\label{def:boundary-current}
With a rooted spanning tree and connectors chosen as in Proposition~\ref{prop:tree-adapted-connectors} on every exceptional orbit, the element
$\bigoplus_{\mathcal{O}}(\operatorname{Ret}_{\mathcal{O}}(g),J_{\mathcal{O}}(g))$
in~\eqref{eq:geometric-index} is called the \emph{boundary current} of $g$. Its two coordinates are the normalized-return contribution and the transport coordinate represented by $J_{\mathcal{O}}(g)$.
\end{defn}

\begin{cor}
\label{cor:zero-boundary-flux}
An element $g\in\F(\G)$ has zero index if and only if, for every exceptional orbit $\mathcal{O}$,
\begin{equation}
\label{eq:zero-flux-criterion}
\langle e,J_{\mathcal{O}}(g)\rangle=0
\quad\text{for each }e\in E(\mathscr{R}_{\mathcal{O}}),
\qquad
\operatorname{Ret}_{\mathcal{O}}(g)=0.
\end{equation}
The vanishing condition is independent of the chosen rooted tree and connectors.
\end{cor}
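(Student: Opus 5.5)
The plan is to deduce Corollary~\ref{cor:zero-boundary-flux} directly from Theorem~\ref{thm:geometric-index} together with injectivity of $\iota$ in~\eqref{eq:relative}. First, since $H_1(\T)=0$, the map $\iota:H_1(\G)\to H_1(\G,\T)$ is injective. Hence $I(g)=0$ if and only if $\iota I(g)=0$ in $H_1(\G,\T)$.

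Second, I will fix, on every exceptional orbit, a rooted spanning tree and connectors adapted to it as in Proposition~\ref{prop:tree-adapted-connectors}. By Theorem~\ref{thm:relative-normal-form}, the coordinate map is an isomorphism onto $\bigoplus_{\mathcal{O}}(H_{\mathcal{O}}^{\ab}\oplus L_{\mathcal{O}})$. Since the sum is direct, $\iota I(g)=0$ exactly when every orbit component vanishes. Theorem~\ref{thm:geometric-index} identifies that component as $(\operatorname{Ret}_{\mathcal{O}}(g),\partial_{\mathcal{O}}J_{\mathcal{O}}(g))$. Lemma~\ref{lemma:tree-transport} makes $\partial_{\mathcal{O}}$ an isomorphism, so the transport part vanishes if and only if $J_{\mathcal{O}}(g)=0$ in $C_1(\mathscr{R}_{\mathcal{O}};\mathbb{Z})$. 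The tree edges form a basis of this free abelian group, so $J_{\mathcal{O}}(g)=0$ means every coefficient $\langle e,J_{\mathcal{O}}(g)\rangle$ vanishes. Together with $\operatorname{Ret}_{\mathcal{O}}(g)=0$, this gives~\eqref{eq:zero-flux-criterion}. Only finitely many orbits meet $\Sigma(g)$; on every other orbit both coordinates are trivially zero.

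For the independence claim, the point is that the left side, $I(g)=0$, is intrinsic. The equivalence just proved holds for every admissible choice of tree and connectors, so the truth of the right-hand condition cannot depend on those choices. As a consistency check, this also follows from Proposition~\ref{prop:connector-shear}. The shear $(a,l)\mapsto(a-u_\#(l),l)$ maps $\{l=0,\,a=0\}$ to itself, and changing the tree only changes the basis of $L_{\mathcal{O}}$ via $\partial_{\mathcal{O}}$.

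I expect no serious obstacle. The one point needing care is making sure the equivalence really is applied to the full relative class, so that the direct-sum decomposition over orbits is used rather than any cancellation through the defect map. There is no such issue here, because we test vanishing in $H_1(\G,\T)$ itself, not in $H_0(\T)$.
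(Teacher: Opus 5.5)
Your proposal is correct and follows essentially the same route as the paper. The paper reduces via the injectivity of $\iota$ and the orbitwise direct sum of Theorem~\ref{thm:relative-normal-form}, then uses that the oriented tree edges form a basis of $C_1(\mathscr{R}_{\mathcal{O}};\mathbb{Z})$. For independence, the paper argues through the intrinsic transport vector together with Proposition~\ref{prop:connector-shear}, which is exactly your consistency check; your primary argument, that $I(g)=0$ is intrinsic while the equivalence holds for every admissible choice of tree and connectors, is equally valid.
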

\begin{proof}
The injection $\iota$ and the direct-sum decomposition in Theorem~\ref{thm:relative-normal-form} reduce the assertion to vanishing of both coordinates. These coefficients are all zero exactly when $J_{\mathcal{O}}(g)=0$, since the oriented tree edges form a basis. Tree independence follows since this is equivalent to vanishing of the intrinsic transport vector. When that vector is zero, Proposition~\ref{prop:connector-shear} leaves the return coordinate unchanged.
\end{proof}

By Definition~\ref{def:boundary-current}, the right-hand side of \eqref{eq:geometric-index} is the boundary current of $g$. Every boundary current arising from an element of the full group has zero defect, since
\begin{equation}
\label{eq:boundary-current-zero-defect}
\delta(\iota I(g))=0\quad\text{in }H_0(\T).
\end{equation}
This equality holds in the dimension group. It does not require the ordinary graph boundary of $J_{\mathcal{O}}(g)$ to vanish at every vertex. For example, the generator of a minimal Cantor $\mathbb{Z}$-system has one unit of transport across its orbit cut.

\subsection{Word trajectories, traverses, and normalized returns}\label{subsec:word-traverses}
We now read the geometric index formula from a word trajectory. The notion of \emph{trajectory} used here comes from our earlier growth paper \cite[Definitions~3.10 and~4.1]{kua26}. A \emph{traverse} of a finite tile is a finite a path whose endpoints are boundary points of that tile and whose intermediate vertices are not boundary points. The endpoints may coincide. We call the trajectory of a traverse without endpoints an \emph{internal path} on a finite tile. A \emph{return word} in the growth argument records a boundary-entry step at a persistent boundary point $\xi$, an internal excursion in its infinite tile returning to that same $\xi$, and a boundary-exit step there. The intervening edges are internal to the tile. The trajectory of the whole word need not end where it started. This is a word with a specified occurrence, not an isotropy germ defined only by its product.

By contrast, a based boundary return in Definition~\ref{def:boundary-path-return} is a composable path closed at its chosen basepoint. The normalized return $p_y^{-1}\gamma p_x$ in Theorem~\ref{thm:relative-normal-form} is its evaluated isotropy germ after an arrow is closed by the chosen connectors. Such a path can involve several traverses and boundary crossings and may contain several return-word occurrences. 

Let $F=F_m\cdots F_1$ be a product of bisections $F_j\in\mathcal{S}$ and let $x\in\sg(F)$. Let $\gamma=(F,x)\in F$ be the germ with source $x$. Put $z_0=x$, $z_j=F_j(z_{j-1})$, and $\gamma_j=(F_j,z_{j-1})$. These trajectory vertices are independent of the basepoint used for normalized returns. The sequence
\begin{equation}
\label{eq:word-trajectory}
z_0,z_1,\ldots,z_m
\end{equation}
is the concrete trajectory of $F$ from $x$. Let
\begin{equation}
\label{eq:exceptional-skeleton-set}
E(F,x)=\{j: \gamma_j\notin\T\}.
\end{equation}
The ordered list $(\gamma_j)_{j\in E(F,x)}$ records the exceptional steps of the trajectory. The following decomposition separates these homological contributions from the intervening finite-level traverses.
\keepwithstatement
\begin{prop}
\label{prop:traverse-index-decomposition}
For the germ $\gamma=(F,x)\in F$,
\begin{equation}
\label{eq:exceptional-skeleton-class}
[\gamma]=\sum_{j\in E(F,x)}[\gamma_j]
\quad\text{in }H_1(\G,\T).
\end{equation}
For every sufficiently deep finite level $n$, each nonempty segment between consecutive exceptional steps is an internal path in one level-$n$ finite tile with boundary endpoints. Cutting at each intermediate boundary point gives a sequence of level-$n$ traverses. The initial segment before the first exceptional step and the final segment after the last may have an endpoint in the interior, so they need not be traverses. Refining the level can change the decomposition into traverses, but it does not change \eqref{eq:exceptional-skeleton-class} or the boundary current in Theorem~\ref{thm:geometric-index}.
\end{prop}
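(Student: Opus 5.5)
The relative identity~\eqref{eq:exceptional-skeleton-class} comes first and is purely algebraic. I would write $\gamma=\gamma_m\gamma_{m-1}\cdots\gamma_1$ as a composable product of generator germs. This holds because $(F,x)$ is the germ of the product bisection $F_m\cdots F_1$ at $x$, and the germ operations~\eqref{eq:germ-operations-prelim} multiply germs along the trajectory~\eqref{eq:word-trajectory}. In the presentation of $H_1(\G,\T)$ recalled in Subsection~\ref{subsec:AFbd-homology}, the relation $[\alpha\beta]=[\alpha]+[\beta]$ applied $m-1$ times gives $[\gamma]=\sum_{j=1}^m[\gamma_j]$. The symbols with $\gamma_j\in\T$ vanish, so only the indices in $E(F,x)$ survive. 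This part needs no tile geometry.

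Next comes the finite-level decomposition. I would fix the finitely many non-exceptional steps $\gamma_j\in\T$. Each is a generator germ in $\T$, so by the compatibility in Definition~\ref{def:compatible-tile-presentation} (the exhaustion clause) it is represented by an internal edge at some level. Because internal edges remain internal under refinement, one common level $n_1$ works for all of them. For $j\in E(F,x)$, Proposition~\ref{prop:persistence-criterion} shows that the $F_j$-edge at $z_{j-1}$ is a boundary edge at every level, so $z_{j-1}$ and $z_j$ are boundary points of their level-$n$ tiles for all $n$.

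Take $n\ge n_1$ and look at a maximal segment between two consecutive exceptional steps. It consists of internal edges, so it lies in a single level-$n$ finite tile, and both of its endpoints are boundary points. I would then cut this segment at every intermediate vertex that is a boundary point of that tile. Each resulting piece has boundary endpoints and no boundary points in its interior, which is exactly a traverse. The initial segment from $z_0$ and the final segment to $z_m$ have only one endpoint forced to be a boundary point, which accounts for the stated exception.

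The invariance under refinement is then immediate. The set $E(F,x)$ and the germs $\gamma_j$ do not depend on $n$, so the right side of~\eqref{eq:exceptional-skeleton-class} does not depend on $n$. The boundary current is the normal-form image of $[\gamma]$, computed through Theorem~\ref{thm:relative-normal-form} and Theorem~\ref{thm:geometric-index}, so it does not depend on $n$ either. What can change is which intermediate vertices count as boundary points at level $n$, and hence where the segments are cut into traverses.

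The main obstacle is being careful about which level-$n$ vertex represents $z_j$, namely the prefix $(z_j)_n$, and checking that the internal edges of one segment really stay inside a single finite tile. This holds because internal edges preserve the terminal vertex and the continuation.
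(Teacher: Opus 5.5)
Your proposal is correct and follows the paper's proof essentially step for step: the additivity relation in $H_1(\G,\T)$ kills the AF steps, one common deep level makes every AF step an internal edge inside a single finite tile, the endpoints of the exceptional steps are persistent boundary points, and refinement only changes where the internal segments are cut into traverses. The one point you should make explicit is why $z_j$ is a boundary point: the inverse germ $(F_j^{-1},z_j)$ is also exceptional because $\mathcal{S}$ is symmetric, and this is what justifies the paper's remark that ranges of exceptional generator arrows are persistent boundary points.
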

\begin{proof}
The defining relation in $H_1(\G,\T)$ gives $[\gamma]=\sum_{j=1}^m[\gamma_j]$. Every step with $\gamma_j\in\T$ vanishes, proving \eqref{eq:exceptional-skeleton-class}.

There are only finitely many steps of this trajectory belonging to $\T$. By the condition that tail arrows eventually become internal edges in Subsection~\ref{subsec:tile-inflation}, all of them are represented by internal $\mathcal{S}$-edges at one sufficiently deep level and at every subsequent level. It follows that each nonempty segment between consecutive exceptional steps is an internal path in one finite tile. The range of the preceding exceptional arrow and the source of the following one are persistent boundary points, hence their level-$n$ truncations are boundary points. Cutting the path at all intermediate boundary visits gives the stated traverses. The initial and final segments can start or end at the arbitrary endpoints of $\gamma$, so no boundary condition is asserted at those endpoints. At deeper levels, internal edges remain internal and boundary points may become internal, and thus consecutive traverses may merge. All such pieces still lie in $\T$ and vanish in the relative group. Therefore, the ordered exceptional steps and the resulting relative class are unchanged.
\end{proof}

Proposition~\ref{prop:traverse-index-decomposition} relates the index to the traverse method used in growth estimates. Traverses describe the internal segments of a trajectory between exceptional steps. Those internal arrows vanish in relative homology. The index records the signed transport of the exceptional steps across the boundary cuts and their normalized return germs after abelianization.

\begin{exmp}[The formula with one border]
\label{exmp:one-border-index}
Let $\tau$ be a minimal homeomorphism of a Cantor set $X$, put $\G=\operatorname{Germ}(\langle\tau\rangle\curvearrowright X)$, and let $\T$ be the AF groupoid obtained by cutting the orbit at $x_0$, as in \cite[Section~5.5]{nek22}. The exceptional orbit contains exactly the two $\T$-orbits
\begin{equation*}
\mathcal{O}_-=\{\tau^k(x_0):k\leq0\},
\qquad
\mathcal{O}_+=\{\tau^k(x_0):k>0\}.
\end{equation*}
The groups of germs are trivial and the boundary tree has one edge, oriented from $\mathcal{O}_-$ to $\mathcal{O}_+$. Hence Theorem~\ref{thm:geometric-index} becomes
\begin{align}
I(g)={}&|\{x\in\Sigma(g)\cap\mathcal{O}_-:g(x)\in\mathcal{O}_+\}|\notag\\
&-|\{x\in\Sigma(g)\cap\mathcal{O}_+:g(x)\in\mathcal{O}_-\}|.
\label{eq:one-border-flux}
\end{align}
Hence, $I(g)$ is the net migration across the border between $x_0$ and $\tau(x_0)$. In particular, $I(\tau)=1$, after choosing the displayed orientation. This is exactly the geometric interpretation described in \cite[Section~5.5]{nek22}.
\end{exmp}

\begin{exmp}[Pure return and mixed boundary motion]
\label{exmp:pure-return-mixed}
If $k_{\mathcal{O}}=1$, so that the orbital graph has only one infinite tile, the chosen spanning tree $\mathscr{R}_{\mathcal{O}}$ has no edges and the geometric formula reduces to the sum of the normalized returns after abelianization. For a mixed example, suppose two infinite tiles are joined by a chosen tree edge $\eta:\xi\to\zeta$ and by a second persistent edge $q:\xi'\to\zeta'$ with the same orientation between the tiles. Let $a:\xi\to\xi'$ and $b:\zeta'\to\zeta$ be the tail arrows. With connectors adapted to the tree, $[\eta]=(0,e)$ and $[q]=([h]_{\ab},e)$ for a return germ $h$. The closed path $\eta^{-1}bqa$ thus has coordinate $([h]_{\ab},0)$. This elementary model separates net transport from the return left after the same transport has been cancelled.
\end{exmp}

\subsection{Surjectivity under minimality}
When $\G$ is minimal and has comparison, the AH exact sequence of \cite[Corollary~6.14]{Li25} implies that the index map $I:\F(\G)\to H_1(\G)$ is surjective. In the present setting, these hypotheses follow from minimality of the AF core.

The distinction between the defect condition and index surjectivity is worth emphasizing. The equation $\delta(z)=0$ says only that the relative class $z\in H_1(\G,\T)$ comes from $H_1(\G)$. Surjectivity asks for more: the class must be realizable by a single compact open full bisection, rather than merely by an integral $1$-cycle. Zero defect does not by itself arrange the required local bisections with pairwise disjoint sources and ranges on the unit space. Under minimality, comparison gives such a realization by Li's theorem.

\keepwithstatement
\begin{lemma}
\label{lemma:AF-comparison}
If $\T$ is minimal and clopen subsets $A,B\subseteq \Omega(\mathsf{B})$ satisfy $\mu(A)<\mu(B)$ for every $\T$-invariant probability measure, there is a compact open bisection of $\T$ with source $A$ and range contained in $B$.
\end{lemma}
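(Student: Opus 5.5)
We will prove Lemma~\ref{lemma:AF-comparison} by working at a single deep Bratteli level and matching cylinders vertex by vertex; the only nontrivial input is a uniform strict inequality that survives passage to deep levels.

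\textbf{Step 1: finite-level counts.} Since $A$ and $B$ are clopen, there is a level $N$ at which both are unions of cylinders ending in $V_N$. For every $n\geq N$ we use the cylinder-count vectors $\mathbf{c}_n(A)$ and $\mathbf{c}_n(B)$ of~\eqref{eq:cylinder-count}. It suffices to find some $n$ with
\[
\mathbf{c}_n(A)_v\leq\mathbf{c}_n(B)_v\qquad\text{for every }v\in V_n.
\]
Indeed, two cylinders $[p],[q]$ ending at the same vertex $v$ are exchanged by the prefix replacement $[q,p]\subseteq\mathfrak{T}_{\mathsf{B},n-1}$. Given the displayed inequality, at each $v$ we choose an injection from the $A$-cylinders at $v$ into the $B$-cylinders at $v$. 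The union of the corresponding prefix replacements is a compact open bisection of $\T$ with source $A$ and range contained in $B$.

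\textbf{Step 2: the quantitative gap.} The $\T$-invariant probability measures form a weak$^*$ compact convex set. Each such measure corresponds to a normalized state on $H_0(\T)$, and $\mu\mapsto\mu(B)-\mu(A)$ is continuous on this set. Hence
\[
\varepsilon:=\min_\mu\bigl(\mu(B)-\mu(A)\bigr)>0.
\]
Minimality of $\T$ makes every invariant measure atomless, and the estimate used in~\eqref{eq:bounded-state-estimate} gives
\[
\max_{v\in V_n}\mu(\text{cylinder ending at }v)\to0
\]
uniformly in $\mu$. This uniformity follows from compactness together with Dini's theorem applied to the decreasing continuous functions $\mu\mapsto\max_{p}\mu([p])$.

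\textbf{Step 3: the main obstacle, passing from measures to pointwise vertex counts.} The vector $z_n=\mathbf{c}_n(B)-\mathbf{c}_n(A)$ may have negative entries at every level even though all states are positive on $[z_n]$. We handle this with the standard dimension-group fact for simple dimension groups: an element that is strictly positive under every state is an order unit, so it is eventually represented by a nonnegative (indeed positive) vector.

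To prove this fact directly, we argue by contradiction. Suppose that for infinitely many $n$ some coordinate $(z_n)_{v_n}<0$. Let $\nu_n$ be the point mass at $v_n$, normalized by the number of paths to $v_n$ and transported to later levels. This defines approximate invariant measures; any weak$^*$ limit point is $\T$-invariant. Along the compatible representatives, $\nu_n$ evaluates $z$ as negative, giving a limiting invariant measure $\mu$ with $\mu(B)-\mu(A)\leq0$. This contradicts $\varepsilon>0$.

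This limiting argument requires care, and we expect it to be the main technical point. Alternatively, we can cite the Effros--Handelman--Shen / Goodearl result for simple dimension groups~\cite{HPS92}, namely that strict positivity under all states implies positivity in the order. Positivity means $z_n\geq0$ coordinatewise for some $n$, which is exactly the inequality needed in Step~1.
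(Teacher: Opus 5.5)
Your proposal is correct and follows essentially the paper's route. The paper also argues by contradiction using the uniform probability measures on the finite $\mathfrak{T}_{\mathsf{B},n}$-classes, whose weak limits are $\T$-invariant, and then matches the $A$-cylinders injectively into the $B$-cylinders at each terminal vertex. Those class measures are the precise form your $\nu_n$ should take, since a vertex functional cannot literally be ``transported to later levels''. Your Step~2 (the uniform gap and the atomlessness estimate) is not needed, and the Effros--Handelman--Shen positivity theorem you cite is a valid alternative to the limiting argument.
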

This is the AF case of the comparison argument in \cite[Subsection~5.5.2]{nek22}, using the matching lemma for elementary groupoids \cite[Lemma~5.5.3(1)]{nek22}. We repeat the proof here.
\begin{proof}
We use the usual argument with finite classes. Let $\mu_{n,x}$ be the uniform probability measure on the finite class $\mathfrak{T}_{\mathsf{B},n}x$. Every weak limit of measures $\mu_{n_j,x_j}$ with $n_j\to\infty$ is $\T$-invariant. Indeed, every fixed compact open bisection of $\T$ is contained in one stage, and at all later stages it bijects its source and range inside each finite class. If arbitrarily deep classes had at least as many points in $A$ as in $B$, a weak limit would contradict the strict inequality for invariant measures. It follows that at a sufficiently deep stage every class has fewer points in $A$ than in $B$. Refine the finitely many elementary multisections so that membership in $A$ and $B$ is constant on their levels. Matching the $A$-levels injectively into the $B$-levels produces the asserted bisection.
\end{proof}

\begin{cor}
\label{cor:index-surjective}
If $\T$ is minimal, then $\G$ is minimal, has comparison, and the index map $I:\F(\G)\to H_1(\G)$ is surjective.
\end{cor}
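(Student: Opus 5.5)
I will prove the three assertions in order: minimality of $\G$, comparison for $\G$, and surjectivity of $I$. The last will follow from the AH exact sequence of \cite[Corollary~6.14]{Li25}, which applies to minimal ample groupoids with comparison. The non-Hausdorff setting is covered there by \cite[Subsection~2.3.1]{Li25}.

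\textbf{Minimality.} Minimality of $\G$ is immediate. Every $\G$-orbit contains a $\T$-orbit, since $\T\subseteq\G$ with the same unit space. If every $\T$-orbit is dense, then so is every $\G$-orbit.

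\textbf{Comparison.} For comparison, let $A,B$ be clopen with $B\neq\varnothing$, and suppose $\mu(A)<\mu(B)$ for every $\mu\in M(\G)$. The key point is that $M(\G)=M(\T)$.
\begin{itemize}
\item Every $\G$-invariant measure is $\T$-invariant, because compact open bisections of $\T$ are compact open bisections of $\G$.
\item Conversely, every $\T$-invariant probability measure is $\G$-invariant by Proposition~\ref{prop:defect-infinitesimal}. That result uses minimality of $\T$ to make the measures atomless, so that the finitely many exceptional arrows of a compact bisection from Lemma~\ref{lemma:finite-exceptional-support} carry no mass.
\end{itemize}
Hence the hypothesis holds for all $\T$-invariant measures. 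Lemma~\ref{lemma:AF-comparison} then produces a compact open bisection of $\T\subseteq\G$ with source $A$ and range contained in $B$. This is exactly comparison for $\G$ in the sense of Definition~\ref{def:minimality-comparison}.

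One small point needs checking. The comparison definition quantifies over $M(\G)$, and if $M(\G)$ were empty the hypothesis would be vacuous. This cannot happen, because $\T$ is AF on a compact space and so admits an invariant probability measure, namely a weak limit of the uniform measures on finite classes used in the proof of Lemma~\ref{lemma:AF-comparison}. So the quantification is over a nonempty set and nothing degenerate occurs.

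\textbf{Surjectivity.} With minimality and comparison in hand, I will invoke \cite[Corollary~6.14]{Li25}. Its exact sequence ends with
\[
\F(\G)_{\ab}\to H_1(\G)\to 0,
\]
where the map is induced by Matui's index $I$ of Definition~\ref{def:index-map}. Surjectivity of $I$ follows.

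\textbf{Main obstacle.} I expect the main difficulty to be hypothesis matching rather than mathematics. I need to confirm two things:
\begin{itemize}
\item Li's comparison notion agrees with the one in Definition~\ref{def:minimality-comparison}, which is taken from \cite[Subsection~5.5.2]{nek22}.
\item Li's map onto $H_1$ is the same as the index $I$, possibly up to sign.
\end{itemize}
A sign discrepancy would not affect surjectivity. If the comparison notions turn out to differ slightly, for instance a version with finitely many clopen pieces or one requiring $\mu(A)<\mu(B)$ only for ergodic measures, I would redo the argument of Lemma~\ref{lemma:AF-comparison} in that form. The finite-class counting argument adapts directly.
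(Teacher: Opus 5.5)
Your proposal is correct and follows essentially the same route as the paper: minimality because $\T$-orbits sit inside $\G$-orbits, $M(\G)=M(\T)$ via Proposition~\ref{prop:defect-infinitesimal}, comparison from Lemma~\ref{lemma:AF-comparison} with the bisection chosen inside $\T$, and surjectivity from \cite[Corollary~6.14]{Li25}. The paper additionally makes explicit the Cantor assumption on $\Omega(\mathsf{B})$, i.e.\ no isolated units, when invoking Li's corollary; your nonemptiness check on $M(\G)$ is harmless but not needed, since $M(\G)=M(\T)$ already makes the two comparison hypotheses identical.
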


\begin{proof}
Minimality of $\T$ implies minimality of $\G$. By Proposition~\ref{prop:defect-infinitesimal}, the $\T$-invariant and $\G$-invariant probability measures coincide. Lemma~\ref{lemma:AF-comparison} gives comparison for the AF core. Hence, if compact open sets $U,V\subseteq \Omega(\mathsf{B})$ satisfy $\mu(U)<\mu(V)$ for every $\G$-invariant probability measure, the same inequalities hold for every $\T$-invariant measure, and a comparison bisection may be chosen inside $\T$. Hence, $\G$ has comparison. Since $\Omega(\mathsf{B})$ is a Cantor space, \cite[Corollary~6.14]{Li25} applies and gives surjectivity of the index map.
\end{proof}

\section{Almost finiteness and the kernel of the index map}
\label{sec:kernel}
The last section analyzed the index map from both algebraic and geometric perspectives. In particular, the surjectivity of the index map is governed by the defect map in the relative homology sequence: a relative class can be realized by an absolute homology class precisely when its defect vanishes. This section studies almost finiteness from the geometric side. The boundary data appearing in the index map also controls the construction of finite approximations, and the vanishing of the corresponding obstruction allows the exceptional boundary to be incorporated into these approximations.

We first characterize approximation of $\G$ by an elementary exhaustion of its fixed AF core. We then prove almost finiteness under minimality and compact generation, allowing elementary approximations outside the original core. Finite AF orbits distinguish these two problems. The section also gives a nonminimal obstruction, a quantitative finite-tile estimate, and the known kernel consequences.

We use the standard groupoid formulation of almost finiteness introduced by Matui in \cite{Matui12}. 

\begin{defn}\label{def:almost-finite}
The groupoid $\G$ is \emph{almost finite} if for every compact set $C\subseteq\G$ and every $\varepsilon>0$ there is an elementary subgroupoid $\mathfrak{K}\subseteq\G$ with $\mathfrak{K}^{(0)}=\Omega(\mathsf{B})$ such that
\begin{equation}
\label{eq:almost-finite-condition}
\frac{|C\mathfrak{K}_x\setminus\mathfrak{K}_x|}{|\mathfrak{K}_x|}<\varepsilon
\end{equation}
for every $x\in \Omega(\mathsf{B})$. Here $\mathfrak{K}_x=\{\kappa\in\mathfrak{K}:\sg(\kappa)=x\}$ is the source fiber at $x$, and
$C\mathfrak{K}_x=\{\gamma\kappa:\gamma\in C,\ \kappa\in\mathfrak{K}_x,\ \sg(\gamma)=\rg(\kappa)\}$.    
\end{defn}

 Since $\mathfrak{K}$ is principal, the range map identifies $\mathfrak{K}_x$ with the finite $\mathfrak{K}$-orbit of $x$. The numerator nevertheless counts arrows in $C\mathfrak{K}_x\setminus\mathfrak{K}_x$, not merely their distinct range points. This distinction matters when $\G$ has nontrivial isotropy. We use \cite[Definition~5.5.1]{nek22}, which treats totally disconnected groupoids with compact unit space. See also the original Hausdorff formulation \cite[Definition~6.2]{Matui12}.

\subsection{Uniform averages in the AF core}
We first work with an arbitrary increasing elementary exhaustion $\mathfrak{K}_1\subseteq\mathfrak{K}_2\subseteq\cdots$ of $\T$, with $\mathfrak{K}_n^{(0)}=\Omega(\mathsf{B})$. No finite family of generating bisections, hypothesis of bounded type, or minimality assumption is used in this subsection. Write $\mathfrak{K}_n(x)=\rg((\mathfrak{K}_n)_x)$ for its finite orbit and $M(\T)$ for the space of invariant Borel probability measures. For $f\in C(\Omega(\mathsf{B}),\mathbb{R})$, put
\begin{equation}
\label{eq:AF-averages}
P_nf(x)=\frac{1}{|\mathfrak{K}_n(x)|}\sum_{y\in\mathfrak{K}_n(x)}f(y),
\qquad q_n(f)=\max_{x\in \Omega(\mathsf{B})}P_nf(x).
\end{equation}
On each clopen level of a finite $\mathfrak{K}_n$-tower, the finite set $\mathfrak{K}_n(x)$ varies by the corresponding tower identifications. Hence $P_nf$ is continuous. \begin{lemma}
\label{lemma:uniform-AF-averaging}
The set $M(\T)$ is nonempty and compact. For every $f\in C(\Omega(\mathsf{B}),\mathbb{R})$,
\begin{equation}
\label{eq:AF-averages-limit}
q_n(f)\ \downarrow\ \max_{\mu\in M(\T)}\int_{\Omega(\mathsf{B})} f\,d\mu.
\end{equation}
\end{lemma}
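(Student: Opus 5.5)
The plan is to prove the lemma by the standard averaging argument for increasing elementary exhaustions. Three things are needed: nonemptiness and compactness of $M(\T)$, monotonicity of $q_n(f)$, and identification of the limit with the maximum of $\int f\,d\mu$ over $M(\T)$.

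\emph{Monotonicity.} Since $\mathfrak{K}_n\subseteq\mathfrak{K}_{n+1}$ and each $\mathfrak{K}_n$ is principal, each finite orbit $\mathfrak{K}_{n+1}(x)$ is a disjoint union of $\mathfrak{K}_n$-orbits. Hence $P_{n+1}f(x)$ is a convex combination of the values $P_nf(y)$ for $y\in\mathfrak{K}_{n+1}(x)$, with weights $|\mathfrak{K}_n(y)|/|\mathfrak{K}_{n+1}(x)|$. Therefore $q_{n+1}(f)\leq q_n(f)$, and the decreasing sequence has a limit $q(f)\geq-\|f\|_\infty$.

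\emph{Compactness and the lower bound.} $M(\T)$ is weak-$*$ closed in the compact space of probability measures, because invariance is tested on countably many compact open bisections through continuous indicator functions. So compactness is immediate. Next, every $\mu\in M(\T)$ is $\mathfrak{K}_n$-invariant, and averaging over the finite orbits of an elementary groupoid preserves integrals: decompose the unit space into multisection levels and use $\mu(V_{ii})=\mu(V_{jj})$. This gives $\int f\,d\mu=\int P_nf\,d\mu\leq q_n(f)$, and hence $\sup_{\mu}\int f\,d\mu\leq q(f)$. This bound does not need nonemptiness.

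\emph{Upper bound and nonemptiness; the main step.} Choose $x_n$ with $P_nf(x_n)=q_n(f)$, which is possible since $P_nf$ is continuous on a compact space. Let $\mu_n=\mu_{n,x_n}$ be the uniform probability measure on $\mathfrak{K}_n(x_n)$, so that $\int f\,d\mu_n=q_n(f)$. Pass to a weak-$*$ convergent subsequence with limit $\mu$. To see that $\mu$ is $\T$-invariant, fix a compact open bisection $W\subseteq\T$. By compactness of $W$ and openness of the exhaustion, $W\subseteq\mathfrak{K}_m$ for some $m$. For $n\geq m$, $W$ maps $\sg(W)\cap\mathfrak{K}_n(x_n)$ bijectively onto $\rg(W)\cap\mathfrak{K}_n(x_n)$, because $W$-arrows stay inside $\mathfrak{K}_n$-orbits. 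Thus $\mu_n(\sg(W))=\mu_n(\rg(W))$, and since indicators of clopen sets are continuous, the equality passes to the limit. So $\mu\in M(\T)$, which proves nonemptiness. Moreover $\int f\,d\mu=\lim q_n(f)=q(f)$, since $q_n(f)$ converges along the whole sequence. Combined with the lower bound, the supremum is attained and equals $q(f)$, which gives the claimed limit.

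The step I expect to need the most care is the claim that a compact open bisection of $\T$ lies in a single stage $\mathfrak{K}_m$. This requires each $\mathfrak{K}_n$ to be open in $\T$, which follows because elementary groupoids are compact open, together with compactness of $W$ covered by the increasing open sets $W\cap\mathfrak{K}_n$. The continuity of $P_nf$ was already noted before the lemma.
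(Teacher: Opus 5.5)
Your proposal is correct and follows essentially the same route as the paper: monotonicity from the decomposition of $\mathfrak{K}_{n+1}$-orbits into $\mathfrak{K}_n$-orbits, the lower bound from $\int P_nf\,d\mu=\int f\,d\mu$, and the upper bound and nonemptiness from a weak limit of uniform measures on maximizing $\mathfrak{K}_n$-orbits, with invariance coming from each compact open $\T$-bisection lying in a single stage. The only differences are that you use the maximizers $x_n$ already for nonemptiness, whereas the paper first takes arbitrary $x_n$, and two harmless slips. First, $P_{n+1}f(x)$ is the uniform average of the values $P_nf(y)$ over $y\in\mathfrak{K}_{n+1}(x)$, equivalently weight $|O|/|\mathfrak{K}_{n+1}(x)|$ for each $\mathfrak{K}_n$-orbit $O\subseteq\mathfrak{K}_{n+1}(x)$, and not weight $|\mathfrak{K}_n(y)|/|\mathfrak{K}_{n+1}(x)|$ per point. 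Second, the identity $\int P_nf\,d\mu=\int f\,d\mu$ needs invariance under the level identifications on all clopen subsets of the levels, which follows from invariance for the restricted bisections, rather than only $\mu(V_{ii})=\mu(V_{jj})$.
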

\begin{proof}
Every $\mathfrak{K}_{n+1}$-orbit is a disjoint union of $\mathfrak{K}_n$-orbits. Its average is therefore a convex combination of the averages over those smaller orbits, and thus $q_{n+1}(f)\leq q_n(f)$.

For $x\in\Omega(\mathsf{B})$, let $\mu_{n,x}$ be the uniform probability measure on $\mathfrak{K}_n(x)$. Choose arbitrary $x_n$ and a weakly convergent subsequence of $\mu_{n,x_n}$. Such a subsequence exists since the space of probability measures on the compact unit space is weakly compact. Its limit is $\T$-invariant. Indeed, if $V$ is a compact open $\T$-bisection, then $V\subseteq\mathfrak{K}_m$ for some $m$. For $n\geq m$, the source and range of $V$ meet every $\mathfrak{K}_n$-orbit in equally many points. Hence $\mu_{n,x}(\sg(V))=\mu_{n,x}(\rg(V))$, and the equality passes to a weak limit since these sets are clopen. The same argument applied to clopen restrictions of $V$ gives invariance. Thus $M(\T)$ is nonempty. It is compact since invariance is a closed condition in the compact space of probability measures.

For $\mu\in M(\T)$, invariance on each finite tower gives $\int P_nf\,d\mu=\int f\,d\mu$, and hence $q_n(f)\geq\int f\,d\mu$. Conversely, choose $x_n$ attaining $q_n(f)$ and take a convergent subsequence of $\mu_{n,x_n}$. Its limit $\mu$ lies in $M(\T)$, while $\int f\,d\mu_{n,x_n}=q_n(f)$. Since $(q_n(f))$ is decreasing, this subsequence has the same limit as the full sequence. Therefore $\lim_nq_n(f)=\int f\,d\mu$, which gives \eqref{eq:AF-averages-limit}.
\end{proof}

\begin{lemma}
\label{lemma:exceptional-neighborhoods}
Let $E\subset \Omega(\mathsf{B})$ be finite, and suppose that every point of $E$ has an infinite $\T$-orbit. For every $\eta>0$, there is a clopen neighborhood $A$ of $E$ and an integer $N$ such that
\begin{equation}
\label{eq:uniform-small-AF-density}
\frac{|A\cap\mathfrak{K}_n(x)|}{|\mathfrak{K}_n(x)|}<\eta
\qquad(x\in \Omega(\mathsf{B}),\ n\geq N).
\end{equation}
Moreover, for any positive integer $d$, $A$ can be chosen as the common source of $d$ compact open $\T$-bisections with pairwise disjoint ranges. This gives the non-strict bound $1/d$ in \eqref{eq:uniform-small-AF-density} once these bisections belong to $\mathfrak{K}_n$.
\end{lemma}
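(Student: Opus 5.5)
The strategy is to prove the stronger second assertion directly, since it implies the first. Fix $\eta>0$ and choose an integer $d>1/\eta$. For each point $e\in E$ we will build $d$ compact open $\T$-bisections with a common source $A_e\ni e$ and pairwise disjoint ranges, and also arrange that all the resulting sets are disjoint across different points of $E$. The neighborhood $A$ will then be $\bigcup_{e\in E}A_e$, and the bisections $V_j=\bigcup_e V_{j,e}$ will serve as the required family.

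To build these bisections, use the hypothesis that $\T e$ is infinite. This lets us choose points $e=y_{e,1},y_{e,2},\dots,y_{e,d}$ in $\T e$, with all the points $y_{e,j}$ distinct over all $e\in E$ and $j$. Finiteness of $E$ makes this possible. By principality of $\T$, each pair $(e,y_{e,j})$ is joined by a unique tail arrow. Take $m$ large enough that every one of these finitely many tail arrows is a prefix replacement $[q,p]$ at level $m$.

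Shrinking to a common cylinder depth $n_1\ge m$ gives the sets we need. Let $A_e=[e_{n_1}]$ and $V_{j,e}=[ (y_{e,j})_{n_1}, e_{n_1}]$. The ranges are the cylinders $[(y_{e,j})_{n_1}]$. Because the points $y_{e,j}$ are distinct, these cylinders are pairwise disjoint once $n_1$ is deep enough. Note that $V_{1,e}$ is the unit bisection on $A_e$. Each $V_j$ is a compact open $\T$-bisection with source $A$, and the ranges $\rg(V_1),\dots,\rg(V_d)$ are pairwise disjoint.

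The density estimate then comes from the exhaustion. Since the $\mathfrak{K}_n$ exhaust $\T$ and the $V_j$ are compact, there is an $N$ with all $V_j\subseteq\mathfrak{K}_N$, and hence $V_j\subseteq\mathfrak{K}_n$ for every $n\ge N$. Fix $x$ and $n\ge N$. Each $V_j$ maps $A\cap\mathfrak{K}_n(x)$ injectively into $\rg(V_j)\cap\mathfrak{K}_n(x)$, because it consists of arrows of $\mathfrak{K}_n$ and so cannot leave the $\mathfrak{K}_n$-orbit. The $d$ images are disjoint subsets of $\mathfrak{K}_n(x)$, so
\[
d\,|A\cap\mathfrak{K}_n(x)|\le|\mathfrak{K}_n(x)|.
\]
This gives the bound $1/d<\eta$, which is \eqref{eq:uniform-small-AF-density}.

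The main obstacle is the bookkeeping of disjointness across the several points of $E$. Two points $e,e'\in E$ may lie in the same $\T$-orbit, so the translates chosen for $e$ must avoid both $E$ and the translates chosen for $e'$. Infinitude of each orbit makes this choice possible, and a single deep level $n_1$ then separates all the finitely many chosen points.
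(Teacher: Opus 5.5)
Your proof is correct, but it reaches the density bound by a different route from the paper.

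\textbf{The paper's route.} The paper proves \eqref{eq:uniform-small-AF-density} first, by measure theory.
\begin{enumerate}
\item Invariant measures give equal mass to all points of one $\T$-orbit, so infinite orbits force $\mu(E)=0$ for every $\mu\in M(\T)$.
\item Dini's theorem on the compact space $M(\T)$ then yields a clopen $A_0\supseteq E$ with $\sup_{\mu}\mu(A_0)<\eta$.
\item Lemma~\ref{lemma:uniform-AF-averaging} converts this measure bound into the uniform bound on orbit densities.
\end{enumerate}
Only afterwards does the paper build the $d$ bisections inside $A_0$. It does so essentially as you do: distinct targets in the infinite orbits, separated ranges, a shrunken common source. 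It then uses the same counting, $d\,|A\cap\mathfrak{K}_n(x)|\le|\mathfrak{K}_n(x)|$.

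\textbf{Your route.} You observe that this counting with $d>1/\eta$ already gives the strict inequality. So for finite $E$ the averaging step is logically unnecessary. Your argument is self-contained and avoids $M(\T)$ and Dini altogether. It also gives an explicit $N$: any stage containing the finitely many prefix-replacement bisections $V_j$.

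\textbf{What the paper's route buys.} It is more robust, because it only uses that the relevant source set is null for every invariant measure. Remark~\ref{rmk:thin-generalization} exploits exactly this in a setting where the exceptional sources may form an infinite compact set. There, finitely many separated translates are not directly available.

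\textbf{A cosmetic point.} The ``moreover'' clause can be read as asking for one $A$ that satisfies the $\eta$-bound and is also the common source of a prescribed number $d$ of bisections. In that case, run your construction with $\max(d,\lfloor 1/\eta\rfloor+1)$ bisections and keep $d$ of them.
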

\begin{proof}
An invariant probability measure assigns equal masses to points in one $\T$-orbit. Indeed, if $\alpha:e\to e'$ is a $\T$-arrow, choose a compact open $\T$-bisection $V\ni\alpha$ and a decreasing clopen neighborhood basis $B_j\subseteq\sg(V)$ at $e$. Invariance gives $\mu(B_j)=\mu(\theta_V(B_j))$, and continuity from above gives $\mu(\{e\})=\mu(\{e'\})$. Since the orbit of every point of $E$ is infinite, every $\mu\in M(\T)$ is therefore nonatomic at $E$. Choose decreasing clopen neighborhoods $A_j$ with intersection $E$. The continuous functions $\mu\mapsto\mu(A_j)$ on the compact space $M(\T)$ decrease pointwise to zero. By Dini's theorem the convergence is uniform, and thus some $A_j$ has $\sup_{\mu\in M(\T)}\mu(A_j)<\eta$. Lemma~\ref{lemma:uniform-AF-averaging}, applied to $1_{A_j}$, proves \eqref{eq:uniform-small-AF-density}.

For the additional assertion, start with a clopen neighborhood $A_0$ satisfying the density bound above. For each $e\in E$, choose $d$ arrows in $\T$ with source $e$, so that all their range points, for all $e$, are distinct. This is possible by choosing finitely many points successively in infinite orbits. Extend the arrows to compact open bisections, choose pairwise disjoint neighborhoods of their range points, and shrink their sources to a common clopen neighborhood $A_e\subseteq A_0$ of $e$. The sets $A_e$ may also be chosen pairwise disjoint. Taking the union of the $k$th bisections over $e\in E$ gives $d$ bisections with common source $A=\bigcup_eA_e\subseteq A_0$ and disjoint ranges. Thus $A$ retains the strict density bound already obtained. In every sufficiently deep $\mathfrak{K}_n$-orbit, the images of $A\cap\mathfrak{K}_n(x)$ under these $d$ bisections are disjoint subsets of that same orbit. It follows that $d|A\cap\mathfrak{K}_n(x)|\leq|\mathfrak{K}_n(x)|$.
\end{proof}

The construction of bisections with a common source and disjoint ranges follows Phillips's argument. Compare \cite[Definition~2.1 and Lemma~2.5]{Phillips05}. Small exceptional neighborhoods and localization of the remaining compact part inside an AF subgroupoid already occur in \cite[Lemmas~2.5 and~2.7]{Phillips05}. Applied to a discrete complement, these principles give the approximation criterion for source fibers below. Phillips's almost AF condition and almost finiteness are different definitions. The estimate below is what establishes the latter here.

\subsection{An exact criterion for the fixed AF exhaustion}
For a compact subset $C\subseteq\G$, put
\begin{equation}
\label{eq:AF-escape-ratio}
\varepsilon_n(C)=\sup_{x\in \Omega(\mathsf{B})}
\frac{|C(\mathfrak{K}_n)_x\setminus(\mathfrak{K}_n)_x|}{|(\mathfrak{K}_n)_x|}.
\end{equation}
This quantity measures approximation by the fixed AF exhaustion. Other elementary subgroupoids may still satisfy Definition~\ref{def:almost-finite}.

\begin{thm}[AF-core approximation criterion]
\label{thm:AF-exhaustion-criterion}
Let $\T\subseteq\G$ be an AF-by-discrete inclusion on a Cantor unit space, and let $\mathfrak{K}_1\subseteq\mathfrak{K}_2\subseteq\cdots$ be an increasing elementary exhaustion of $\T$. The following conditions are equivalent:
\begin{enumerate}[label=\textup{(\roman*)}]
\item For every compact $C\subseteq\G$, $\varepsilon_n(C)\to0$.
\item The source of every arrow in $\G\setminus\T$ has an infinite $\T$-orbit.
\item Every finite $\T$-orbit is a $\G$-orbit, and the reduction of $\G$ to it is principal.
\end{enumerate}
When these conditions hold, every increasing elementary exhaustion of $\T$ supplies almost-finite approximations of $\G$.
\end{thm}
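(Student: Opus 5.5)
I will prove the cycle (ii)$\Rightarrow$(i)$\Rightarrow$(iii)$\Rightarrow$(ii), since each of these implications uses a different part of the setup.

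\emph{(iii)$\Rightarrow$(ii).} If an exceptional arrow had source in a finite $\T$-orbit $\mathcal{F}$, then by (iii) $\mathcal{F}$ is a whole $\G$-orbit. So the arrow has both endpoints in $\mathcal{F}$, and composing with the unique tail arrow gives nontrivial isotropy, as in Proposition~\ref{prop:intratile-implies-isotropy}. This contradicts principality of $\G|_{\mathcal{F}}$.

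\emph{(i)$\Rightarrow$(iii).} Take a finite $\T$-orbit $\mathcal{F}$ and suppose (iii) fails for it. Then some $\gamma\in\G\setminus\T$ has source $x\in\mathcal{F}$: either $\G\mathcal{F}\neq\mathcal{F}$, or some arrow of $\G|_{\mathcal{F}}$ is not the unique tail arrow. Put $C=V$, an isolating bisection for $\gamma$. Since $\mathcal{F}$ is finite, for large $n$ we have $(\mathfrak{K}_n)_x=\T_x$, a finite set of size $|\mathcal{F}|$. The arrow $\gamma\alpha$, where $\alpha$ is the identity arrow at $x$, lies in $C(\mathfrak{K}_n)_x$ but not in $\T\supseteq\mathfrak{K}_n$. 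Hence $\varepsilon_n(C)\geq 1/|\mathcal{F}|$ for all large $n$, contradicting (i).

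\emph{(ii)$\Rightarrow$(i).} This is the main step. A compact $C$ is covered by finitely many compact open bisections, and $\varepsilon_n$ is subadditive over finite unions. So it suffices to treat one compact open bisection $U$ with finite exceptional part $U\setminus\T$ (Lemma~\ref{lemma:finite-exceptional-support}). Let $E=\sg(U\setminus\T)$; it is finite, and by (ii) each point of $E$ has an infinite $\T$-orbit. Given $\eta>0$, Lemma~\ref{lemma:exceptional-neighborhoods} gives a clopen $A\supseteq E$ and $N$ such that $A$ has density $<\eta$ in every $\mathfrak{K}_n$-orbit for $n\geq N$. Then $U|_{\sg(U)\setminus A}$ is a compact open bisection contained in $\T$, so it lies in some $\mathfrak{K}_m$, since the $\mathfrak{K}_n$ form an increasing open exhaustion of $\T$ and compactness gives a single stage. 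For $n\geq\max(m,N)$, any arrow $u\kappa\in U(\mathfrak{K}_n)_x\setminus(\mathfrak{K}_n)_x$ must have $\rg(\kappa)\in A$. Because $U$ is a bisection, the map $\kappa\mapsto u\kappa$ is injective, so such arrows number at most $|A\cap\mathfrak{K}_n(x)|<\eta|\mathfrak{K}_n(x)|$. Hence $\varepsilon_n(U)<\eta$.

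The one point to check is that the counts match with multiplicities: $(\mathfrak{K}_n)_x$ is in bijection with $\mathfrak{K}_n(x)$ by principality, so the density bound applies directly. The last assertion of the theorem is then just (i) restated in the language of Definition~\ref{def:almost-finite}.
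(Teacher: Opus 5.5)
Your proof is correct and is essentially the paper's argument. Direction (ii)$\Rightarrow$(i) uses the same ingredients: the density bound from Lemma~\ref{lemma:exceptional-neighborhoods}, a single stage containing the restriction of the bisection off $A$, and the injective count $\kappa\mapsto u\kappa$. The converse uses the same obstruction: the unit arrow at a point of a finite $\T$-orbit $\mathcal{F}$ forces $\varepsilon_n\geq 1/|\mathcal{F}|$.

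The differences are only organizational. You reduce to a single bisection by subadditivity of $\varepsilon_n$ instead of carrying the factor $q$. You also arrange the implications as a cycle, where the paper proves (i)$\Leftrightarrow$(ii) and (ii)$\Leftrightarrow$(iii) separately.
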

Neither effectiveness nor a Hausdorff arrow space is needed for this statement.
\begin{proof}
Assume (ii), and fix a compact set $C\subseteq\G$ and $\varepsilon>0$. Cover $C$ by compact open bisections $U_1,\ldots,U_q$. For each $i$, the set $U_i\setminus\T$ is a compact subset of the discrete complement, hence finite. Therefore
$E=\bigcup_i\sg(U_i\setminus\T)$ is finite. By (ii), every point of $E$ has an infinite $\T$-orbit. Choose $\eta<\varepsilon/q$ and, by Lemma~\ref{lemma:exceptional-neighborhoods}, a clopen neighborhood $A$ of $E$ whose density in every sufficiently deep $\mathfrak{K}_n$-orbit is smaller than $\eta$.

Outside $A$ there are no exceptional arrows in any $U_i$: the restriction $U_i|_{\Omega(\mathsf{B})\setminus A}$ is a compact subset of $\T$. Since $\T=\bigcup_n\mathfrak{K}_n$ and the exhaustion is increasing and open, all these finitely many compact restrictions lie in one stage $\mathfrak{K}_N$. Thus, for $n\geq N$, a product $uk$ with $u\in U_i$ and $k\in(\mathfrak{K}_n)_x$ can leave $(\mathfrak{K}_n)_x$ only when $\rg(k)\in A$. The bisection property gives at most one product for each such $k$, and principality of $\mathfrak{K}_n$ identifies the arrows in $(\mathfrak{K}_n)_x$ with the points of $\mathfrak{K}_n(x)$. Hence
\begin{equation}
\label{eq:finite-exception-escape-bound}
|C(\mathfrak{K}_n)_x\setminus(\mathfrak{K}_n)_x|
\leq\sum_{i=1}^q|A\cap\mathfrak{K}_n(x)|
=q|A\cap\mathfrak{K}_n(x)|.
\end{equation}
After increasing $N$ to include the density bound, division by $|(\mathfrak{K}_n)_x|=|\mathfrak{K}_n(x)|$ gives $\varepsilon_n(C)\leq q\eta<\varepsilon$. Distinct arrows in $C(\mathfrak{K}_n)_x\setminus(\mathfrak{K}_n)_x$ are counted separately even when their range points coincide. This proves (i).

Now suppose that (ii) fails. Let $\gamma\notin\T$ have source $x$ in a finite $\T$-orbit of cardinality $m$, and choose a compact open bisection $U$ containing $\gamma$. The unit arrow at $x$ lies in every $\mathfrak{K}_n$, whereas $\gamma$ lies in none of them. Therefore
\begin{equation}
\label{eq:finite-core-orbit-obstruction}
\varepsilon_n(U)\geq\frac1{|(\mathfrak{K}_n)_x|}\geq\frac1m
\end{equation}
for every $n$, and thus (i) fails. This proves (i)$\Leftrightarrow$(ii).

It remains to compare (ii) and (iii). Let $Y$ be a finite $\T$-orbit. If (ii) holds, every arrow of $\G$ with source in $Y$ already lies in $\T$, and thus it also has range in $Y$. Hence, $Y$ is a $\G$-orbit and $\G|_Y=\T|_Y$ is principal. Conversely, suppose (iii) holds and let $\gamma:x\to y$ have $x$ in a finite $\T$-orbit $Y$. Then $y\in Y$. Since $\T|_Y$ is transitive and principal, it contains a unique arrow from $x$ to $y$. Principality of $\G|_Y$ forces $\gamma$ to be that arrow, and thus $\gamma\in\T$. Thus (ii) holds. The proof of (ii)$\Rightarrow$(i) used only that the chosen sequence is an increasing elementary exhaustion of $\T$, which proves the final assertion for every such exhaustion.
\end{proof}

\begin{cor}
\label{cor:infinite-AF-orbits-almost-finite}
Every AF-by-discrete groupoid with infinite $\T$-orbits is almost finite. In particular, this holds when the AF core is minimal on a Cantor space.
\end{cor}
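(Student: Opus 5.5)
The corollary should follow directly from Theorem~\ref{thm:AF-exhaustion-criterion}. I will verify condition (ii) and then use the implication (ii)$\Rightarrow$(i). Suppose every $\T$-orbit is infinite. Then in particular the source of every arrow in $\G\setminus\T$ has an infinite $\T$-orbit, which is exactly condition (ii).

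Next I need an exhaustion to feed into the theorem. The standard stages $\mathfrak{T}_{\mathsf{B},n}$ form an increasing elementary exhaustion of $\T$ with unit space $\Omega(\mathsf{B})$; this is recorded after Definition~\ref{def:elementary-AF-groupoid}. Fix a compact $C\subseteq\G$ and $\varepsilon>0$. Condition (i) gives $\varepsilon_n(C)\to0$, so I can pick $n$ with $\varepsilon_n(C)<\varepsilon$. Then $\mathfrak{K}=\mathfrak{T}_{\mathsf{B},n}$ is an elementary subgroupoid of $\G$ with full unit space. By the definition of $\varepsilon_n(C)$ in \eqref{eq:AF-escape-ratio}, it satisfies \eqref{eq:almost-finite-condition} at every $x$. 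This is Definition~\ref{def:almost-finite}, so $\G$ is almost finite.

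For the minimal case, I need to show that a minimal $\T$ on a Cantor space has only infinite orbits. Suppose some $\T$-orbit $Y$ were finite. Minimality makes $Y$ dense, and a finite set is closed in the Hausdorff space $\Omega(\mathsf{B})$, so $Y=\Omega(\mathsf{B})$. That would make $\Omega(\mathsf{B})$ finite, contradicting the Cantor assumption. So every $\T$-orbit is infinite, and the first part applies.

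I expect no real obstacle here. The only points that need care are that the fixed exhaustion consists of elementary subgroupoids with the whole unit space, and that the ratio in \eqref{eq:AF-escape-ratio} is literally the quantity in Definition~\ref{def:almost-finite}, arrows counted with multiplicity. Both are immediate from the definitions.
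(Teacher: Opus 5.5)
Your proposal is correct and follows the paper's argument. The paper deduces the corollary directly from the implication (ii)$\Rightarrow$(i) and the final assertion of Theorem~\ref{thm:AF-exhaustion-criterion}, since no exceptional arrow can be based on a finite $\T$-orbit, and your use of the standard stages $\mathfrak{T}_{\mathsf{B},n}$, together with the observation that a finite orbit in a Cantor space is closed and hence cannot be dense, matches the reasoning there.
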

No hypothesis of bounded type, finite generation, bounded rank, or finite isotropy is required.

Here the criterion applies to every AF exhaustion, since no exceptional arrow can be based on a finite AF orbit.

\begin{rmk}
\label{rmk:thin-generalization}
The sufficiency argument also works without discreteness if, for each compact open bisection $U$, the compact set $\sg(U\setminus\T)$ has measure zero for every $\mu\in M(\T)$. Shrinking clopen neighborhoods and Lemma~\ref{lemma:uniform-AF-averaging} again give the density bound. In the discrete case, this measure-zero condition is equivalent to all exceptional sources having infinite $\T$-orbits: a finite $\T$-orbit carries its uniform invariant probability measure. Thus the estimate uses the density of neighborhoods of exceptional sources in the finite AF orbits, not a uniform bound on the number of boundary points on finite tiles.
\end{rmk}

\subsection{Minimal groupoids with a possibly nonminimal AF core}
The preceding result assumes infinite orbits for the chosen core. We now remove that requirement when $\G$ is minimal and compactly generated. Minimality first restricts the AF orbit closures to the following two possibilities.
\keepwithstatement
\begin{lemma}
\label{lemma:AF-orbit-closures}
Let $\T\subseteq\G$ be an AF-by-discrete inclusion, and suppose that $\G$ is minimal. Every proper closed $\T$-invariant subset of $\Omega(\mathsf{B})$ is finite. Consequently, every $\T$-orbit is either finite or dense in $\Omega(\mathsf{B})$.
\end{lemma}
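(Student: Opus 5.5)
The plan is to show that every point of a proper closed $\T$-invariant set $Y\subseteq\Omega(\mathsf{B})$ is isolated in $Y$. Then $Y$ is compact and discrete, hence finite. The mechanism is the following. Minimality of $\G$ forces each point of $Y$ to leave $Y$ along some arrow, and $\T$-invariance forces that arrow to be exceptional. Discreteness of $\G\setminus\T$ then isolates the point, via an isolating bisection (Lemma~\ref{lemma:finite-exceptional-support}, Definition~\ref{def:isolating-bisection}).

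\emph{Step 1: every point of $Y$ has an exceptional arrow leaving $Y$.} We may assume $Y\neq\varnothing$, and put $W=\Omega(\mathsf{B})\setminus Y$, a nonempty open set. Fix $y\in Y$. By minimality of $\G$ the orbit $\G y$ is dense, so it meets $W$, and there is an arrow $\gamma:y\to w$ with $w\in W$. If $\gamma$ were in $\T$, then $w$ would lie in $\T y\subseteq Y$ by $\T$-invariance, which is impossible. Hence $\gamma\in\G\setminus\T$. This arrow is a single groupoid arrow, so no word factorization is needed.

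\emph{Step 2: such a point is isolated in $Y$.} By Lemma~\ref{lemma:finite-exceptional-support}, choose a compact open bisection $U\ni\gamma$ with $U\setminus\T=\{\gamma\}$. Since $\rg(\gamma)=w$ lies in the open set $W$, we can shrink $U$ by restricting its source to a clopen neighborhood of $y$. After this, $\rg(U)\subseteq W$ and $U$ is still isolating. Now let $y'\in\sg(U)\cap Y$ with $y'\neq y$. The arrow of $U$ with source $y'$ is not $\gamma$, so it lies in $\T$. Its range is therefore in $\T y'\subseteq Y$, but it also lies in $\rg(U)\subseteq W$, a contradiction. Hence $\sg(U)\cap Y=\{y\}$, so $y$ is isolated in $Y$. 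Since $y$ was arbitrary, $Y$ is a compact space all of whose points are isolated, and therefore $Y$ is finite.

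\emph{Step 3: the consequence for orbits.} Let $x\in\Omega(\mathsf{B})$ and set $Y=\overline{\T x}$. I must check that $Y$ is $\T$-invariant. Take $z\in Y$ and an arrow $\alpha\in\T$ with source $z$. Choose a compact open bisection $V\subseteq\T$ containing $\alpha$. Points $z_j\in\T x$ converging to $z$ eventually lie in $\sg(V)$, and then $\theta_V(z_j)\in\T x$ converges to $\theta_V(z)=\rg(\alpha)$, so $\rg(\alpha)\in Y$. By Steps~1 and~2, either $Y=\Omega(\mathsf{B})$, in which case $\T x$ is dense, or $Y$ is finite, in which case $\T x\subseteq Y$ is finite.

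I expect the only delicate point to be Step 2. There one must shrink the isolating bisection so that its whole range lies in $W$ while it keeps a single exceptional arrow. This works because restricting the source to a clopen neighborhood of $y$ preserves the isolating property and, by continuity of $\theta_U$ at $y$, places the range inside the open set $W$.
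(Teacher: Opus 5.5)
Your proof is correct, but it is organized differently from the paper's. The paper works with the derived set $Y'$ of an arbitrary closed $\T$-invariant set $Y$ and shows, without using minimality, that $Y'$ is $\G$-invariant. Near an accumulation point, all but finitely many points of $Y\cap\sg(U)$ avoid the finite set $\sg(U\setminus\T)$, so their images under $U$ are distinct points of $Y$ accumulating at $\rg(\gamma)$. Minimality is then used only once, on the closed $\G$-invariant set $Y'$. That set is nonempty by compactness when $Y$ is infinite, which forces $Y=\Omega(\mathsf{B})$.

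You instead apply minimality pointwise. Each $y$ in a proper $Y$ has an arrow into the complement $W$, that arrow must be exceptional, and an isolating bisection around it, with range shrunk into $W$, gives $\sg(U)\cap Y=\{y\}$. So you show directly that $Y'=\varnothing$ for proper $Y$, rather than that $Y'$ is either empty or everything.

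Your route is shorter and avoids the argument that the range points are distinct and accumulate. It also records that every point of a proper closed $\T$-invariant set is the source of an exceptional arrow leaving that set. The paper's route instead yields a minimality-free structural statement valid for every AF-by-discrete inclusion. You also check explicitly that $\overline{\T x}$ is $\T$-invariant, which the paper takes for granted.
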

\begin{proof}
Let $Y$ be a closed $\T$-invariant subset and let $Y'$ be its set of accumulation points. We claim that $Y'$ is $\G$-invariant. Given $x\in Y'$ and an arrow $\gamma$ with source $x$, choose a compact open bisection $U$ containing $\gamma$. There are distinct points $x_j\in Y\cap\sg(U)$ tending to $x$. The AF-by-discrete hypothesis is used here: $U\setminus\T$ is finite, and thus after discarding finitely many terms the points $x_j$ avoid $\sg(U\setminus\T)$. The corresponding arrows of $U$ therefore lie in $\T$, and $\T$-invariance of $Y$ implies that their range points lie in $Y$. These range points are distinct and converge to $\rg(\gamma)$, and thus $\rg(\gamma)\in Y'$. Applying the same argument to inverse arrows proves that $Y'$ is $\G$-invariant.

If $Y$ is infinite, compactness gives $Y'\ne\varnothing$, and minimality of $\G$ gives $Y'=\Omega(\mathsf{B})$. Hence $Y=\Omega(\mathsf{B})$. The closure of an infinite $\T$-orbit is a closed $\T$-invariant set and cannot be finite, and thus it is the whole unit space.
\end{proof}

\begin{lemma}
\label{lemma:finitely-many-finite-AF-orbits}
Let $\T\subseteq\G$ be an AF-by-discrete inclusion, and suppose that $\G$ is minimal and compactly generated. Then the union $Z$ of the finite $\T$-orbits is finite. There exists a nonempty clopen set $A\subseteq \Omega(\mathsf{B})\setminus Z$ such that $\T|_A$ is a minimal AF groupoid and $A$ is an orbit transversal (Definition~\ref{def:minimality-comparison}) for $\G$.
\end{lemma}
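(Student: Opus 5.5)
The plan has two parts: first show that $Z$ is finite, then build $A$ by cutting out small clopen neighborhoods of $Z$ and keeping the unique minimal AF piece.

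\textbf{Finiteness of $Z$.} Choose a finite symmetric generating family $\mathcal{S}$ as in Definition~\ref{def:compact-generation}. By Corollary~\ref{cor:compact-generation-germs}, only finitely many $\G$-orbits are exceptional, and each contains only finitely many $\T$-orbits. Now let $Y$ be a finite $\T$-orbit.

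\begin{itemize}
\item If $Y$ lies in a nonexceptional $\G$-orbit, then every arrow with source in $Y$ belongs to $\T$. Hence $Y$ is a finite $\G$-orbit.
\item Minimality of $\G$ on the Cantor space forbids finite orbits, since a finite orbit is closed and therefore not dense. This case cannot occur.
\end{itemize}

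So every finite $\T$-orbit lies in an exceptional $\G$-orbit. There are finitely many such $\T$-orbits, each finite, so $Z$ is finite. By Lemma~\ref{lemma:AF-orbit-closures}, every other $\T$-orbit is dense.

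\textbf{Constructing $A$.} Choose a clopen set $A$ that misses $Z$ and meets every $\G$-orbit. I would take $A=\Omega(\mathsf{B})\setminus W$, where $W$ is a small clopen neighborhood of $Z$, for instance a union of deep cylinders around the finitely many points of $Z$.

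\begin{itemize}
\item \emph{$A$ is nonempty.} $Z$ is finite and the unit space is a Cantor space.
\item \emph{$A$ is an orbit transversal.} Each $\G$-orbit is dense, because $\G$ is minimal. So it meets the nonempty open set $A$.
\item \emph{$\T|_A$ is AF.} It is a reduction of an AF groupoid to a clopen set: intersect each elementary stage $\mathfrak{T}_{\mathsf{B},n}$ with $A$, refining multisections by $A$ at a deep level.
\end{itemize}

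\textbf{Minimality of $\T|_A$.} Every $\T|_A$-orbit has the form $\T x\cap A$ with $x\in A$. Since $x\notin Z$, the orbit $\T x$ is dense in $\Omega(\mathsf{B})$. Points of $\T x\cap A$ are joined inside $A$ by the unique tail arrow, so $\T x\cap A$ is a single $\T|_A$-orbit. It is dense in the open set $A$.

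\textbf{Main obstacle.} The delicate point is the dichotomy from Lemma~\ref{lemma:AF-orbit-closures} together with finiteness. One must avoid relying on the full exceptional support of $\mathcal{E}_{\mathcal{S}}$ when a finite $\T$-orbit contains no exceptional source but still lies in an exceptional orbit. This is handled because Corollary~\ref{cor:compact-generation-germs} bounds all $\T$-orbits inside exceptional $\G$-orbits, not just those carrying exceptional arrows.

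One should also check that a clopen $W$ around $Z$ exists with $A$ still meeting the dense orbits. This holds automatically since $A$ is nonempty and open.
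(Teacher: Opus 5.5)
Your proof is correct and follows the paper's route: compact generation makes the finite $\T$-orbits finitely many, because one that avoids exceptional arrows would be a finite $\G$-orbit; then any nonempty clopen $A$ missing $Z$ works, with minimality of $\T|_A$ coming from Lemma~\ref{lemma:AF-orbit-closures}. The only difference is that you count through Corollary~\ref{cor:compact-generation-germs} instead of bounding the finite $\T$-orbits directly by $|E|$ for $E=\bigcup_{F\in\mathcal{S}}\sg(F\setminus\T)$; the case you flag as the main obstacle in fact cannot occur, since a $\T$-orbit missing $E$ is mapped into itself by every generator in $\mathcal{S}$, hence is itself a $\G$-orbit, and every arrow with source in it lies in $\T$.
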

\begin{proof}
Choose a finite generating family $\mathcal{S}$ of compact open bisections that is closed under inverses. Its exceptional source set $E=\bigcup_{F\in\mathcal{S}}\sg(F\setminus\T)$ is finite by Lemma~\ref{lemma:finite-exceptional-support}. Every finite $\T$-orbit $Y$ meets $E$. Indeed, if $Y\cap E=\varnothing$, then every generator arrow whose source lies in $Y$ belongs to $\T$ and therefore has range in $Y$. Since $\mathcal{S}$ generates $\G$ and is closed under inverses, no $\G$-arrow can leave $Y$. Thus $Y$ would be a finite $\G$-orbit, contradicting minimality on the Cantor unit space. Distinct finite $\T$-orbits are disjoint, and thus there are at most $|E|$ of them, and their union $Z$ is finite.

Choose a nonempty clopen set $A$ disjoint from $Z$. Every $\T$-orbit meeting $A$ is infinite and thus dense by Lemma~\ref{lemma:AF-orbit-closures}. Its intersection with $A$ is dense in $A$, and thus $\T|_A$ is minimal. It is AF, being exhausted by the elementary reductions $\mathfrak{K}_n|_A$. Finally, every nonempty open subset of a minimal groupoid's unit space is an orbit transversal.
\end{proof}

\begin{lemma}
\label{lemma:full-clopen-AF-lift}
Let $A\subset \Omega(\mathsf{B})$ be clopen and an orbit transversal for $\G$. If $\G|_A$ is almost finite, then $\G$ is almost finite.
\end{lemma}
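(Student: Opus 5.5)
Given $C\subseteq\G$ compact and $\varepsilon>0$, I will build an elementary subgroupoid $\mathfrak{K}\subseteq\G$ with unit space $\Omega(\mathsf{B})$ from an elementary approximation $\mathfrak{K}_A\subseteq\G|_A$. The idea is to transport everything into $A$ and average there.

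\textbf{Step 1: cover the unit space by copies of pieces of $A$.} Since $A$ is clopen and meets every orbit, each $y\in\Omega(\mathsf{B})$ has an arrow to some point of $A$. Each such arrow lies in a compact open bisection, and compactness of $\Omega(\mathsf{B})$ then gives a finite family. I disjointify the sources and fix a partition $\Omega(\mathsf{B})=\bigsqcup_{j=0}^r D_j$ with $D_0=A$. For each $j\geq1$ I choose a compact open bisection $W_j$ with source $D_j$ and range $\rg(W_j)\subseteq A$, and put $W_0=1_A$. The ranges $\rg(W_j)$ may overlap inside $A$.

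\textbf{Step 2: pull back through the finite compact correspondence.} Put $W=\bigcup_j W_j$. This is a compact open set of arrows whose source map is bijective onto $\Omega(\mathsf{B})$ and whose range map is at most $(r+1)$-to-one into $A$. Set $C'=W C W^{-1}\cup WW^{-1}$, which is a compact subset of $\G|_A$.
\begin{itemize}
\item Choose an elementary subgroupoid $\mathfrak{K}_A\subseteq\G|_A$ that is $(C',\varepsilon')$-invariant.
\item Define $\mathfrak{K}=W^{-1}\mathfrak{K}_A W$, the set of products $w^{-1}kw'$ with matching endpoints.
\end{itemize}
$\mathfrak{K}$ is a compact open subgroupoid with unit space $\Omega(\mathsf{B})$. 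Its orbits are the preimages under $\rg\circ(\sg|_W)^{-1}$ of $\mathfrak{K}_A$-orbits. Its fibres satisfy $|\mathfrak{K}_y|=\sum_{z\in\mathfrak{K}_A(x)}|\{j: z\in\rg(W_j)\}|$, where $x$ is the image of $y$. Each coefficient lies between $1$ (for $j=0$) and $r+1$.

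\textbf{Step 3: principality and the multisection decomposition.} This is the main obstacle. $\mathfrak{K}$ is principal precisely when no nontrivial isotropy is created. That holds because $w^{-1}kw'=\Id$ forces $w=w'$ and $k=\Id$, using that $W$ is source-injective and $\mathfrak{K}_A$ is principal. It also requires the $W_j$ to be exactly the chosen bisections and not to introduce loops, so I shall state this verification carefully. Refining the towers of $\mathfrak{K}_A$ by the clopen sets $\rg(W_j)$ gives multisections of $\mathfrak{K}$. Hence $\mathfrak{K}$ is elementary.

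\textbf{Step 4: the invariance estimate.} A pair $(\gamma,\kappa)$ with $\gamma\kappa\notin\mathfrak{K}_y$ conjugates through $W$ to a pair $(\gamma',\kappa')$ with $\gamma'\in C'$ and $\kappa'\in(\mathfrak{K}_A)_x$ escaping $\mathfrak{K}_A$. The map $(\gamma,\kappa)\mapsto(\gamma',\kappa')$ is at most $(r+1)^2$-to-one. Therefore
\[
\frac{|C\mathfrak{K}_y\setminus\mathfrak{K}_y|}{|\mathfrak{K}_y|}\leq\frac{(r+1)^2\,|C'(\mathfrak{K}_A)_x\setminus(\mathfrak{K}_A)_x|}{|(\mathfrak{K}_A)_x|}<(r+1)^2\varepsilon'.
\]
Choosing $\varepsilon'=\varepsilon/(r+1)^2$ completes the proof.
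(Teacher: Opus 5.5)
Your construction is the paper's. The paper likewise uses a clopen partition with $A$ as one piece, compact open bisections carrying the other pieces into $A$, the pulled-back groupoid $W^{-1}\mathfrak{K}_AW$, elementariness by refining the towers of $\mathfrak{K}_A$ along the sets $\rg(W_j)$, and the fibre count $|(\mathfrak{K}_A)_x|\le|\mathfrak{K}_y|\le(r+1)|(\mathfrak{K}_A)_x|$. Principality, which you flag as the main obstacle, is in fact immediate. An isotropy arrow $w^{-1}kw'$ at $y$ has $\sg(w)=\sg(w')=y$, so $w=w'$, and then $k$ is isotropy in $\mathfrak{K}_A$. Also, $WW^{-1}=1_A$ because the $W_j$ have disjoint sources, so that term in $C'$ is harmless but does nothing.

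The step that does not go through as written is Step 4. You bound the numerator by counting pairs $(\gamma,\kappa)$ and transport them $(r+1)^2$-to-one to escaping pairs $(\gamma',\kappa')$ in $A$. You then bound the number of those escaping pairs by $|C'(\mathfrak{K}_A)_x\setminus(\mathfrak{K}_A)_x|$, but that quantity counts \emph{arrows}. Distinct pairs can have the same product $\gamma'\kappa'$, so the pair count is at least, not at most, this arrow count.

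The fix, which is what the paper does, is to count arrows directly. Write $w_z$ for the arrow of $W$ with source $z$, and send $\eta\in C\mathfrak{K}_y\setminus\mathfrak{K}_y$ to $w_{\rg(\eta)}\eta w_y^{-1}$.
\begin{itemize}
\item If $\eta=\gamma\kappa$ with $\kappa=w_{\rg(\kappa)}^{-1}kw_y$, the image equals $(w_{\rg(\gamma)}\gamma w_{\sg(\gamma)}^{-1})k\in C'(\mathfrak{K}_A)_x$.
\item The image lies outside $\mathfrak{K}_A$, since otherwise $\eta\in\mathfrak{K}$.
\item For fixed $y$, $\eta$ is recovered from its image once $\rg(\eta)$ is chosen in the fibre of $\rg\circ(\sg|_W)^{-1}$ over the range of the image. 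Hence the map is at most $(r+1)$-to-one.
\end{itemize}
Combined with $|\mathfrak{K}_y|\ge|(\mathfrak{K}_A)_x|$, this gives the estimate with factor $r+1$ (the paper's factor $r$ for $r$ pieces).

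Alternatively, you can keep counting pairs. Each escaping arrow in $A$ arises from at most $m$ pairs, where $m$ is the number of compact open bisections needed to cover $C'$. Since $m$ is fixed before $\mathfrak{K}_A$ is chosen, taking $\varepsilon'=\varepsilon/(m(r+1)^2)$ repairs your argument.
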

\begin{proof}
Since $A$ is an orbit transversal, every point of the unit space is the source of an arrow whose range lies in $A$. Compact open bisections around such arrows give an open cover of the compact unit space. After taking a finite subcover, refining the sources to a clopen partition, and restricting the bisections, we obtain
$\Omega(\mathsf{B})=P_0\sqcup\cdots\sqcup P_{r-1}$, with $P_0=A$, and compact open bisections $V_i$ such that $\sg(V_i)=P_i$ and $\rg(V_i)\subseteq A$. Take $V_0=1_A$. For $x\in P_i$, write $v_x$ for the unique arrow of $V_i$ with source $x$, and put $q(x)=\rg(v_x)$. The map $q:\Omega(\mathsf{B})\to A$ is a local homeomorphism and every fiber has between one and $r$ points.

For $0\leq i,j<r$, let
$\G_{ij}=\{\gamma\in\G:\sg(\gamma)\in P_j,\ \rg(\gamma)\in P_i\}$.
This is a clopen subset of the arrow space. The map
\begin{equation*}
N_{ij}(\gamma)=v_{\rg(\gamma)}\gamma v_{\sg(\gamma)}^{-1},
\end{equation*}
is a homeomorphism from $\G_{ij}$ onto the arrows of $\G|_A$ with source in $\rg(V_j)$ and range in $\rg(V_i)$. Its inverse sends $\eta:q(x)\to q(y)$ to $v_y^{-1}\eta v_x$. In particular, for compact $C\subseteq\G$ the set
\begin{equation*}
C_A=\bigcup_{i,j}N_{ij}(C\cap\G_{ij})
      =\bigcup_{i,j}V_iCV_j^{-1}
\end{equation*}
is compact even when the arrow space of $\G$ is non-Hausdorff.

Choose an elementary subgroupoid $\mathfrak{L}\subseteq\G|_A$ satisfying the almost-finite estimate for $C_A$ with tolerance $\varepsilon/r$, and put
\begin{equation}
\label{eq:lifted-elementary-groupoid}
\mathfrak{K}=\bigcup_{i,j}V_i^{-1}\mathfrak{L} V_j.
\end{equation}
Equivalently, $\mathfrak{K}\cap\G_{ij}=N_{ij}^{-1}(\mathfrak{L})$. The maps $N_{ij}$ therefore show that $\mathfrak{K}$ is compact and open, is closed under composition and inversion, contains all units, and is principal. To see elementariness explicitly, take a finite multisection decomposition of $\mathfrak{L}$ and refine the base of each tower so that, on every tower level, membership in each clopen set $\rg(V_i)$ is constant. If $D_u$ is a refined level and $i$ is one of the indices with $D_u\subseteq\rg(V_i)$, then $P_i\cap q^{-1}(D_u)$ is a lifted level. The bisections $V_i^{-1}L_{uv}V_j$ between these lifted levels form a multisection. The finitely many refined towers cover $\mathfrak{K}$. Thus $\mathfrak{K}$ is elementary. Its orbits are precisely the finite sets obtained by taking the $q$-fibers over the finite $\mathfrak{L}$-orbits.

More precisely, principality of $\mathfrak{L}$ gives
$|\mathfrak{K}_x|=\sum_{z\in\mathfrak{L}(q(x))}|q^{-1}(z)|$:
for each $z$ in the $\mathfrak{L}$-orbit of $q(x)$ there is one arrow of $\mathfrak{L}$ from $q(x)$ to $z$, and its lifts with source $x$ are indexed by the points of $q^{-1}(z)$. Since every fiber of $q$ has between one and $r$ points,
$|\mathfrak{L}_{q(x)}|\leq|\mathfrak{K}_x|\leq r|\mathfrak{L}_{q(x)}|$.
If $\gamma\in C\mathfrak{K}_x\setminus\mathfrak{K}_x$, write its range as $y$. Normalizing the factorization of $\gamma$ through an arrow of $\mathfrak{K}_x$ gives
$v_y\gamma v_x^{-1}\in C_A\mathfrak{L}_{q(x)}\setminus\mathfrak{L}_{q(x)}$. Conversely, for fixed source $x$, a normalized arrow with range $z\in A$ has at most $|q^{-1}(z)|\leq r$ lifts, one for each possible range point of the lift. Consequently,
\begin{equation}
\label{eq:clopen-lift-escape}
\frac{|C\mathfrak{K}_x\setminus\mathfrak{K}_x|}{|\mathfrak{K}_x|}
\leq r\frac{|C_A\mathfrak{L}_{q(x)}\setminus\mathfrak{L}_{q(x)}|}{|\mathfrak{L}_{q(x)}|}
<\varepsilon.
\end{equation}
\end{proof}

\begin{thm}[Minimal almost-finiteness theorem]
\label{thm:minimal-AF-by-discrete-almost-finite}
Every compactly generated minimal AF-by-discrete groupoid with Cantor unit space is almost finite.
\end{thm}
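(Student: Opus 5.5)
The plan is to reduce to Theorem~\ref{thm:AF-exhaustion-criterion} on a full clopen reduction and then lift back with Lemma~\ref{lemma:full-clopen-AF-lift}.

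First, Lemma~\ref{lemma:finitely-many-finite-AF-orbits} shows that the union $Z$ of the finite $\T$-orbits is finite. It also gives a nonempty clopen orbit transversal $A\subseteq\Omega(\mathsf{B})\setminus Z$ on which $\T|_A$ is a minimal AF groupoid. Next I will verify that the inclusion $\T|_A\subseteq\G|_A$ is again AF-by-discrete on a Cantor unit space. The unit space $A$ is a nonempty clopen subset of the Cantor space, hence Cantor. The subgroupoid $\T|_A$ is open in $\G|_A$. The complement $(\G\setminus\T)\cap\G|_A$ is a subspace of a discrete space, hence discrete.

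Every $\T|_A$-orbit is the intersection with $A$ of a dense, infinite $\T$-orbit. Such an intersection is dense in the open set $A$ and therefore infinite. Thus condition~(ii) of Theorem~\ref{thm:AF-exhaustion-criterion} holds for the pair $(\G|_A,\T|_A)$. Equivalently, Corollary~\ref{cor:infinite-AF-orbits-almost-finite} applies. This requires an elementary exhaustion of $\T|_A$, and $\mathfrak{K}_n|_A$ serves, where $(\mathfrak{K}_n)$ is the standard exhaustion $\mathfrak{T}_{\mathsf{B},n}$. Each $\mathfrak{K}_n|_A$ is compact, open and principal, and is elementary after refining its multisection levels so that membership in $A$ is constant on them, exactly as in the lifting lemma. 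Hence $\G|_A$ is almost finite in the sense of Definition~\ref{def:almost-finite}. The unit space of each approximating elementary subgroupoid is then $A$, and the lemma is to be read with $A$ in place of $\Omega(\mathsf{B})$.

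Finally, Lemma~\ref{lemma:full-clopen-AF-lift} applies because $A$ is a clopen orbit transversal and $\G|_A$ is almost finite. It transfers almost finiteness to $\G$.

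The substantive point, and the one I expect to need care, is the claim that $\T|_A$ has only infinite orbits and is still AF. This rests on Lemma~\ref{lemma:AF-orbit-closures}: minimality of $\G$ forces every $\T$-orbit to be finite or dense, and $A$ avoids all finite ones. Compact generation enters only through the finiteness of $Z$, which is what allows $A$ to be clopen and nonempty. A secondary check is that Theorem~\ref{thm:AF-exhaustion-criterion} was stated on $\Omega(\mathsf{B})$ but uses only a Cantor unit space, an AF-by-discrete inclusion, and an increasing elementary exhaustion. So it applies verbatim to the reduction, and no Bratteli presentation of $\T|_A$ is needed.
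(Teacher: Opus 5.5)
Your proposal is correct and follows the paper's proof essentially verbatim. You choose $A$ from Lemma~\ref{lemma:finitely-many-finite-AF-orbits}, check that $\T|_A\subseteq\G|_A$ is again an open AF inclusion with discrete complement whose core has only infinite (indeed dense) orbits, apply Corollary~\ref{cor:infinite-AF-orbits-almost-finite} to get almost finiteness of $\G|_A$, and lift back with Lemma~\ref{lemma:full-clopen-AF-lift}; your extra checks (that $\mathfrak{K}_n|_A$ gives an elementary exhaustion, and that the criterion needs only a Cantor unit space) are details the paper leaves implicit.
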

The original AF core may be nonminimal, and its finite orbits are allowed.
\begin{proof}
Choose $A$ as in Lemma~\ref{lemma:finitely-many-finite-AF-orbits}. The inclusion $\T|_A\subseteq\G|_A$ is an open AF inclusion with discrete complement. The AF core $\T|_A$ is minimal, and thus Corollary~\ref{cor:infinite-AF-orbits-almost-finite} makes $\G|_A$ almost finite. Lemma~\ref{lemma:full-clopen-AF-lift} recovers the elementary approximations on the whole $\Omega(\mathsf{B})$.
\end{proof}

The original exhaustion of $\T$ need not approximate $\G$: an exceptional arrow based at a finite $\T$-orbit gives the uniform obstruction \eqref{eq:finite-core-orbit-obstruction}. Compact generation is used only to make the union of finite $\T$-orbits finite. The theorem also holds whenever a full clopen set meeting only infinite $\T$-orbits is independently available.

\begin{cor}
\label{cor:minimal-core-replacement}
Under the hypotheses of Theorem~\ref{thm:minimal-AF-by-discrete-almost-finite}, $\G$ admits an open minimal AF subgroupoid $\mathfrak{T}'$ with discrete complement.
\end{cor}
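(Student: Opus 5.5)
We need to construct an open minimal AF subgroupoid $\mathfrak{T}'\subseteq\G$ with $\G^{(0)}$ as unit space and $\G\setminus\mathfrak{T}'$ discrete. The plan is to modify $\T$ only near the finite set $Z$ of points in finite $\T$-orbits, by absorbing $Z$ into infinite orbits through a clopen cut. By Lemma~\ref{lemma:finitely-many-finite-AF-orbits}, $Z$ is finite and there is a clopen orbit transversal $A\subseteq\Omega(\mathsf{B})\setminus Z$ with $\T|_A$ minimal AF.

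\textbf{Step 1.} Build $\mathfrak{T}'$ by the lifting construction of Lemma~\ref{lemma:full-clopen-AF-lift}. Choose the clopen partition $\Omega(\mathsf{B})=P_0\sqcup\cdots\sqcup P_{r-1}$ with $P_0=A$, and compact open bisections $V_i$ with $\sg(V_i)=P_i$ and $\rg(V_i)\subseteq A$. Make the choice so that each $V_i\setminus\T$ is contained in a finite set. This is automatic by Lemma~\ref{lemma:finite-exceptional-support}. Then put
\begin{equation*}
\mathfrak{T}'=\bigcup_{i,j}V_i^{-1}(\T|_A)V_j .
\end{equation*}
Exhausting $\T|_A$ by $\mathfrak{K}_n|_A$ and lifting each stage exactly as in the proof of Lemma~\ref{lemma:full-clopen-AF-lift} exhibits $\mathfrak{T}'$ as an increasing union of elementary subgroupoids with unit space $\Omega(\mathsf{B})$. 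Hence $\mathfrak{T}'$ is an open AF subgroupoid. Its orbits are the $q$-saturations of $\T|_A$-orbits. These are infinite and dense, because $\T|_A$ is minimal and $q$ has finite fibres. So $\mathfrak{T}'$ is minimal.

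\textbf{Step 2.} Show that $\G\setminus\mathfrak{T}'$ is discrete. This is the main obstacle. Under the homeomorphisms $N_{ij}$, the set $\G_{ij}\setminus\mathfrak{T}'$ corresponds to $(\G|_A\setminus\T|_A)\cap\sg^{-1}(\rg V_j)\cap\rg^{-1}(\rg V_i)$. That set is discrete, since $\G|_A\setminus\T|_A$ is a subspace of the discrete set $\G\setminus\T$. Each $\G_{ij}$ is clopen and $N_{ij}$ is a homeomorphism, so $\G\setminus\mathfrak{T}'$ is a finite union of clopen pieces, each homeomorphic to a discrete space. It is therefore discrete.

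The care needed is in verifying that $N_{ij}$ is indeed a homeomorphism between clopen pieces when the arrow space may be non-Hausdorff. I would argue this locally: multiplication by the fixed compact open bisections $V_i,V_j^{-1}$ is a homeomorphism between the relevant open subsets of the arrow space.

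\textbf{Step 3.} Confirm that $\mathfrak{T}'$ is open and that its elementary stages are compact open principal groupoids, as already checked in Lemma~\ref{lemma:full-clopen-AF-lift}. If one prefers the standard convention that the AF core comes from a Bratteli diagram, invoke the Bratteli presentation of AF groupoids from \cite[Section~5.2]{nek22}.
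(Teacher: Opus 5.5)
Your proposal is correct and follows the paper's proof essentially step for step: the same transversal $A$ and bisections $V_i$, the same $\mathfrak{T}'=\bigcup_{i,j}V_i^{-1}(\T|_A)V_j$ exhausted by the lifted stages, and discreteness of $\G\setminus\mathfrak{T}'$ via the normalization homeomorphisms $N_{ij}$, which carry the complement of $\mathfrak{T}'$ onto the complement of $\T|_A$. The one slip is in your density argument: the orbits $q^{-1}\bigl((\T|_A)(q(x))\bigr)$ are dense because each $q|_{P_i}=\theta_{V_i}$ is a homeomorphism onto the open set $\rg(V_i)\subseteq A$ (the paper's ``$q$ is a local homeomorphism''), not because $q$ has finite fibres; also, the construction replaces $\T$ globally rather than only near $Z$, and $\mathfrak{T}'$ need not contain $\T$.
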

The replacement core may fail to contain the original $\T$.
\begin{proof}
Use the full clopen set $A$ and the bisections $V_i$ constructed above, and put $\mathfrak{L}_n=\mathfrak{K}_n|_A$. The increasing elementary subgroupoids
$\mathfrak{H}_n=\bigcup_{i,j}V_i^{-1}\mathfrak{L}_nV_j$
from \eqref{eq:lifted-elementary-groupoid} have union
$\mathfrak{T}'=\bigcup_{i,j}V_i^{-1}(\T|_A)V_j$.
Their normalization shows that the $\mathfrak{T}'$-orbit of $x$ is $q^{-1}((\T|_A)(q(x)))$. Since $q$ is a local homeomorphism and $\T|_A$ is minimal, these orbits are dense.

On the open set of arrows with source in $P_j$ and range in $P_i$, normalization by $V_i$ and $V_j^{-1}$ is a homeomorphism onto the arrows with source in $\rg(V_j)$ and range in $\rg(V_i)$. It takes the complement of $\mathfrak{T}'$ to the complement of $\T|_A$. The latter is discrete, and thus $\G\setminus\mathfrak{T}'$ is discrete. 
\end{proof}

Changing the AF core is not merely changing boundary generators or connectors. The relative group and defect in the dimension group belong to the chosen pair $(\G,\T)$ and can change under this replacement. The groupoid homology and index remain those of $\G$. The original core is retained in the preceding boundary calculations. The replacement gives a minimal AF core that can be used to obtain almost finiteness and comparison.

\subsection{An unrestricted counterexample and its index kernel}
A finite $\G$-orbit with nontrivial isotropy obstructs almost finiteness. Such orbits, however, are allowed by AF-by-discreteness.

\keepwithstatement
\begin{prop}
\label{prop:finite-orbit-isotropy-obstruction}
A groupoid with a finite orbit containing a nontrivial isotropy arrow is not almost finite.
\end{prop}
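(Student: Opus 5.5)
The plan is to argue by contradiction directly from Definition~\ref{def:almost-finite}, using a single compact bisection through the nontrivial isotropy arrow. Let $Y$ be a finite orbit with $|Y|=m$, and let $h\in\G_x^x$ be a nonunit isotropy arrow with $x\in Y$. Choose a compact open bisection $U\ni h$ and put $C=U$, which is compact. Suppose that $\mathfrak{K}$ is an elementary subgroupoid with $\mathfrak{K}^{(0)}=\G^{(0)}$ satisfying \eqref{eq:almost-finite-condition} for $C$ and some $\varepsilon<1/m$.

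The first step is to bound the fiber size at $x$. Since $\mathfrak{K}\subseteq\G$ is principal, the range map identifies $\mathfrak{K}_x$ with the finite $\mathfrak{K}$-orbit of $x$. This orbit is contained in $\G x=Y$, so $|\mathfrak{K}_x|\le m$.

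The second step is to exhibit one escaping arrow. Since $h\notin\mathfrak{K}$ by principality of $\mathfrak{K}$, the unit arrow $(\Id,x)\in\mathfrak{K}_x$ satisfies $h=h\cdot(\Id,x)\in C\mathfrak{K}_x$ and $h\notin\mathfrak{K}_x$. Hence
\[
\frac{|C\mathfrak{K}_x\setminus\mathfrak{K}_x|}{|\mathfrak{K}_x|}\ge\frac1m>\varepsilon,
\]
which contradicts the choice of $\mathfrak{K}$. This is the same mechanism as the obstruction \eqref{eq:finite-core-orbit-obstruction}, now applied to an arbitrary elementary subgroupoid rather than to a fixed exhaustion.

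The only point needing care is that the definition counts arrows rather than range points, so the isotropy arrow $h$ counts even though its range $x$ already lies in the orbit of $\mathfrak{K}$. This is exactly why the remark after Definition~\ref{def:almost-finite} stresses arrow counting. No Hausdorffness or effectiveness assumption is used.
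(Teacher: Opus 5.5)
Your proof is correct and follows essentially the same route as the paper. Both take a compact open bisection through the nontrivial isotropy arrow and use principality of an arbitrary elementary $\mathfrak{K}$ to get $|\mathfrak{K}_x|\le m$ and that the arrow is not in $\mathfrak{K}$; the unit at $x$ then produces an escaping arrow, which forces the ratio in \eqref{eq:almost-finite-condition} to be at least $1/m$.
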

\begin{proof}
Let $\gamma\ne(\Id,x)$ be an isotropy arrow at a point in an orbit of cardinality $m<\infty$, and choose a compact open bisection $U$ containing it. For every elementary subgroupoid $\mathfrak{K}$ with unit space $\Omega(\mathsf{B})$, principality gives $\gamma\notin\mathfrak{K}$ and $|\mathfrak{K}_x|\leq m$. Since $(\Id,x)\in\mathfrak{K}$, we have $\gamma\in U\mathfrak{K}_x\setminus\mathfrak{K}_x$, and thus the ratio in \eqref{eq:almost-finite-condition} is at least $1/m$. This excludes all elementary approximations, not just the stages of the chosen AF core.
\end{proof}

\begin{exmp}[One fixed point and a discrete exceptional arrow]
\label{exmp:AF-by-discrete-not-almost-finite}
Let $X=\{0,1\}^{\omega}$ and $\xi=0^\omega$. On the clopen annuli $U_j=0^j1\{0,1\}^{\omega}$, $j\geq0$, define
\begin{equation}
\label{eq:fixed-point-involution}
b(0^j1aw)=0^j1(1-a)w\quad(a\in\{0,1\}),
\qquad b(\xi)=\xi.
\end{equation}
This is a continuous involution. Continuity at $\xi$ follows since each annulus is preserved. Its only fixed point is $\xi$. The germ $(b,\xi)$ is nontrivial since every neighborhood of $\xi$ meets annuli on which $b$ moves points. Hence the set of fixed points of the nontrivial group element has empty interior, and thus the transformation groupoid $\G=\langle b\rangle\ltimes X$ is Hausdorff and effective. No nontrivial group element has the identity germ at any point, and hence the transformation groupoid agrees with the groupoid of germs. Define
\begin{equation}
\label{eq:counterexample-core}
\mathfrak{T}=X\ \cup\ \{(b,x):x\ne\xi\},
\qquad
\mathfrak{K}_n=X\ \cup\ \{(b,x):x\in\textstyle\bigcup_{j<n}U_j\}.
\end{equation}
Here $X$ denotes the unit arrows. The subgroupoids $\mathfrak{K}_n$ are compact open and principal, with classes of size one or two. Hence, every $\mathfrak{T}$-orbit has cardinality at most two. Their union is the open AF subgroupoid $\mathfrak{T}$, whose complement is the singleton $\{(b,\xi)\}$.

We realize $\mathfrak{T}$ as the tail groupoid of a Bratteli diagram $\mathsf{B}$ on the same Cantor space. Write a point of $U_j$ as $0^j1aw$, with $a\in\{0,1\}$ and $w\in\{0,1\}^{\omega}$. Take an infinite spine, attach two exit edges with the same source and range at level $j+1$, and, after the exit, attach a binary tree whose distinct branches never merge. The spine path is $\xi$, while the two exit edges encode $a$ and the subsequent branch encodes $w$. Two paths that leave the spine are tail equivalent exactly when they have the same $j$ and $w$ coordinates, which is precisely the orbit relation of $\mathfrak{T}$. The basic tail bisections correspond to restrictions of $b$ on these annuli and to unit bisections. The identification also respects the groupoid topology. We henceforth identify $X$ with $\Omega(\mathsf{B})$ and $\mathfrak{T}$ with $\T$.

Nevertheless, $\G$ is not almost finite: its singleton orbit $\{\xi\}$ has isotropy $C_2$, and thus Proposition~\ref{prop:finite-orbit-isotropy-obstruction} applies. In fact, the ratio for the full bisection $U_b$ equals one at $\xi$ for every elementary subgroupoid.
\end{exmp}

This example shows that failure of almost finiteness can coexist with $\ker I=\Sym(\G)$.
\keepwithstatement
\begin{prop}
\label{prop:counterexample-kernel}
For Example~\ref{exmp:AF-by-discrete-not-almost-finite}, $H_1(\G)\cong C_2$, the index is the germ homomorphism at $\xi$, and $\ker I=\Sym(\G)$.
\end{prop}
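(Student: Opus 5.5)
Three things have to be proved for Example~\ref{exmp:AF-by-discrete-not-almost-finite}: $H_1(\G)\cong C_2$, the index is the germ homomorphism at $\xi$, and $\ker I=\Sym(\G)$. The homology and the index come straight from the relative normal form; the kernel needs a direct factorization argument, since Theorem~\ref{thm:introduction-kernel} is only proved later in the paper.

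\textbf{Homology.} There is exactly one exceptional arrow, $(b,\xi)$, and its inverse is itself. So there is exactly one exceptional orbit, $\mathcal{O}=\{\xi\}$. It consists of the single $\T$-orbit $\{\xi\}$, so $L_{\mathcal{O}}=0$, and its group of germs is $H_{\mathcal{O}}=\G_\xi^\xi=\{(\Id,\xi),(b,\xi)\}\cong C_2$. Theorem~\ref{thm:relative-normal-form} therefore gives $H_1(\G,\T)\cong C_2$, generated by $[(b,\xi)]$. Since $H_0(\T)$ is torsion-free, Proposition~\ref{prop:torsion-isotropy-zero-defect} shows $\delta=0$, and~\eqref{eq:relative} then gives $\iota:H_1(\G)\xrightarrow{\cong}C_2$.

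\textbf{Index.} For $g\in\F(\G)$ the set $\Sigma(g)$ is either empty or $\{\xi\}$. It equals $\{\xi\}$ exactly when $g$ fixes $\xi$ (every $\T$-orbit through $\xi$ is $\{\xi\}$, and no exceptional arrow leaves $\xi$) and has germ $(b,\xi)$ there. Theorem~\ref{thm:index-formula} then identifies $I(g)$ with the germ of $g$ at $\xi$, which is the germ homomorphism $\F(\G)\to\G_\xi^\xi\cong C_2$. Consequently $\ker I$ consists of the elements with trivial germ at $\xi$.

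\textbf{Kernel.} First, $\Sym(\G)\subseteq\ker I$: a transposition $t_V$ either has $\xi$ outside $\sg(V)\cup\rg(V)$, where its germ is the identity, or has $\xi\in\sg(V)\cup\rg(V)$. In the latter case $V$ or $V^{-1}$ contains an arrow from $\xi$ to a different point, which is impossible because $\xi$ is fixed by every arrow. For the reverse inclusion, let $g\in\ker I$. Then $g$ agrees with the identity on a neighborhood of $\xi$, which we may take to be the cylinder $[0^n]$. On each annulus $U_j$, $g$ is locally either the identity or $b$, and it preserves each annulus. So $g$ is supported in $\bigcup_{j<n}U_j$, where every germ lies in $\T$. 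This makes $g\in\F(\T)$, and $\F(\T)=\Sym(\T)\subseteq\Sym(\G)$ by the remark after Definition~\ref{def:full-symmetric-alternating}. Alternatively, one can write $g$ explicitly as the product of the transpositions $t_{V}$, with $V=(b,\cdot)$ restricted to clopen subsets of $\{0^j1 0w\}$.

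The only delicate step is checking that trivial germ at $\xi$ forces $g$ to equal the identity on a whole cylinder $[0^n]$, with no residual action on deeper annuli. This holds by the definition of the germ, once one has noted that $\xi$ has orbit $\{\xi\}$ in $\G$, so that $g(\xi)=\xi$ automatically.
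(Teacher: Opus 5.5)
Your proof is correct and follows the paper's argument for the homology, the index, and the inclusion $\Sym(\G)\subseteq\ker I$ (the singleton orbit of $\xi$ cannot meet the source or range of any transposition). For the reverse inclusion, you note that an element with trivial germ at $\xi$ has its whole full bisection in $\T$ and then invoke $\F(\T)=\Sym(\T)$, whereas the paper writes such a $g$ directly as the single transposition $t_V$. There $V$ is the restriction of $b$ to the part of the $b$-invariant clopen set where $g$ acts as $b$ that lies in the halves $0^j10\{0,1\}^{\omega}$, which is your ``alternatively'' remark, and this gives the slightly stronger fact that every nontrivial element of $\ker I$ is itself one dynamical transposition.
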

\begin{proof}
There is only one exceptional arrow, $\gamma=(b,\xi)$, and its square is the unit. Nekrashevych's relative arrow presentation therefore gives $H_1(\G,\T)\cong C_2$, generated by $[\gamma]$. Since the defect map is a homomorphism, $2\delta[\gamma]=0$. The dimension group $H_0(\T)$ associated with $\mathsf{B}$ is torsion-free, and thus $\delta[\gamma]=0$. As $H_1(\T)=0$, the relative exact sequence identifies $H_1(\G)$ with this $C_2$. The geometric index formula then identifies $I(g)$ with the germ of $g$ at $\xi$.

If $I(g)=0$, then $g$ is the identity on some neighborhood of $\xi$. The set $B$ on which $g$ acts as $b$ is then a clopen $b$-invariant subset of a finite union of the annuli $U_j$. To see invariance, on each two-point orbit $\{x,bx\}$ a bijection locally choosing between the identity and $b$ either fixes both points or exchanges them. Let $D$ be the union of the halves $0^j10\{0,1\}^{\omega}$ of those finitely many annuli. The restriction of the bisection of $b$ to $B\cap D$ has disjoint source and range, whose union is $B$. Its associated dynamical transposition is precisely $g$. Conversely, every dynamical transposition is the identity near $\xi$: it cannot move $\xi$ out of its singleton orbit, and its moved set is clopen. Hence, $\Sym(\G)\subseteq\ker I$ and equality follows.
\end{proof}

Thus AF-by-discreteness alone gives neither almost finiteness nor an obstruction to $\ker I=\Sym(\G)$ when almost finiteness fails. The singleton orbit makes the example nonminimal.

\subsection{A quantitative boundary F\o lner criterion}
The preceding qualitative theorems make a boundary-to-volume hypothesis unnecessary when the AF core is minimal, or when $\G$ is minimal and compactly generated. The next criterion has a different purpose: it shows directly that the \emph{original finite-level AF stages} form an almost-finite approximation and gives an explicit escape bound in terms of the tile geometry. For systems of bounded type with infinite orbits, the resulting almost-finiteness statement is already \cite[Proposition~5.5.2]{nek22}.

For every level $n$, let $\mathcal{T}_{v,n}$ range over the finite tiles and put
\begin{equation}
\label{eq:tile-boundary-volume}
\beta_n=\max_v|\partial\mathcal{T}_{v,n}|,
\qquad
m_n=\min_v|V(\mathcal{T}_{v,n})|.
\end{equation}
Here $\partial\mathcal{T}_{v,n}$ is the set of boundary points. Let $\mathfrak{T}_{\mathsf{B},n}$ be the level-$n$ elementary subgroupoid of the AF core. Since $\T$ is open in $\G$, each $\mathfrak{T}_{\mathsf{B},n}$ is a compact open elementary subgroupoid of $\G$ with unit space $\Omega(\mathsf{B})$. If the level-$n$ prefix of $x$ belongs to $\mathcal{T}_{v,n}$, then the Cayley graph of $\mathfrak{T}_{\mathsf{B},n}$ at $x$ is $\mathcal{T}_{v,n}^{\circ}$. Deleting the boundary edges does not change the vertex set, and principality identifies the source fiber with that vertex set. Hence, $|(\mathfrak{T}_{\mathsf{B},n})_x|=|V(\mathcal{T}_{v,n})|$. The F\o lner estimate below uses both structures on this common vertex set: $\mathcal{T}_{v,n}^{\circ}$ records the elementary AF arrows, while the boundary edges of $\mathcal{T}_{v,n}$ record where products of the generating bisections can leave that elementary subgroupoid.

\begin{lemma}
\label{lemma:uniform-tile-growth}
Suppose that $\mathsf{B}$ is simple. Then
\begin{equation}
\label{eq:min-tile-growth}
m_n\longrightarrow\infty.
\end{equation}
\end{lemma}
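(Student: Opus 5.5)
The vertex set of $\mathcal{T}_{v,n}$ is $\{p\in\Omega_n(\mathsf{B}):\mathbf{r}(p)=v\}$, with $v\in V_{n+1}$. Hence $m_n=\min_{v\in V_{n+1}}h_{n+1}(v)$, where $h_{n+1}(v)$ is the number of paths from the root level $V_1$ to $v$. The plan is to show that these path counts tend to infinity uniformly in the vertex by using simplicity together with the standing Cantor assumption.

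\textbf{Step 1: the counts are nondecreasing.} The recursion $h_{n+1}(w)=\sum_{e\in E_n,\ \mathbf{r}(e)=w}h_n(\mathbf{s}(e))$ holds, and every vertex receives at least one edge because range maps are onto. Hence $m_{n+1}\geq m_n$, and it suffices to find a subsequence of levels along which $m_n$ grows without bound.

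\textbf{Step 2: a telescoped level with two paths.} Use simplicity to choose levels $n_1<n_2<\cdots$ such that every vertex of $V_{n_j}$ is connected to every vertex of $V_{n_{j+1}}$. Fix $j$ and a vertex $w\in V_{n_{j+1}}$. If every $w$ at every such stage had $h(w)$ bounded by some $C$, I aim for a contradiction, as follows. Since each vertex of $V_{n_j}$ reaches $w$, we get $h_{n_{j+1}}(w)\geq\sum_{u\in V_{n_j}}h_{n_j}(u)\cdot(\#\text{paths }u\to w)$. In particular, $h_{n_{j+1}}(w)\geq\sum_u h_{n_j}(u)=|\Omega_{n_j-1}(\mathsf{B})|$, the total number of paths of that length. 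Because $\Omega(\mathsf{B})$ is a Cantor space, hence infinite, the total number of finite paths of length $n$ tends to infinity: otherwise, by König's lemma with surjective source maps, the path space would be finite. Consequently $m_{n_{j+1}}\geq|\Omega_{n_j-1}(\mathsf{B})|\to\infty$.

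\textbf{Step 3: conclusion.} Combining the subsequence bound of Step 2 with the monotonicity of Step 1 gives $m_n\to\infty$.

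\textbf{Main obstacle.} The delicate point is the indexing convention. Tiles at level $n$ are indexed by $V_{n+1}$ and count paths ending there, so the inequality of Step 2 must compare the correct levels. One must also justify that $|\Omega_n(\mathsf{B})|$ is unbounded. Here the Cantor hypothesis is used: a bounded sequence of path counts would force eventually bijective extension of paths, making $\Omega(\mathsf{B})$ finite. Simplicity alone would not suffice, as remarked earlier in the paper.
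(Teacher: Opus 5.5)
Your proposal is correct and follows essentially the same argument as the paper. The total path count $|\Omega_k(\mathsf{B})|$ tends to infinity because $\Omega(\mathsf{B})$ is infinite. Simplicity then gives a later level at which every vertex receives a path from every earlier vertex, so every tile there has at least $|\Omega_k(\mathsf{B})|$ vertices, and monotonicity from incoming edges handles the remaining levels.

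One index detail: your bound $h_{n_{j+1}}(w)\geq|\Omega_{n_j-1}(\mathsf{B})|$ controls $m_{n_{j+1}-1}$ rather than $m_{n_{j+1}}$. Step~1 makes this off-by-one harmless. Your justification that $|\Omega_n(\mathsf{B})|\to\infty$ via eventually bijective path extension is more explicit than the paper's.
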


\begin{proof}
Let $h_n(v)$ be the number of finite paths ending at the vertex corresponding to the finite tile $\mathcal{T}_{v,n}$. Since $\Omega(\mathsf{B})$ is infinite, $|\Omega_n(\mathsf{B})|=\sum_v h_n(v)$ tends to infinity. Fix $M>0$ and choose a level $k$ with $|\Omega_k(\mathsf{B})|>M$. Simplicity gives $q>k$ such that every vertex in $V_{q+1}$ receives a path from every vertex in $V_{k+1}$. Consequently, every level-$q$ finite tile contains, for each level-$k$ prefix, at least one descendant, and thus
$|V(\mathcal{T}_{v,q})|\geq |\Omega_k(\mathsf{B})|>M$
for every $v$. At subsequent levels, every vertex has an incoming edge, and thus the minimum cardinality of the finite tiles cannot decrease. Since $M$ was arbitrary, \eqref{eq:min-tile-growth} follows.
\end{proof}

\begin{thm}
\label{thm:boundary-folner}
Assume the tile presentation compatible with the AF core from Subsection~\ref{subsec:hypothesis-scope}, with $\G$ generated by a fixed finite family $\mathcal{S}\subseteq\operatorname{Bis}_c(\G)$ that is closed under inverses, and suppose that
\begin{equation}
\label{eq:boundary-folner-ratio}
\frac{\beta_n}{m_n}\longrightarrow0.
\end{equation}
Then $\G$ is almost finite in the sense of \eqref{eq:almost-finite-condition}. More precisely, the AF subgroupoids $\mathfrak{T}_{\mathsf{B},n}$ form an almost-finite approximation.
\end{thm}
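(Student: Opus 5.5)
Fix a compact set $C\subseteq\G$ and $\varepsilon>0$. The plan is to show directly that the escape ratio $\varepsilon_n(C)$ of \eqref{eq:AF-escape-ratio} for the stages $\mathfrak{K}_n=\mathfrak{T}_{\mathsf{B},n}$ tends to zero. The bound will come from counting boundary exits in finite tiles, in the spirit of Proposition~\ref{prop:uniform-boundary-defect}.

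\textbf{Reduction to words.} Since $\mathcal{S}$ generates $\G$, every arrow lies in a bisection of $\langle\mathcal{S}\rangle_{\mathrm{inv}}$, and these bisections are open. Compactness of $C$ therefore gives finitely many words $W_1,\dots,W_q$ in $\mathcal{S}$, of lengths at most $\ell$, whose product bisections cover $C$. Because $C\mathfrak{K}_x\subseteq\bigcup_i W_i\mathfrak{K}_x$, it suffices to bound $|W\mathfrak{K}_x\setminus\mathfrak{K}_x|$ for a single word $W=F_\ell\cdots F_1$.

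\textbf{Counting exits.} Let $x$ have level-$n$ prefix in the tile $\mathcal{T}_{v,n}$. By principality of $\mathfrak{K}_n$, the fiber $\mathfrak{K}_x$ is identified with $V(\mathcal{T}_{v,n})$; each $k\in\mathfrak{K}_x$ corresponds to the vertex $p$ given by the prefix of $\rg(k)$. The product $Wk$ is defined for at most one germ, namely $(W,\rg(k))k$.

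Suppose the trajectory of $W$ from $p$ uses only internal edges. By compatibility (Definition~\ref{def:compatible-tile-presentation}), $W$ then acts on the cylinder $[p]$ as a prefix replacement into the same tile. Hence $(W,\rg(k))\in\mathfrak{K}_n$ and $Wk\in\mathfrak{K}_x$.

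So an escaping arrow requires the trajectory from $p$ to meet a boundary edge at some first step $j\le\ell$. The first $j-1$ steps are internal, so they give an injective partial map of the vertices of $\mathcal{T}_{v,n}$. The vertex reached at step $j-1$ is a boundary point. Therefore, for each $j$, the number of such $p$ is at most $|\partial\mathcal{T}_{v,n}|\le\beta_n$. This yields
\[
|W\mathfrak{K}_x\setminus\mathfrak{K}_x|\le\ell\beta_n,
\qquad
\frac{|C\mathfrak{K}_x\setminus\mathfrak{K}_x|}{|\mathfrak{K}_x|}\le\frac{q\ell\beta_n}{m_n},
\]
uniformly in $x$. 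By \eqref{eq:boundary-folner-ratio}, the right-hand side tends to $0$.

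\textbf{Main obstacle.} The delicate step is the claim that a fully internal trajectory gives an arrow of $\mathfrak{T}_{\mathsf{B},n}$ at level exactly $n$. Compatibility says an internal edge from $p$ to $q$ acts as $pz\mapsto qz$ on all of $[p]$, so each step is a level-$n$ prefix replacement, and their composite is one as well. I would check carefully that the domains of all $F_j$ contain the successive cylinders. That holds because an internal edge presupposes $[p]\subseteq\sg(F)$.

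A second point to verify is that $x$ ranges over all of $\Omega(\mathsf{B})$, not just persistent points. The count above is purely finite-level, so it applies to every $x$. If $\beta_n$ is only an eventual quantity or a cylinder is partially undefined, the edge simply does not exist and contributes nothing.
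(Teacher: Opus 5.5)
Your argument is essentially the paper's. You cover $C$ by finitely many word bisections and identify $(\mathfrak{T}_{\mathsf{B},n})_x$ with $V(\mathcal{T}_{v,n})$. For each word you charge an escaping arrow to the first non-internal step. Injectivity of the preceding internal steps then gives at most $\ell\beta_n$ escapes per word, hence the bound $q\ell\beta_n/m_n$.

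The one point to correct is your final remark about partially undefined cylinders. When the trajectory first fails to be internal at step $j$, the count needs the $F_j$-edge at the current vertex $p'$ to exist. Only then is that edge a boundary edge and $p'$ a boundary point.

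If $[p']$ meets $\sg(F_j)$ only partially, there is no $F_j$-edge at $p'$, yet $F_j$ may still be defined at the actual point reached. The arrow $Wk$ then exists and can lie outside $\mathfrak{T}_{\mathsf{B},n}$. It is not charged to any boundary vertex. So such cylinders do not ``contribute nothing''; they break the count.

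The fix is the one the paper uses. Consider only $n$ so large that each of the finitely many clopen sets $\sg(F)$, $F\in\mathcal{S}$, is a union of level-$n$ cylinders. This persists at all deeper levels and costs nothing, since only $n\to\infty$ matters. Then every defined non-internal step is necessarily a boundary edge, and your count is valid.
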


\begin{proof}
Let $C\subseteq\G$ be compact. Every arrow of $\G$ belongs to a product of bisections in $\mathcal{S}$. These products form an open cover of $\G$, and compactness gives words $w_1,\ldots,w_q$ such that
$C\subseteq w_1\cup\cdots\cup w_q.$
Write $\ell_i=|w_i|$ and $L=\ell_1+\cdots+\ell_q$.

Choose $n$ sufficiently large that the level-$n$ cylinders resolve the domains and ranges of all $F\in\mathcal{S}$. Fix a point $x$, and let $\mathcal{T}_{v,n}$ be the finite tile containing its level-$n$ prefix. Identify the vertex set $\rg((\mathfrak{T}_{\mathsf{B},n})_x)$ with $V(\mathcal{T}_{v,n})$. Equivalently, $\mathcal{T}_{v,n}^{\circ}$ is the corresponding elementary Cayley graph. Since $\mathfrak{T}_{\mathsf{B},n}$ is principal, every $y$ in this orbit determines a unique arrow $k_y\in\mathfrak{T}_{\mathsf{B},n}$ from $x$ to $y$.

Consider first one word $F=F_\ell\cdots F_1$, with $F_j\in\mathcal{S}$, and a starting point $y$ in this elementary orbit on which $F$ is defined. Suppose that $(F,y)k_y\notin\mathfrak{T}_{\mathsf{B},n}$. Follow the trajectory of $y$ under the successive letters and let $j$ be its first step that is not represented by an internal edge of the finite tile. Such a step must occur, since a word represented entirely by internal prefix replacements gives an arrow of $\mathfrak{T}_{\mathsf{B},n}$. The first $j-1$ steps therefore remain in $\mathcal{T}_{v,n}^{\circ}$. Since the level-$n$ cylinders resolve the domain of $F_j$, the next defined label is recorded in the finite tile, and since it is not internal it is a boundary edge. Thus the point reached immediately before the $j$th step is a boundary vertex.

For a fixed $j$, the preceding partial word $F_{j-1}\cdots F_1$ is injective on its domain. Its values remain in the same elementary orbit, and within that orbit the level-$n$ prefix determines the point. Consequently, distinct starting points whose trajectories first meet the boundary at step $j$ give distinct boundary vertices. There are at most $\beta_n$ such starting points for each $j$, and therefore at most $\ell\beta_n$ in total. For each starting point there is at most one arrow in the product bisection. We count these arrows separately even when they have the same range. Hence
\begin{equation}
\label{eq:one-word-folner-bound}
|F(\mathfrak{T}_{\mathsf{B},n})_x\setminus(\mathfrak{T}_{\mathsf{B},n})_x|\leq \ell\beta_n.
\end{equation}
Taking the union over the chosen word bisections gives
$|C(\mathfrak{T}_{\mathsf{B},n})_x\setminus(\mathfrak{T}_{\mathsf{B},n})_x| \leq L\beta_n.$
On the other hand, $|(\mathfrak{T}_{\mathsf{B},n})_x|\geq m_n$. Hence,
\begin{equation}
\frac{|C(\mathfrak{T}_{\mathsf{B},n})_x\setminus(\mathfrak{T}_{\mathsf{B},n})_x|}{|(\mathfrak{T}_{\mathsf{B},n})_x|}
\leq L\frac{\beta_n}{m_n},
\end{equation}
uniformly in $x$. By \eqref{eq:boundary-folner-ratio}, the right-hand side is smaller than $\varepsilon$ for all sufficiently large $n$. Taking $\mathfrak{K}=\mathfrak{T}_{\mathsf{B},n}$ proves almost finiteness.
\end{proof}

\begin{cor}
\label{cor:bounded-type-almost-finite}
Assume the finite compatible tile presentation of Subsection~\ref{subsec:hypothesis-scope}. Suppose that $\mathsf{B}$ is simple, and $|\partial\mathcal{T}_{v,n}|\leq C_\partial$ for all $v,n$. Then the standard elementary stages $\mathfrak{T}_{\mathsf{B},n}$ form an almost-finite approximation of $\G$. In particular, $\G$ is almost finite. This applies, in particular, to tile inflations of bounded type.
\end{cor}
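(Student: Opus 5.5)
The corollary follows from Theorem~\ref{thm:boundary-folner} once we check the ratio condition \eqref{eq:boundary-folner-ratio}. By definition $\beta_n=\max_v|\partial\mathcal{T}_{v,n}|$. The uniform hypothesis $|\partial\mathcal{T}_{v,n}|\leq C_\partial$ gives $\beta_n\leq C_\partial$ for every $n$. Simplicity of $\mathsf{B}$, together with the standing Cantor assumption that $\Omega(\mathsf{B})$ is infinite, is exactly the input of Lemma~\ref{lemma:uniform-tile-growth}, which yields $m_n\to\infty$. Hence
\[
0\leq\frac{\beta_n}{m_n}\leq\frac{C_\partial}{m_n}\longrightarrow0.
\]

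The remaining hypotheses of Theorem~\ref{thm:boundary-folner} are the finite compatible tile presentation and a finite inverse-closed generating family $\mathcal{S}\subseteq\operatorname{Bis}_c(\G)$. These are precisely the assumptions of Subsection~\ref{subsec:hypothesis-scope}, which the corollary imposes. The theorem then says that the standard stages $\mathfrak{T}_{\mathsf{B},n}$ satisfy the almost-finite estimate \eqref{eq:almost-finite-condition} for every compact $C$ and every $\varepsilon>0$ once $n$ is large. In particular $\G$ is almost finite.

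For the final sentence, Definition~\ref{def:bounded-type} includes $\sup_{v,n}|\partial\mathcal{T}_{v,n}|<\infty$, so the constant $C_\partial$ exists. I should not claim the bounded-type clause without simplicity: Definition~\ref{def:bounded-type} does not assert simplicity, so the statement should be read as applying to bounded-type inflations over a simple diagram. Under that reading no further step is needed.

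There is no real obstacle here. The only point needing care is the level indexing in Lemma~\ref{lemma:uniform-tile-growth}, namely whether $m_n$ is taken over tiles at level $n$ and whether the minimum is nondecreasing. The lemma already settles both, so I would simply cite it.
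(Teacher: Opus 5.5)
Your proposal is correct and is the paper's proof: bound $\beta_n\leq C_\partial$, obtain $m_n\to\infty$ from Lemma~\ref{lemma:uniform-tile-growth}, and apply Theorem~\ref{thm:boundary-folner}, which gives the stronger claim about the standard stages $\mathfrak{T}_{\mathsf{B},n}$. The paper also notes that Corollary~\ref{cor:infinite-AF-orbits-almost-finite} yields almost finiteness on its own, since the AF core of a simple diagram is minimal, but that route does not give the explicit finite-tile escape estimate your argument provides.
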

This recovers the corresponding conclusion of \cite[Proposition~5.5.2]{nek22} for systems of bounded type with infinite orbits.

\begin{proof}
The uniform boundary bound gives $\beta_n\leq C_\partial$ for all $n$, while Lemma~\ref{lemma:uniform-tile-growth} gives $m_n\to\infty$. Hence \eqref{eq:boundary-folner-ratio} holds and Theorem~\ref{thm:boundary-folner} applies. Corollary~\ref{cor:infinite-AF-orbits-almost-finite} also applies since the AF core is minimal. The present argument gives the explicit finite-tile escape estimate.
\end{proof}

\begin{rmk}
\label{rmk:folner-beyond-bounded-type}
The proof uses only the condition on the ratio of boundary to volume \eqref{eq:boundary-folner-ratio}. Uniform boundedness of $|V_n|$ and $|E_n|$, finiteness of the groups of germs, and the localization hypotheses from \cite{kua26a} are not needed. The approximation criterion also applies when the boundaries of finite tiles are unbounded but satisfy the displayed ratio estimate. The kernel consequences obtained below from almost finiteness use minimality and comparison. Section~\ref{sec:constructive-kernel} gives an independent kernel description without these assumptions. Failure of~\eqref{eq:boundary-folner-ratio} implies neither failure of almost finiteness nor failure of approximation by the chosen elementary stages of $\T$: the largest boundary and the smallest finite tile can occur in different finite tiles.
\end{rmk}

\subsection{Known kernel consequences with hypotheses}
The remaining statements in this subsection are consequences of established full-group theory once almost finiteness has been verified. The principal almost-finite kernel theorem is \cite[Theorem~5.5.9]{nek22}. Since our AF-by-discrete groupoids may have nontrivial groups of germs, we instead use two results valid in the effective setting: minimal almost finite groupoids have comparison, and their derived and alternating full groups coincide. See \cite[Theorems~5.5.4 and~5.5.5]{nek22}, and the compact-open bisection formulation in \cite[Theorems~4.8 and~4.9]{nekrash2025}. Li's results in the present Cantor setting under minimality and comparison then supply index surjectivity and the kernel reduction \cite[Corollaries~6.14 and~6.17]{Li25}. 

\keepwithstatement
\begin{thm}
\label{thm:kernel-folner}
Suppose that $\G$ is effective and that either its AF core is minimal on $\Omega(\mathsf{B})$ or $\G$ is minimal and compactly generated. Then
\begin{equation}
\label{eq:kernel-symmetric}
\ker I=\Sym(\G),
\end{equation}
the index map is surjective, and
\begin{equation}
\label{eq:derived-alternating}
\Der(\G)=\Alt(\G).
\end{equation}
Consequently, there is an exact sequence
\begin{equation}
\label{eq:index-short-exact}
1\longrightarrow\Sym(\G)\longrightarrow\F(\G)
\xrightarrow{\ I\ }H_1(\G)\longrightarrow0.
\end{equation}
More generally, the same conclusions hold whenever $\G$ is minimal and almost finite in this Cantor setting, whether almost finiteness follows from Theorem~\ref{thm:AF-exhaustion-criterion} or from another elementary approximation.
\end{thm}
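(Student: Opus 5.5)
The argument reduces everything to the case of a minimal almost finite effective groupoid and then invokes the cited structure theorems; the work lies in checking that their hypotheses hold.

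\emph{Step 1: reduction.} In the first case, with $\T$ minimal, Corollary~\ref{cor:index-surjective} makes $\G$ minimal and shows that $\G$ has comparison. Corollary~\ref{cor:infinite-AF-orbits-almost-finite} then makes $\G$ almost finite, since every $\T$-orbit is infinite. In the second case, with $\G$ minimal and compactly generated, Theorem~\ref{thm:minimal-AF-by-discrete-almost-finite} gives almost finiteness directly. So in both cases $\G$ is minimal, almost finite and effective on a Cantor unit space. From here on I work only under this general hypothesis, which also proves the final sentence of the statement.

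\emph{Step 2: comparison and surjectivity.} I would apply \cite[Theorem~5.5.4]{nek22}, in the compact-open bisection form \cite[Theorem~4.8]{nekrash2025}, to get comparison for minimal almost finite effective groupoids. In the first case comparison is already supplied by Corollary~\ref{cor:index-surjective}. Minimality and comparison on the Cantor space are exactly the hypotheses of \cite[Corollary~6.14]{Li25}, which yields surjectivity of $I$.

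\emph{Step 3: kernel and derived subgroup.} For $\ker I=\Sym(\G)$ I would use Li's kernel reduction \cite[Corollary~6.17]{Li25}, again under minimality and comparison. It identifies the kernel of the index map with the subgroup generated by transpositions, that is, with $\Sym(\G)$ in the multisection sense of Definition~\ref{def:full-symmetric-alternating}. For $\Der(\G)=\Alt(\G)$ I would cite \cite[Theorem~5.5.5]{nek22}, equivalently \cite[Theorem~4.9]{nekrash2025}, which is valid in the effective, possibly non-Hausdorff, minimal almost finite setting. Exactness of \eqref{eq:index-short-exact} then combines surjectivity with $\ker I=\Sym(\G)$.

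\emph{Main obstacle.} The main obstacle is matching conventions in Step 3. I need to check that the transposition subgroup appearing in Li's statement coincides with $\Sym(\G)$ as defined here through multisections. I also need to check that Li's non-Hausdorff homology agrees with the chain model of Definition~\ref{def:groupoid-homology}, so that his index is the map $I$ of Definition~\ref{def:index-map}.

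If the identification of the transposition subgroups is not immediate, I would fall back on a direct inclusion argument. The inclusion $\Sym(\G)\subseteq\ker I$ is direct: a transposition $t_V$ has $1_{U_{t_V}}=1_V+1_{V^{-1}}+1_{\text{unit part}}$. This chain is homologous to zero because $[1_V]+[1_{V^{-1}}]=[1_{VV^{-1}}]=0$, using the degree-one relation $[\alpha\beta]=[\alpha]+[\beta]$ on bisections. The reverse inclusion is the content actually imported from the cited theorems.
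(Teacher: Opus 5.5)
Your reduction in Step~1 and your sources for comparison, surjectivity and $\Der(\G)=\Alt(\G)$ match the paper's proof. The gap is in Step~3. \cite[Corollary~6.17]{Li25} does not identify $\ker I$ with $\Sym(\G)$ under minimality and comparison. As recalled in \eqref{eq:li-kernel-reduction}--\eqref{eq:li-kernel-equivalences}, it gives only two things:
\begin{itemize}
\item $\ker I=\langle\Sym(\G),\Der(\G)\rangle$;
\item the equivalence $\ker I=\Sym(\G)\iff\Der(\G)=\Alt(\G)$.
\end{itemize}
Under minimality and comparison alone, the residual quotient $\ker I/\Sym(\G)\cong\Der(\G)/(\Der(\G)\cap\Sym(\G))$ of Proposition~\ref{prop:residual-kernel-quotient} is a perfect group that is not known to vanish. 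So the inclusion $\ker I\subseteq\Sym(\G)$, which you describe as ``the content actually imported from the cited theorems,'' is not supplied by Li's result on its own. Your ``main obstacle'' paragraph is therefore aimed at the wrong place. The real issue is not matching conventions for the transposition subgroup but the missing input $\Der(\G)\subseteq\Sym(\G)$.

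The repair uses what you already have: the two conclusions of Step~3 are not independent. First obtain $\Der(\G)=\Alt(\G)$ from \cite[Theorem~5.5.5]{nek22}; this is where almost finiteness and effectiveness are used, beyond comparison. Then deduce
\[
\ker I=\Sym(\G)\Der(\G)=\Sym(\G)\Alt(\G)=\Sym(\G),
\]
using $\Alt(\G)\le\Sym(\G)$, since a multisection $3$-cycle is a product of two dynamical transpositions. Equivalently, feed $\Der(\G)=\Alt(\G)$ into Li's equivalence. This is exactly the order of the paper's proof. With that correction the rest of your argument, including the exactness of \eqref{eq:index-short-exact} and the final ``more generally'' clause, goes through.
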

\begin{proof}
In the first case, minimality of the AF core implies minimality of $\G$, and Corollary~\ref{cor:infinite-AF-orbits-almost-finite} gives almost finiteness. In the second case, Theorem~\ref{thm:minimal-AF-by-discrete-almost-finite} gives almost finiteness directly. The cited results of Nekrashevych then give comparison and, using effectiveness, $\Der(\G)=\Alt(\G)$.

Now apply Li's low-degree exact sequence \cite[Corollary~6.14]{Li25}. Its final map is the homomorphism induced by the index and is onto, and thus the index itself is surjective. Li's kernel reduction \cite[Corollary~6.17]{Li25} says that $\ker I$ is generated by $\Sym(\G)$ and $\Der(\G)$ and that $\ker I=\Sym(\G)$ is equivalent to $\Der(\G)=\Alt(\G)$. This proves the theorem. 
\end{proof}
Section~\ref{sec:constructive-kernel} gives a direct transposition factorization and extends the kernel equality to every AF-by-discrete groupoid without singleton $\G$-orbits.

\begin{cor}
\label{cor:full-quotient-matrix}
Assume either of the hypotheses in Theorem~\ref{thm:kernel-folner}. With the hypothesis finite generation, the splitting, and defect matrices from Theorem~\ref{thm:finite-level-defect}, for every sufficiently large $n$, there is an isomorphism
\begin{equation}
\label{eq:full-quotient-matrix}
\F(\G)/\Sym(\G)\cong H_1(\G)\cong \Tor H_1(\G,\T)\oplus\ker D_n.
\end{equation}
The first isomorphism is canonical. The second depends on the chosen relative splitting and free generators, and its existence does not supply an effective stabilization level.
\end{cor}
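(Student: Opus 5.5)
The plan is to combine Theorem~\ref{thm:kernel-folner} with Theorem~\ref{thm:finite-level-defect}. Under either hypothesis of Theorem~\ref{thm:kernel-folner}, the index map $I:\F(\G)\to H_1(\G)$ is surjective and $\ker I=\Sym(\G)$. This is the exact sequence~\eqref{eq:index-short-exact}. The first isomherphism $\F(\G)/\Sym(\G)\cong H_1(\G)$ then follows from the first isomorphism theorem applied to $I$. This isomorphism is canonical, because $I(g)=[1_{U_g}]$ involves no choices and effectiveness makes the full bisection $U_g$ unique. I would also record that $\Sym(\G)$ is normal here, being a kernel. It is in any case normal as a subgroup generated by a conjugation-invariant family of transpositions, since $gt_Vg^{-1}=t_{U_gVU_g^{-1}}$.

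For the second isomorphism, assume the finite-generation hypothesis of Theorem~\ref{thm:finite-level-defect}, namely that $H_1(\G,\T)$ is finitely generated. Under compact generation this holds by Theorem~\ref{thm:relative-normal-form} together with Corollary~\ref{cor:compact-generation-germs}. Fix the splitting~\eqref{eq:relative-free-splitting}, the free generators $z_1,\ldots,z_m$, and their isolating bisections. The defect matrices $D_n$ then satisfy~\eqref{eq:defect-matrix-compatibility}, and Theorem~\ref{thm:finite-level-defect} yields the following. The kernels $K_n=\ker D_n$ stabilize at some $n_0$, and for $n\ge n_0$,
\[
H_1(\G)\cong\ker\delta=\Tor H_1(\G,\T)\oplus K_n.
\]
This identification passes through $\iota$ and the exact sequence~\eqref{eq:relative}, using that $\delta$ vanishes on torsion because $H_0(\T)$ is torsion-free. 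Composing it with the first isomorphism gives~\eqref{eq:full-quotient-matrix}.

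The only delicate point is bookkeeping about dependence. The second isomorphism depends on the splitting~\eqref{eq:relative-free-splitting}, the basis $z_j$, and the isolating bisections. Changing these changes $D_n$ by column operations and by eventual agreement of the defect vectors, as in Proposition~\ref{prop:finite-level-defect-vector}. As Remark~\ref{rmk:stabilization-not-effective} notes, the level $n_0$ comes only from the Noetherian stabilization argument, so the existence of the isomorphism gives no effective bound on $n_0$. I do not expect any step to be a genuine obstacle. Theorem~\ref{thm:kernel-folner} carries the real content, namely surjectivity of $I$ and the kernel equality.
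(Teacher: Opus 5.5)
Your proposal is correct and follows the same route the paper intends. The paper gives no separate proof: the corollary is read off from the exact sequence~\eqref{eq:index-short-exact} of Theorem~\ref{thm:kernel-folner}, which yields the canonical first isomorphism, combined with the stabilized matrix identification~\eqref{eq:H1-stable-matrix} of Theorem~\ref{thm:finite-level-defect}, which yields the second; your bookkeeping on choices and the non-effective stabilization level matches the paper's remarks.
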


\begin{cor}
\label{cor:finite-germ-full-quotient}
Assume the finite-singular-germ condition of Corollary~\ref{cor:finite-singular-H1}, and minimality of the AF core. If $\mathcal{O}_1,\ldots,\mathcal{O}_r$ are the exceptional orbits, then
\begin{equation}
\label{eq:finite-germ-full-quotient}
\F(\G)/\Sym(\G)\cong\bigoplus_{i=1}^r H_{\mathcal{O}_i}^{\ab}.
\end{equation}
\end{cor}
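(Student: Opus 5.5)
The plan is to combine Theorem~\ref{thm:kernel-folner} with Theorem~\ref{thm:finite-level-defect} for the first statement, and then specialize to the finite-singular-germ setting of Corollary~\ref{cor:finite-singular-H1} for the second. No new estimates are needed; the work is bookkeeping about which hypotheses feed which earlier result.

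\textbf{Corollary~\ref{cor:full-quotient-matrix}.} Under either hypothesis of Theorem~\ref{thm:kernel-folner}, the index map $I:\F(\G)\to H_1(\G)$ is surjective and $\ker I=\Sym(\G)$. The first isomorphism theorem then gives the canonical isomorphism $\F(\G)/\Sym(\G)\cong H_1(\G)$, namely $g\Sym(\G)\mapsto I(g)$. This map is canonical because $I$ is intrinsic to $\G$.

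Next, assume that $H_1(\G,\T)$ is finitely generated, for instance under generation by finitely many bisections via Theorem~\ref{thm:relative-normal-form} and Corollary~\ref{cor:compact-generation-germs}. Fix the splitting~\eqref{eq:relative-free-splitting}, the free generators and isolating bisections, and the resulting matrices $D_n$. By~\eqref{eq:H1-stable-matrix}, for every $n$ beyond the stabilization level we have $H_1(\G)\cong\Tor H_1(\G,\T)\oplus K_n$, and $K_n=\ker D_n$ by~\eqref{eq:matrix-kernels}. Composing the two isomorphisms gives~\eqref{eq:full-quotient-matrix}. The dependence on the chosen splitting and basis, and the noneffectiveness of the stabilization level, are inherited from Theorem~\ref{thm:finite-level-defect} and Remark~\ref{rmk:stabilization-not-effective}.

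\textbf{Corollary~\ref{cor:finite-germ-full-quotient}.} Minimality of the AF core places us in the first case of Theorem~\ref{thm:kernel-folner}, so again $\F(\G)/\Sym(\G)\cong H_1(\G)$. The finite-singular-germ condition gives $H_1(\G)\cong\bigoplus_i H_{\xi_i}^{\ab}$ by Corollary~\ref{cor:finite-singular-H1}, where each $\xi_i$ is the germ-defining point of $\mathcal{O}_i$. Finally, $H_{\xi_i}^{\ab}\cong H_{\mathcal{O}_i}^{\ab}$ canonically, since changing the basepoint within an orbit alters the identification only by an inner automorphism, which acts trivially on the abelianization.

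\textbf{Main obstacle.} The only delicate point is confirming that the hypotheses line up. The effectiveness hypothesis of Theorem~\ref{thm:kernel-folner} is the standing assumption on $\G$. The finite-generation hypothesis needed for the matrix description must be imposed separately in Corollary~\ref{cor:full-quotient-matrix}; it does not follow from minimality of the core alone. In Corollary~\ref{cor:finite-germ-full-quotient}, by contrast, finiteness of the germ groups makes the relative group finite, so $\delta=0$ and no matrix stabilization is needed at all.
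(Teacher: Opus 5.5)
Your argument for this corollary is correct and is exactly the paper's proof: the minimal-core case of Theorem~\ref{thm:kernel-folner} gives $\F(\G)/\Sym(\G)\cong H_1(\G)$, and Corollary~\ref{cor:finite-singular-H1} computes $H_1(\G)\cong\bigoplus_i H_{\mathcal{O}_i}^{\ab}$; your treatment of Corollary~\ref{cor:full-quotient-matrix} is extra and not needed here. One small correction to your closing remark: finite germ groups alone do not make the relative group torsion; you also need the transport lattices to vanish, which holds because under the finite-singular-germ condition each exceptional orbit has a single infinite tile, and this is already built into Corollary~\ref{cor:finite-singular-H1}.
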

This applies to the cited finite-singular-germ examples of bounded type once their minimal AF core and singular data have been verified.
\begin{proof}
Theorem~\ref{thm:kernel-folner} identifies the quotient with $H_1(\G)$, and Corollary~\ref{cor:finite-singular-H1} computes that homology group.
\end{proof}

\subsection{The residual quotient under comparison}
For this subsection, we assume explicitly that $\G$ is minimal and has comparison. We consider the quotient $\ker I/\Sym(\G)$ by the subgroup generated by dynamical transpositions. These hypotheses hold in the case of minimal groupoids above. We retain the general reduction as a comparison with the general groupoid problem. For the AF-by-discrete inclusion itself, Corollary~\ref{cor:no-singletons-kernel} in the next section will make this residual quotient trivial under minimality without establishing almost finiteness first. Li's result is
\begin{equation}
\label{eq:li-kernel-reduction}
\ker I=\langle \Sym(\G),\Der(\G)\rangle,
\end{equation}
with equivalences
\begin{equation}
\label{eq:li-kernel-equivalences}
\ker I=\Sym(\G)\quad\Longleftrightarrow\quad \Der(\G)\subseteq \Sym(\G)
\quad\Longleftrightarrow\quad \Der(\G)=\Alt(\G).
\end{equation}
These are the conclusions of \cite[Corollary~6.17]{Li25}.

The residual quotient is controlled by the derived subgroup and is itself perfect.
\keepwithstatement
\begin{prop}
\label{prop:residual-kernel-quotient}
Under the preceding minimality and comparison assumptions, we have
\begin{equation}
\label{eq:actual-obstruction}
\ker I/\Sym(\G)\cong \Der(\G)/(\Der(\G)\cap \Sym(\G)),
\end{equation}
and there is an exact sequence
\begin{equation}
\label{eq:obstruction-quotients}
1\longrightarrow (\Der(\G)\cap \Sym(\G))/\Alt(\G)\longrightarrow \Der(\G)/\Alt(\G)
\longrightarrow\ker I/\Sym(\G)\longrightarrow1.
\end{equation}
The group $\ker I/\Sym(\G)$ is perfect. In particular, every homomorphism from it to an abelian group is trivial.
\end{prop}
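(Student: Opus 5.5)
The plan is to derive everything from Li's reduction \eqref{eq:li-kernel-reduction}, $\ker I=\langle\Sym(\G),\Der(\G)\rangle$, together with elementary group theory. The first step is to record that $\Sym(\G)$ is normal in $\F(\G)$: conjugating a dynamical transposition $t_V$ by $g\in\F(\G)$ gives the transposition $t_{U_gVU_g^{-1}}$. The same computation shows that $\Alt(\G)$ is normal, since conjugating a multisection $3$-cycle gives a multisection $3$-cycle. Because $\Sym(\G)$ is normal, $\langle\Sym(\G),\Der(\G)\rangle=\Der(\G)\Sym(\G)$. The second isomorphism theorem then gives
\[
\ker I/\Sym(\G)=\Der(\G)\Sym(\G)/\Sym(\G)\cong\Der(\G)/(\Der(\G)\cap\Sym(\G)),
\]
which is \eqref{eq:actual-obstruction}.

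For the exact sequence \eqref{eq:obstruction-quotients}, I would check the chain of inclusions $\Alt(\G)\subseteq\Der(\G)\cap\Sym(\G)\subseteq\Der(\G)$. The inclusion $\Alt(\G)\subseteq\Sym(\G)$ holds because a multisection $3$-cycle is the product of two transpositions within the same multisection. The inclusion $\Alt(\G)\subseteq\Der(\G)$ holds because a $3$-cycle is a commutator of two transpositions inside the finite symmetric group acting on the multisection levels; a degree-$3$ multisection suffices, and $\mathrm{Alt}(3)\subseteq[\mathrm{Sym}(3),\mathrm{Sym}(3)]$. All three groups are normal in $\F(\G)$, hence in $\Der(\G)$. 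The third isomorphism theorem, applied to $\Alt(\G)\trianglelefteq\Der(\G)\cap\Sym(\G)\trianglelefteq\Der(\G)$, yields the short exact sequence. Its right-hand term is identified with $\ker I/\Sym(\G)$ through \eqref{eq:actual-obstruction}.

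Perfectness is the step that needs a real input, and I expect it to be the main obstacle. I would try to show that $\Der(\G)$ is itself perfect modulo $\Sym(\G)$, that is, $[\Der(\G),\Der(\G)]\Sym(\G)\supseteq\Der(\G)$. The general fact $[\F(\G),\F(\G)]\subseteq[\Der(\G),\Der(\G)]\,\Sym(\G)$ is not available in the abstract, so I would argue via abelianizations instead. Let $Q=\ker I/\Sym(\G)$. Then $\F(\G)/\Sym(\G)$ is an extension of $H_1(\G)$ by $Q$, using surjectivity of $I$ from Li's sequence. The abelianization of $\F(\G)/\Sym(\G)$ is $\F(\G)/\Der(\G)\Sym(\G)=\F(\G)/\ker I\cong H_1(\G)$. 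Hence the image of $\Der(\G)$ in $\F(\G)/\Sym(\G)$, which is all of $Q$, equals the commutator subgroup of $\F(\G)/\Sym(\G)$.

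To pass from this to $[Q,Q]=Q$, I would use the stronger structural input from Li's framework under minimality and comparison, namely that $\Der(\G)$ is perfect (in fact simple modulo nothing: $\Der(\G)=[\Der(\G),\Der(\G)]$ by the Nekrashevych/Matui-type results for minimal groupoids with comparison, e.g.\ via $\Alt(\G)$ being simple and $\Der(\G)$ perfect). Granting $\Der(\G)$ perfect, its quotient $\Der(\G)/(\Der(\G)\cap\Sym(\G))\cong Q$ is perfect. The final assertion, that every homomorphism from $Q$ to an abelian group is trivial, is then immediate. If that citation is unavailable, the fallback is to show directly that every commutator $[g,h]$ with $g,h\in\F(\G)$ lies in $[\Der(\G),\Der(\G)]\Sym(\G)$. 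For this I would use comparison to conjugate $g$ into a form supported on a small clopen set, splitting $g=g_1g_2$ with disjoint small supports up to transpositions.
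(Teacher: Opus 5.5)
Your argument is essentially the paper's: normality of $\Sym(\G)$ and $\Alt(\G)$, then Li's reduction $\ker I=\Sym(\G)\Der(\G)$ with the second isomorphism theorem, then the inclusions $\Alt(\G)\subseteq\Der(\G)\cap\Sym(\G)\subseteq\Der(\G)$ for the exact sequence, and finally perfectness inherited as a quotient of $\Der(\G)$. The one input you attribute only vaguely, perfectness of $\Der(\G)$ under minimality and comparison, is exactly Li's Corollary~6.10, which the paper cites; with that reference your abelianization detour and the comparison-based fallback are unnecessary.
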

\begin{proof}
The subgroups $\Sym(\G)$ and $\Alt(\G)$ are normal in $\F(\G)$, since conjugation preserves multisections. Also $\Alt(\G)\subseteq \Der(\G)\cap \Sym(\G)$. By \eqref{eq:li-kernel-reduction}, $\ker I=\Sym(\G)\Der(\G)$, and thus the second isomorphism theorem gives \eqref{eq:actual-obstruction}. The quotient map from $\Der(\G)/\Alt(\G)$ then has kernel $(\Der(\G)\cap \Sym(\G))/\Alt(\G)$. Finally, $\Der(\G)$ is perfect by \cite[Corollary~6.10]{Li25}. Every quotient of a perfect group is perfect.
\end{proof}

To compute the residual quotient geometrically, we must realize the local configurations and determine their relations modulo $\Sym(\G)$. Section~\ref{sec:constructive-kernel} gives this direct AF-by-discrete computation beyond minimality, while the localization methods of \cite{kua26a,kua26b} address effective representatives in prescribed subgroups.

\subsection{Abelianization and the parity term}\label{subsec:AH-terms}
Assume now one of the minimality hypotheses of Theorem~\ref{thm:kernel-folner}. By Theorem~\ref{thm:kernel-folner},
$\Der(\G)=\Alt(\G)$ and $\ker I=\Sym(\G).$
Hence the two successive quotients
\begin{equation}
\frac{\Sym(\G)}{\Alt(\G)}
\qquad\text{and}\qquad
\frac{\F(\G)}{\Sym(\G)}\cong H_1(\G)
\end{equation}
separate the parity contribution from the boundary index contribution. Under minimality and comparison, the following part of Li's low-degree AH exact sequence is available independently of almost finiteness \cite[Corollary~6.14]{Li25}:
\begin{equation}
\label{eq:AH-bounded-type}
H_2(\G)\xrightarrow{\theta_{\G}} H_0(\G;\mathbb{Z}/2\mathbb{Z})
\xrightarrow{j_{\G}} \F(\G)_{\ab}
\xrightarrow{\ I_{\ab}\ } H_1(\G)
\longrightarrow0.
\end{equation}
Here $I_{\ab}$ is induced by the index map, and $j_{\G}$ sends $[1_{\sg(V)}]$ to the class of $t_V$ in the abelianization, where $V$ is a compact open bisection with disjoint source and range. The preceding term in Li's whole exact sequence is $H_2(\Der(\G))$. Only the displayed terms will be used here. The map $\theta_{\G}$ is the secondary parity differential whose geometric computation is developed in Sections~\ref{sec:parity}--\ref{sec:surface-parity}.

\begin{defn}[AH, strong AH, and split AH]
\label{def:AH-terms}
The exact sequence~\eqref{eq:AH-bounded-type} is the \emph{AH sequence}. We say that $\G$ has the \emph{strong AH property} if $j_{\G}$ is injective. We say that the AH sequence \emph{splits} if $I_{\ab}$ admits a homomorphic section as a map of abelian groups.
\end{defn}
By exactness, the strong AH property is equivalent to $\theta_{\G}=0$.

The strong AH property and splitting are distinct assertions. See \cite[Remark~6.16]{Li25}.

Section~\ref{sec:index} determines the quotient of the abelianization by parity. Section~\ref{sec:parity} computes the first map for coherent realizations of returns: finite groups of germs satisfy this hypothesis automatically, and the sparse $\mathbb{Z}^d$ models provide infinite examples with nonzero higher homology. Under minimality and comparison, Theorem~\ref{thm:coherent-return-split-AH} then splits the AH extension.


\section{Constructive boundary cancellation and the kernel}
\label{sec:constructive-kernel}

The index formula (Theorem~\ref{thm:index-formula}) detects vanishing of the relative index class. Under the finite boundary hypotheses, the geometric index theorem (Theorem~\ref{thm:geometric-index}) expresses this as zero boundary current. We now realize the cancellation by dynamical transpositions. Throughout this section, $\G$ is any effective AF-by-discrete groupoid with Cantor unit space $\Omega(\mathsf{B})$ and fixed AF core $\T$. The possibly infinite direct-sum normal form of Remark~\ref{rmk:normal-form-scope} applies.

The construction has two stages. Vanishing of the transport coordinate first balances the finitely many exceptional sources and ranges inside each $\T$-orbit. After an AF interpolation, the exceptional germs can therefore be collected on a finite set that is invariant under the element. On every $\G$-orbit containing at least two marked points, the remaining return labels form a germ table as in Definition~\ref{def:germ-table}. Vanishing of the return coordinate puts the collected product of labels in the derived subgroup, and commutators can then be converted into swaps between two distinct marked points. The only place where this last move is unavailable is a singleton $\G$-orbit. 
Put
\begin{equation}
\label{eq:singleton-orbit-set}
Z_1=\{x\in \Omega(\mathsf{B}):\G x=\{x\}\},\qquad H_x=\G_x^x.
\end{equation}
The set $Z_1$ is allowed to be empty. In particular, it is empty when $\G$ is minimal, since a singleton orbit cannot be dense in a Cantor space. Singleton $\G$-orbits can nevertheless occur in nonminimal AF-by-discrete groupoids. Example~\ref{exmp:singleton-nonabelian-kernel} explains such a case with a nontrivial group of germs. A point in $Z_1$ is still nonisolated in $\Omega(\mathsf{B})$, since the Cantor unit space has no isolated points. 

\begin{defn}[Singleton germ evaluation]
\label{def:singleton-germ-evaluation}
Let $x\in Z_1$. The \emph{germ evaluation at $x$} is the map $\operatorname{ev}_x:\F(\G)\to H_x$ given by $\operatorname{ev}_x(g)=(g,x)$. We write $\operatorname{ev}_{Z_1}(g)=((g,x))_{x\in Z_1}$ for the family of all singleton germ evaluations.
\end{defn}

Every full-group element fixes $x$, and thus $\operatorname{ev}_x$ is a homomorphism. Moreover, $\operatorname{ev}_{Z_1}(g)$ has finite nonidentity support for every fixed $g$: a nontrivial isotropy germ at a singleton orbit cannot lie in the principal groupoid $\T$, while $U_g\setminus\T$ is finite by Lemma~\ref{lemma:finite-exceptional-support}. Thus $\operatorname{ev}_{Z_1}$ takes values in the restricted direct product of the groups $H_x$.

\begin{thm}[Kernel theorem]
\label{thm:singleton-kernel}
The restriction of the singleton germ-evaluation map to $\ker I$ induces an exact sequence of groups
\begin{equation}
\label{eq:singleton-kernel-exact}
1\longrightarrow\Sym(\G)\longrightarrow \ker I
\xrightarrow{\ \Lambda\ }\bigoplus_{x\in Z_1}[H_x,H_x]
\longrightarrow1,
\qquad \Lambda(g)=\operatorname{ev}_{Z_1}(g).
\end{equation}
Here $\bigoplus$ denotes the restricted direct product: tuples have finite nonidentity support. Hence, $\ker I=\Sym(\G)$ if and only if every group of germs at a singleton $\G$-orbit is abelian. In particular, this equality holds whenever $\G$ has no singleton orbits.
\end{thm}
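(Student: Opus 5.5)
The plan is to prove the three assertions in the order they appear: that $\Lambda$ lands in the commutator subgroups and kills $\Sym(\G)$, that it is onto, and that its kernel is exactly $\Sym(\G)$. The last step is the constructive heart of the argument.

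\emph{Image of $\Lambda$.} Take $g\in\ker I$ and $x\in Z_1$. By Theorem~\ref{thm:index-formula}, $\iota I(g)=\sum_{y\in\Sigma(g)}[(g,y)]$. In the normal form of Theorem~\ref{thm:relative-normal-form}, the singleton orbit $\{x\}$ is its own summand, with $L_{\{x\}}=0$ and coordinate $[(g,x)]_{\ab}\in H_x^{\ab}$. Since $\iota$ is injective, $I(g)=0$ forces $(g,x)\in[H_x,H_x]$. Every dynamical transposition is the identity near $x$: its moved set is clopen, and it cannot move $x$ out of its singleton orbit. Hence $\Sym(\G)\subseteq\ker I\cap\ker\Lambda$.

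\emph{Surjectivity of $\Lambda$.} Because the target is a restricted product, it suffices to realize one commutator $[h,k]$ at one point $x$ and then multiply finitely many such elements with disjoint supports. Choose isolating bisections for $h$ and $k$. Fix a clopen neighborhood $W$ of $x$ on which both are defined, and shrink it so that $hW\cup W$ and $kW\cup W$ lie inside a small clopen set $D$. The bisections of $h$ and $k$ restricted to $W$ do not give full-group elements, because $hW\ne W$ in general. The fix is to use Lemma~\ref{lemma:AF-two-transpositions}-style AF interpolation: since $x$ is nonisolated and the symmetric differences $hW\triangle W$ and $kW\triangle W$ avoid $x$, we complete $h|_W$ to an element $\tilde h\in\F(\G)$ supported in $D$ with germ $h$ at $x$. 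We do the same for $\tilde k$. Then $[\tilde h,\tilde k]\in\Der(\G)\subseteq\ker I$ and has germ $[h,k]$ at $x$. The completion step uses only that $\rg(V)$ and $\sg(V)$ have equal class modulo a $\T$-matching, which holds here because the isotropy defect of a commutator-building element need not vanish. This is the point to check: I expect to handle it by completing with elements of $\G$ itself (the inverse bisection on the complement), as in a standard ``extend a partial homeomorphism'' argument.

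\emph{Exactness in the middle.} This is the main obstacle. Let $g\in\ker I$ with trivial germs on $Z_1$; we must write $g$ as a product of transpositions. The order of work is as follows.
\begin{enumerate}
\item Isolate the finite set $\Sigma(g)$. Away from it, $g$ is AF locally.
\item Use the vanishing transport coordinate $e_{\mathcal{T}_{g(y)}}-e_{\mathcal{T}_y}$ summed to zero in each $L_{\mathcal{O}}$. This lets us pair, within each $\T$-orbit, exceptional ranges with exceptional sources. Then compose $g$ with AF elements (which lie in $\Sym(\T)\subseteq\Sym(\G)$) so that the exceptional germs are permuted on a finite $g$-invariant set $\Sigma'$.
\item On each orbit meeting $\Sigma'$ in at least two points, normalize by connectors to obtain a germ table whose product lies in $[H_{\mathcal{O}},H_{\mathcal{O}}]$, using $\operatorname{Ret}_{\mathcal{O}}(g)=0$.
\item Rewrite a commutator label $[a,b]$ at a marked point $y$ as a product of transpositions $t_V$ swapping small neighborhoods of two distinct marked points $y,y'$, with $V$ carrying germs $a$, $b$, and so on. The key identity is that the marked transposition of Lemma~\ref{lemma:marked-transposition} with label $a$ composed with one with label $b$ moves labels between points. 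The commutator then telescopes, and a permutation of labels is a product of swaps.
\item After this cancellation, the remaining element is AF everywhere except at singleton orbits, where its germs are trivial by hypothesis. It therefore lies in $\F(\T)=\Sym(\T)\subseteq\Sym(\G)$.
\end{enumerate}
The main difficulty is step 2. Matching exceptional sources and ranges requires local bisections with disjoint clopen sources and ranges, and we build these via nested isolation (Proposition~\ref{prop:nested-isolation}). Step 4 needs at least two points, which is exactly why singleton orbits survive as the cokernel.

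The final equivalences follow immediately: the quotient $\ker I/\Sym(\G)$ is trivial exactly when every $[H_x,H_x]$ with $x\in Z_1$ is trivial.
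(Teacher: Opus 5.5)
Your treatment of the image of $\Lambda$ and of exactness in the middle follows the paper's route: orbitwise balancing of exceptional sources and ranges by AF interpolation, germ tables with zero total label, commutator cancellation by swaps between two marked points, and an AF remainder. Your surjectivity step, however, has a genuine gap.

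Writing a commutator as $[\tilde h,\tilde k]$ requires elements of $\F(\G)$ with germs $h$ and $k$ at $x$, and these need not exist. Any $u\in\F(\G)$ with germ $h$ at the singleton point $x$ has coordinate $[h]_{\ab}$ in the summand $H_x^{\ab}$ of Theorem~\ref{thm:relative-normal-form}, while $\delta\iota I(u)=0$. So if $\lambda_{\{x\}}([h]_{\ab})\neq0$ and no other exceptional orbit compensates, no such $u$ exists, whatever you do on the complement of $x$. Nonzero isotropy defect at a singleton orbit does occur, e.g.\ for a one-ray translation of Cantor blocks accumulating at a fixed point.

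More generally, germ evaluation $\F(\G)\to H_x$ has image some subgroup $R$, and commutators of lifts only reach $[R,R]$. For instance, if $\{x\}$ is the only exceptional orbit and $\lambda_{\{x\}}$ is injective, then $R=[H_x,H_x]$, and you only reach the derived subgroup of $[H_x,H_x]$, which is in general proper.

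The repair is not to factor the commutator at all. Take an isolating bisection $V$ for $c\in[H_x,H_x]$ itself. Since $L_{\{x\}}=0$ and $[c]_{\ab}=0$, the relative class $[c]$ vanishes, hence $[1_{\sg(V)}]=[1_{\rg(V)}]$ in $H_0(\T)$. Lemma~\ref{lemma:AF-clopen-completion} then completes $V$ by AF arrows to some $u_c$ whose only exceptional arrow is $c$. Its relative index class is $[c]=0$, so $I(u_c)=0$. The added AF arrows have identity germ at every other singleton orbit by principality of $\T$.

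There are also two smaller omissions. First, being the identity near singleton orbits gives $\Sym(\G)\subseteq\ker\Lambda$, but not $\Sym(\G)\subseteq\ker I$. For the latter you need that the exceptional arrows of $t_V$ occur in pairs $\gamma,\gamma^{-1}$, with $[\gamma]+[\gamma^{-1}]=0$ in $H_1(\G,\T)$.

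Second, in steps 3--4 a non-singleton orbit may meet your invariant set $\Sigma'$ in exactly one point. You must adjoin an auxiliary point of that orbit and apply a second AF interpolation so that the modified element fixes it with identity germ, as in Proposition~\ref{prop:finite-boundary-balance}, before the two-point commutator identity can be used. Likewise, step 5 should compare the product of lifted transpositions with the modified element only through their germs on the invariant marked set, as in Lemma~\ref{lemma:marked-AF-remainder}. The lifts do not satisfy the table identities as homeomorphisms, only at the level of marked germs.
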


The restriction to singleton orbits is essential. If a $\G$-orbit contains at least two points, Proposition~\ref{prop:finite-boundary-balance} adds a second marked point when needed, and Proposition~\ref{prop:finite-table-cancellation} factors every germ table whose total label is zero into the tables $t_{uv}(h)$ defined below. Thus a commutator concentrated at one point can be cancelled using another point of the same orbit. Nontrivial, even nonabelian, isotropy on a finite $\G$-orbit of cardinality greater than one therefore contributes no term to~\eqref{eq:singleton-kernel-exact}. Moreover, only for a singleton orbit does evaluation at a chosen point define a homomorphism $\F(\G)\to H_x$, since then every full-group element fixes that point.

Subsections~\ref{subsec:finite-AF-interpolation}--\ref{subsec:singleton-residual} prove the theorem. We then give a nonminimal example and discuss effective reconstruction.

\subsection{Finite interpolation in the AF core}\label{subsec:finite-AF-interpolation}
\begin{lemma}
\label{lemma:finite-AF-interpolation}
Let $A_0,B_0\subseteq \Omega(\mathsf{B})$ be finite, and let $\theta:A_0\to B_0$ be a bijection such that $\theta(x)\in\T x$ for every $x\in A_0$. Let $Q_0\subseteq \Omega(\mathsf{B})$ be a finite set disjoint from $A_0\cup B_0$. There is $a\in\F(\T)$ inducing $\theta$ on $A_0$ and having germ $(\Id,x)$ at every $x\in Q_0$. Its germs on $A_0$ are the unique AF arrows from $x$ to $\theta(x)$.
\end{lemma}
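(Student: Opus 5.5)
The plan is to work inside one sufficiently deep elementary stage $\mathfrak{T}_{\mathsf{B},n}$ and realize $\theta$ there as a permutation of level-$n$ cylinders.

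\textbf{Step 1: choose the level.} For each $x\in A_0$ the unique tail arrow $x\to\theta(x)$ lies in some $\mathfrak{T}_{\mathsf{B},m}$. Since $A_0$ is finite, we choose $n$ so large that three things hold simultaneously:
\begin{enumerate}[label=(\alph*)]
\item every one of these arrows is a level-$n$ prefix replacement, meaning $x=pz$ and $\theta(x)=qz$ with $|p|=|q|=n$ and $\mathbf{r}(p)=\mathbf{r}(q)$;
\item the finitely many points of $A_0\cup B_0\cup Q_0$ have pairwise distinct level-$n$ prefixes;
\item any two points of $A_0\cup B_0$ that are tail equivalent have tails that agree from position $n+1$ on.
\end{enumerate}
Condition (b) is possible because distinct infinite paths are separated by sufficiently deep cylinders. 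Conditions (a) and (c) are possible because each involves only finitely many pairs.

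\textbf{Step 2: define the permutation.} Group the points of $A_0\cup B_0$ by common tail $z$ beyond level $n$. Within one group, all level-$n$ prefixes end at the same vertex $v=\mathbf{r}(p)$, and $\theta$ induces an injective partial map $\pi_z$ on the prefix set of the tile at $v$. By (b), this partial map sends the prefixes of $A_0$-points in the group to the prefixes of $B_0$-points in the group.

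Different tails $z\neq z'$ can end at the same vertex $v$. To keep their partial maps from conflicting on whole cylinders, we pass to a deeper level $n'$ at which the tails $z$ are also separated, now using the prefixes $pz_{[n+1,n']}$. Equivalently, in Step 1 we choose $n$ so large that all points of $A_0\cup B_0\cup Q_0$ already have distinct prefixes and each group becomes a set of prefixes inside one tile with a common continuation beyond level $n$; one then checks that distinct groups use disjoint prefix sets. After this, the partial injections for the various tiles have pairwise disjoint domains and ranges.

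Extend each partial injection to a permutation of the prefixes at its vertex. The extension must fix every prefix of a $Q_0$-point. This is possible because these prefixes lie neither in the domain nor in the range, by (b) together with the disjointness of $Q_0$ from $A_0\cup B_0$. A finite injective partial map whose domain and range both avoid a set $S$ extends to a bijection fixing $S$ pointwise: match the complement of the domain with the complement of the range, which have equal size, and choose the matching so that it is the identity on $S$.

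\textbf{Step 3: define $a$ and read off its germs.} The element $a$ acts by the prefix replacement $pw\mapsto\sigma(p)w$ on each cylinder $[p]$ of a permuted prefix, and as the identity elsewhere. Its bisection lies in $\mathfrak{T}_{\mathsf{B},n}\subseteq\T$ and is full, so $a\in\F(\T)$. Its germ at each $x\in A_0$ is a prefix replacement with unchanged continuation, and therefore equals the unique tail arrow $x\to\theta(x)$. At each $y\in Q_0$, the cylinder $[y_n]$ is fixed with identity section, so the germ there is $(\Id,y)$.

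\textbf{Main obstacle.} The only delicate point is the disjointness bookkeeping in Step 2: $\theta$ is specified only on points, whereas $a$ must act on whole cylinders at a uniform level. One must rule out two points of $A_0\cup B_0$ sharing a level-$n$ prefix while needing different images. The fix is to choose the level deep enough that every point of $A_0\cup B_0\cup Q_0$ has its own cylinder and that every required tail arrow is a prefix replacement at that level.
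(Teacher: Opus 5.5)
Your argument is correct, but it organizes the construction differently from the paper.

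The paper works one $\T$-orbit $\mathcal{O}$ at a time. It extends $\theta$ to a permutation $\sigma_{\mathcal{O}}$ of the finite set $(A_0\cup B_0)\cap\mathcal{O}$. It then spreads the AF arrows $\alpha_q$ from a basepoint over a small clopen neighborhood, giving a multisection whose levels are tiny neighborhoods of these points and avoid both $Q_0$ and the multisections of the other orbits. Finally it realizes $\sigma_{\mathcal{O}}$ there.

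You instead use the standard towers of a single stage $\mathfrak{T}_{\mathsf{B},n}$, whose levels are whole cylinders $[p]$. You realize everything by one permutation of $\Omega_n(\mathsf{B})$ that preserves terminal vertices. Points from different $\T$-orbits may share a tower (same terminal vertex, different tails), so your extension step runs per terminal vertex rather than per orbit. The count still works because the prefix map preserves $\mathbf{r}$. Your route produces an explicit finitary table from the Bratteli model. The paper's route is phrased for an abstract elementary exhaustion and keeps the supports small and orbit-separated.

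Two small remarks on your write-up.
\begin{enumerate}
\item Condition (c), the grouping by tails, and the passage to a deeper level $n'$ are unnecessary. By (b) alone, sending the level-$n$ prefix of $x\in A_0$ to that of $\theta(x)$ is a well-defined injective partial map. The continuation is carried along automatically by $pw\mapsto\sigma(p)w$.
\item When several $\pi_z$ sit at the same vertex $v$, you must extend their union, which is injective by (b), to one permutation of the prefixes ending at $v$. Extending each $\pi_z$ separately would give competing permutations of the same set.
\end{enumerate}
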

\begin{proof}
Set $Q=A_0\cup B_0$. For each $\T$-orbit $\mathcal{O}$ meeting $Q$, put $Q_{\mathcal{O}}=Q\cap\mathcal{O}$. The restriction of $\theta$ to $A_0\cap\mathcal{O}$ is a bijection onto $B_0\cap\mathcal{O}$, and hence extends to a permutation $\sigma_{\mathcal{O}}$ of the finite set $Q_{\mathcal{O}}$: indeed, the unused source set $Q_{\mathcal{O}}\setminus A_0$ and the unused range set $Q_{\mathcal{O}}\setminus B_0$ have the same cardinality.

Choose a basepoint $z_{\mathcal{O}}\in Q_{\mathcal{O}}$ and, for every $q\in Q_{\mathcal{O}}$, let $\alpha_q:z_{\mathcal{O}}\to q$ be the unique AF arrow. Since only finitely many arrows are involved and $\T$ is AF, they lie in one sufficiently deep elementary stage. Shrinking a common clopen neighborhood of $z_{\mathcal{O}}$ in that stage, the restrictions of the $\alpha_q$ form a finite multisection whose levels contain the points of $Q_{\mathcal{O}}$. The source can be chosen so small that all of these levels avoid $Q_0$ and, for distinct orbits $\mathcal{O}$, the corresponding multisections have disjoint supports. Realize $\sigma_{\mathcal{O}}$ in this multisection. Taking the product over the finitely many orbits and the identity off their supports gives an element $a\in\F(\T)$ inducing $\theta$ on $A_0$ and having germ $(\Id,x)$ at every $x\in Q_0$. Principality of $\T$ gives uniqueness of the specified AF germs on $A_0$.
\end{proof}

\begin{lemma}
\label{lemma:AF-two-transpositions}
Every $f\in\F(\T)$ is a product of at most two dynamical transpositions belonging to $\F(\T)$.
\end{lemma}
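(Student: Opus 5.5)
The plan is to reduce to a single elementary stage and then use the classical fact that every permutation of a finite set is a product of two involutions; each involution is then realized as a single dynamical transposition. Let $f\in\F(\T)$ with full bisection $U_f\subseteq\T$. Since $U_f$ is compact and $\T=\bigcup_n\mathfrak{T}_{\mathsf{B},n}$ is an increasing union of open subgroupoids, $U_f$ lies in one stage $\mathfrak{T}_{\mathsf{B},n}$. After a further refinement, $U_f$ is a finite union of prefix replacements $[q,p]$ between paths of one common length $n$. Hence $f$ permutes the level-$n$ cylinders, and it preserves each class $\Omega_n^v=\{p\in\Omega_n(\mathsf{B}):\mathbf{r}(p)=v\}$, $v\in V_{n+1}$. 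Write $\pi_v$ for the induced permutation of $\Omega_n^v$.

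For each $v$, I will write $\pi_v=\sigma_v\tau_v$ with $\sigma_v,\tau_v$ involutions of $\Omega_n^v$. This is the standard fact, applied cycle by cycle: a cycle $(0\,1\,\cdots\,m-1)$ is the product of the reflections $i\mapsto -i$ and $i\mapsto 1-i$ modulo $m$. Let $\sigma$ be the element of $\F(\T)$ acting on each $[p]$ with $p\in\Omega_n^v$ by the prefix replacement $[\sigma_v(p),p]$, and define $\tau$ similarly. Then $f=\sigma\tau$ as elements of $\F(\T)$, because their bisections agree cylinder by cylinder. With the convention that words apply the right factor first, I only need to check that the composition order matches the order in $\pi_v=\sigma_v\tau_v$.

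It remains to show that an involution $\sigma\in\F(\T)$ built this way is one dynamical transposition. For each $v$, choose one representative $p$ from each $2$-cycle $\{p,\sigma_v(p)\}$ of $\sigma_v$. Let $V$ be the union over these representatives of the prefix replacements $[\sigma_v(p),p]$. Then $V$ is a compact open bisection in $\T$. Its source is the union of the chosen cylinders $[p]$ and its range is the union of the partner cylinders $[\sigma_v(p)]$, and these are disjoint because the $2$-cycles are disjoint. The complement of $\sg(V)\cup\rg(V)$ is exactly the union of the fixed cylinders of the $\sigma_v$. By \eqref{eq:transposition-prelim}, $t_V=V\cup V^{-1}\cup 1_{\text{rest}}$ is exactly the bisection of $\sigma$. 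If $\sigma$ is the identity, the factor is simply omitted, which gives the bound ``at most two''.

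The only step needing care is the first reduction: a compact bisection of $\T$ must be given by an equal-length prefix-replacement table at one common level, and it must not change the continuation. This is the finitary-table statement recalled in Subsection~\ref{subsec:bratteli-core} and proved in Lemma~\ref{lem:finitary-section-AF}, applied with empty charts. I expect no genuine obstacle beyond this, since everything else takes place inside finite symmetric groups $\operatorname{Sym}(\Omega_n^v)$, realized simultaneously over the finitely many vertices $v$.
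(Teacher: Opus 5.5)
Your proof is correct and is essentially the paper's argument: you place $f$ in one elementary stage, refine until $f$ permutes clopen levels (your equal-length cylinders with fixed terminal vertex are exactly refined multisection levels), write each finite permutation as a product of the two reflections $j\mapsto -j$ and $j\mapsto 1-j$ on each cycle, and realize each involution as a single dynamical transposition by choosing one direction on every exchanged pair. The only thing to tidy is notation: the common cylinder level at which $U_f$ becomes a prefix-replacement table is in general deeper than the stage index $n$, since inside $\mathfrak{T}_{\mathsf{B},n}$ the replacement prefix may depend on the continuation, so that level should get its own name.
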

\begin{proof}
The full bisection of $f$ is compact and is contained in one elementary stage of $\T$. Decompose that stage into finitely many multisections and refine their clopen bases so that the permutation induced by $f$ is constant on each refined base. Every finite permutation is a product of two involutions. More explicitly, on a cycle indexed by $\mathbb{Z}/m\mathbb{Z}$, the maps $r(j)=-j$ and $s(j)=1-j$ satisfy $s r(j)=j+1$.

Perform these two reflections on every cycle in every refined multisection. This gives $f=s r$, where $s$ and $r$ are involutions whose nontrivial cycles exchange pairs of clopen levels. For either involution, choose one direction on each exchanged pair. The union of these finitely many bisections has disjoint source and range, and thus the involution is a dynamical transposition. Identity factors can be omitted.
\end{proof}

The AF hypothesis matters here: the involution in Example~\ref{exmp:AF-by-discrete-not-almost-finite} has nonzero index.

\subsection{Marked germs and clopen lifts}\label{subsec:marked-germs}
We carry out cancellation on a finite marked set, allowing AF motion elsewhere.

\begin{defn}[AF off a finite marked set]
\label{def:AF-off-P}
Let $P\subseteq \Omega(\mathsf{B})$ be finite. We say that $u\in\F(\G)$ is \emph{AF off $P$} if $\Sigma(u)\subseteq P$, where $\Sigma(u)$ is the exceptional support of Definition~\ref{def:exceptional-support}. When $P$ is fixed in a construction, we call it the \emph{marked set}.
\end{defn}

Being AF off $P$ is a condition on germs, not on pointwise support: $u$ may act nontrivially on a clopen set much larger than $P$, provided that every germ based outside $P$ belongs to $\T$. The next lemma lifts one prescribed arrow to a transposition while controlling all marked germs simultaneously.
\keepwithstatement
\begin{lemma}
\label{lemma:marked-transposition}
Let $x,y$ be distinct points of a finite set $P$, and let $\gamma:x\to y$ be an arrow of $\G$. There is a compact open bisection $V\ni\gamma$ such that $\sg(V)\cap\rg(V)=\varnothing$, the two sets meet $P$ only at $x$ and $y$, respectively, and $V\setminus\T\subseteq\{\gamma\}$. Consequently,
\begin{equation}
\label{eq:marked-transposition}
t_V=V\cup V^{-1}\cup 1_{\Omega(\mathsf{B})\setminus(\sg(V)\cup\rg(V))}
\end{equation}
preserves $P$, exchanges $x$ and $y$ with germs $\gamma$ and $\gamma^{-1}$, has germ $(\Id,z)$ at every other $z\in P$, and is AF off $P$.
\end{lemma}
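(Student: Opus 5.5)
The plan is to start from any compact open bisection $W\ni\gamma$ and shrink it by restricting its source to a small clopen neighborhood of $x$. Since $x\neq y$ and $\Omega(\mathsf{B})$ is Hausdorff, I will first choose disjoint clopen neighborhoods $N_x\ni x$ and $N_y\ni y$ with $N_x\cap P=\{x\}$ and $N_y\cap P=\{y\}$; this is possible because $P$ is finite. By Lemma~\ref{lemma:finite-exceptional-support}, $W\setminus\T$ is finite, and if $\gamma$ is exceptional it has an isolating neighborhood inside $W$. I will therefore restrict $W$ to a clopen source neighborhood $S\ni x$ which avoids the sources of the finitely many exceptional arrows of $W$ other than $\gamma$, so that $V\setminus\T\subseteq\{\gamma\}$ for $V=W|_S$. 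If $\gamma\in\T$, this condition holds after the same shrinking.

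Next I impose the support conditions. Because $\theta_W$ is a homeomorphism between open sets with $\theta_W(x)=y$, continuity gives a clopen $S\subseteq N_x\cap\sg(W)$ containing $x$ with $\theta_W(S)\subseteq N_y$. Then $\sg(V)=S\subseteq N_x$ and $\rg(V)\subseteq N_y$. These sets are disjoint, and they meet $P$ only in $x$ and $y$ respectively. The restriction $V=W|_S$ is compact open because $S$ is clopen in the compact source. It remains a bisection and still contains $\gamma$. So $t_V$ in \eqref{eq:marked-transposition} is a well-defined dynamical transposition in the sense of Definition~\ref{def:full-symmetric-alternating}.

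Finally I read off the germs of $t_V$. At $x$ the germ is $\gamma$, and at $y$ it is $\gamma^{-1}$. At any $z\in P\setminus\{x,y\}$, the point $z$ lies outside the clopen set $\sg(V)\cup\rg(V)$, so the germ is $(\Id,z)$. Hence $t_V$ preserves $P$ and exchanges $x$ and $y$. For the statement that $t_V$ is AF off $P$, note that any point outside $P$ has either a unit germ, or a germ in $V\setminus\{\gamma\}$, or a germ in $V^{-1}\setminus\{\gamma^{-1}\}$. All of these lie in $\T$, since $\T$ is closed under inverses. Thus $\Sigma(t_V)\subseteq\{x,y\}\subseteq P$.

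I expect no real obstacle. The only delicate point is doing the shrinking so that isolation and disjointness hold simultaneously, and a single clopen source neighborhood achieves both.
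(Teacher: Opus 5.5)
Your proposal is correct and follows essentially the same route as the paper: shrink a compact open bisection around $\gamma$ so that $\gamma$ is its only possible non-AF arrow (by finite exceptional support) and so that its source and range lie in disjoint clopen neighborhoods of $x$ and $y$ meeting $P$ only at those points, then read off the germs of $t_V$. The only cosmetic difference is the case $\gamma\in\T$: the paper takes $W\subseteq\T$ directly by openness of $\T$, whereas you handle both cases at once by avoiding the finitely many exceptional sources of $W$.
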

\begin{proof}
Start with a compact open bisection $W\ni\gamma$. If $\gamma\in\T$, take $W\subseteq\T$ using openness of the AF core. If $\gamma\notin\T$, restrict $W$ by Lemma~\ref{lemma:finite-exceptional-support} so that $W\setminus\T=\{\gamma\}$. Since $x\neq y$ and $P$ is finite, choose disjoint clopen neighborhoods $A\ni x$ and $B\ni y$ with $A\cap P=\{x\}$ and $B\cap P=\{y\}$. Restrict $W$ once more so that its source lies in $A$ and its range lies in $B$. Call the resulting bisection $V$. Then $\sg(V)\cap\rg(V)=\varnothing$ and the only possible non-AF arrow of $V$ is $\gamma$. Since $\T$ is a subgroupoid, $V^{-1}\setminus\T\subseteq\{\gamma^{-1}\}$ as well. Formula~\eqref{eq:marked-transposition} now has all the stated properties.
\end{proof}

\begin{lemma}
\label{lemma:marked-AF-remainder}
Suppose that $u,v\in\F(\G)$ preserve a finite set $P$, are AF off $P$, and have equal germs at every point of $P$. Then $v^{-1}u\in\F(\T)$. Products and inverses of elements preserving $P$ and AF off $P$ still have these two properties.
\end{lemma}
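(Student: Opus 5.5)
The plan is to prove both assertions by comparing germs of full bisections pointwise, using principality of $\T$ away from $P$. Put $w=v^{-1}u$. Since $u$ and $v$ preserve $P$ and are bijections of $\Omega(\mathsf{B})$, they also preserve the complement $\Omega(\mathsf{B})\setminus P$. I would check separately that every germ $(w,z)$ lies in $\T$ for $z\in P$ and for $z\notin P$. Then $U_w\subseteq\T$, so $w\in\F(\T)$.

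Take first $z\in P$. Put $z'=u(z)$; since $u$ and $v$ have equal germs at $z$, they agree near $z$, so $v(z)=z'$ as well. The germ of $w$ at $z$ is
\[
(v^{-1},z')(u,z)=(v,z)^{-1}(u,z)=(\Id,z),
\]
which lies in $\T$.

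Now take $z\notin P$. Then $(u,z)\in\T$ because $u$ is AF off $P$. Its range $u(z)$ lies outside $P$, since $u$ maps the complement of $P$ into itself. Because $v$ is AF off $P$, we have $(v,u(z))\in\T$, and hence $(v,u(z))^{-1}=(v^{-1},v(u(z)))\in\T$, where $v(u(z))$ is understood so that the product is defined. More precisely, the germ of $v^{-1}$ at $u(z)$ is the inverse of the germ of $v$ at $v^{-1}(u(z))$. The point $v^{-1}(u(z))$ lies outside $P$, since $v^{-1}$ also preserves the complement of $P$. So that germ of $v$ is in $\T$, and its inverse is in $\T$. Therefore $(w,z)=(v^{-1},u(z))(u,z)$ is a product of two arrows of $\T$, and lies in $\T$ because $\T$ is a subgroupoid.

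For closure under products, let $u$ and $v$ preserve $P$ and be AF off $P$. Then $uv$ preserves $P$. For $z\notin P$ we have $v(z)\notin P$, so $(uv,z)=(u,v(z))(v,z)$ is a product of two AF arrows and lies in $\T$. For inverses, $u^{-1}$ preserves $P$. For $z\notin P$, $(u^{-1},z)=(u,u^{-1}(z))^{-1}$ with $u^{-1}(z)\notin P$, so this germ is in $\T$.

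No step here is a real obstacle. The only point needing care is the bookkeeping that bijections preserving $P$ also preserve its complement, which is what keeps every base point of a germ outside $P$ and lets the AF-off-$P$ hypothesis apply.
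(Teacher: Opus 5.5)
Your proof is correct and follows the same route as the paper's: the elements preserve the complement of $P$, the equal germs cancel to $(\Id,z)$ at points of $P$, and off $P$ every germ in the composition lies in the subgroupoid $\T$, which also gives closure under products and inverses. The aside about $(v,u(z))^{-1}$ is unnecessary, since the germ you actually need is that of $v$ at $v^{-1}(u(z))$, and your ``more precisely'' sentence handles that correctly.
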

\begin{proof}
Such elements also preserve $\Omega(\mathsf{B})\setminus P$. At a point outside $P$, every germ encountered in their composition belongs to $\T$, and thus the product germ belongs to $\T$. At a point of $P$, the equal germs of $u$ and $v$ cancel to the identity in $v^{-1}u$. Its full bisection is thus contained in $\T$. The same composition observation proves the last assertion.
\end{proof}

Thus two such products with the same marked germs differ by an element of $\F(\T)$, which Lemma~\ref{lemma:AF-two-transpositions} factors into at most two transpositions.

\subsection{Cancellation in a finite germ table}
We record the germs on an invariant marked set in a finite table, using the normalized returns of Theorem~\ref{thm:relative-normal-form} as labels.

\begin{defn}[Germ table and total label]
\label{def:germ-table}
Let $P=\{x_0,\ldots,x_{n-1}\}$ lie in one $\G$-orbit. Choose connectors $p_i:x_0\to x_i$, with $p_0=(\Id,x_0)$, and put $H=\G_{x_0}^{x_0}$. A \emph{germ table} on $P$ consists of a permutation $\pi\in\operatorname{Sym}(n)$ and arrows $\gamma_i:x_i\to x_{\pi(i)}$. Its \emph{normalized labels} are
\begin{equation}
\label{eq:germ-table-labels}
a_i=p_{\pi(i)}^{-1}\gamma_i p_i\in H.
\end{equation}
The \emph{total label} of the table is
\begin{equation}
\label{eq:table-character}
\chi\bigl(\pi,(a_i)\bigr)=\sum_i[a_i]_{\ab}\in H^{\ab}.
\end{equation}
\end{defn}

All abstract germ tables form a group isomorphic to the wreath product $H\wr\operatorname{Sym}(n)=H^n\rtimes\operatorname{Sym}(n)$, where the symmetric group permutes the factors of $H^n$. Equivalently, this is the group of full bisections of the discrete restriction $\G|_P$. Only tables whose arrows are realized by germs of clopen bisections of $\G$ will be lifted below. With the right-hand table applied first, multiplication is
\begin{equation}
\label{eq:table-product}
(\pi,(a_i))\, (\rho,(b_i))
=\bigl(\pi\rho,(a_{\rho(i)}b_i)\bigr).
\end{equation}
After abelianization, the permutation merely reorders the labels, so~\eqref{eq:table-product} shows that $\chi$ is a homomorphism.

For distinct indices $u,v$ and $h\in H$, let $t_{uv}(h)$ denote the abstract table that exchanges $u,v$, with label $h$ on the arrow from $v$ to $u$ and label $h^{-1}$ on its inverse. In terms of actual arrows, the former is $p_u h p_v^{-1}$. All other germs of this table are identities. By Lemma~\ref{lemma:marked-transposition}, this table has a lift to a dynamical transposition, AF off $P$, and the lift can fix any additional finite marked set pointwise with identity germs.

Set $D_{uv}(h)=t_{uv}(h)t_{uv}(1)$. Direct multiplication gives a table with identity permutation, label $h$ at $u$, label $h^{-1}$ at $v$, and identity labels elsewhere. Let $C_u(h)$ be the table with identity permutation, label $h$ at $u$, and identity labels elsewhere. Then
\begin{equation}
\label{eq:six-transposition-identity}
D_{uv}(a)D_{uv}(b)D_{uv}(a^{-1}b^{-1})=C_u([a,b]),
\qquad [a,b]=aba^{-1}b^{-1}.
\end{equation}
Indeed, the label at $u$ is $aba^{-1}b^{-1}$, while the label at $v$ is $a^{-1}b^{-1}ba=1$. Thus a commutator label at one point is a product of six tables of the form $t_{uv}(h)$, using one additional point. With at least two marked points, these identities yield a transposition factorization whenever the total abelianized label vanishes. 
\keepwithstatement
\begin{prop}
\label{prop:finite-table-cancellation}
Suppose $n\geq2$. The kernel of $\chi$ is generated by the tables $t_{uv}(h)$. More precisely, let $(\pi,(a_i))\in\ker\chi$ and put $r=a_{n-1}\cdots a_1a_0$ (in the order used in~\eqref{eq:diagonal-collection}). Given a commutator expression
\begin{equation}
\label{eq:return-commutator-certificate}
r=\prod_{j=1}^c[u_j,v_j]\quad\text{in }H,
\end{equation}
the table is a product of at most
\begin{equation}
\label{eq:table-factor-count}
n-\operatorname{cyc}(\pi)+2(n-1)+6c
\end{equation}
tables of the form $t_{uv}(h)$. Here $\operatorname{cyc}(\pi)$ counts all cycles, including fixed points. For $n=1$, no transposition is available and $\ker\chi=[H,H]$.
\end{prop}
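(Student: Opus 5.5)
We argue entirely inside the wreath product $H\wr\operatorname{Sym}(n)$, using the multiplication rule~\eqref{eq:table-product}. First, each $t_{uv}(h)$ lies in $\ker\chi$, since its labels are $h$ and $h^{-1}$. The generated subgroup is therefore contained in $\ker\chi$, and the work is the reverse inclusion with the stated count. Given $(\pi,(a_i))\in\ker\chi$, we proceed in three stages: remove the permutation, collect the diagonal labels at one point, and kill the resulting commutator.

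\emph{Stage one.} Write $\pi$ as a product of $n-\operatorname{cyc}(\pi)$ transpositions. Multiply $(\pi,(a_i))$ on the left by the corresponding tables $t_{uv}(1)$, each of which is its own inverse. This produces a table with identity permutation whose labels are obtained from the $a_i$ by the reindexing in~\eqref{eq:table-product}. Because $\chi$ is a homomorphism and each $t_{uv}(1)$ has zero total label, the new diagonal table $(\mathrm{id},(b_i))$ still has total label zero. Tracking the reindexing, the ordered product $b_{n-1}\cdots b_0$ agrees with the ordered product $r$ in~\eqref{eq:diagonal-collection}, possibly up to conjugacy. Pinning down this product exactly is the bookkeeping point I expect to be most delicate.

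\emph{Stage two (diagonal collection).} For $i=1,\ldots,n-1$, multiply successively by $D_{0i}(b_i)=t_{0i}(b_i)t_{0i}(1)$. Each such factor moves the label at $i$ to the point $0$ by multiplying it into the running product there, leaving the label at $i$ trivial. This step uses $2(n-1)$ transpositions. The only remaining label is $C_0(r')$, where $r'$ is the ordered product; the multiplication order in the $D$ tables should be arranged so that $r'=r$. Since the total label is zero, $r\in[H,H]$. Using the given certificate~\eqref{eq:return-commutator-certificate}, write $C_0(r)=\prod_j C_0([u_j,v_j])$. Each factor equals six tables $t_{01}(\cdot)$ by~\eqref{eq:six-transposition-identity}, which requires $n\geq 2$. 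Inverting and reassembling these factors expresses the original table as a product of at most $n-\operatorname{cyc}(\pi)+2(n-1)+6c$ tables $t_{uv}(h)$. Every $t_{uv}(h)$ is an involution, so inverses cost no additional factors.

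\emph{The case $n=1$.} Here the table is simply an element $a_0\in H$, and $\chi$ reduces to the abelianization map $H\to H^{\ab}$. Hence $\ker\chi=[H,H]$.

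The main obstacle is the order convention in stage two. The paper's rule $(\pi,(a_i))(\rho,(b_i))=(\pi\rho,(a_{\rho(i)}b_i))$ determines whether collection yields $b_{n-1}\cdots b_0$ or its reverse. If the result is a reversed or conjugated form of $r$, I would instead note that any product of the $b_i$ differs from $r$ by an element of $[H,H]$ with an explicit commutator expression. Absorbing that discrepancy would spoil the count, however. So the right fix is to choose the order of the $D_{0i}$ multiplications so that the result matches the definition of $r$ used in~\eqref{eq:diagonal-collection}.
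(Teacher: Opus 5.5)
Your proposal is correct and is essentially the paper's proof read in reverse. The paper writes $(\pi,(a_i))=P_\pi\operatorname{diag}(a_0,\ldots,a_{n-1})$, with $P_\pi$ a product of $n-\operatorname{cyc}(\pi)$ tables $t_{uv}(1)$, then $\operatorname{diag}(a_0,\ldots,a_{n-1})=D_{1,0}(a_1)\cdots D_{n-1,0}(a_{n-1})C_0(r)$, and finally applies~\eqref{eq:six-transposition-identity} with the pair $0,1$.

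The bookkeeping point you flag dissolves, and you should close it explicitly. By~\eqref{eq:table-product}, left multiplication by a table whose labels are all $1$ changes only the permutation and does not reindex the right factor's labels, so after stage one you have exactly $b_i=a_i$. Left-multiplying successively by $D_{0,i}(a_i)$ for $i=1,\ldots,n-1$ then leaves exactly $a_{n-1}\cdots a_1a_0=r$ at $0$. In any case, a conjugate of $r$ would not have spoiled the count, since conjugating a commutator expression preserves its length $c$.
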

\begin{proof}
Each table $t_{uv}(h)$ has zero total label. Let $P_\pi$ have permutation $\pi$ and arrows $p_{\pi(i)}p_i^{-1}$, so all its normalized labels are identities. Let $\operatorname{diag}(a_0,\ldots,a_{n-1})$ denote the table with identity permutation and labels $a_0,\ldots,a_{n-1}$. Then
\begin{equation}
\label{eq:diagonal-collection}
\begin{split}
(\pi,(a_i))&=P_\pi\operatorname{diag}(a_0,\ldots,a_{n-1}),\\
\operatorname{diag}(a_0,\ldots,a_{n-1})
&=D_{1,0}(a_1)\cdots D_{n-1,0}(a_{n-1})C_0(r).
\end{split}
\end{equation}
For the second equality, the labels at $i\neq0$ are $a_i$, while the label at $0$ is $a_1^{-1}\cdots a_{n-1}^{-1}r=a_0$. Since the total abelianized label is zero, $r\in[H,H]$ and a finite expression~\eqref{eq:return-commutator-certificate} exists by the definition of the derived subgroup.

Factor $P_\pi$ into $n-\operatorname{cyc}(\pi)$ tables of the form $t_{uv}(1)$. Each of the $n-1$ factors $D_{i,0}(a_i)$ uses two tables of the form $t_{uv}(h)$. Apply~\eqref{eq:six-transposition-identity} to every commutator in~\eqref{eq:return-commutator-certificate}, using the pair $0,1$. This gives the stated factorization and count. When $n=1$, the group of germ tables is $H$, the map $\chi$ is abelianization, and there are no pairs of distinct marked points.
\end{proof}

These identities are calculations in the wreath product $H\wr\operatorname{Sym}(n)$. They give a factorization into tables $t_{uv}(h)$ with an explicit bound on the number of factors. Each factor can be lifted to a dynamical transposition whose exceptional sources lie in the fixed finite set. We will use equality of germs, followed by a remainder in the AF core, rather than assert the identities for arbitrary lifts as homeomorphisms of $\Omega(\mathsf{B})$.

\subsection{From zero transport to an invariant finite set}
We first express zero transport without choosing a spanning tree. Put $E=\Sigma(g)$. In one exceptional $\G$-orbit, the transport coordinate of Theorem~\ref{thm:index-formula} is the sum of the vectors $e_{\T(gx)}-e_{\T(x)}$ over $x\in E$. Consequently, the coefficient of a fixed $\T$-orbit $\mathcal{T}$ is $|g(E)\cap\mathcal{T}|-|E\cap\mathcal{T}|$. If $g\in\ker I$, every such coefficient is zero, and hence
\begin{equation}
\label{eq:finite-exceptional-balance}
|\Sigma(g)\cap\mathcal{T}|=|g(\Sigma(g))\cap\mathcal{T}|.
\end{equation}
Both sets are finite. This is stronger than equality of the total numbers of exceptional sources and ranges: it balances them \emph{inside each $\T$-orbit}. AF interpolation will turn this orbitwise balance into an invariant finite marked set on which the return labels can be cancelled.
\keepwithstatement
\begin{prop}
\label{prop:finite-boundary-balance}
Let $g\in \ker I$, and suppose that $(g,x)=(\Id,x)$ for every $x\in Z_1$. There are $a,b\in\F(\T)$ and a finite set $P\supseteq \Sigma(g)$ such that $k=bag$ preserves $P$, $\Sigma(k)=\Sigma(g)$, and each $\G$-orbit meeting $P$ contains at least two points of $P$. There is at most one added point for each orbit meeting $\Sigma(g)$, and the germs of $k$ at all the added points are identities.
\end{prop}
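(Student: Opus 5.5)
Write $E=\Sigma(g)$ and $R=g(E)$. The goal is to use the orbitwise balance \eqref{eq:finite-exceptional-balance} to correct $g$ by AF motions, so that its exceptional ranges land back on its exceptional sources. Since $g\in\ker I$, the transport coordinate of Theorem~\ref{thm:index-formula} vanishes. Hence $|E\cap\mathcal{T}|=|R\cap\mathcal{T}|$ for every $\T$-orbit $\mathcal{T}$. Choose, in each such $\T$-orbit, a bijection $\theta:R\to E$ with $\theta(y)\in\T y$.

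The naive plan is to apply Lemma~\ref{lemma:finite-AF-interpolation} once, with $A_0=R$, $B_0=E$ and $Q_0=\varnothing$. This gives $a\in\F(\T)$ inducing $\theta$ on $R$, and then $ag$ maps $E$ onto $E$. The exceptional support is preserved: at $x\in E$ the germ of $ag$ is an AF arrow composed with the non-AF germ $(g,x)$, so it is non-AF. At $x\notin E$ both germs are AF. Thus $\Sigma(ag)=E$ and $ag$ preserves $E$.

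The complication is that Lemma~\ref{lemma:finite-AF-interpolation} requires $Q_0$ to be disjoint from $A_0\cup B_0$. The added points must also be kept fixed with identity germs, while $R$ and $E$ may overlap. For this reason I will use two AF corrections, $a$ and $b$, and choose the added points first.

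\textbf{Choosing added points.} Take a $\G$-orbit $\mathcal{O}$ meeting $E$ with $|E\cap\mathcal{O}|=1$, say $E\cap\mathcal{O}=\{x\}$. If $x\in Z_1$, the hypothesis gives $(g,x)=(\Id,x)\in\T$, so $x\notin E$, a contradiction. Hence $\mathcal{O}$ has at least two points. Choose $q_{\mathcal{O}}\in\mathcal{O}\setminus E$. I want $q_{\mathcal{O}}\notin R$ and $g(q_{\mathcal{O}})=q_{\mathcal{O}}$ with identity germ, but a general point of $\mathcal{O}$ is moved by $g$. So the goal becomes arranging $k$ to fix $q_{\mathcal{O}}$ with germ $(\Id,q_{\mathcal{O}})$.

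Since $R\cap\mathcal{O}$ has one point $y=g(x)$, and $y\in\T x$ by the balance, there is freedom in choosing $q_{\mathcal{O}}$. Pick $q_{\mathcal{O}}\notin E\cup R\cup g^{-1}(E\cup R)$; this is possible if $\mathcal{O}$ has at least five points. The germ of $g$ at $q_{\mathcal{O}}$ is AF, and it goes from $q_{\mathcal{O}}$ to $g(q_{\mathcal{O}})$ in the same $\T$-orbit. Let $Q$ be the set of added points.

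\textbf{First correction $a$.} Apply Lemma~\ref{lemma:finite-AF-interpolation} with $A_0=R\cup g(Q)$ and $B_0=E\cup Q$, sending $R$ to $E$ by $\theta$ and $g(q)$ to $q$. Its germ at $g(q)$ is the unique AF arrow, so $ag$ has germ $(\Id,q)$ at $q$ by principality. Disjointness is needed here, which is why I avoid $g^{-1}(E\cup R)$.

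\textbf{Second correction $b$.} With $Q_0=\varnothing$ there is nothing left to fix, and $b$ can be taken to be the identity. Alternatively, use $b$ to handle the degenerate case where $\mathcal{O}$ is small, for example when $g(q)$ collides with $E$. In that case interpolate in two steps, first moving $R$ onto $E$ while fixing a point $q$ chosen outside $E\cup R$, then repairing $g(q)$.

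\textbf{Remaining orbit sizes.} The main obstacle is $\G$-orbits with only two to four points. These need a separate case check: there every point is exceptional or can be fixed directly. In particular, a finite $\G$-orbit containing a point of $E$ with $|E\cap\mathcal{O}|=1$ and $|\mathcal{O}|=2$ forces the other point to be regular for $g$. I would handle this by choosing $q$ to be that other point and noting that $g$ preserves $\mathcal{O}$ and fixes $x$ after the correction.

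Finally, set $k=bag$ and $P=E\cup Q$. Then $k$ preserves $P$, $\Sigma(k)=E$, each relevant orbit contains at least two points of $P$, and the germs at the added points are identities.
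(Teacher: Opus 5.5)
Your core construction is correct, and it is a slightly more economical version of the paper's argument. The paper interpolates twice:
\begin{itemize}
\item first $a$ with $A_0=g(E)$, $B_0=E$, $Q_0=\varnothing$, so that $h=ag$ preserves $E$ and hence its complement;
\item then, for arbitrary added points $y_{\mathcal{O}}\in\mathcal{O}\setminus E$, a second interpolation $b$ sending $h(y)\mapsto y$ with $Q_0=E$, which is allowed exactly because $h(B)$ and $B$ avoid $E$.
\end{itemize}
Your first correction does both jobs in one step, and this works for \emph{any} $q_{\mathcal{O}}\in\mathcal{O}\setminus E$. Lemma~\ref{lemma:finite-AF-interpolation} requires only that $Q_0$ be disjoint from $A_0\cup B_0$; it does not require $A_0$ to be disjoint from $B_0$. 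Its proof extends $\theta$ to a permutation of $(A_0\cup B_0)\cap\mathcal{T}$ in each $\T$-orbit $\mathcal{T}$. With $Q_0=\varnothing$, take $\theta$ on $R$ and $g(q)\mapsto q$ on $g(Q)$. This is a well-defined bijection $R\sqcup g(Q)\to E\sqcup Q$ respecting $\T$-orbits, because $g$ is injective, $Q\cap E=\varnothing$, and $(g,q)\in\T$. All germs of $a\in\F(\T)$ are AF, so $\Sigma(ag)=E$, and principality gives $(ag,q)=(\Id,q)$. Thus $b=\Id$ suffices, and no identity-germ condition at $E$ is needed.

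The gap is self-inflicted. You imposed $q_{\mathcal{O}}\notin R\cup g^{-1}(E\cup R)$ in the belief that disjointness was needed. This pushed small orbits into a separate case that the proposal never actually proves.
\begin{itemize}
\item The paragraph on remaining orbit sizes is not an argument. In the relevant orbits exactly one point lies in $E$, so ``every point is exceptional'' does not apply.
\item Orbits with three or four points are not treated at all. For instance, if $\mathcal{O}=\{x,y,z\}$ and $g$ acts by $x\mapsto y\mapsto z\mapsto x$, your forbidden set is all of $\mathcal{O}$.
\end{itemize}
Drop the extra constraint and the whole case analysis, and keep $b=\Id$. Your proof then covers every orbit size uniformly.
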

\begin{proof}
Write $E=\Sigma(g)$. Equation~\eqref{eq:finite-exceptional-balance} gives, separately in each $\T$-orbit, equal cardinalities for the exceptional ranges $g(E)$ and exceptional sources $E$. Choose a bijection $g(E)\to E$ respecting the $\T$-orbits. Lemma~\ref{lemma:finite-AF-interpolation} extends this finite matching to $a\in\F(\T)$. Put $h=ag$. Then $h(E)=E$. Moreover, for every source $x$, the germ $(ag,x)$ belongs to $\T$ if and only if $(g,x)$ does, since the germ of $a$ at $g(x)$ lies in the subgroupoid $\T$ and can be cancelled by its inverse. Hence $\Sigma(h)=E$.

Now consider a $\G$-orbit $\mathcal{O}$ with $|E\cap\mathcal{O}|=1$. Its unique point in $E$ supports a non-AF germ of $g$. If $\mathcal{O}$ were a singleton orbit, the hypothesis on $Z_1$ would be violated. Hence $\mathcal{O}\setminus E$ is nonempty. Choose one point $y_{\mathcal{O}}$ there. Let $B$ be the finite set of these auxiliary points. Since $h$ preserves $E$, it also preserves its complement, and thus both $B$ and $h(B)$ are disjoint from $E$. For $y\in B$ we have $y\notin E=\Sigma(h)$, and hence $(h,y)\in\T$.

Use Lemma~\ref{lemma:finite-AF-interpolation} to choose $b\in\F(\T)$ with $b(h(y))=y$ for $y\in B$ and with germ $(\Id,x)$ at every $x\in E$. Set $P=E\cup B$ and $k=bh$. Then $k$ preserves $P$, permutes $E$, and fixes $B$. At $y\in B$, the germ $(k,y)$ is an isotropy arrow in $\T$ and thus is the identity by principality. Also $\Sigma(k)=E$. Every orbit meeting $P$ now has at least two marked points. If $E$ is empty, take $a=b=1$ and $P=\varnothing$.
\end{proof}

\begin{lemma}
\label{lemma:table-return-character}
Let $k\in \ker I$ preserve a finite set $P$ and be AF off $P$. For each $\G$-orbit $\mathcal{O}$ meeting $P$, the germ table of $k$ on $P\cap\mathcal{O}$ has zero total label in the abelianized group of germs, for any choice of connectors between these marked points.
\end{lemma}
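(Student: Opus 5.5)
The plan is to compute $\iota I(k)$ by Theorem~\ref{thm:index-formula} and read off the return coordinate of each exceptional orbit, then compare it with the total label of the germ table. Since $k$ is AF off $P$, we have $\Sigma(k)\subseteq P$, so $\iota I(k)=\sum_{x\in\Sigma(k)}[(k,x)]$. Germs of $k$ at points of $P\setminus\Sigma(k)$ lie in $\T$ and therefore have relative class zero. Hence we may equally write $\iota I(k)=\sum_{x\in P}[(k,x)]$ in $H_1(\G,\T)$. Since $I(k)=0$ and $\iota$ is injective, this sum vanishes. By the orbitwise direct sum of Theorem~\ref{thm:relative-normal-form}, the partial sum over $P\cap\mathcal{O}$ vanishes in $H_1(\G,\T)_{\mathcal{O}}$ for each orbit $\mathcal{O}$. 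A non-exceptional orbit meeting $P$ has only AF germs and trivial isotropy, so its table labels are trivial and there is nothing to prove there.

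Next, fix an exceptional orbit $\mathcal{O}$ and marked points $x_0,\dots,x_{n-1}$ with arbitrary connectors $p_i:x_0\to x_i$. I would extend these to a connector system for Theorem~\ref{thm:relative-normal-form} with the same basepoint $x_0$. To do this, choose representatives and connectors $c_j$ for the $\T$-orbits, and set $p'_x=a_xc_j$. Each given $p_i$ then differs from $p'_{x_i}$ by an isotropy element: $p_i=p'_{x_i}u_i$ with $u_i\in H$. The normal-form coordinate of $[(k,x_i)]$ is $\bigl([p'^{-1}_{x_{\pi(i)}}(k,x_i)p'_{x_i}]_{\ab},\,e_{\T x_{\pi(i)}}-e_{\T x_i}\bigr)$. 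Write this in terms of the table label $a_i=p_{\pi(i)}^{-1}(k,x_i)p_i$. Abelianizing gives $[a_i]_{\ab}+[u_{\pi(i)}]_{\ab}-[u_i]_{\ab}$ for the return coordinate.

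Summing over $i$, the correction terms telescope to zero because $\pi$ is a permutation of $\{0,\dots,n-1\}$. The transport vectors likewise sum to zero, because $k$ permutes $P\cap\mathcal{O}$. The return coordinate of the orbit contribution is therefore exactly $\chi=\sum_i[a_i]_{\ab}$. Vanishing of the orbit contribution in $H_{\mathcal{O}}^{\ab}\oplus L_{\mathcal{O}}$ then forces $\chi=0$. This holds for every choice of connectors, since the computation above started from arbitrary $p_i$.

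The only delicate point is the bookkeeping. One must check that the arbitrary table connectors $p_i$ can be compared with normal-form connectors using the same basepoint; if the basepoints differed, the comparison would conjugate $H$ and change nothing in $H^{\ab}$. One must also check that permutation invariance makes the isotropy correction cancel. The choice of basepoint does not matter, because inner automorphisms act trivially on $H^{\ab}$.
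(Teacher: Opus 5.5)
Your proof is correct and takes essentially the paper's route. Both arguments read off the return coordinate of $\iota I(k)=0$ in the $\mathcal{O}$-summand of the relative normal form, and both use the telescoping $\sum_i\bigl([u_{\pi(i)}]_{\ab}-[u_i]_{\ab}\bigr)=0$ over the permutation $\pi$ to pass between normal-form connectors and arbitrary ones. The differences are cosmetic: you compare arbitrary connectors with normal-form ones in a single step and note the trivial non-exceptional case, whereas the paper first computes with the normal-form connectors and then changes them.
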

\begin{proof}
Fix one orbit $\mathcal{O}$ meeting $P$, choose one marked point as the basepoint for the normal form, and choose one representative in every $\T$-orbit contained in $\mathcal{O}$. For a marked point $x$ in such a $\T$-orbit, let $p_x=a_xc_i$ be the connector of Theorem~\ref{thm:relative-normal-form}, where $a_x$ is the unique AF arrow from the chosen representative to $x$. Thus the same construction supplies a connector $p_x$ for every marked point, even when several marked points lie in one $\T$-orbit.

For $x_i\in P\cap\mathcal{O}$, the normalized label of the germ $(k,x_i):x_i\to k(x_i)$ is then
$p_{k(x_i)}^{-1}(k,x_i)p_{x_i}$. If $(k,x_i)\in\T$, the source and range lie in the same $\T$-orbit and the uniqueness of the AF arrow gives $p_{k(x_i)}=(k,x_i)p_{x_i}$. Hence, this normalized label is the identity. Since $k$ is AF off $P$, all exceptional sources of its full bisection occur among the marked points. Therefore the sum of the abelianized table labels is exactly the return coordinate of the relative class $\iota I(k)$ in the summand for $\mathcal{O}$. Since $k\in\ker I$, this relative class is zero, and so the total label vanishes.

It remains to remove this special choice of connectors. Any other connector from the same marked basepoint to $x_i$ has the form $p_i u_i$ with $u_i$ in the base group of germs. The corresponding label changes from $a_i$ to $u_{\pi(i)}^{-1}a_i u_i$. In the abelianization, the total change is $\sum_i([u_i]-[u_{\pi(i)}])=0$, since $\pi$ is a permutation. Changing the marked basepoint transports all labels through the same isomorphism of groups of germs. Thus vanishing of the total label is independent of all connector choices.
\end{proof}

\subsection{Constructive transposition factorization}
Combining the table factorization with AF interpolation gives an explicit transposition factorization, independently of the abstract kernel theorems of Section~\ref{sec:kernel}.

\keepwithstatement
\begin{thm}
\label{thm:constructive-boundary-cancellation}
Let $g\in \ker I$ have the identity germ at every singleton $\G$-orbit. Then $g$ is a finite product of dynamical transpositions. More precisely, make the choices of Proposition~\ref{prop:finite-boundary-balance}. For each orbit $\mathcal{O}$ meeting its marked set $P$, let $n_{\mathcal{O}}=|P\cap\mathcal{O}|$, let $\pi_{\mathcal{O}}$ be the induced permutation, and choose a commutator expression of length $c_{\mathcal{O}}$ for the product $r_{\mathcal{O}}$ of the normalized labels in the order specified in Proposition~\ref{prop:finite-table-cancellation}. The construction gives
\begin{equation}
\label{eq:constructive-normal-form}
g=\tau_1\cdots\tau_N f_0,\qquad f_0\in\F(\T),
\end{equation}
where all $\tau_i$ are dynamical transpositions and
\begin{equation}
\label{eq:constructive-length-bound}
N\leq\sum_{\mathcal{O}}
\bigl(n_{\mathcal{O}}-\operatorname{cyc}(\pi_{\mathcal{O}})
    +2(n_{\mathcal{O}}-1)+6c_{\mathcal{O}}\bigr).
\end{equation}
The factor $f_0\in\F(\T)$ is a product of at most two further dynamical transpositions. In particular, the resulting total length is at most $3|\Sigma(g)|+6\sum_{\mathcal{O}}c_{\mathcal{O}}+2$.
\end{thm}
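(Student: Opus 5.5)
The plan is to assemble the constructive factorization from Proposition~\ref{prop:finite-boundary-balance}, Lemma~\ref{lemma:table-return-character}, Proposition~\ref{prop:finite-table-cancellation}, Lemma~\ref{lemma:marked-transposition}, Lemma~\ref{lemma:marked-AF-remainder} and Lemma~\ref{lemma:AF-two-transpositions}, in that order.

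\emph{Step 1: reduce to an element preserving $P$.} Apply Proposition~\ref{prop:finite-boundary-balance} to get $a,b\in\F(\T)$ and a marked set $P$ such that $k=bag$ preserves $P$, is AF off $P$ with $\Sigma(k)=\Sigma(g)$, and every orbit meeting $P$ contains $n_{\mathcal{O}}\geq2$ marked points. Since $a,b\in\F(\T)$ have zero index (their full bisections lie in $\T$, so the relative class vanishes and $\iota$ is injective), we get $k\in\ker I$. Then $g=a^{-1}b^{-1}k$, and this AF prefactor will be absorbed at the end.

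\emph{Step 2: factor the germ tables.} For each orbit $\mathcal{O}$ meeting $P$, record the germ table of $k$ on $P\cap\mathcal{O}$ with respect to fixed connectors. By Lemma~\ref{lemma:table-return-character} its total label vanishes. Since $n_{\mathcal{O}}\geq2$, Proposition~\ref{prop:finite-table-cancellation} writes this table as a product of at most $n_{\mathcal{O}}-\operatorname{cyc}(\pi_{\mathcal{O}})+2(n_{\mathcal{O}}-1)+6c_{\mathcal{O}}$ elementary tables $t_{uv}(h)$. Lift each factor by Lemma~\ref{lemma:marked-transposition} to a dynamical transposition $\tau$ that preserves $P$, is AF off $P$, and has identity germs at all other marked points, including points of other orbits. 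The germs of a product of such lifts at points of $P$ are computed by multiplying germs along the finite permutation of $P$, so the product $u=\tau_1\cdots\tau_N$ has the same marked germs as $k$. This matching of marked germs is the main point to verify. It needs the lifts to act on $P$ exactly as the abstract tables do, which holds because each lift permutes $P$ with the prescribed arrows and fixes the remaining points with identity germs. Lemma~\ref{lemma:marked-AF-remainder} then gives $u^{-1}k\in\F(\T)$.

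\emph{Step 3: collect the AF factors and count.} We have
\[
g=a^{-1}b^{-1}u f_1,\qquad f_1\in\F(\T).
\]
To reach the stated form $\tau_1\cdots\tau_N f_0$ with the AF factor on the right, rewrite $a^{-1}b^{-1}u=u\,(u^{-1}a^{-1}b^{-1}u)$. The conjugate lies in $\F(\T)$ because $\F(\T)$ is normalized by $\F(\G)$: conjugating a bisection of $\T$ by a full bisection of $\G$ stays in $\T$ wherever its germs are AF. That fails only at the finitely many exceptional germs, and these cancel in the conjugate, since $\T$ is a normal subgroupoid of $\G$ in the sense that $\gamma\T\gamma^{-1}\subseteq\T$ for units.

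This normality is the step I expect to be the main obstacle, because it is not immediate. If it fails, the alternative is to set $f_0=u^{-1}g$ directly. At points outside $P$, its germs are AF since $\Sigma(g)\subseteq P$ and $u$ is AF off $P$. At points of $P$, the germs of $u$ and of $k=bag$ agree and $a,b$ contribute AF germs, so $u^{-1}g=u^{-1}k\,(ba)^{-1}$-type conjugates of AF germs are AF. Hence $u^{-1}g\in\F(\T)$ by the germwise argument of Lemma~\ref{lemma:marked-AF-remainder}, without needing normality, and this is the version to use.

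Then Lemma~\ref{lemma:AF-two-transpositions} writes $f_0$ as at most two transpositions. Summing the table bounds gives \eqref{eq:constructive-length-bound}. Using $n_{\mathcal{O}}-\operatorname{cyc}\leq n_{\mathcal{O}}-1$ and $\sum_{\mathcal{O}} n_{\mathcal{O}}\leq|\Sigma(g)|+\#\{\text{added points}\}$, with at most one added point only in orbits where $|\Sigma(g)\cap\mathcal{O}|=1$, we get $3(n_{\mathcal{O}}-1)\leq3|\Sigma(g)\cap\mathcal{O}|$. Adding the two transpositions for $f_0$ gives the total $3|\Sigma(g)|+6\sum_{\mathcal{O}}c_{\mathcal{O}}+2$.
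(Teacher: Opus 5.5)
Your Steps~1 and~2 agree with the paper. The element $k=bag$ has zero index, its germ tables lie in $\ker\chi$, and the lifted product $u=\tau_1\cdots\tau_N$ gives $f_1=u^{-1}k\in\F(\T)$ by Lemma~\ref{lemma:marked-AF-remainder}. Your final counting is also the paper's.

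The gap is in Step~3, where you must move the AF prefactor out of $g=p^{-1}uf_1$, with $p=ba$. Both of your options fail, for the same reason. Since $u^{-1}k\in\F(\T)$, we have $u^{-1}g=(u^{-1}p^{-1}u)(u^{-1}k)$. So your fallback $f_0=u^{-1}g$ is AF exactly when $u^{-1}p^{-1}u$ is AF, which is exactly your normality claim for this conjugate.

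But $\F(\T)$ is not normalized by $\F(\G)$. The germ of $qfq^{-1}$ at $q(x)$ is $(q,f(x))(f,x)(q,x)^{-1}$. For $f\in\F(\T)$, this lies outside $\T$ whenever exactly one of $(q,x)$ and $(q,f(x))$ is exceptional. This is why Lemma~\ref{lemma:admissible-conjugation-parity} only conjugates AF elements that are the identity near the exceptional set of the conjugator.

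Here is a concrete failure. Take an exceptional arrow $\gamma:x\to y$ between distinct $\T$-orbits, for instance $(\tau,x_0)$ in Example~\ref{exmp:one-border-index}. Let $V$ be a bisection for $\gamma$ as in Lemma~\ref{lemma:marked-transposition}, and let $s\in\F(\T)$ satisfy $s(x)=x'\neq x$ with identity germ at $y$. Then:
\begin{itemize}
\item $g=st_V\in\ker I$ and $\Sigma(g)=\{x,y\}$;
\item Proposition~\ref{prop:finite-boundary-balance} adds no auxiliary point, so $P=\{x,y\}$;
\item $g(y)=x'\notin P$.
\end{itemize}
Since $u$ preserves $P$ and is AF off $P$, the germ $(u^{-1},x')$ is AF. Hence $(u^{-1}g,y)=(u^{-1},x')(g,y)\notin\T$.

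Your germwise argument slips because $g=(ba)^{-1}k$, not $k(ba)^{-1}$. The AF factor sits between $u^{-1}$ and $k$, so the marked germs of $u$ and $k$ are never actually compared.

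The missing idea, which is how the paper concludes, is to conjugate the transpositions instead of the AF factor:
\[
g=p^{-1}uf_1=(p^{-1}up)(p^{-1}f_1).
\]
Dynamical transpositions are stable under conjugation by any element of $\F(\G)$. If $\tau_i=t_{V_i}$ and $U_p$ is the full bisection of $p$, then $p^{-1}\tau_ip=t_{W_i}$ with $W_i=U_p^{-1}V_iU_p$. This is a compact open bisection with source $p^{-1}(\sg(V_i))$ and range $p^{-1}(\rg(V_i))$, which are disjoint. On the other side, $f_0=p^{-1}f_1\in\F(\T)$ simply because $\F(\T)$ is a group, so no normality is needed.

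This keeps exactly $N$ transposition factors. Your counting then yields \eqref{eq:constructive-length-bound} and the total bound. By contrast, expanding $p^{-1}$ into two more transpositions via Lemma~\ref{lemma:AF-two-transpositions} would still show $g\in\Sym(\G)$. It would, however, add two factors and violate the stated bounds.
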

\begin{proof}
Let $k=bag$ and $P$ be as in Proposition~\ref{prop:finite-boundary-balance}. The index of $k$ is zero, since $a,b\in\F(\T)$ have zero index. Only finitely many $\G$-orbits meet $P$. On each such orbit, Lemma~\ref{lemma:table-return-character} puts the germ table of $k$ in $\ker\chi$, and Proposition~\ref{prop:finite-boundary-balance} guarantees that the table has at least two marked points. Proposition~\ref{prop:finite-table-cancellation} therefore factors the germ table on each orbit into the indicated number of tables $t_{uv}(h)$.

Lift these table factors one at a time by Lemma~\ref{lemma:marked-transposition}, always using the same marked set $P$ and requiring identity germs at its other points. Let $t$ be the product in the order prescribed by the abstract table factorizations. At the marked points, multiplication of germs is exactly multiplication in the germ tables, and thus $t$ and $k$ have the same germs on $P$. Every factor preserves $P$ and is AF off $P$. Lemma~\ref{lemma:marked-AF-remainder} shows that these properties are preserved under multiplication, even if the clopen supports of different lifts overlap away from $P$. Hence $t$ is AF off $P$, and the same lemma gives $f=t^{-1}k\in\F(\T)$.

Write $p=ba$. Since $k=pg=t f$, we obtain
$g=(p^{-1}t p)(p^{-1}f)$. Conjugation by a full bisection sends a dynamical transposition to a dynamical transposition. This gives~\eqref{eq:constructive-normal-form} with $f_0=p^{-1}f\in\F(\T)$ and with the bound~\eqref{eq:constructive-length-bound}. Apply Lemma~\ref{lemma:AF-two-transpositions} to $f_0$.

For the coarser bound, if $e_{\mathcal{O}}=|\Sigma(g)\cap\mathcal{O}|$, then $n_{\mathcal{O}}-1\leq e_{\mathcal{O}}$, including the case $e_{\mathcal{O}}=1$ where one auxiliary point was added. Also $n_{\mathcal{O}}-\operatorname{cyc}(\pi_{\mathcal{O}})\leq n_{\mathcal{O}}-1$. Sum these inequalities. If $\Sigma(g)=\varnothing$, Lemma~\ref{lemma:AF-two-transpositions} alone applies.
\end{proof}

The product $r_{\mathcal{O}}$ depends on the chosen order of the marked points and on their connectors, while membership in the derived group is independent of these choices. The length bound is relative to the chosen commutator expressions. It is not a uniform bound over $g$ or over all possible groups of germs. Nor does the theorem imply a uniform bound in the generators of a prescribed acting group.

The product $D_{uv}(h)=t_{uv}(h)t_{uv}(1)$ uses two transpositions between distinct points, rather than a representative of $h$ on an invariant clopen neighborhood. Consequently, the construction applies without the finite-singular-germ hypothesis or the simultaneous invariant-neighborhood hypothesis used in some near-full completion arguments.

\subsection{The residual group at singleton orbits}\label{subsec:singleton-residual}
To prove surjectivity of $\Lambda$ in Theorem~\ref{thm:singleton-kernel}, we extend zero-defect bisections to full bisections using the AF core.

\keepwithstatement
\begin{lemma}
\label{lemma:AF-clopen-completion}
For clopen $A,B\subseteq \Omega(\mathsf{B})$, equality $[1_A]=[1_B]$ in $H_0(\T)$ is equivalent to the existence of a compact open bisection of $\T$ from $A$ to $B$. Consequently, a compact open $\G$-bisection $V$ with $[1_{\sg(V)}]=[1_{\rg(V)}]$ in $H_0(\T)$ extends to a full bisection whose added arrows all belong to $\T$.
\end{lemma}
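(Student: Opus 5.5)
The forward implication is immediate. If $W\subseteq\T$ is a compact open bisection with $\sg(W)=A$ and $\rg(W)=B$, then $\partial_1(1_W)=1_A-1_B$ in the chain complex of $\T$. Hence $[1_A]=[1_B]$ in $H_0(\T)$.

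For the converse I would work with cylinder-count vectors. Since $[1_A]=[1_B]$ in the direct limit \eqref{eq:H0-direct-limit}, there is a level $n$ at which $A$ and $B$ are both unions of cylinders ending in $V_n$ and $\mathbf{c}_n(A)=\mathbf{c}_n(B)$. The cylinder-count vectors are compatible under $M_n$, and equality in an inductive limit occurs after finitely many incidence maps, so such a level exists. Equality of the vectors means that, for every $v\in V_n$, the number of length-$(n-1)$ paths $p$ with $[p]\subseteq A$ and $\mathbf{r}(p)=v$ equals the corresponding number for $B$. I would choose a bijection between these two finite sets of paths for each $v$. The union of the prefix replacements $[q,p]$ over the matched pairs is then a compact open bisection of $\mathfrak{T}_{\mathsf{B},n}\subseteq\T$ from $A$ onto $B$. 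No multisection or comparison argument is needed, because exact equality of counts at one level suffices.

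For the consequence, let $V$ be a compact open $\G$-bisection with $[1_{\sg(V)}]=[1_{\rg(V)}]$. Its complements $A=\Omega(\mathsf{B})\setminus\sg(V)$ and $B=\Omega(\mathsf{B})\setminus\rg(V)$ are clopen. Additivity gives $[1_A]=[1_{\Omega(\mathsf{B})}]-[1_{\sg(V)}]=[1_B]$. The first part then supplies a compact open $W\subseteq\T$ from $A$ onto $B$. The union $V\cup W$ has disjoint sources and disjoint ranges, each covering $\Omega(\mathsf{B})$, so it is a compact open full bisection. All of its added arrows lie in $W\subseteq\T$.

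The only point requiring care is the passage from equality in the direct limit to equality of the actual count vectors at some common level. This follows from the definition of the inductive limit together with $M_n\mathbf{c}_n=\mathbf{c}_{n+1}$. The remaining steps are routine.
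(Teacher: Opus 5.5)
Your proposal is correct and follows the paper's proof essentially step for step: you find a common deep level where the cylinder-count vectors of $A$ and $B$ agree, realize a vertex-preserving matching of cylinders by prefix replacements, and complete $V$ by an AF bisection between the complements of its source and range. One trivial indexing point: with the paper's conventions the matched paths lie in $\Omega_{n-1}(\mathsf{B})$, so the bisection lies in $\mathfrak{T}_{\mathsf{B},n-1}$ rather than $\mathfrak{T}_{\mathsf{B},n}$, which is of course still contained in the AF core.
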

\begin{proof}
Represent $A$ and $B$ by unions of cylinders at a common Bratteli level. Equality of their classes in the dimension group means that their cylinder-count vectors agree at some later level. At that level, pair source and range cylinders with the same terminal vertex. The corresponding prefix replacements form the required AF bisection. Conversely, the source and range of any AF bisection have equal classes in $H_0(\T)$.

For the last assertion, the complements of $\sg(V)$ and $\rg(V)$ also have equal classes. Choose an AF bisection between these complements and take its union with $V$. Sources and ranges are disjoint within the union and both cover $\Omega(\mathsf{B})$.
\end{proof}

In particular, a non-AF arrow $\gamma$ has a full representative $u$ with $U_u\setminus\T=\{\gamma\}$ if and only if $\delta[\gamma]=0$. For sufficiency, take an isolating bisection for $\gamma$ and complete it by the preceding lemma. For necessity, a full representative with only that exceptional arrow has relative index class $[\gamma]$, while exactness gives $\delta\iota I(u)=0$. Hence, $\delta[\gamma]=0$.

This also clarifies the comparison with finite germ extensions in \cite{BHM24}. Relative to the base group $\F(\T)$, a germ is singular exactly when it does not belong to $\T$. A construction that localizes one prescribed singular germ while agreeing with the base group at every other source therefore requires a full representative whose only non-AF germ is the prescribed one, and its obstruction is precisely the defect $\delta[\gamma]$. Finiteness of the exceptional set alone does not remove this obstruction. Our cancellation uses swaps between two distinct marked points instead, and thus an individual singular germ need not admit a full representative that is AF at every other source.

\begin{proof}[Proof of Theorem~\ref{thm:singleton-kernel}]
By Definition~\ref{def:singleton-germ-evaluation}, evaluation at each $x\in Z_1$ is a homomorphism. Since $\T$ is principal and the $\G$-orbit of $x$ is the singleton $\{x\}$, the corresponding $\T$-orbit is also a singleton and the transport lattice is zero. The orbit summand in the relative normal form of Theorem~\ref{thm:relative-normal-form} is therefore exactly $H_x^{\ab}$. If $g\in\ker I$, the image of $I(g)$ in the relative group is zero, and thus the class of $(g,x)$ in $H_x^{\ab}$ vanishes. Equivalently, $(g,x)\in[H_x,H_x]$. Lemma~\ref{lemma:finite-exceptional-support} shows that only finitely many of these germs are nonidentity. Hence $\Lambda$ is a well-defined homomorphism to the restricted direct product in~\eqref{eq:singleton-kernel-exact}.

Every dynamical transposition has zero index. Indeed, its exceptional arrows occur in pairs of arrows inverse to each other, and $[\gamma]+[\gamma^{-1}]=0$ in the relative group. Theorem~\ref{thm:index-formula} and the injection in~\eqref{eq:relative} thus give $\Sym(\G)\subseteq\ker I$, without using minimality or comparison. A dynamical transposition also has germ $(\Id,x)$ at a singleton orbit $\{x\}$: such a point can belong to neither of its two exchanged clopen sets, since otherwise its orbit would contain a second point. Hence $\Sym(\G)\subseteq\ker\Lambda$. Conversely, Theorem~\ref{thm:constructive-boundary-cancellation} applies to each $g\in\ker\Lambda$ and gives $g\in\Sym(\G)$. This identifies the kernel.

For surjectivity, take $x\in Z_1$ and $h\in[H_x,H_x]$ with $h\neq(\Id,x)$. Choose an isolating bisection $V\ni h$, and thus $V\setminus\T=\{h\}$. In the singleton orbit summand $H_x^{\ab}$ of Theorem~\ref{thm:relative-normal-form}, the relative class of $h$ is its abelianization. Since $h\in[H_x,H_x]$, this class is zero. Hence $[h]=0$ in $H_1(\G,\T)$, and the relative exact sequence~\eqref{eq:relative} gives $[1_{\sg(V)}]=[1_{\rg(V)}]$ in $H_0(\T)$. Complete $V$ by Lemma~\ref{lemma:AF-clopen-completion} to a full bisection $U_{u_h}$ representing $u_h\in\F(\G)$. Its only exceptional arrow is $h$ at $x$, and thus its relative index class is $[h]=0$. Injectivity of $H_1(\G)\to H_1(\G,\T)$ in~\eqref{eq:relative} gives $I(u_h)=0$. At every other singleton orbit the germ is the identity. Multiplying finitely many such lifts therefore realizes any finitely supported tuple in the target.
\end{proof}

No choice of connectors enters the map $\Lambda$. Although the proof uses the chosen AF core to balance and complete bisections, the quotient in~\eqref{eq:singleton-kernel-exact} is described intrinsically by singleton $\G$-orbits and their groups of germs. The construction chooses the individual lifts $u_h$ independently and produces no homomorphic splitting.

\begin{cor}
\label{cor:no-singletons-kernel}
If an AF-by-discrete groupoid in the standing setting has no singleton $\G$-orbits, then $\ker I=\Sym(\G)$. In particular, this holds for every minimal AF-by-discrete groupoid on a Cantor space, without compact generation or an almost-finiteness assumption. Under minimality and comparison, Li's Corollary~6.17 then also gives $\Der(\G)=\Alt(\G)$, and Corollary~6.14 gives index surjectivity.
\end{cor}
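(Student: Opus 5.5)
The corollary follows from Theorem~\ref{thm:singleton-kernel} once we check that $Z_1$ is empty. The cited results of Li then supply the two remaining conclusions. The steps are taken in that order.

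\emph{Kernel equality.} If $\G$ has no singleton orbits, then $Z_1=\varnothing$. The target of $\Lambda$ in~\eqref{eq:singleton-kernel-exact} is therefore the empty restricted direct product, that is, the trivial group. Exactness gives $\ker I=\Sym(\G)$. Equivalently, every $g\in\ker I$ satisfies the hypothesis of Theorem~\ref{thm:constructive-boundary-cancellation} vacuously, so it is a product of dynamical transpositions. The reverse inclusion $\Sym(\G)\subseteq\ker I$ was shown in the proof of Theorem~\ref{thm:singleton-kernel}.

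\emph{The minimal case.} Suppose $\G$ is minimal and $\{x\}$ is an orbit. Minimality makes $\{x\}$ dense, so $\Omega(\mathsf{B})=\overline{\{x\}}=\{x\}$, which contradicts the Cantor assumption. So $Z_1=\varnothing$ and the first part applies. This argument uses only the standing AF-by-discrete hypotheses, effectiveness, and the Cantor unit space. Neither compact generation nor almost finiteness enters, because the proof of Theorem~\ref{thm:singleton-kernel} uses only Lemma~\ref{lemma:finite-exceptional-support}, AF interpolation, and the relative normal form of Remark~\ref{rmk:normal-form-scope}.

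\emph{The consequences of Li's results.} Now assume minimality and comparison. By \cite[Corollary~6.17]{Li25}, recalled in~\eqref{eq:li-kernel-equivalences}, $\ker I=\Sym(\G)$ is equivalent to $\Der(\G)=\Alt(\G)$. Since the kernel equality has already been established, we obtain $\Der(\G)=\Alt(\G)$. Surjectivity of $I$ is the final exactness statement in \cite[Corollary~6.14]{Li25}, as used in the proof of Theorem~\ref{thm:kernel-folner}.

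None of these steps is a real obstacle, since all the substantive work was done in Theorem~\ref{thm:singleton-kernel}. The only point that needs attention is that Li's corollaries are applied in the effective Cantor setting, where they hold.
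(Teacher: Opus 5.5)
Your proposal is correct and follows essentially the same route as the paper's proof. You take $Z_1=\varnothing$ in Theorem~\ref{thm:singleton-kernel}, observe that a singleton orbit cannot be dense in a Cantor space, and then quote Li's Corollaries~6.17 and~6.14 for the last two assertions.
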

\begin{proof}
The first two assertions follow from Theorem~\ref{thm:singleton-kernel}, since a singleton orbit cannot be dense in a Cantor space. The last assertions are the indicated results of \cite{Li25}.
\end{proof}

This separates the kernel question from the approximation question left in Section~\ref{sec:kernel}. A minimal AF-by-discrete groupoid that is not compactly generated has no residual kernel quotient, even if almost finiteness has not yet been established for its chosen AF core. The corresponding question for arbitrary minimal groupoids on Cantor spaces remains open.

\subsection{A sharp nonminimal example}\label{subsec:singleton-example}
The singleton term can be nontrivial. It also distinguishes a torsion element of zero index from a dynamical transposition.

\begin{exmp}[A finite nonabelian group at a singleton orbit]
\label{exmp:singleton-nonabelian-kernel}
Let $H$ be a finite group, let $Y$ be a Cantor space, and let
\begin{equation}
\label{eq:finite-group-cone-space}
X=\{\infty\}\ \cup\ \coprod_{n\geq1}(\{n\}\times H\times Y)
\end{equation}
be the one-point compactification of the displayed topological disjoint union. Each annulus $A_n=\{n\}\times H\times Y$ is clopen, and neighborhoods of $\infty$ contain all sufficiently high annuli. The space $X$ is compact, metrizable, zero-dimensional, and has no isolated points, and thus it is a Cantor space.

We now choose a Bratteli diagram whose path space and tail groupoid realize this model. Fix a homeomorphism $Y\cong\{0,1\}^{\omega}$. The diagram $\mathsf{B}$ has an infinite spine
\begin{equation*}
v_0\longrightarrow v_1\longrightarrow v_2\longrightarrow\cdots.
\end{equation*}
For every $n\geq1$, attach $|H|$ edges, indexed by $H$, from $v_n$ to one vertex $w_{n,\varnothing}$ at the next level. From $w_{n,u}$, where $u$ is a finite binary word, there are two edges to $w_{n,u0}$ and $w_{n,u1}$. These branches do not merge with one another or with the spine. Every level contains only finitely many vertices. The unique infinite path that stays on the spine corresponds to $\infty$. Every other infinite path leaves the spine at a unique level $n$, chooses an exit edge indexed by $h\in H$, and then follows a binary ray $y\in Y$. This gives a homeomorphism
\begin{equation*}
\Omega(\mathsf{B})\cong X.
\end{equation*}
Under this identification, two paths outside the spine are tail equivalent if and only if they have the same $n$-coordinate and the same $Y$-coordinate. Their $H$-coordinates may differ. The spine path forms a singleton tail class. The tail groupoid of $\mathsf{B}$ is the equivalence relation obtained by varying the $H$-coordinate in each set $\{n\}\times H\times\{y\}$ and fixing $\infty$. Its basic bisections replace the $H$-coordinate over a fixed finite binary prefix of $y$. They are precisely the corresponding clopen restrictions of the action described below. We henceforth identify $X$ with $\Omega(\mathsf{B})$.

Let $H$ act by left multiplication on the $H$ coordinate of each annulus and fix $\infty$. This action is free on $\Omega(\mathsf{B})\setminus\{\infty\}$. Every nonidentity element acts nontrivially in every neighborhood of $\infty$, and thus the action is effective and its transformation groupoid agrees with its groupoid of germs. Put
\begin{equation}
\label{eq:cone-AF-core}
\G=H\ltimes \Omega(\mathsf{B}),\qquad
\T=\G^{(0)}\cup\{(h,x):x\neq\infty\}.
\end{equation}
The core is open and principal. An elementary exhaustion allows the full $H$ action on $A_1,\ldots,A_N$ and only units elsewhere. The complement is the finite discrete set $\{(h,\infty):h\neq1\}$. Thus, $\G$ is AF-by-discrete.

If $H\ne1$, its only exceptional orbit is $\{\infty\}$, with group of germs $H$. Thus $H_1(\G,\T)=H^{\ab}$. The same formula holds when $H=1$, since both groups are then zero. The full bisection of a global $h\in H$ has source and range $\Omega(\mathsf{B})$ and at most the one exceptional arrow $(h,\infty)$. Its defect is zero. Therefore, $H_1(\G)\cong H^{\ab}$ and the index is abelianization of the germ at $\infty$.

An element of the full group agrees with one global $h\in H$ on a neighborhood of $\infty$, since its cocycle to the finite discrete group $H$ is continuous. Its difference from that global element belongs to $\F(\T)$. Conversely, every element of $\F(\T)$ has germ $(\Id,\infty)$ at $\infty$. Evaluation and the global action thus give
\begin{equation}
\label{eq:cone-full-group}
\F(\G)=\F(\T)\rtimes H,\qquad
\Sym(\G)=\F(\T),\qquad
\ker I=\F(\T)\rtimes[H,H].
\end{equation}
For the middle equality, every dynamical transposition has germ $(\Id,\infty)$ at $\infty$ and belongs to $\F(\T)$, while Lemma~\ref{lemma:AF-two-transpositions} gives the reverse inclusion.

Take $H=\operatorname{Sym}(3)$. Then $H_1(\G)\cong C_2$ and $\ker I/\Sym(\G)\cong\operatorname{Alt}(3)\cong C_3$. A global three-cycle has order three and zero index but lies outside $\Sym(\G)$, since its germ at $\infty$ is nonidentity. Therefore, torsion of zero index may lie outside the dynamical symmetric group in the unrestricted nonminimal setting.

If $H\neq1$, the groupoid is not almost finite: every elementary subgroupoid has only the unit arrow in its source fiber at $\infty$, and the full bisection of any nonidentity $h$ has escape ratio one there. This extends the finite-orbit obstruction from Example~\ref{exmp:AF-by-discrete-not-almost-finite}. For abelian $H$, the kernel still equals $\Sym(\G)$. For nonabelian $H$, the quotient is its nontrivial derived subgroup. The singleton orbit makes the example nonminimal.
\end{exmp}

\subsection{Certificates, effectiveness, and a prescribed acting group}\label{subsec:factorization-certificates}
The construction has a finite record that can be checked independently of a uniform stopping procedure. First, list $\Sigma(g)$ and its image and pair them in each $\T$-orbit using~\eqref{eq:finite-exceptional-balance}. Record the finite AF arrows and clopen multisections implementing $a$ and $b$. Next, record the finite germ tables, their connectors, and a commutator expression~\eqref{eq:return-commutator-certificate} for the product of labels in each table. Finally, record the isolating bisections used to lift the tables $t_{uv}(h)$, the resulting remainder in $\F(\T)$, and a finite elementary stage in which that remainder is a clopen permutation. These choices produce the actual factors in~\eqref{eq:constructive-normal-form}, not merely their indices.

A computation from this record requires presentations in which the indicated clopen bisections, finite AF arrows, and germ equalities can be verified. The theorem supplies no algorithm for finding $\Sigma(g)$, deciding equality of arbitrary germs, finding a commutator expression, or recognizing a sufficiently deep stage from an arbitrary finite-state or nonstationary description. A commutator expression must be verified in the actual group of germs. An abelianization matrix whose validity has been established can verify membership in the derived subgroup, but it does not verify a specified commutator expression. The remainder in $\F(\T)$ is known to belong to a finite stage by compactness, but recognizing that stage effectively is additional input. Thus the existence proof does not by itself give a procedure for finding and verifying these data. Corollary~\ref{cor:directed-effective-factorization} below resolves them for the displayed directed model with finitely many states, with factors in $\F(\G)$.

For a specified subgroup $G\leq\F(\G)$, the factorization stays inside $G$ if $\F(\T)\leq G$ and the chosen marked transposition lifts belong to $G$. More generally, membership may be checked factor by factor. If only $\Alt(\T)$ is known to lie in $G$, an additional parity issue remains. The selector and localization constructions of \cite{kua26a,kua26b} address these membership and word-reconstruction questions.

\begin{problem}[Effective reconstruction in a specified presentation]
\label{prob:kernel}
For the concrete Bratteli or automaton models under consideration, construct from the given input the finite data recorded above and reconstruct the transposition factors as words in a prescribed acting group whenever such membership is justified. Determine quantitative bounds from the boundary data and the actual groups of germs, without assuming a uniform bound on commutator length.
\end{problem}

\section{Computations from boundary data}
\label{sec:certified}
The preceding sections reduced the computation of the first homology group to the study of finite-level defect matrices. In particular, the kernel of the defect map is eventually described by the kernels of integer matrices arising from the boundary data. This section turns that description into explicit algebraic information using Smith reduction. The resulting decomposition separates the free and torsion parts of the homological invariants and provides a finite procedure for computing the abelian quotients appearing later.

Theorem~\ref{thm:finite-level-defect} gives eventual stabilization without a stopping test. We obtain such a test for an eventually periodic AF core from a complete relative presentation and explicit defect vectors. In the finite-state examples, terminating searches also recover the exceptional arrows and the local bisections used in boundary cancellation.

A \emph{certificate} consists of input data whose validity has been established independently. Periodicity of the incidence matrices and completeness of the germ relations require separate proofs. Smith normal form will be used to keep the calculation integral. Recall that integer row and column operations by unimodular matrices reduce an integer matrix to a diagonal matrix whose nonzero diagonal entries divide the following ones. The change-of-basis matrices have integer inverses, equivalently determinant $\pm1$. See, for example, \cite{Newman72}. The examples below include a realized nonzero defect, a finite-state action with a nonabelian group of germs, and the model of Example~\ref{exmp:depth-counterexamples} with an infinite cyclic group of germs and infinitely many states.

\subsection{A periodic certificate for the degree-one computation}
Choose relative generators $z_1,\ldots,z_m$ represented by boundary arrows or integral sums of boundary arrows. The free abelian group on these generators is $\mathbb{Z}^m$. The assumption of periodicity in this subsection is used only to simplify the
finite-level computation and to make the boundary contribution explicit. The
general results above do not require the AF core to have a periodic tail.

\begin{defn}[Periodic boundary certificate]
\label{def:periodic-boundary-certificate}
A \emph{periodic boundary certificate} at level $N$ consists of the following data, whose validity must be established independently.
\begin{enumerate}
    \item There is a complete abelian presentation
\begin{equation}
\label{eq:certified-relative-presentation}
H_1(\G,\T)\cong\mathbb{Z}^m/B\mathbb{Z}^s,
\qquad B\in M_{m\times s}(\mathbb{Z}),
\end{equation}
where the columns of $B$ generate the full subgroup of relations among $z_1,\ldots,z_m$.

\item Isolating bisections and cylinder decompositions at level $N$ have been exhibited so that the $j$th column of a matrix $D\in M_{r\times m}(\mathbb{Z})$, where $r=|V_N|$, is the level-$N$ defect vector of $z_j$. Thus, for $u\in\mathbb{Z}^m$, the defect $\delta([u])$ is the direct-limit class of $Du\in\mathbb{Z}^{V_N}$. 

\item  A period $p$ from level $N$ onward has been proved, together with fixed identifications $V_{N+kp}\cong V_N$ for $k\geq0$, so that the one-period incidence map is
\begin{equation}
\label{eq:period-matrix}
P=M_{N+p-1}\cdots M_N\in M_r(\mathbb{Z})
\end{equation}
and the same map occurs at every subsequent period endpoint.
\end{enumerate}

\end{defn}

Completeness is essential in each part of Definition~\ref{def:periodic-boundary-certificate}. A finite boundary quotient supplies relative generators, but a matrix $B$ is certified only after all abelianized germ relations have been proved. For example, a complete presentation of every $H_{\mathcal{O}}^{\ab}$ together with the free transport generators gives the first part. No finite presentation of the nonabelian groups of germs is required. Similarly, agreement of finitely many displayed incidence matrices does not prove future periodicity. The rule producing the periodic tail is part of the certificate.

The relation matrix and the defect matrix need not be compatible at the initial integer level. A relation column $b$ satisfies $[Db]=0$ in the direct limit since it represents zero in $H_1(\G,\T)$, but the vector $Db\in\mathbb{Z}^{V_N}$ itself may be nonzero and become zero only after several incidence maps. 
\keepwithstatement
\begin{lemma}
\label{lemma:periodic-death}
Let $P\in M_r(\mathbb{Z})$ and let $\nu$ be the least nonnegative integer satisfying $\operatorname{rank}_{\mathbb{Q}}P^\nu=\operatorname{rank}_{\mathbb{Q}}P^{\nu+1}$. Then $\nu\leq r$, and a vector $v\in\mathbb{Z}^r$ represents zero in $\varinjlim(\mathbb{Z}^r,P)$ if and only if $P^\nu v=0$. For the periodic tail~\eqref{eq:period-matrix}, the same test decides vanishing of a class represented at level $N$.
\end{lemma}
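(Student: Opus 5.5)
The plan is to work entirely with the rational ranks of the powers of $P$ and the integer kernels $K_j=\ker(P^j:\mathbb{Z}^r\to\mathbb{Z}^r)$, and to prove three things in order: that $\nu\le r$, that the kernels stop growing at $\nu$, and that this gives the vanishing test.

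\emph{The rank sequence.} The ranks $\rho_j=\operatorname{rank}_{\mathbb{Q}}P^j$ form a nonincreasing sequence of integers in $[0,r]$. Suppose $\rho_j=\rho_{j+1}$ for some $j$. Then $\operatorname{im}_{\mathbb{Q}}P^{j+1}\subseteq\operatorname{im}_{\mathbb{Q}}P^j$ have equal dimension, so they coincide. Applying $P$ to both sides gives $\operatorname{im}_{\mathbb{Q}}P^{j+2}=\operatorname{im}_{\mathbb{Q}}P^{j+1}$, so the sequence is constant from $j$ onward. Before $\nu$ the sequence strictly decreases, starting from $\rho_0=r\ge 0$. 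It can therefore drop at most $r$ times, and $\nu\le r$.

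\emph{Stabilization of the kernels.} The subgroups $K_j$ increase with $j$. Since $\mathbb{Z}^r$ is torsion-free, $K_j=\mathbb{Z}^r\cap\ker_{\mathbb{Q}}P^j$. By rank--nullity, $\dim_{\mathbb{Q}}\ker_{\mathbb{Q}}P^j=r-\rho_j$, which is constant for $j\ge\nu$. A nested chain of $\mathbb{Q}$-subspaces of equal dimension is constant. Intersecting with $\mathbb{Z}^r$ then gives $K_j=K_\nu$ for all $j\ge\nu$.

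\emph{The vanishing test.} By the definition of the direct limit, $v$ represents zero in $\varinjlim(\mathbb{Z}^r,P)$ exactly when $P^jv=0$ for some $j\ge0$, that is, when $v\in\bigcup_jK_j$. By the previous step this union is $K_\nu$, which gives the criterion $P^\nu v=0$.

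\emph{Transfer to the periodic tail.} A class represented at level $N$ vanishes in $H_0(\T)=\varinjlim(\mathbb{Z}^{V_n},M_n)$ if and only if its image vanishes at some later level. This is equivalent to vanishing at some later period endpoint $N+kp$, because a vector that is zero at some level stays zero under all later incidence maps. Under the fixed identifications $V_{N+kp}\cong V_N$ from Definition~\ref{def:periodic-boundary-certificate}, the composite incidence map from level $N$ to level $N+kp$ is $P^k$. Hence the subsequence of period endpoints is cofinal and computes the same direct limit, and the previous step applies.

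The main obstacle is the rational-versus-integral subtlety in the second step. The matrix $P$ is not assumed invertible, and integer kernels need not be saturated a priori. The identity $K_j=\mathbb{Z}^r\cap\ker_{\mathbb{Q}}P^j$ is what resolves this, and it follows immediately from torsion-freeness of $\mathbb{Z}^r$. The remaining steps are routine.
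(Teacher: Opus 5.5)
Your proof is correct and is essentially the paper's argument. The only difference is that you propagate stabilization through the images $\operatorname{im}_{\mathbb{Q}}P^j$ and pass to kernels by rank--nullity, whereas the paper runs the standard induction $\ker P^k=\ker P^{k+1}\Rightarrow\ker P^{k+1}=\ker P^{k+2}$ directly on the rational kernels before intersecting with $\mathbb{Z}^r$; your transfer to the period endpoints is the same as the paper's. The rational-versus-integral point you single out as the main obstacle is harmless, since the kernel of an integer matrix acting on $\mathbb{Z}^r$ is automatically saturated.
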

\begin{proof}
The kernels of the powers of $P$ over $\mathbb{Q}$ increase. Equality of their dimensions at consecutive powers implies equality of those kernels. Once $\ker P^k=\ker P^{k+1}$, a vector killed by $P^{k+2}$ has image under $P$ in $\ker P^{k+1}=\ker P^k$ and thus already belongs to $\ker P^{k+1}$. Induction proves permanent stabilization. There can be at most $r$ strict increases. Intersecting with $\mathbb{Z}^r$ gives the integral assertion. A vector has zero direct-limit class exactly when some later map kills it. In a periodic system, a vector that becomes zero at an intermediate level remains zero at the next level $N+kp$. Thus the same test applies.
\end{proof}

Lemma~\ref{lemma:periodic-death} gives a finite test for vanishing: the class of $v$ is zero precisely when $P^\nu v=0$. Applying the lemma to all columns of $D$ gives an explicit stabilization level in Theorem~\ref{thm:finite-level-defect}.
\keepwithstatement
\begin{thm}
\label{thm:certified-periodic}
Assume a periodic boundary certificate as in Definition~\ref{def:periodic-boundary-certificate}, and put $A=P^\nu D$. Then $AB=0$ and
\begin{equation}
\label{eq:certified-H1}
H_1(\G)\cong\ker_{\mathbb{Z}}A/B\mathbb{Z}^s.
\end{equation}
\end{thm}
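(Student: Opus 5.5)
The plan is to combine the exact sequence~\eqref{eq:relative}, the identification $H_1(\G)\cong\ker\delta$ in~\eqref{eq:H1-kernel}, and Lemma~\ref{lemma:periodic-death}. By the first part of the certificate, the quotient map $\mathbb{Z}^m\to H_1(\G,\T)$, $u\mapsto[u]$, is surjective with kernel $B\mathbb{Z}^s$. By the second part, $\delta([u])$ is the direct-limit class of $Du\in\mathbb{Z}^{V_N}$. Hence
\[
\ker\delta=\{u\in\mathbb{Z}^m:[Du]=0\text{ in }H_0(\T)\}/B\mathbb{Z}^s,
\]
and the proof reduces to identifying the numerator with $\ker_{\mathbb{Z}}A$.

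For this I would use the third part of the certificate, which identifies the tail of the direct system from level $N$ with $\varinjlim(\mathbb{Z}^r,P)$, sampled at the period endpoints $N+kp$. This is cofinal, so a vector at level $N$ dies in the full system exactly when it dies at some endpoint. Lemma~\ref{lemma:periodic-death} then gives that $[Du]=0$ if and only if $P^\nu Du=0$, that is, $Au=0$. It follows that $\{u:[Du]=0\}=\ker_{\mathbb{Z}}A$.

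For the claim $AB=0$: each column $b$ of $B$ represents zero in $H_1(\G,\T)$, so $[Db]=\delta(0)=0$. The lemma then gives $P^\nu Db=0$, hence $Ab=0$ column by column. This also shows $B\mathbb{Z}^s\subseteq\ker_{\mathbb{Z}}A$, so the quotient in~\eqref{eq:certified-H1} makes sense. Combining with the displayed identification yields $H_1(\G)\cong\ker_{\mathbb{Z}}A/B\mathbb{Z}^s$.

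The only delicate point is the passage from the given Bratteli levels to the periodic system. One has to check that vanishing at an intermediate level $n$ with $N+kp<n<N+(k+1)p$ propagates to the next endpoint, since the intermediate maps are linear. One also has to check that the fixed identifications $V_{N+kp}\cong V_N$ make the successive period maps literally equal to $P$. Both facts are already contained in the certificate and in the last sentence of Lemma~\ref{lemma:periodic-death}, so I expect no real obstacle beyond recording them.
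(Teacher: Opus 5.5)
Your proposal is correct and follows the paper's proof essentially step for step. You derive $AB=0$ from the vanishing of the direct-limit classes of the relation columns via Lemma~\ref{lemma:periodic-death}, identify the preimage of $\ker\delta$ in $\mathbb{Z}^m$ with $\ker_{\mathbb{Z}}A$ by the same lemma, and pass to $H_1(\G)$ through the injection in~\eqref{eq:relative}. The paper's additional Smith normal form discussion only supplies explicit coordinates and is not needed for the stated isomorphism.
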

Therefore, level $N+\nu p$, with $\nu\leq r$, suffices to decide whether any combination of these defect columns is zero in the dimension group, including combinations representing torsion relations. Smith normal form computes the invariant factors and explicit coordinates of~\eqref{eq:certified-H1}.
\begin{proof}
Each column of $B$ is a genuine relative relation, and thus the corresponding column of $DB$ has zero direct-limit class. Lemma~\ref{lemma:periodic-death} gives $P^\nu DB=0$. For $u\in\mathbb{Z}^m$, the same lemma shows that $\delta([u])=0$ precisely when $Au=0$. Taking the inverse image of $\ker\delta$ in $\mathbb{Z}^m$ thus gives $\ker_{\mathbb{Z}}A$, and its relation subgroup is $B\mathbb{Z}^s$. The injection in~\eqref{eq:relative} identifies this quotient with $H_1(\G)$.

For explicit coordinates, choose unimodular matrices $U,V$ with $UAV$ in Smith normal form and with its $\rho=\operatorname{rank}A$ nonzero diagonal entries placed first. Then the last $m-\rho$ columns of $V$ form an integral basis matrix $W$ for $\ker_{\mathbb{Z}}A=\{u\in\mathbb{Z}^m:Au=0\}$. Since $AB=0$, the first $\rho$ rows of $V^{-1}B$ vanish. Let $C$ be its remaining rows. Then $B=WC$ and
\begin{equation}
\label{eq:certified-lattice-quotient}
H_1(\G)\cong\mathbb{Z}^{m-\rho}/C\mathbb{Z}^s.
\end{equation}
A second Smith reduction gives its invariant factors. For $u\in\ker_{\mathbb{Z}}A$, its coordinates before this second reduction are the last $m-\rho$ entries of $V^{-1}u$. These formulas also retain lifts of the computed homology generators to the original boundary generators.
\end{proof}

The integral basis is important. Clearing denominators independently in a rational nullspace basis can produce a proper sublattice of the integer kernel. For instance, for $A=(2\ \ 1\ \ 1)$ the two vectors $(-1,2,0)^{\mathsf{t}}$ and $(-1,0,2)^{\mathsf{t}}$ span a proper sublattice of the integer kernel: they miss $(-1,1,1)^{\mathsf{t}}$. A subgroup $K\leq\mathbb{Z}^m$ is \emph{saturated} if $nv\in K$, with $v\in\mathbb{Z}^m$ and a nonzero integer $n$, implies $v\in K$. The kernel lattice must be saturated, while the relation lattice $B\mathbb{Z}^s$ must \emph{not} be replaced by its saturation, since doing so can erase torsion.

\begin{rmk}
\label{rmk:certified-input-scope}
Theorem~\ref{thm:certified-periodic} applies to $B,D,P$ with the relative presentation, bisections, and incidence rule required by Definition~\ref{def:periodic-boundary-certificate}. Once the exceptional germs of an element of the full group have been listed, their abelianized normalized returns and transport coordinates give a vector in $\mathbb{Z}^m$. The theorem computes the corresponding class in $H_1(\G)$. Finding this list and constructing representatives of the resulting classes are separate problems, solved explicitly in the examples below. The periodic hypothesis concerns the fixed AF core, not the number of states of the added boundary maps.
\end{rmk}

The following matrix illustrates why the initial level may not suffice. The positive matrix
\begin{equation}
\label{eq:nilpotent-test-matrix}
P=\begin{pmatrix}4&4&1\\2&2&5\\3&3&3\end{pmatrix}
\end{equation}
has ranks $3,2,1,1$ for $P^0,P,P^2,P^3$. For $d=(1,0,-1)^{\mathsf{t}}$, we have $Pd=(3,-3,0)^{\mathsf{t}}$ and $P^2d=0$. Thus $\nu=2$ is necessary for this column. If an abstract presentation has the relation $2z=0$ and initial column $d$, the relation column $2d$ is nonzero initially but vanishes at the certified level. This calculation illustrates the integer procedure. No realization by boundary bisections is asserted.

\subsection{A minimal stationary example with nonzero defect}
\label{subsec:two-cut-odometer}
We construct an explicit rank-two Bratteli diagram in which defects from two different exceptional orbits cancel. Let $V_n=\{0,1\}$ for $n\geq1$, with one root edge to each vertex. For each target vertex $v\in V_{n+1}$, order the three incoming edges by their sources as $v,1-v,v$. The incidence matrix is
\begin{equation}
\label{eq:two-cut-incidence}
M=\begin{pmatrix}2&1\\1&2\end{pmatrix}.
\end{equation}
A path can be recorded by its first vertex $\epsilon\in\{0,1\}$ and the successive positions $d_j\in\{0,1,2\}$ of its later edges in the incoming orders. Its vertices satisfy $v_{j+1}=v_j+d_j\pmod2$. This identifies the path space with $\{0,1\}\times\mathbb{Z}_3$, where $\mathbb{Z}_3=\varprojlim_n\mathbb{Z}/3^n\mathbb{Z}$ is the ring of $3$-adic integers and $z=\sum_{j\geq1}d_j3^{j-1}$. The order successor, extended at both maximal paths, is the homeomorphism
\begin{equation}
\label{eq:two-cut-odometer}
\tau(\epsilon,z)=(1-\epsilon,z+1).
\end{equation}
Indeed, the first nonmaximal edge is incremented and preceding digits $2$ are reset to $0$. Their parity is even, while the increment changes parity by one. The same formula extends the maximal path $(i,-1)$ to the minimal path $(1-i,0)$.

The two exceptional-orbit defects are nonzero individually but cancel in the index of the generating homeomorphism. Write $e_0,e_1$ for the standard vertex basis of $\mathbb{Z}^{V_n}$, using the stationary identification of the vertex sets.
\keepwithstatement
\begin{prop}
\label{prop:two-cut-defect}
Let $\G$ be the groupoid of germs of the action generated by~\eqref{eq:two-cut-odometer}, and let $\T$ be the tail groupoid of~\eqref{eq:two-cut-incidence}. Then $\mathfrak{G}$ is a minimal principal AF-by-discrete groupoid. There are exactly two exceptional orbits. Each is divided into two infinite tiles and has trivial group of germs. If $r_i$ denotes the relative class of $\gamma_i:(i,-1)\to(1-i,0)$, then
\begin{equation}
\label{eq:two-cut-data}
H_1(\G,\T)=\mathbb{Z}r_0\oplus\mathbb{Z}r_1,
\qquad
D=\begin{pmatrix}1&-1\\-1&1\end{pmatrix},
\qquad MD=D.
\end{equation}
The class $[e_0-e_1]\in H_0(\T)$ is nonzero and
\begin{equation}
\label{eq:two-cut-homology}
\delta(a r_0+b r_1)=(a-b)[e_0-e_1],\qquad
H_1(\G)\cong\mathbb{Z}(r_0+r_1),\qquad
\iota I(\tau)=r_0+r_1.
\end{equation}
\end{prop}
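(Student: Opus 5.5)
We first settle the dynamics and then read off the homology from the earlier general results. The map $\tau$ in~\eqref{eq:two-cut-odometer} is a homeomorphism of $\{0,1\}\times\mathbb{Z}_3$. Its square is $(\epsilon,z)\mapsto(\epsilon,z+2)$, and addition of $2$ is minimal on $\mathbb{Z}_3$ because $2$ is a $3$-adic unit. Hence every $\tau^2$-orbit is dense in each copy $\{\epsilon\}\times\mathbb{Z}_3$, and $\tau$ exchanges the two copies, so $\tau$ is minimal. Since $\tau^n$ has no fixed points for $n\neq0$, the action is free and $\G$ is principal, with all groups of germs trivial.

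Next we identify the non-AF germs of $\tau$. Off the two maximal paths, $\tau$ is locally the order successor, and that local map is a prefix replacement: the first nonmaximal edge is incremented, the earlier edges are reset to minimal ones, and the terminal vertex is unchanged. So these germs lie in $\T$, and every germ of $\tau^{\pm1}$ except those at the maximal points $(i,-1)$, respectively the minimal points $(1-i,0)$, is AF. The arrow $\gamma_i:(i,-1)\to(1-i,0)$ joins the tail class of a maximal path to that of a minimal path. These tail classes are distinct: their vertex sequences are constant $i$ (all digits $2$, so the parity never changes) and constant $1-i$, so no tail arrow joins them. Thus $\gamma_i\notin\T$. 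Lemma~\ref{lemma:finite-exceptional-support} and the fact that $\T$ is open show that $\G\setminus\T=\{\tau^n$-germs that cross a maximal point$\}$ is discrete, so $\G$ is AF-by-discrete. The four tail classes of $(0,-1),(1,-1),(0,0),(1,0)$ are pairwise distinct. The $\G$-orbit of $(0,-1)$ contains $(1,0)$ and the orbit of $(1,-1)$ contains $(0,0)$. These two orbits are distinct: $(0,0)=\tau^n(1,-1)$ would force $n=1$ in the $3$-adic coordinate, and $\tau(1,-1)=(0,0)$, which is consistent. So we must check carefully that $(0,-1)$ and $(1,-1)$ lie in different $\G$-orbits. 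This holds because $\tau^n(0,-1)=(n\bmod 2\text{ flip},-1+n)$, which equals $(1,-1)$ only for $n=0$, a contradiction. Consequently each exceptional orbit consists of exactly two infinite tiles, and Theorem~\ref{thm:relative-normal-form} gives $H_1(\G,\T)=L_{\mathcal{O}_0}\oplus L_{\mathcal{O}_1}=\mathbb{Z}r_0\oplus\mathbb{Z}r_1$.

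For the defect, we take the isolating bisection of $\gamma_i$ given by $\tau$ restricted to the level-$n$ cylinder of the maximal path ending at $i$. Its range is the cylinder of the minimal path ending at $1-i$, so Proposition~\ref{prop:finite-level-defect-vector} gives the column $e_i-e_{1-i}$, which is $D$. The identity $MD=D$ is a direct computation, since $M(1,-1)^{\mathsf t}=(1,-1)^{\mathsf t}$. Because $M$ is invertible over $\mathbb{Q}$, the vector $e_0-e_1$ is never killed, so $[e_0-e_1]\neq0$. This gives the formula for $\delta$, and its kernel is $\mathbb{Z}(r_0+r_1)$, so by~\eqref{eq:H1-kernel} we get $H_1(\G)\cong\mathbb{Z}(r_0+r_1)$. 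Finally, $\Sigma(\tau)=\{(0,-1),(1,-1)\}$, and Theorem~\ref{thm:index-formula} yields $\iota I(\tau)=r_0+r_1$. We expect the main obstacle to be the orbit and tile bookkeeping, namely verifying that the maximal and minimal points fall into two distinct $\G$-orbits with two distinct tiles each. The computations themselves are routine.
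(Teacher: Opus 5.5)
Your argument follows the paper's proof closely. You use the same restriction of $\tau$ to the cylinder of the maximal length-$n$ path as isolating bisection, the same defect column $e_i-e_{1-i}$, injectivity of $M$ on $e_0-e_1$, the relative sequence~\eqref{eq:relative}, and Theorem~\ref{thm:index-formula} applied to $\tau$. That homological half is fine. Your minimality argument, via $\tau^2$ being translation by the $3$-adic unit $2$, is an equally good substitute for the paper's Chinese remainder argument.

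The weak point is the verification that $\T\subseteq\G$ is an AF-by-discrete inclusion. Theorem~\ref{thm:relative-normal-form} and~\eqref{eq:relative} both depend on it.
\begin{itemize}
\item You never show $\T\subseteq\G$, and ``the fact that $\T$ is open'' must be proved rather than assumed. One way: a prefix replacement $[q,p]$ agrees on $[p]$ with $\tau^k$, where $k$ is the difference of the positions of $q$ and $p$ in the order on the finite tower over $\mathbf{r}(p)$. Hence every basic tail bisection is an open bisection $U(\tau^k,[p])$ of $\G$.
\item Citing Lemma~\ref{lemma:finite-exceptional-support} to obtain discreteness is circular. That lemma derives finiteness of $U\setminus\T$ \emph{from} discreteness of $\G\setminus\T$. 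You need a direct count instead. The germ $(\tau^n,x)$ is the product of the $\tau^{\pm1}$-germs along the trajectory of $x$. So it lies in $\T$ unless that trajectory passes through a point $(i,-1)$ (for $n>0$) or $(1-i,0)$ (for $n<0$), which happens for at most $2|n|$ points $x$. The bisections $U_{\tau^n}$ are open, cover $\G$, and are homeomorphic to $\Omega(\mathsf{B})$ via the source map. Hence each exceptional germ is isolated.
\end{itemize}

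The orbit paragraph also needs repair. The sentence ``These two orbits are distinct: $(0,0)=\tau^n(1,-1)$ would force $n=1$\ldots'' proves nothing and should be dropped; your next argument ($-1+n=-1$ forces $n=0$) is the correct one. More importantly, ``consequently each exceptional orbit consists of exactly two infinite tiles'' needs an upper bound, not just four distinct tail classes. Use the same germ decomposition: every arrow of $\G$ is a composite of AF germs and the arrows $\gamma_0^{\pm1},\gamma_1^{\pm1}$. So the $\G$-orbit of $(i,-1)$ is exactly $\T(i,-1)\cup\T(1-i,0)$, and there are no further exceptional orbits. The paper reaches the same conclusion by noting that the negative and nonnegative positions in each orbit eventually carry the maximal and minimal tails, respectively. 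With these repairs your proof is complete.
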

\begin{proof}
The translation by $(1,1)$ is minimal on $\mathbb{Z}/2\mathbb{Z}\times\mathbb{Z}_3$: the Chinese remainder theorem allows an integer iterate to meet every prescribed parity and residue class modulo $3^n$. It is free, since no nonzero integer is zero in $\mathbb{Z}_3$. Tail-equivalent paths differ by a power of the successor map on a common finite tower, and thus $\T$ is an open subgroupoid of $\G$. Away from the two maximal paths, $\tau$ is a prefix replacement on a sufficiently small cylinder. Its only non-AF germs are $\gamma_0,\gamma_1$, and the corresponding assertion for powers proves that $\G\setminus\T$ is discrete.

The two maximal paths lie in different $\tau$-orbits: an integer iterate preserving the coordinate $-1\in\mathbb{Z}_3$ is the zeroth iterate. In each of these orbits, the tail relation separates the negative and nonnegative integer positions at the cut. Points on each side eventually have the same maximal or minimal tail respectively, whereas those two tails are distinct. Hence there are two infinite tiles in each exceptional orbit. There are no other exceptional orbits, and principality makes the groups of germs trivial.

Restrict $\tau$ to the cylinder of the maximal length-$n$ path ending at $i$. Its image is the cylinder of the minimal length-$n$ path ending at $1-i$. This restriction isolates $\gamma_i$, and thus its defect vector is $e_i-e_{1-i}$ at every level. This proves~\eqref{eq:two-cut-data}. The matrix $M$ is injective and fixes $e_0-e_1$, so that vector has a nonzero direct-limit class. The relative exact sequence gives~\eqref{eq:two-cut-homology}. Finally, $\tau$ crosses both cuts once in the positive direction, giving its stated index.
\end{proof}

Here the stopping certificate is immediate: $P=M$ is invertible over $\mathbb{Q}$, and thus $\nu=0$. The integer kernel of $D$ is $\mathbb{Z}(1,1)^{\mathsf{t}}$. Neither boundary generator has zero defect, and neither can occur as the sole exceptional arrow of an element of the full group, by Lemma~\ref{lemma:AF-clopen-completion}. Their sum is realized by the explicitly given $\tau$. A nonzero relative homology class supported on one exceptional orbit may fail to be realizable by itself, although it becomes realizable after adding a class supported on another exceptional orbit. The required cancellation takes place in the dimension group, not merely between the endpoint tiles in the boundary quotient.

The normalized invariant state takes both cylinder generators at vertex level $n$ to $1/(2\cdot3^{n-1})$. Thus $[e_0-e_1]$ is a nonzero infinitesimal, as required by Proposition~\ref{prop:defect-infinitesimal}. Taking the quotient by its cyclic subgroup gives $H_0(\G)\cong\mathbb{Z}[1/3]$, with the unit corresponding to $2$ under the identification given by the sum of the two coordinates at level $1$. Equivalently, normalizing the unit to $1$ identifies the group with $\frac12\mathbb{Z}[1/3]$. This computation also follows by taking direct limits of $\mathbb{Z}^2/\mathbb{Z}(e_0-e_1)$, on which $M$ induces multiplication by $3$.

The construction works with any odd integer $q\geq3$: take an alternating incoming source list of length $q$, giving diagonal entries $(q+1)/2$ and off-diagonal entries $(q-1)/2$, and use $(\epsilon,z)\mapsto(1-\epsilon,z+1)$ on $\mathbb{Z}/2\mathbb{Z}\times\mathbb{Z}_q$, where $\mathbb{Z}_q=\varprojlim_n\mathbb{Z}/q^n\mathbb{Z}$ is the group of $q$-adic integers. These models realize nonzero boundary defects and cancellation between distinct exceptional orbits. Compare the source--range incidence calculation in \cite[Theorem~7.5]{BNS21}, whose connecting map belongs to a different exact sequence.

\subsection{A finite-state model with a nonabelian group of germs}
\label{subsec:finite-state-certificate}
Let $\mathsf{X}=\{0,1,2,3\}$ and $\Omega(\mathsf{B})=\mathsf{X}^{\omega}$, with the first letter read as the least significant base-four digit. Let $a$ be the base-four odometer. Let $H=\operatorname{Sym}(\{1,2,3\})$, acting on $\mathsf{X}$ while fixing $0$. For $h\in H$, let $\sigma_h$ be the finitary rooted permutation $\sigma_h(cw)=h(c)w$ of the first letter, and define $b_h$ recursively by
\begin{equation}
\label{eq:directed-S3-recursion}
b_h(0w)=0b_h(w),\qquad b_h(1w)=1\sigma_h(w),\qquad
b_h(2w)=2w,\qquad b_h(3w)=3w.
\end{equation}
Write $s=(12)$, $t=(23)$, and $G=\langle a,b_s,b_t\rangle$. The states $a,b_s,b_t,\sigma_s,\sigma_t,1$ constitute a finite automaton. Inverses are also finite-state. Let $\G$ be the groupoid of germs of this action, with the tail groupoid of the Bratteli diagram having one vertex and four edges at each level as its AF core. For its boundary quotient, take $\mathcal{S}$ to be the full bisections of $a,a^{-1},b_s,b_t$.

The boundary calculation separates transport by the odometer from the abelianization of the nonabelian group of germs.
\keepwithstatement
\begin{prop}
\label{prop:directed-S3-index}
The AF inclusion above is minimal and AF-by-discrete. It has one exceptional orbit with two infinite tiles and group of germs isomorphic to $H\cong\operatorname{Sym}(3)$. A complete presentation of this group of germs is
\begin{equation}
\label{eq:directed-S3-germ-presentation}
\langle s,t\mid s^2=t^2=(st)^3=1\rangle.
\end{equation}
Using $a$ for the positive transport coordinate, we have
\begin{equation}
\label{eq:directed-S3-H1}
H_1(\G)\cong\mathbb{Z}\oplus C_2,\qquad
I(a)=(1,0),\qquad I(b_s)=I(b_t)=(0,1).
\end{equation}
Every class $(m,\bar\epsilon)$ is represented by $a^m b_s^\epsilon$, where $\epsilon\in\{0,1\}$. For a word in $a^{\pm1},b_s,b_t$, its index is its signed exponent sum in $a$ together with the parity of its total number of $b_s,b_t$ letters.
\end{prop}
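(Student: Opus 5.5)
The plan has four steps: identify the AF core and exceptional arrows, compute the group of germs at the boundary point, apply the relative normal form and defect map, and finally realize every class by explicit elements.

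\textbf{Core and exceptional arrows.} The core is the one-vertex, four-edge tail groupoid, so it is the usual $4$-adic tail relation. It is minimal, and it is open in $\G$ because every equal-length prefix replacement is locally a power of $a$. Next I will locate the non-AF germs of the generators.
\begin{itemize}
\item For $a$, the only non-AF germ is the carry $3^\omega\mapsto 0^\omega$.
\item For $b_h$, the recursion~\eqref{eq:directed-S3-recursion} shows that at any point other than $0^\omega$ the map $b_h$ agrees near the point with a finitary map, either $\sigma_h$ after a prefix $0^j1$ or the identity. So the only candidate exceptional germ is $(b_h,0^\omega)$.
\end{itemize}
Discreteness of $\G\setminus\T$ follows from Lemma~\ref{lemma:finite-exceptional-support}-type reasoning on these finitely many points, together with the cylinder isolation. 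The orbit of $0^\omega$ under $a$ consists of the tails of $0^\omega$ and of $3^\omega$. These give two infinite tiles joined by the carry edge, which I take as the spanning tree. The $b$-loops sit at $0^\omega$.

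\textbf{Group of germs.} By Proposition~\ref{prop:germ-fg}, with the carry as the tree edge, $H_{\mathcal{O}}$ is generated by the germs of $b_s$ and $b_t$ at $0^\omega$. The map $h\mapsto(b_h,0^\omega)$ is a homomorphism. To see this, note that on each annulus $0^j1\mathsf{X}^\omega$ the map $b_h$ acts as $\sigma_h$ on the next letter, and $\sigma_h\sigma_k=\sigma_{hk}$. It is injective because, for $h\ne1$, the map $\sigma_h$ moves points in every annulus, so the germ is nontrivial. Hence $H_{\mathcal{O}}\cong\operatorname{Sym}(3)$, with the standard Coxeter presentation~\eqref{eq:directed-S3-germ-presentation}. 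The abelianization is $C_2$, with $s$ and $t$ both mapping to the generator.

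\textbf{Homology.} Theorem~\ref{thm:relative-normal-form} gives $H_1(\G,\T)\cong C_2\oplus\mathbb{Z}$. Here $\mathbb{Z}=L_{\mathcal{O}}$ is generated by the carry class $r$ and $C_2$ is the germ coordinate. By Proposition~\ref{prop:torsion-isotropy-zero-defect}, the torsion part has zero defect. For the carry, $H_0(\T)=\mathbb{Z}[1/4]$ has no infinitesimals, so $\delta=0$ by Proposition~\ref{prop:defect-infinitesimal}. Therefore $H_1(\G)\cong\mathbb{Z}\oplus C_2$.

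\textbf{Index values and realization.} Theorem~\ref{thm:index-formula} gives the index values.
\begin{itemize}
\item $a$ has the single exceptional germ, the carry, with identity normalized return (it is the tree edge), so $I(a)=(1,0)$.
\item $b_s$ and $b_t$ each have one exceptional germ, at $0^\omega$, with abelianized class $1\in C_2$ and no transport, so $I(b_s)=I(b_t)=(0,1)$.
\end{itemize}
Since $I$ is a homomorphism into an abelian group, the word formula and the representatives $a^mb_s^\epsilon$ follow.

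The main obstacle is the completeness of the presentation, namely showing the germ map from $\operatorname{Sym}(3)$ is injective and that no extra germs arise from words mixing $a$ and $b$ at $0^\omega$. Proposition~\ref{prop:germ-fg} handles the latter, since the loops at $0^\omega$ are the only non-tree edges. For injectivity, I will evaluate germs on the annuli $0^j1\mathsf{X}^\omega$.
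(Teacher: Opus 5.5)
Your proof is correct and has the same skeleton as the paper's. Proposition~\ref{prop:germ-fg} gives generation by the two loops at $0^\omega$. The identity $b_hb_k=b_{hk}$, together with the nontrivial action of $\sigma_h$ on the annuli $[0^j1]$, gives injectivity. The relative normal form and the index formula give the rest.

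The one step where you use a different key fact is the vanishing of the defect. You argue globally: the core is minimal with dimension group $\mathbb{Z}[1/4]$, whose unique state is injective. So Proposition~\ref{prop:defect-infinitesimal} kills $\delta$ on all of $H_1(\G,\T)$ at once, which makes your separate appeal to Proposition~\ref{prop:torsion-isotropy-zero-defect} redundant.

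The paper argues locally. Each of $a,b_s,b_t$ is a full bisection whose only exceptional arrow is the relevant relative generator. So $U_g$ is itself an isolating bisection with source and range $\Omega(\mathsf{B})$, and \eqref{eq:relative-defect-prelim} gives zero defect. This local argument needs neither minimality nor any information about the state space of $H_0(\T)$. It also exhibits each relative generator as the sole exceptional arrow of a global element, which is exactly the realizability criterion stated after Lemma~\ref{lemma:AF-clopen-completion}. Your argument is shorter, but it depends on the special fact that this particular dimension group has no infinitesimals.

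Citing the standard Coxeter presentation of $\operatorname{Sym}(3)$, instead of the paper's count of alternating normal forms, is an immaterial difference.
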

\begin{proof}
Recursion~\eqref{eq:directed-S3-recursion} gives $b_hb_k=b_{hk}$ on every finite level and thus on $\Omega(\mathsf{B})$. For $h\neq1$, the action on each $[0^n1]$ contains the nontrivial permutation $\sigma_h$, and thus $b_h$ has a nontrivial germ at $\xi=0^\omega$. Hence, $h\mapsto(b_h,\xi)$ is injective. At every point different from $\xi$, the recursion eventually exits the $0$-spine and becomes finitary, so that germ is AF. The only non-AF germ of $a$ is its carry from $3^\omega$ to $0^\omega$. Every equal-length prefix replacement is locally a power of $a$, and thus the full tail groupoid is contained in $\G$.

The boundary quotient $Q_{\mathcal{S}}(\mathcal{O})$ of Definition~\ref{def:boundary-quotient} thus has two vertices, the tail classes of $0^\omega$ and $3^\omega$, one transport edge, and the loops $b_s,b_t$ at $0^\omega$, up to inversion. Proposition~\ref{prop:germ-fg} shows that these loops generate the entire group of germs. Their embedded copy of $H$ is consequently the whole group. Two involutions whose product has order three have at most six alternating normal forms, and thus the displayed relations give a group of order at most six. The faithful permutations $s,t$ realize all six, proving completeness of~\eqref{eq:directed-S3-germ-presentation}.

Each of the three chosen global generators $a,b_s,b_t$ has exactly one exceptional arrow, while the corresponding full bisection has source and range equal to all of $\Omega(\mathsf{B})$. Hence the defect of that unique exceptional arrow is zero by Definition~\ref{def:defect-map}. The relative normal form gives $H_1(\G,\T)\cong\mathbb{Z}\oplus H^{\ab}$, where the free factor is transport between the two infinite tiles, and $\delta=0$ on both factors. Since $H^{\ab}\cong C_2$ via permutation parity, this proves~\eqref{eq:directed-S3-H1} and the formula for words. Minimality follows from the odometer. Finally, the displayed representatives have the claimed indices by additivity.
\end{proof}

The matrices for the relative presentation and the defect, with coordinates ordered as transport, $s$, $t$, are
\begin{equation}
\label{eq:directed-S3-matrices}
P=(4),\qquad D=(0\ \ 0\ \ 0),\qquad
B=\begin{pmatrix}0&0&0\\2&0&3\\0&2&3\end{pmatrix}.
\end{equation}
The nonzero Smith factors of $B$ are $1,2$. This presentation keeps the free and torsion coordinates together while computing the defect. The certificate has $\nu=0$ and gives $\mathbb{Z}\oplus C_2$ exactly. In this example, the groups of germs are nonabelian, although their homological contribution is the one parity coordinate. For every $h\in H$, the map $b_h$ is the identity on the cylinders $[0^n2]$, which accumulate at $0^\omega$. Hence the singular point is purely non-Hausdorff. The finite-state computation applies equally to this non-Hausdorff groupoid of germs.

\begin{cor}
\label{cor:finite-permutation-returns}
Let $H$ be a finite group acting faithfully on $q\geq2$ letters, numbered from $0$ to $q-1$. Use the $q$-ary odometer. For $h\in H$, define $b_h$ by $b_h(0w)=0b_h(w)$, $b_h(1w)=1\sigma_h(w)$, and $b_h(jw)=jw$ for $2\leq j\leq q-1$, where $\sigma_h$ is the rooted permutation induced by the given action of $H$. The resulting minimal AF-by-discrete action, which has finitely many states, has one exceptional orbit with two infinite tiles and group of germs $H$. Its certified data satisfy
\begin{equation}
\label{eq:finite-permutation-homology}
H_1(\G)\cong\mathbb{Z}\oplus H^{\ab},\qquad
I(a)=(1,0),\qquad I(b_h)=(0,[h]_{\ab}).
\end{equation}
A finite multiplication table for $H$ supplies a complete matrix of the abelian relations. The incidence matrix is $(q)$ and all defect columns vanish. Every class $(m,[h]_{\ab})$ has the representative in the full group $a^m b_h$.
\end{cor}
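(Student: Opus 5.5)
The plan is to imitate the proof of Proposition~\ref{prop:directed-S3-index}, replacing $\operatorname{Sym}(3)$ by an arbitrary finite group $H$ acting faithfully on the $q$ letters. We number them so that $0$ is fixed when possible. The literal recursion only requires $\sigma_h$ to be a rooted permutation of the first letter, so fixing $0$ is not needed for well-definedness.

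\textbf{Step 1: the maps form a copy of $H$.} First, the recursion shows $b_hb_k=b_{hk}$ level by level, so $h\mapsto b_h$ is a homomorphism into homeomorphisms of $\mathsf{X}^{\omega}$. The states $a$, $b_h$, $\sigma_h$ and $1$ form a finite automaton.

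\textbf{Step 2: germs and AF-by-discreteness.} On the cylinders $[0^n1]$ the map $b_h$ acts by $\sigma_h$, which is nontrivial when $h\neq1$ by faithfulness. Hence $h\mapsto(b_h,0^\omega)$ is injective. At any point other than $0^\omega$, the recursion leaves the $0$-spine after finitely many letters and then acts finitarily, so that germ lies in $\T$.

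\textbf{Step 3: the tail groupoid and minimality.} The odometer has the single non-AF germ given by its carry $(q-1)^\omega\to0^\omega$. Every equal-length prefix replacement is locally a power of $a$, so the full $q$-ary tail groupoid sits inside $\G$. It is open there, and its complement is discrete. Minimality follows from minimality of the odometer, and the core is simple with incidence matrix $(q)$.

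\textbf{Step 4: boundary quotient and group of germs.} Take $\mathcal{S}$ to be the full bisections of $a^{\pm1}$ and of the $b_h$. The boundary quotient has two vertices, the tails of $0^\omega$ and $(q-1)^\omega$, one transport edge, and loops $b_h$ at $0^\omega$. By Proposition~\ref{prop:germ-fg}, taking the transport edge as the spanning tree, the loops generate the group of germs. That group is therefore exactly the embedded copy of $H$.

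\textbf{Step 5: defects and homology.} Theorem~\ref{thm:relative-normal-form} gives $H_1(\G,\T)\cong\mathbb{Z}\oplus H^{\ab}$. Each generator $a$, $b_h$ is a full bisection with a single exceptional arrow, so its defect is zero by the argument in Proposition~\ref{prop:directed-S3-index}. Proposition~\ref{prop:torsion-isotropy-zero-defect} also kills the torsion part. Hence $\delta=0$ and $H_1(\G)\cong\mathbb{Z}\oplus H^{\ab}$. Theorem~\ref{thm:index-formula} then yields $I(a)=(1,0)$ and $I(b_h)=(0,[h]_{\ab})$, and by additivity $a^mb_h$ represents $(m,[h]_{\ab})$.

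\textbf{Step 6: the certificate.} The multiplication table of $H$ gives the complete abelian relation matrix, and together with $D=0$ and $P=(q)$ this yields the certificate of Theorem~\ref{thm:certified-periodic} with $\nu=0$.

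\textbf{Expected obstacle.} The main point to check is that every germ of $b_h$ away from $0^\omega$ is AF, including at points of the orbit of $0^\omega$ and at $(q-1)^\omega$. It must also hold when $h$ moves the letters $0$ or $1$: then $\sigma_h$ moves the next letter, but it is still a finitary permutation of one letter. If that fails, we would first restrict to $h$ fixing $0$ and $1$ by relabeling, which faithfulness permits whenever $q$ exceeds the support, and otherwise note that the recursion never uses $h$ on the first letter itself.
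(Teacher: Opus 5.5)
Your proposal is correct and follows essentially the paper's own proof: it transfers the argument of Proposition~\ref{prop:directed-S3-index} using the recursion $b_hb_k=b_{hk}$, the embedded copy of $H$ in the germs at $0^\omega$, the two-vertex boundary quotient with no further return generators, zero defects of the global generators, and the multiplication table as the complete abelian relation matrix. The fallback in your last paragraph (relabeling so that $H$ fixes $0$ and $1$) is unnecessary and in general impossible, for instance when $H$ acts transitively. As the paper notes, the identity $b_h(0w)=0b_h(w)$ already forces $b_h$ to fix $0^\omega$, and $h$ only ever acts on the letter following a prefix $0^n1$, so your primary argument suffices.
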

\begin{proof}
The proof of Proposition~\ref{prop:directed-S3-index} uses only the faithful finite permutation action on the continuation after the prefix $1$. The recursion gives an embedded copy of $H$ in the group of germs, and the two-vertex boundary quotient gives no additional return generators. The full bisections corresponding to the generators give zero defects. If we use one return generator for each $h\in H$, the multiplication table gives the complete abelian relation matrix through the relations $[b_h]+[b_j]-[b_{hj}]=0$ and $[b_1]=0$. We may eliminate redundant generators afterward. The claimed representatives follow from additivity of the index. A permutation $h$ may move the letter $0$. The identity $b_h(0w)=0b_h(w)$ nevertheless ensures that $b_h$ fixes $0^\omega$.
\end{proof}

\subsection{Reconstructing exceptional arrows from words}
The preceding example also permits an exact computation of the finite exceptional set, not only of its total homology class. The exceptional orbit identifies with $\mathbb{Z}\subset\mathbb{Z}_4$: nonnegative integers have eventually zero expansions and negative integers have eventually $3$ expansions. At an integer $n\neq0$, let $k=v_4(n)=\max\{j\geq0:4^j\text{ divides }n\}$, let $d$ be the remainder of $n/4^k$ modulo $4$, and let $c$ be the remainder of $\lfloor n/4^{k+1}\rfloor$ modulo $4$. Take $d,c\in\{0,1,2,3\}$. Then
\begin{equation}
\label{eq:integer-directed-action}
b_h(n)=
\begin{cases}
n+4^{k+1}(h(c)-c),&d=1,\\
n,&d\neq1,
\end{cases}
\qquad b_h(0)=0.
\end{equation}
All operations use integers, including for negative $n$. Also, $a(n)=n+1$.

Write words in the alphabet $\{a,a^{-1},b_s,b_t\}$. Let $w=s_\ell\cdots s_1$ act by applying $s_1$ first, put $w_0=\Id$, and put $w_j=s_j\cdots s_1$ for $1\leq j\leq\ell$. For a generator $r$, let $E(r)$ denote the set of integers at which its germ is not in $\T$. In this model,
$E(a)=\{-1\}$, $E(a^{-1})=\{0\}$, and $E(b_s)=E(b_t)=\{0\}$. Therefore every exceptional source of the word must lie in the finite candidate set
\begin{equation}
\label{eq:candidate-exceptional-set}
\operatorname{Cand}(w)=\bigcup_{j=1}^\ell w_{j-1}^{-1}(E(s_j)).
\end{equation}
Each preimage is computed exactly from~\eqref{eq:integer-directed-action} by applying the inverse of the corresponding prefix word. While following the trajectory from a candidate integer $n$, attach the label $h$ exactly when a $b_h$ step is taken at $0$, and attach the identity label to every other step. Multiplying these labels in the order of germ composition gives $h_{w,n}\in H$. This gives a finite list of triples $(n,w(n),h_{w,n})$. Proposition~\ref{prop:directed-boundary-algorithm} determines which of them represent germs outside $\T$. To obtain the boundary current of Definition~\ref{def:boundary-current}, retain those triples, abelianize their return labels, and record their transport coordinates.
\keepwithstatement
\begin{prop}
\label{prop:directed-boundary-algorithm}
For the directed model, $\Sigma(w)$, defined in Definition~\ref{def:exceptional-support}, is exactly the subset of $\operatorname{Cand}(w)$ in~\eqref{eq:candidate-exceptional-set} consisting of integers $n$ such that either $n$ and $w(n)$ have different signs in the partition $\mathbb{Z}_{<0}\sqcup\mathbb{Z}_{\geq0}$, or $h_{w,n}\neq1$. Its transport entry is $1_{\mathbb{Z}_{\geq0}}(w(n))-1_{\mathbb{Z}_{\geq0}}(n)$ and its normalized return, in the sense of Subsection~\ref{subsec:boundary-returns}, is $h_{w,n}$. The procedure terminates and uses exact calculations with integers and finite permutations.
\end{prop}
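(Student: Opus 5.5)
The plan is to decompose the word's trajectory step by step and use the relative presentation. First, prove that $\Sigma(w)\subseteq\operatorname{Cand}(w)$. If $(w,n)\notin\T$, then by composability $(w,n)=(s_\ell,w_{\ell-1}(n))\cdots(s_1,n)$. Since $\T$ is a subgroupoid, some factor $(s_j,w_{j-1}(n))$ lies outside $\T$. Hence $w_{j-1}(n)\in E(s_j)$, and so $n\in w_{j-1}^{-1}(E(s_j))$. Points outside the exceptional orbit $\mathbb{Z}$ never enter this set, because every $E(r)$ is contained in $\mathbb{Z}$ and the action preserves orbits. Thus it suffices to decide, for each candidate $n$, whether $(w,n)\in\T$.

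Second, compute the normal-form coordinates of $(w,n)$ using Theorem~\ref{thm:relative-normal-form} together with Proposition~\ref{prop:tree-adapted-connectors}. Take the two tiles $\mathbb{Z}_{<0}$ and $\mathbb{Z}_{\geq0}$, with basepoint $0$. Use the spanning-tree edge given by the carry $(a,-1)$, so that its connector makes the normalized return of $a$ at $-1$ trivial. By additivity in $H_1(\G,\T)$, and more precisely at the level of groupoid arrows before abelianizing, the normalized return of the composite is the ordered product of the normalized returns of its steps, because the connector terms telescope as in the proof of Proposition~\ref{prop:germ-fg}. Steps lying in $\T$ contribute the identity. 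The carry steps of $a^{\pm1}$ contribute the identity by the tree choice. A step $b_h$ at $0$ contributes $(b_h,0)$, which is identified with $h$ via the injective map $h\mapsto(b_h,\xi)$ from the proof of Proposition~\ref{prop:directed-S3-index}. Hence the normalized return is $h_{w,n}$. The transport entry is $e_{\mathcal{T}_{w(n)}}-e_{\mathcal{T}_n}$, which in the single coordinate of $L_{\mathcal{O}}\cong\mathbb{Z}$ equals $1_{\mathbb{Z}_{\geq0}}(w(n))-1_{\mathbb{Z}_{\geq0}}(n)$.

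Third, and this is the key step, show that the arrow-level normalized data detect membership in $\T$, not merely the abelianized class. I claim that an arrow $\gamma:n\to m$ in the orbit lies in $\T$ if and only if $n,m$ lie in the same tile and $p_m^{-1}\gamma p_n=1$. For the forward direction, if $\gamma\in\T$ then $n$ and $m$ lie in the same tile, and principality gives $\gamma p_n=p_m$, as in the proof of Theorem~\ref{thm:relative-normal-form}. For the converse, if both conditions hold then $\gamma=p_mp_n^{-1}$. Since $p_m$ and $p_n$ share the same tile connector $c_i$, this equals $a_m a_n^{-1}\in\T$.

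Fourth, note that nonabelian information is essential here. The criterion must use $h_{w,n}\neq1$ in $H$, and not merely its abelianization. For this reason the labels are multiplied in order of composition.

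Finally, termination. The candidate set is finite, of size at most $\ell$. Each preimage and trajectory is computed with the exact integer formula~\eqref{eq:integer-directed-action} and its inverse, which is $b_{h^{-1}}$. Permutation products are computed in the finite group $H$.

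The main obstacle is the third step. There one must check that the connectors implicit in the labeling rule (attaching a label only at $b_h$ steps at $0$) agree with the tree-adapted connectors, so that carries and steps away from $0$ genuinely contribute identity normalized returns. In particular, one must verify that $b_h$ at points $n\neq0$ is AF, which follows from the recursion exiting the $0$-spine.
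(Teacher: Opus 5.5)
Your proposal is correct and follows essentially the same route as the paper's proof: $\Sigma(w)\subseteq\operatorname{Cand}(w)$ because $\T$ is a subgroupoid, you use the connectors $p_n=(a^n,0)$ (which is exactly what your tree-adapted choice produces), normalized returns telescope along the trajectory, and an arrow lies in $\T$ exactly when its endpoints share a tile and its normalized return is trivial. The point you flag as the main obstacle is already settled by facts recorded in the proof of Proposition~\ref{prop:directed-S3-index}: the maps $b_h$ are AF away from $0^\omega$, and the only non-AF germs of $a^{\pm1}$ are the carries. So nothing further is needed there.
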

\begin{proof}
A product of AF arrows is AF. Hence an exceptional arrow of a word has an exceptional generator step, and its source belongs to $\operatorname{Cand}(w)$ in~\eqref{eq:candidate-exceptional-set}. Choose base point $0$, connector $(a^{-1},0):0\to-1$, and translations in $\T$ within each of the two infinite tiles. The resulting arrow is $p_n=(a^n,0):0\to n$. Every odometer step has trivial normalized return, including the selected transport edge. Every $b_h$ step away from $0$ is AF, and the step at $0$ has normalized return $h$. These observations give the asserted product of labels by telescoping connectors along the word. An arrow belongs to the AF core exactly when its endpoints lie in the same infinite tile and its normalized return is the identity. This proves the test. There are at most $\ell$ candidates, and their trajectories use finitely many integer operations and permutations.
\end{proof}

The same procedure applies to a supplied element of the full group given by a finite clopen partition and words on its pieces. Refine the partition to cylinders and retain the candidates of each word only when they lie in its source cylinder. This gives the list of exceptional germs before abelianization. The sum of their classes in $H_1(\G,\T)$ gives the boundary current in the coordinates of Section~\ref{sec:index}. The list still retains nontrivial return germs whose abelianizations vanish. This distinction is essential: a nontrivial three-cycle germ lies in $[H,H]$ and hence has zero abelianized return coordinate, but it must not be erased from the exceptional support or from the constructive cancellation problem.

There is also a terminating reconstruction of the transposition factors of a zero-index element in this finite-state model. Integers in the same infinite tile have equal tails after a computably chosen level, giving explicit prefix matchings in $\T$. A label in $H$ is a word in $s,t$, and a label in $[H,H]\cong C_3$ is either the identity or a single commutator. To isolate the required germs, increase the cylinder depth and compute the sections of the finite-state automaton. After a common refinement, the sections under comparison are synchronous tree automorphisms. Equality of two such sections is decidable by exploring the finite product automaton: a state represents the identity exactly when every output permutation reachable from it is trivial. Once the marked germs have been canceled, the remaining homeomorphism has all germs in $\T$. For each point, there is therefore a cylinder on which the remainder is a tail prefix replacement. Compactness gives a finite cover by cylinders, and enumeration by increasing the depth must eventually reach a finite common refinement of this cover. Hence this remaining element of $\F(\T)$ is effectively recognized as a finite prefix permutation. These searches construct the certificates required by Theorem~\ref{thm:constructive-boundary-cancellation}.

\keepwithstatement
\begin{cor}
\label{cor:directed-effective-factorization}
For elements of the full group supplied as finite cylinder tables of words in $a^{\pm1},b_s,b_t$, the index, the list of exceptional germs, and a transposition factorization of every zero-index element can be computed by terminating procedures. In the zero-index case, we may take
\begin{equation}
\label{eq:directed-factor-bound}
\ell_{\Sym}(g)\leq3|\Sigma(g)|+8.
\end{equation}
Here $\ell_{\Sym}(g)$ is the least number of dynamical transpositions whose product is $g$. The factors are supplied as clopen tables in $\F(\G)$, not necessarily as words in $G$.
\end{cor}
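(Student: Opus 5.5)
The plan is to make each step of Theorem~\ref{thm:constructive-boundary-cancellation} effective in the directed model, and then to count factors. The input $g$ is a finite cylinder table of words in $a^{\pm1},b_s,b_t$.

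First, the exceptional germs. For each piece of the table, Proposition~\ref{prop:directed-boundary-algorithm} gives the candidate set~\eqref{eq:candidate-exceptional-set}. I keep only the candidates lying in that piece's source cylinder. For each survivor I compute the triple $(n,g(n),h_{g,n})$ exactly and apply the sign and label test. This produces $\Sigma(g)$ together with its transport entries and return labels in $H$. The index is then computed by Theorem~\ref{thm:index-formula}. Summing the transport entries and the parities $[h_{g,n}]_{\ab}\in C_2$ gives the coordinates of Proposition~\ref{prop:directed-S3-index}, since $\iota$ is injective and $\delta=0$ here.

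Second, the factorization when the index is zero. Note that $\G$ has no singleton orbits, because the odometer makes it minimal. So the hypotheses of Theorem~\ref{thm:constructive-boundary-cancellation} hold, and I run its construction step by step, checking that each choice is effective.

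\begin{enumerate}[label=(\roman*)]
\item \emph{Matching inside $\T$-orbits.} The two infinite tiles are $\mathbb{Z}_{<0}$ and $\mathbb{Z}_{\geq0}$. The matching $g(\Sigma(g))\to\Sigma(g)$ required by~\eqref{eq:finite-exceptional-balance} is therefore a finite computation on integers.
\item \emph{Interpolating elements $a',b'\in\F(\T)$.} Two integers of the same sign agree beyond a computable level. I can therefore write down explicit prefix permutations on deep cylinders, as in Lemma~\ref{lemma:finite-AF-interpolation}. These cylinders are chosen to avoid the other marked points.
\item \emph{Germ table and factorization in the wreath product.} There is one exceptional orbit, with $n=|P|\le|\Sigma(g)|+1$ marked points. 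Its germ table lies in $H\wr\operatorname{Sym}(n)$. The product $r$ of its labels lies in $[H,H]\cong C_3$, and so $r$ is either the identity or a single commutator, for instance $r=[s,t]^{\pm1}$. This gives $c\le1$. Proposition~\ref{prop:finite-table-cancellation} then yields an explicit list of tables $t_{uv}(h)$.
\item \emph{Lifting the tables.} Each $t_{uv}(h)$ is lifted via Lemma~\ref{lemma:marked-transposition}. The arrow $p_uhp_v^{-1}$ is realized explicitly as a word ($a^{u}b_ha^{-v}$ restricted to a cylinder). The isolating cylinders are found by increasing the depth and computing sections.
\end{enumerate}

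Third, the remainder. Let $f=t^{-1}k$. By Lemma~\ref{lemma:marked-AF-remainder}, $f$ lies in $\F(\T)$. To recognize it as a finite prefix permutation, I enumerate depths $d$. At each depth I compute all sections of $f$ at length-$d$ prefixes, using the finite product automaton. I test whether every section is the identity, by checking reachable outputs as described above. Compactness guarantees that this search terminates. Lemma~\ref{lemma:AF-two-transpositions} then splits $f$ into at most two transpositions. I expect this recognition step to be the main obstacle. The difficulty is checking that sections of tables mixing words and lifted transpositions remain finite-state, so that equality with the identity is decidable. I would handle this by noting that every factor is, on a common cylinder refinement, a restriction of an element of $G$, so all sections lie in the finite state set of $G$ composed with finitary permutations.

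The bound follows from the count in Theorem~\ref{thm:constructive-boundary-cancellation}, namely $3|\Sigma(g)|+6\sum c_{\mathcal{O}}+2$. There is one exceptional orbit and $c\le1$, which gives $3|\Sigma(g)|+8$. The conjugation by $p=b'a'$ keeps the factors as clopen tables in $\F(\G)$, though not necessarily as words in $G$.
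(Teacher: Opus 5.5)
Your proposal is correct and follows the paper's route. Like the paper, you make each step of Theorem~\ref{thm:constructive-boundary-cancellation} effective: Proposition~\ref{prop:directed-boundary-algorithm} gives the exceptional germs, integer tail matchings give the AF interpolation, $c\le1$ holds because every element of $[H,H]\cong C_3$ is a single commutator, and the $\F(\T)$ remainder is recognized by enumerating cylinder depths and deciding triviality of automaton sections, which yields $3|\Sigma(g)|+6+2$. One bookkeeping correction: since $g=(p^{-1}tp)(p^{-1}f)$ with $p=b'a'$, the element to split by Lemma~\ref{lemma:AF-two-transpositions} is $f_0=p^{-1}f$, not $f$; otherwise a stray factor $p^{-1}\in\F(\T)$ remains, and $f_0$ is recognized just as effectively because $p$ is an explicit prefix permutation.
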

\begin{proof}
The preceding constructions produce all the data required for the factorization. The action is minimal on an infinite Cantor space, and thus there are no singleton $\G$-orbits and Theorem~\ref{thm:constructive-boundary-cancellation} applies to every zero-index element. There is only one exceptional $\G$-orbit, and every element of the derived subgroup $[H,H]\cong C_3$ is either the identity or a single commutator. Thus $\sum c_{\mathcal{O}}\leq1$ in~\eqref{eq:constructive-length-bound}, giving $3|\Sigma(g)|+6+2$. The automaton and cylinder searches terminate for the reasons just given. No running-time bound is asserted, and the factors produced in the full group may lie outside the prescribed acting subgroup $G$.
\end{proof}

\subsection{An explicit eight-transposition identity}
\label{subsec:eight-transposition}
We can write the cancellation of a nontrivial commutator return without any search. For $h\in H$, define a dynamical transposition by
\begin{equation}
\label{eq:explicit-directed-transposition}
T_h(0w)=1b_{h^{-1}}(w),\qquad
T_h(1w)=0b_h(w),
\end{equation}
fixing $[2]\cup[3]$. Its two pieces are restrictions of the given partial homeomorphisms and prefix replacements in $\T$. It is an involution exchanging the disjoint clopen sets $[0]$ and $[1]$. Put $D_h=T_hT_1$. This acts as $b_h$ on the tail below $[0]$ and as $b_{h^{-1}}$ on the tail below $[1]$.

Let $c=[s,t]=sts^{-1}t^{-1}$ and $k=s^{-1}t^{-1}=st$, where the last equality uses that $s$ and $t$ are involutions. The exact action identities $b_hb_j=b_{hj}$ give
\begin{equation}
\label{eq:explicit-six-swap}
D_sD_tD_k=L_0(b_c),
\end{equation}
where $L_0(b_c)(0w)=0b_c(w)$ and $L_0(b_c)$ is the identity off $[0]$. At the other cylinder, the labels are $s^{-1}t^{-1}ts=1$. At $[0]$, their product is $c$. Let $P_h$ be the permutation of finite prefixes in $\F(\T)$ $P_h(1dw)=1h(d)w$, fixing the other cylinders of the first level. Recursion~\eqref{eq:directed-S3-recursion} yields $b_c=L_0(b_c)P_c$. Since $s,t$ are involutions and $(st)^3=1$, we have $c=ts$ and $P_c=P_tP_s$. Therefore,
\begin{equation}
\label{eq:explicit-eight-transposition}
[b_s,b_t]=T_sT_1T_tT_1T_{st}T_1P_tP_s.
\end{equation}
The six $T$ factors are the explicitly displayed dynamical transpositions. Since $t=(23)$ and $s=(12)$, the two prefix permutations $P_t$ and $P_s$ are themselves dynamical transpositions, swapping $[12]$ with $[13]$ and $[11]$ with $[12]$, respectively. It is an equality of homeomorphisms, not merely an equality of marked germs: the homomorphism $h\mapsto b_h$ makes all local identities exact. The left-hand side has one exceptional arrow, carrying the nontrivial three-cycle germ $c$ at $0^\omega$, but its index is zero since $c\in[H,H]$. In contrast to the singleton example of Section~\ref{sec:constructive-kernel}, this commutator germ can be cancelled using the auxiliary point $10^\omega$ in the same infinite tile.

\subsection{Stationary incidence does not force finite groups of germs}
\label{subsec:infinite-cyclic-certified}
Return to the infinite cyclic choice in Example~\ref{exmp:depth-counterexamples}, with $N_j=3^j$ and $f_j$ the binary odometer truncated at depth $j$. Let $a$ be the binary odometer and $b$ the map defined in that example. Its group of germs at $0^\omega$ has presentation $H=\langle b\mid\ \rangle\cong\mathbb{Z}$: for every $m\neq0$, the order $2^j$ eventually fails to divide $m$, and thus $b^m$ remains nontrivial arbitrarily near $0^\omega$. Together with Proposition~\ref{prop:germ-fg}, this proves that the displayed presentation is complete. The two infinite tiles again have the tail-equivalence classes of $0^\omega$ and $1^\omega$ as their vertex sets, and both global generators have one exceptional arrow and zero defect. The complete input is
\begin{equation}
\label{eq:infinite-cyclic-certified-data}
P=(2),\qquad B\in M_{2\times0}(\mathbb{Z}),\qquad D=(0\ \ 0),
\qquad H_1(\G)\cong\mathbb{Z}^2.
\end{equation}
The two coordinates are the exponent sums in $a,b$, and $a^m b^n$ realizes $(m,n)$. The stabilization certificate is again $\nu=0$. The finite geometric description and the proof using divisibility of the orders supply the germ presentation. The sections $b|_{0^{N_j}1}=f_j$ have unbounded finite orders. The displayed automorphism is infinite-state.

The finite and infinite examples both have stationary AF cores and zero defect, while their groups of germs differ. This isolates the two inputs of Definition~\ref{def:periodic-boundary-certificate}: periodic incidence determines a level at which vanishing of the defect in the dimension group can be decided, whereas the size and relations of the groups of germs determine the relative presentation. Neither input determines the other, and finite-state behavior of the boundary maps is a third, separate issue.

The germ sums and transport maps can be compared with \cite[Section~3]{BHM24}, the source--range incidence with \cite{BNS21}, and homological discretisation with \cite{LiMiller26}. Here the relative presentation and defect matrices come with  concrete compact open bisections representing the exceptional germs, so the calculation also tests realization by full bisections.

For other models, computing the exceptional set and proving complete germ relations remain separate tasks. An eventually periodic diagram and a finite automaton alone do not supply all the data of Definition~\ref{def:periodic-boundary-certificate}. The next section treats higher homology and parity.

\section{Higher homology, parity, and the abelianization}
\label{sec:parity}

The index map does not by itself determine the abelianization of the full group. We also need the secondary differential, which determines the surviving parity classes, and the resulting abelian extension of $H_1(\G)$. We first compute higher groupoid homology and then address these two questions.

Under minimality and comparison, the part of Li's ordinary AH sequence relevant here is the tail of the exact sequence
\begin{equation}
\label{eq:AH-parity-sequence}
H_2(\G)\xrightarrow{\theta_{\G}}H_0(\G;\mathbb{Z}/2\mathbb{Z})
\xrightarrow{j_{\G}}\F(\G)_{\ab}
\xrightarrow{I_{\ab}}H_1(\G)\longrightarrow0,
\end{equation}
obtained from \cite[Corollary~6.14]{Li25}. The preceding term, suppressed in \eqref{eq:AH-parity-sequence}, is $H_2(\Der(\G))$. By \cite[Remark~6.16]{Li25}, the strong AH property is equivalent to the vanishing of the displayed map $\theta_{\G}$. The map $j_{\G}$ sends $[1_{\sg(V)}]$ to the class of $t_V$ in the abelianization, for a compact open bisection $V$ with disjoint source and range. Thus $\theta_{\G}$ is exactly the obstruction to retaining all of $H_0(\G;\mathbb{Z}/2\mathbb{Z})$ as parity.

Li also proves a stable sequence. Let $\mathfrak{R}$ be the discrete pair groupoid on $\mathbb{N}$, with one arrow $(i,j)$ from $j$ to $i$. The stabilized full group $\F(\mathfrak{R}\times\G)$ is the union of the full groups on $\{1,\ldots,m\}\times\Omega(\mathsf{B})$, extended by the identity on all other copies of $\Omega(\mathsf{B})$. Li's Theorem~6.12 gives the corresponding stable low-degree exact sequence in the compact-open bisection setting with $\F(\mathfrak{R}\times\G)$. No minimality or comparison is required. Corollary~6.14 identifies the stable and ordinary sequences under minimality, comparison, and the absence of isolated units. The nonminimal examples in Section~\ref{sec:finite-return-relations} use this stable sequence without the ordinary full-group comparison.

\subsection{The discrete relative nerve in every degree}
Here we assume only the AF-by-discrete inclusion. We write $\G^{\delta}$ and $\T^{\delta}$ for the same groupoids with the discrete topology. The superscript is essential: the absolute homology of the discrete groupoid need not agree with the homology of the original topological groupoid. We call a composable $n$-tuple \emph{exceptional} when at least one of its coordinates lies outside $\T$. Equivalently, it lies in $\G^{(n)}\setminus\T^{(n)}$. For an abelian group $A$ with trivial $\G$-action, $A^{(\G^{(n)}\setminus\T^{(n)})}$ denotes the direct sum of copies of $A$ indexed by the exceptional set, equivalently the group of finitely supported $A$-valued functions on that set. Restriction to exceptional tuples identifies the relative complex with its discrete counterpart, orbit by orbit.
\keepwithstatement
\begin{lemma}
\label{lem:relative-nerve}
The set $\G^{(n)}\setminus\T^{(n)}$ is closed and discrete in $\G^{(n)}$, and restriction induces an isomorphism of relative chain complexes
\begin{equation}
\label{eq:relative-nerve-isomorphism}
\frac{C_c(\G^{(*)};A)}{C_c(\T^{(*)};A)}
\ \cong\
\frac{C_c((\G^{\delta})^{(*)};A)}{C_c((\T^{\delta})^{(*)};A)}.
\end{equation}
In degree zero, both sides are zero. In degree $n\geq1$, both sides identify with $A^{(\G^{(n)}\setminus\T^{(n)})}$.
\end{lemma}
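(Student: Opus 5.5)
The plan is to first prove the two topological claims about $\G^{(n)}\setminus\T^{(n)}$, and then compare the two relative chain complexes degree by degree through restriction to exceptional tuples.

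For \emph{closedness}, write $\G^{(n)}\setminus\T^{(n)}=\bigcup_{i}\{(\gamma_1,\ldots,\gamma_n):\gamma_i\notin\T\}$. Each set in this union is the preimage of the closed set $\G\setminus\T$ under the continuous $i$th coordinate projection, so it is closed, and a finite union of closed sets is closed.

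For \emph{discreteness}, fix an exceptional tuple $(\gamma_1,\ldots,\gamma_n)$ and choose an index $i$ with $\gamma_i\notin\T$. By Lemma~\ref{lemma:finite-exceptional-support} there is an isolating bisection $V_i\ni\gamma_i$. For the other coordinates I choose compact open bisections $V_j\ni\gamma_j$ that are small enough to be isolating when $\gamma_j\notin\T$ and contained in $\T$ when $\gamma_j\in\T$. The set $W=(V_1\times\cdots\times V_n)\cap\G^{(n)}$ is then an open neighbourhood of the tuple. In $W$, each coordinate is determined by any single other coordinate, because composability links consecutive sources and ranges and each $V_j$ is a bisection. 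Hence $W$ is homeomorphic, via the source of the last coordinate, to a compact open subset of $\Omega(\mathsf{B})$. An exceptional tuple in $W$ must have some coordinate outside $\T$. By the choice of the $V_j$, that coordinate can only be $\gamma_j$ for an index $j$ with $\gamma_j\notin\T$. Since it determines the whole tuple, the tuple is $(\gamma_1,\ldots,\gamma_n)$. So $W$ meets the exceptional set in a single point, which gives discreteness. It also gives compact open Hausdorff neighbourhoods that isolate each exceptional tuple; these are used next.

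For the \emph{chain isomorphism} in degree $n\ge1$, define $R:C_c(\G^{(n)};A)\to A^{(\G^{(n)}\setminus\T^{(n)})}$ by restriction. This is well defined because a generator supported on a compact open Hausdorff set $K$ meets the closed discrete exceptional set in a compact, hence finite, set. $R$ is surjective because each point function $a\delta_\tau$ is the image of $a1_W$ for an isolating $W$ as above. The main step is identifying $\ker R$ with $C_c(\T^{(n)};A)$.

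\begin{itemize}
\item The inclusion $C_c(\T^{(n)};A)\subseteq\ker R$ is immediate.
\item For the reverse inclusion, write $f\in\ker R$ as $f=\sum a_k1_{K_k}$ with the $K_k$ compact open Hausdorff. Finitely many exceptional points occur among the $K_k$.
\item Around each such point I take an isolating neighbourhood $W$ and split $1_{K_k}=1_{K_k\cap W}+1_{K_k\setminus W}$. I expect this step to be the main obstacle in the non-Hausdorff setting: $K_k\setminus W$ is compact open because $W$ is clopen inside the Hausdorff set $K_k$.
\item The pieces $K_k\setminus W$ now lie in the open set $\T^{(n)}$, so they contribute elements of $C_c(\T^{(n)};A)$.
\item The remaining pieces are supported on the finitely many sets $W$. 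There, $f$ restricted to $W$ equals $c1_W$ minus contributions living in $W\cap\T^{(n)}$, where $c$ is the value of $f$ at the exceptional point. Since $f\in\ker R$, $c=0$, so these pieces also lie in $C_c(\T^{(n)};A)$.
\end{itemize}

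The same argument with the discrete topology gives the identification of the discrete relative groups trivially. In degree zero both relative groups are zero because $\T^{(0)}=\G^{(0)}$.

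Finally, the boundary maps agree under these identifications. The face maps are the same set maps in both topologies, and $(d_i)_*$ of a point function is the point function at the image, which is zero if the image lies in $\T^{(n-1)}$. Hence the restriction isomorphisms commute with the induced boundaries, giving \eqref{eq:relative-nerve-isomorphism}.
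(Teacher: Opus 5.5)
Your overall plan is the paper's: the same product neighbourhoods $W=(V_1\times\cdots\times V_n)\cap\G^{(n)}$, the restriction map, excision of isolating sets, and a face-map check. Your closedness, discreteness, well-definedness and surjectivity steps are correct.

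\textbf{The gap: which summands you cut.} The problem is in the kernel step, exactly at the non-Hausdorff point you flagged. The splitting $1_{K_k}=1_{K_k\cap W}+1_{K_k\setminus W}$ produces chains only if $K_k\cap W$ is clopen in $K_k$. Your justification (``$W$ is clopen inside the Hausdorff set $K_k$'') is valid only when $W\subseteq K_k$. Your last bullet, however, also cuts summands that meet $W$ without containing the exceptional tuple $\tau$; these are your ``contributions living in $W\cap\T^{(n)}$''. For such summands the cut can fail.

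Here is a concrete failure in the Houghton--Cantor groupoid $\G_{\mathrm{H}}$, with $n=1$. Let $W$ be the bisection of $\alpha$ over a clopen neighbourhood $N$ of $\infty$; it isolates $(\alpha,\infty)$. Let $K=1_N$. Then $K\cap W$ is the set of units over the third-ray blocks in $N$. It accumulates at $(\Id,\infty)\notin W$, so it is open but not closed in $K$, and $K\setminus W$ is not open. Neither $1_{K\cap W}$ nor $1_{K\setminus W}$ lies in $C_c(\G^{(1)};A)$; push forward along $\sg$ and the result is not locally constant at $\infty$. For the same reason, ``$f$ restricted to $W$'' is in general not a chain, so there is nothing to place in $C_c(\T^{(n)};A)$.

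\textbf{The fix.} Cut only the summands that contain $\tau$, and let the pieces inside $W$ cancel exactly.
\begin{itemize}
\item Put $I_\tau=\{k:\tau\in K_k\}$.
\item Shrink $W$ into the open set $\bigcap_{k\in I_\tau}K_k$; this is possible because product neighbourhoods form a basis.
\item Replace $a_k1_{K_k}$ by $a_k1_{K_k\setminus W}$ for $k\in I_\tau$ only, leaving every other summand untouched.
\item The chain changes by $-\bigl(\sum_{k\in I_\tau}a_k\bigr)1_W=-f(\tau)1_W=0$. The new sets are again compact open Hausdorff, and $\tau$ lies in none of them.
\end{itemize}
Repeat this for the finitely many exceptional tuples, one at a time, to obtain a presentation supported in $\T^{(n)}$. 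Your claim that the pieces $K_k\setminus W$ lie in $\T^{(n)}$ also needs this iteration, since every exceptional tuple of $K_k$ must be removed first. This is how the paper argues.

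\textbf{The chain-map step.} State explicitly that $d_i(\T^{(n)})\subseteq\T^{(n-1)}$, so every preimage of an exceptional face is exceptional. This is what makes $R((d_i)_*f)$ agree with $(d_i)_*(Rf)$ at exceptional faces. It is also what lets you transport your point-function computation from the discrete side to the topological one.
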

\begin{proof}
The set $\T^{(n)}$ is open, and thus its complement is closed. Fix an exceptional tuple $(\gamma_1,\ldots,\gamma_n)$. Choose a compact open bisection $V_i$ containing each $\gamma_i$, contained in $\T$ when $\gamma_i\in\T$ and isolating $\gamma_i$ otherwise. Put $W=(V_1\times\cdots\times V_n)\cap\G^{(n)}$. For any exceptional tuple in $W$, a coordinate outside $\T$ must equal the prescribed exceptional arrow. Composability and injectivity of source and range on the neighboring bisections then determine every other coordinate. Thus $W$ contains no other exceptional tuple, proving discreteness.

A set $W$ of this form is compact, open, and Hausdorff, since its last source map identifies it with a compact open subset of $\Omega(\mathsf{B})$. Such sets form a basis for $\G^{(n)}$, and each meets the closed discrete set $\G^{(n)}\setminus\T^{(n)}$ in finitely many tuples. Restriction therefore maps the compactly supported chain group to $A^{(\G^{(n)}\setminus\T^{(n)})}$. The indicator of a set containing exactly one exceptional tuple restricts to the indicator of that tuple, so restriction is onto.

We verify the kernel without assuming that the arrow space is Hausdorff. Write a chain as a finite linear combination $\sum_i a_i1_{W_i}$, where each $W_i$ has the form just described. Fix an exceptional tuple $\gamma$ at which the restriction vanishes, and let $I_\gamma=\{i:\gamma\in W_i\}$. Choose a set $W$ of the same form containing $\gamma$, contained in every $W_i$ with $i\in I_\gamma$, and containing no other exceptional tuple in the finite union of the $W_i$. Since $W$ is compact and each $W_i$ is Hausdorff, $W$ is clopen in every $W_i$ containing it. The coefficient of $\gamma$ is zero, so $\sum_{i\in I_\gamma}a_i=0$. Replacing each $a_i1_{W_i}$ with $i\in I_\gamma$ by $a_i1_{W_i\setminus W}$ therefore leaves the total chain unchanged. Each $W_i\setminus W$ is compact and open, and the chosen exceptional tuple has been removed from every summand containing it. Repeating this over the finitely many exceptional tuples in the original presentation gives a presentation supported in $\T^{(n)}$. Hence the kernel is exactly $C_c(\T^{(n)};A)$.

Finally, every face of a tuple all of whose coordinates lie in $\T$ again lies in $\T$, since $\T$ is a subgroupoid. Hence, if $\eta$ is an exceptional face, every preimage of $\eta$ under a face map is exceptional. For a compactly supported chain only finitely many such preimages contribute, and restriction therefore commutes with every push-forward $(d_i)_*$: on an exceptional face both sides are the same finite sum over its exceptional preimages. Thus restriction is a chain map, proving \eqref{eq:relative-nerve-isomorphism}. Every composable tuple lies in one $\G$-orbit of units, and every face remains in that orbit. Since every relative chain has finite support on exceptional tuples, the resulting decomposition is the algebraic direct sum of the orbitwise subcomplexes, even when there are infinitely many exceptional orbits.
\end{proof}

\begin{thm}[Higher-homology theorem]
\label{thm:higher-germ-homology}
Let $\T\subseteq\G$ be an AF-by-discrete inclusion in the sense of Definition~\ref{def:AF-by-discrete}. For every abelian group $A$ with trivial $\G$-action and every $n\geq2$, there is an isomorphism
\begin{equation}
\label{eq:higher-germ-homology}
H_n(\G;A)\cong\bigoplus_{\mathcal{O}}H_n(H_{\mathcal{O}};A),
\end{equation}
where the sum is over exceptional $\G$-orbits. On the right, $A$ is regarded as an $H_{\mathcal{O}}$-module with trivial action. Choosing a different basepoint or connecting arrow in an orbit changes the identification only by the usual conjugacy identification in group homology.
\end{thm}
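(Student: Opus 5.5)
The plan is to compute $H_n(\G;A)$ for $n\geq2$ entirely through the discrete relative nerve. The long exact sequence of the pair, together with $H_n(\T;A)=0$ for $n\geq1$ (Subsection~\ref{subsec:AFbd-homology}), gives $H_n(\G;A)\cong H_n(\G,\T;A)$ for $n\geq2$. Lemma~\ref{lem:relative-nerve} then identifies the relative complex with the quotient complex $C_*((\G^{\delta})^{(*)};A)/C_*((\T^{\delta})^{(*)};A)$ of the discrete groupoids. It also splits this complex as an algebraic direct sum of orbitwise subcomplexes. Nonexceptional orbits contribute zero, since there all tuples lie in $\T$. So it suffices to fix one exceptional orbit $\mathcal{O}$ and show that the discrete relative homology of $(\G^{\delta}|_{\mathcal{O}},\T^{\delta}|_{\mathcal{O}})$ in degree $n\geq2$ is $H_n(H_{\mathcal{O}};A)$.

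For the discrete groupoids I would use the standard algebraic facts about discrete groupoid homology. The nerve homology of a discrete transitive groupoid $\G^{\delta}|_{\mathcal{O}}$ is the group homology of its vertex group, so it equals $H_*(H_{\mathcal{O}};A)$. This is because the groupoid is equivalent to the one-object groupoid $H_{\mathcal{O}}$; explicitly, the connectors $p_x$ give the equivalence, and equivalent discrete groupoids have chain-homotopy-equivalent nerves. The restricted core $\T^{\delta}|_{\mathcal{O}}$ is a disjoint union of principal transitive groupoids, one pair groupoid for each $\T$-orbit in $\mathcal{O}$. Each of these is equivalent to the trivial group, so its homology is $\mathbb{Z}$-free data concentrated in degree $0$, namely $A^{(\mathcal{T}(\mathcal{O}))}$. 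The long exact sequence of the discrete pair then gives $H_n(\G^{\delta}|_{\mathcal{O}},\T^{\delta}|_{\mathcal{O}};A)\cong H_n(H_{\mathcal{O}};A)$ for $n\geq2$, because the core terms vanish in degrees $n$ and $n-1\geq1$. This is also consistent with the degree-one normal form of Theorem~\ref{thm:relative-normal-form}, which serves as a sanity check. Summing over orbits gives \eqref{eq:higher-germ-homology}.

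For the basepoint statement, I would note that a different choice of basepoint or connector changes the equivalence to $H_{\mathcal{O}}$ by a natural isomorphism. That isomorphism is conjugation by an isotropy element, and it acts on group homology with trivial coefficients by the usual conjugacy identification, which is the identity.

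The main obstacle is the rigorous justification that the discrete nerve of a transitive groupoid computes the homology of its vertex group, and that this is compatible with the sub-groupoid inclusion in the long exact sequence. I would handle it directly. Fix connectors and define the retraction of the nerve sending $(\gamma_1,\dots,\gamma_n)$ to the tuple of normalized returns $p^{-1}\gamma_i p$. Then write an explicit simplicial homotopy to the identity, using the connectors as the homotopy data, as in the standard proof that equivalences of categories induce homotopy equivalences of nerves. The same homotopy preserves the subcomplex for $\T^{\delta}$ whenever connectors within a $\T$-orbit are chosen through $\T$, which is exactly the choice $p_x=a_xc_i$ made earlier. If that compatibility proves awkward, a fallback is to avoid the pair-level homotopy altogether and just compute both absolute discrete homologies separately, then use the five lemma.
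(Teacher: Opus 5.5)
Your proposal is correct and follows the paper's proof step for step: the reduction $H_n(\G;A)\cong H_n(\G,\T;A)$ from $H_{>0}(\T;A)=0$, the identification with the discrete relative complex and its orbitwise splitting via Lemma~\ref{lem:relative-nerve}, the vanishing of positive-degree homology for the pair-groupoid components of $(\T|_{\mathcal{O}})^{\delta}$, the connector equivalence $(\G|_{\mathcal{O}})^{\delta}\simeq H_{\mathcal{O}}$, and the long exact sequence of the discrete orbit pair.

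One remark in your last paragraph is wrong, though it is not used. The connector homotopy does not preserve the $\T^{\delta}$-subcomplex once $\mathcal{O}$ contains more than one $\T$-orbit, because the connectors $p_x=a_xc_i$ with $i\neq0$ lie outside $\T$. If it were a homotopy of pairs, the retraction would induce an isomorphism on relative $H_1$, yet that map kills the transport summand $L_{\mathcal{O}}$. This is harmless: as in your second paragraph and in the paper, you only need the two absolute computations and the long exact sequence.
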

No compact generation, minimality, bounded type, or finiteness of the groups of germs is needed.
\begin{proof}
The relative long exact sequence of the pair $(\G,\T)$ contains, for $n\geq2$, the segment
\[
H_n(\T;A)\longrightarrow H_n(\G;A)\longrightarrow
H_n(\G,\T;A)\longrightarrow H_{n-1}(\T;A).
\]
Since $\T$ is an AF groupoid, its homology vanishes in positive degrees.
Therefore, both outer terms are zero, and the middle arrow induces an
isomorphism
\[
H_n(\G;A)\cong H_n(\G,\T;A).
\]
Thus, it remains to compute the relative homology.

By Lemma~\ref{lem:relative-nerve}, the relative chain complex can be identified with the discrete relative complex
\[
\frac{C_c((\G^\delta)^{(*)};A)}
{C_c((\T^\delta)^{(*)};A)}.
\]
Moreover, every composable tuple belongs to a unique $\G$-orbit of units, and the face maps preserve this orbit. Hence the relative chain complex
decomposes as the algebraic direct sum of orbitwise relative complexes:
\[
C_\bullet(\G,\T;A)\cong\bigoplus_{\mathcal O}C_\bullet((\G|_{\mathcal O})^\delta,(\T|_{\mathcal O})^\delta;A).
\]
Since every chain has finite support, this direct sum decomposition is
compatible with homology.

Fix an exceptional orbit $\mathcal O$. We first analyze the two groupoids
appearing in the corresponding relative complex. Since $\T$ is AF and
principal, the restriction $(\T|_{\mathcal O})^\delta$ is a disjoint union of pair groupoids, one for each $\T$-orbit contained in $\mathcal O$. The nerve of each pair groupoid is contractible. Choosing one object in every component therefore gives a contraction of the whole nerve, and thus
\[
H_k((\T|_{\mathcal O})^\delta;A)=0
\qquad (k\geq1).
\]

Next, consider $(\G|_{\mathcal O})^\delta$. This groupoid is transitive. Choose a basepoint $x\in\mathcal O$ and, for every $y\in\mathcal O$, choose an
arrow $p_y:x\to y$, with $p_x=(\Id,x)$. Let
$H_{\mathcal O}=\G_x^x$. Define
\[
q(\gamma:y\to z)=p_z^{-1}\gamma p_y\in H_{\mathcal O}.
\]
Then $q$ is a functor from $(\G|_{\mathcal O})^\delta$ to the one-object
groupoid associated with $H_{\mathcal O}$. Let
$i:H_{\mathcal O}\to(\G|_{\mathcal O})^\delta$ be the inclusion at the
chosen basepoint. We have $q\circ i=\Id_{H_{\mathcal O}}$. Moreover, the
chosen arrows $p_y$ define a natural transformation
\[
i\circ q\Rightarrow\Id_{(\G|_{\mathcal O})^\delta}.
\]
Hence the two groupoids have homotopy equivalent nerves, and therefore
\[
H_n((\G|_{\mathcal O})^\delta;A)
\cong
H_n(H_{\mathcal O};A).
\]

Now apply the relative long exact sequence to the orbitwise pair
\[
((\G|_{\mathcal O})^\delta,
(\T|_{\mathcal O})^\delta).
\]
Because the positive-degree homology of
$(\T|_{\mathcal O})^\delta$ vanishes, we obtain, for $n\geq2$,
\[
H_n((\G|_{\mathcal O})^\delta,
(\T|_{\mathcal O})^\delta;A)
\cong
H_n(H_{\mathcal O};A).
\]

If $\mathcal O$ is nonexceptional, then
$\G|_{\mathcal O}=\T|_{\mathcal O}$, so the corresponding relative complex is
zero. Therefore only exceptional orbits contribute. Taking the direct sum
over all exceptional orbits gives
\[
H_n(\G,\T;A)
\cong
\bigoplus_{\mathcal O}H_n(H_{\mathcal O};A).
\]
Combining this with the initial identification
$H_n(\G;A)\cong H_n(\G,\T;A)$ proves
\eqref{eq:higher-germ-homology}.

Finally, changing the basepoint or the chosen connecting arrows replaces the
identification with the composition by an inner automorphism of
$H_{\mathcal O}$. Since inner automorphisms act trivially on group homology
with trivial coefficients, the resulting identification is canonical up to the
usual conjugacy identification.
\end{proof}

The theorem is the higher-degree counterpart of the degree-one normal form, but the outcome is simpler. To see exactly where transport disappears, recall that $\mathcal{T}(\mathcal{O})$ is the set of $\T$-orbits contained in one exceptional orbit. With trivial coefficients,
$H_0((\T|_{\mathcal{O}})^\delta;A)\cong\bigoplus_{\mathcal{T}_i\in\mathcal{T}(\mathcal{O})}A$, while $H_0((\G|_{\mathcal{O}})^\delta;A)\cong A$, and the induced map is summation of the component coordinates. Its kernel is the transport term visible in relative degree one. For $n\geq2$, however, the adjacent AF term is positive-degree homology and vanishes. Thus no transport group, finite or infinite rank, survives in higher degree. Only the group homology of the exceptional isotropy remains. In particular, principal AF-by-discrete groupoids satisfy $H_n(\G;A)=0$ for every $n\geq2$. For finite $H_{\mathcal{O}}$, the group $H_2(H_{\mathcal{O}};\mathbb{Z})$ is its Schur multiplier. It is groupoid homology must not be confused with the second homology of the topological full group.

\subsection{Coherent local actions and the parity obstruction}\label{subsec:coherent-evaluation}
A coherent action of a group $L$ on a compact open set gives a map from its classifying space to Li's infinite loop space. Naturality of the Atiyah--Hirzebruch spectral sequence then gives vanishing of the secondary differential on the image of $H_2(L;\mathbb{Z})$.

Write $\mathfrak{B}_{\G}$ for Li's permutative category of compact open bisections and put $\mathscr{E}_{\G}=\mathbb{K}(\mathfrak{B}_{\G})$. We use the construction of \cite[Sections~2.6 and~3]{Li25}. For a group $L$, let $BL$ be its classifying space, let $(BL)_+$ denote the addition of a disjoint basepoint, and let $\Sigma^\infty(BL)_+$ be the corresponding suspension spectrum. We write $\mathbb{S}$ for the sphere spectrum. Li's Theorem~4.18 identifies $\widetilde H_*(\mathscr{E}_{\G};A)$ with $H_*(\G;A)$, while the map $\theta_{\G}$ is the differential $d^2_{2,0}$ in the Atiyah--Hirzebruch spectral sequence used in the proof of \cite[Theorem~6.12]{Li25}. This definition of $\theta_{\G}$ requires neither minimality nor comparison. Those hypotheses enter only when Li's stable full-group sequence is transferred to the ordinary full group.

Let $U\subseteq \Omega(\mathsf{B})$ be compact open and let $\rho:L\to\F(\G|_U)$ be a group homomorphism. Write $V_h$ for the full bisection of $\rho(h)$ in $\G|_U$, and thus $\sg(V_h)=\rg(V_h)=U$. The homomorphism identity is exactly $V_hV_k=V_{hk}$. It therefore sends a simplex in the bar complex of $L$ to the corresponding set of composable groupoid arrows. On the unnormalized bar complex recalled in Subsection~\ref{subsec:groupoid-homology}, set
\begin{equation}
\label{eq:coherent-bar-evaluation}
\begin{split}
Q_{\rho,n}[h_1|\cdots|h_n]&=1_{V_{h_1}*\cdots*V_{h_n}},\\
Q_{\rho,0}[\,]&=1_U.
\end{split}
\end{equation}
Here $*$ denotes the set of composable tuples, not their product. The maps $Q_{\rho,n}$ form a chain map $Q_{\rho,*}$. Indeed, the bar face that multiplies two adjacent labels becomes the groupoid face that composes the corresponding adjacent arrows, and the first and last faces agree since every $V_h$ has source and range $U$. Degenerate group simplices map to degenerate groupoid chains, and thus the same map is available on normalized homology.

\begin{lemma}
\label{lem:coherent-parity}
For every homomorphism $\rho$ as above,
\begin{equation}
\label{eq:coherent-theta-zero}
\theta_{\G}\circ (Q_{\rho,2})_*=0:H_2(L;\mathbb{Z})\longrightarrow H_0(\G;\mathbb{Z}/2\mathbb{Z}).
\end{equation}
\end{lemma}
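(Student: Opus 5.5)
The plan is to realize $Q_{\rho,*}$ at the level of spectra and then use naturality of the Atiyah--Hirzebruch spectral sequence, as the preamble to the lemma indicates. The homomorphism $\rho$ defines a strict monoidal functor from the one-object category of $L$ (its translation groupoid) into Li's permutative category $\mathfrak{B}_{\G}$. Its single object goes to the compact open set $U$, regarded as an object, and $h$ goes to the full bisection $V_h$ of $\rho(h)$ in $\G|_U$. The identity $V_hV_k=V_{hk}$ is exactly functoriality. Precomposing with the free permutative category on this groupoid, I take the induced map of $K$-theory spectra. Since the free permutative category on $BL$ has $K$-theory $\Sigma^\infty(BL)_+$ by Barratt--Priddy--Quillen--Segal, this gives a map of spectra
\[
\Phi_\rho:\Sigma^\infty(BL)_+\longrightarrow\mathscr{E}_{\G}.
\]

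Next I would check that on ordinary homology $\Phi_\rho$ induces $(Q_{\rho,*})_*$ under Li's identification $\widetilde H_*(\mathscr{E}_{\G};A)\cong H_*(\G;A)$ from Theorem~4.18 of \cite{Li25}. Li's identification is built from the nerve of $\mathfrak{B}_{\G}$ and compares simplices of composable bisections with chains on $\G^{(n)}$. The functor sends the bar simplex $[h_1|\cdots|h_n]$ to the composable string $(V_{h_1},\ldots,V_{h_n})$, whose associated chain is the indicator of $V_{h_1}*\cdots*V_{h_n}$. This is formula \eqref{eq:coherent-bar-evaluation}. I expect this compatibility to be the main obstacle, because it requires unwinding Li's comparison map, not just quoting it. What I would try first is to verify it on the level of nerves before group completion, where both sides are explicit simplicial chain maps.

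With that granted, $\Phi_\rho$ induces a morphism of Atiyah--Hirzebruch spectral sequences whose $E^2$ terms are $H_p(\,\cdot\,;\pi_q\mathbb{S})$ and $H_p(\,\cdot\,;\pi_q\mathscr{E}\text{-coefficients})$; in both cases the relevant coefficients are $\pi_0\mathbb{S}=\mathbb{Z}$ and $\pi_1\mathbb{S}=\mathbb{Z}/2\mathbb{Z}$. Naturality gives
\[
\theta_{\G}\circ(Q_{\rho,2})_*=(\Phi_\rho)_*\circ d^2_{2,0}(\Sigma^\infty(BL)_+).
\]
For a suspension spectrum $\Sigma^\infty X_+$ the differential $d^2_{2,0}\colon H_2(X;\mathbb{Z})\to H_0(X;\mathbb{Z}/2\mathbb{Z})$ is the dual of $Sq^2$ composed with mod-two reduction. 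This lands in degree zero, and for a space (indeed after restricting to the basepoint summand, which splits off) it vanishes. Concretely, the map $\Sigma^\infty X_+\to\Sigma^\infty S^0=\mathbb{S}$ induced by $X\to\mathrm{pt}$ is an isomorphism on $H_0$, and $H_2(\mathbb{S};\mathbb{Z})=0$. Naturality therefore forces $d^2_{2,0}=0$ for $\Sigma^\infty(BL)_+$, and \eqref{eq:coherent-theta-zero} follows.
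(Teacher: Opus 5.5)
Your outline is the paper's. You build a spectrum map $\Sigma^\infty(BL)_+\to\mathscr{E}_{\G}$ from the object $U$ and the automorphisms $V_h$; your Barratt--Priddy--Quillen route and the paper's suspension--loop adjunction give the same map. You then apply naturality of $d^2_{2,0}$ and kill the source differential with the retraction onto the sphere summand, which is exactly the paper's final step.

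The gap is the step you set aside: showing that this spectrum map induces $(Q_{\rho,2})_*$ under Li's isomorphism $\widetilde H_*(\mathscr{E}_{\G};\mathbb{Z})\cong H_*(\G;\mathbb{Z})$. This is the substance of the lemma, not a technicality. $Q_{\rho,*}$ is a combinatorial chain map, $\theta_{\G}$ is defined spectrally, and without this identification naturality says nothing about $\theta_{\G}\circ(Q_{\rho,2})_*$.

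The route you sketch rests on an inaccurate picture of Li's comparison. Li's isomorphism is not built by matching simplices of the nerve of $\mathfrak{B}_{\G}$ with chains on $\G^{(n)}$, and nerve homology does not compute spectrum homology. Already for the free permutative category on $L$, the nerve is $\coprod_n E\Sigma_n\times_{\Sigma_n}(BL)^n$, which carries the homology of all symmetric groups, while its $K$-theory $\Sigma^\infty(BL)_+$ has homology $H_*(L;\mathbb{Z})$. So verifying agreement ``before group completion'' does not by itself pin down the map on $\widetilde H_2(\mathscr{E}_{\G})$. Li's isomorphism instead comes from an augmented resolution by the spectra $\mathbb{K}(\mathfrak{B}^{(\nu)})$, where $\mathfrak{B}^{(\nu)}$ is the bisection category of $\G\curvearrowright\G^{(\nu)}$ and these spectra have homology concentrated in degree zero.

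The missing idea is to build the matching resolution on the source side and map it into Li's:
\begin{itemize}
\item Take the translation groupoids $\mathfrak{C}_\nu=L\ltimes L^\nu$. Their augmented nerves form the bar resolution of $BL$, and their components are pair groupoids with no positive homology.
\item Define functors $\mathfrak{C}_\nu\to\mathfrak{B}^{(\nu)}$ sending the object $(h_0,\ldots,h_{\nu-1})$ to $V_{h_0}*\cdots*V_{h_{\nu-1}}$, and the arrow $l$ to left multiplication of the first coordinate by $V_l$.
\item Check that these commute with the augmented face functors. This uses $V_hV_k=V_{hk}$ and the fact that projecting away a coordinate is bijective on the sets of composable tuples.
\item Then the degree-zero terms become the indicators of $V_{h_1}*\cdots*V_{h_n}$, with the bar differentials as their faces. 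Naturality of the connecting maps gives the commutative square identifying the induced map with $(Q_{\rho,n})_*$.
\end{itemize}
That is how the paper closes the argument. Once this square is in place, your remaining steps go through as written.
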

\begin{proof}
The object $U$ and its automorphisms $\rho(h)$ give a map $BL\to\Omega^\infty\mathscr{E}_{\G}$, in the component represented by $U$. By the suspension-spectrum adjunction, this gives a spectrum map $f_\rho:\Sigma^\infty(BL)_+\to\mathscr{E}_{\G}$.

We verify that this map induces the chain evaluation~\eqref{eq:coherent-bar-evaluation} under Li's homology comparison. For $\nu\geq1$, let $\mathfrak{C}_\nu=L\ltimes L^\nu$, where $L$ acts by left multiplication on the first coordinate. Put $\mathfrak{C}_0=L$, regarded as a groupoid with one object. The augmented face functors multiply consecutive coordinates or delete the last coordinate. At $\nu=1$, the augmentation forgets the object and retains the acting arrow. The augmented nerves of these groupoids give the usual bar resolution of $BL$. For $\nu\geq1$, each component of $\mathfrak{C}_\nu$ is a pair groupoid, and the components are indexed by the last $\nu-1$ coordinates. Hence their classifying spaces have no positive homology, and their degree-zero homology is the free abelian group on $L^{\nu-1}$.

Use the categories $\mathfrak{B}^{(\nu)}=\mathfrak{B}_{\G\curvearrowright\G^{(\nu)}}$ from \cite[Section~4.3]{Li25}. An object $(h_0,\ldots,h_{\nu-1})$ of $\mathfrak{C}_\nu$ is assigned the compact open set of composable tuples
\begin{equation*}
W_{h_0,\ldots,h_{\nu-1}}=V_{h_0}*\cdots*V_{h_{\nu-1}}.
\end{equation*}
The arrow given by $l\in L$ is assigned the bisection that left multiplies the first coordinate by $V_l$. Its range is $W_{lh_0,h_1,\ldots,h_{\nu-1}}$. At $\nu=0$, the unique object is assigned $U$ and the arrow $l$ is assigned $V_l$. Extending these assignments to finite families with separate labels gives functors to the bisection categories. The labels keep distinct summands separate even when the corresponding sets of tuples overlap.

These functors commute with the augmented face functors. For a multiplication face, this follows from $V_hV_k=V_{hk}$. For the last face, projection onto the remaining coordinates is a bijection between the corresponding sets of composable tuples. For the augmentation, its range is $U$. Hence, after passage to classifying spaces and suspension spectra, they give a comparison of the augmented complexes used in the proof of \cite[Theorem~4.18]{Li25}, with $f_\rho$ at the augmentation.

Under \cite[Theorem~4.14]{Li25}, the image of the component indexed by $(h_1,\ldots,h_n)$ is the indicator of $V_{h_1}*\cdots*V_{h_n}$: one drops the first coordinate of $W_{h_0,h_1,\ldots,h_n}$, and this projection is a bijection. The alternating faces on the resulting degree-zero complexes are precisely the bar differentials. Naturality of the connecting maps in the augmented resolutions thus gives, for every $n>0$, the commutative diagram
\begin{equation*}
\begin{CD}
\widetilde H_n(\Sigma^\infty(BL)_+;\mathbb{Z}) @>{(f_\rho)_*}>> \widetilde H_n(\mathscr{E}_{\G};\mathbb{Z})\\
@V{\cong}VV @V{\cong}VV\\
H_n(L;\mathbb{Z}) @>{(Q_{\rho,n})_*}>> H_n(\G;\mathbb{Z}).
\end{CD}
\end{equation*}
Here the right vertical map is the isomorphism constructed in the proof of \cite[Theorem~4.18]{Li25}, obtained from the same augmented complexes $\mathfrak{B}^{(\nu)}$. Thus the lower horizontal map is the map induced by $f_\rho$ under Li's comparison, rather than through an abstract homology isomorphism. In degree zero, the comparison sends the generator to $1_U$. In degree two, the three faces are $1_{V_k}-1_{V_{hk}}+1_{V_h}$. The construction in all degrees, rather than this face identity alone, identifies the induced homology map.

Suppose now that $N\leq L$ and that the restricted action lies in $\F(\T|_U)$. The same assignments, together with the open inclusion $\T\subseteq\G$, commute with the functors in \cite[Section~3.1.1]{Li25} and give the commutative square
\begin{equation*}
\begin{CD}
\Sigma^\infty(BN)_+ @>>> \Sigma^\infty(BL)_+\\
@V{f_{\rho|_N}}VV @VV{f_\rho}V\\
\mathscr{E}_{\T} @>>> \mathscr{E}_{\G}.
\end{CD}
\end{equation*}
Passing to cofibers gives the corresponding map of cofiber sequences. Under the comparison just described, the induced map on relative homology is obtained by quotienting the bar and groupoid chain complexes by the $N$- and $\T$-subcomplexes, respectively, and is therefore the relative map induced by $Q_{\rho,*}$. This is the compatibility used in Lemma~\ref{lem:marked-surface-parity}.

It remains to identify the differential on the source spectrum. The basepoint inclusion and the augmentation $BL_+\to S^0$ split $\Sigma^\infty(BL)_+$ as $\mathbb{S}\vee\Sigma^\infty BL$. Let $p$ be the retraction onto the sphere summand. Since $BL$ is connected, $p_*$ is an isomorphism on the target group $H_0(-;\mathbb{Z}/2\mathbb{Z})$, whereas the degree-two integral homology of the sphere spectrum is zero. Therefore, for $c\in H_2(L;\mathbb{Z})$, naturality gives
$p_*d^2_{2,0}(c)=d^2_{2,0}(p_*c)=0$.
Since $p_*$ is injective in degree zero, the source differential itself is zero. Naturality for the spectrum map $f_\rho$ then yields \eqref{eq:coherent-theta-zero}.
\end{proof}

Finite subgroups satisfy the required coherent-lifting condition automatically by Lemma~\ref{lem:finite-germ-coherent-lift}, proved in Section~\ref{sec:germs}.

The secondary differential thus vanishes on degree-two classes arising from finite subgroups of groups of germs.
\keepwithstatement
\begin{thm}
\label{thm:theta-finite-germs}
Suppose that, for every exceptional orbit $\mathcal{O}$, the group $H_2(H_{\mathcal{O}};\mathbb{Z})$ is generated by the images of $H_2(L;\mathbb{Z})$ over finite subgroups $L\leq H_{\mathcal{O}}$. Then $\theta_{\G}=0$. In particular, $\theta_{\G}=0$ if all groups of germs are finite.
\end{thm}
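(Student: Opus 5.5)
The plan is to combine the higher-homology decomposition of Theorem~\ref{thm:higher-germ-homology} with the coherent-parity vanishing of Lemma~\ref{lem:coherent-parity}. By Theorem~\ref{thm:higher-germ-homology} with $A=\mathbb{Z}$, $H_2(\G)\cong\bigoplus_{\mathcal{O}}H_2(H_{\mathcal{O}};\mathbb{Z})$. Since $\theta_{\G}$ is a homomorphism, it suffices to show that $\theta_{\G}$ vanishes on the image of each summand. By hypothesis, each summand is generated by the images of $H_2(L;\mathbb{Z})$ for finite $L\leq H_{\mathcal{O}}$. So the task reduces to proving $\theta_{\G}(\phi_L(c))=0$ for every finite $L\leq H_{\mathcal{O}}$ and every $c\in H_2(L;\mathbb{Z})$, where $\phi_L$ denotes the composite $H_2(L)\to H_2(H_{\mathcal{O}})\to H_2(\G)$.

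For a fixed finite $L$, we use Lemma~\ref{lem:finite-germ-coherent-lift} at the basepoint $x_{\mathcal{O}}$. It supplies a clopen $U\ni x_{\mathcal{O}}$ and a homomorphism $\rho:L\to\F(\G|_U)$ such that $\rho(h)$ fixes $x_{\mathcal{O}}$, has germ $h$ there, and is AF elsewhere on $U$. Lemma~\ref{lem:coherent-parity} then gives $\theta_{\G}\circ(Q_{\rho,2})_*=0$. What remains is to verify that $(Q_{\rho,2})_*$ agrees with $\phi_L$ under the identifications of Theorem~\ref{thm:higher-germ-homology}.

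This identification is the main step. First, the map $H_2(\G)\to H_2(\G,\T)$ is an isomorphism, so it suffices to compare after passing to relative chains. The chain $Q_{\rho,2}[h|k]=1_{V_h*V_k}$ restricts on exceptional tuples to the finitely supported function on $(V_h*V_k)\setminus\T^{(2)}$, as in Lemma~\ref{lem:relative-nerve}. Every arrow of $V_h$ other than the germ at $x_{\mathcal{O}}$ lies in $\T$. Hence the only composable pair in $V_h*V_k$ with a non-AF coordinate is $((h,x_{\mathcal{O}}),(k,x_{\mathcal{O}}))$, taken when $h$ or $k$ is nontrivial. When both are trivial, the tuple lies in $\T$ and vanishes relatively. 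So the relative chain is the bar symbol $[h|k]$ placed at the basepoint, inside the orbitwise discrete complex of $\mathcal{O}$. In the proof of Theorem~\ref{thm:higher-germ-homology}, the composite isomorphism $H_n((\G|_{\mathcal{O}})^\delta,(\T|_{\mathcal{O}})^\delta)\cong H_n(H_{\mathcal{O}})$ is induced by the inclusion $i$ at the basepoint and its homotopy inverse $q$. Therefore the image of $[h|k]$ corresponds to the class of $[h|k]$ in $H_2(H_{\mathcal{O}})$, and this is exactly $\phi_L$.

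The care required is in checking that the relative sequence and the discrete comparison commute with these chain-level maps, including the connecting maps. I would rely on naturality: the square formed by $Q_{\rho,*}$ from the bar complex of $L$ into the absolute complex of $\G$, followed by the quotient onto the relative complex, equals the inclusion $i$ precomposed with the inclusion $L\leq H_{\mathcal{O}}$. The final assertion follows because for finite $H_{\mathcal{O}}$ one may take $L=H_{\mathcal{O}}$ itself.
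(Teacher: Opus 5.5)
Your proposal is correct and follows the paper's proof. You decompose $H_2(\G)$ by Theorem~\ref{thm:higher-germ-homology}, reduce to finite subgroups using the hypothesis, lift each finite $L$ coherently by Lemma~\ref{lem:finite-germ-coherent-lift}, and annihilate its image with Lemma~\ref{lem:coherent-parity}. Your chain-level check that $(Q_{\rho,2})_*$ becomes the inclusion-induced map $H_2(L;\mathbb{Z})\to H_2(H_{\mathcal{O}};\mathbb{Z})$ under the relative-nerve identification is the step the paper leaves implicit here and spells out in the proof of Theorem~\ref{thm:coherent-return-split-AH}. No connecting maps are actually needed in that check, since both comparison isomorphisms are induced by quotient chain maps.
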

\begin{proof}
By the higher-homology decomposition of Theorem~\ref{thm:higher-germ-homology},
the group $H_2(\G;\mathbb Z)$ is generated by the images of the groups
$H_2(H_{\mathcal O};\mathbb Z)$ over exceptional orbits $\mathcal O$.
Hence it is enough to show that $\theta_{\G}$ vanishes on each such summand.

Fix an exceptional orbit $\mathcal O$. By the hypothesis, $H_2(H_{\mathcal O};\mathbb Z)$ is generated by the images of
$H_2(L;\mathbb Z)$, where $L\leq H_{\mathcal O}$ ranges over finite
subgroups. Therefore every class in $H_2(H_{\mathcal O};\mathbb Z)$ is a
finite sum of classes induced from finite subgroup homology.

Let $L\leq H_{\mathcal O}$ be such a finite subgroup. By Lemma~\ref{lem:finite-germ-coherent-lift}, the inclusion of $L$ admits a
coherent local lift. The vanishing result of Lemma~\ref{lem:coherent-parity} then implies that $\theta_{\G}$ vanishes on the image of $H_2(L;\mathbb Z)$ in $H_2(H_{\mathcal O};\mathbb Z)$.

Since these images generate $H_2(H_{\mathcal O};\mathbb Z)$, the restriction of $\theta_{\G}$ to every exceptional-orbit summand is zero. Therefore, $\theta_{\G}$ vanishes on the whole of $H_2(\G;\mathbb Z)$.
\end{proof}

The first statement also applies to locally finite groups of germs, since group homology commutes with directed unions of finite subgroups. More generally, the same proof applies when coherent local lifts have degree-two images generating the group homology. Such coherence requires the relations to hold simultaneously on a common invariant clopen neighborhood. Lemma~\ref{lem:finite-germ-coherent-lift} supplies this for finite groups.

\subsection{Coherent realizations of returns beyond finite groups}
We now require a coherent local realization of the whole group of germs. The following definition also applies when that group is infinite.

\begin{defn}[Coherent realization of returns]
\label{def:coherent-return-realization}
Let $\mathcal{O}$ be an exceptional orbit, choose a basepoint $x\in\mathcal{O}$, and put $H_{\mathcal{O}}=\G_x^x$. A \emph{coherent realization of returns} of $H_{\mathcal{O}}$ is a clopen neighborhood $U\ni x$ and a homomorphism
\begin{equation}
\label{eq:coherent-return-realization}
\rho_{\mathcal{O}}:H_{\mathcal{O}}\longrightarrow\F(\G|_U)
\end{equation}
such that the germ of $\rho_{\mathcal{O}}(h)$ at $x$ is $h$ and $(\rho_{\mathcal{O}}(h),y)\in\T$ for every $y\in U\setminus\{x\}$.
\end{defn}
Extending by the identity outside $U$ gives a homomorphism $H_{\mathcal{O}}\to\F(\G)$, which we denote by the same symbol.

All multiplication relations must hold on the same invariant clopen neighborhood $U$. Choosing representatives independently on different neighborhoods does not give this property. Finite groups of germs admit such realizations by Lemma~\ref{lem:finite-germ-coherent-lift}. Infinite groups need not.
\keepwithstatement
\begin{thm}
\label{thm:coherent-return-split-AH}
Let $\G$ be a minimal effective AF-by-discrete groupoid with Cantor unit space and comparison. Suppose that every exceptional group of germs admits a coherent realization of returns. Then
\[\theta_{\G}=0 \addtag \label{eq:coherent-return-theta-zero}\]

and Li's AH sequence splits as a sequence of abelian groups:
\begin{equation}
\label{eq:coherent-split-AH}
\F(\G)_{\ab}\cong H_0(\G;\mathbb{Z}/2\mathbb{Z})\oplus H_1(\G).
\end{equation}
More precisely, in the connector coordinates of Theorem~\ref{thm:relative-normal-form}, every isotropy defect map vanishes. If
\begin{equation}
L=\bigoplus_{\mathcal{O}}L_{\mathcal{O}}
=\bigoplus_{\mathcal{O}}\ker\bigl(\mathbb{Z}[\mathcal{T}(\mathcal{O})]\to\mathbb{Z}\bigr),
\end{equation}
where $L_{\mathcal{O}}$ is the transport lattice of Definition~\ref{def:transport-lattice}, and $\tau:L\to H_0(\T)$ is the total transport defect, then
\begin{equation}
\label{eq:coherent-H1}
H_1(\G)\cong \bigoplus_{\mathcal{O}}H_{\mathcal{O}}^{\ab}\oplus\ker\tau
\end{equation}
and we may choose the splitting in~\eqref{eq:coherent-split-AH} so that its restriction to $\bigoplus_{\mathcal{O}}H_{\mathcal{O}}^{\ab}$ is induced by the coherent realizations of returns.
\end{thm}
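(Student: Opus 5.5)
Extending $\rho_{\mathcal{O}}$ by the identity outside $U$ gives a homomorphism $H_{\mathcal{O}}\to\F(\G)$ whose full bisection has source and range $\Omega(\mathsf{B})$ and exactly one exceptional arrow, namely $h$ at $x$. The vanishing of the isotropy defect comes first. For each $h\in H_{\mathcal{O}}$ the full bisection of $\rho_{\mathcal{O}}(h)$ is an absolute $1$-cycle, and by Theorem~\ref{thm:index-formula} its relative class is $[h]$. Exactness of~\eqref{eq:relative} then gives $\delta[h]=0$, so every $\lambda_{\mathcal{O}}$ vanishes. This holds in connector coordinates based at $x$; for other connectors we invoke Proposition~\ref{prop:connector-shear}, since $\lambda_{\mathcal{O}}'=\lambda_{\mathcal{O}}$. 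Hence $\delta$ restricted to $\bigoplus_{\mathcal{O}}(H_{\mathcal{O}}^{\ab}\oplus L_{\mathcal{O}})$ is $(a,l)\mapsto\tau(l)$. Equation~\eqref{eq:H1-kernel} then gives~\eqref{eq:coherent-H1} directly, with $\bigoplus_{\mathcal{O}}H_{\mathcal{O}}^{\ab}\subseteq H_1(\G)$ realized by the classes $I(\rho_{\mathcal{O}}(h))$.

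Next, $\theta_{\G}=0$. By Theorem~\ref{thm:higher-germ-homology}, $H_2(\G)$ is the direct sum of the $H_2(H_{\mathcal{O}})$. We apply Lemma~\ref{lem:coherent-parity} with $L=H_{\mathcal{O}}$ and $\rho=\rho_{\mathcal{O}}$. It remains to check that $(Q_{\rho,2})_*$ coincides with the inclusion of the $\mathcal{O}$-summand in that decomposition. Under the relative chain identification of Lemma~\ref{lem:relative-nerve}, the chain $1_{V_h*V_k}$ restricts to the exceptional composable tuples. By coherence, these are only the pair $(h,k)$ at $x$ together with tuples containing a single exceptional coordinate at $x$; but the non-AF coordinates occur only at $x$, so an exceptional pair of $V_h*V_k$ has its relevant coordinate based at $x$, and then both coordinates are determined. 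Modulo $\T$-chains this is exactly the bar symbol $[h|k]$ in the discrete orbit complex at the basepoint. The map to $H_2(H_{\mathcal{O}})$ is therefore the identity, and $\theta_{\G}$ vanishes on every summand.

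For the splitting, \eqref{eq:AH-parity-sequence} together with $\theta_{\G}=0$ gives the short exact sequence $0\to H_0(\G;\mathbb{Z}/2\mathbb{Z})\to\F(\G)_{\ab}\to H_1(\G)\to0$, and we need a homomorphic section. On $\bigoplus_{\mathcal{O}}H_{\mathcal{O}}^{\ab}$ we take $[h]\mapsto[\rho_{\mathcal{O}}(h)]_{\ab}$. This is a homomorphism because $\rho_{\mathcal{O}}$ is one and $\F(\G)_{\ab}$ is abelian, and it is a section by the computation in the first paragraph. The complementary summand $\ker\tau$ is a subgroup of the free abelian group $L$, hence free. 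We lift a basis of $\ker\tau$ arbitrarily, which is possible because the index map is surjective under minimality and comparison (Li, or Corollary~\ref{cor:index-surjective}), and extend linearly. This yields a section on all of $H_1(\G)$ and hence~\eqref{eq:coherent-split-AH}.

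The main obstacle is the identification in the second step. One has to verify that the chain-level image of $[h|k]$ in the relative complex agrees with the orbit-basepoint model of Theorem~\ref{thm:higher-germ-homology}, including the homotopy equivalence that uses the chosen connectors. I would handle this by taking $x$ itself as the basepoint with trivial connectors, so that $q\circ i=\Id$ applies verbatim. The remaining steps are routine once this is in place.
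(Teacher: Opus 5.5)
Your proposal is correct and follows the paper's proof step for step. The isotropy defect $\lambda_{\mathcal{O}}$ vanishes because the extended $\rho_{\mathcal{O}}(h)$ is a full-group element whose index has relative class $[h]$. The differential $\theta_{\G}$ vanishes by Lemma~\ref{lem:coherent-parity}, since modulo $\T$-chains $1_{V_h*V_k}$ is the single tuple $(h,k)$ at the basepoint. The section is built from $\rho_{\mathcal{O}}$ on $\bigoplus_{\mathcal{O}}H_{\mathcal{O}}^{\ab}$ and from lifts of a basis of the free group $\ker\tau$. You only treat the isotropy defect before $\theta_{\G}$ and spell out the chain-level identification in more detail than the paper does.

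One small correction: Corollary~\ref{cor:index-surjective} assumes the AF core is minimal, which is not among the present hypotheses. Index surjectivity should therefore be taken from Li's Corollary~6.14 (minimality and comparison of $\G$) alone.
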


\begin{proof}
Fix an exceptional orbit $\mathcal{O}$. By Theorem~\ref{thm:higher-germ-homology}, its contribution in degree two is $H_2(H_{\mathcal{O}};\mathbb{Z})$. Apply Lemma~\ref{lem:coherent-parity} to the homomorphism $\rho_{\mathcal{O}}$. Under the identification by the relative nerve, the induced map on homology in degree two is the inclusion of this whole orbit summand: every nondegenerate tuple of the coherent action is AF away from the single basepoint, while its exceptional tuple at the basepoint is exactly the corresponding tuple in $H_{\mathcal{O}}$. Hence $\theta_{\G}$ vanishes on every summand, proving~\eqref{eq:coherent-return-theta-zero}.

For $h\in H_{\mathcal{O}}$, the extended element of the full group $\rho_{\mathcal{O}}(h)$ has no exceptional source except $x$ and fixes $x$. The index formula thus gives
$I(\rho_{\mathcal{O}}(h))=([h]_{\ab},0)$
in the return--transport coordinates. Since every full-group index lies in $\ker\delta$, the isotropy defect of $[h]_{\ab}$ is zero. Equation~\eqref{eq:coherent-H1} follows. Passing $\rho_{\mathcal{O}}$ to abelianizations supplies a section on the corresponding summand of $\bigoplus_{\mathcal{O}}H_{\mathcal{O}}^{\ab}$.

The group $L$ is free abelian. Hence, its subgroup $\ker\tau$ is free abelian and has a basis. For each basis element choose a full-group element with that index. The surjectivity of the index map is the last arrow in Li's exact sequence. Sending the chosen basis to their classes in $\F(\G)_{\ab}$ extends uniquely to a homomorphism $\ker\tau\to\F(\G)_{\ab}$ and gives a section of $I_{\ab}$ on the transport summand. Together with the sections coming from the coherent actions on the isotropy summands, this produces a section of $I_{\ab}$ on all of $H_1(\G)$.

By the first part of the proof, $\theta_{\G}=0$, and thus exactness of \eqref{eq:AH-parity-sequence} makes $j_{\G}$ injective. We therefore obtain the direct-sum decomposition \eqref{eq:coherent-split-AH}.
\end{proof}

\subsection{A split abelianization theorem for finite groups of germs}
In the finite-group case, the preceding coherent-realization theorem can be written entirely in boundary data. Put
\begin{equation}
L=\bigoplus_{\mathcal{O}}\ker\bigl(\mathbb{Z}[\mathcal{T}(\mathcal{O})]\longrightarrow\mathbb{Z}\bigr),
\end{equation}
and let $\tau:L\to H_0(\T)$ be the transport defect in chosen connector coordinates. Since $\bigoplus_{\mathcal{O}}H_{\mathcal{O}}^{\ab}$ is torsion and $H_0(\T)$ is torsion-free, the isotropy defect vanishes. \begin{thm}
\label{thm:finite-germ-split-AH}
Let $\G$ be a minimal effective AF-by-discrete groupoid with Cantor unit space and comparison. Suppose that every group of germs is finite. Then
\begin{equation}
\label{eq:split-AH-finite-germs}
0\longrightarrow H_0(\G;\mathbb{Z}/2\mathbb{Z})
\xrightarrow{j_{\G}}\F(\G)_{\ab}
\xrightarrow{I_{\ab}}H_1(\G)\longrightarrow0
\end{equation}
is split exact. In particular, there is a generally noncanonical isomorphism
\begin{equation}
\label{eq:finite-germ-full-abelianization}
\F(\G)_{\ab}\cong
\bigl(H_0(\T)/(\tau(L)+2H_0(\T))\bigr)
\oplus \bigoplus_{\mathcal{O}}H_{\mathcal{O}}^{\ab}\oplus\ker\tau.
\end{equation}
\end{thm}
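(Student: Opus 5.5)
The plan is to derive this as the finite-group specialization of Theorem~\ref{thm:coherent-return-split-AH}, and then compute the three summands explicitly.

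First, by Lemma~\ref{lem:finite-germ-coherent-lift}, applied with $H=H_{\mathcal{O}}$ finite at a basepoint of each exceptional orbit, we obtain a clopen neighborhood $U$ and a homomorphism $\rho_{\mathcal{O}}:H_{\mathcal{O}}\to\F(\G|_U)$. This homomorphism has germ $h$ at the basepoint and is AF at every other point of $U$, which is exactly a coherent realization of returns in the sense of Definition~\ref{def:coherent-return-realization}. Theorem~\ref{thm:coherent-return-split-AH} then gives $\theta_{\G}=0$, injectivity of $j_{\G}$, and a homomorphic section of $I_{\ab}$. Together with Li's exact sequence~\eqref{eq:AH-parity-sequence}, this yields split exactness of~\eqref{eq:split-AH-finite-germs}. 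As an alternative route to $\theta_{\G}=0$ alone, Theorem~\ref{thm:theta-finite-germs} applies directly.

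Next, I identify the two outer terms. Theorem~\ref{thm:coherent-return-split-AH}, specifically~\eqref{eq:coherent-H1}, gives $H_1(\G)\cong\bigoplus_{\mathcal{O}}H_{\mathcal{O}}^{\ab}\oplus\ker\tau$. Here the isotropy defect vanishes by Proposition~\ref{prop:torsion-isotropy-zero-defect}, since each $H_{\mathcal{O}}^{\ab}$ is finite. For the parity term, I use the exact sequence~\eqref{eq:relative} to write $H_0(\G)=H_0(\T)/\delta(H_1(\G,\T))$. With $\lambda=0$, the image of $\delta$ is $\tau(L)$.

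The step needing the most care is passing to $\mathbb{Z}/2\mathbb{Z}$ coefficients. My first attempt is to argue that $H_0(\G;\mathbb{Z}/2\mathbb{Z})\cong H_0(\G)\otimes\mathbb{Z}/2\mathbb{Z}$ by the universal coefficient theorem, because $H_{-1}=0$ so no Tor correction appears in degree zero. Then $H_0(\G)\otimes\mathbb{Z}/2\cong H_0(\T)/(\tau(L)+2H_0(\T))$. As a cross-check, I would also verify this with the relative sequence for $\mathbb{Z}/2$ coefficients. Its connecting map $H_1(\G,\T;\mathbb{Z}/2)\to H_0(\T;\mathbb{Z}/2)$ has image generated by the mod-2 reductions of the defects of exceptional arrows. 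This holds because Lemma~\ref{lem:relative-nerve} makes the relative degree-one group free on exceptional arrows with arbitrary coefficients, and the defect of each arrow is $\tau$ of its transport vector plus an isotropy defect that is zero in $H_0(\T)$. That last point requires $\lambda_{\mathcal{O}}=0$ integrally, which we already have.

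Finally, combining the split sequence with these identifications gives~\eqref{eq:finite-germ-full-abelianization}. The isomorphism is noncanonical because the section on $\ker\tau$ depends on chosen full-group lifts of a basis, and the decomposition depends on the choice of connectors.
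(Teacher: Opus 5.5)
Your proposal is correct and follows the paper's proof essentially step for step: Lemma~\ref{lem:finite-germ-coherent-lift} supplies coherent realizations of returns, and Theorem~\ref{thm:coherent-return-split-AH} then gives the split sequence and $H_1(\G)\cong\bigoplus_{\mathcal{O}}H_{\mathcal{O}}^{\ab}\oplus\ker\tau$; torsion-freeness of $H_0(\T)$ kills the isotropy defect, so that $\operatorname{im}\delta=\tau(L)$, and the degree-zero change of coefficients gives $H_0(\G;\mathbb{Z}/2\mathbb{Z})\cong H_0(\T)/(\tau(L)+2H_0(\T))$. Your mod-two relative-sequence cross-check is not needed, but it is consistent with the argument the paper gives in Proposition~\ref{prop:periodic-parity-matrix}.
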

This applies in particular when the AF core is minimal, or when $\G$ is compactly generated and minimal, using Section~\ref{sec:kernel} to supply comparison. No assumption of bounded type is needed.
\begin{proof}
By Lemma~\ref{lem:finite-germ-coherent-lift}, every exceptional group of germs admits a coherent realization of returns. Theorem~\ref{thm:coherent-return-split-AH} therefore gives the split exact sequence and
$H_1(\G)\cong\bigoplus_{\mathcal{O}}H_{\mathcal{O}}^{\ab}\oplus\ker\tau$.
Since every $H_{\mathcal{O}}^{\ab}$ is finite while the dimension group $H_0(\T)$ is torsion-free, the isotropy part of the defect is zero. Hence, $\operatorname{im}\delta=\tau(L)$. The low-degree relative exact sequence then gives
$H_0(\G)=H_0(\T)/\tau(L)$. Since degree zero has no preceding homology term, change of coefficients yields
$H_0(\G;\mathbb{Z}/2\mathbb{Z})\cong H_0(\G)/2H_0(\G)$,
which is the quotient $H_0(\T)/(\tau(L)+2H_0(\T))$ appearing in \eqref{eq:finite-germ-full-abelianization}.
\end{proof}

The splitting is a splitting of abelian groups. The finite groups of germs are realized on clopen neighborhoods, while representatives of a free transport basis are chosen using index surjectivity.

For periodic incidence data, the parity group is computed by a stabilized matrix calculation over $\mathbb{Z}/2\mathbb{Z}$.
\keepwithstatement
\begin{prop}
\label{prop:periodic-parity-matrix}
Suppose the incidence sequence of $\mathsf{B}$ is eventually periodic from level $N$, with period product $P\in M_r(\mathbb{Z})$. Let $D$ be the matrix of actual defect columns for a finite generating set of $H_1(\G,\T)$ at that level. Put $\overline P=P\bmod2$, $\overline D=D\bmod2$, and
\begin{equation}
\nu_2=\min\{k\geq0:\operatorname{rank}_{\mathbb{Z}/2\mathbb{Z}}\overline P^k
=\operatorname{rank}_{\mathbb{Z}/2\mathbb{Z}}\overline P^{k+1}\}.
\end{equation}
Then $\nu_2\leq r$, and
\begin{equation}
\label{eq:periodic-parity-matrix}
H_0(\G;\mathbb{Z}/2\mathbb{Z})\cong
\frac{\operatorname{im}\overline P^{\nu_2}}
{\operatorname{im}(\overline P^{\nu_2}\overline D)}
\cong C_2^{\,k_2},\qquad
k_2=\operatorname{rank}_{\mathbb{Z}/2\mathbb{Z}}\overline P^{\nu_2}
-\operatorname{rank}_{\mathbb{Z}/2\mathbb{Z}}(\overline P^{\nu_2}\overline D).
\end{equation}
Under the hypotheses of Theorem~\ref{thm:finite-germ-split-AH}, this is the actual parity summand and $\F(\G)_{\ab}\cong H_1(\G)\oplus C_2^{k_2}$.
\end{prop}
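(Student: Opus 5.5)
The plan is to compute $H_0(\G;\mathbb{Z}/2\mathbb{Z})$ from the mod-two version of the low-degree relative sequence, and then reduce the direct limit over the periodic tail to one stabilized image.

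\textbf{Step 1: mod-two relative sequence.} Apply the relative long exact sequence of $(\T\subseteq\G)$ with coefficients $\mathbb{Z}/2\mathbb{Z}$. This gives
\[
H_1(\G,\T;\mathbb{Z}/2\mathbb{Z})\xrightarrow{\delta_2}\mathsf{P}_{\mathsf{B}}\to H_0(\G;\mathbb{Z}/2\mathbb{Z})\to0 .
\]
By Lemma~\ref{lem:relative-nerve}, the relative chains are finitely supported $\mathbb{Z}/2\mathbb{Z}$-functions on exceptional arrows. The relative degree-one cycles are therefore generated by the mod-two reductions of the integral arrow symbols, and $\delta_2$ of an arrow symbol is the mod-two reduction of $[1_{\sg(V)}]-[1_{\rg(V)}]$. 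Hence $\operatorname{im}\delta_2$ is spanned by the reductions of the integral defect columns of the chosen finite generating set, namely the columns of $\overline D$.

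A cleaner alternative avoids the mod-two relative group altogether. Use $H_0(\G)=H_0(\T)/\delta(H_1(\G,\T))$ from \eqref{eq:relative}, and right exactness of $-\otimes\mathbb{Z}/2\mathbb{Z}$ (universal coefficients in degree zero) to get
\[
H_0(\G;\mathbb{Z}/2\mathbb{Z})\cong \mathsf{P}_{\mathsf{B}}/\overline{\delta(H_1(\G,\T))}.
\]
The reduction of that image is spanned by the classes $[\overline D e_j]$. I would take this second route.

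\textbf{Step 2: stabilized image.} Use \eqref{eq:AF-H0-direct-limit} with $A=\mathbb{Z}/2\mathbb{Z}$ along the periodic tail, so that $\mathsf{P}_{\mathsf{B}}\cong\varinjlim(\mathbb{F}_2^r,\overline P)$. Over the field $\mathbb{F}_2$ the images $\operatorname{im}\overline P^k$ decrease and the kernels increase. By the dimension argument of Lemma~\ref{lemma:periodic-death}, done over $\mathbb{F}_2$, once the ranks of $\overline P^k$ and $\overline P^{k+1}$ agree they agree forever, and $\nu_2\le r$. Put $W=\operatorname{im}\overline P^{\nu_2}$. Then $\overline P$ restricts to a bijection of $W$, since $\overline P W=W$ and the dimensions are equal. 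The map $v\mapsto \overline P^{\nu_2}v$ then identifies the direct limit with $W$. Its kernel is exactly the vectors eventually killed, namely $\ker\overline P^{\nu_2}$, and it is surjective because $\overline P|_W$ is invertible, which handles the later transition maps. Intermediate (non-period-endpoint) levels are handled as in Lemma~\ref{lemma:periodic-death}.

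\textbf{Step 3: assemble.} Under this identification, the subgroup spanned by the defect classes is $\operatorname{im}(\overline P^{\nu_2}\overline D)$. This gives the quotient in \eqref{eq:periodic-parity-matrix}. Since it is an $\mathbb{F}_2$-vector space, it is $C_2^{k_2}$ with $k_2$ the difference of the two ranks.

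\textbf{Step 4: final assertion.} Under the hypotheses of Theorem~\ref{thm:finite-germ-split-AH}, the sequence \eqref{eq:split-AH-finite-germs} is split exact. Its left term is this parity group, so $\F(\G)_{\ab}\cong H_1(\G)\oplus C_2^{k_2}$.

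I expect the main obstacle to be Step 1. One has to check that the finite generating set of $H_1(\G,\T)$ spans the whole image of the defect after reduction mod two. It does, because $\delta(H_1(\G,\T))$ is generated by the defects of the generators and tensoring is right exact. One must also make sure that no extra mod-two classes appear, for example from $\Tor_1$. They do not, because only the cokernel of an integral map is being reduced.
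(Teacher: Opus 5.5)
Your argument is correct and follows the paper's proof: Step~2 is the same identification of $\varinjlim((\mathbb{Z}/2\mathbb{Z})^r,\overline P)$ with the stable image $\operatorname{im}\overline P^{\nu_2}$, and your first route in Step~1 is exactly the paper's, namely the mod-two relative sequence with $H_1(\G,\T;\mathbb{Z}/2\mathbb{Z})\cong H_1(\G,\T)\otimes\mathbb{Z}/2\mathbb{Z}$, which holds because the relative degree-zero chains vanish. The route you prefer, tensoring $H_0(\G)=H_0(\T)/\delta(H_1(\G,\T))$ with $\mathbb{Z}/2\mathbb{Z}$ and using right exactness, is an equally valid minor variant; it is the same degree-zero change of coefficients the paper itself uses in the proof of Theorem~\ref{thm:finite-germ-split-AH}.
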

\begin{proof}
After passing to the cofinal period endpoints and reducing coefficients modulo two, the dimension group associated with $\mathsf{B}$ becomes $\varinjlim((\mathbb{Z}/2\mathbb{Z})^r,\overline P)$. The sequence $\ker\overline P^k$ stabilizes at $k=\nu_2$. The induced endomorphism on $(\mathbb{Z}/2\mathbb{Z})^r/\ker\overline P^{\nu_2}$ is invertible, and thus the map from the initial period endpoint onto the direct limit is surjective with kernel $\ker\overline P^{\nu_2}$. Applying $\overline P^{\nu_2}$ identifies this quotient with the stable image $\operatorname{im}\overline P^{\nu_2}$.

It remains to justify that reducing the integral defect matrix $D$ captures the whole mod-two defect. The relative chain complex has degree-zero group equal to zero since $\G$ and $\T$ have the same unit space. Hence the universal coefficient sequence gives
$H_1(\G,\T;\mathbb{Z}/2\mathbb{Z})\cong H_1(\G,\T)\otimes\mathbb{Z}/2\mathbb{Z}$.
There is no additional $\operatorname{Tor}$ term from relative degree zero. Thus the reductions of the chosen integral generators still generate the relative group with mod-two coefficients, and their defect columns are exactly $\overline D$. Under the identification with $\operatorname{im}\overline P^{\nu_2}$, these columns become $\overline P^{\nu_2}\overline D$. The right-exact degree-zero relative sequence therefore gives \eqref{eq:periodic-parity-matrix}. The last assertion is Theorem~\ref{thm:finite-germ-split-AH}.
\end{proof}

The integers $\nu_2$ and $\nu$ of Section~\ref{sec:certified} can differ: the former is computed over $\mathbb{Z}/2\mathbb{Z}$, the latter over $\mathbb{Q}$. For example, $P=(4)$ has $\nu=0$ but $\nu_2=1$, and the parity group of its AF core is zero.

\subsection{A finite certificate for the abelian extension}
Even after $\theta_{\G}$ has been computed, the abelianization need not be the direct sum of its parity and index terms. The remaining problem is an ordinary extension problem.

\begin{defn}[Surviving parity group]
\label{def:surviving-parity}
Assume that the ordinary AH sequence \eqref{eq:AH-parity-sequence} applies. The \emph{surviving parity group} is $\mathsf{P}_{\G}=H_0(\G;\mathbb{Z}/2\mathbb{Z})/\operatorname{im}\theta_{\G}$.
\end{defn}
By exactness, $j_{\G}$ factors through an injection $\mathsf{P}_{\G}\hookrightarrow\F(\G)_{\ab}$ whose image is $\ker I_{\ab}$. Thus the AH sequence determines a short exact sequence
$0\to\mathsf{P}_{\G}\to\F(\G)_{\ab}\to H_1(\G)\to0$,
but it does not by itself determine whether this sequence splits.

For the remainder of this subsection, assume that $\G$ is minimal and has comparison. Suppose a certified computation gives $H_1(\G)=\mathbb{Z}^b\oplus\bigoplus_{j=1}^t\mathbb{Z}/n_j\mathbb{Z}$, with specified generators and index representatives $g_j$ for the torsion generators. By the constructive kernel theorem (Theorem~\ref{thm:constructive-boundary-cancellation}), $g_j^{n_j}$ is a product of dynamical transpositions. Write a chosen such factorization as
$g_j^{n_j}=t_{V_{j,1}}\cdots t_{V_{j,m_j}},\qquad \sg(V_{j,\ell})\cap\rg(V_{j,\ell})=\varnothing,$
including the transpositions used for its remainder in $\F(\T)$. The parities of these factorizations record the obstruction to splitting the abelian extension.
\keepwithstatement
\begin{prop}
\label{prop:extension-parity-certificate}
The classes
\begin{equation}
\label{eq:extension-parity-class}
\kappa_j=\sum_{\ell=1}^{m_j}[1_{\sg(V_{j,\ell})}]
\quad\text{in }\mathsf{P}_{\G}/n_j\mathsf{P}_{\G}
\end{equation}
are independent of the chosen transposition factorization and of the chosen index representatives. Together, they determine the isomorphism class of the abelian extension
$0\to\mathsf{P}_{\G}\to\F(\G)_{\ab}\to H_1(\G)\to0$.
The extension splits if and only if every $\kappa_j$ is zero. The obstructions for odd $n_j$ vanish automatically. For even $n_j$, $n_j\mathsf{P}_{\G}=0$, and thus \eqref{eq:extension-parity-class} is an actual element of $\mathsf{P}_{\G}$.
\end{prop}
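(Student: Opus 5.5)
The proof reduces the statement to the standard computation of $\operatorname{Ext}^1_{\mathbb{Z}}(H_1(\G),\mathsf{P}_{\G})$. We work in the short exact sequence $0\to\mathsf{P}_{\G}\xrightarrow{\bar j}\F(\G)_{\ab}\to H_1(\G)\to0$ obtained from~\eqref{eq:AH-parity-sequence}. Since $H_1(\G)=\mathbb{Z}^b\oplus\bigoplus_j\mathbb{Z}/n_j\mathbb{Z}$, we have
\[
\operatorname{Ext}^1(H_1(\G),\mathsf{P}_{\G})\cong\bigoplus_j\mathsf{P}_{\G}/n_j\mathsf{P}_{\G}.
\]
The class of the extension is computed generator by generator: lift the generator of $\mathbb{Z}/n_j\mathbb{Z}$ to $[g_j]\in\F(\G)_{\ab}$. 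Then $n_j[g_j]$ lies in $\ker I_{\ab}=\bar j(\mathsf{P}_{\G})$, and its preimage, taken modulo $n_j\mathsf{P}_{\G}$, is the $j$th component of the extension class.

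First, we identify this preimage with $\kappa_j$. We have $[g_j^{n_j}]=\sum_\ell[t_{V_{j,\ell}}]$ in $\F(\G)_{\ab}$. By the description of $j_{\G}$ after~\eqref{eq:AH-parity-sequence}, each $[t_{V}]$ equals $j_{\G}[1_{\sg(V)}]$. Hence $n_j[g_j]=\bar j(\kappa_j)$ computed in $\mathsf{P}_{\G}$ itself.

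Independence of the factorization then follows from injectivity of $\bar j$: two factorizations of the same element $g_j^{n_j}$ give the same class in $\F(\G)_{\ab}$, so their parity sums agree already in $\mathsf{P}_{\G}$. Independence of the representative follows from the usual Ext argument. If $I(g_j')=I(g_j)$, then $[g_j']-[g_j]=\bar j(p)$ for some $p\in\mathsf{P}_{\G}$, and multiplying by $n_j$ changes the class by $n_jp$, which vanishes modulo $n_j\mathsf{P}_{\G}$. The main subtlety is that a transposition factorization of $g_j'^{\,n_j}$ need not be compared directly with one of $g_j^{n_j}$. The argument avoids this by passing through abelianization classes, which is legitimate because $j_{\G}$ is defined on $[1_{\sg(V)}]$ independently of $V$ (Li).

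Since the Ext class is exactly $(\kappa_j)_j$, these classes determine the isomorphism class of the extension, and the extension splits if and only if every $\kappa_j$ vanishes. For the final assertions, note that $2\mathsf{P}_{\G}=0$ because $\mathsf{P}_{\G}$ is a quotient of an $\mathbb{F}_2$-vector space. If $n_j$ is odd, then $n_j\mathsf{P}_{\G}=\mathsf{P}_{\G}$, so the quotient vanishes. If $n_j$ is even, then $n_j\mathsf{P}_{\G}=0$, so $\kappa_j$ is an actual element of $\mathsf{P}_{\G}$.
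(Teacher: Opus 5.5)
Your proposal is correct and follows essentially the paper's argument. You identify $n_j$ times the lift of the $j$th torsion generator with $\bar{j}(\kappa_j)$ using the defining formula for $j_{\G}$, obtain independence from the injectivity of $\mathsf{P}_{\G}\hookrightarrow\F(\G)_{\ab}$ and from the $n_jp$ shift under a change of lift, and then read off the classification. The only difference is packaging: you invoke the standard isomorphism $\operatorname{Ext}^1(H_1(\G),\mathsf{P}_{\G})\cong\bigoplus_j\mathsf{P}_{\G}/n_j\mathsf{P}_{\G}$, whereas the paper writes down the explicit presentation $(\mathsf{P}_{\G}\oplus\mathbb{Z}^{b+t})/\langle n_je_{b+j}-\widehat\kappa_j\rangle$ of $\F(\G)_{\ab}$ and checks splitting summand by summand, which is the computation underlying that Ext isomorphism.
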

\begin{proof}
Let $\widetilde t_j$ be the class of $g_j$ in the abelianization of the full group. The displayed factorization and the definition of $j_{\G}$ identify $n_j\widetilde t_j$ with the sum in \eqref{eq:extension-parity-class}, before reduction by $n_j\mathsf{P}_{\G}$. This is independent of the factorization since $\mathsf{P}_{\G}$ embeds in the abelianization. Replacing a lift adds an element $p\in \mathsf{P}_{\G}$ and changes its multiple by $n_jp$. Hence, the reduced class is independent of the lift.

Choose representatives $\widehat\kappa_j\in \mathsf{P}_{\G}$. The extension has the abelian presentation
\begin{equation}
\label{eq:extension-presentation}
\bigl(\mathsf{P}_{\G}\oplus\mathbb{Z}^{b+t}\bigr)\Big/
\left\langle n_je_{b+j}-\widehat\kappa_j:1\leq j\leq t\right\rangle.
\end{equation}
Indeed, the chosen lifts together with $\mathsf{P}_{\G}$ generate the extension. Any relation among them projects to a relation in the chosen decomposition of $H_1(\G)$, and thus after subtracting the displayed torsion relations it lies entirely in $\mathsf{P}_{\G}$. Hence, \eqref{eq:extension-presentation} is a complete presentation. The free summand always splits. For the cyclic summand of order $n_j$, a splitting exists exactly when its generator has a lift of order dividing $n_j$, which is equivalent to making its multiple zero by changing the lift. This is precisely the condition $\kappa_j=0$ in $\mathsf{P}_{\G}/n_j\mathsf{P}_{\G}$. Multiplication by an odd integer is the identity on the exponent-two group $\mathsf{P}_{\G}$, while multiplication by an even integer is zero.
\end{proof}

An order-$n$ index class leaves the order of a lift in the abelianization unresolved. For example, with $\mathsf{P}_{\G}=C_2$ and even $n$, the relation $n\widetilde t=1\in C_2$ gives the nonsplit cyclic extension $C_{2n}$. The zero relation gives $C_n\oplus C_2$. This illustrates the abelian extension calculation independently of a groupoid realization. In Theorem~\ref{thm:finite-germ-split-AH}, the coherent group actions supply torsion lifts whose abelianized orders divide the corresponding orders in $H_{\mathcal{O}}^{\ab}$, and thus these obstructions vanish.

\subsection{An explicit signature for the directed examples with finite groups of germs}
We construct a parity coordinate for the directed examples from their level permutations. This gives an independent verification of the splitting in Theorem~\ref{thm:finite-germ-split-AH}.

Consider the family from Corollary~\ref{cor:finite-permutation-returns}, on $\{0,\ldots,q-1\}^{\omega}$, with a finite faithful subgroup $H\leq\operatorname{Sym}(q)$. Recall that $b_h$ has section $b_h$ at $0$, section $\sigma_h$ at $1$, and identity sections elsewhere. There is one exceptional $\G$-orbit with two infinite tiles, group of germs $H$, and zero defect in the dimension group. Hence Theorem~\ref{thm:higher-germ-homology} gives
\begin{equation}
\label{eq:directed-all-homology}
H_0(\G)=\mathbb{Z}[1/q],\qquad
H_1(\G)=\mathbb{Z}\oplus H^{\ab},\qquad
H_n(\G)\cong H_n(H;\mathbb{Z})\quad(n\geq2).
\end{equation}

Let $\chi_H:H\to\mathbb{Z}/2\mathbb{Z}$ be the sign of the given permutation representation. Since the target is abelian, it factors through $H^{\ab}$. Composing with the return coordinate of $I(g)$ therefore gives a well-defined value $\chi(g)$. After refining a compact open table for $g$ to a common sufficiently deep level $n$, the element $g$ permutes the level-$n$ cylinders. Let $\epsilon_n(g)\in\mathbb{Z}/2\mathbb{Z}$ be the sign of this finite permutation. When $q$ is odd, the return coordinate gives a correction for which the resulting sign is constant at all sufficiently large levels.
\keepwithstatement
\begin{thm}
\label{thm:corrected-boundary-signature}
If $q$ is odd, then for all sufficiently large $n$, the value
\begin{equation}
\label{eq:corrected-boundary-signature}
\operatorname{sgn}_{\partial}(g)=\epsilon_n(g)+(n-1)\chi(g)
\end{equation}
is independent of $n$ and defines a homomorphism $\F(\G)\to\mathbb{Z}/2\mathbb{Z}$. It is zero on $a$ and every $b_h$, and has value $1$ on a transposition in $\Sym(\T)$ exchanging two equal-length cylinders. The induced map
\begin{equation}
\label{eq:directed-full-abelianization}
(I,\operatorname{sgn}_{\partial}):\F(\G)_{\ab}
\longrightarrow\mathbb{Z}\oplus H^{\ab}\oplus C_2
\end{equation}
is an isomorphism. If $q$ is even, $I_{\ab}$ alone is an isomorphism $\F(\G)_{\ab}\cong\mathbb{Z}\oplus H^{\ab}$.
\end{thm}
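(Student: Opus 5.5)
The plan is to compute how $\epsilon_n$ changes under refinement, correct the drift by the return coordinate, and then read off the abelianization from Theorem~\ref{thm:finite-germ-split-AH}.

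\emph{Refinement drift.} Fix $g$ and a level $n$ at which $g$ permutes the level-$n$ cylinders; for $n$ large the exceptional support $\Sigma(g)$ lies in the orbit of $0^\omega$ and is separated by distinct cylinders. Passing to level $n+1$, each cylinder $[p]$ splits into $q$ children. Away from $\Sigma(g)$ the action on $[p]$ is a prefix replacement, so the children move along rigidly. Such blocks contribute $\epsilon_n(g)$ to the sign, because $q$ is odd: each cycle of length $m$ lifts to $q$ cycles of length $m$. At a cylinder whose section is not the identity, the children are additionally permuted by the section at level one. For $a$, the only nonrigid point is the carry, and the sections along the carry are odometers. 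For $b_h$, the section along $0^n$ is again $b_h$, which acts on the first letter by $h$ only through $\sigma_h$ on the child $[0^n1]$. I will check that the permutation induced on the children of $[0^n]$ is $h$ acting on positions, so it contributes the sign $\chi_H(h)$.

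The plan is to prove, by splitting $g$ along a clopen table, that
\[\epsilon_{n+1}(g)=\epsilon_n(g)+\chi(g)\]
for all large $n$. The rigid pieces contribute no drift, since $q$ is odd. Each exceptional germ at $0^\omega$ with normalized return $h$ contributes $\chi_H(h)$. The transport germs contribute a fixed term, and I will verify that the odometer's carry contributes zero, since it is the base-$q$ cycle of odd length $q$. Summing over $\Sigma(g)$ and using that $\chi_H$ factors through $H^{\ab}$ together with the index formula (Theorem~\ref{thm:index-formula}), the drift is $\chi(g)$. This yields constancy of $\epsilon_n(g)+(n-1)\chi(g)$.

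\emph{Homomorphism and values.} Each $\epsilon_n$ is a homomorphism on elements resolved at level $n$, and $\chi$ is a homomorphism because $I$ is. Hence $\operatorname{sgn}_\partial$ is a homomorphism. The normalization $(n-1)$ is chosen so that the value on $a$ and on each $b_h$ at the first level is zero. To verify this, I will compute $\epsilon_1(a)$ as the sign of a $q$-cycle, which is even, and $\epsilon_1(b_h)$, which is trivial since $b_h$ fixes the first letter. For a cylinder transposition at level $n$, the sign is $1$ with $\chi=0$. Since $q$ is odd, refinement gives $q$ transpositions, so the sign stays $1$.

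\emph{Isomorphism.} By Theorem~\ref{thm:finite-germ-split-AH}, $\F(\G)_{\ab}\cong H_0(\G;\mathbb{Z}/2\mathbb{Z})\oplus H_1(\G)$. Here $H_0(\G;\mathbb{Z}/2\mathbb{Z})=\mathbb{Z}[1/q]/2$, which is $C_2$ for odd $q$ and $0$ for even $q$. The map $(I,\operatorname{sgn}_\partial)$ is surjective, using $a$, the elements $b_h$, and a transposition, where $j_\G$ of the generator is that transposition. Comparing this with the split sequence gives injectivity. The even case follows directly from $H_0(\G;\mathbb{Z}/2\mathbb{Z})=0$.

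The main obstacle is the local computation at the exceptional germs. The refinement sign must depend only on the abelianized return, and the transport carry must contribute zero even when an element combines several exceptional germs. I would first verify it on generators, then use multiplicativity of both sides modulo $\F(\T)$, where the drift is zero because $q$ is odd.
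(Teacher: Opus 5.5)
Your overall plan matches the paper's: the refinement formula for block permutations, correcting the drift by $\chi$, and Li's sequence via Theorem~\ref{thm:finite-germ-split-AH} for the isomorphism. The values on generators and on transpositions, surjectivity and injectivity, and the even case are fine once the drift identity is in place. The gap is in that identity, $\epsilon_{n+1}(g)=\epsilon_n(g)+\chi(g)$, which carries the theorem and whose local justification is false.

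At no level is $g$ rigid on every level-$n$ cylinder missing $\Sigma(g)$. For $g=b_h$, the cylinder $[0^{n-1}1]$ misses $0^\omega$, yet the section there is $\sigma_h$, which permutes its $q$ children by $h$. Conversely, $b_h$ fixes the first letter, so its section $b_h$ at $0^n$ induces the \emph{trivial} permutation on the children of $[0^n]$. The permutation $h$ acts on the children of $[0^n1]$, one level lower. With your bookkeeping, where drift comes only from cylinders meeting $\Sigma(g)$, the drift of $b_h$ would be $0$, not $\chi_H(h)$.

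The correct identity is $\epsilon_{n+1}(g)=q\,\epsilon_n(g)+r_n(g)$. Here $r_n(g)$ is the sum of the signs of the root permutations of \emph{all} level-$n$ sections of $g$, including the finitary sections next to the exceptional points. For $b_h$, the entire contribution comes from the non-exceptional section $\sigma_h$ at $0^{n-1}1$.

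The passage from generators to an arbitrary element is also unsupported. Multiplicativity ``modulo $\F(\T)$'' presupposes that $\F(\G)$ is generated by $\F(\T)$, $a$ and the $b_h$. This is not formal: $\F(\T)$ is not normal, and $\ker I=\Sym(\G)$ contains transpositions with non-AF germs. You give no argument for it.

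The missing idea is to place the multiplicativity on the self-similar group $J=\langle a,b_h,\sigma_h\rangle$ rather than on $\F(\G)$:
\begin{itemize}
\item The rule $(uv)|_p=u|_{v(p)}\,v|_p$ makes $r_m$ a homomorphism on $J$.
\item For odd $q$ and every $m\geq1$, inspecting the recursions gives $r_m(a)=r_m(\sigma_h)=0$ and $r_m(b_h)=\chi_H(h)$. Hence $r_m=\chi$ on $J$.
\item Refine a table of $g$ to a level $N$ with section words $u_v\in J$. Then $r_n(g)=\sum_v r_{n-N}(u_v)=\sum_v\chi(u_v)=\chi(g)$ for $n\geq N+1$. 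The last equality is the index formula applied to the localized sections.
\end{itemize}
This identity holds for all $m\geq1$, not merely for large $m$. That is also what justifies reading off $\operatorname{sgn}_\partial(a)$ and $\operatorname{sgn}_\partial(b_h)$ at level one, as you propose.
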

\begin{proof}
Let $J$ be the self-similar group generated by $a$, the $b_h$, and the rooted permutations $\sigma_h$. The rooted permutations belong to $\F(\T)$, and adjoining them leaves $\G$ unchanged. For a rooted automorphism $u$, let $r_m(u)$ be the sum modulo two of the signs of the root permutations of all its sections at level $m$. The section multiplication rule shows that $r_m$ is a homomorphism: multiplication only permutes the section positions before multiplying their root permutations.

For odd $q$ and every $m\geq1$, direct inspection of the recursions gives $r_m(a)=0$, $r_m(\sigma_h)=0$, and $r_m(b_h)=\chi_H(h)$. In the last case, the only nontrivial root contribution is the section $\sigma_h$ at $0^{m-1}1$. It follows that $r_m(u)=\chi(u)$ for every $u\in J$ and $m\geq1$. Equality on generators suffices since both sides are homomorphisms.

Refine a table for $g$ to a common level $N$, with section words $u_v\in J$. Its index is the sum of the indices of its localized sections, since the permutation of the level-$N$ cylinders belongs to $\F(\T)$. For a localized $a$, the normalized return is trivial, for a localized $b_h$, it is $h$, and for a localized $\sigma_h$, it is trivial. These assertions follow using the odometer connectors in the exceptional orbit. Therefore, $\chi(g)=\sum_{|v|=N}\chi(u_v)$. For $n\geq N+1$, the total contribution of the root signs of the sections of $g$ at level $n$ is consequently $\chi(g)$.

For these sufficiently deep levels, write $r_n(g)$ for the sum of the root signs of the sections of $g$ at level $n$, extending the notation above. The sign formula for a block permutation is $\epsilon_{n+1}(g)=q\epsilon_n(g)+r_n(g)$. Since $q$ is odd, we obtain $\epsilon_{n+1}(g)=\epsilon_n(g)+\chi(g)$ for $n\geq N+1$. This proves stabilization of \eqref{eq:corrected-boundary-signature}. For two elements, choose a common valid level. Both the finite permutation sign and $\chi$ are homomorphisms there, proving additivity.

The odometer permutes $q^n$ words in one cycle, of even sign when $q$ is odd. For $b_h$, its sign on the first level is zero and its level-$n$ sign is $(n-1)\chi_H(h)$, and thus both specified signatures vanish. A cylinder transposition in $\F(\T)$ is repeated $q^{n-N}$ times at level $n$, an odd number. Here $\chi(g)=0$, and its corrected signature is thus $1$.

Now $H_0(\G;\mathbb{Z}/2\mathbb{Z})=C_2$ for odd $q$, and the signature detects its image under $j_{\G}$. This independently proves that $j_{\G}$ is injective and $\theta_{\G}=0$ in this family. The exact sequence shows that \eqref{eq:directed-full-abelianization} is injective. It is onto since $a$ realizes the free index generator, the actual subgroup $h\mapsto b_h$ realizes $H^{\ab}$ with zero corrected signature, and a cylinder transposition in $\F(\T)$ realizes the last factor. For even $q$, $H_0(\G;\mathbb{Z}/2\mathbb{Z})=0$, and thus the assertion follows directly from Li's sequence.
\end{proof}

The parity factor and the sign character of $H$ are different invariants. When $q$ is odd, the signs $\epsilon_n(b_h)$ alternate with the level whenever $\chi_H(h)=1$. The correction term in~\eqref{eq:corrected-boundary-signature} makes the resulting value constant. For a localization of $b_h$ on a cylinder of length $N$, the corrected signature is $N\chi_H(h)$ modulo two. Another localized lift can change its parity coordinate without changing its index. When $q$ is even, there is no extra AF parity factor, although $H^{\ab}$ may still contain substantial two-torsion.

\begin{exmp}[Parity benchmarks for finite returns]
\label{exmp:parity-benchmarks}
In the original four-letter model with $H=\operatorname{Sym}(3)$, $H_2(\G)=0$ and $\F(\G)_{\ab}\cong\mathbb{Z}\oplus C_2$. For the same construction with $q=3$ and the natural action of $\operatorname{Sym}(3)$, $H_2(\G)=0$ while $\F(\G)_{\ab}\cong\mathbb{Z}\oplus C_2\oplus C_2$. The first $C_2$ is the return abelianization and the second is AF parity.

For a test with nonzero homology in degree two, take $q=5$ and let $H=C_2\times C_2$ act regularly on four letters and fix the fifth. Then $H_2(\G)\cong C_2$, but $\theta_{\G}:C_2\to C_2$ is the zero map, and $\F(\G)_{\ab}\cong\mathbb{Z}\oplus C_2^3$. Therefore, vanishing of the parity map here is not explained by vanishing of its source. All elements of this permutation representation of $H$ are even. The corrected signature reduces to the eventually constant ordinary sign. More concretely, for the two independent involutions $u,v\in H$, the bar cycle $z=[u|v]-[v|u]$ represents the nonzero element of $H_2(H;\mathbb{Z})$. Its double is the boundary of $[u|u|v]-[u|v|u]+[v|u|u]$. The corresponding groupoid cycle is the difference of the indicators of the two sets of composable arrows obtained from $b_u,b_v$ in opposite orders. Their coherent action as a finite group gives a lift of this return cycle, which explains its zero parity obstruction. The bar matrices in the following subsection give a finite procedure for checking these claims.

For a finite return with order greater than two, take $q=5$ and $H=C_4$ acting as a four-cycle and fixing the fifth letter. The result is $\F(\G)_{\ab}\cong\mathbb{Z}\oplus C_4\oplus C_2$. The $C_4$ is realized by the actual subgroup $h\mapsto b_h$, and thus no $C_8$ extension occurs.
\end{exmp}

\subsection{Other benchmarks and computations from verified finite data}
The principal rank-two system of Proposition~\ref{prop:two-cut-defect} has no higher homology by Theorem~\ref{thm:higher-germ-homology}. Its dimension group modulo $2$ is the direct limit of $(\mathbb{Z}/2\mathbb{Z})^2$ under $\left(\begin{smallmatrix}0&1\\1&0\end{smallmatrix}\right)$. Quotienting by the image of the defect identifies the two coordinates. Thus, $H_0(\G;\mathbb{Z}/2\mathbb{Z})=C_2$. Since $H_1(\G)=\mathbb{Z}$, Li's sequence and a lift of its generator give $\F(\G)_{\ab}\cong\mathbb{Z}\oplus C_2$. The nonzero infinitesimal defect matters when the dimension group is computed, even though the final abelianization is that of a minimal Cantor $\mathbb{Z}$-system. This agrees with the classical abelianization formula for minimal Cantor systems \cite{Matui06}.

For the sparse binary model with an infinite cyclic group of germs in Subsection~\ref{subsec:infinite-cyclic-certified}, $H_2(\G)=H_2(\mathbb{Z};\mathbb{Z})=0$ and $H_0(\G;\mathbb{Z}/2\mathbb{Z})=0$. Consequently, $\F(\G)_{\ab}\cong\mathbb{Z}^2$. 

For a finite group given by a complete multiplication table, the normalized bar matrices provide a finite exact computation of $H_2$. With symbols containing the identity set to zero, they are
\begin{equation}
\label{eq:bar-degree-two-three}
\begin{split}
\partial_2[h|k]&=[k]-[hk]+[h],\\
\partial_3[h|k|l]&=[k|l]-[hk|l]+[h|kl]-[h|k].
\end{split}
\end{equation}
For $|H|=d$, their matrix sizes are $(d-1)\times(d-1)^2$ and $(d-1)^2\times(d-1)^3$. The Smith normal form of $\partial_2$ supplies an integral basis of its kernel. Expressing $\partial_3$ in that basis and taking its Smith normal form computes $H_2(H;\mathbb{Z})$. This uses the exact arithmetic of Section~\ref{sec:certified}. No finite presentation is substituted for a multiplication table without a completeness proof.

\subsection{Sparse groups of germs isomorphic to \texorpdfstring{$\mathbb{Z}^d$}{Z-d}}
\label{subsec:sparse-free-abelian-returns}
Let $\mathsf{X}=\{0,1,2\}$ and $\Omega(\mathsf{B})=\mathsf{X}^{\omega}$, with the first letter read as the least significant ternary digit. Let $a$ be the ternary odometer
\begin{equation}
\label{eq:ternary-odometer}
a(0w)=1w,\qquad a(1w)=2w,\qquad a(2w)=0a(w).
\end{equation}
Put $\xi=0^\omega$ and $\eta=2^\omega$. For $j\geq1$, let $\alpha_j$ be the depth-$j$ truncated ternary odometer: on the first $j$ letters it adds one modulo $3^j$, and all sections at level $j$ are trivial. Thus $\alpha_j$ has order $3^j$.

Choose $N_j=3^j$ and the pairwise disjoint cylinders
\begin{equation}
\label{eq:sparse-ternary-cylinders}
U_j=[0^{N_j}1].
\end{equation}
Fix $d\geq1$ and partition the positive integers into the residue classes $J_1,\ldots,J_d$ modulo $d$. Define $b_i\in\operatorname{Aut}(\mathsf{X}^*)$ by
\begin{equation}
\label{eq:sparse-free-abelian-generators}
b_i(0^{N_j}1w)=0^{N_j}1\alpha_j(w)\quad(j\in J_i),
\qquad
b_i=1\quad\text{off }\bigcup_{j\in J_i}U_j.
\end{equation}
Every $b_i$ fixes $\xi$. Since $N_{j+1}>N_j+j+2$, at each level there is at most one nontrivial section of $b_i$ outside the prefix of $\xi$. Thus the $b_i$ are bounded automorphisms, although the finitary depths of their sections on the cylinders $U_j$ are unbounded.

Let
\begin{equation}
\label{eq:sparse-free-abelian-groupoid}
G_d=\langle a,b_1,\ldots,b_d\rangle,
\qquad
\G_d=\operatorname{Germ}(G_d\curvearrowright \Omega(\mathsf{B})),
\end{equation}
and let $\T$ be the ternary tail groupoid.

The maps $b_i$ realize $\mathbb{Z}^d$ as the group of germs, and the defect map is zero.
\keepwithstatement
\begin{prop}
\label{prop:sparse-Zd-return-model}
For every $d\geq1$, the groupoid $\G_d$ is minimal, effective and AF-by-discrete. It has one exceptional $\G_d$-orbit, and $\T$ has exactly two orbits in it, the tail classes of $\xi$ and $\eta$. The group of germs at $\xi$ is
\begin{equation}
\label{eq:sparse-Zd-germs}
H_\xi\cong\mathbb{Z}^d,
\end{equation}
generated by the germs of $b_1,\ldots,b_d$. With the transport generator oriented so that $I(a)$ is positive,
\begin{equation}
\label{eq:sparse-Zd-H1}
H_1(\G_d,\T)\cong\mathbb{Z}^{d+1},\qquad
\delta=0,\qquad
H_1(\G_d)\cong\mathbb{Z}^{d+1},
\end{equation}
and
\begin{equation}
\label{eq:sparse-Zd-indices}
I(a)=e_0,\qquad I(b_i)=e_i\quad(1\leq i\leq d).
\end{equation}
Moreover,
\begin{equation}
\label{eq:sparse-Zd-higher}
H_n(\G_d;\mathbb{Z})\cong
\bigwedge^n\mathbb{Z}^d\quad(n\geq2),
\end{equation}
with the right-hand side zero for $n>d$, and
\begin{equation}
\label{eq:sparse-Zd-H0}
H_0(\G_d)\cong\mathbb{Z}[1/3],\qquad
H_0(\G_d;\mathbb{Z}/2\mathbb{Z})\cong\mathbb{Z}/2\mathbb{Z}.
\end{equation}
\end{prop}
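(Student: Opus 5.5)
The proof follows the pattern already used for Example~\ref{exmp:depth-counterexamples} and Proposition~\ref{prop:directed-S3-index}, with three ingredients: the geometry of the sparse cylinders, the computation of the group of germs at $\xi$, and the homology read off from Theorems~\ref{thm:relative-normal-form}, \ref{thm:index-formula} and~\ref{thm:higher-germ-homology}.

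\textbf{Step 1 (AF-by-discrete structure and orbits).} Every equal-length ternary prefix replacement is locally a power of $a$, so $\T\subseteq\G_d$; minimality follows from the odometer. The odometer's only non-AF germ is its carry $(a,\eta)\colon\eta\to\xi$, together with its inverse. Each $b_i$ acts on $U_j$ by the finitary $\alpha_j$ and is the identity elsewhere, so $b_i$ is AF away from $\xi$. Hence $\G_d\setminus\T$ consists of the germs in the single orbit $\mathbb{Z}\xi$. This orbit splits into the two tail classes of $\xi$ and $\eta$. Discreteness of the complement then follows exactly as in Proposition~\ref{prop:directed-S3-index}. For effectiveness I would check that a germ of a word acting trivially on an open set is trivial. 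Since every group element agrees with an odometer power composed with $b$'s locally, this reduces to the fact that $b_1^{m_1}\cdots b_d^{m_d}$ with some $m_i\neq0$ is nonidentity on every neighborhood of $\xi$ (Step~2), whereas points away from $\xi$ see only finitary maps.

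\textbf{Step 2 (germs $\cong\mathbb{Z}^d$).} The $b_i$ have disjoint supports, so they commute, and $\mathbb{Z}^d\to H_\xi$, $m\mapsto(b_1^{m_1}\cdots b_d^{m_d},\xi)$, is a homomorphism. For injectivity, pick $i$ with $m_i\neq0$. For $j\in J_i$ large, $3^j\nmid m_i$, so $\alpha_j^{m_i}\neq1$ on $U_j$; since the sets $U_j$ with $j\in J_i$ accumulate at $\xi$, the germ is nontrivial. This is the divisibility argument of Subsection~\ref{subsec:infinite-cyclic-certified}. Surjectivity comes from Proposition~\ref{prop:germ-fg}. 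The boundary quotient has two vertices, one transport edge from the carry, and loops $(b_i,\xi)$, so the normalized returns of the $b_i$ generate $H_\xi$.

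\textbf{Step 3 (homology).} Theorem~\ref{thm:relative-normal-form} gives $H_1(\G_d,\T)\cong H_\xi^{\ab}\oplus L\cong\mathbb{Z}^{d}\oplus\mathbb{Z}$. The full bisections of $a$ and of each $b_i$ each have a single exceptional arrow and satisfy $\sg=\rg=\Omega(\mathsf{B})$, so each generator of the relative group is the index of an actual element. Consequently $\delta$ vanishes on generators, hence $\delta=0$, and \eqref{eq:sparse-Zd-indices} follows from Theorem~\ref{thm:index-formula} once the transport edge is oriented by $a$. Theorem~\ref{thm:higher-germ-homology} gives $H_n(\G_d)\cong H_n(\mathbb{Z}^d;\mathbb{Z})\cong\bigwedge^n\mathbb{Z}^d$, the homology of the $d$-torus. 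Finally, the exact sequence~\eqref{eq:relative} with $\delta=0$ gives $H_0(\G_d)=H_0(\T)=\varinjlim(\mathbb{Z},3)=\mathbb{Z}[1/3]$, and reducing mod two gives $\mathbb{Z}[1/3]/2\cong C_2$.

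The main obstacle is Step~1. Discreteness and effectiveness at non-Hausdorff-looking points need care: I must show that no germ at a point $x\neq\xi$ in the orbit escapes $\T$. For this I would argue that a neighborhood of such an $x$ meets only finitely many $U_j$, so all $b_i$ act there by a finitary table.
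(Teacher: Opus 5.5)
Your proposal is correct and follows essentially the same route as the paper. Discreteness comes from the AF germs of the $b_i$ away from $\xi$ together with finitely many pulled-back exceptional sources per word. The group $H_\xi\cong\mathbb{Z}^d$ comes from commutation of disjointly supported maps, the $3^j$-divisibility argument, and Proposition~\ref{prop:germ-fg}. The defect vanishes because the full bisections of $a$ and of each $b_i$ have a single exceptional arrow. The remaining homology then follows from Theorem~\ref{thm:higher-germ-homology} and~\eqref{eq:relative}.

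Effectiveness needs no reduction to Step~2: any groupoid of germs is effective by construction, since a germ in the interior of the isotropy is the germ of a map fixing an open set pointwise, hence a unit.
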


\begin{proof}
The ternary odometer is minimal. Its groupoid of germs contains $\T$, and thus $\G_d$ is minimal. The only non-AF germ of the odometer $a$ is the carry $\eta\to\xi$. Its inverse is the exceptional germ of $a^{-1}$. For $x\neq\xi$, every $b_i$ changes only finitely many letters on a neighborhood of $x$, hence its germ belongs to $\T$. At $\xi$, every neighborhood contains a cylinder $U_j$, with $j\in J_i$, on which $b_i$ is nontrivial. Hence, the germ of $b_i$ is nontrivial. Every word has only finitely many exceptional sources. They are obtained by pulling back, along its prefixes, the finite sets of exceptional sources of its letters. Restricting the word bisection around one such source isolates the corresponding non-AF germ. Hence $\T$ is open and $\G_d\setminus\T$ is discrete.

The supports used for distinct $b_i$ are disjoint, and thus the $b_i$ commute. If
\begin{equation*}
b_1^{m_1}\cdots b_d^{m_d}
\end{equation*}
has trivial germ at $\xi$, then for every $i$ and every sufficiently large $j\in J_i$ its restriction to $U_j$ is $\alpha_j^{m_i}=1$. Hence $3^j$ divides $m_i$ for arbitrarily large $j$, forcing $m_i=0$. Therefore, the displayed germs generate a copy of $\mathbb{Z}^d$. For the boundary quotient, take $\mathcal{S}$ to be the full bisections of $a,a^{-1},b_1^{\pm1},\ldots,b_d^{\pm1}$. Choosing one edge from each pair of edges inverse to each other, the boundary quotient $Q_{\mathcal{S}}(\mathcal{O})$ of Definition~\ref{def:boundary-quotient} has two vertices corresponding to the two infinite tiles, one edge represented by the exceptional odometer germ, and the $d$ loops represented by $(b_i,\xi)$. Proposition~\ref{prop:germ-fg} shows that there are no additional generators of the group of germs, proving~\eqref{eq:sparse-Zd-germs}.

Choose the edge represented by the exceptional odometer germ for the spanning tree. The relative normal form is then one transport coordinate together with the $d$ return coordinates. Each $b_i$ is a global element of the full group with exactly one exceptional arrow, at $\xi$, and $a$ has exactly the single carry germ. Their source and range are both $\Omega(\mathsf{B})$, and thus each displayed relative basis element has zero defect in the dimension group. This proves~\eqref{eq:sparse-Zd-H1} and~\eqref{eq:sparse-Zd-indices}. Theorem~\ref{thm:higher-germ-homology} and the standard homology of the free abelian group give~\eqref{eq:sparse-Zd-higher}. Finally, the ternary tail groupoid with one Bratteli vertex at each level has dimension group $\mathbb{Z}[1/3]$. Since $\delta=0$, the map to $H_0(\G_d)$ is an isomorphism. Reducing modulo two gives~\eqref{eq:sparse-Zd-H0}.
\end{proof}

The group of germs in Proposition~\ref{prop:sparse-Zd-return-model} is coherently realized on all of $\Omega(\mathsf{B})$ by
\begin{equation}
\label{eq:sparse-Zd-coherent-lift}
\rho:\mathbb{Z}^d\longrightarrow\F(\G_d),\qquad
(m_1,\ldots,m_d)\longmapsto b_1^{m_1}\cdots b_d^{m_d}.
\end{equation}
It is AF away from $\xi$ and induces the isomorphism~\eqref{eq:sparse-Zd-germs} on germs at $\xi$. This coherent realization gives a split abelianization, with the remaining parity detected by the eventual signs of the level permutations.
\keepwithstatement
\begin{thm}
\label{thm:sparse-Zd-parity}
For every $d\geq1$, the groupoid $\G_d$ satisfies
\begin{equation}
\label{eq:sparse-Zd-theta}
\theta_{\G_d}=0
\end{equation}
and
\begin{equation}
\label{eq:sparse-Zd-abelianization}
\F(\G_d)_{\ab}\cong\mathbb{Z}^{d+1}\oplus C_2.
\end{equation}
There is an explicit parity homomorphism
\begin{equation}
\label{eq:sparse-Zd-stable-sign}
\epsilon_\infty:\F(\G_d)\longrightarrow\mathbb{Z}/2\mathbb{Z}
\end{equation}
given by the eventual sign of the ternary level permutations. It vanishes on $a,b_1,\ldots,b_d$. On a transposition in $\Sym(\T)$ between two cylinders of the same level, it takes the value $1$. Consequently,
\begin{equation}
\label{eq:sparse-Zd-abelianization-coordinates}
(I,\epsilon_\infty):\F(\G_d)\longrightarrow\mathbb{Z}^{d+1}\oplus C_2
\end{equation}
induces the isomorphism~\eqref{eq:sparse-Zd-abelianization}.
\end{thm}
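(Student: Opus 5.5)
The plan has three parts: first verify that $\G_d$ satisfies the hypotheses of Theorem~\ref{thm:coherent-return-split-AH}, then build the parity homomorphism $\epsilon_\infty$ by hand, and finally read off the abelianization from Li's sequence.

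\textbf{Hypotheses and $\theta_{\G_d}=0$.} By Proposition~\ref{prop:sparse-Zd-return-model}, $\G_d$ is minimal, effective and AF-by-discrete. The AF core $\T$ is the ternary tail groupoid, which is minimal, so Corollary~\ref{cor:index-surjective} supplies comparison and index surjectivity. The unique exceptional group of germs $H_\xi\cong\mathbb{Z}^d$ has the coherent realization $\rho$ of~\eqref{eq:sparse-Zd-coherent-lift} on $U=\Omega(\mathsf{B})$. It is a homomorphism because the $b_i$ have disjoint supports and commute, it is AF away from $\xi$, and it induces the germ isomorphism. Theorem~\ref{thm:coherent-return-split-AH} then gives $\theta_{\G_d}=0$ and the splitting $\F(\G_d)_{\ab}\cong H_0(\G_d;\mathbb{Z}/2\mathbb{Z})\oplus H_1(\G_d)$. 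Substituting $H_1(\G_d)\cong\mathbb{Z}^{d+1}$ and $H_0(\G_d;\mathbb{Z}/2\mathbb{Z})\cong C_2$ from Proposition~\ref{prop:sparse-Zd-return-model} yields~\eqref{eq:sparse-Zd-abelianization}.

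\textbf{Construction of $\epsilon_\infty$.} We imitate the proof of Theorem~\ref{thm:corrected-boundary-signature} with $q=3$ and no correction term, since the relevant character vanishes here.
\begin{enumerate}
\item For $g\in\F(\G_d)$, refine a clopen table of $g$ to a level $N$ on which $g$ permutes level-$N$ cylinders. The sections are words in the self-similar group $J$ generated by $a$, the $b_i$, and all the finitary $\alpha_j$ and their sections.
\item Let $r_m(u)$ be the sum of the root-permutation signs of the sections of $u$ at level $m$. This is a homomorphism on $J$.
\item For every element of $J$, $r_m=0$ for all large $m$:
\begin{itemize}
\item the root permutation of $a$ is a $3$-cycle, hence even, and all its sections are $a$ or $1$;
\item every root permutation of a truncated odometer $\alpha_j$ or its sections is a cyclic rotation of three letters or the identity, hence even;
\item the $b_i$ have trivial root permutations away from the spine, and on the cylinders $U_j$ they act through the $\alpha_j$.
\end{itemize}
\item The block-permutation sign formula then reads $\epsilon_{n+1}(g)=3\epsilon_n(g)+r_n(g)=\epsilon_n(g)$, so $\epsilon_n(g)$ stabilizes.
\item Additivity follows by comparing two elements at a common valid level.
\end{enumerate}
On generators, $a$ acts on level $n$ as a $3^n$-cycle, which is even since $3^n$ is odd. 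Each $b_i$ acts on level $n$ as a product of odometer cycles of odd length $3^{k}$ on subcylinders, hence also even. A transposition of two level-$N$ cylinders in $\Sym(\T)$ becomes $3^{n-N}$ disjoint transpositions at level $n$, an odd number, so $\epsilon_\infty$ takes the value $1$ on it.

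\textbf{The isomorphism.} For $(I,\epsilon_\infty)$ to be onto, $a$ and $b_1,\ldots,b_d$ realize the basis $e_0,\ldots,e_d$ of $H_1(\G_d)$ with $\epsilon_\infty=0$, and the cylinder transposition realizes the $C_2$ factor. For injectivity, Li's sequence with $\theta=0$ embeds $H_0(\G_d;\mathbb{Z}/2\mathbb{Z})=C_2$ as $\ker I_{\ab}$, generated by $j_{\G}$ of a cylinder, that is, by the transposition. Since $\epsilon_\infty$ detects this generator, the induced map on $\F(\G_d)_{\ab}$ is injective.

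\textbf{Expected obstacle.} The main difficulty is uniform stabilization. The sections of $b_i$ have unbounded finitary depth, so for any fixed $n$ infinitely many cylinders $U_j$ carry nontrivial level permutations. The first attempt is to argue that each table involves only finitely many section words. This handles the finitely many cylinders above level $n$. On the infinitely many deeper cylinders $U_j$, the level-$n$ permutation restricted to $[0^{N_j}1]$ is only a truncation of $\alpha_j$, which is still a product of cycles of odd length, so its sign is $0$ uniformly in $j$. The point to check carefully is that every such truncated odometer, and every section of one, contributes even sign at every level.
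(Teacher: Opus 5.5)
Your proposal is correct and follows essentially the paper's route: Theorem~\ref{thm:coherent-return-split-AH} applied to the coherent lift~\eqref{eq:sparse-Zd-coherent-lift} gives $\theta_{\G_d}=0$ and the abstract splitting. The stable sign comes from the fact that all section permutations are even while the level-$N$ block permutation is repeated an odd number of times; your recursion $\epsilon_{n+1}=3\epsilon_n+r_n$ with $r_n=0$ is the same computation. Injectivity of $(I,\epsilon_\infty)$ then follows because $\ker I_{\ab}=\operatorname{im}j_{\G}\cong C_2$ is detected by a cylinder transposition.

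Two small corrections. First, the ``obstacle'' you describe is inverted. At a fixed level $n$, only the finitely many $U_j$ with $N_j+1<n$ carry nontrivial level-$n$ permutations. The deeper ones lie inside the cylinder $[0^n]$, whose level-$n$ vertex is fixed by $b_i$. Second, the section-closed group $J$ must also contain the non-finitary spine sections $b_i|_{0^m}$. These are harmless, since all their root permutations lie in $C_3$.
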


\begin{proof}
The coherent realization~\eqref{eq:sparse-Zd-coherent-lift} and Theorem~\ref{thm:coherent-return-split-AH} give~\eqref{eq:sparse-Zd-theta} and the abstract splitting. We give an independent parity coordinate.

Every permutation induced by the ternary odometer on level $n$ is a cycle of odd length $3^n$, hence is even. For the truncated odometer $\alpha_j$, its permutation on a level $m\leq j$ is a $3^m$-cycle, while for $m\geq j$ it is a product of $3^{m-j}$ cycles of length $3^j$. These are again even permutations. A section of $a^{\pm1}$ is again an odometer section or the identity. A section of $b_i^{\pm1}$ is the identity, a section of one $\alpha_j^{\pm1}$, or a sparse product of deeper $\alpha_j^{\pm1}$ on disjoint cylinders. Hence every such section has even permutations on every finite level. This property is preserved under products and further sections.

Take $g\in\F(\G_d)$ and refine a compact open table for $g$ to a common ternary level $N$. At every deeper level $n$, the permutation of the level-$N$ cylinders is repeated $3^{n-N}$ times, an odd number, while all permutations contributed by the sections are even. Hence, the sign of the level-$n$ permutation is independent of $n$ for all $n\geq N$. This defines~\eqref{eq:sparse-Zd-stable-sign} and makes it a homomorphism. The generators $a,b_i$ have zero sign by the preceding paragraph. A transposition in $\Sym(\T)$ of two level-$N$ cylinders becomes a product of $3^{n-N}$ transpositions at level $n$, and thus has sign one.

By~\eqref{eq:sparse-Zd-H0}, the parity term in Li's exact sequence is one-dimensional over $\mathbb{Z}/2\mathbb{Z}$. The homomorphism $\epsilon_\infty$ is nonzero on its generator represented by a dynamical transposition, and thus the parity map into the abelianization of the full group is injective. Exactness gives again $\theta_{\G_d}=0$. Finally,~\eqref{eq:sparse-Zd-indices} supplies the free index coordinates, and $\epsilon_\infty$ supplies the parity coordinate, proving~\eqref{eq:sparse-Zd-abelianization-coordinates}.
\end{proof}

\begin{cor}
\label{cor:sparse-Z2-parity-test}
For $d=2$,
$H_2(\G_2;\mathbb{Z})\cong\mathbb{Z}, \qquad H_0(\G_2;\mathbb{Z}/2\mathbb{Z})\cong C_2,$
but
\begin{equation}
\theta_{\G_2}:\mathbb{Z}\longrightarrow C_2
\end{equation}
is the zero homomorphism. Under the identification $H_\xi=\langle u,v\rangle\cong\mathbb{Z}^2$, the normalized bar cycle
\begin{equation}
\label{eq:sparse-Z2-fundamental-cycle}
[u|v]-[v|u]
\end{equation}
represents a generator of $H_2(\G_2;\mathbb{Z})$.
\end{cor}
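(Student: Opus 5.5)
The argument splits into three separate assertions: $H_2(\G_2;\mathbb{Z})\cong\mathbb{Z}$, $H_0(\G_2;\mathbb{Z}/2\mathbb{Z})\cong C_2$, and $\theta_{\G_2}=0$, plus the identification of an explicit generator. Each is the specialization $d=2$ of results already proved.

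First, for the degree-two group I would apply Theorem~\ref{thm:higher-germ-homology}. Proposition~\ref{prop:sparse-Zd-return-model} gives a single exceptional orbit, with $H_\xi\cong\mathbb{Z}^2$ generated by the germs $u=(b_1,\xi)$ and $v=(b_2,\xi)$. The theorem then gives $H_2(\G_2;\mathbb{Z})\cong H_2(\mathbb{Z}^2;\mathbb{Z})\cong\bigwedge^2\mathbb{Z}^2\cong\mathbb{Z}$, which is \eqref{eq:sparse-Zd-higher} with $n=d=2$. The parity group $H_0(\G_2;\mathbb{Z}/2\mathbb{Z})\cong C_2$ is \eqref{eq:sparse-Zd-H0}. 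The vanishing $\theta_{\G_2}=0$ is \eqref{eq:sparse-Zd-theta} of Theorem~\ref{thm:sparse-Zd-parity}. That theorem rests on the coherent realization \eqref{eq:sparse-Zd-coherent-lift} and Lemma~\ref{lem:coherent-parity}. It is independently confirmed by the stable sign $\epsilon_\infty$, which detects the parity generator, so $j_{\G_2}$ is injective and exactness forces $\theta_{\G_2}=0$.

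Second, for the explicit cycle, I would use the standard fact that in the normalized bar complex of $\mathbb{Z}^2=\langle u,v\rangle$ the commutator cycle $[u|v]-[v|u]$ represents a generator of $H_2$. It is a cycle by \eqref{eq:bar-degree-two-three}: $\partial_2$ gives $([v]-[uv]+[u])-([u]-[vu]+[v])=0$, since $uv=vu$. To see that it generates, I would take the pullback along the torus map $T^2\to B\mathbb{Z}^2$, which carries the fundamental class to this cycle, or evaluate it on the cocycle $\omega([u^a v^b|u^c v^e])=be$. The pairing gives $1\cdot1-0=1$, while $\omega$ vanishes on boundaries modulo the alternation, so the class is primitive. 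The cleanest evaluation is the standard pairing with the cup product of the two coordinate cocycles $x^*\smile y^*$, which gives value $1$ on $[u|v]-[v|u]$.

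Third, the cycle must be transported to $H_2(\G_2;\mathbb{Z})$. The identification in Theorem~\ref{thm:higher-germ-homology} sends bar symbols at the basepoint to the corresponding exceptional composable tuples. Equivalently, through the coherent evaluation $Q_{\rho,2}$ it becomes $1_{V_{b_1}*V_{b_2}}-1_{V_{b_2}*V_{b_1}}$, whose only exceptional tuples are $(u,v)$ and $(v,u)$. The main point requiring care is this compatibility between the coherent chain map and the relative-nerve isomorphism. As in the proof of Theorem~\ref{thm:coherent-return-split-AH}, every nondegenerate tuple of the coherent action is AF away from $\xi$ and therefore vanishes relatively, so the image is exactly the bar cycle. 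This completes the plan.
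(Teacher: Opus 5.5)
Your proposal is correct and is essentially the paper's argument. The corollary is the case $d=2$ of Proposition~\ref{prop:sparse-Zd-return-model} and Theorem~\ref{thm:sparse-Zd-parity}, and the bar cycle is transported through Theorem~\ref{thm:higher-germ-homology} exactly as you describe, equivalently through the coherent lift as in the proof of Theorem~\ref{thm:coherent-return-split-AH}.

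One slip needs fixing. The cocycle $\omega([u^av^b|u^cv^e])=be$ is $y^*\smile y^*$, which is the coboundary of $g\mapsto-\binom{y(g)}{2}$. It evaluates to $0\cdot1-1\cdot0=0$ on $[u|v]-[v|u]$, not to $1$. The cocycle you want is $ae$, that is, $x^*\smile y^*$, which is what you use in your last sentence. Either that evaluation or the torus argument alone suffices for generation.
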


The example has an infinite finitely generated group of germs, nonzero free second homology, and a nonzero parity target, while its coherent realization of returns forces the differential to vanish. The next section separates such global coherence from the weaker lifting property for finite relations.

\section{Finite return relations and a nonzero parity differential}
\label{sec:finite-return-relations}

A coherent realization requires all relations of a group of germs to hold on one invariant clopen neighborhood. A class in $H_2(H_x;\mathbb{Z})$, however, involves only finitely many labels and multiplication relations. We prove that lifting finite relations on neighborhoods chosen separately suffices for vanishing of the secondary differential and splitting of the AH sequence. A minimal example separates this condition from coherence for the whole group. A nonminimal Houghton--Cantor example has nonzero differential.

\subsection{Lifting a finite presentation}
\begin{defn}[Lifting finite relations]
\label{def:finite-relation-lifting}
Let $x$ be a basepoint of an exceptional orbit and put $H_x=\G_x^x$. We say that $(\G,\T)$ has \emph{the lifting property for finite relations at $x$} if, for every finitely presented group $L$ and every homomorphism $\phi:L\to H_x$, there are a clopen neighborhood $U\ni x$ and a homomorphism $\rho:L\to\F(\G|_U)$ such that every $\rho(l)$ fixes $x$, has germ $\phi(l)$ at $x$, and is AF at every point of $U\setminus\{x\}$.
\end{defn}

Neither $\phi$ nor $\rho$ is required to be injective, and the neighborhood may depend on the finite presentation and on $\phi$. A coherent realization implies this property by composition. The converse fails even for finitely generated $H_x$. See Proposition~\ref{prop:wreath-no-coherent-lift}. We impose the property at one chosen basepoint of each exceptional orbit.

\begin{lemma}
\label{lem:finite-presentation-cycle}
Let $H$ be a group and let $z\in H_2(H;\mathbb{Z})$. There are a finitely presented group $L$, a homomorphism $\phi:L\to H$, and $\widetilde z\in H_2(L;\mathbb{Z})$ such that $\phi_*(\widetilde z)=z$.
\end{lemma}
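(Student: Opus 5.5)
We represent $z$ by a finite bar cycle and extract a finitely presented group that already carries the relations the cycle uses. Choose a normalized bar $2$-cycle $c=\sum_{i=1}^m n_i[h_i|k_i]$ in the bar complex of $H$ representing $z$, with $\partial_2c=0$. Let $S\subseteq H$ be the finite set of all labels $h_i$, $k_i$ and products $h_ik_i$ occurring in $c$. Let $F$ be the free group on symbols $\{s_g:g\in S\}$, and impose the finitely many relations $s_{h_i}s_{k_i}=s_{h_ik_i}$ for $1\leq i\leq m$, together with $s_1=1$ if $1\in S$. Call the resulting finitely presented group $L$. The assignment $s_g\mapsto g$ respects these relations, so it defines a homomorphism $\phi:L\to H$.

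Next we lift the cycle. Put $\widetilde c=\sum_i n_i[\bar s_{h_i}|\bar s_{k_i}]$ in the bar complex of $L$, where $\bar s_g$ is the image of $s_g$. Then
\[
\partial_2\widetilde c=\sum_i n_i\bigl([\bar s_{k_i}]-[\bar s_{h_i}\bar s_{k_i}]+[\bar s_{h_i}]\bigr).
\]
The imposed relation gives $\bar s_{h_i}\bar s_{k_i}=\bar s_{h_ik_i}$. Hence $\partial_2\widetilde c$ is obtained from $\partial_2c$ by the substitution $[g]\mapsto[\bar s_g]$ on the finite set $S$.

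This substitution step is the main point to check. Two distinct symbols $[g]$ and $[g']$ with $g\neq g'$ might map to the same symbol in $L$, but the substitution sends $[g]$ to $[\bar s_g]$ term by term, so it is a well-defined homomorphism from the free abelian group on $S$ to the degree-one bar chains of $L$. Since $\partial_2c=0$ as a formal sum on $S$, its image $\partial_2\widetilde c$ is also zero. Identity entries are handled by the normalization convention, because $s_1=1$. Therefore $\widetilde c$ is a cycle, and we set $\widetilde z=[\widetilde c]$.

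Finally, $\phi$ sends $[\bar s_{h_i}|\bar s_{k_i}]$ to $[h_i|k_i]$, so $\phi_*\widetilde z=z$.

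An alternative route is Hopf's formula $H_2(H)=(R\cap[F,F])/[F,R]$ applied to a presentation $F/R$ of $H$. A class is a finite product of commutators lying in $R$, and one quotients the finitely generated free group on the letters involved by the finitely many relators needed to write it as an element of the relation subgroup. The bar-cycle argument above avoids that bookkeeping.
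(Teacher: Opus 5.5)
Your proposal is correct and follows essentially the same route as the paper: represent $z$ by a finite normalized bar cycle, take one generator for each label, send the identity label to $1$, impose the finitely many relations $s_{h_i}s_{k_i}=s_{h_ik_i}$, and lift the cycle label by label. Your treatment of the cancellation step is a slightly tidier version of the paper's term-by-term argument: you use a substitution homomorphism on $\mathbb{Z}[S]$ that kills the identity symbol.
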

\begin{proof}
Represent $z$ by a finite normalized bar cycle $\sum_i n_i[h_i|k_i]$. Introduce one symbol $x_h$ for each distinct nonidentity element occurring among the finitely many labels $h_i$, $k_i$, and $h_ik_i$, and put $x_1=1$. Impose the finitely many relations $x_{h_i}x_{k_i}=x_{h_ik_i}$. The assignments $x_h\mapsto h$ define a homomorphism $\phi:L\to H$ from the resulting finitely presented group.

Now replace every label of the chosen bar cycle by the corresponding element $x_h\in L$. Its boundary is the same finite formal sum of the labels $x_{k_i}$, $x_{h_ik_i}$, and $x_{h_i}$ that occurred in the original boundary. Whenever two terms cancelled in the bar complex of $H$, they carried the same group element and hence now carry the same symbol in $L$. Thus the lifted chain is again a $2$-cycle. Its image under $\phi$ is the original representative of $z$, proving the claim. Relations in $L$ that identify additional labels can only create further cancellations and do not affect the argument.
\end{proof}

\begin{lemma}
\label{lem:exponent-two-splitting}
Let $0\to P\to E\xrightarrow{q}A\to0$ be an exact sequence of abelian groups, with $2P=0$. It splits if every $a\in A$ satisfying $2a=0$ has a lift $e\in E$ satisfying $2e=0$.
\end{lemma}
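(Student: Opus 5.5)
The plan is to use the standard criterion that an extension of abelian groups splits exactly when its class in $\operatorname{Ext}^1(A,P)$ vanishes. We then reduce this to the $2$-torsion of $A$, using that $P$ is an $\mathbb{F}_2$-vector space. Because $2P=0$, $P$ is injective as a $\mathbb{Z}/2\mathbb{Z}$-module, though not as an abelian group, so injectivity cannot be applied over $\mathbb{Z}$ directly. Instead we construct a section by hand: first on the $2$-torsion, then extend.

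First, let $A[2]=\{a:2a=0\}$, an $\mathbb{F}_2$-vector space. Choose a basis and use the hypothesis to pick, for each basis vector, a lift $e$ with $2e=0$. These lifts span an elementary abelian $2$-subgroup of $E$. Sending the basis to these lifts and extending linearly therefore gives a homomorphism $s_0:A[2]\to E$ with $qs_0=\mathrm{id}$.

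Next, we extend $s_0$ to all of $A$. We pull back along $A[2]\hookrightarrow A$ and pass to the quotient $E'=E/s_0(A[2])$. This gives $0\to P\to E'\to A/A[2]\to 0$, since $s_0(A[2])\cap P=0$. Here $A/A[2]\cong 2A$ via multiplication by $2$, so $A/A[2]$ has no elements of order $2$.

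The key point is that any extension of an abelian group $B$ with $B[2]=0$ by an exponent-two group $P$ splits. To see this, consider the composite $E'\xrightarrow{2} E'$. It kills $P$, so it factors through $B$, giving a map $\beta:B\to E'$ with $\beta(q(e))=2e$. Then $q\beta=2$ on $B$, and multiplication by $2$ on $B$ is injective. To obtain an actual section, I would instead use $\operatorname{Ext}$: $\operatorname{Ext}^1(B,P)$ is killed by $2$, and $\operatorname{Ext}^1(B,P)\cong\operatorname{Ext}^1_{\mathbb{F}_2}$-type terms computed from $B/2B$ and $B[2]$. Precisely, $\operatorname{Ext}^1_{\mathbb{Z}}(B,P)\cong \operatorname{Ext}^1_{\mathbb{Z}}(B,\mathbb{Z}/2)^{\oplus}$-like, and $\operatorname{Ext}^1(B,\mathbb{Z}/2)$ is computed from $0\to\mathbb{Z}\xrightarrow2\mathbb{Z}\to\mathbb{Z}/2\to0$ as $\operatorname{coker}(\operatorname{Hom}(B,\mathbb{Z}/2)\to\operatorname{Ext}^1(B,\mathbb{Z}))$, which injects into $\operatorname{Ext}^2$-free position. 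It equals $\operatorname{Hom}(\Tor(B,\mathbb{Z}/2),\cdot)$-type data, which vanishes when $B[2]=0$.

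I expect this vanishing to be the main obstacle. The cleanest route is likely the long exact sequence for $\operatorname{Ext}(-,P)$ applied to $0\to A[2]\to A\to A/A[2]\to0$, combined with $\operatorname{Ext}^1(B,P)\cong\operatorname{Hom}(B[2],P)$ for divisibility-free reasons. This fails in general, so I would check via $P\cong\bigoplus\mathbb{Z}/2$ and the universal coefficient theorem, where $\operatorname{Ext}^1(B,\mathbb{Z}/2)$ relates to $B[2]$ only through $\operatorname{Hom}$ of $2$-torsion pieces.

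Finally, we lift the section of $E'$ back to $E$ and add $s_0$ to obtain a section of $q$. The two pieces glue because the extension splits over $A[2]$ and over the quotient, and the class restricts compatibly.
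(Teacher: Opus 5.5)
There is a genuine gap, and it is in your second step. The claim that $A/A[2]\cong 2A$ has no elements of order $2$ is false. In fact $(A/A[2])[2]=A[4]/A[2]$, so for $A=\mathbb{Z}/4$ you get $A/A[2]\cong\mathbb{Z}/2$. Your key point is true, but it therefore does not apply to $E'$.

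Worse, $E'$ can fail to split. Take $E=\mathbb{Z}/2\oplus\mathbb{Z}/4$, $P=\mathbb{Z}/2\oplus 0$, and $q$ the second projection. The hypothesis holds, and $s_0(2)=(1,2)$ is a legitimate choice in your first step. But then $E'=E/\langle(1,2)\rangle\cong\mathbb{Z}/4$: the class of $(0,1)$ has order $4$, and $P$ maps onto $2E'$. This is a nonsplit extension of $\mathbb{Z}/2$ by $\mathbb{Z}/2$. So no argument about $E'$ alone can succeed without controlling the choice of $s_0$.

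Your gluing step would be fine, since $E$ is the pullback of $E'$ along $A\to A/A[2]$, but there is no splitting of $E'$ to pull back. The $\operatorname{Ext}$ paragraph does not repair this. Its displayed "isomorphisms" are not precise statements, and no vanishing is actually proved.

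The missing idea is that the hypothesis gives $P\cap 2E=0$ directly, with no decomposition of $A$ at all. Suppose $p=2e\in P$. Then $2q(e)=0$, so choose a lift $e'$ of $q(e)$ with $2e'=0$. Then $e-e'\in P$, and $p=2(e-e')+2e'=0$.

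Hence $P\to E/2E$ is injective. Both groups are $\mathbb{F}_2$-vector spaces, so this injection has a linear retraction $E/2E\to P$. Composing it with $E\to E/2E$ gives a homomorphism $E\to P$ that restricts to the identity on $P$, and its kernel is the required complement.

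This is exactly the paper's proof. The same two lines also prove your key point, where $P\cap 2E'=0$ follows from $B[2]=0$.
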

\begin{proof}
Suppose $p=2e\in P$. The class $a=q(e)$ is killed by two. Choose a lift $e'$ of $a$ killed by two. Then $e-e'\in P$, and thus $p=2(e-e')+2e'=0$. Hence, $P\cap2E=0$. The map $P\to E/2E$ is injective. Both groups are vector spaces over $\mathbb{Z}/2\mathbb{Z}$, and thus this inclusion has a linear retraction. Composing that retraction with $E\to E/2E$ gives a homomorphism $E\to P$ that is the identity on $P$. Its kernel supplies the required complement. No finite-generation assumption is needed.
\end{proof}

\begin{thm}[Finite-relation AH splitting]
\label{thm:finite-relations-split-AH}
Assume the AF-by-discrete inclusion and the lifting property for finite relations at a basepoint of every exceptional orbit. Then $\theta_{\G}=0$, every isotropy defect map is zero, and Li's stable AH sequence splits. More precisely, put $L=\bigoplus_{\mathcal{O}}L_{\mathcal{O}}$, with $L_{\mathcal{O}}$ as in Definition~\ref{def:transport-lattice}, and let $\tau:L\to H_0(\T)$ be the transport defect in the sense of Definition~\ref{def:defect-map}, in chosen connector coordinates. Then
\begin{equation}
\label{eq:finite-relations-stable-split}
\begin{split}
H_1(\G)&\cong \bigoplus_{\mathcal{O}}H_{\mathcal{O}}^{\ab}\oplus\ker\tau,\\
\F(\mathfrak{R}\times\G)_{\ab}
&\cong \frac{H_0(\T)}{\tau(L)+2H_0(\T)}\oplus \bigoplus_{\mathcal{O}}H_{\mathcal{O}}^{\ab}\oplus\ker\tau.
\end{split}
\end{equation}
Here $\mathfrak{R}$ is the discrete pair groupoid on a countable infinite set, and the isomorphism in the second line is noncanonical. If $\G$ is minimal and has comparison, then the same formula holds with $\F(\G)_{\ab}$ in place of $\F(\mathfrak{R}\times\G)_{\ab}$.
\end{thm}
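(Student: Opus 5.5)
The plan is to follow the proof of Theorem~\ref{thm:coherent-return-split-AH}, replacing the single coherent realization by finitely many lifts, one for each finite piece of data. The proof has four parts: vanishing of $\theta_{\G}$, vanishing of the isotropy defect, the splitting of the stable sequence, and the explicit formulas.

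\emph{Vanishing of $\theta_{\G}$.} By Theorem~\ref{thm:higher-germ-homology}, it suffices to treat a class $z\in H_2(H_{\mathcal{O}};\mathbb{Z})$ in one exceptional orbit summand.
\begin{enumerate}
\item Lemma~\ref{lem:finite-presentation-cycle} supplies a finitely presented $L$, a homomorphism $\phi:L\to H_{\mathcal{O}}$, and a class $\widetilde z\in H_2(L;\mathbb{Z})$ with $\phi_*\widetilde z=z$.
\item The lifting property for finite relations gives $\rho:L\to\F(\G|_U)$, AF off the basepoint, with germs given by $\phi$.
\item Lemma~\ref{lem:coherent-parity} gives $\theta_{\G}\circ(Q_{\rho,2})_*=0$ on $H_2(L;\mathbb{Z})$.
\item It remains to check that $(Q_{\rho,2})_*\widetilde z$ is the image of $z$ under the isomorphism of Theorem~\ref{thm:higher-germ-homology}.
\end{enumerate}
Step 4 should go through the relative nerve of Lemma~\ref{lem:relative-nerve}. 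Every composable tuple of the bisections $V_{\rho(l)}$ based away from $x$ lies in $\T^{(2)}$, so it dies in the relative complex. The unique exceptional tuple sits at $x$ and equals $(\phi(l_1),\phi(l_2))$. The relative chain is therefore the bar chain $\phi_*\widetilde z$ in the orbit summand, and since $H_2(\G)\cong H_2(\G,\T)$, this gives step 4. I expect this identification to be the main obstacle. In particular, one must check that the identification in Theorem~\ref{thm:higher-germ-homology} between the discrete transitive groupoid and $H_{\mathcal{O}}$ is compatible with evaluating at the basepoint with trivial connector. I would verify this using the retraction functor $q$ from that proof, which is the identity on loops at $x$.

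\emph{Isotropy defect and $H_1(\G)$.} For $h\in H_{\mathcal{O}}$, apply the lifting property to $L=\mathbb{Z}$ with $1\mapsto h$. This produces a full-group element whose only exceptional germ is $h$ at $x$. By Theorem~\ref{thm:index-formula}, its index is $([h]_{\ab},0)$, and this class has zero defect. Hence $\lambda_{\mathcal{O}}=0$, and the description of $H_1(\G)$ follows from \eqref{eq:H1-kernel} together with Theorem~\ref{thm:relative-normal-form}.

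\emph{Splitting.} Since $\theta_{\G}=0$, the stable sequence $0\to H_0(\G;\mathbb{Z}/2)\to\F(\mathfrak{R}\times\G)_{\ab}\to H_1(\G)\to0$ is exact, using \cite[Theorem~6.12]{Li25}. By Lemma~\ref{lem:exponent-two-splitting}, it suffices to lift every $2$-torsion class of $H_1(\G)$ to an element killed by $2$.
\begin{itemize}
\item Torsion in $H_1(\G)$ lies in $\bigoplus_{\mathcal{O}}H^{\ab}_{\mathcal{O}}$ (Corollary~\ref{cor:torsion-rank}, or directly from the torsion-free transport part). A $2$-torsion class is a finite sum, over orbits, of elements $[h]_{\ab}$ with $h^2\in[H,H]$.
\item Write $h^2=\prod_{j=1}^{c}[u_j,v_j]$. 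Take the finitely presented group $L$ with generators $y,u_j,v_j$ and the single relation $y^2=\prod_{j=1}^{c}[u_j,v_j]$, mapping $y\mapsto h$, $u_j\mapsto u_j$, $v_j\mapsto v_j$.
\item Lift this to $\rho$. Then $\rho(y)$ has index $([h]_{\ab},0)$ and $2[\rho(y)]=0$ in the abelianization.
\item Embedding $\F(\G)$ in the stable full group and taking sums over the finitely many orbits gives the required lifts.
\end{itemize}

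\emph{The formula.} The displayed formula follows from $H_0(\G)=H_0(\T)/\tau(L)$, which uses $\lambda=0$ and \eqref{eq:relative}, together with right exactness of reduction mod $2$, as in the proof of Theorem~\ref{thm:finite-germ-split-AH}. Under minimality and comparison, \cite[Corollary~6.14]{Li25} identifies the stable sequence with the ordinary one, giving the final assertion.
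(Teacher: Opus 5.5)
Your proposal is correct and follows the paper's proof essentially step for step: the finite-presentation lift of a degree-two cycle combined with Lemma~\ref{lem:coherent-parity} and the relative-nerve identification gives $\theta_{\G}=0$, the lift of $\mathbb{Z}\to H_{\mathcal{O}}$ gives vanishing isotropy defect, and the one-relator group $\langle y,u_j,v_j\mid y^2=\prod_j[u_j,v_j]\rangle$ feeds Lemma~\ref{lem:exponent-two-splitting} to give the splitting. Of your two justifications that torsion lies in $\bigoplus_{\mathcal{O}}H_{\mathcal{O}}^{\ab}$, use the direct one (that $\ker\tau$ is free), since Corollary~\ref{cor:torsion-rank} carries a finite-generation hypothesis that this theorem does not assume.
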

The groups of germs and their abelianizations may be infinitely generated.
\begin{proof}
We separate the argument into the secondary differential, the index group, and the abelian extension.

First fix one orbit summand $H_2(H_x;\mathbb{Z})$ in Theorem~\ref{thm:higher-germ-homology}, and let $z$ be a class in that summand. Lemma~\ref{lem:finite-presentation-cycle} gives a finitely presented group $L'$, a homomorphism $\phi:L'\to H_x$, and $\widetilde z\in H_2(L';\mathbb{Z})$ with $\phi_*(\widetilde z)=z$. By Definition~\ref{def:finite-relation-lifting}, after shrinking to a clopen neighborhood $U$ of $x$ there is an actual action $\rho:L'\to\F(\G|_U)$ whose germ at $x$ is $\phi$ and which is AF at every other point of $U$. The chain map $Q_{\rho,*}$ from~\eqref{eq:coherent-bar-evaluation} therefore has exactly the relative behavior used in Lemma~\ref{lem:relative-nerve}: modulo $\T$, the associated set of composable arrows contributes only the tuple at $x$, whose entries are prescribed by $\phi$. Consequently $(Q_{\rho,2})_*(\widetilde z)$ is identified with $z$ in the $H_2(H_x;\mathbb{Z})$ summand. Lemma~\ref{lem:coherent-parity} gives $\theta_{\G}(z)=0$. Since $H_2(\G)$ is the direct sum of these orbit summands, $\theta_{\G}=0$.

Next fix $h\in H_x$ and apply Definition~\ref{def:finite-relation-lifting} to the homomorphism $\mathbb{Z}\to H_x$, $1\mapsto h$. Extend the resulting local full-group element by the identity outside its clopen domain. It is AF away from $x$, fixes $x$, and has germ $h$ there. Hence its relative index has no transport coordinate and has isotropy coordinate $[h]_{\ab}$. Every full-group index lies in $\ker\delta$, and thus Definition~\ref{def:defect-map} gives $\lambda_x([h]_{\ab})=0$. Thus every isotropy defect map is zero. In the relative normal form, $\delta$ is therefore just the transport map $\tau$ on $L=\bigoplus_{\mathcal{O}}L_{\mathcal{O}}$, and exactness yields
$H_1(\G)\cong\bigoplus_{\mathcal{O}}H_{\mathcal{O}}^{\ab}\oplus\ker\tau$.
Similarly $H_0(\G)\cong H_0(\T)/\tau(L)$. The degree-zero universal coefficient identification then gives
$H_0(\G;\mathbb{Z}/2\mathbb{Z})\cong H_0(\T)/(\tau(L)+2H_0(\T))$.

It remains to split Li's stable AH sequence. Since $\theta_{\G}=0$, Theorem~6.12 of \cite{Li25} gives a short exact sequence with kernel $H_0(\G;\mathbb{Z}/2\mathbb{Z})$, an exponent-two group, and quotient $H_1(\G)$. We verify the criterion of Lemma~\ref{lem:exponent-two-splitting}. Let $[h]_{\ab}\in H_x^{\ab}$ be killed by two. Then $h^2\in[H_x,H_x]$, and thus there is a finite expression $h^2=\prod_{i=1}^r[a_i,b_i]$. Use the finitely presented group
\begin{equation}
\label{eq:two-torsion-return-presentation}
L_h=\langle z,u_1,v_1,\ldots,u_r,v_r\mid
z^2=\prod_{i=1}^r[u_i,v_i]\rangle
\end{equation}
and the homomorphism sending its displayed generators to $h,a_i,b_i$. Definition~\ref{def:finite-relation-lifting} realizes this one finite relation by an actual local action. In the abelianization of the resulting full group, the commutators on the right disappear and therefore $2[\rho(z)]=0$. The index of $\rho(z)$ is $([h]_{\ab},0)$.

The transport term $\ker\tau$ is a subgroup of the free abelian group $L$, and hence is torsion-free. Therefore every element of $H_1(\G)$ killed by two is a finite sum of two-torsion classes from the groups $H_{\mathcal{O}}^{\ab}$. Applying the preceding construction to those finitely many coordinates and adding the resulting classes in the stable abelianization produces a lift killed by two. Lemma~\ref{lem:exponent-two-splitting} now splits the stable short exact sequence and gives the second line of~\eqref{eq:finite-relations-stable-split}. If $\G$ is minimal and has comparison, Li's Corollary~6.14 identifies the corresponding ordinary AH sequence, and the same finite-relation lifts give the same splitting for $\F(\G)_{\ab}$.
\end{proof}

In the relation in~\eqref{eq:two-torsion-return-presentation}, the element $h$ may have infinite order: only its abelianized class is killed by two. The neighborhood may depend on this finite relation, so the splitting does not require a coherent action of the whole group of germs.

\subsection{Invariant neighborhoods and rooted actions}
The invariant clopen bases used in Proposition~\ref{prop:invariant-neighborhood-stabilization} also allow finitely many germ relations to hold as actual maps on one common neighborhood.

\keepwithstatement
\begin{prop}
\label{prop:invariant-neighborhood-finite-relations}
Suppose that every finite collection of groups of germs at $x$ has representatives by compact open bisections with a common invariant clopen neighborhood basis at $x$, where each nonidentity germ is represented by an isolating bisection and the identity germ is represented by a unit bisection. Write $f_1,\ldots,f_d$ for the associated local homeomorphisms. Thus there are clopen sets $U_n\ni x$, decreasing to $\{x\}$, such that, for all sufficiently large $n$, every $f_i$ is defined on $U_n$ and satisfies $f_i(U_n)=U_n$. Then the lifting property for finite relations holds at $x$.
\end{prop}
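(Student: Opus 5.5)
Given a finitely presented group $L=\langle s_1,\ldots,s_m\mid w_1,\ldots,w_k\rangle$ and a homomorphism $\phi:L\to H_x$, the plan is to build $\rho$ by choosing local representatives of the finitely many germs $\phi(s_j)$, restricting them to a common invariant clopen set $U_n$, and checking that the finitely many defining relations hold there as actual maps.

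\emph{Representatives.} Apply the hypothesis to the finite collection of germs $\phi(s_1),\ldots,\phi(s_m)$. This gives representatives $f_1,\ldots,f_m$ by isolating bisections (or unit bisections for identity germs) and a common decreasing invariant clopen basis $U_n$. For large $n$, each $f_j$ is defined on $U_n$ and satisfies $f_j(U_n)=U_n$. Each $f_j|_{U_n}$ is therefore an element of $\F(\G|_{U_n})$ fixing $x$ with germ $\phi(s_j)$. Since the bisections are isolating, every germ at a point of $U_n\setminus\{x\}$ lies in $\T$.

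\emph{Relations.} Each relator $w_i$ is a word in the $s_j^{\pm1}$. Because each $f_j|_{U_n}$ is a genuine bijection of $U_n$, the word $w_i(f_1|_{U_n},\ldots,f_m|_{U_n})$ is a well-defined element of $\F(\G|_{U_n})$. Its germ at $x$ is $\phi(w_i)=1$, so it is the identity on some neighborhood of $x$. With finitely many relators there is one clopen neighborhood $W\ni x$ on which all $k$ relator words act as the identity. Choose $n'\ge n$ with $U_{n'}\subseteq W$. Since $U_{n'}$ is still invariant under every $f_j$, the restrictions $f_j|_{U_{n'}}$ satisfy every relator identically. They therefore define a homomorphism $\rho:L\to\F(\G|_{U_{n'}})$.

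\emph{Remaining properties.} The element $\rho(l)$ fixes $x$ and has germ $\phi(l)$ because germ evaluation at the fixed point $x$ is a homomorphism. For $y\ne x$, every letter of a word evaluated along the trajectory of $y$ stays inside $U_{n'}\setminus\{x\}$, since $x$ is fixed and the maps are bijections. Each letter therefore contributes a $\T$-germ, and so $\rho(l)$ is AF at $y$ because $\T$ is a subgroupoid.

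\emph{Main obstacle.} The delicate step is the relation step: relators must hold as maps, not merely as germs. This is exactly where invariance is needed. Without it, a relator word need not even be defined on $U_n$, and shrinking the neighborhood would not preserve the domains. The point to verify is that the compatibility $f_j(U_{n'})=U_{n'}$ is inherited when passing to the smaller set $U_{n'}\subseteq W$, so that $\rho$ lands in $\F(\G|_{U_{n'}})$. This is where the hypothesis of a common invariant basis for the given finite collection is used, as opposed to invariant neighborhoods chosen separately for each germ.
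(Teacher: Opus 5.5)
Your proposal is correct and follows essentially the same route as the paper: choose isolating (or unit) representatives of the generator germs, use the common invariant basis to pass to a sufficiently deep $U_n$ on which all relator words act as the identity, invoke the universal property of the presentation, and deduce the AF property off $x$ from invariance of the punctured set $U_n\setminus\{x\}$. The only cosmetic difference is that you first restrict to an invariant $U_n$ so that the relator words are globally defined and then shrink to $U_{n'}\subseteq W$, whereas the paper chooses a single deep $U_n$ that meets all the requirements at once.
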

\begin{proof}
Choose a finite presentation $L=\langle s_1,\ldots,s_d\mid r_1,\ldots,r_m\rangle$. For each nonidentity germ $\phi(s_i)$ choose an isolating representative, and for an identity germ choose a unit bisection. Denote the resulting local homeomorphisms by $f_i$. Since $\phi(r_j)=(\Id,x)$, the local homeomorphism represented by the word $r_j(f_1,\ldots,f_d)$ agrees with the identity on some neighborhood of $x$. There are only finitely many relators. Choose one sufficiently deep $U_n$ contained in all these neighborhoods on which the relators are identities and in the domains needed for the finitely many words, and such that every $f_i$ preserves $U_n$. Then the restrictions $f_i|_{U_n}$ are homeomorphisms of $U_n$ and the defining relators hold there as actual equalities. The universal property of the presentation gives a homomorphism $\rho:L\to\F(\G|_{U_n})$ with the prescribed germs. Since each generator is AF on $U_n\setminus\{x\}$ and this punctured set is invariant under all the restricted generators, every word is AF there as well.
\end{proof}

Stabilizers of boundary rays preserve their prefix cylinders, giving the required invariant clopen bases.
\keepwithstatement
\begin{cor}
\label{cor:rooted-finite-relation-AH}
Suppose that $\G$ is the groupoid of germs of an action by automorphisms of a locally finite rooted tree whose boundary is a Cantor set, and that $\G$ has the AF-by-discrete inclusion. Then the lifting property for finite relations holds at every group of germs. In particular, the stable AH sequence splits. The unstabilized sequence splits under minimality and comparison.
\end{cor}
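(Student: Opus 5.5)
The plan is to reduce the corollary to Proposition~\ref{prop:invariant-neighborhood-finite-relations}. The task is to produce, for every finite collection of germs at a point $x$ of an exceptional orbit, representatives by isolating bisections (or unit bisections for the identity) that preserve a common clopen neighborhood basis at $x$. Once the lifting property for finite relations holds at a chosen basepoint of every exceptional orbit, Theorem~\ref{thm:finite-relations-split-AH} gives $\theta_{\G}=0$ and the splitting of the stable AH sequence. Under minimality and comparison, the same theorem gives the ordinary splitting. So the only real work is constructing the invariant basis.

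To construct it, write $x=v_1v_2\cdots$ and take $U_n=[x_n]$, the prefix cylinders, which decrease to $\{x\}$. Let $h_1,\ldots,h_d\in H_x$ be nonidentity germs. Each $h_i$ is a germ of some full-group element, and such an element agrees near $x$ with an element $g_i$ of the acting group. Since $h_i$ is isotropy, $g_i(x)=x$. A tree automorphism fixing the ray $x$ fixes every prefix $x_n$, and therefore maps each cylinder $[x_n]$ onto itself. Thus the global maps $g_i$ already preserve every $U_n$.

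The main obstacle is that the definition demands isolating representatives, whereas $g_i$ may have other non-AF germs. By Lemma~\ref{lemma:finite-exceptional-support}, the full bisection of $g_i$ has only finitely many exceptional arrows. I would therefore restrict the bisection of $g_i$ to a deep cylinder $[x_{n_0}]$ that avoids the finitely many other exceptional sources. Because $g_i$ maps $[x_{n_0}]$ onto itself, this restriction $f_i=g_i|_{[x_{n_0}]}$ is a compact open bisection containing $h_i$, with $f_i\setminus\T=\{h_i\}$, and with $f_i([x_n])=[x_n]$ for all $n\geq n_0$. Identity germs are represented by the unit bisection $1_{[x_{n_0}]}$. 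Taking $n_0$ large enough for all $i$ simultaneously gives the common invariant basis $(U_n)_{n\geq n_0}$ required by Proposition~\ref{prop:invariant-neighborhood-finite-relations}.

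Local finiteness of the tree ensures that the cylinders are clopen. The Cantor boundary hypothesis is used only to match the standing setting, so that Li's results and Theorem~\ref{thm:finite-relations-split-AH} apply. Finally, I would note that the AF core here is the tail groupoid of the level structure, so that "AF away from $x$" is preserved under restriction and under products of the restricted generators, exactly as in the proof of Proposition~\ref{prop:invariant-neighborhood-finite-relations}.
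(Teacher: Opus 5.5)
Your proposal is correct and follows essentially the same route as the paper. You represent each isotropy germ by an acting-group element fixing the ray, observe that it preserves every prefix cylinder $[x_n]$, restrict to a common deep cylinder to make the representatives isolating, and then invoke Proposition~\ref{prop:invariant-neighborhood-finite-relations} and Theorem~\ref{thm:finite-relations-split-AH}; where you cite Lemma~\ref{lemma:finite-exceptional-support}, the paper cites discreteness of $\G\setminus\T$ directly, which amounts to the same thing. Your closing remark identifying the AF core with the tail groupoid of the tree levels is not among the hypotheses and is not needed, since keeping products AF away from $x$ only uses that $\T$ is a subgroupoid.
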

\begin{proof}
Let $\xi$ be the boundary ray under consideration. Represent each nonidentity isotropy germ by an element $g$ of the acting group with $g(\xi)=\xi$. Represent the identity germ by the identity map. Since a rooted-tree automorphism fixing $\xi$ fixes every vertex on that ray, each such $g$ preserves every prefix cylinder $[\xi_n]$. For a nonidentity germ $(g,\xi)$, discreteness of $\G\setminus\T$ gives a neighborhood of that arrow in the groupoid containing no other non-AF arrow. Intersecting its source with a sufficiently deep prefix cylinder $[\xi_n]$ therefore makes the action bisection of $g$ on $[\xi_n]$ isolating, while $[\xi_n]$ remains $g$-invariant. For a finite collection of germs, choose one common depth with this property for every nonidentity representative. All still deeper prefix cylinders are then preserved by every representative, the nonidentity representatives remain isolating there, and the identity representatives remain unit bisections. Hence these cylinders form the common invariant basis required by Proposition~\ref{prop:invariant-neighborhood-finite-relations}.
\end{proof}

Unlike the bounded-depth criterion of Proposition~\ref{prop:jns-depth} and the finiteness test of Proposition~\ref{prop:tail-stabilization}, this argument requires only invariant clopen neighborhoods. It applies to infinite groups of germs with unbounded finitary depths. For general partial maps on Bratteli cylinders, a common neighborhood on which the relations hold need not be invariant.

\subsection{A minimal groupoid without a coherent return action}
We construct a concrete separation between the two realization conditions. Let
\begin{equation}
\label{eq:wreath-return-group}
H=\mathbb{Z}\wr\mathbb{Z}
=\left(\bigoplus_{n\in\mathbb{Z}}\mathbb{Z} e_n\right)\rtimes\langle t\rangle,
\qquad te_nt^{-1}=e_{n+1},
\end{equation}
and write $u=e_0$. Its presentation is $\langle u,t\mid[u,t^kut^{-k}]=1\ (k\geq1)\rangle$. All the constructions below are explicit finite permutations. The infinite presentation is used only to identify the limiting group of germs.

For $j\geq1$, put $L_j=3^j$, $r_j=2j+1$, and $M_j=3^{3j}$. On $\mathbb{Z}/M_j\mathbb{Z}$, define
\begin{equation}
\label{eq:wreath-finite-approximations}
T_j(z)=z+1,\qquad
A_j=(0\ L_j\ 2L_j\ \cdots\ (r_j-1)L_j),
\end{equation}
with $A_j$ fixing every other point. We have $r_jL_j<M_j$. Identify these $M_j$ points with ternary words of length $3j$, read as least-significant-digit-first integers. Each permutation extends to a prefix replacement in $\T$, leaving the infinite suffix unchanged. The maps $A_j$ may move the intermediate prefix levels. These finite models eventually detect each fixed relation, although no single model satisfies all the relations.
\keepwithstatement
\begin{lemma}
\label{lem:wreath-marked-limit}
For every fixed word $w$ in $u^{\pm1},t^{\pm1}$,
\begin{equation}
\label{eq:wreath-eventual-relations}
w=1\text{ in }H
\quad\Longleftrightarrow\quad
w(A_j,T_j)=1\text{ for all sufficiently large }j.
\end{equation}
A nonidentity word is nonidentity in every sufficiently large finite model. Nevertheless,
\begin{equation}
\label{eq:wreath-moving-relator}
[A_j,T_j^{L_j}A_jT_j^{-L_j}]\neq1
\end{equation}
for every $j$.
\end{lemma}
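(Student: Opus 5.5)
The plan is to compare each finite model with the lamplighter-type action of $H$ on $\mathbb{Z}$. Let $H$ act on $\mathbb{Z}\times\mathbb{Z}$ (or, more simply, think of $H$ acting by its standard action on $\mathbb{Z}[\,\cdot\,]$) and interpret $(A_j,T_j)$ as an approximation: $T_j$ is the translation by one on the cycle $\mathbb{Z}/M_j\mathbb{Z}$, while $A_j$ is a cycle of length $r_j$ supported on the progression $0,L_j,\ldots,(r_j-1)L_j$. The conjugates $T_j^kA_jT_j^{-k}$ are the same cycle shifted by $k$.

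For $|k|<L_j$ (and $M_j$ large relative to $r_jL_j$), the shifted supports are disjoint from the support of $A_j$, so these conjugates commute with $A_j$. This gives the structure of the wreath product on every window of bounded size.

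For the forward direction, take a fixed word $w$ with $w=1$ in $H$. It has bounded length $\ell$. Write $w$ in normal form: its $t$-exponent sum is zero, and its $u$-part is the product $\prod_k e_k^{n_k}$ with $\sum$ over $|k|\le\ell$, where $e_k$ corresponds to $t^kut^{-k}$. The key claim is that for $L_j>2\ell$ the assignment $e_k\mapsto T_j^kA_jT_j^{-k}$ for $|k|\le\ell$ and $t\mapsto T_j$ respects all relations that are used in rewriting a word of length $\le\ell$ to normal form. Those relations are commutations $[e_a,e_b]$ with $|a-b|\le 2\ell<L_j$, and these hold by the disjoint-support observation. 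Hence $w(A_j,T_j)$ equals the image of the normal form, which is trivial. This gives the forward implication for all $j$ with $L_j>2\ell$.

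For the reverse direction, suppose $w\ne1$ in $H$. I split into two cases.

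If the $t$-exponent sum $m$ of $w$ is nonzero, then modulo the support of the $A_j$-conjugates, $w(A_j,T_j)$ moves a point by $m\not\equiv0$ mod $M_j$. The moved set has size at most $\ell r_j\ll M_j$, so for large $j$ there is a point outside it, and the element is nontrivial.

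If $m=0$, then $w$ corresponds to $\prod_k e_k^{n_k}\ne1$, so some $n_k\ne0$, and the image is $\prod_k(T_j^kA_jT_j^{-k})^{n_k}$, a product of commuting cycles with disjoint supports. The factor for $k$ acts on its support as an $r_j$-cycle raised to the power $n_k$. Since $r_j=2j+1\to\infty$, we have $r_j\nmid n_k$ for large $j$, so that factor is nontrivial on its own support. Disjointness then makes the product nontrivial.

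For \eqref{eq:wreath-moving-relator}, the conjugate $T_j^{L_j}A_jT_j^{-L_j}$ is the cycle on $L_j,2L_j,\ldots,r_jL_j$. Its support overlaps that of $A_j$ in $r_j-1\ge2$ points but is not equal to it. Two cycles with overlapping, distinct supports do not commute: evaluate the commutator at $0$, or at $(r_j-1)L_j$, and track the images explicitly. For instance, $A_j$ sends $(r_j-1)L_j$ to $0$, while the shifted cycle fixes $0$; composing in both orders gives different images of $(r_j-2)L_j$. This direct calculation is routine.

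The main obstacle is making the forward direction rigorous, namely justifying that rewriting to normal form only invokes commutators $[e_a,e_b]$ with $|a-b|\le2\ell$. I would handle this by instead computing $w(A_j,T_j)$ directly. Push all $T_j$'s to the right by conjugation, which is an exact identity in $\operatorname{Sym}(M_j)$. This yields $\bigl(\prod_i T_j^{k_i}A_j^{\pm1}T_j^{-k_i}\bigr)T_j^m$ with all $|k_i|\le\ell$, where the ordered product mirrors exactly the ordered product $\prod_i e_{k_i}^{\pm1}$ in $H$. Since $e$'s with the same index commute trivially, and those with different indices within the window commute in both groups, both products reduce to $\prod_k(\cdot)^{n_k}$. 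This handles both directions simultaneously.
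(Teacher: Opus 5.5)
Your proof is correct and follows the paper's argument. You write $w$ as a product of conjugates $t^{k_i}u^{\pm1}t^{-k_i}$ followed by $t^m$, note that for $2\ell<L_j$ distinct shifts lie in distinct residue classes modulo $L_j$ (so the corresponding conjugates of $A_j$ have disjoint supports), use $r_j\to\infty$ when $m=0$ and the count of moved points against $M_j$ when $m\neq0$, and check the moving relator by a direct evaluation, which is exactly the paper's route. Two small remarks: the exact conjugation identity in your last paragraph already handles both implications, so you should drop the earlier ``relations used in rewriting'' sketch; and your evaluation at $(r_j-2)L_j$ is an equivalent variant of the paper's evaluation of the commutator at $0$.
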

\begin{proof}
Write the word as a product of conjugates $t^nut^{-n}$ and their inverses, followed by a power $t^k$. For a fixed word, only finitely many indices $n$ occur, and the number of factors and their exponents are bounded. For large $j$, different shift indices give disjoint supports of the corresponding conjugates of $A_j$: modulo $L_j$ their indices are different, and $M_j$ is divisible by $L_j$. Hence the corresponding conjugates of $A_j$ commute. Their orders $r_j$ tend to infinity, and thus a fixed nonzero exponent gives a nonidentity power for all sufficiently large $j$. If $k=0$, this proves both assertions in~\eqref{eq:wreath-eventual-relations}. If $k\neq0$, let $m$ be the number of these conjugates of $A_j$. Their product moves at most $mr_j$ points. The nontrivial translation $T_j^k$ moves all $M_j$ points for large $j$. Thus that product cannot equal $T_j^{-k}$, and the word is nonidentity. For instance, for a word of length $\ell$, the estimates in this argument hold for $j>\ell$.

For~\eqref{eq:wreath-moving-relator}, the second cycle is $(L_j\ 2L_j\ \cdots\ r_jL_j)$. With the convention $[a,b]=aba^{-1}b^{-1}$, its commutator with $A_j$ sends $0$ to $r_jL_j$, which is not $0$ modulo $M_j$.
\end{proof}

Let $\Omega(\mathsf{B})=\{0,1,2\}^{\omega}$, $\xi=0^\omega$, and let $a$ be the ternary odometer. Put $n_j=10j^2$ and $U_j=[0^{n_j}1]$. Define $b$ and $c$ on $U_j$ by the prefix permutations $A_j$ and $T_j$ on the next $3j$ letters, respectively. Set both maps equal to the identity elsewhere, including at $\xi$. The disjoint clopen supports accumulate only at $\xi$. These maps are homeomorphisms. Write $\G_{\mathrm{w}}$ for the groupoid of germs of $\langle a,b,c\rangle$, and retain the ternary tail groupoid as $\T$.
\keepwithstatement
\begin{prop}
\label{prop:wreath-no-coherent-lift}
The groupoid $\G_{\mathrm{w}}$ is minimal, compactly generated, effective, and AF-by-discrete. It has one exceptional orbit with two infinite tiles and group of germs $H_\xi\cong\mathbb{Z}\wr\mathbb{Z}$, with $(b,\xi)=u$ and $(c,\xi)=t$. It satisfies the lifting property for finite relations, but $H_\xi$ has no coherent realization of returns in $\G_{\mathrm{w}}$ on any clopen neighborhood of $\xi$.
\end{prop}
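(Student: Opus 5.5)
The plan is to verify the structural properties first, then identify the germ group, then treat the two realization statements separately.

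\emph{Structural properties.} Minimality comes from the ternary odometer, as in Proposition~\ref{prop:sparse-Zd-return-model}. Compact generation holds because the finite generating family consists of the full bisections of $a^{\pm1},b^{\pm1},c^{\pm1}$. Effectiveness holds because a groupoid of germs is always effective. For AF-by-discreteness, away from $\xi$ every point has a neighborhood meeting at most one $U_j$ or none, so the germs of $b$ and $c$ there are finite prefix replacements in $\T$. The only non-AF germs of the generators are the carry $(a,\eta)$ with $\eta=2^\omega$ and the germs of $b,c$ at $\xi$. The word argument of Proposition~\ref{prop:sparse-Zd-return-model} then shows that $\T$ is open and $\G_{\mathrm{w}}\setminus\T$ is discrete. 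The boundary quotient has two vertices, the tail classes of $\xi$ and $\eta$, joined by the carry edge, together with two loops at $\xi$. By Proposition~\ref{prop:germ-fg}, the germs $(b,\xi)$ and $(c,\xi)$ generate $H_\xi$.

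\emph{Identifying $H_\xi$.} A word $w(b,c)$ has trivial germ at $\xi$ exactly when $w(A_j,T_j)=1$ for all large $j$, since on $U_j$ it acts by $w(A_j,T_j)$ on the next $3j$ letters and is the identity off $\bigcup_j U_j$. Lemma~\ref{lem:wreath-marked-limit} says this holds exactly when $w=1$ in $H$. So $u\mapsto(b,\xi)$ and $t\mapsto(c,\xi)$ define an isomorphism $H\cong H_\xi$.

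\emph{Lifting finite relations.} Take $\phi:L\to H_\xi$ with $L$ finitely presented. I would represent each generator of $L$ by the same word in $b^{\pm1},c^{\pm1}$ that represents its image in $H$, restricted to $V_N=\bigcup_{j\ge N}U_j\cup\{\xi\}$. This set is clopen and invariant under $b$ and $c$, since both preserve each $U_j$. Each relator of $L$ maps to a word that is trivial in $H$. By Lemma~\ref{lem:wreath-marked-limit}, that word becomes trivial in the $j$th model for $j>\ell$, where $\ell$ is its length. Choosing $N$ larger than every relator length makes all relators hold as actual identities on $V_N$. The restricted maps are AF off $\xi$, so Proposition~\ref{prop:invariant-neighborhood-finite-relations} (or its direct proof) gives the required $\rho$.

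\emph{No coherent realization.} Suppose $\rho:H\to\F(\G_{\mathrm{w}}|_U)$ were a coherent realization. Put $\beta=\rho(u)$ and $\gamma=\rho(t)$. The germs of $\beta$ and $\gamma$ at $\xi$ equal those of $b$ and $c$, so on some neighborhood, hence on $U_j$ for all $j\ge N_0$, $\beta=b$ and $\gamma=c$. This uses that the sets $\bigcup_{j\ge N}U_j\cup\{\xi\}$ form a neighborhood basis, that $b$ and $c$ preserve each $U_j$, and that they are the identity off $\bigcup_j U_j$. The step I expect to be hardest is that $\beta$ and $\gamma$ need only agree with $b$ and $c$ near $\xi$; the complement of $\bigcup_j U_j$ near $\xi$ contains other cylinders on which $\beta$ and $\gamma$ could act arbitrarily. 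The cure is to take $j$ large enough that $U_j$ lies inside the agreement neighborhood. Since $b$ and $c$ preserve $U_j$, the restrictions of $\beta,\gamma$ to $U_j$ are exactly $A_j,T_j$. The relator $[u,t^{L_j}ut^{-L_j}]=1$ holds in $H$, so the homomorphism forces $[\beta,\gamma^{L_j}\beta\gamma^{-L_j}]=1$, and restricted to the invariant set $U_j$ this gives $[A_j,T_j^{L_j}A_jT_j^{-L_j}]=1$. That contradicts~\eqref{eq:wreath-moving-relator}. The point is that the relator used depends on $j$, so no single neighborhood can satisfy them all.
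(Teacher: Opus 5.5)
Your proof follows the paper's argument in every part. The structural properties come from the odometer and the word argument, and $H_\xi$ is identified via Lemma~\ref{lem:wreath-marked-limit} and Proposition~\ref{prop:germ-fg}. The relators are lifted on an invariant neighborhood of $\xi$, and non-coherence is a contradiction with~\eqref{eq:wreath-moving-relator} on a late cylinder $U_j$. One step is wrong as written, but it is easily repaired.

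\emph{The error.} The set $V_N=\bigcup_{j\ge N}U_j\cup\{\xi\}$ is closed but not open. Every cylinder $[0^n]$ contains points such as $0^n20^\omega$ that lie in no $U_j$, so $\xi$ is not an interior point of $V_N$. Hence $V_N$ is not an admissible clopen neighborhood in Definition~\ref{def:finite-relation-lifting}, and these sets do not form a neighborhood basis at $\xi$.

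\emph{The fix.} Use the cylinder $[0^{n_N}]$ instead. It contains $U_j$ for $j\ge N$ and is disjoint from $U_j$ for $j<N$, so it is invariant under $b$ and $c$. On it, $b$ and $c$ are the identity off $\bigcup_{j\ge N}U_j$. Your relator identities therefore hold on all of $[0^{n_N}]$, and the universal property of the presentation gives the required $\rho$.

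In the non-coherence step the false basis claim is not actually used. What you need, and what the paper uses, is that any neighborhood on which $\rho(u),\rho(t)$ agree with $b,c$ contains some $[0^n]$, hence every $U_j$ with $n_j\ge n$.

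A minor point: the explicit bound $j>\ell$ from Lemma~\ref{lem:wreath-marked-limit} is unnecessary for the lifting. Since $\phi(r)=1$, each substituted relator word already has identity germ at $\xi$, so it is the identity on some $[0^n]$. The paper concludes this way, via the rooted-tree structure at the retained levels $N_j$ (Corollary~\ref{cor:rooted-finite-relation-AH}), or equivalently via the common invariant basis $[0^n]$ and Proposition~\ref{prop:invariant-neighborhood-finite-relations}.
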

\begin{proof}
Every germ of $b$ or $c$ away from $\xi$ is AF, whereas their germs at $\xi$ are nontrivial since the cylinders $U_j$ accumulate at $\xi$. Lemma~\ref{lem:wreath-marked-limit} identifies the subgroup of germs generated by these two returns with $H$: a word has identity germ exactly when it acts trivially on all cylinders $U_j$ with sufficiently large $j$. The ternary odometer contains every equal-length prefix replacement locally and supplies the only transport connection between the two relevant tail classes, from $2^\omega$ to $0^\omega$. Up to reversing that connection, the boundary quotient has two vertices, one transport edge, and the two return loops represented by $b$ and $c$. Proposition~\ref{prop:germ-fg} therefore shows that the full isotropy at $\xi$ is exactly the group generated by those loops, namely $H$. The exceptional support of a word is contained in finitely many pullbacks of the exceptional supports of the generators, and thus $\G_{\mathrm{w}}\setminus\T$ is discrete. The odometer acts minimally, hence so does $\G_{\mathrm{w}}$. Since a groupoid of germs is effective and the action is generated by three global homeomorphisms, $\G_{\mathrm{w}}$ is effective and compactly generated.

Set $N_j=n_j+1+3j$. The intervals of levels on which the permutations $A_j,T_j$ act are disjoint, and $n_{j+1}>N_j$. Both $b$ and $c$ preserve the partitions into length-$N_j$ cylinders: the maps on $U_i$ with $i\leq j$ are prefix permutations at that level, and each $U_i$ with $i>j$ is contained in one level-$N_j$ cylinder. The odometer also preserves each such partition. Hence, the action is by automorphisms of the locally finite rooted tree obtained by retaining the levels $N_j$. Corollary~\ref{cor:rooted-finite-relation-AH} applies. Equivalently, for the return germs, the cylinders $[0^n]$ already form a common invariant basis.

Suppose there were a coherent realization of returns $\rho:H_\xi\to\F(\G_{\mathrm{w}}|_U)$. Its values at $u,t$ agree with $b,c$ on one sufficiently small cylinder about $\xi$. Every sufficiently late $U_j$ lies in that cylinder. The restrictions of $\rho(u),\rho(t)$ to $U_j$ are consequently exactly $A_j,T_j$, and preserve $U_j$. But the relation $[u,t^{L_j}ut^{-L_j}]=1$ of $H$ would then hold on $U_j$, contradicting~\eqref{eq:wreath-moving-relator}. This argument applies to any representatives with the required germs, not only to the original pair $b,c$.
\end{proof}

The example separates $\forall\mathscr{R}\,\exists U_{\mathscr{R}}$, with $\mathscr{R}$ a finite system of relations, from $\exists U\,\forall\mathscr{R}$. Every fixed relation holds on some neighborhood of $\xi$, while on every neighborhood a longer relation fails on some cylinder $U_j$. Finite generation and residual finiteness of the abstract wreath product do not repair this failure. The permutation tables on the cylinders $U_j$ have growing size, and the retained rooted-tree presentation has unbounded valency.

Despite this failure of coherence, finite-relation lifting still gives zero differential and a split abelianization.
\keepwithstatement
\begin{prop}
\label{prop:wreath-homology-parity}
For $\G_{\mathrm{w}}$,
\begin{equation}
\label{eq:wreath-homology-parity}
\begin{split}
H_0(\G_{\mathrm{w}})&=\mathbb{Z}[1/3],\qquad H_1(\G_{\mathrm{w}})=\mathbb{Z}^3,\\
H_n(\G_{\mathrm{w}})&\cong
\bigoplus_{(d_1,\ldots,d_{n-1})\in\mathbb{N}_{>0}^{\,n-1}}\mathbb{Z}\quad(n\geq2),\\
\theta_{\G_{\mathrm{w}}}&=0,\qquad
\F(\G_{\mathrm{w}})_{\ab}\cong\mathbb{Z}^3\oplus C_2.
\end{split}
\end{equation}
The classes of $a,b,c$ give the three index coordinates. The last coordinate is the eventual sign of the permutations at the retained levels $N_j$ and is nonzero on a cylinder transposition in $\F(\T)$.
\end{prop}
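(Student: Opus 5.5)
The plan is to compute each item from the general theorems already proved, with the wreath product $H=\mathbb{Z}\wr\mathbb{Z}$ supplying the only nonformal input.

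\emph{Step 1: degrees zero and one.} By Proposition~\ref{prop:wreath-no-coherent-lift} there is one exceptional orbit with two infinite tiles. The relative normal form of Theorem~\ref{thm:relative-normal-form} then gives
\[
H_1(\G_{\mathrm{w}},\T)\cong H^{\ab}\oplus\mathbb{Z}.
\]
Abelianizing the presentation $\langle u,t\mid[u,t^kut^{-k}]\rangle$ gives $H^{\ab}\cong\mathbb{Z}^2$, generated by the classes of $u$ and $t$. The global generators $a,b,c$ each have exactly one exceptional arrow: the carry for $a$, and the germs $u,t$ at $\xi$ for $b,c$. Each of these full bisections has source and range $\Omega(\mathsf{B})$, so every relative basis element has zero defect and $\delta=0$. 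The exact sequence~\eqref{eq:relative} then yields $H_1(\G_{\mathrm{w}})\cong\mathbb{Z}^3$, with $I(a),I(b),I(c)$ as the basis. Since $\delta=0$, we also get $H_0(\G_{\mathrm{w}})\cong H_0(\T)=\mathbb{Z}[1/3]$ and hence $H_0(\G_{\mathrm{w}};\mathbb{Z}/2\mathbb{Z})\cong C_2$.

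\emph{Step 2: higher homology.} Theorem~\ref{thm:higher-germ-homology} reduces this to computing $H_n(\mathbb{Z}\wr\mathbb{Z};\mathbb{Z})$. Write $H=N\rtimes\mathbb{Z}$ with $N=\bigoplus_{n\in\mathbb{Z}}\mathbb{Z}e_n$. The Lyndon--Hochschild--Serre sequence for this extension has only two columns, so it collapses, and the integral homology of the cyclic quotient is concentrated in degrees $0$ and $1$. This gives a short exact sequence
\[
0\to H_0\bigl(\mathbb{Z};\textstyle\bigwedge^nN\bigr)\to H_n(H)\to H_1\bigl(\mathbb{Z};\textstyle\bigwedge^{n-1}N\bigr)\to0 .
\]
The group $\bigwedge^kN$ is free on the basis $e_{i_1}\wedge\cdots\wedge e_{i_k}$ with $i_1<\cdots<i_k$, and the shift $t$ acts freely on this basis for $k\geq1$. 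Hence the coinvariants $H_0(\mathbb{Z};\bigwedge^nN)$ are free on the orbits of $n$-element subsets, and these orbits are indexed by the gap vectors $(d_1,\ldots,d_{n-1})\in\mathbb{N}_{>0}^{\,n-1}$. The invariants $H_1(\mathbb{Z};\bigwedge^{n-1}N)$ vanish when $n-1\geq1$, because a free $\mathbb{Z}[t^{\pm1}]$-module has no nonzero invariants. For $n\geq2$ this gives the free group on $\mathbb{N}_{>0}^{\,n-1}$ claimed in~\eqref{eq:wreath-homology-parity}. As a check, the case $n=1$ recovers $H^{\ab}=\mathbb{Z}^2$.

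\emph{Step 3: differential and abelianization.} Proposition~\ref{prop:wreath-no-coherent-lift} gives the lifting property for finite relations. Theorem~\ref{thm:finite-relations-split-AH} then gives $\theta_{\G_{\mathrm{w}}}=0$ and a splitting. The groupoid is minimal, compactly generated and effective, so Theorem~\ref{thm:minimal-AF-by-discrete-almost-finite} together with Theorem~\ref{thm:kernel-folner} supplies comparison. Therefore the splitting holds for the unstabilized group:
\[
\F(\G_{\mathrm{w}})_{\ab}\cong C_2\oplus\mathbb{Z}^3 .
\]

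\emph{Step 4: the explicit parity coordinate.} This is the main obstacle. I would imitate the proof of Theorem~\ref{thm:sparse-Zd-parity}, working at the retained levels $N_j$. First refine $g$ to a table at a retained level. Passing from $N_j$ to $N_{j+1}$ repeats each level permutation $3^{N_{j+1}-N_j}$ times, an odd number of times. The sections then contribute additional signs that need to be controlled. For the odometer, these are $3^m$-cycles and hence even. The permutations $T_j$ are $M_j$-cycles with $M_j$ odd, hence even. The permutations $A_j$ are $r_j$-cycles with $r_j=2j+1$ odd, hence even. All these permutations, and every section of them, act as even permutations on every retained level, so the eventual sign stabilizes and defines a homomorphism. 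It vanishes on $a,b,c$ and equals $1$ on a cylinder transposition in $\F(\T)$. Since $H_0(\G_{\mathrm{w}};\mathbb{Z}/2\mathbb{Z})\cong C_2$, this sign detects the parity summand. Combined with $I$, it gives the stated isomorphism.

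The delicate point is checking that sections at intermediate, non-retained levels cannot introduce odd blocks. I would handle this by only ever comparing signs at the retained levels $N_j$, where $b$ and $c$ act by whole-block prefix permutations.
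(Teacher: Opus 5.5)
Your proposal is correct and follows the paper's proof essentially step by step: the zero-defect computation from the three global generators, the two-column Lyndon--Hochschild--Serre argument with vanishing invariants and gap-indexed coinvariants, finite-relation lifting combined with almost finiteness to obtain comparison, and the eventual sign computed only at the retained levels $N_j$. One small point: comparison is not part of the statement of Theorem~\ref{thm:kernel-folner}; it comes from the Nekrashevych result on minimal almost finite groupoids cited in that proof, and the paper invokes this result directly.
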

\begin{proof}
Each of $a,b,c$ is a full bisection with one exceptional source, and thus its individual defect is zero. The relative group is $\mathbb{Z}\oplus H^{\ab}$, and $H^{\ab}=\mathbb{Z}^2$. The asserted formulas for $H_0,H_1$ and the index representatives follow.

Put $B=\bigoplus_{n\in\mathbb{Z}}\mathbb{Z} e_n$. For a module with an action of $\mathbb{Z}$, superscripts and subscripts $\mathbb{Z}$ denote its invariants and coinvariants, respectively. The homology sequence for $B\rtimes\mathbb{Z}$, or the Lyndon--Hochschild--Serre spectral sequence, which has two columns here, gives
\begin{equation}
0\longrightarrow (\textstyle\bigwedge^n B)_{\mathbb{Z}}
\longrightarrow H_n(H;\mathbb{Z})
\longrightarrow (\textstyle\bigwedge^{n-1}B)^{\mathbb{Z}}
\longrightarrow0;
\end{equation}
see \cite[Chapter~VII]{Brown82}. For positive exterior degree, the invariant group is zero: the simultaneous shift has infinite orbits on the basis of finite increasing index sets, whereas an element has finite support. The coinvariants of $\bigwedge^n B$ are free on translation classes of increasing $n$-tuples. Such classes are parametrized by their positive consecutive gaps. Theorem~\ref{thm:higher-germ-homology} gives the asserted groupoid homology in the higher degrees. In degree two, the gap-$d$ generator is represented by $[u|t^dut^{-d}]-[t^dut^{-d}|u]$.

The lifting property for finite relations and Theorem~\ref{thm:finite-relations-split-AH} give $\theta_{\G_{\mathrm{w}}}=0$ and the stable splitting. For the ordinary full group, Proposition~\ref{prop:wreath-no-coherent-lift} shows that $\G_{\mathrm{w}}$ is minimal, compactly generated, and AF-by-discrete. Theorem~\ref{thm:minimal-AF-by-discrete-almost-finite} therefore gives almost finiteness, and the comparison result recalled in Section~\ref{sec:kernel} gives comparison. Hence the ordinary splitting also follows from Theorem~\ref{thm:finite-relations-split-AH}.

There is an independent parity check. Both permutations in~\eqref{eq:wreath-finite-approximations} are even, since their nontrivial cycles have the odd lengths $r_j$ and $M_j$. At the retained levels, the nontrivial sections of $b,c$ consist only of the even permutations on the cylinders $U_i$ with $n_i>N_j$, all contained in $[0^{N_j}]$. The ternary odometer induces a cycle of odd length on every finite ternary level, and hence also has even sign. A section of a fixed word thus has even sign at every sufficiently late retained level.

For an element of the full group given by a table of words on finitely many cylinders, choose $N_j$ beyond the table's cylinder lengths. All its words preserve the length-$N_j$ partition. Refining to later retained levels multiplies each old finite permutation by an odd number of copies and adds only even section permutations. Its sign is consequently constant. This gives a homomorphism $\epsilon_\infty:\F(\G_{\mathrm{w}})\to C_2$. It is zero on $a,b,c$ and is one on a cylinder transposition in $\F(\T)$, since its number of descendant copies is odd. As $H_0(\G_{\mathrm{w}};\mathbb{Z}/2\mathbb{Z})=C_2$, this proves directly that the parity homomorphism in Li's sequence is injective. Finally, $(I,\epsilon_\infty)$ gives the stated abelianization coordinates.
\end{proof}

In particular, $H_2$ is free of infinite rank and the whole group of germs has no coherent realization of returns, yet the parity differential is zero. This example lies outside the criterion using homology from finite subgroups: its group of germs is torsion-free, and thus finite subgroups cannot generate its nonzero second homology. 

\subsection{The Houghton obstruction on a Cantor space}
For an infinite set $D$, write $\operatorname{Sym}_{\mathrm{fin}}(D)$ for the finitely supported permutations and $\operatorname{Alt}_{\mathrm{fin}}(D)$ for its alternating subgroup. The quotient $\operatorname{Sym}(D)/\operatorname{Sym}_{\mathrm{fin}}(D)$ is the balanced near symmetric group in the realization by global permutations. The central extension
\begin{equation*}
1\longrightarrow \operatorname{Sym}_{\mathrm{fin}}(D)/\operatorname{Alt}_{\mathrm{fin}}(D)
\longrightarrow \operatorname{Sym}(D)/\operatorname{Alt}_{\mathrm{fin}}(D)
\longrightarrow \operatorname{Sym}(D)/\operatorname{Sym}_{\mathrm{fin}}(D)
\longrightarrow1
\end{equation*}
has kernel $\mathbb{Z}/2\mathbb{Z}$, detected by the sign of a finitely supported permutation. Pulling this extension back along a balanced near action gives the Kapoudjian class. For a near action of $\mathbb{Z}^2$, Cornulier's criterion says that the class is nonzero exactly when chosen permutation lifts of the two generators have an odd finitely supported commutator. See \cite[Proposition~8.C.4]{Cornulier19}. Example~8.C.5 of \cite{Cornulier19} is the three-ray Houghton action. We realize the same obstruction on a Cantor space and identify it with Li's secondary differential. The permutation sign criterion is the cited result of Cornulier.

Let $Y$ be a Cantor set, let $\mathcal{D}=\{1,2,3\}\times\mathbb{N}_0$, and form the one-point compactification
\begin{equation}
\label{eq:Houghton-Cantor-space}
X_{\mathrm{H}}=\{\infty\}\cup(\mathcal{D}\times Y).
\end{equation}
Each block $\{p\}\times Y$ is clopen, and a neighborhood of $\infty$ contains all but finitely many entire blocks. This is a compact metrizable zero-dimensional space without isolated points, hence a Cantor space.

We choose a Bratteli diagram that realizes the AF relation used below. Fix an enumeration $\mathcal{D}=\{p_1,p_2,\ldots\}$ and a homeomorphism $Y\cong\{0,1\}^{\omega}$. The diagram $\mathsf{B}_{\mathrm{H}}$ has an infinite spine $v_0\to v_1\to\cdots$. At level $n\geq1$, it also has vertices $w_u$ indexed by binary words $u$ of length $n$. There is an exit edge from $v_{n-1}$ to each such $w_u$, and the other edges are $w_u\to w_{u0},w_{u1}$. Distinct binary prefixes never merge along these latter edges. The spine path represents $\infty$. A path taking an exit from $v_{n-1}$ chooses the prefix of a unique $y\in Y$ of length $n$ and then follows its binary branch. It represents $(p_n,y)$. Hence $\Omega(\mathsf{B}_{\mathrm{H}})\cong X_{\mathrm{H}}$. Two paths that leave the spine are tail equivalent exactly when their $Y$-coordinates agree, regardless of their $\mathcal{D}$-coordinates. Prefix replacements correspond to the maps between blocks that preserve $y$ on a clopen cylinder in $Y$. This identification also gives the topology of the AF groupoid used below. We henceforth identify $X_{\mathrm{H}}$ with $\Omega(\mathsf{B}_{\mathrm{H}})$.

Define permutations $\alpha,\beta$ of $\mathcal{D}$ as follows:
\begin{equation}
\label{eq:Houghton-generators}
\begin{aligned}
\alpha(1,n)&=(1,n+1),&
\alpha(2,0)&=(1,0),&
\alpha(2,n+1)&=(2,n),\\
\beta(1,n)&=(1,n+1),&
\beta(3,0)&=(1,0),&
\beta(3,n+1)&=(3,n).
\end{aligned}
\end{equation}
The first permutation fixes the third ray and the second fixes the second ray. They act on $\Omega(\mathsf{B}_{\mathrm{H}})$ by these block permutations, preserving the coordinate in $Y$ and fixing $\infty$. These formulas are a choice of generators for the three-ray Houghton group. They differ by conventions from those in \cite[Example~8.C.5]{Cornulier19}. Direct evaluation gives
\begin{equation}
\label{eq:Houghton-odd-commutator}
[\alpha,\beta]=((1,0)\ (1,1)).
\end{equation}
The commutator is an exchange of two entire Cantor blocks. In particular, the images of $\alpha$ and $\beta$ commute modulo finitely supported block permutations and define the balanced near $\mathbb{Z}^2$-action whose Kapoudjian class is nonzero.

Let $G_{\mathrm{H}}=\langle\alpha,\beta\rangle$ and let $\G_{\mathrm{H}}$ be its groupoid of germs on $\Omega(\mathsf{B}_{\mathrm{H}})$. Let $\mathfrak{T}_{\mathsf{B}_{\mathrm{H}}}$ contain the units and all arrows between $(p,y)$ and $(q,y)$, for $p,q\in\mathcal{D}$, with no nontrivial arrows at $\infty$. Thus the AF core records finite block permutations that preserve the $Y$-coordinate, while the exceptional germs at $\infty$ remember the eventual translations of the three rays.

The block action has a single exceptional orbit, whose $\mathbb{Z}^2$ isotropy determines the positive-degree groupoid homology.
\keepwithstatement
\begin{prop}
\label{prop:Houghton-AF-core}
The groupoid $\G_{\mathrm{H}}$ is effective, compactly generated, and AF-by-discrete relative to the open AF subgroupoid $\mathfrak{T}_{\mathsf{B}_{\mathrm{H}}}$. Its only exceptional orbit is $\{\infty\}$, and its group of germs there is $\mathbb{Z}^2$. Let $C(Y,\mathbb{Z})$ denote the group of continuous integer-valued functions on $Y$. Then
\begin{equation}
\label{eq:Houghton-homology}
\begin{split}
H_0(\G_{\mathrm{H}})&\cong\mathbb{Z}\oplus C(Y,\mathbb{Z}),\\
H_1(\G_{\mathrm{H}})&\cong\mathbb{Z}^2,\qquad H_2(\G_{\mathrm{H}})\cong\mathbb{Z},
\qquad H_n(\G_{\mathrm{H}})=0\quad(n\geq3).
\end{split}
\end{equation}
In the first line, $[1_{\Omega(\mathsf{B}_{\mathrm{H}})}]=(1,0)$ and the class of one block is $e=(0,1_Y)$.
\end{prop}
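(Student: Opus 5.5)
The plan is to verify the structural claims first, then compute homology through the relative machinery of Section~\ref{sec:index} and Theorem~\ref{thm:higher-germ-homology}.

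\emph{Structure of $\G_{\mathrm{H}}$.} The block permutations $\alpha,\beta$ preserve $\mathcal{D}\times Y$ and fix $\infty$. They are continuous at $\infty$, because each moves a cofinite set of blocks to blocks and neighborhoods of $\infty$ contain all but finitely many blocks. Away from $\infty$, every germ of every word is a block map $(p,y)\mapsto(q,y)$ on a clopen block, hence lies in $\mathfrak{T}_{\mathsf{B}_{\mathrm{H}}}$. So $\G_{\mathrm{H}}\setminus\mathfrak{T}_{\mathsf{B}_{\mathrm{H}}}$ consists of nonidentity germs at $\infty$. Any compact open bisection contains at most one arrow with source $\infty$, so this set is discrete; it is also closed because the core is open (core arrows never have source $\infty$ except the unit).

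For the isotropy, define the \emph{eventual translation} homomorphism $G_{\mathrm{H}}\to\mathbb{Z}^3$, recording the eventual shift on each ray. Its image lies in the sum-zero lattice $\cong\mathbb{Z}^2$; $\alpha\mapsto(1,-1,0)$ and $\beta\mapsto(1,0,-1)$ generate that lattice. Two elements have the same germ at $\infty$ exactly when they agree off finitely many blocks, i.e.\ when they have the same eventual translation. Hence $H_\infty\cong\mathbb{Z}^2$, and every nontrivial germ there is non-AF, so $\{\infty\}$ is the unique exceptional orbit. Effectiveness holds because this is a groupoid of germs. Compact generation holds because the two global bisections generate.

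\emph{Homology.} Theorem~\ref{thm:higher-germ-homology} gives $H_n(\G_{\mathrm{H}})\cong H_n(\mathbb{Z}^2)$ for $n\ge2$, which is $\mathbb{Z}$ for $n=2$ and $0$ for $n\ge3$. For low degrees, use~\eqref{eq:relative}. The $\T$-orbit of $\infty$ is a singleton, so $L=0$ and Theorem~\ref{thm:relative-normal-form} gives $H_1(\G_{\mathrm{H}},\T)\cong\mathbb{Z}^2$. The global elements $\alpha,\beta$ are full bisections whose only exceptional arrows are their germs at $\infty$. By Theorem~\ref{thm:index-formula} these germs have zero defect, and since they generate the relative group, $\delta=0$. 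Therefore $H_1(\G_{\mathrm{H}})\cong\mathbb{Z}^2$ and $H_0(\G_{\mathrm{H}})\cong H_0(\T)$.

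\emph{Computing $H_0(\T)$.} This is the step I expect to be the main obstacle, since it needs either a Bratteli direct-limit computation or a direct orbit argument. A clopen set either contains $\infty$, in which case it contains all but finitely many entire blocks, or it misses $\infty$, in which case it is a finite union of pieces $\{p\}\times C$. Modulo core bisections, $[\{p\}\times C]$ depends only on $C$ and is additive in $C$. This gives a map $C(Y,\mathbb{Z})\to H_0(\T)$; adding the class of $\Omega$ yields a surjection $\mathbb{Z}\oplus C(Y,\mathbb{Z})\to H_0(\T)$.

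For injectivity, construct an invariant functional
\[
A\longmapsto\Bigl(1_A(\infty),\ \sum_{p}\bigl(1_{A\cap(\{p\}\times Y)}-1_A(\infty)\,1_Y\bigr)\Bigr),
\]
where the sum is finite because almost all blocks are full or empty according to $1_A(\infty)$. Check that it is additive and invariant under core bisections, i.e.\ under finite block permutations that preserve $y$. It sends $\Omega\mapsto(1,0)$ and one block to $(0,1_Y)$, and composing it with the surjection gives the identity. Equivalently, compute the dimension group of $\mathsf{B}_{\mathrm{H}}$ directly as $\varinjlim\bigl(\mathbb{Z}\oplus\mathbb{Z}^{2^n}\bigr)$, with the spine coordinate splitting off.
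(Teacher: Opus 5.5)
Your proposal is correct and follows the paper's proof essentially step for step. Eventual translation vectors identify $H_\infty\cong\mathbb{Z}^2$, the single exceptional germs of $\alpha,\beta$ force $\delta=0$ in \eqref{eq:relative}, Theorem~\ref{thm:higher-germ-homology} handles $n\geq2$, and $H_0(\mathfrak{T}_{\mathsf{B}_{\mathrm{H}}})$ is computed with the same functional $f\mapsto\bigl(f(\infty),\sum_p(f|_{\{p\}\times Y}-f(\infty))\bigr)$; your surjection-plus-left-inverse argument just makes the paper's ``these relations generate the kernel'' explicit. The one point you leave implicit is the inclusion $\mathfrak{T}_{\mathsf{B}_{\mathrm{H}}}\subseteq\G_{\mathrm{H}}$, i.e.\ that every tail arrow $(p,y)\to(q,y)$ is a germ of $\langle\alpha,\beta\rangle$. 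The paper obtains this from $[\alpha,\beta]$ and its conjugates generating all finitary block permutations, but transitivity of $\langle\alpha,\beta\rangle$ on $\mathcal{D}$, which is visible from \eqref{eq:Houghton-generators}, already suffices.
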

\begin{proof}
The commutator~\eqref{eq:Houghton-odd-commutator} and its conjugates give the edge transpositions of a connected graph on $\mathcal{D}$: powers of $\alpha$ give adjacent exchanges along the first and second rays, while conjugates using $\beta$ connect the third ray. Consequently $G_{\mathrm{H}}$ contains every finitely supported permutation of $\mathcal{D}$.

Every element of the Houghton group is an eventual translation on each ray. The translation vectors of $\alpha$ and $\beta$ are $(1,-1,0)$ and $(1,0,-1)$. They form a basis of the subgroup $\{(n_1,n_2,n_3)\in\mathbb{Z}^3:n_1+n_2+n_3=0\}$. An element has zero translation vector exactly when it is finitely supported. Near $\infty$, every finitely supported block permutation is the identity, and thus the germ map kills precisely this subgroup and identifies $H_\infty$ with $\mathbb{Z}^2$. At a point $(p,y)$ in a block, a germ is determined by the source block, target block, and unchanged $Y$-coordinate. If an element fixes such a point, then its block permutation fixes $p$, and the action is the identity on the whole block. Hence, the restriction away from $\infty$ is principal. Since the groupoid is a groupoid of germs it is effective, and the two global bisections $\alpha,\beta$ compactly generate it.

Enumerate $\mathcal{D}$ and, at stage $m$, allow every pair arrow among its first $m$ blocks while retaining units elsewhere. This is an elementary compact open subgroupoid, and the increasing union of these stages is $\mathfrak{T}_{\mathsf{B}_{\mathrm{H}}}$. Every nonidentity germ away from $\infty$ belongs to this AF core, whereas the nonidentity germs at $\infty$ are exactly the nonzero eventual-translation classes. A representative of such a class agrees with an AF block permutation away from $\infty$, and thus a sufficiently small bisection around its germ meets $\G_{\mathrm{H}}\setminus\mathfrak{T}_{\mathsf{B}_{\mathrm{H}}}$ only in that germ. Hence the complement is discrete. The groupoid $\G_{\mathrm{H}}$ is nevertheless non-Hausdorff: for example, $\alpha$ has identity germs on the whole third ray, and these units accumulate at its nonidentity germ at $\infty$.

If $f\in C(\Omega(\mathsf{B}_{\mathrm{H}}),\mathbb{Z})$ and $c=f(\infty)$, then $f=c$ on every sufficiently late block. The map
\begin{equation}
\label{eq:Houghton-dimension-coordinate}
f\longmapsto\left(c,\sum_{p\in\mathcal{D}}(f|_{\{p\}\times Y}-c)\right)
\end{equation}
is a finite sum and induces an isomorphism $H_0(\mathfrak{T}_{\mathsf{B}_{\mathrm{H}}})\cong\mathbb{Z}\oplus C(Y,\mathbb{Z})$. Indeed, the relations coming from $\mathfrak{T}_{\mathsf{B}_{\mathrm{H}}}$ identify copies of the same function on different blocks, and these generate the kernel of~\eqref{eq:Houghton-dimension-coordinate}. The two full bisections $\alpha,\beta$ each have one exceptional arrow and have source and range $\Omega(\mathsf{B}_{\mathrm{H}})$. Their individual defects vanish. The exceptional orbit is the singleton $\{\infty\}$, which is one $\mathfrak{T}_{\mathsf{B}_{\mathrm{H}}}$-orbit. Hence $H_1(\mathfrak{G}_{\mathrm{H}},\mathfrak{T}_{\mathsf{B}_{\mathrm{H}}})\cong\mathbb{Z}^2$ and the connecting map is zero. The relative exact sequence gives $H_0,H_1$, and Theorem~\ref{thm:higher-germ-homology} gives the remaining homology groups.
\end{proof}

The groupoid is not minimal. Its singleton orbit $\{\infty\}$ also prevents almost finiteness, exactly as in the earlier counterexamples with finite groups of germs: every elementary subgroupoid has only its unit arrow at $\infty$, while multiplication by a bisection with a nonidentity germ there gives an arrow outside that elementary subgroupoid. The ratio in Definition~\ref{def:almost-finite} is therefore at least one. Every invariant probability measure is concentrated at $\infty$. In fact, permutations of finitely many blocks force all block masses to be equal, hence zero. This last observation explains why we cannot make this example minimal by simply adding arrows while keeping these compact clopen blocks and an invariant probability measure of full support.

The odd block commutator yields a nonzero secondary differential, even though the group of germs is abelian.
\keepwithstatement
\begin{thm}
\label{thm:Houghton-nonzero-theta}
In the coordinates of~\eqref{eq:Houghton-homology}, the secondary map in Li's stable sequence is
\begin{equation}
\label{eq:Houghton-nonzero-theta}
\theta_{\G_{\mathrm{H}}}:\mathbb{Z}\longrightarrow
\mathbb{Z}/2\mathbb{Z}\oplus C(Y,\mathbb{Z}/2\mathbb{Z}),
\qquad m\longmapsto(0,(m\bmod2)1_Y).
\end{equation}
In particular, AF-by-discreteness alone does not imply $\theta_{\G}=0$, even for a compactly generated groupoid whose only nontrivial group of germs is the abelian group $\mathbb{Z}^2$. The stable abelianization is
\begin{equation}
\label{eq:Houghton-stable-abelianization}
\F(\mathfrak{R}\times\G_{\mathrm{H}})_{\ab}
\cong\mathbb{Z}^2\oplus\mathbb{Z}/2\mathbb{Z}\oplus
\frac{C(Y,\mathbb{Z}/2\mathbb{Z})}{\langle1_Y\rangle}.
\end{equation}
\end{thm}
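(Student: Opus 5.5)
The idea is to compute $\theta_{\G_{\mathrm{H}}}$ on the generator of $H_2(\G_{\mathrm{H}})\cong H_2(\mathbb{Z}^2;\mathbb{Z})\cong\mathbb{Z}$, which is represented by the bar cycle $[u|v]-[v|u]$ with $u=(\alpha,\infty)$ and $v=(\beta,\infty)$. The approach is a lift-and-measure-parity argument. The free group $F=\langle\alpha',\beta'\rangle$ maps to $G_{\mathrm{H}}\le\F(\G_{\mathrm{H}})$. Its relation $[\alpha',\beta']$ maps to the germ identity at $\infty$ but is realized globally as the block transposition $[\alpha,\beta]=((1,0)\ (1,1))\in\F(\mathfrak{T}_{\mathsf{B}_{\mathrm{H}}})$. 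Via Hopf's formula, $H_2(\mathbb{Z}^2)\cong (R\cap[F,F])/[F,R]$, where $R$ is the normal closure of the commutator. The standard identification of the AH differential $d^2_{2,0}$ in Li's framework then evaluates a relator lying in $[F,F]$ and mapping into the AF-core full group. The evaluation is the AF parity $\varepsilon_{\mathsf{B}}$ of its realization, pushed to $H_0(\G;\mathbb{Z}/2\mathbb{Z})$. This is the mechanism the paper describes as the parity of a lifted surface relation.

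The first step is to make this precise using the relative comparison square from the proof of Lemma~\ref{lem:coherent-parity}. I take $L=F$ and $N=R$. The action $\rho:F\to\F(\G_{\mathrm{H}})$ restricts on $N$ into $\F(\mathfrak{T}_{\mathsf{B}_{\mathrm{H}}})$, away from $\infty$ and trivially at $\infty$ in germ. Passing to the map of cofiber sequences $\Sigma^\infty(BN)_+\to\Sigma^\infty(BF)_+\to$ cofiber gives a map into the sequence $\mathscr{E}_{\T}\to\mathscr{E}_{\G}\to$ relative. The class in $H_2(\mathbb{Z}^2)$ lifts to relative $H_2(F,N)$ and maps to the relative class in $H_2(\G,\T)\cong H_2(\G)$ coming from Lemma~\ref{lem:relative-nerve}. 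I then apply the boundary to $H_1(N)=N^{\ab}$. The relator $r=[\alpha',\beta']$ goes to its image in $H_1$ of the stable $\mathscr{E}_{\T}$-side, that is, in $\F(\mathfrak{R}\times\T)_{\ab}$. For an AF groupoid this group is $H_0(\T;\mathbb{Z}/2\mathbb{Z})$ via $\varepsilon_{\mathsf{B}}$, by Li's sequence for $\T$ with $H_1(\T)=0$. Naturality of $d^2$ with respect to this map of cofiber sequences identifies $\theta_{\G_{\mathrm{H}}}(z)$ with the image under $q_2$ of $\varepsilon_{\mathsf{B}}([\alpha,\beta])$. I expect this identification to be the main obstacle. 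Li's low-degree sequence identifies $d^2_{2,0}$ via the map $H_0(-;\mathbb{Z}/2)\to\pi_1$ of the connective cover, so the plan is to check in the source spectrum, where the boundary in the Serre/AH picture is the transgression. I would try to verify it by comparing with the sphere-spectrum computation, in which $\pi_1\mathbb{S}=\mathbb{Z}/2$ is detected by sign.

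The second step is the computation itself. The parity of the transposition of two Cantor blocks equals the class of one block, namely $[1_{\{(1,0)\}\times Y}]$ mod $2$. By~\eqref{eq:Houghton-dimension-coordinate} this class is $(0,1_Y)$ in $\mathbb{Z}/2\oplus C(Y,\mathbb{Z}/2)$. The identification of $H_0(\G_{\mathrm{H}};\mathbb{Z}/2)$ with this group follows from Proposition~\ref{prop:Houghton-AF-core} together with zero defect. Linearity then gives $m\mapsto(0,(m\bmod2)1_Y)$. This matches Cornulier's criterion that an odd commutator of the permutation lifts detects the Kapoudjian class. Sign independence of the chosen lift follows because a different lift changes $r$ by an element of $[F,R]$, which has even parity.

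Finally, for~\eqref{eq:Houghton-stable-abelianization}, I use Li's stable sequence (Theorem~6.12), which gives $0\to H_0(\G;\mathbb{Z}/2)/\operatorname{im}\theta\to\F(\mathfrak{R}\times\G_{\mathrm{H}})_{\ab}\to\mathbb{Z}^2\to0$. The quotient $H_0(\G;\mathbb{Z}/2)/\operatorname{im}\theta$ is $\mathbb{Z}/2\oplus C(Y,\mathbb{Z}/2)/\langle1_Y\rangle$. Since $H_1=\mathbb{Z}^2$ is free, the sequence splits, which yields the stated formula.
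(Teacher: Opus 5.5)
Your route is sound but different from the paper's, and considerably heavier. You compute $\theta_{\G_{\mathrm{H}}}$ on the generator directly, as the AF parity of the commutator of the lifts $\alpha,\beta$. That is exactly Lemma~\ref{lem:marked-surface-parity}, specialized as in Corollary~\ref{cor:commuting-return-parity} with $E=\{\infty\}$, $u_1=\alpha$, $v_1=\beta$. The paper itself remarks after that corollary that this recovers the Houghton computation.

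The step you flag as the main obstacle is that lemma, and your sketch does not yet prove it. Three things are needed:
\begin{enumerate}
\item Lift the relative class to $\pi_2$ of the cofiber of $\Sigma^\infty(BN)_+\to\Sigma^\infty(BF)_+$. This uses surjectivity of the stable Hurewicz map, as in Lemma~\ref{lem:relative-spectrum-transgression}.
\item Check that the cofiber boundary in $\pi_1(\Sigma^\infty(BN)_+)\cong\pi_1(\mathbb{S})\oplus N^{\ab}$ has zero $\pi_1(\mathbb{S})$-component. A homology-level boundary in $N^{\ab}$ does not by itself control this. A nonzero sphere component would shift your answer by $q_2[1_{\Omega(\mathsf{B}_{\mathrm{H}})}]=(1,0)$, so the first coordinate in \eqref{eq:Houghton-nonzero-theta} would be left undetermined.
\item Identify $\pi_1(\mathscr{E}_{\mathfrak{T}_{\mathsf{B}_{\mathrm{H}}}})$ with $\mathsf{P}_{\mathsf{B}}$ so that the loop of $\rho(r)$ goes to $\varepsilon_{\mathsf{B}}(\rho(r))$. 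This needs both $H_1=H_2=0$ for the AF core and a transposition factorization at an elementary stage.
\end{enumerate}
The paper carries all of this out in Section~\ref{sec:surface-parity}. Granting it, your evaluation $\varepsilon_{\mathsf{B}}([\alpha,\beta])=(0,1_Y)$ and your final splitting argument are correct. Your remark about independence modulo $[F,R]$ is unnecessary, since $\theta$ is intrinsic. If you want it anyway, it needs the conjugation invariance of Lemma~\ref{lemma:admissible-conjugation-parity}.

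The paper's proof avoids the spectrum-level transgression entirely:
\begin{itemize}
\item Li's parity map $j_{\G}$ sends $e=(0,1_Y)$, the mod-two class of the block $(1,0)\times Y$, to the class of $t_V$.
\item By \eqref{eq:Houghton-odd-commutator}, $t_V=[\alpha,\beta]$ is already a commutator, so its class in the stabilized abelianization is zero.
\item Exactness of Li's stable sequence then gives $e\in\operatorname{im}\theta_{\G_{\mathrm{H}}}$.
\item Since $H_2(\G_{\mathrm{H}})\cong\mathbb{Z}$ and the target has exponent two, the image is $\{0,\theta(1)\}$, which forces $\theta(1)=e$.
\end{itemize}
This uses only the definition of $j_{\G}$ on transpositions and exactness. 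The sphere-summand and framing issue never arises, and the possible $(1,0)$ ambiguity is excluded automatically.

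What your route buys is generality. The exactness trick depends on $H_2$ being cyclic. Your formula evaluates $\theta$ on each surface class separately, so it would still determine $\theta$ when $H_2(H_{\mathcal{O}};\mathbb{Z})$ has larger rank. It also displays the Kapoudjian mechanism explicitly.
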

\begin{proof}
Let $V$ be the bisection from $(1,0)\times Y$ onto $(1,1)\times Y$ induced by the block exchange. The associated dynamical transposition $t_V$ is exactly the commutator in~\eqref{eq:Houghton-odd-commutator}. The stable parity map in Li's Theorem~6.12 sends the mod-two class $e$ of the source block to the stabilized class $[t_V]$. But $t_V=[\alpha,\beta]$ is already a commutator in the ordinary full group, and thus its image in the stabilized abelianization is zero. Exactness of Li's stable sequence therefore places $e$ in $\operatorname{im}\theta_{\G_{\mathrm{H}}}$.

By Proposition~\ref{prop:Houghton-AF-core}, $H_2(\G_{\mathrm{H}})\cong\mathbb{Z}$, while $H_0(\G_{\mathrm{H}};\mathbb{Z}/2\mathbb{Z})\cong\mathbb{Z}/2\mathbb{Z}\oplus C(Y,\mathbb{Z}/2\mathbb{Z})$. The element $e=(0,1_Y)$ is nonzero. Since the target has exponent two, the image of any homomorphism from $\mathbb{Z}$ has at most two elements. We have just shown that it contains the nonzero element $e$, and thus it is exactly $\{0,e\}$. This determines $\theta_{\G_{\mathrm{H}}}$ and proves~\eqref{eq:Houghton-nonzero-theta}. The cokernel of $\theta$ is
$(\mathbb{Z}/2\mathbb{Z}\oplus C(Y,\mathbb{Z}/2\mathbb{Z}))/\langle e\rangle$, and the remaining quotient $H_1(\G_{\mathrm{H}})\cong\mathbb{Z}^2$ is free. Hence the stable AH extension splits abstractly, giving~\eqref{eq:Houghton-stable-abelianization}.
\end{proof}

The proof uses only the relative calculation in low degrees, the explicit block commutator, and Li's stable exactness. For this realization, the resulting nonzero class is the Cantor-block version of the Kapoudjian obstruction from \cite[Section~8.C]{Cornulier19}.

\begin{cor}
\label{cor:Houghton-no-finite-coherence}
The lifting property for finite relations fails at $\infty$ in $\G_{\mathrm{H}}$, already for the identity homomorphism $\mathbb{Z}^2\to H_\infty$. There is no coherent realization of returns of $H_\infty$.
\end{cor}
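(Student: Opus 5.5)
The plan is to argue by contradiction, using Theorem~\ref{thm:Houghton-nonzero-theta} together with the splitting argument behind Theorem~\ref{thm:finite-relations-split-AH}. Suppose first that the lifting property for finite relations holds at $\infty$ for the identity homomorphism $\phi=\Id:\mathbb{Z}^2\to H_\infty$. The group $\mathbb{Z}^2=\langle u,v\mid [u,v]\rangle$ is finitely presented, and its second homology is generated by the bar cycle $[u|v]-[v|u]$. Under Theorem~\ref{thm:higher-germ-homology} and Proposition~\ref{prop:Houghton-AF-core}, this cycle maps to a generator of $H_2(\G_{\mathrm{H}})\cong\mathbb{Z}$.

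So a lift $\rho:\mathbb{Z}^2\to\F(\G_{\mathrm{H}}|_U)$, AF off $\infty$ and with the prescribed germs, would give a chain map $Q_{\rho,*}$. As in the first paragraph of the proof of Theorem~\ref{thm:finite-relations-split-AH}, modulo $\mathfrak{T}_{\mathsf{B}_{\mathrm{H}}}$ its only exceptional tuples are those at $\infty$. Therefore $(Q_{\rho,2})_*$ sends the fundamental class of $\mathbb{Z}^2$ to the generator of $H_2(\G_{\mathrm{H}})$. Lemma~\ref{lem:coherent-parity} would then give $\theta_{\G_{\mathrm{H}}}(1)=0$. This contradicts \eqref{eq:Houghton-nonzero-theta}, which gives $\theta_{\G_{\mathrm{H}}}(1)=(0,1_Y)\neq0$. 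Lemma~\ref{lem:coherent-parity} needs no minimality or comparison, since $\theta$ is defined as Li's $d^2_{2,0}$. Hence the property fails.

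The one point to check is the identification of $(Q_{\rho,2})_*$ with the orbit-summand inclusion via Lemma~\ref{lem:relative-nerve}. I would verify it directly. The relative chain $Q_{\rho,2}([u|v]-[v|u])$ restricted to exceptional tuples is the single difference $[u|v]-[v|u]$ at $\infty$. This holds because every other composable pair in $V_{h}*V_k$ has both coordinates in the AF core. Here $U$ is clopen and contains $\infty$, and $\rho$ extended by the identity off $U$ keeps every germ away from $\infty$ in $\mathfrak{T}_{\mathsf{B}_{\mathrm{H}}}$.

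For the second assertion, a coherent realization of returns is by definition a homomorphism $H_\infty\to\F(\G_{\mathrm{H}}|_U)$ with the required germs. Composing it with $\phi=\Id$ would give exactly the finite-relation lift just excluded, so none exists. As an independent sanity check, I might also argue concretely. Any representatives near $\infty$ must agree with $\alpha,\beta$ modulo AF block permutations on all late blocks. Their commutator is then an odd finitely supported block permutation on the blocks inside $U$, by Cornulier's sign criterion, so the representatives cannot commute. I expect the homological route to be cleaner and would present that.
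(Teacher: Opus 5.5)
Your proposal is correct and takes the same route as the paper. A finite-relation lift of the identity homomorphism would send the torus class to the generator of $H_2(\G_{\mathrm{H}})$. Lemma~\ref{lem:coherent-parity} would then make $\theta_{\G_{\mathrm{H}}}$ vanish on that generator, contradicting \eqref{eq:Houghton-nonzero-theta}, and coherent realizations are excluded because composing one with the identity homomorphism gives such a lift. Your permutation sanity check via Cornulier's odd-commutator criterion is likewise the paper's alternative direct argument.
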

\begin{proof}
The identity homomorphism $\mathbb{Z}^2\to H_\infty$ has a fundamental class generating $H_2(\mathbb{Z}^2;\mathbb{Z})$. If Definition~\ref{def:finite-relation-lifting} held for this homomorphism, the resulting local action would evaluate that class to the generator of the $H_2(H_\infty)$ summand in $H_2(\G_{\mathrm{H}})$, exactly as in the first paragraph of the proof of Theorem~\ref{thm:finite-relations-split-AH}. Lemma~\ref{lem:coherent-parity} would force its image under $\theta_{\G_{\mathrm{H}}}$ to vanish, contradicting~\eqref{eq:Houghton-nonzero-theta}. A coherent realization of the whole group of germs would in particular give such a finite-relation lift, and thus it is impossible as well.

There is also a direct permutation proof. Extend any proposed local commuting lifts by the identity. They differ from $\alpha$ and $\beta$ by elements of $\F(\mathfrak{T}_{\mathsf{B}_{\mathrm{H}}})$, hence by finitely supported block permutations depending locally constantly on $y\in Y$. For each fixed $y$, changing the two Houghton lifts by finitely supported permutations does not change the sign of their finitely supported commutator: after quotienting by the even finitely supported permutations, the remaining order-two kernel is central. The commutator therefore remains odd, whereas two commuting lifts would have identity commutator. This is precisely Cornulier's criterion in \cite[Proposition~8.C.4]{Cornulier19}.
\end{proof}

\begin{rmk}
\label{rmk:Houghton-ordinary-full-group}
Let $N$ be the group of continuous maps from $Y$ to the discrete group $\operatorname{Sym}_{\mathrm{fin}}(\mathcal{D})$. Compactness of $Y$ makes the image of such a map finite, and thus the union of all block supports appearing in it is finite. These are exactly the full-group elements of the AF core, and hence $N=\F(\mathfrak{T}_{\mathsf{B}_{\mathrm{H}}})$. The germ at $\infty$ gives a surjective homomorphism $\F(\G_{\mathrm{H}})\to H_\infty\cong\mathbb{Z}^2$ with kernel $N$. Pointwise sign induces
$N_{\ab}\cong C(Y,\mathbb{Z}/2\mathbb{Z})$: on each clopen piece this is the usual abelianization of the finitary symmetric group, and compactness permits only finitely many such pieces. Conjugation by $\alpha$ and $\beta$ merely relabels blocks and therefore preserves pointwise sign. Modulo $[N,N]$, the only relation needed to commute lifts of the two quotient generators is~\eqref{eq:Houghton-odd-commutator}, whose pointwise sign is the constant function $1_Y$. Hence
\begin{equation}
\label{eq:Houghton-ordinary-abelianization}
\F(\G_{\mathrm{H}})_{\ab}\cong
\mathbb{Z}^2\oplus C(Y,\mathbb{Z}/2\mathbb{Z})/\langle1_Y\rangle.
\end{equation}
There are no further abelian relations. Indeed, modulo $[N,N]$ every element can be written as $f\alpha^m\beta^n$ with $f\in C(Y,\mathbb{Z}/2\mathbb{Z})$, and multiplication is the direct-product law with the central correction $nm'1_Y$ coming from commuting $\beta^n$ past $\alpha^{m'}$. Abelianization therefore kills exactly the subgroup generated by the constant function $1_Y$, proving~\eqref{eq:Houghton-ordinary-abelianization}.

The index detects only the germ in the singleton exceptional orbit, and thus $\ker I=N$. A dynamical transposition has identity germ at that singleton orbit, while every element of the AF full group $N$ is a product of dynamical transpositions. Hence $\ker I=N=\Sym(\G_{\mathrm{H}})$. Thus nonzero $\theta$ can coexist with equality of the index kernel and the dynamical symmetric group. The additional $\mathbb{Z}/2\mathbb{Z}$ in the stable formula~\eqref{eq:Houghton-stable-abelianization} is represented by a transposition exchanging two copies of the singleton orbit in the stabilization. Minimality fails, so the hypotheses identifying the stable and ordinary sequences are not satisfied.
\end{rmk}

\section{Surface returns and the secondary differential}\label{sec:surface-parity}

\begin{defn}[Surface relation]
\label{def:surface-relation}
A \emph{surface relation} in a group $H$ consists of labels $h_1,k_1,\ldots,h_g,k_g$ satisfying $\prod_{i=1}^g[h_i,k_i]=1$.
\end{defn}
Such a relation determines a homomorphism $\pi_1(\Sigma_g)\to H$ and hence, by the oriented fundamental class, a class in $H_2(H;\mathbb{Z})$.

Every class in $H_2(H;\mathbb{Z})=H_2(BH;\mathbb{Z})$ is represented by the image of the fundamental class of a closed oriented surface. This is the standard degree-two oriented-bordism realization: in total degree two the oriented-bordism spectral sequence has no positive-degree coefficient contribution. Thus surface relations lose no degree-two information. Their advantage here is geometric: after the labels are lifted from germs to full-group elements, the surface relation need no longer hold as an equality in $\F(\G)$. The product of commutators of the chosen lifts is AF away from the marked points and has the identity germ at each marked point, and thus lies in $\F(\T)$. Its AF parity records exactly the secondary differential.

\subsection{Parity of returns and relative transgression}
\label{subsec:return-parity}

Recall from Subsection~\ref{subsec:AFbd-homology} that $\mathsf{P}_{\mathsf{B}}=H_0(\T;\mathbb{Z}/2\mathbb{Z})$ and that the AF parity homomorphism $\varepsilon_{\mathsf{B}}:\F(\T)\to \mathsf{P}_{\mathsf{B}}$ records the signs of the finite permutations at a sufficiently deep elementary stage. Let
\begin{equation}
\label{eq:AF-parity-to-G}
q_2:\mathsf{P}_{\mathsf{B}}\longrightarrow H_0(\G;\mathbb{Z}/2\mathbb{Z})
\end{equation}
be the homomorphism induced by the inclusion $\T\subseteq\G$.

After lifting the generators of a punctured surface, the product of the lifted generators representing the boundary loop gives an element of the AF full group. The relative cofiber boundary records the stable class of this element, and the AF homology vanishing identifies that class with its parity.

For a connective spectrum $E$, write $H_i(E;A)=\pi_i(HA\wedge E)$ for its stable homology, where $HA$ is the Eilenberg--Mac Lane spectrum of $A$. For the spectra $\mathscr{E}_{\G}$ of Subsection~\ref{subsec:coherent-evaluation}, Li's Theorem~4.18 identifies this reduced stable homology with groupoid homology. We denote by $\theta_E:H_2(E;\mathbb{Z})\to H_0(E;\mathbb{Z}/2\mathbb{Z})$ the map appearing in the five-term Atiyah--Hirzebruch sequence for the sphere spectrum. For $E=\mathscr{E}_{\G}$ it is the differential $d^2_{2,0}$ used in the proof of \cite[Theorem~6.12]{Li25}, and hence it is the map denoted by $\theta_{\G}$ in this paper.
\keepwithstatement
\begin{lemma}
\label{lem:relative-spectrum-transgression}
Let $A\to B\to C$ be a cofiber sequence of connective spectra. Suppose that $H_i(A;\mathbb{Z})=0$ for $i>0$ and that $H_0(A;\mathbb{Z})\to H_0(B;\mathbb{Z})$ is onto. Then $H_2(B;\mathbb{Z})\to H_2(C;\mathbb{Z})$ is an isomorphism and $\pi_2(C)\to H_2(C;\mathbb{Z})$ is onto. If $\xi\in\pi_2(C)$ maps to the image of $c\in H_2(B;\mathbb{Z})$, then $\theta_B(c)$ is the image of the cofiber boundary $\partial\xi\in\pi_1(A)$ under
\begin{equation}
\label{eq:relative-spectrum-transgression}
\pi_1(A)\cong H_0(A;\mathbb{Z}/2\mathbb{Z})
\longrightarrow H_0(B;\mathbb{Z}/2\mathbb{Z}).
\end{equation}
\end{lemma}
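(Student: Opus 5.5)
The plan is to compare the Atiyah--Hirzebruch (equivalently, Postnikov/Hurewicz) data of $B$ and $C$ through the long exact sequences of the cofiber sequence, and to identify $\theta$ with the standard low-degree description: for a connective spectrum $E$ with $k$-invariant in the range of $\pi_0,\pi_1$, the map $\theta_E$ is the composite of the inverse Hurewicz ambiguity with the action of $\eta\in\pi_1(\mathbb{S})$. Concretely, I will use the exact sequence $\pi_2(E)\to H_2(E;\mathbb{Z})\xrightarrow{\theta_E}\pi_0(E)\otimes\mathbb{Z}/2\xrightarrow{\eta}\pi_1(E)\to H_1(E;\mathbb{Z})\to0$, with $H_0(E;\mathbb{Z}/2\mathbb{Z})\cong\pi_0(E)/2$ by connectivity, and its naturality in $E$.

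First, the homology statements. The long exact homology sequence gives $H_2(A)\to H_2(B)\to H_2(C)\to H_1(A)$, and both outer terms vanish, so $H_2(B)\cong H_2(C)$. Next, $H_0(C)=\operatorname{coker}(H_0(A)\to H_0(B))=0$ and $H_1(C)\hookrightarrow H_0(A)$ with image $\ker(H_0(A)\to H_0(B))$; thus $C$ is connective with $\pi_0(C)=0$, hence $C$ is $0$-connected and $\pi_1(C)\cong H_1(C)$. For a $0$-connected spectrum the Hurewicz map in degree $2$ is onto (the first $k$-invariant obstruction lands in $\pi_1\otimes\mathbb{Z}/2\to$ degree-$3$, so the five-term sequence for $C$ reads $\pi_2(C)\to H_2(C)\to\pi_0(C)/2=0$). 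This gives surjectivity of $\pi_2(C)\to H_2(C;\mathbb{Z})$. Also $A$ has $H_i(A)=0$ for $i>0$, so $A$ is an Eilenberg--Mac Lane-like spectrum in low degrees only up to $\eta$: its five-term sequence gives $\pi_0(A)/2\xrightarrow{\eta}\pi_1(A)\to H_1(A)=0$ and $H_2(A)=0\to\pi_0(A)/2$, so $\eta:\pi_0(A)/2\cong\pi_1(A)$, which is the identification $\pi_1(A)\cong H_0(A;\mathbb{Z}/2\mathbb{Z})$ in \eqref{eq:relative-spectrum-transgression}.

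Now the main step, the transgression formula. Take $\xi\in\pi_2(C)$ with Hurewicz image $c$ (transported to $H_2(C)$). Then $\partial\xi\in\pi_1(A)$ maps to zero in $\pi_1(B)$. Write $\partial\xi=\eta a$ with $a\in\pi_0(A)/2$; by naturality of $\eta$, $\eta\cdot i_*(a)=i_*(\partial\xi)=0$ in $\pi_1(B)$, so $i_*(a)\in\ker(\eta)=\operatorname{im}\theta_B$. The claim is $i_*(a)=\theta_B(c)$ exactly, and I would prove it by realizing $\theta$ as a connecting map: $\theta_E$ is the boundary of the cofiber sequence $\tau_{\ge1}E\to E\to H\pi_0E$ followed by the Hurewicz/$\eta$ identification, or equivalently via the map of five-term sequences induced by $B\to C$ together with a diagram chase in the $3\times3$ diagram of cofiber sequences for $A\to B\to C$ smashed with $\mathbb{S}\to H\mathbb{Z}\to\mathrm{cofib}$. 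Using that $\mathrm{cofib}(\mathbb{S}\to H\mathbb{Z})$ has bottom homotopy $\pi_2\cong\mathbb{Z}/2$ generated by a class whose attaching map is $\eta$, the boundary $H_2(E)\to\pi_1(\mathrm{fib})$ appearing there is $\theta_E$, and the commuting square of boundaries for $B$ versus $C$ relative to $A$ yields the equality. This diagram chase, and the sign/normalization in identifying $\theta$ with a boundary map, is the obstacle; I would first check it on the universal example $A=H\mathbb{Z}$-like truncation of $\mathbb{S}$, i.e.\ $B=\mathbb{S}$, $A=\tau_{\le0}$-free piece, where both sides are forced by $\eta\ne0$.

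Finally I would note that the result is independent of the choice of $\xi$: two choices differ by an element of $\ker(\pi_2C\to H_2C)$, which is the image of $\eta\pi_1(C)$; its boundary lies in $\eta\cdot\partial\pi_1(C)$, which maps to $\eta$ of something in $\ker(\pi_0A\to\pi_0B)$, hence to zero in $H_0(B;\mathbb{Z}/2\mathbb{Z})$.
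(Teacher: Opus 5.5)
Your preliminary steps match the paper's. The homology long exact sequence gives $H_0(C;\mathbb{Z})=0$ and $H_2(B;\mathbb{Z})\cong H_2(C;\mathbb{Z})$. The five-term sequence for the $0$-connected spectrum $C$ gives surjectivity of $\pi_2(C)\to H_2(C;\mathbb{Z})$, and the five-term sequence for $A$ gives $\pi_1(A)\cong H_0(A;\mathbb{Z}/2\mathbb{Z})$. Your closing check that the answer does not depend on $\xi$ is also correct. (One harmless slip: $H_1(C)$ does not embed in $H_0(A)$, because $H_1(A)=0$ makes $H_1(B)\to H_1(C)$ injective and in general nonzero; you only need $H_0(C)=0$.)

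The gap is the transgression identity, which is the real content of the lemma. Your observation that $i_*(a)\in\ker\eta=\operatorname{im}\theta_B$ only shows $i_*(a)=\theta_B(c')$ for \emph{some} $c'$, not for the given $c$. You name the $3\times3$ diagram, but call the chase ``the obstacle'' and fall back on an ill-specified universal example. One example cannot prove the identity without a naturality reduction to it, which you do not give. The sign worry is vacuous: $H_0(B;\mathbb{Z}/2\mathbb{Z})$ has exponent two, as the paper notes.

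What is missing is the chase itself, which is what the paper relies on. Write each column as $E\xrightarrow{h_E}H\mathbb{Z}\wedge E\xrightarrow{q_E}Q\wedge E\xrightarrow{\delta_E}\Sigma E$, with $Q$ the cofiber of $\mathbb{S}\to H\mathbb{Z}$. Then $\theta_E=(q_E)_*$ on $\pi_2$, and the identification in the statement is $(\delta_A)_*^{-1}$.

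The class $\theta_B(c)=(q_B)_*(c)\in\pi_2(Q\wedge B)$ maps to $(q_C)_*(h_C)_*(\xi)=0$ in $\pi_2(Q\wedge C)$, so it lifts to $\pi_2(Q\wedge A)$. To choose the lift $a'$ with $(\delta_A)_*(a')=\pm\partial\xi$, you need the coherence of the $3\times3$ diagram obtained by smashing the two cofiber sequences. Exactness and commutativity of the separate homotopy long exact sequences are not enough. The mechanism:
\begin{itemize}
\item The pair $(\xi,c)$, together with a homotopy between their images in $H\mathbb{Z}\wedge C$, is a class in $\pi_2$ of the homotopy pullback $C\times^{h}_{H\mathbb{Z}\wedge C}(H\mathbb{Z}\wedge B)$.
\item Its boundary lands in $\Sigma$ of the total fiber of the square formed by $B$, $C$, $H\mathbb{Z}\wedge B$, $H\mathbb{Z}\wedge C$.
\item The two identifications of that total fiber with $\Sigma^{-1}(Q\wedge A)$, one through the rows and one through the columns, agree up to sign.
\item This gives simultaneously $(1\wedge i)_*(a')=\pm\theta_B(c)$ and $(\delta_A)_*(a')=\pm\partial\xi$.
\end{itemize}
Since $(\delta_A)_*$ is an isomorphism here, $a'=(\delta_A)_*^{-1}(\partial\xi)$ is forced. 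Hence $\theta_B(c)$ is the image of $\partial\xi$, which is the asserted formula.
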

\begin{proof}
Let $Q$ be the cofiber of the unit $\mathbb{S}\to H\mathbb{Z}$, where $\mathbb{S}$ is the sphere spectrum. We have $\pi_i(Q)=0$ for $i<2$ and $\pi_2(Q)=\mathbb{Z}/2\mathbb{Z}$, and thus the $2$-truncation of $Q$ is $\Sigma^2H(\mathbb{Z}/2\mathbb{Z})$. If $E$ is connective, smashing the Postnikov map with $E$ is still an isomorphism on $\pi_2$ and gives
\begin{equation*}
\pi_2(Q\wedge E)\cong H_0(E;\mathbb{Z}/2\mathbb{Z}),
\qquad
\pi_1(Q\wedge E)=0.
\end{equation*}
The cofiber sequence obtained by smashing $\mathbb{S}\to H\mathbb{Z}\to Q$ with $E$ thus gives
\begin{equation*}
\pi_2(E)\longrightarrow H_2(E;\mathbb{Z})
\xrightarrow{\theta_E}H_0(E;\mathbb{Z}/2\mathbb{Z})
\longrightarrow\pi_1(E)\longrightarrow H_1(E;\mathbb{Z})\longrightarrow0.
\end{equation*}
The connecting homomorphism displayed here is the differential $d^2_{2,0}$ in the five-term sequence of the Atiyah--Hirzebruch spectral sequence for the sphere spectrum. This is precisely the sequence in low degrees used in the proof of \cite[Theorem~6.12]{Li25}.

The homology sequence of $A\to B\to C$ gives $H_0(C;\mathbb{Z})=0$ since $H_0(A)\to H_0(B)$ is onto. Since $H_1(A)=H_2(A)=0$, the same sequence gives the asserted isomorphism $H_2(B)\cong H_2(C)$. The cofiber $C$ is connective, and thus $H_{-1}(C;\mathbb{Z})=0$ and the degree-zero stable Hurewicz map identifies $\pi_0(C)$ with $H_0(C;\mathbb{Z})$. The universal coefficient sequence therefore gives
$H_0(C;\mathbb{Z}/2\mathbb{Z})=0$. The five-term sequence for $C$ now makes $\pi_2(C)\to H_2(C;\mathbb{Z})$ surjective. For $A$, the hypotheses $H_1(A)=H_2(A)=0$ reduce the same five-term sequence to the canonical isomorphism in~\eqref{eq:relative-spectrum-transgression}.

Smash the cofiber sequence $A\to B\to C$ with the sequence $\mathbb{S}\to H\mathbb{Z}\to Q$. This gives a commuting diagram whose rows and columns are cofiber sequences:
\begin{equation*}
\begin{CD}
A @>>> B @>>> C\\
@VVV @VVV @VVV\\
H\mathbb{Z}\wedge A @>>> H\mathbb{Z}\wedge B @>>> H\mathbb{Z}\wedge C\\
@VVV @VVV @VVV\\
Q\wedge A @>>> Q\wedge B @>>> Q\wedge C.
\end{CD}
\end{equation*}
The image of $c$ in $\pi_2(H\mathbb{Z}\wedge C)$ lifts to $\xi$. Chase this lift around the diagram. Its boundary in $\pi_1(A)$ corresponds, under the left column, to a class in $\pi_2(Q\wedge A)$ whose image in $\pi_2(Q\wedge B)$ is the image of $c$ under the middle column. These two descriptions are exactly~\eqref{eq:relative-spectrum-transgression} and $\theta_B(c)$. The sign convention for cofiber boundaries has no effect in a group of exponent two. This also proves that the result is independent of the chosen lift $\xi$.
\end{proof}

Let $E\subseteq\Omega(\mathsf{B})$ be finite. Suppose that $u_1,v_1,\ldots,u_g,v_g\in\F(\G)$ fix $E$ pointwise and are AF outside $E$, and put $r=\prod_i[u_i,v_i]$. Assume that $r\in\F(\T)$. Since $r$ fixes $E$ pointwise and $\T$ is principal, the germ of $r$ at every $x\in E$ is the identity. Hence the germs $(u_i,x),(v_i,x)\in\G_x^x$ satisfy the surface relation and therefore define a class $c_x\in H_2(\G_x^x;\mathbb{Z})$. If several marked points lie in the same $\G$-orbit, connectors identify their groups of germs up to inner automorphism. Inner automorphisms act trivially on group homology. Hence the sum of the $c_x$ is a well-defined class $c\in H_2(\G)$ under Theorem~\ref{thm:higher-germ-homology}.

\begin{lemma}
\label{lem:marked-surface-parity}
With the preceding hypotheses, let $c\in H_2(\G)$ be this sum of surface classes. Then
\begin{equation}
\label{eq:marked-surface-parity}
\theta_{\G}(c)=q_2\bigl(\varepsilon_{\mathsf{B}}(r)\bigr).
\end{equation}
\end{lemma}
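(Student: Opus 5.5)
We will apply Lemma~\ref{lem:relative-spectrum-transgression} to the cofiber sequence $\mathscr{E}_{\T}\to\mathscr{E}_{\G}\to C$ obtained from the open inclusion $\T\subseteq\G$ (Li's functoriality recalled in the proof of Lemma~\ref{lem:coherent-parity}). The hypotheses hold. Positive-degree homology of $\mathscr{E}_{\T}$ is $H_{>0}(\T)=0$ by AF vanishing and Li's Theorem~4.18. Also $H_0(\T)\to H_0(\G)$ is onto by~\eqref{eq:relative}. Under these identifications, the map $\pi_1(\mathscr{E}_{\T})\cong H_0(\T;\mathbb{Z}/2\mathbb{Z})=\mathsf{P}_{\mathsf{B}}\to H_0(\G;\mathbb{Z}/2\mathbb{Z})$ in~\eqref{eq:relative-spectrum-transgression} is $q_2$. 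Moreover, the identification $\pi_1(\mathscr{E}_{\T})\cong\mathsf{P}_{\mathsf{B}}$ sends the stable class of $f\in\F(\T)$ to $\varepsilon_{\mathsf{B}}(f)$. This holds because $\pi_1$ of the AF K-theory spectrum is the abelianized stable AF full group, where a cylinder transposition goes to the class of its cylinder. So the task reduces to producing $\xi\in\pi_2(C)$ lifting the image of $c$ with $\partial\xi=[r]\in\pi_1(\mathscr{E}_{\T})$.

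To build $\xi$, we will use the punctured surface. Let $\Sigma_g^\circ$ be the genus-$g$ surface minus a disc, with free fundamental group $F$ on $a_i,b_i$, and boundary loop $w=\prod[a_i,b_i]$. Send $a_i\mapsto u_i$ and $b_i\mapsto v_i$. This gives a homomorphism $\rho:F\to\F(\G)$, hence a coherent local action in the sense of Subsection~\ref{subsec:coherent-evaluation} on $U=\Omega(\mathsf{B})$. Restricted to $N=\langle w\rangle\cong\mathbb{Z}$, it lands in $\F(\T)$ since $\rho(w)=r$. The square displayed in the proof of Lemma~\ref{lem:coherent-parity} gives a map of cofiber sequences
\[\Sigma^\infty(B\mathbb{Z})_+\to\Sigma^\infty(BF)_+\to\Sigma^\infty(BF/B\mathbb{Z})\]
into $\mathscr{E}_{\T}\to\mathscr{E}_{\G}\to C$. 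Since $BF/B\mathbb{Z}\simeq\Sigma_g^\circ/\partial\simeq\Sigma_g$ modulo a point, the fundamental class gives $[\Sigma_g]\in\pi_2^s$ of the source cofiber, namely the degree-one stable map $S^2\to\Sigma_g$. Its cofiber boundary is the boundary circle, which is the class of $w$ in $\pi_1^s(\Sigma^\infty(B\mathbb{Z})_+)$. Naturality then gives $\xi=f_*[\Sigma_g]$ with $\partial\xi=[\rho(w)]=[r]$.

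It remains to check that the Hurewicz image of $\xi$ in $H_2(C)\cong H_2(\mathscr{E}_{\G})=H_2(\G)$ is $c$. By the relative compatibility recorded at the end of the proof of Lemma~\ref{lem:coherent-parity}, the induced map on relative homology is the chain map $Q_{\rho,*}$ modulo $\T$. The relative fundamental class of $(\Sigma_g^\circ,\partial)$, written as a bar chain in $F$ modulo $N$, goes to the relative chain supported on exceptional composable tuples. Because every $u_i,v_i$ is AF off $E$ and fixes $E$, Lemma~\ref{lem:relative-nerve} shows these tuples lie only over the points of $E$. At each $x\in E$ the tuple is the bar chain of the surface relation in $\G_x^x$. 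Theorem~\ref{thm:higher-germ-homology} then identifies the class with $\sum_x c_x=c$.

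The main obstacle is this last identification, which also requires the isomorphism $\pi_1(\mathscr{E}_{\T})\cong\mathsf{P}_{\mathsf{B}}$ to match $\varepsilon_{\mathsf{B}}$. For the identification, I would first verify it for $g=1$ with a single marked point, comparing explicit chains. For the isomorphism, I would reduce to one elementary stage, where it is the sign of finite symmetric groups. Sign conventions are irrelevant in the exponent-two target.
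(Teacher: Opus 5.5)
Your proposal is correct and follows essentially the same route as the paper. It lifts the punctured-surface relative class through the map of cofiber sequences given by the relative compatibility in Lemma~\ref{lem:coherent-parity}, identifies the cofiber boundary with the stable class of $r$ and hence, at an elementary stage, with $\varepsilon_{\mathsf{B}}(r)$, identifies the Hurewicz image with $c$ via $Q_{\rho,*}$ modulo $\T$, and concludes with Lemma~\ref{lem:relative-spectrum-transgression}. The differences are cosmetic: you use the free group $F$ with the cyclic subgroup $\langle w\rangle$ where the paper uses $L=\langle u_i,v_i\rangle\leq\F(\G)$ with $N=L\cap\F(\T)$, and the vanishing of the $\pi_1(\mathbb{S})$-component of the cofiber boundary, which the paper checks via the splittings $\Sigma^\infty(BN)_+\simeq\mathbb{S}\vee\Sigma^\infty BN$, is left implicit in your passage to the reduced cofiber $\Sigma^\infty(BF/B\mathbb{Z})$.
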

\begin{proof}
Let $L\leq\F(\G)$ be generated by the displayed lifts and put $N=L\cap\F(\T)$. Every displayed generator fixes $E$ pointwise, and hence preserves the complement of $E$ setwise. At every point outside $E$ its germ belongs to $\T$. The same holds for its inverse. Since $\T$ is a subgroupoid, every word in these generators is therefore AF away from $E$. Evaluation of germs consequently gives a homomorphism
\begin{equation*}
L\longrightarrow\prod_{x\in E}\G_x^x,
\qquad
u\longmapsto\bigl((u,x)\bigr)_{x\in E}.
\end{equation*}
Its kernel is exactly $N$. Indeed, if all marked germs are trivial, the preceding observation shows that the element is AF everywhere. Conversely, an element of $L\cap\F(\T)$ fixes every $x\in E$, and principality of $\T$ forces its germ there to be the identity. In particular, $N\trianglelefteq L$. The surface relator has trivial germ at every marked point and hence lies in $N$.

Remove an open disc from $\Sigma_g$. The punctured surface has free fundamental group on the standard surface generators, and thus the chosen lifts define a map to $BL$. Its oriented boundary is the loop represented by $r$, now regarded as a loop in $BN$. This gives a relative class $z\in H_2(BL,BN;\mathbb{Z})$ with $\partial z=[r]\in H_1(N;\mathbb{Z})$. Thus the ordinary topological boundary of the punctured surface is exactly the element $r\in\F(\T)$.

Apply coherent evaluation to the actual actions $N\to\F(\T)$ and $L\to\F(\G)$. The relative compatibility proved in Lemma~\ref{lem:coherent-parity} gives a compatible map from the cofiber of $\Sigma^\infty(BN)_+\to\Sigma^\infty(BL)_+$ to the cofiber of $\mathscr{E}_{\T}\to\mathscr{E}_{\G}$, and identifies the induced homology map with the map obtained by passing $Q_{\rho,*}$ to the quotient complexes. Since $BN$ and $BL$ are connected, the first cofiber has zero integral homology in degree zero, and therefore also zero degree-zero homology with $\mathbb{Z}/2\mathbb{Z}$-coefficients. Its low-degree Atiyah--Hirzebruch sequence consequently makes the stable Hurewicz map onto in degree two. Choose a stable lift of $z$.

Use the canonical splittings
\begin{equation*}
\Sigma^\infty(BN)_+\simeq\mathbb{S}\vee\Sigma^\infty BN,
\qquad
\Sigma^\infty(BL)_+\simeq\mathbb{S}\vee\Sigma^\infty BL.
\end{equation*}
The map induced by $N\hookrightarrow L$ is the identity on the sphere summands. Hence the sphere summands cancel after passage to the cofiber. Equivalently, the cofiber boundary has zero component in $\pi_1(\mathbb{S})\cong\mathbb{Z}/2\mathbb{Z}$. Its $H_1(N;\mathbb{Z})$-component is $[r]$ by naturality of the stable Hurewicz map. Thus coherent evaluation sends the boundary to the stable loop represented by $r$ in $\pi_1(\mathscr{E}_{\T})$.

Now use that $\T$ is AF. Its positive integral homology vanishes. The five-term sequence used in Lemma~\ref{lem:relative-spectrum-transgression}, together with Li's identification of spectrum homology with groupoid homology, therefore gives the canonical isomorphism
$\pi_1(\mathscr{E}_{\T})\cong H_0(\T;\mathbb{Z}/2\mathbb{Z})=\mathsf{P}_{\mathsf{B}}$.
We identify the stable loop of $r$ under this isomorphism. The full bisection of $r$ is compact and $\T=\bigcup_n\mathfrak{T}_{\mathsf{B},n}$ is an increasing open exhaustion, so it is contained in one elementary stage. At that stage, $r$ is a finite permutation on each tower. The calculation in the proof of \cite[Theorem~6.12]{Li25}, together with naturality for the inclusion of an elementary pair groupoid, sends a transposition of two levels to the mod-two class of one source level. Additivity therefore sends a permutation of a tower to its sign. Passing to the Bratteli direct limit gives exactly the parity vector of Definition~\ref{def:AF-parity}. Hence the stable class of $r$ is $\varepsilon_{\mathsf{B}}(r)$.

Finally, apply the relative compatibility from Lemma~\ref{lem:coherent-parity} to the class $z$. After passing $Q_{\rho,*}$ to the quotient by the chains coming from $\T$, the image of $z$ is supported only at the marked set $E$. At each $x\in E$ it is the bar cycle of the germ surface relation. Under Theorem~\ref{thm:higher-germ-homology}, the sum of these cycles is precisely $c\in H_2(\G)\cong H_2(\G,\T)$. We may therefore apply Lemma~\ref{lem:relative-spectrum-transgression} to the cofiber sequence $\mathscr{E}_{\T}\to\mathscr{E}_{\G}$: the AF property gives $H_i(\T;\mathbb{Z})=0$ for $i>0$, and $H_0(\T)\to H_0(\G)$ is onto since the relative chain group in degree zero is zero. The cofiber boundary is the class $\varepsilon_{\mathsf{B}}(r)$ just computed, and its image in the absolute group is obtained by $q_2$. This gives~\eqref{eq:marked-surface-parity}.
\end{proof}

\subsection{The formula for a single return}
Fix an exceptional orbit $\mathcal{O}$, a basepoint $x\in\mathcal{O}$, and write $H=H_{\mathcal{O}}=\G_x^x$. Recall from Subsection~\ref{subsec:defect-map} that the isotropy defect is
\begin{equation}
\lambda_{\mathcal{O}}:H^{\ab}\longrightarrow H_0(\T).
\end{equation}
Assume in this subsection that $\lambda_{\mathcal{O}}=0$. If $h\in H$ is nonidentity, choose an isolating bisection representing $h$. Its defect in $H_0(\T)$ is zero, since its relative class has return coordinate $[h]_{\ab}$ and no transport coordinate. Lemma~\ref{lemma:AF-clopen-completion} therefore completes it by AF arrows to an element $u_h\in\F(\G)$ that fixes $x$, has germ $h$ at $x$, and is AF at every other point. For the identity germ we take $u_{(\Id,x)}=\Id$. These are the lifts at $x$ used in this subsection.

\keepwithstatement
\begin{thm}
\label{thm:surface-return-parity}
Assume that $\lambda_{\mathcal{O}}=0$, and let
$c\in H_2(H;\mathbb{Z})\subseteq H_2(\G;\mathbb{Z})$
belong to the corresponding orbit summand under Theorem~\ref{thm:higher-germ-homology}. Represent $c$ by a map $f:\Sigma\to BH$ from a closed oriented surface. On a connected component of genus $g$, choose the standard generators
\begin{equation}
a_1,b_1,\ldots,a_g,b_g,
\qquad
\prod_{i=1}^{g}[a_i,b_i]=1
\end{equation}
in its fundamental group, and put $h_i=f_*(a_i)$ and $k_i=f_*(b_i)$. Choose lifts $u_i,v_i\in\F(\G)$ of $h_i,k_i$ at $x$ as above, with AF germs at every other point. Then
\begin{equation}
\label{eq:surface-return-relation}
r_f=\prod_{i=1}^{g}[u_i,v_i]\in\F(\T).
\end{equation}
For a disconnected surface, take the product over its components. The secondary differential is
\begin{equation}
\label{eq:surface-return-parity}
\theta_{\G}(c)=q_2\bigl(\varepsilon_{\mathsf{B}}(r_f)\bigr).
\end{equation}
In particular, the right-hand side is independent of the surface representative and of all choices of these lifts.
\end{thm}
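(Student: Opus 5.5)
The plan is to reduce the statement to Lemma~\ref{lem:marked-surface-parity}, applied with the marked set $E=\{x\}$. First I will check membership~\eqref{eq:surface-return-relation}. Each lift $u_i,v_i$ fixes $x$, has germ $h_i$ or $k_i$ there, and is AF at every other point. By Lemma~\ref{lemma:marked-AF-remainder}, applied with $P=\{x\}$, products and inverses of such elements keep these two properties, so $r_f$ is AF off $x$. Its germ at $x$ is $\prod_i[h_i,k_i]=f_*\bigl(\prod_i[a_i,b_i]\bigr)=1$. Hence the full bisection of $r_f$ lies in $\T$, that is, $r_f\in\F(\T)$.

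For a disconnected surface I will take the lifts for all components together. The product of the component relators is then one surface relation of total genus $\sum g$ at the same point $x$, and its class is the sum of the component classes. So the disconnected case is literally an instance of the single-relation case with more commutator pairs. The hypotheses of Lemma~\ref{lem:marked-surface-parity} now hold: the lifts fix $E$ pointwise, are AF outside $E$, and $r=r_f\in\F(\T)$.

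The remaining point, which I expect to be the main bookkeeping step, is to identify the class $c_x$ built in that lemma with the given $c$. The lemma's class is the image of the fundamental class under the homomorphism $\pi_1(\Sigma_g)\to H$ determined by the germs $h_i,k_i$. Since $\pi_1(\Sigma_g)\to\pi_1(BH)$ determines $f$ up to homotopy (as $BH$ is aspherical), this class is $f_*[\Sigma]=c$. For this I will use the standard-generator presentation, so that orientation conventions match the boundary loop chosen in the lemma. Once $c_x=c$ in the orbit summand of Theorem~\ref{thm:higher-germ-homology}, formula~\eqref{eq:surface-return-parity} follows directly from~\eqref{eq:marked-surface-parity}.

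Finally, independence of choices needs no further work. The left side $\theta_{\G}(c)$ depends only on $c$, so the right side cannot depend on the surface representative, on the choice of lifts $u_i,v_i$, or on the completions from Lemma~\ref{lemma:AF-clopen-completion}. The hypothesis $\lambda_{\mathcal{O}}=0$ enters only to guarantee that these lifts exist.
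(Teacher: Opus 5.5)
Your proposal is correct and follows the paper's proof. Both arguments verify $r_f\in\F(\T)$ from the invariance of the complement of $x$ and the triviality of the germ relator at $x$. Both then apply Lemma~\ref{lem:marked-surface-parity} with $E=\{x\}$, identify the germ surface class with $f_*[\Sigma]=c$, and read off independence of choices from the intrinsic left-hand side.

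The only difference is cosmetic. You treat a disconnected surface by concatenating the component relators into one surface relation of total genus, whose class is the sum of the component classes via the pinch map. The paper instead sums the connected-case formulas, using additivity of $\theta_{\G}$, $q_2$ and $\varepsilon_{\mathsf{B}}$.
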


\begin{proof}
Every integral degree-two homology class of $BH$ is represented by the image of the fundamental class of a closed oriented surface. A spherical component maps trivially to $H_2(BH;\mathbb{Z})$ and may be discarded. On each remaining component, choose the standard surface generators and their lifts at $x$.

Each lift fixes $x$ and is AF away from $x$. Hence the complement of $x$ is invariant under every lift, and a word based outside $x$ remains outside $x$ while all of its successive germs lie in $\T$. Thus $r_f$ is AF away from $x$. At $x$, its germ is the product of commutators of the labels $h_i,k_i$, which is the identity since these labels come from a homomorphism $\pi_1(\Sigma_g)\to H$. Consequently the germ of $r_f$ belongs to $\T$ at every point, and $r_f\in\F(\T)$.

Apply Lemma~\ref{lem:marked-surface-parity} with marked set $E=\{x\}$. The germ surface class appearing there is exactly $f_*[\Sigma]$, namely the prescribed class $c$ in the $\mathcal{O}$-summand of Theorem~\ref{thm:higher-germ-homology}. This gives~\eqref{eq:surface-return-parity}. For a disconnected surface, the fundamental class is the sum of the component classes and $\varepsilon_{\mathsf{B}}$ is a homomorphism, and thus the same formula holds for the product of the component relators. Since the left-hand side depends only on $c$, the right-hand side is independent of the surface representative and of every choice of lifts at $x$.
\end{proof}

\begin{defn}[Parity homomorphism for returns]
\label{def:return-parity-homomorphism}
When $\lambda_{\mathcal{O}}=0$, define
\begin{equation}
\label{eq:return-parity-homomorphism}
\kappa_{\mathcal{O}}:H_2(H_{\mathcal{O}};\mathbb{Z})
\longrightarrow H_0(\G;\mathbb{Z}/2\mathbb{Z})
\end{equation}
by the right-hand side of~\eqref{eq:surface-return-parity}. We call $\kappa_{\mathcal{O}}$ the \emph{parity homomorphism for returns} of the exceptional orbit.
\end{defn}
By Theorem~\ref{thm:surface-return-parity}, $\kappa_{\mathcal{O}}$ is the restriction of $\theta_{\G}$ to that orbit summand.

When the lifts at $x$ determine a balanced near action and AF parity reduces to the parity of finitely supported permutations, $\kappa_{\mathcal{O}}$ is the evaluation of the Kapoudjian class on $H_2(H_{\mathcal{O}};\mathbb{Z})$. Cornulier describes the corresponding cocycle by the parity of the finitely supported permutation measuring the failure of the chosen lifts to multiply exactly. See the proof of \cite[Proposition~8.C.3]{Cornulier19}.

\begin{cor}
\label{cor:commuting-return-parity}
Assume $\lambda_{\mathcal{O}}=0$. If $h,k\in H_{\mathcal{O}}$ commute and $u_h,u_k$ are lifts at the basepoint with AF germs at every other point, then
\begin{equation}
\label{eq:commuting-return-parity}
\theta_{\G}\bigl([h|k]-[k|h]\bigr)
=
q_2\bigl(\varepsilon_{\mathsf{B}}([u_h,u_k])\bigr).
\end{equation}
Here the bar cycle denotes the image of the fundamental class of the torus.
\end{cor}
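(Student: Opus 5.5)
This is the genus-one case of Theorem~\ref{thm:surface-return-parity}, so the plan is to specialize that theorem rather than reprove anything. The torus $T^2=\Sigma_1$ has fundamental group $\langle a,b\mid [a,b]=1\rangle\cong\mathbb{Z}^2$. Since $h$ and $k$ commute, the assignment $a\mapsto h$, $b\mapsto k$ defines a homomorphism $\pi_1(T^2)\to H_{\mathcal{O}}$. We realize it by a map $f:T^2\to BH_{\mathcal{O}}$, which is possible because $T^2$ is aspherical, so $T^2\simeq B\mathbb{Z}^2$.

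The first step is to identify the class $f_*[T^2]\in H_2(H_{\mathcal{O}};\mathbb{Z})$ with the bar cycle $[h|k]-[k|h]$. Triangulate the square with sides labelled $a,b$ by cutting it along a diagonal. This gives two $2$-simplices, $[a|b]$ and $[b|a]$, with opposite orientations. Hence the fundamental class of $B\mathbb{Z}^2$ is represented in the bar complex by $[a|b]-[b|a]$.

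Applying $f_*$ then gives $[h|k]-[k|h]$. We check directly that this is a cycle: by the formula $\partial_2[h|k]=[k]-[hk]+[h]$, its boundary is $-[hk]+[kh]=0$, using $hk=kh$. The statement itself already fixes the convention that this cycle denotes the image of the torus fundamental class, so only the orientation sign remains to check. The sign is irrelevant anyway, because the target $H_0(\G;\mathbb{Z}/2\mathbb{Z})$ has exponent two.

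Next, apply Theorem~\ref{thm:surface-return-parity} with genus $g=1$, $h_1=h$, $k_1=k$, and lifts $u_1=u_h$, $v_1=u_k$. These lifts meet that theorem's requirements: they are lifts at the basepoint $x$ with AF germs at every other point, which is exactly what the theorem asks of its lifts. The theorem then yields $r_f=[u_h,u_k]\in\F(\T)$ and the formula $\theta_{\G}(f_*[T^2])=q_2(\varepsilon_{\mathsf{B}}([u_h,u_k]))$.

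The theorem also states that the right-hand side is independent of the choice of lifts. So the identity holds for any lifts $u_h,u_k$ of the stated kind, not only for lifts obtained by AF completion. The same independence covers the degenerate case where $h$ or $k$ is the identity, for which we take the lift to be $\Id$.

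The only genuine step is the identification of the torus fundamental class with the bar cycle, and it is standard. No further obstacle is expected.
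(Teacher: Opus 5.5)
Your proposal is correct and follows the paper's own proof, which simply applies Theorem~\ref{thm:surface-return-parity} to the torus with its standard pair of generators. Your identification of the torus fundamental class with $[h|k]-[k|h]$, and your remark that its sign is irrelevant because the target has exponent two, spell out details the paper leaves implicit.
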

\begin{proof}
Apply Theorem~\ref{thm:surface-return-parity} to a torus, using its standard pair of generators.
\end{proof}

Formula~\eqref{eq:commuting-return-parity} puts the Houghton calculation in its natural form. In Theorem~\ref{thm:Houghton-nonzero-theta}, the two return germs commute, the chosen lifts are $\alpha,\beta$, and their commutator is one transposition exchanging two Cantor blocks. Its parity is the constant class $1_Y$. Hence the parity homomorphism for returns is nonzero. This is exactly the criterion using an odd commutator for Cornulier's Kapoudjian class in the balanced near $\mathbb{Z}^2$ action \cite[Proposition~8.C.4 and Example~8.C.5]{Cornulier19}.

The lifting property for finite relations gives $\kappa_{\mathcal{O}}=0$. A surface representative uses only finitely many labels and the single surface relator. Definition~\ref{def:finite-relation-lifting} allows these finitely many relations to be realized on one common clopen neighborhood, and thus the surface relator is the identity there. After extending the local action by the identity outside this neighborhood, the corresponding product of commutators is the identity and therefore has zero AF parity. Theorem~\ref{thm:surface-return-parity} gives $\kappa_{\mathcal{O}}=0$. Thus Theorem~\ref{thm:finite-relations-split-AH} has two logically distinct parts: finite-relation lifting first gives $\theta_{\G}=0$, and the subsequent exponent-two argument constructs a splitting of the resulting AH extension. The vanishing of the homomorphisms $\kappa_{\mathcal{O}}$ alone proves only the first statement.

\subsection{Balancing at three sites and arbitrary isotropy defect}
\label{subsec:balanced-return-parity}
The condition $\lambda_{\mathcal{O}}=0$ allows each return germ to be completed at one point. With three points in the same exceptional orbit, we instead balance the abelianized return labels before constructing the corresponding full-group
element. Two surface homeomorphisms cancel the degree-one labels, while the three degrees $1,1,-1$ preserve the original degree-two class.

\keepwithstatement
\begin{lemma}
\label{lem:three-surface-balancing}
Let $\Sigma_g$ be a closed connected oriented surface of genus $g\geq1$. There exist self-homeomorphisms $\varphi_+,\varphi_-:\Sigma_g\to\Sigma_g$ such that
\begin{equation}
\label{eq:three-surface-degrees}
\deg(\varphi_+)=1,
\qquad
\deg(\varphi_-)=-1,
\end{equation}
and
\begin{equation}
\label{eq:three-surface-H1}
\operatorname{id}+ (\varphi_+)_*+(\varphi_-)_*=0
\quad\text{on }H_1(\Sigma_g;\mathbb{Z}).
\end{equation}
\end{lemma}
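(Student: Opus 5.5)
The plan is to reduce everything to the symplectic action on $H_1(\Sigma_g;\mathbb{Z})\cong\mathbb{Z}^{2g}$ and to realize suitable matrices by homeomorphisms. Since every element of $\operatorname{Sp}_{2g}(\mathbb{Z})$ is induced by an orientation-preserving homeomorphism (surjectivity of the symplectic representation of the mapping class group), and every integral matrix $M$ with $M^{\mathsf t}JM=-J$ is induced by an orientation-reversing homeomorphism (compose a fixed reflection with an element of the mapping class group), it suffices to find integral matrices $A\in\operatorname{Sp}_{2g}(\mathbb{Z})$ and $B$ anti-symplectic with
\[
I+A+B=0 .
\]
Then $\varphi_\pm$ realizing $A,B$ have degrees $1,-1$, since the degree is the sign by which the map acts on the intersection form, and \eqref{eq:three-surface-H1} holds.

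I will work blockwise. Using a symplectic basis $a_1,b_1,\ldots,a_g,b_g$, I will take $A$ and $B$ block diagonal with $2\times2$ blocks, so the problem reduces to the genus one case: find $A_0\in\operatorname{SL}_2(\mathbb{Z})$ and $B_0\in\operatorname{GL}_2(\mathbb{Z})$ with $\det B_0=-1$ and $I+A_0+B_0=0$. In genus one, symplectic means determinant $1$ and anti-symplectic means determinant $-1$. So $B_0=-I-A_0$, and I need $\det(I+A_0)=-1$. For $A_0\in\operatorname{SL}_2$ one has $\det(I+A_0)=2+\operatorname{tr}A_0$, so trace $-3$ is required. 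I will take
\[
A_0=\begin{pmatrix}-1&1\\-1&-2\end{pmatrix},
\]
which has determinant $2+1=3$. That is wrong, so I adjust to $\begin{pmatrix}-1&1\\1&-2\end{pmatrix}$, which has determinant $2-1=1$ and trace $-3$. Then $B_0=\begin{pmatrix}0&-1\\-1&1\end{pmatrix}$ has determinant $-1$, as required. The direct sum of these blocks is symplectic, respectively anti-symplectic, for the standard form.

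The remaining point, and the only real input, is the realization step. $\operatorname{Mod}(\Sigma_g)\to\operatorname{Sp}_{2g}(\mathbb{Z})$ is onto, which is classical (Dehn twists, as in Farb--Margalit), and the extended mapping class group maps onto the anti-symplectic coset, because a reflection induces an anti-symplectic matrix. An orientation-reversing homeomorphism has degree $-1$, and an orientation-preserving one has degree $1$. I expect no further obstacle. In genus one the realization can even be done directly by linear maps of $\mathbb{R}^2/\mathbb{Z}^2$, and for higher genus I can alternatively embed the genus one construction on each handle of a connected-sum decomposition, applying the torus maps handlewise after isotoping them to fix a disc. That version needs care with the reflection, since $B_0$ reverses orientation and must be chosen to fix the attaching disc.
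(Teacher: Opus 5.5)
Your proposal is correct and follows the paper's proof: reduce to a block-diagonal genus-one problem in a symplectic basis, take $A_0\in\operatorname{SL}_2(\mathbb{Z})$ of trace $-3$ so that $B_0=-I-A_0$ has determinant $-1$, and realize $A$ by surjectivity of $\operatorname{Mod}(\Sigma_g)\to\operatorname{Sp}_{2g}(\mathbb{Z})$ and $B$ by composing such a realization with a fixed orientation-reversing homeomorphism. Your corrected choice $A_0=\left(\begin{smallmatrix}-1&1\\1&-2\end{smallmatrix}\right)$, $B_0=\left(\begin{smallmatrix}0&-1\\-1&1\end{smallmatrix}\right)$ works just as well as the paper's $A_0=\left(\begin{smallmatrix}-2&-1\\-1&-1\end{smallmatrix}\right)$, $B_0=\left(\begin{smallmatrix}1&1\\1&0\end{smallmatrix}\right)$, though you should delete the abandoned first attempt from a final write-up.
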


\begin{proof}
For one pair of vectors in a symplectic basis, use
\begin{equation}
\label{eq:surface-balance-matrices}
A_0=
\begin{pmatrix}
-2&-1\\
-1&-1
\end{pmatrix},
\qquad
B_0=
\begin{pmatrix}
1&1\\
1&0
\end{pmatrix}.
\end{equation}
Then $A_0+B_0=-I_2$, $\det A_0=1$ and $\det B_0=-1$. If
$J_0=\left(\begin{smallmatrix}0&1\\-1&0\end{smallmatrix}\right)$, direct multiplication gives
$A_0^{\mathsf{t}}J_0A_0=J_0$ and $B_0^{\mathsf{t}}J_0B_0=-J_0$. Take the block diagonal matrices $A=A_0^{\oplus g}$ and $B=B_0^{\oplus g}$ in a symplectic basis of $H_1(\Sigma_g;\mathbb{Z})$. It follows that $A$ is symplectic, $B$ is anti-symplectic and $I+A+B=0$. The symplectic representation of the orientation-preserving mapping class group is onto $\operatorname{Sp}(2g,\mathbb{Z})$. See, for example, \cite[Chapter~6]{FM12}. Hence $A$ is realized by an orientation-preserving homeomorphism. Fix any orientation-reversing homeomorphism $R$ of $\Sigma_g$. Its action $R_*$ is anti-symplectic, and $R_*^{-1}B$ is symplectic. Realize $R_*^{-1}B$ by an orientation-preserving homeomorphism and compose with $R$. The result is orientation reversing and acts by $B$. These two homeomorphisms have the required degrees and homological actions.
\end{proof}

Fix an exceptional orbit $\mathcal{O}$ containing three distinct points $x_0,x_+,x_-$. Choose a basepoint $x\in\mathcal{O}$ and write $H=\G_x^x$. Use the connector system of Theorem~\ref{thm:relative-normal-form}: if $x_\sigma$ lies in the $\T$-orbit $\mathcal{T}_i$ with representative $y_i$ and connector $c_i:x\to y_i$, let $a_\sigma:y_i\to x_\sigma$ be the unique tail arrow and put $p_\sigma=a_\sigma c_i$. We identify an isotropy arrow $\gamma$ at $x_\sigma$ with $p_\sigma^{-1}\gamma p_\sigma\in H$. This construction also covers the case in which two or all three marked points lie in the same $\T$-orbit. They then use the same $c_i$ and different tail arrows. If $f:\Sigma_g\to BH$ represents a class $c\in H_2(H;\mathbb{Z})$, put
\begin{equation}
\label{eq:three-surface-maps}
f_0=f,
\qquad
f_+=f\circ\varphi_+,
\qquad
f_-=f\circ\varphi_-.
\end{equation}
For every oriented $1$-cell $s$ in the standard surface presentation, the three normalized return labels satisfy
\begin{equation}
\label{eq:three-return-balance}
[f_{0*}(s)]_{\ab}+[f_{+*}(s)]_{\ab}+[f_{-*}(s)]_{\ab}=0
\quad\text{in }H^{\ab},
\end{equation}
since the induced homomorphism $H_1(\Sigma_g;\mathbb{Z})\to H^{\ab}$ is additive and~\eqref{eq:three-surface-H1} holds. Equation~\eqref{eq:three-return-balance} is exactly the degree-one condition needed below: the three prescribed isotropy germs have zero return coordinate in the relative normal form, while their transport coordinate is automatically zero.

\begin{thm}[Parity formula for balanced returns]
\label{thm:balanced-surface-return-parity}
Let $\mathcal{O}$ be an exceptional $\G$-orbit containing at least three points, with group of germs $H=H_{\mathcal{O}}$. No assumption is made on the isotropy defect
$\lambda_{\mathcal{O}}:H^{\ab}\to H_0(\T)$. Let
$c\in H_2(H;\mathbb{Z})\subseteq H_2(\G;\mathbb{Z})$.

Represent $c$ by a map $f:\Sigma\to BH$ from a closed oriented surface. Discard spherical components, which represent zero in $H_2(BH;\mathbb{Z})$. For each remaining connected component, choose $\varphi_+,\varphi_-$ as in Lemma~\ref{lem:three-surface-balancing} and define $f_0,f_+,f_-$ by~\eqref{eq:three-surface-maps}. Choose also three distinct points $x_0,x_+,x_-\in\mathcal{O}$. For every standard surface generator $s$ and each of the three prescribed isotropy germs, choose an isolating bisection when that germ is nonidentity and a sufficiently small unit bisection when it is the identity. The three source neighborhoods may be chosen pairwise disjoint, and likewise the three range neighborhoods. These representative bisections have normalized return labels, as in Subsection~\ref{subsec:relative-normal-form},
\begin{equation}
\label{eq:balanced-generator-labels}
f_{0*}(s),\qquad f_{+*}(s),\qquad f_{-*}(s).
\end{equation}
Their union has zero class in $H_1(\G,\T)$ and thus, by Lemma~\ref{lemma:AF-clopen-completion}, extends by AF arrows to an element of the full group $u_s\in\F(\G)$. In particular,
\begin{equation}
\label{eq:balanced-generator-index-zero}
I(u_s)=0.
\end{equation}
If
\begin{equation}
\label{eq:balanced-surface-relator}
r_f^{\mathrm{bal}}
=\prod_{i=1}^{g}[u_{a_i},u_{b_i}],
\end{equation}
with the products over connected components multiplied together, then
$r_f^{\mathrm{bal}}\in\F(\T)$ and
\begin{equation}
\label{eq:balanced-surface-parity}
\theta_{\G}(c)
=q_2\bigl(\varepsilon_{\mathsf{B}}(r_f^{\mathrm{bal}})\bigr).
\end{equation}
The right-hand side is independent of the three points, the connectors, the representative bisections, the completions by the AF core and the chosen balancing homeomorphisms.
\end{thm}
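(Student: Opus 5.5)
The plan is to reduce the statement to Lemma~\ref{lem:marked-surface-parity} with the three-point marked set $E=\{x_0,x_+,x_-\}$, after two preliminary checks: the generators $u_s$ exist, and the relator $r_f^{\mathrm{bal}}$ lies in $\F(\T)$.

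\textbf{Existence of the lifts $u_s$.} For a fixed standard generator $s$, take the union $W_s$ of the three chosen representative bisections. Because the source neighborhoods are pairwise disjoint, and likewise the range neighborhoods, $W_s$ is a compact open bisection. Its exceptional arrows are the (at most three) nonidentity prescribed isotropy germs. By the relative normal form, Theorem~\ref{thm:relative-normal-form}, their classes have zero transport coordinate, since each is isotropy. The sum of their return coordinates is zero by~\eqref{eq:three-return-balance}. Hence $[W_s]=0$ in $H_1(\G,\T)$. The non-exceptional part of $W_s$ lies in $\T$ and has zero source-minus-range class, so $\delta[W_s]=0$ gives $[1_{\sg(W_s)}]=[1_{\rg(W_s)}]$ in $H_0(\T)$. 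Lemma~\ref{lemma:AF-clopen-completion} then completes $W_s$ by AF arrows to a full element $u_s$. This $u_s$ fixes $E$ pointwise, is AF off $E$, and has the prescribed germs at the three points. Its index is computed by Theorem~\ref{thm:index-formula}: it is the image of $[W_s]=0$, so~\eqref{eq:balanced-generator-index-zero} follows from injectivity of $\iota$.

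\textbf{The relator lies in $\F(\T)$.} Each $u_s$ fixes the three points and is AF elsewhere. The product $r_f^{\mathrm{bal}}$ is therefore AF off $E$. At $x_\sigma$, after conjugating by $p_\sigma$, its germ is the surface relator evaluated on the labels $f_{\sigma*}(a_i),f_{\sigma*}(b_i)$. This is the identity because $f_\sigma$ is a map of the surface. So $r_f^{\mathrm{bal}}\in\F(\T)$, and the hypotheses of Lemma~\ref{lem:marked-surface-parity} hold with $g$ taken as the total genus over components.

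\textbf{Applying the lemma.} The lemma gives $\theta_{\G}$ of the sum of the three marked surface classes, with each class transported into the $\mathcal{O}$-summand by the connectors $p_\sigma$. This transport is harmless, since inner automorphisms act trivially on $H_2$. The three classes are $(f\circ\varphi_\sigma)_*[\Sigma]=\deg(\varphi_\sigma)\,c$. Their sum is $c+c-c=c$ by~\eqref{eq:three-surface-degrees}, which yields~\eqref{eq:balanced-surface-parity}. For disconnected surfaces, $\varepsilon_{\mathsf{B}}$ is a homomorphism and the fundamental class is additive over components, so the same formula holds for the product of component relators. Independence of all the choices is automatic, since the left-hand side depends only on $c$.

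\textbf{Main obstacle.} The delicate point is the bookkeeping when several marked points lie in the same $\T$-orbit. I need the normalized labels used in~\eqref{eq:three-return-balance} to be exactly the return coordinates of the isolating arrows in the normal form. I would handle this by choosing the connectors $p_\sigma=a_\sigma c_i$ as described, so that the normalized return of an isotropy arrow $\gamma$ at $x_\sigma$ is $p_\sigma^{-1}\gamma p_\sigma$ by definition. I must also confirm that identity-germ unit pieces contribute nothing to $[W_s]$.
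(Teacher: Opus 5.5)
Your proposal is correct and follows the paper's own proof essentially step for step: the three isotropy pieces have zero relative class, so Lemma~\ref{lemma:AF-clopen-completion} builds each $u_s$; invariance of the marked complement gives $r_f^{\mathrm{bal}}\in\F(\T)$; and Lemma~\ref{lem:marked-surface-parity} on $E=\{x_0,x_+,x_-\}$ with the degree count $c+c-c=c$ finishes. The bookkeeping you flag is handled in the paper exactly as you suggest, by the connectors $p_\sigma=a_\sigma c_i$ fixed before the theorem, and the unit pieces lie in $\T$, so they contribute nothing to the relative class.
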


\begin{proof}
Fix one standard surface generator $s$. The three prescribed germs at $x_0,x_+,x_-$ are isotropy germs, and thus their transport coordinates in Theorem~\ref{thm:relative-normal-form} vanish. Their return coordinate is the sum in~\eqref{eq:three-return-balance}, which is zero in $H^{\ab}$. Hence the union of the three representative bisections represents zero in $H_1(\G,\T)$. In particular, its source-minus-range defect in $H_0(\T)$ is zero. Lemma~\ref{lemma:AF-clopen-completion} therefore adjoins AF arrows and produces a full bisection $u_s$. Adding AF arrows does not change the relative class, and thus the index formula gives $I(u_s)=0$.

Do this independently for every surface generator. At the three marked points, the germ of the attaching word~\eqref{eq:balanced-surface-relator} is respectively the surface relator associated with $f_0$, $f_+$ and $f_-$. Each is the identity. Every $u_s$ fixes the three marked points, so their complement is invariant under every generator and its inverse. A word based outside the marked set therefore remains outside it, where all successive germs lie in $\T$. Since $\T$ is a subgroupoid, the germ of the attaching word also lies in $\T$. Therefore $r_f^{\mathrm{bal}}\in\F(\T)$.

It remains to identify which absolute degree-two class is represented by these three marked surface relations. On one connected component, precomposition with a self-map of degree $d$ multiplies the image of the oriented fundamental class by $d$. Hence the three marked contributions in the $\mathcal{O}$-summand of Theorem~\ref{thm:higher-germ-homology} are
$(f_0)_*[\Sigma_g]+(f_+)_*[\Sigma_g]+(f_-)_*[\Sigma_g]=c_0+c_0-c_0=c_0$,
since $\deg(\varphi_+)=1$ and $\deg(\varphi_-)=-1$, where $c_0$ is the class of that component. Summing over the connected components gives the original class $c$. Lemma~\ref{lem:marked-surface-parity}, applied componentwise to the same three marked points, now identifies the AF parity of the product of the component relators with $\theta_{\G}(c)$ and gives~\eqref{eq:balanced-surface-parity}. Since the left-hand side is intrinsic, the right-hand side is independent of the marked points, connectors, representative bisections, AF completions and balancing homeomorphisms.
\end{proof}

The intrinsic quantity in Theorem~\ref{thm:balanced-surface-return-parity} is the image under $q_2$. The class $\varepsilon_{\mathsf{B}}(r_f^{\mathrm{bal}})$ in $\mathsf{P}_{\mathsf{B}}$ may change when the marked points, connectors, or AF completions are changed. Any such change lies in $\ker q_2$. Section~\ref{sec:boundary-extensions} studies this parity before applying $q_2$.

\begin{defn}[Parity homomorphism from balanced returns]
\label{def:balanced-return-parity}
For an exceptional orbit $\mathcal{O}$ with at least three points, define
\begin{equation}
\label{eq:balanced-return-parity-map}
\kappa^{\mathrm{bal}}_{\mathcal{O}}:
H_2(H_{\mathcal{O}};\mathbb{Z})
\longrightarrow H_0(\G;\mathbb{Z}/2\mathbb{Z})
\end{equation}
by~\eqref{eq:balanced-surface-parity}.
\end{defn}
Theorem~\ref{thm:balanced-surface-return-parity} identifies $\kappa^{\mathrm{bal}}_{\mathcal{O}}$ with the restriction of $\theta_{\G}$ to the $\mathcal{O}$-summand of Theorem~\ref{thm:higher-germ-homology}. If $\lambda_{\mathcal{O}}=0$, it agrees with the homomorphism for a single return $\kappa_{\mathcal{O}}$ of Definition~\ref{def:return-parity-homomorphism}.

For a torus, the construction at three points gives explicit lifts of two commuting return germs, each with index zero.
\keepwithstatement
\begin{cor}
\label{cor:balanced-commuting-return}
Let $h,k\in H_{\mathcal{O}}$ commute and assume that $\mathcal{O}$ contains three points. Choose three marked points $x_0,x_+,x_-$. There are zero-index elements of the full group $U,V$ whose normalized exceptional labels are
\begin{equation}
\label{eq:balanced-torus-labels}
\begin{array}{c|ccc}
&x_0&x_+&x_-\\ \hline
U&h&h^{-2}k^{-1}&hk\\
V&k&h^{-1}k^{-1}&h
\end{array}
\end{equation}
with AF germs elsewhere. Their commutator belongs to $\F(\T)$ and
\begin{equation}
\label{eq:balanced-torus-parity}
\theta_{\G}([h|k]-[k|h])
=q_2\bigl(\varepsilon_{\mathsf{B}}([U,V])\bigr).
\end{equation}
\end{cor}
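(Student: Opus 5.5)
This will be a direct specialization of Theorem~\ref{thm:balanced-surface-return-parity} to the torus $\Sigma_1$, with the map $f:\Sigma_1\to BH_{\mathcal{O}}$ determined by $a_1\mapsto h$, $b_1\mapsto k$. This map is well defined because $[h,k]=1$. Its fundamental class is the bar cycle $[h|k]-[k|h]$, exactly as in Corollary~\ref{cor:commuting-return-parity}.

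I would make the balancing homeomorphisms of Lemma~\ref{lem:three-surface-balancing} explicit for $g=1$. In the basis $(a_1,b_1)$ of $H_1(\Sigma_1;\mathbb{Z})\cong\mathbb{Z}^2$, take $\varphi_+$ to act by $A_0$ and $\varphi_-$ by $B_0$, as in \eqref{eq:surface-balance-matrices}. For the torus these can be realized by the linear maps of $\mathbb{R}^2/\mathbb{Z}^2$ themselves, so no appeal to the mapping class group is needed. The labels $f_{\pm*}(a_1)$ and $f_{\pm*}(b_1)$ are then read off from the columns of $A_0$ and $B_0$, written multiplicatively in the abelian subgroup $\langle h,k\rangle$. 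Because the image is abelian, the representing group elements are determined by their homology classes.

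Concretely, the first column of $A_0$ is $(-2,-1)$, which gives the label $h^{-2}k^{-1}$. The second column is $(-1,-1)$, which gives $h^{-1}k^{-1}$. For $B_0$, the columns $(1,1)$ and $(1,0)$ give $hk$ and $h$. These are exactly the entries of \eqref{eq:balanced-torus-labels}, with $U=u_{a_1}$ and $V=u_{b_1}$.

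Each row of the table multiplies to the identity in $H_{\mathcal{O}}$, not merely in $H_{\mathcal{O}}^{\ab}$. This is the balance \eqref{eq:three-return-balance} in its strongest form. It follows that the three marked isotropy germs have zero relative class. Lemma~\ref{lemma:AF-clopen-completion} then completes them, via isolating bisections with pairwise disjoint source and range neighborhoods, to zero-index elements $U$ and $V$ that are AF off $\{x_0,x_+,x_-\}$.

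Theorem~\ref{thm:balanced-surface-return-parity} now gives $[U,V]\in\F(\T)$ together with \eqref{eq:balanced-torus-parity}. The degree check is $1+1-1=1$, which confirms that the marked contributions sum to the original torus class.

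The only step I expect to need care is the convention matching the matrix columns to the generators, in particular whether $\varphi_*$ is read on columns or rows. Getting this wrong would transpose the table. I would check it against the additivity condition \eqref{eq:three-return-balance} on each generator, which any correct reading must satisfy. Since $A_0$ and $B_0$ are symmetric, the transpose is in fact harmless here.
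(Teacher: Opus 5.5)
Your proposal is correct and follows the paper's proof. Both specialize Theorem~\ref{thm:balanced-surface-return-parity} to the torus with the balancing matrices $A_0,B_0$, read the labels of $U$ and $V$ off their first and second columns, and use commutativity of $h,k$ together with the vanishing abelianized row sums to obtain AF completions. Your realization of $A_0,B_0$ by linear automorphisms of $\mathbb{R}^2/\mathbb{Z}^2$ is a harmless simplification of the mapping-class-group step in Lemma~\ref{lem:three-surface-balancing}; since these maps fix the basepoint, the labels are honest elements of $H_{\mathcal{O}}$ rather than conjugacy classes.
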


\begin{proof}
Use the matrices in~\eqref{eq:surface-balance-matrices} for the torus. Their first columns give the three labels of $U$ and their second columns give the three labels of $V$. Since $h$ and $k$ commute, the pair in each column of~\eqref{eq:balanced-torus-labels} defines a homomorphism from the torus group. The abelianized labels in each row sum to zero, and thus the two prescriptions of three germs have zero relative class and admit completions by the AF core. Theorem~\ref{thm:balanced-surface-return-parity} gives~\eqref{eq:balanced-torus-parity}.
\end{proof}

\begin{cor}
\label{cor:minimal-balanced-parity}
If $\G$ is minimal and AF-by-discrete on a Cantor space, then every exceptional $\G$-orbit is infinite. Hence, under the decomposition
\begin{equation}
H_2(\G;\mathbb{Z})
\cong\bigoplus_{\mathcal{O}}H_2(H_{\mathcal{O}};\mathbb{Z}),
\end{equation}
the secondary differential is the sum of the parity homomorphisms from balanced returns
\begin{equation}
\label{eq:minimal-theta-balanced-sum}
\theta_{\G}
=\sum_{\mathcal{O}}\kappa^{\mathrm{bal}}_{\mathcal{O}}.
\end{equation}
In particular, nonzero isotropy defect does not constitute a separate unresolved case in the minimal setting. For minimal AF-by-discrete groupoids with comparison, the strong AH property of Definition~\ref{def:AH-terms} is equivalent to the vanishing of all the homomorphisms $\kappa^{\mathrm{bal}}_{\mathcal{O}}$.
\end{cor}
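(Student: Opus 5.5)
The plan has three steps. First, every exceptional orbit is infinite. Second, on each orbit summand $\theta_{\G}$ agrees with $\kappa^{\mathrm{bal}}_{\mathcal{O}}$ by Theorem~\ref{thm:balanced-surface-return-parity}. Third, the AH equivalence follows from exactness and Definition~\ref{def:AH-terms}.

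\emph{Step one.} Let $\mathcal{O}$ be an exceptional orbit and suppose it were finite. By minimality, $\mathcal{O}$ is dense in $\Omega(\mathsf{B})$. A finite set is closed, so we would get $\mathcal{O}=\Omega(\mathsf{B})$. This is impossible, because a Cantor space is infinite. Hence $\mathcal{O}$ is infinite and in particular contains at least three distinct points. No compact generation is needed.

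\emph{Step two.} By Theorem~\ref{thm:higher-germ-homology}, $H_2(\G;\mathbb{Z})$ is the algebraic direct sum of the summands $H_2(H_{\mathcal{O}};\mathbb{Z})$. Each class is therefore a finite sum $c=\sum_{\mathcal{O}}c_{\mathcal{O}}$. Since $\theta_{\G}$ is a homomorphism, $\theta_{\G}(c)=\sum_{\mathcal{O}}\theta_{\G}(c_{\mathcal{O}})$. By step one, each exceptional orbit satisfies the three-point hypothesis of Theorem~\ref{thm:balanced-surface-return-parity}. That theorem imposes no condition on $\lambda_{\mathcal{O}}$, and by Definition~\ref{def:balanced-return-parity} it gives $\theta_{\G}(c_{\mathcal{O}})=\kappa^{\mathrm{bal}}_{\mathcal{O}}(c_{\mathcal{O}})$. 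Summing over orbits gives \eqref{eq:minimal-theta-balanced-sum}. This also shows that nonzero isotropy defect creates no separate case.

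\emph{Step three.} Under minimality and comparison, Li's sequence \eqref{eq:AH-bounded-type} applies. By exactness, $j_{\G}$ is injective exactly when $\theta_{\G}=0$, which is the remark following Definition~\ref{def:AH-terms}. By step two, $\theta_{\G}=0$ exactly when every $\kappa^{\mathrm{bal}}_{\mathcal{O}}$ vanishes. For the reverse implication, restrict $\theta_{\G}$ to each summand, which we may do because the sum is direct.

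The only point requiring care is confirming that $\theta_{\G}$ is the same map in both places. Theorem~\ref{thm:balanced-surface-return-parity} is stated for the stable differential $d^2_{2,0}$, defined without minimality or comparison. Li's Corollary~6.14 identifies this with the map in the ordinary sequence under minimality and comparison. So no further work should be needed beyond citing that identification.
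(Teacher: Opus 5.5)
Your proposal is correct and follows essentially the same route as the paper: a finite orbit is closed, so minimality would force it to be the whole Cantor space; on each summand of Theorem~\ref{thm:higher-germ-homology}, Theorem~\ref{thm:balanced-surface-return-parity} (through Definition~\ref{def:balanced-return-parity}) identifies $\theta_{\G}$ with $\kappa^{\mathrm{bal}}_{\mathcal{O}}$; and Li's exact sequence together with the direct-sum structure gives the strong AH equivalence. Your extra check, that the stable differential $d^2_{2,0}$ agrees with the map in the ordinary sequence under minimality and comparison, is consistent with how the paper defines $\theta_{\G}$.
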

\begin{proof}
A finite orbit in a Cantor space is closed. Minimality would therefore make it the whole unit space, which is impossible. Hence, every $\G$-orbit is infinite and in particular contains three distinct points. Theorem~\ref{thm:higher-germ-homology} decomposes $H_2(\G;\mathbb{Z})$ as the direct sum of the exceptional orbit summands, and Definition~\ref{def:balanced-return-parity} identifies the restriction of $\theta_{\G}$ to each summand with $\kappa^{\mathrm{bal}}_{\mathcal{O}}$. This proves~\eqref{eq:minimal-theta-balanced-sum}. Under comparison, Definition~\ref{def:AH-terms} and Li's low-degree exact sequence identify the strong AH property with $\theta_{\G}=0$, which, for a direct-sum domain, is equivalent to the vanishing of every summand map.
\end{proof}

The corollary reduces strong AH in the minimal case to the parity of balanced surface relations. A counterexample would require a balanced surface relation with nonzero image under~\eqref{eq:balanced-surface-parity}. The Houghton--Cantor example lies outside this test since its exceptional orbit is a singleton, even though the group of germs at that point is $\mathbb{Z}^2$. The lifting property for finite relations annihilates these parity homomorphisms. Whether minimality alone supplies enough lifting remains open.

Under comparison, the kernel theorem identifies the surviving parity contribution as $\Sym(\G)/\Alt(\G)$.
\keepwithstatement
\begin{prop}
\label{prop:minimal-comparison-parity-quotient}
Let $\G$ be minimal, effective, and AF-by-discrete on a Cantor space, and assume comparison. Then
\begin{equation}
\label{eq:minimal-comparison-KS-DA}
\ker I=\Sym(\G),
\qquad
\Der(\G)=\Alt(\G).
\end{equation}
Li's AH exact sequence consequently identifies the parity quotient as
\begin{equation}
\label{eq:minimal-comparison-parity-quotient}
\frac{\Sym(\G)}{\Alt(\G)}
\cong
\frac{H_0(\G;\mathbb{Z}/2\mathbb{Z})}{\operatorname{im}\theta_{\G}}.
\end{equation}
Equivalently, there is a short exact sequence
\begin{equation}
\label{eq:minimal-comparison-abelianization-extension}
0\longrightarrow \Sym(\G)/\Alt(\G)
\longrightarrow \F(\G)_{\ab}
\xrightarrow{\ I_{\ab}\ }H_1(\G)
\longrightarrow0.
\end{equation}
In particular, the equality $\ker I=\Sym(\G)$ does \emph{not} by itself imply the strong AH property. The latter is equivalent to the canonical parity map
\begin{equation}
H_0(\G;\mathbb{Z}/2\mathbb{Z})\longrightarrow \Sym(\G)/\Alt(\G)
\end{equation}
being injective, or equivalently to $\theta_{\G}=0$.
\end{prop}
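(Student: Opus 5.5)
The plan is to assemble this from results already in the paper. Two steps are formal: the kernel identification and Li's AH sequence. The only real content is the exact identification of images.

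First, equality $\ker I=\Sym(\G)$ follows from Corollary~\ref{cor:no-singletons-kernel}. Minimality on a Cantor space excludes singleton $\G$-orbits, so Theorem~\ref{thm:singleton-kernel} has zero residual term. Comparison is given, so the hypotheses of \cite[Corollary~6.17]{Li25} hold, and its equivalences~\eqref{eq:li-kernel-equivalences} turn $\ker I=\Sym(\G)$ into $\Der(\G)=\Alt(\G)$. This proves~\eqref{eq:minimal-comparison-KS-DA}. No almost finiteness is needed, since comparison is assumed directly.

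Next, I will use the ordinary AH sequence~\eqref{eq:AH-parity-sequence} from \cite[Corollary~6.14]{Li25}. By exactness, $j_{\G}$ induces an injection $H_0(\G;\mathbb{Z}/2\mathbb{Z})/\operatorname{im}\theta_{\G}\hookrightarrow\F(\G)_{\ab}$ whose image is $\ker I_{\ab}$. It remains to identify $\ker I_{\ab}$ with $\Sym(\G)/\Alt(\G)$.

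Since $\Der(\G)=\Alt(\G)$, we have $\F(\G)_{\ab}=\F(\G)/\Alt(\G)$. Also, $I$ factors through the abelianization, so $\ker I_{\ab}=\ker I/\Der(\G)=\Sym(\G)/\Alt(\G)$. This gives the short exact sequence~\eqref{eq:minimal-comparison-abelianization-extension}, and comparing the two descriptions of $\ker I_{\ab}$ gives~\eqref{eq:minimal-comparison-parity-quotient}.

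I still need to check that the resulting isomorphism is the canonical parity map, $[1_{\sg(V)}]\mapsto t_V\Alt(\G)$. This holds because $j_{\G}$ sends $[1_{\sg(V)}]$ to the class of $t_V$, and $t_V\in\Sym(\G)$.

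For the final assertions, the canonical map $H_0(\G;\mathbb{Z}/2\mathbb{Z})\to\Sym(\G)/\Alt(\G)$ is $j_{\G}$ with its target restricted to $\ker I_{\ab}$. Injectivity of this map is therefore the strong AH property of Definition~\ref{def:AH-terms}, which is equivalent to $\theta_{\G}=0$ by exactness. The remark that $\ker I=\Sym(\G)$ alone does not force strong AH is a logical observation. The kernel equality leaves $\operatorname{im}\theta_{\G}$ undetermined; the Houghton computation of Remark~\ref{rmk:Houghton-ordinary-full-group} illustrates kernel equality alongside nonzero $\theta$, though that example is outside the minimal setting.

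The main obstacle is bookkeeping rather than new mathematics: confirming that the map $j_{\G}$ in Li's sequence is literally the transposition-class map, so that the quotient is identified canonically and not merely abstractly. I will take this from the description of $j_{\G}$ recalled in Subsection~\ref{subsec:AH-terms}.
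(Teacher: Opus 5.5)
Your proposal is correct and follows the paper's own proof essentially step for step. Both derive $\ker I=\Sym(\G)$ from Corollary~\ref{cor:no-singletons-kernel} and $\Der(\G)=\Alt(\G)$ from \cite[Corollary~6.17]{Li25}, then combine $\ker I_{\ab}=\ker I/\Der(\G)=\Sym(\G)/\Alt(\G)$ with exactness of Li's sequence. The only difference is that you obtain the final injectivity criterion directly from Definition~\ref{def:AH-terms} and exactness, where the paper cites \cite[Remark~6.16]{Li25}.
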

\begin{proof}
Corollary~\ref{cor:no-singletons-kernel} gives $\ker I=\Sym(\G)$, since a minimal Cantor groupoid has no singleton orbit. Under minimality and comparison, \cite[Corollary~6.17]{Li25} then gives $\Der(\G)=\Alt(\G)$. In Li's exact sequence \cite[Corollary~6.14]{Li25}, the image of the parity map is the kernel of $I_{\ab}$. Since
$\ker I_{\ab}=\ker I/\Der(\G)=\Sym(\G)/\Alt(\G),$
and the kernel of the parity map is $\operatorname{im}\theta_{\G}$, the first isomorphism theorem gives~\eqref{eq:minimal-comparison-parity-quotient}. Equation~\eqref{eq:minimal-comparison-abelianization-extension} follows at once. The last assertion is exactly the injectivity criterion in \cite[Remark~6.16]{Li25}.
\end{proof}

\subsection{Boundary signatures and the closest known obstructions to strong AH}
\label{subsec:boundary-signature}

The balanced formula reduces the minimal problem to the homomorphisms
$\kappa_{\mathcal{O}}^{\mathrm{bal}}$, but it is useful to record an equivalent formulation in group-theoretic terms. It also makes precise the analogy with the signature of a finite permutation and with the Kapoudjian class of a near action.

\begin{defn}[Boundary signature]
\label{def:boundary-signature}
Assume that every $\G$-orbit has at least three points. A \emph{boundary signature} is a homomorphism
\begin{equation*}
\varepsilon_{\partial}:\Sym(\G)\longrightarrow H_0(\G;\mathbb{Z}/2\mathbb{Z})
\end{equation*}
such that, for every compact open bisection $F$ with $\sg(F)\cap\rg(F)=\varnothing$,
$\varepsilon_{\partial}(t_F)=[1_{\sg(F)}].$
Here $t_F$ is the dynamical transposition associated with $F$.
\end{defn}

Definition~\ref{def:boundary-signature} is intrinsic to $\G$. For an AF-by-discrete inclusion, the problem is to extend AF parity by corrections at the exceptional boundary.

\begin{prop}
\label{prop:boundary-signature-criterion}
Let $\G$ be a minimal effective AF-by-discrete groupoid with Cantor unit space and comparison. Then the following are equivalent.
\begin{enumerate}
\item The secondary homomorphism $\theta_{\G}:H_2(\G;\mathbb{Z})\to H_0(\G;\mathbb{Z}/2\mathbb{Z})$ is zero.
\item The canonical epimorphism $H_0(\G;\mathbb{Z}/2\mathbb{Z})\to \Sym(\G)/\Alt(\G)$ is an isomorphism.
\item A boundary signature exists.
\end{enumerate}
If $\T\subseteq\G$ is an AF core, then every boundary signature restricts on $\Sym(\T)$ to the AF parity map followed by the natural quotient
$\mathsf{P}_{\mathsf{B}}\to H_0(\G;\mathbb{Z}/2\mathbb{Z})$.
\end{prop}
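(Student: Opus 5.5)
The plan is to run everything through Li's exact sequence \eqref{eq:AH-parity-sequence} together with Proposition~\ref{prop:minimal-comparison-parity-quotient}. Under minimality and comparison, that proposition identifies $\ker I_{\ab}$ with $\Sym(\G)/\Alt(\G)$. It also shows that the canonical map $H_0(\G;\mathbb{Z}/2\mathbb{Z})\to\Sym(\G)/\Alt(\G)$, sending $[1_{\sg(F)}]\mapsto [t_F]$, is an epimorphism with kernel $\operatorname{im}\theta_{\G}$. The equivalence (1)$\Leftrightarrow$(2) is then immediate: the epimorphism is injective exactly when $\operatorname{im}\theta_{\G}=0$. Minimality guarantees that every orbit is infinite, as in Corollary~\ref{cor:minimal-balanced-parity}, so Definition~\ref{def:boundary-signature} makes sense.

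For (2)$\Rightarrow$(3), let $\psi$ be the inverse isomorphism $\Sym(\G)/\Alt(\G)\to H_0(\G;\mathbb{Z}/2\mathbb{Z})$. Compose it with the quotient map $\Sym(\G)\to\Sym(\G)/\Alt(\G)$; this map is well defined because $\Alt(\G)$ is normal in $\Sym(\G)$, as noted in the proof of Proposition~\ref{prop:residual-kernel-quotient}. The composite is a homomorphism, and by construction it sends $t_F$ to $[1_{\sg(F)}]$, so it is a boundary signature.

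For (3)$\Rightarrow$(2), suppose a boundary signature $\varepsilon_\partial$ exists. Because its target is abelian of exponent two, $\varepsilon_\partial$ kills every commutator and every element of odd order. A multisection $3$-cycle is a product of two transpositions $t_{F}t_{F'}$ whose sources have equal class in $H_0(\G)$, so it maps to $2[1_{\sg}]=0$. Hence $\varepsilon_\partial$ vanishes on $\Alt(\G)$ and factors through $\Sym(\G)/\Alt(\G)$. Composing with the canonical epimorphism sends $[1_{\sg(F)}]$ to $[1_{\sg(F)}]$, so the composite is the identity on $H_0(\G;\mathbb{Z}/2\mathbb{Z})$. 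That requires classes of the form $[1_{\sg(F)}]$ to generate the group, and I will get this from comparison: every clopen set of small measure is the source of a bisection with disjoint range, and clopen sets split into such small pieces. The canonical map therefore has a left inverse, so it is injective, hence an isomorphism.

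For the final claim, take $f\in\F(\T)=\Sym(\T)$. Lemma~\ref{lemma:AF-two-transpositions}, or more precisely its refinement into level transpositions at a deep stage, writes $f$ as a product of transpositions of cylinders whose sources are level sets. By Definition~\ref{def:AF-parity}, $\varepsilon_{\mathsf{B}}(f)$ is the sum of the classes of those source cylinders, because a transposition has parity equal to the class of either cylinder. Applying $\varepsilon_\partial$ gives the same sum after applying $q_2$, so $\varepsilon_\partial|_{\Sym(\T)}=q_2\circ\varepsilon_{\mathsf{B}}$.

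The main obstacle I expect is the step showing that $\varepsilon_\partial$ kills $\Alt(\G)$. A $3$-cycle on a multisection with levels $V_{11},V_{22},V_{33}$ is $t_{V_{12}}t_{V_{23}}$, and since $[1_{V_{11}}]=[1_{V_{22}}]$ in $H_0(\G)$ it maps to $2[\cdot]=0$; I expect this to work directly. The generation claim used above could instead be avoided by noting that the epimorphism $H_0\to\Sym/\Alt$ already exists, so surjectivity is given.
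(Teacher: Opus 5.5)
Your proof is correct. For (1)$\Leftrightarrow$(2), (2)$\Rightarrow$(3), the vanishing on $\Alt(\G)$, and the final restriction claim, it follows the paper's proof. Your explicit computation $t_{V_{12}}t_{V_{23}}\mapsto[1_{V_{22}}]+[1_{V_{33}}]=0$ is a concrete form of the paper's observation that a multisection $3$-cycle has odd order.

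\textbf{Where your route differs.} The only real difference is how you show that the classes $[1_{\sg(F)}]$ generate $H_0(\G;\mathbb{Z}/2\mathbb{Z})$. You go through comparison, which requires a clopen partition whose pieces $U_i$ satisfy
\[
\mu(U_i)<\mu(\Omega(\mathsf{B})\setminus U_i)\quad\text{for all }\mu\in M(\G)\text{ simultaneously.}
\]
You assert that clopen sets split into such pieces but do not prove it. It does hold: invariant measures are atomless because orbits are infinite, and then compactness of $M(\G)$ with Dini's theorem gives uniform smallness, as in Lemma~\ref{lemma:exceptional-neighborhoods}. If $M(\G)=\varnothing$, comparison applies vacuously. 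The paper avoids measures entirely. For $x\in U$ it picks an arrow from $x$ to a different point. It restricts a compact open bisection around that arrow to a clopen $U_x\subseteq U$ whose image misses $U_x$, then refines a finite subcover to a partition. This uses only that orbits are not singletons, so it is shorter and needs no comparison at this step.

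\textbf{Your closing remark is not a valid shortcut.} Surjectivity of $j_{\G}$ together with $\varepsilon_\partial(t_F)=[1_{\sg(F)}]$ gives only $j_{\G}\circ\overline\varepsilon_\partial=\mathrm{id}$ on $\Sym(\G)/\Alt(\G)$. So $\overline\varepsilon_\partial$ is a section of $j_{\G}$, which leaves $\ker j_{\G}=\operatorname{im}\theta_{\G}$ untouched. Injectivity of $j_{\G}$ needs $\overline\varepsilon_\partial\circ j_{\G}=\mathrm{id}$ on all of $H_0(\G;\mathbb{Z}/2\mathbb{Z})$. That is equivalent to $\overline\varepsilon_\partial$ being onto, which is exactly the generation statement. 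So generation cannot be bypassed, and you should keep your main argument or replace it with the paper's.
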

\begin{proof}
By \cite[Corollary~6.14 and Remark~6.16]{Li25}, (1) is equivalent to injectivity of the canonical parity map into the abelianization. Proposition~\ref{prop:minimal-comparison-parity-quotient} identifies its image with $\Sym(\G)/\Alt(\G)$. Thus the relevant corestriction
\begin{equation*}
j_{\G}:H_0(\G;\mathbb{Z}/2\mathbb{Z})\longrightarrow \Sym(\G)/\Alt(\G)
\end{equation*}
is onto, and (1) is equivalent to (2).

Assume (2). Composing the quotient $\Sym(\G)\to\Sym(\G)/\Alt(\G)$ with $j_{\G}^{-1}$ gives a homomorphism $\varepsilon_{\partial}$, and the defining formula for the canonical parity map gives $\varepsilon_{\partial}(t_F)=[1_{\sg(F)}]$. Hence a boundary signature exists.

Conversely, suppose that $\varepsilon_{\partial}$ is a boundary signature. Every multisection $3$-cycle has order three, while the target of $\varepsilon_{\partial}$ has exponent two, and thus $\varepsilon_{\partial}$ vanishes on the generators of $\Alt(\G)$. It therefore descends to a homomorphism $\overline\varepsilon_{\partial}:\Sym(\G)/\Alt(\G)\to H_0(\G;\mathbb{Z}/2\mathbb{Z})$.

We claim that the classes $[1_{\sg(F)}]$ appearing in dynamical transpositions generate $H_0(\G;\mathbb{Z}/2\mathbb{Z})$. Indeed, let $U$ be clopen. For every $x\in U$, the orbit of $x$ is infinite, and thus choose an arrow from $x$ to a point different from $x$. Shrinking to a compact open bisection gives a clopen neighborhood $U_x\subseteq U$ whose image is disjoint from $U_x$. Compactness and zero-dimensionality refine these neighborhoods to a finite clopen partition $U=U_1\sqcup\cdots\sqcup U_m$, with a dynamical transposition having source $U_i$ for every $i$. Hence $[1_U]=\sum_i[1_{U_i}]$ is generated by source classes of transpositions. On each such generator,
$\overline\varepsilon_{\partial}\circ j_{\G}([1_{U_i}])=[1_{U_i}]$.
Thus $\overline\varepsilon_{\partial}\circ j_{\G}$ is the identity on all of $H_0(\G;\mathbb{Z}/2\mathbb{Z})$, and $j_{\G}$ is injective. This proves (2).

Finally, on $\Sym(\T)$ both a boundary signature and $q_2\circ\varepsilon_{\mathsf{B}}$ are homomorphisms, and they agree on every AF dynamical transposition. Since such transpositions generate $\Sym(\T)$, the two restrictions coincide.
\end{proof}

Matui already constructs a signature with values in the dimension group modulo two on the index kernel for a minimal Cantor homeomorphism in \cite[Section~4]{Matui06}. The present question asks for such a signature in the AF-by-discrete setting, where return isotropy may contribute an obstruction. Therefore, the remaining obstruction can be stated without spectra: one asks whether the signature on the AF core can always be corrected at the discrete boundary so as to become a homomorphism on the dynamical symmetric group. This is close in spirit to Lacourte's construction of a signature extending the parity of finitely supported permutations for piecewise continuous transformations \cite{Lac22}. It is also the splitting formulation of the Kapoudjian obstruction for near actions. See \cite[Section~8.C]{Cornulier19}. The point here is that the target is the generally nontrivial group $H_0(\G;\mathbb{Z}/2\mathbb{Z})$ rather than a single copy of $\mathbb{Z}/2\mathbb{Z}$.

Combining Proposition~\ref{prop:boundary-signature-criterion} with Corollary~\ref{cor:minimal-balanced-parity} gives the following equivalent form of the obstruction.

\begin{cor}
\label{cor:balanced-signature-equivalence}
Under the hypotheses of Proposition~\ref{prop:boundary-signature-criterion}, a boundary signature exists if and only if
\begin{equation*}
\sum_{\mathcal{O}}\kappa_{\mathcal{O}}^{\mathrm{bal}}=0.
\end{equation*}
In particular, since the decomposition of higher homology is a direct sum over exceptional orbits, it is enough that every parity homomorphism from balanced returns vanish.
\end{cor}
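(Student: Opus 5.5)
The corollary follows by chaining two earlier results. The plan is to take Proposition~\ref{prop:boundary-signature-criterion}, which under minimality, effectiveness, the AF-by-discrete inclusion and comparison shows that a boundary signature exists if and only if $\theta_{\G}=0$. Its hypothesis that every orbit has at least three points holds here: a minimal groupoid on a Cantor space has only infinite orbits, as observed in the proof of Corollary~\ref{cor:minimal-balanced-parity}. Then I will use Corollary~\ref{cor:minimal-balanced-parity}, which identifies $\theta_{\G}$ with $\sum_{\mathcal{O}}\kappa^{\mathrm{bal}}_{\mathcal{O}}$ under the decomposition $H_2(\G;\mathbb{Z})\cong\bigoplus_{\mathcal{O}}H_2(H_{\mathcal{O}};\mathbb{Z})$ of Theorem~\ref{thm:higher-germ-homology}. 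Substituting the second statement into the first gives the equivalence directly.

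For the second assertion, I will read the sum $\sum_{\mathcal{O}}\kappa^{\mathrm{bal}}_{\mathcal{O}}$ as the homomorphism on the direct sum whose restriction to the $\mathcal{O}$-summand is $\kappa^{\mathrm{bal}}_{\mathcal{O}}$. A homomorphism out of a direct sum vanishes if and only if each restriction to a summand vanishes. Hence, if every $\kappa^{\mathrm{bal}}_{\mathcal{O}}$ is zero, the sum is zero and a boundary signature exists. The converse also holds, although the statement only asks for sufficiency.

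I do not expect a real obstacle, since this is bookkeeping. The one point to check is that the sum is understood as a map on the direct sum rather than as a pointwise sum of maps with a common domain. This matches the restriction description in Definition~\ref{def:balanced-return-parity}, where each $\kappa^{\mathrm{bal}}_{\mathcal{O}}$ is the restriction of $\theta_{\G}$ to the $\mathcal{O}$-summand.
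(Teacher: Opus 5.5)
Your proposal is correct and follows the paper's route: the corollary is obtained by combining Proposition~\ref{prop:boundary-signature-criterion} (boundary signature exists $\Leftrightarrow$ $\theta_{\G}=0$) with Corollary~\ref{cor:minimal-balanced-parity} ($\theta_{\G}=\sum_{\mathcal{O}}\kappa^{\mathrm{bal}}_{\mathcal{O}}$ on the direct-sum decomposition). Your check that minimality supplies the three-point hypothesis, and your reading of the sum as the map on the direct sum with summand restrictions $\kappa^{\mathrm{bal}}_{\mathcal{O}}$, are both consistent with the paper.
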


The criterion clarifies what a counterexample must contain. It is not enough to have an infinite group of germs, failure of coherence for the whole group, or even a nonrealizable near action in an abstract sense. One must produce a relation among finitely many returns whose residual permutation in $\F(\T)$ has nonzero class after passing to $H_0(\G;\mathbb{Z}/2\mathbb{Z})$.

The preceding criterion isolates the remaining obstruction entirely inside the AF-by-discrete setting. Once the contribution of the AF core has been encoded by the boundary signature, failure of strong AH can only come from a nontrivial compatibility obstruction between the exceptional germ data and the parity of the finite permutations in the AF core. The question is therefore whether the recurrence and comparison properties of a minimal AF-by-discrete groupoid force this boundary obstruction to vanish.

\begin{problem}[Minimal AF-by-discrete strong AH]
\label{prob:minimal-AFbd-strong-AH}
Let $\G$ be a minimal effective AF-by-discrete groupoid with Cantor unit space. Under comparison, must the boundary signature of Definition~\ref{def:boundary-signature} exist? Equivalently, must every parity homomorphism from balanced returns
\begin{equation*}
\kappa_{\mathcal{O}}^{\mathrm{bal}}:H_2(H_{\mathcal{O}};\mathbb{Z})
\longrightarrow H_0(\G;\mathbb{Z}/2\mathbb{Z})
\end{equation*}
vanish?
\end{problem}

The question is not resolved by the known permutation-type obstructions.
Proposition~\ref{prop:no-clopen-translation-rays} excludes a minimal groupoid realizing the Houghton construction by these clopen blocks whenever an invariant probability measure exists. The formulation in terms of a boundary signature gives a concrete route to a positive answer: construct a stabilized parity of the finite permutations in $\F(\T)$ produced at sufficiently large levels, corrected only by the finitely many persistent return configurations, and prove that the correction is independent of the chosen level. A negative route must instead produce a surface relation whose balanced relator has nonzero image under $q_2\circ\varepsilon_{\mathsf{B}}$ while respecting recurrence and, when an invariant probability measure is available, the obstruction of Proposition~\ref{prop:no-clopen-translation-rays}.

\subsubsection*{Clopen translation rays and invariant measures}
In the Houghton--Cantor example, the obstruction from returns is realized by permuting infinitely many disjoint Cantor blocks that accumulate at a singleton orbit. This mechanism is incompatible with an invariant probability measure of full support.

\keepwithstatement
\begin{prop}
\label{prop:no-clopen-translation-rays}
Let $\G$ be a minimal groupoid on a Cantor space and suppose that $\G$ admits an invariant probability measure $\mu$. Let $g\in\F(\G)$. There do not exist pairwise disjoint nonempty clopen sets $U_0,U_1,\ldots$ such that
$g(U_n)=U_{n+1}\qquad(n\geq0).$
More generally, let $\mathcal{X}$ be a connected locally finite graph with infinitely many vertices. There is no family of pairwise disjoint nonempty clopen sets $(U_v)_{v\in V\mathcal{X}}$ and compact open bisections that, along every edge $v--w$ outside a finite set of edges, map $U_v$ onto $U_w$, provided that deleting those finitely many edges leaves an infinite connected component.
\end{prop}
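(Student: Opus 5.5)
The plan is a measure argument. First, $\mu$ has full support: the support of a $\G$-invariant measure is a closed invariant set, and minimality forces it to be everything. Hence every nonempty clopen set has positive $\mu$-measure.

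For the ray statement, note that $g$ is induced by a compact open full bisection. Restricting this bisection to $U_n$ gives a compact open bisection with source $U_n$ and range $U_{n+1}$. Invariance of $\mu$ (Definition~\ref{def:minimality-comparison}) then gives $\mu(U_{n+1})=\mu(U_n)$, so $\mu(U_n)=\mu(U_0)>0$ for all $n$. The sets are pairwise disjoint, so countable additivity yields
\[
\mu\Bigl(\bigcup_n U_n\Bigr)=\sum_n\mu(U_0)=\infty,
\]
which contradicts $\mu(\Omega(\mathsf{B}))=1$.

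For the graph version, let $\mathcal{X}'$ be an infinite connected component left after deleting the finitely many exceptional edges. For every remaining edge $v--w$ in $\mathcal{X}'$ there is a compact open bisection mapping $U_v$ onto $U_w$, so invariance gives $\mu(U_v)=\mu(U_w)$. Connectedness of $\mathcal{X}'$ propagates this equality along paths, so $\mu(U_v)=c$ for all $v\in V\mathcal{X}'$. Here $c>0$ by full support, since each $U_v$ is nonempty clopen. Local finiteness is not even needed: $\mathcal{X}'$ has infinitely many vertices, and the sets $U_v$ are pairwise disjoint. Summing over any $N$ of them gives $1\geq Nc$ for every $N$, which is a contradiction.

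The only point requiring care is the invariance step for a bisection defined merely on clopen pieces. If the given bisection $V$ maps $U_v$ onto $U_w$ only after restriction, one applies invariance to the restriction $V|_{U_v}$, which is again compact open since $U_v$ is clopen. I expect no real obstacle here. The one subtlety is to make sure the invariant measure is with respect to $\G$, not just $\T$; Proposition~\ref{prop:defect-infinitesimal} reconciles the two when $\T$ is minimal, but the statement assumes $\G$-invariance directly.
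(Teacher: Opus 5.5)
Your proposal is correct and matches the paper's proof: you derive full support from minimality, use invariance on the restricted bisections to make $\mu(U_v)$ constant along the surviving edges, and reach a contradiction with finite total mass from infinitely many disjoint sets of equal positive measure. Your remark that local finiteness is not needed is also accurate, since the paper's argument does not use it either.
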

\begin{proof}
Minimality implies that every invariant probability measure has full support, hence every nonempty clopen set has positive measure. In the first assertion, invariance gives
$\mu(U_n)=\mu(U_0)>0$ for every $n$, contradicting finite total mass.

For the second assertion, pass to an infinite connected component after deleting the exceptional edges. Invariance along its edges makes $\mu(U_v)$ constant on that component. The common value is positive, while the sets are pairwise disjoint and there are infinitely many of them, again a contradiction.
\end{proof}

Almost finite Cantor groupoids admit invariant probability measures, and minimality makes every such measure have full support. See the construction preceding and the conclusion of \cite[Lemma~6.8]{Matui12}. Proposition~\ref{prop:no-clopen-translation-rays} therefore excludes clopen-block realizations of the Houghton near action in every minimal almost finite groupoid, including the minimal AF-by-discrete groupoids covered by Theorem~\ref{thm:minimal-AF-by-discrete-almost-finite}. The same measure argument applies to the one-ended near $\mathbb{Z}^2$-sets $X_{m,s}$ from \cite[Sections~5.E and 9.H]{Cornulier19}. This excludes the translation-ray mechanism itself. A realization not built from pairwise disjoint clopen blocks remains possible.

\section{Boundary extensions from balanced returns}\label{sec:boundary-extensions}
\label{subsec:balanced-boundary-extension}

We now construct an extension class before choosing a surface representative. Balanced triples of return germs admit zero-index lifts. For two such triples, the product of their chosen lifts multiplied by the inverse of the chosen lift of their product lies in the AF full group. Its parity gives a group-cohomology class.

The boundary extension controls the secondary differential. The AH extension
$0\to\mathsf{P}_{\G}\to\F(\G)_{\ab}\to H_1(\G)\to0$ discussed in Section~\ref{sec:parity} instead describes the subsequent abelian extension of $H_1(\G)$ by the surviving parity group.

Fix an exceptional $\G$-orbit $\mathcal{O}$ containing three distinct points $x_0,x_1,x_2$. Choose connectors $p_j:x_0\to x_j$, with $p_0=(\Id,x_0)$, and put $H=H_{x_0}$. We use the connectors to identify $H$ with the isotropy group at $x_j$ by
$h\mapsto p_jh p_j^{-1}$. Equivalently, an isotropy germ $g\in\G_{x_j}^{x_j}$ is normalized to $p_j^{-1}gp_j\in H$.

\begin{defn}[Balanced triple group]
\label{def:balanced-triple-group}
The \emph{balance homomorphism} is
\begin{equation*}
\operatorname{bal}:H^3\longrightarrow H^{\ab},\qquad
(h_0,h_1,h_2)\longmapsto
[h_0]_{\ab}+[h_1]_{\ab}+[h_2]_{\ab},
\end{equation*}
where $H^3$ has componentwise multiplication. The \emph{balanced triple group} is
\begin{equation}
\label{eq:balanced-triple-group}
B_3(H)=\ker(\operatorname{bal}).
\end{equation}
\end{defn}
Thus $B_3(H)$ consists exactly of triples of isotropy germs at $x_0,x_1,x_2$ whose return coordinate in the relative normal form is zero. Since all three prescribed germs are isotropy germs, their transport coordinate is already zero.

\begin{lemma}
\label{lemma:three-site-lifts}
For every $\mathbf{h}=(h_0,h_1,h_2)\in B_3(H)$, there is an element
$u_{\mathbf{h}}\in\ker I$ that fixes $x_0,x_1,x_2$, satisfies
$(u_{\mathbf{h}},x_j)=p_jh_jp_j^{-1}$ for $j=0,1,2$, and is AF at every other point. We take the identity homeomorphism as the lift of the identity element of $B_3(H)$.

If $u_{\mathbf{h}}$ and $u'_{\mathbf{h}}$ are two such lifts, then
$u'_{\mathbf{h}}u_{\mathbf{h}}^{-1}\in\F(\T)$ and this AF element has germ $(\Id,x_j)$ at each marked point.
\end{lemma}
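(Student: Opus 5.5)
The plan is to repeat the construction used in the proof of Theorem~\ref{thm:balanced-surface-return-parity} for a single generator. Fix $\mathbf{h}=(h_0,h_1,h_2)\in B_3(H)$.

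\emph{Local bisections.} For each $j$ with $p_jh_jp_j^{-1}\neq(\Id,x_j)$, choose an isolating bisection $V_j$ for the isotropy arrow $p_jh_jp_j^{-1}$ (Definition~\ref{def:isolating-bisection}). For each $j$ with trivial germ, take a small unit bisection $V_j$ at $x_j$. Using Lemma~\ref{lemma:finite-exceptional-support} and Proposition~\ref{prop:nested-isolation}, shrink the sources so that the three source neighborhoods are pairwise disjoint, the three range neighborhoods are pairwise disjoint, and, moreover, $\sg(V_j)\cup\rg(V_j)$ avoids the other two marked points. Then $V=V_0\cup V_1\cup V_2$ is a compact open bisection with at most three exceptional arrows, namely the prescribed isotropy germs.

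\emph{Vanishing of the relative class.} Compute the class of $V$ in $H_1(\G,\T)$ with the relative normal form of Theorem~\ref{thm:relative-normal-form}. Use the connector system obtained by extending the chosen $p_j$, which is possible because $p_0=(\Id,x_0)$. Each arrow is isotropy, so its transport coordinate vanishes. Its normalized return is $h_j$, so the return coordinate is $\operatorname{bal}(\mathbf{h})=0$. If the given $p_j$ are not of the form $a_xc_i$, the abelianized labels are unchanged anyway: replacing a connector conjugates the label by an element of $H$, and for an isotropy arrow this does not change the class in $H^{\ab}$.

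Hence $\delta[V]=[1_{\sg(V)}]-[1_{\rg(V)}]=0$ in $H_0(\T)$. Lemma~\ref{lemma:AF-clopen-completion} then completes $V$ by AF arrows to a full bisection $u_{\mathbf{h}}$. Because the added arrows avoid $\sg(V)\ni x_j$, $u_{\mathbf{h}}$ fixes each $x_j$ with the prescribed germ, and all its other germs are AF. Theorem~\ref{thm:index-formula} gives $\iota I(u_{\mathbf{h}})=\sum_j[p_jh_jp_j^{-1}]=0$, and injectivity of $\iota$ yields $u_{\mathbf{h}}\in\ker I$. For $\mathbf{h}=1$, the identity homeomorphism already has all the required properties.

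\emph{Uniqueness up to the AF core.} Both lifts $u_{\mathbf{h}}$ and $u'_{\mathbf{h}}$ preserve the finite set $P=\{x_0,x_1,x_2\}$, are AF off $P$, and have equal germs at every point of $P$. Lemma~\ref{lemma:marked-AF-remainder} (applied to $u_{\mathbf{h}}^{-1}$ and $u'^{-1}_{\mathbf{h}}$, or directly by the same germwise argument) gives $u'_{\mathbf{h}}u_{\mathbf{h}}^{-1}\in\F(\T)$. At each $x_j$ the germ of this element is $(p_jh_jp_j^{-1})(p_jh_jp_j^{-1})^{-1}=(\Id,x_j)$.

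The main point needing care is the zero-defect step: the return coordinate must be computed in connector coordinates compatible with the given $p_j$. This is exactly the observation above that, for isotropy arrows, changing connectors conjugates labels and leaves abelianized classes fixed. The second point is keeping the AF completion off the marked points, which is arranged by including the marked points inside $\sg(V)$ and $\rg(V)$ as above.
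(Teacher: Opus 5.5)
Your proposal is correct and follows essentially the same route as the paper: isolating (or unit) bisections at the three marked points with disjoint sources and ranges, vanishing of the relative class because isotropy arrows have zero transport and normalized labels conjugate to $h_j$, AF completion by Lemma~\ref{lemma:AF-clopen-completion}, then the index formula with injectivity of $\iota$; for the second part, your appeal to Lemma~\ref{lemma:marked-AF-remainder} applied to the inverses is just a packaged form of the paper's direct germwise check. One small imprecision is the claim that the chosen $p_j$ can always be extended to a connector system of Theorem~\ref{thm:relative-normal-form} (this fails, e.g.\ when two marked points lie in the same $\T$-orbit), but your conjugation remark already handles that case, exactly as the paper does.
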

\begin{proof}
For each $j$, if $p_jh_jp_j^{-1}$ is nonidentity, choose an isolating bisection $V_j$ containing that isotropy germ. If it is the identity germ, choose instead a sufficiently small unit bisection at $x_j$. Since the marked points are distinct, the three source neighborhoods may be chosen pairwise disjoint, and likewise the three range neighborhoods. To read the relative coordinate, compare $p_j$ with the connector supplied by Theorem~\ref{thm:relative-normal-form}. The two arrows differ on the right by an element of $H$, so the normalized return label is conjugate to $h_j$ and hence has the same class $[h_j]_{\ab}$ in $H^{\ab}$. The transport coordinate is zero since the arrow is isotropy. Thus the union $V_0\sqcup V_1\sqcup V_2$ represents the relative class
$([h_0]_{\ab}+[h_1]_{\ab}+[h_2]_{\ab},0)=0$.
Its source--range defect in $H_0(\T)$ therefore vanishes. Lemma~\ref{lemma:AF-clopen-completion} adjoins an AF bisection on the complementary clopen sets and produces a full bisection representing an element $u_{\mathbf{h}}\in\F(\G)$. The added arrows do not change the relative class, and thus Theorem~\ref{thm:index-formula} gives zero image of $I(u_{\mathbf{h}})$ in $H_1(\G,\T)$. The injection $H_1(\G)\hookrightarrow H_1(\G,\T)$ then gives $I(u_{\mathbf{h}})=0$.

For the second assertion, the two lifts have the same germ at each marked point, and thus their quotient has germ $(\Id,x_j)$ there. Both lifts fix the marked set pointwise, and therefore preserve its complement. If $y$ is not marked, then $u_{\mathbf{h}}^{-1}(y)$ is not marked either. The two arrows entering the germ of $u'_{\mathbf{h}}u_{\mathbf{h}}^{-1}$ at $y$ consequently lie in $\T$. Since $\T$ is a subgroupoid, the quotient germ lies in $\T$. Hence every germ of $u'_{\mathbf{h}}u_{\mathbf{h}}^{-1}$ belongs to $\T$, and thus the quotient lies in $\F(\T)$.
\end{proof}

The next lemma explains why the parity of the elements $u_{\mathbf{h}}u_{\mathbf{k}}u_{\mathbf{h}\mathbf{k}}^{-1}$ defined below takes values in an ordinary group-cohomology module with trivial action. Only AF elements that are the identity on a neighborhood of the marked points occur in the argument.

\keepwithstatement
\begin{lemma}
\label{lemma:admissible-conjugation-parity}
Let $u\in\F(\G)$ be AF at every point outside a finite set $E$ and fix every point of $E$. Let $a\in\F(\T)$ have the identity germ on a neighborhood of $E$. Then
$uau^{-1}\in\F(\T)$ and
\begin{equation}
\label{eq:admissible-conjugation-parity}
\varepsilon_{\mathsf{B}}(uau^{-1})=
\varepsilon_{\mathsf{B}}(a)
\quad\text{in }\mathsf{P}_{\mathsf{B}}.
\end{equation}
\end{lemma}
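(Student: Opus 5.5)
First I will show that $uau^{-1}\in\F(\T)$, checking germs pointwise. Choose a clopen neighborhood $W$ of $E$ on which $a$ is the identity. For a point $y\in E$, the fact that $u$ fixes $y$ gives $(uau^{-1},y)=(u,y)(a,y)(u,y)^{-1}$ with $(a,y)=(\Id,y)$, so this germ is the identity. For $y\notin E$, the point $u^{-1}(y)$ also lies outside $E$, because $u$ permutes $E$ pointwise and hence preserves its complement. Consequently $(u^{-1},y)\in\T$. If $a(u^{-1}(y))\notin E$, then the germ of $u$ at that point is in $\T$. If instead $a(u^{-1}(y))\in E$, then $u^{-1}(y)$ would lie in $a^{-1}(W)=W$ and so be fixed by $a$, forcing $u^{-1}(y)\in E$, which is impossible. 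Since $\T$ is a subgroupoid, every germ of $uau^{-1}$ lies in $\T$.

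For the parity identity, I will localize $a$ away from $E$ and use that $\varepsilon_{\mathsf{B}}$ is determined by transpositions. By Lemma~\ref{lemma:AF-two-transpositions}, and more precisely its proof, $a$ is a product of dynamical transpositions $t_F$ in $\F(\T)$ whose bisections come from multisection levels of one elementary stage. Since $a$ is the identity on $W$, I refine the stage so that $W$, or a smaller clopen neighborhood of $E$, is a union of levels fixed by the permutations. Then every transposition in the factorization has $\sg(F)\cup\rg(F)$ disjoint from a neighborhood $W'$ of $E$. Each $t_F$ satisfies the hypotheses of the first paragraph, so it suffices to prove $\varepsilon_{\mathsf{B}}(ut_Fu^{-1})=\varepsilon_{\mathsf{B}}(t_F)=[1_{\sg(F)}]$ and then multiply.

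Now $ut_Fu^{-1}$ is the dynamical transposition of the bisection $F'=U_uFU_u^{-1}$, which has disjoint source $u(\sg F)$ and range $u(\rg F)$. The compact set $\sg(F)$ avoids $E$, so the restriction of $U_u$ to it is a compact open bisection contained in $\T$ by Lemma~\ref{lemma:finite-exceptional-support}, after shrinking and using finitely many exceptional points all lying in $E$. Hence $[1_{u(\sg F)}]=[1_{\sg F}]$ in $H_0(\T)$, and therefore also in $\mathsf{P}_{\mathsf{B}}$. The remaining step is to check that $F'\subseteq\T$ and that an AF transposition has parity equal to the class of its source. The paper notes this after Definition~\ref{def:AF-parity}: one passes to an elementary stage where $F'$ exchanges equal-length cylinders.

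I expect the main obstacle to be this last identification. An AF transposition whose source is an arbitrary clopen set decomposes at a deep stage into a disjoint union of cylinder transpositions, and I must verify that the sign vector is the cylinder-count vector of the source modulo $2$. My plan is to refine until each piece of $F'$ is a single prefix replacement $[q,p]$ with $p\neq q$. At that stage each such piece contributes one transposition at the terminal vertex $\mathbf{r}(p)$, and summing these contributions gives $\mathbf{c}_n(\sg F')\bmod 2$.
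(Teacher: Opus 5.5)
Your proposal is correct and follows the paper's route. You refine an elementary stage so that a clopen neighborhood of $E$ is a union of levels fixed by $a$, factor $a$ into AF transpositions $t_F$ supported away from that neighborhood, and use that $u$ is AF on $\sg(F)$ to get $[1_{u(\sg F)}]=[1_{\sg F}]$ in $\mathsf{P}_{\mathsf{B}}$. Your pointwise germ check that $uau^{-1}\in\F(\T)$ and your cylinder-level check that an AF transposition has parity equal to the class of its source are valid expansions of steps the paper reads off directly. The appeal to Lemma~\ref{lemma:finite-exceptional-support} is unnecessary, since every arrow of $U_u$ with source in $\sg(F)$ already lies in $\T$.
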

\begin{proof}
Choose a clopen neighborhood $N$ of $E$ on which $a$ is the identity. Choose an elementary stage of $\T$ containing the full bisection of $a$, and refine its tower decomposition so that $N$ is a union of clopen levels and $a$ permutes the levels in each tower. The levels contained in $N$ are fixed pointwise. On the remaining levels, $a$ factors into dynamical transpositions whose source and range sets are disjoint from $N$, hence from $E$. It is therefore enough to treat one such transposition $t_F$.

On $\sg(F)\cup\rg(F)$ the full-group element $u$ is AF. Hence conjugation sends $t_F$ to the dynamical transposition associated with the AF bisection $uFu^{-1}$, and thus $ut_Fu^{-1}\in\F(\T)$. By Definition~\ref{def:AF-parity},
$\varepsilon_{\mathsf{B}}(t_F)=[1_{\sg(F)}]$ and
$\varepsilon_{\mathsf{B}}(ut_Fu^{-1})=[1_{u(\sg(F))}]$.
The restriction of $u$ to $\sg(F)$ is an AF bisection, and therefore these two clopen sets define the same class in $H_0(\T;\mathbb{Z}/2\mathbb{Z})=\mathsf{P}_{\mathsf{B}}$. Summing over the transposition factorization proves both assertions.
\end{proof}

Choose once and for all a lift $u_{\mathbf{h}}$ as in Lemma~\ref{lemma:three-site-lifts} for every $\mathbf{h}\in B_3(H)$, with $u_{\mathbf{1}}=\Id$. Since every lift fixes the marked set pointwise, its complement is invariant. Hence $u_{\mathbf{h}}u_{\mathbf{k}}$ is AF away from the marked set and has at $x_j$ the germ $p_jh_jk_jp_j^{-1}$. It is therefore another lift of $\mathbf{h}\mathbf{k}$ in the sense of Lemma~\ref{lemma:three-site-lifts}. Consequently, the element
\begin{equation*}
d(\mathbf{h},\mathbf{k})=
 u_{\mathbf{h}}u_{\mathbf{k}}u_{\mathbf{h}\mathbf{k}}^{-1}
\end{equation*}
lies in $\F(\T)$ and has the identity germ at each marked point. Since the marked set is finite, this identity-germ condition means that $d(\mathbf{h},\mathbf{k})$ is the identity on a neighborhood of the marked set. Define
\begin{equation}
\label{eq:balanced-extension-cocycle}
\omega_{\mathcal{O}}(\mathbf{h},\mathbf{k})
=
\varepsilon_{\mathsf{B}}\!\left(d(\mathbf{h},\mathbf{k})\right)
\in\mathsf{P}_{\mathsf{B}}.
\end{equation}
We regard $\mathsf{P}_{\mathsf{B}}$ as a trivial $B_3(H)$-module. For this action, a normalized $2$-cocycle satisfies
$\omega(\mathbf{1},\mathbf{h})=\omega(\mathbf{h},\mathbf{1})=0$ and
$\omega(\mathbf{h},\mathbf{k})+\omega(\mathbf{h}\mathbf{k},\mathbf{l})
=\omega(\mathbf{k},\mathbf{l})+\omega(\mathbf{h},\mathbf{k}\mathbf{l})$.
Two such cocycles differ by a coboundary if their difference has the form
$b(\mathbf{h})+b(\mathbf{k})-b(\mathbf{h}\mathbf{k})$.

\keepwithstatement
\begin{prop}
\label{prop:balanced-boundary-extension-class}
The function~\eqref{eq:balanced-extension-cocycle} is a normalized $2$-cocycle with values in the trivial $B_3(H)$-module $\mathsf{P}_{\mathsf{B}}$. Replacing the chosen lifts changes it by a coboundary. Hence the marked triple and the connector identifications determine a cohomology class
\begin{equation}
\label{eq:balanced-extension-class}
[\omega_{\mathcal{O}}]
\in H^2\!\left(B_3(H);\mathsf{P}_{\mathsf{B}}\right).
\end{equation}
\end{prop}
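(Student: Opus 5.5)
The plan is to verify the three properties directly from the definition of $d(\mathbf{h},\mathbf{k})$, using that $\varepsilon_{\mathsf{B}}$ is a homomorphism on $\F(\T)$ together with Lemma~\ref{lemma:admissible-conjugation-parity}.

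Normalization is immediate. Since $u_{\mathbf{1}}=\Id$, we have $d(\mathbf{1},\mathbf{h})=u_{\mathbf{h}}u_{\mathbf{h}}^{-1}=\Id$, and similarly $d(\mathbf{h},\mathbf{1})=\Id$. Both therefore have zero parity.

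For the cocycle identity I will compute $u_{\mathbf{h}}u_{\mathbf{k}}u_{\mathbf{l}}u_{\mathbf{hkl}}^{-1}$ in two ways. Grouping on the left gives
\[
d(\mathbf{h},\mathbf{k})\,u_{\mathbf{hk}}u_{\mathbf{l}}u_{\mathbf{hkl}}^{-1}=d(\mathbf{h},\mathbf{k})\,d(\mathbf{hk},\mathbf{l}).
\]
Grouping on the right gives
\[
u_{\mathbf{h}}\,d(\mathbf{k},\mathbf{l})\,u_{\mathbf{kl}}u_{\mathbf{hkl}}^{-1}=\bigl(u_{\mathbf{h}}d(\mathbf{k},\mathbf{l})u_{\mathbf{h}}^{-1}\bigr)\,d(\mathbf{h},\mathbf{kl}).
\]
All four $d$-factors lie in $\F(\T)$ and are the identity near the marked set, by the discussion preceding~\eqref{eq:balanced-extension-cocycle}. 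The element $u_{\mathbf{h}}$ fixes $\{x_0,x_1,x_2\}$ and is AF elsewhere. Lemma~\ref{lemma:admissible-conjugation-parity} therefore shows that the conjugate $u_{\mathbf{h}}d(\mathbf{k},\mathbf{l})u_{\mathbf{h}}^{-1}$ lies in $\F(\T)$ and has the same AF parity as $d(\mathbf{k},\mathbf{l})$. Applying the homomorphism $\varepsilon_{\mathsf{B}}$ to both expressions gives the trivial-module cocycle identity, since the target is abelian and the ordering of factors does not matter.

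Independence of the lifts is handled similarly. Let $u'_{\mathbf{h}}=a_{\mathbf{h}}u_{\mathbf{h}}$ with $a_{\mathbf{h}}=u'_{\mathbf{h}}u_{\mathbf{h}}^{-1}$, and require $a_{\mathbf{1}}=\Id$. By Lemma~\ref{lemma:three-site-lifts}, $a_{\mathbf{h}}\in\F(\T)$ has identity germ at the finitely many marked points, so it is the identity on a neighborhood of them. Then
\[
d'(\mathbf{h},\mathbf{k})=a_{\mathbf{h}}\,u_{\mathbf{h}}a_{\mathbf{k}}u_{\mathbf{h}}^{-1}\,d(\mathbf{h},\mathbf{k})\,a_{\mathbf{hk}}^{-1}.
\]
The middle conjugate again falls under Lemma~\ref{lemma:admissible-conjugation-parity}. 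Each displayed factor lies in $\F(\T)$, and taking parities gives $\omega'=\omega+\partial b$ with $b(\mathbf{h})=\varepsilon_{\mathsf{B}}(a_{\mathbf{h}})$, which is normalized. The signs are irrelevant because $\mathsf{P}_{\mathsf{B}}$ has exponent two.

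The main obstacle is making sure every factor to which the parity is applied genuinely lies in $\F(\T)$ and is the identity near the marked set. If it were not, the conjugation by $u_{\mathbf{h}}$, which is non-AF at the marked points, could alter the parity. This is exactly what Lemma~\ref{lemma:admissible-conjugation-parity} and the identity-germ observations supply, so the care lies in splitting each product into such admissible factors rather than treating the whole word at once.
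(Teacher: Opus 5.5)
Your proof is correct and follows the paper's argument essentially verbatim. It uses the same two groupings of $u_{\mathbf{h}}u_{\mathbf{k}}u_{\mathbf{l}}u_{\mathbf{hkl}}^{-1}$ for the cocycle identity and the same expansion of $d'(\mathbf{h},\mathbf{k})$ with $a_{\mathbf{h}}=u'_{\mathbf{h}}u_{\mathbf{h}}^{-1}$, and in each case Lemma~\ref{lemma:admissible-conjugation-parity} controls the one conjugated factor, which must lie in $\F(\T)$ and be the identity near the marked set.
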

\begin{proof}
Normalization follows from $u_{\mathbf{1}}=\Id$. Associativity gives
\begin{equation*}
d(\mathbf{h},\mathbf{k})d(\mathbf{h}\mathbf{k},\mathbf{l})
=u_{\mathbf{h}}d(\mathbf{k},\mathbf{l})u_{\mathbf{h}}^{-1}
 d(\mathbf{h},\mathbf{k}\mathbf{l}).
\end{equation*}
The conjugated factor $d(\mathbf{k},\mathbf{l})$ is AF and has identity germ at the three marked points. Lemma~\ref{lemma:admissible-conjugation-parity} therefore shows that its parity is unchanged by conjugation with $u_{\mathbf{h}}$. Applying $\varepsilon_{\mathsf{B}}$ to the displayed identity gives the cocycle equation.

Now let $u'_{\mathbf{h}}=a_{\mathbf{h}}u_{\mathbf{h}}$ be another normalized system of lifts, so that $u'_{\mathbf{1}}=\Id$. Lemma~\ref{lemma:three-site-lifts} gives $a_{\mathbf{h}}\in\F(\T)$ with identity germ at all three marked points. Expanding the new defect yields
\begin{equation*}
d'(\mathbf{h},\mathbf{k})
=a_{\mathbf{h}}
 \bigl(u_{\mathbf{h}}a_{\mathbf{k}}u_{\mathbf{h}}^{-1}\bigr)
 d(\mathbf{h},\mathbf{k})
 a_{\mathbf{h}\mathbf{k}}^{-1}.
\end{equation*}
Lemma~\ref{lemma:admissible-conjugation-parity} and additivity of AF parity therefore give
\begin{equation*}
\omega'_{\mathcal{O}}(\mathbf{h},\mathbf{k})-\omega_{\mathcal{O}}(\mathbf{h},\mathbf{k})
=
\varepsilon_{\mathsf{B}}(a_{\mathbf{h}})+
\varepsilon_{\mathsf{B}}(a_{\mathbf{k}})-
\varepsilon_{\mathsf{B}}(a_{\mathbf{h}\mathbf{k}}),
\end{equation*}
which is the coboundary of $\mathbf{h}\mapsto\varepsilon_{\mathsf{B}}(a_{\mathbf{h}})$.
\end{proof}

\begin{defn}[Balanced boundary extension class]
\label{def:balanced-boundary-extension}
For the chosen marked triple and connector identifications, the class
$[\omega_{\mathcal{O}}]$ of Proposition~\ref{prop:balanced-boundary-extension-class} is the \emph{balanced boundary extension class} of the orbit $\mathcal{O}$.
\end{defn}

We construct the corresponding central extension from the full-group lifts. Put $E=\{x_0,x_1,x_2\}$ and let $G_E$ be the subgroup of $\F(\G)$ consisting of elements that fix $E$ pointwise, are AF on $\Omega(\mathsf{B})\setminus E$, and whose normalized germ triple belongs to $B_3(H)$. Evaluation at the three marked germs gives a surjective homomorphism
\begin{equation}
\label{eq:concrete-balanced-extension}
G_E\longrightarrow B_3(H).
\end{equation}
Its kernel $K_E$ consists precisely of the elements of $\F(\T)$ with identity germ at the three marked points. Lemma~\ref{lemma:admissible-conjugation-parity} shows that $\varepsilon_{\mathsf{B}}|_{K_E}$ is invariant under conjugation by $G_E$. Therefore
$N_E=\ker(\varepsilon_{\mathsf{B}}|_{K_E})$ is normal in $G_E$, the quotient $K_E/N_E$ is central in $G_E/N_E$, and
\begin{equation*}
K_E/N_E\cong\varepsilon_{\mathsf{B}}(K_E)\leq\mathsf{P}_{\mathsf{B}}.
\end{equation*}
Pushing this central extension out along the inclusion
$\varepsilon_{\mathsf{B}}(K_E)\hookrightarrow\mathsf{P}_{\mathsf{B}}$ gives a central extension of $B_3(H)$ by $\mathsf{P}_{\mathsf{B}}$. The section determined by the chosen $u_{\mathbf{h}}$ has multiplication cocycle exactly~\eqref{eq:balanced-extension-cocycle}.

Equivalently, the pushed-out extension has underlying set
$\mathsf{P}_{\mathsf{B}}\times B_3(H)$ and multiplication
$(a,\mathbf{h})(b,\mathbf{k})=(a+b+\omega_{\mathcal{O}}(\mathbf{h},\mathbf{k}),\mathbf{h}\mathbf{k})$.
Its extension class is exactly $[\omega_{\mathcal{O}}]$, and it splits precisely when this class vanishes. This is the standard classification of central extensions by degree-two group cohomology \cite[Chapter~IV]{Brown82}.

The class in Definition~\ref{def:balanced-boundary-extension} is not asserted to be intrinsic before coordinate choices. For example, replacing $p_j$ by $p_ja_j$ with $a_j\in H$ conjugates the $j$th coordinate by $a_j^{-1}$. This preserves $B_3(H)$ since inner automorphisms act trivially on $H^{\ab}$, and it changes the displayed cocycle by the corresponding pullback together with a change of section. Changing the marked points gives a possibly different concrete extension. We do not identify these extensions canonically before applying $q_2$. What becomes intrinsic is the evaluation on balanced surface classes after applying the natural quotient
$q_2:\mathsf{P}_{\mathsf{B}}\to H_0(\G;\mathbb{Z}/2\mathbb{Z})$ from~\eqref{eq:AF-parity-to-G}. We write
$\overline\omega_{\mathcal{O}}=q_2\circ\omega_{\mathcal{O}}$ and
$[\overline\omega_{\mathcal{O}}]=q_{2*}[\omega_{\mathcal{O}}]$.

\begin{thm}[Boundary-extension formula]
\label{thm:cohomological-balanced-parity}
Let $c\in H_2(H;\mathbb{Z})$ and represent it by a map
$f:\Sigma_g\to BH$ from a closed connected oriented surface of genus $g\geq1$. Choose the balancing homeomorphisms
$\varphi_+,\varphi_-$ of Lemma~\ref{lem:three-surface-balancing}, and write
$\rho=f_*:\pi_1(\Sigma_g)\to H$, $\rho_+=\rho\circ(\varphi_+)_*$, and
$\rho_-=\rho\circ(\varphi_-)_*$.
Then
\begin{equation}
\label{eq:balanced-surface-homomorphism}
\widehat\rho:\pi_1(\Sigma_g)\longrightarrow B_3(H),\qquad
\gamma\longmapsto
(\rho(\gamma),\rho_+(\gamma),\rho_-(\gamma))
\end{equation}
is a homomorphism, and
\begin{equation}
\label{eq:theta-boundary-extension-pairing}
\kappa_{\mathcal{O}}^{\mathrm{bal}}(c)
=
q_2\!\left(
\left\langle
\widehat\rho^{\,*}[\omega_{\mathcal{O}}],[\Sigma_g]
\right\rangle
\right).
\end{equation}
Here we identify $[\Sigma_g]$ with the fundamental class in the group homology of $\pi_1(\Sigma_g)$. The bracket is the usual Kronecker pairing between group cohomology with trivial coefficients and group homology. Equivalently,
$\langle\widehat\rho^{\,*}[\omega_{\mathcal{O}}],[\Sigma_g]\rangle
=\langle[\omega_{\mathcal{O}}],\widehat\rho_*[\Sigma_g]\rangle$.
In particular, after applying $q_2$, the value is independent of the surface representative, the balancing homeomorphisms, the marked points, the connectors, the representative bisections, and the chosen lifts.
\end{thm}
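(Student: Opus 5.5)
The plan is to reduce the pairing formula to Theorem~\ref{thm:balanced-surface-return-parity} by computing the Kronecker pairing of a pulled-back central-extension cocycle with the surface fundamental class directly from the cocycle. The first step is routine. The map $\widehat\rho$ is a homomorphism into $H^3$ because each coordinate is. It lands in $B_3(H)$ because, by \eqref{eq:three-surface-H1}, the sum of the three abelianized coordinates of $\widehat\rho(\gamma)$ is the image of $(\operatorname{id}+(\varphi_+)_*+(\varphi_-)_*)[\gamma]=0$ under $H_1(\Sigma_g)\to H^{\ab}$.

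Next I would use the standard description of $\langle\widehat\rho^{\,*}[\omega],[\Sigma_g]\rangle$ for a central extension $0\to\mathsf{P}_{\mathsf{B}}\to E\to B_3(H)\to0$ with set-theoretic section $s$. Choose lifts $s(\widehat\rho(a_i))$ and $s(\widehat\rho(b_i))$ of the surface generators and form $\prod_i[s(\widehat\rho(a_i)),s(\widehat\rho(b_i))]$. This element lies in the central kernel, and its value there equals the pairing, up to a sign that is irrelevant in exponent two. This identity is standard; I would verify it by writing the fundamental class as the usual bar cycle of the one-relator presentation (for instance, via Hopf's formula) and expanding the commutator using the multiplication $(a,\mathbf h)(b,\mathbf k)=(a+b+\omega(\mathbf h,\mathbf k),\mathbf h\mathbf k)$. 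Both sides are independent of the section, so it is enough to take the section given by the chosen lifts $u_{\mathbf h}$.

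In the concrete extension $G_E/N_E$, pushed out along $\varepsilon_{\mathsf{B}}(K_E)\hookrightarrow\mathsf{P}_{\mathsf{B}}$, the lifts of the generators are the classes of the full-group elements $u_{\widehat\rho(a_i)}$ and $u_{\widehat\rho(b_i)}$. Their commutator product is $r=\prod_i[u_{\widehat\rho(a_i)},u_{\widehat\rho(b_i)}]\in K_E$, and its central value is $\varepsilon_{\mathsf{B}}(r)$. The elements $u_{\widehat\rho(s)}$ are exactly admissible choices of the balanced generator lifts $u_s$ in Theorem~\ref{thm:balanced-surface-return-parity}. They have labels $f_{0*}(s)$, $f_{+*}(s)$, $f_{-*}(s)$ at the three marked points, they fix those points, and they are AF elsewhere; Lemma~\ref{lemma:three-site-lifts} constructs them by the same AF completion. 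Therefore $r=r_f^{\mathrm{bal}}$, and applying $q_2$ together with Theorem~\ref{thm:balanced-surface-return-parity} gives $\theta_{\G}(c)=\kappa^{\mathrm{bal}}_{\mathcal{O}}(c)$ equal to the right-hand side of \eqref{eq:theta-boundary-extension-pairing}.

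The independence claims then follow from the intrinsic left-hand side. The main obstacle I expect is the careful identification in the second step: the commutator evaluation of the pulled-back class must match the convention for $\omega_{\mathcal{O}}$, meaning that $d(\mathbf h,\mathbf k)=u_{\mathbf h}u_{\mathbf k}u_{\mathbf h\mathbf k}^{-1}$ is the multiplication defect. It must also remain valid after the pushout, since $\varepsilon_{\mathsf{B}}$ is only known to be conjugation-invariant on $K_E$ (Lemma~\ref{lemma:admissible-conjugation-parity}). I would handle this by computing entirely within $G_E/N_E$, where $K_E/N_E$ is central, so that the bar-cycle expansion is legitimate there.
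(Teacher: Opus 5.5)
Your proposal is correct and follows the paper's proof. You balance via the $H_1$ identity, identify the Kronecker pairing with the central value of the lifted surface commutator word in the pushed-out extension built from $G_E/N_E$, recognize that word as $r_f^{\mathrm{bal}}$ for the lifts $u_{\widehat\rho(s)}$, and then invoke Theorem~\ref{thm:balanced-surface-return-parity}. The only difference is cosmetic: you justify the commutator-evaluation formula by expanding the bar cycle of the one-relator presentation, whereas the paper appeals to the cellular obstruction to extending the lifts over the $2$-cell of $\Sigma_g$.
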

\begin{proof}
The identity
$\operatorname{id}+(\varphi_+)_*+(\varphi_-)_*=0$ on
$H_1(\Sigma_g;\mathbb{Z})$ implies
\begin{equation*}
[\rho(\gamma)]_{\ab}+[\rho_+(\gamma)]_{\ab}+[\rho_-(\gamma)]_{\ab}=0
\quad\text{in }H^{\ab}
\end{equation*}
for every $\gamma\in\pi_1(\Sigma_g)$. Hence~\eqref{eq:balanced-surface-homomorphism} lands in $B_3(H)$.

To identify the pairing, use the standard presentation
$\pi_1(\Sigma_g)=\langle a_1,b_1,\ldots,a_g,b_g\mid\prod_i[a_i,b_i]=1\rangle$ and pull the central extension preceding the theorem back along $\widehat\rho$. Give $\Sigma_g$ its standard CW structure with one vertex, the displayed $1$-cells and one $2$-cell attached by the surface word. Choose the section lifts of the $1$-cell labels. The obstruction to extending these lifts across the $2$-cell is the element of the central kernel obtained by evaluating the lifted attaching word, namely the product of the commutators of the chosen lifts. By the cellular description of the extension class, this obstruction is
$\langle\widehat\rho^{\,*}[\omega_{\mathcal{O}}],[\Sigma_g]\rangle$. Depending on the convention for the surface fundamental class and commutator word, the element may be written with the opposite sign. The distinction is immaterial since $\mathsf{P}_{\mathsf{B}}$ has exponent $2$.

For the concrete extension~\eqref{eq:concrete-balanced-extension}, take the AF completions in Theorem~\ref{thm:balanced-surface-return-parity} to be the chosen lifts $u_{\widehat\rho(s)}$ for the triples
$(\rho(s),\rho_+(s),\rho_-(s))$. Their lifted attaching word is then precisely the balanced relator $r_f^{\mathrm{bal}}$. It belongs to $K_E$, and its image in the central kernel of the pushed-out extension is
$\varepsilon_{\mathsf{B}}(r_f^{\mathrm{bal}})$. Therefore
\begin{equation*}
\left\langle
\widehat\rho^{\,*}[\omega_{\mathcal{O}}],[\Sigma_g]
\right\rangle
=
\varepsilon_{\mathsf{B}}(r_f^{\mathrm{bal}})
\quad\text{in }\mathsf{P}_{\mathsf{B}}.
\end{equation*}
Theorem~\ref{thm:balanced-surface-return-parity} then gives
$q_2(\varepsilon_{\mathsf{B}}(r_f^{\mathrm{bal}}))
=\kappa_{\mathcal{O}}^{\mathrm{bal}}(c)=\theta_{\G}(c)$,
which proves~\eqref{eq:theta-boundary-extension-pairing}. Since the last expression is intrinsic, all choices disappear after applying $q_2$.
\end{proof}

\begin{defn}[Balanced surface subgroup]
\label{def:balanced-surface-subgroup}
Let $\mathcal{B}_2(H)\subseteq H_2(B_3(H);\mathbb{Z})$ be the subgroup generated by the classes
$\widehat\rho_*[\Sigma_g]$ arising from all surface representatives of classes in $H_2(H;\mathbb{Z})$ and all choices of balancing homeomorphisms allowed by Lemma~\ref{lem:three-surface-balancing}, through the construction of Theorem~\ref{thm:cohomological-balanced-parity}. We call $\mathcal{B}_2(H)$ the \emph{balanced surface subgroup}.
\end{defn}

The following criterion uses evaluation on $\mathcal{B}_2(H)$ rather than vanishing of the whole cohomology class.
\keepwithstatement
\begin{cor}
\label{cor:boundary-extension-criterion}
Assume that $\G$ is minimal, effective, and AF-by-discrete with Cantor unit space and comparison. For every exceptional orbit, choose three marked points and form the balanced boundary extension class. The conditions below are independent of these choices, and the following are equivalent.
\begin{enumerate}
\item $\G$ has the strong AH property, equivalently $\theta_{\G}=0$.
\item For every exceptional orbit $\mathcal{O}$, the class
$[\overline\omega_{\mathcal{O}}]=q_{2*}[\omega_{\mathcal{O}}]$ pairs trivially with
$\mathcal{B}_2(H_{\mathcal{O}})$.
\item Every class obtained from the balanced lift of a surface relation has zero residual AF parity in $H_0(\G;\mathbb{Z}/2\mathbb{Z})$.
\end{enumerate}
A sufficient, generally stronger, condition is that, for every exceptional orbit,
\begin{equation*}
[\overline\omega_{\mathcal{O}}]=0
\quad\text{in}\quad
H^2\!\left(B_3(H_{\mathcal{O}});H_0(\G;\mathbb{Z}/2\mathbb{Z})\right).
\end{equation*}
\end{cor}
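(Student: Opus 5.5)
The plan is to chain together results already established: Li's exact sequence via Proposition~\ref{prop:boundary-signature-criterion} and Corollary~\ref{cor:minimal-balanced-parity}, and Theorem~\ref{thm:cohomological-balanced-parity}. Nothing new about the groupoid needs to be proved. First, minimality on a Cantor space makes every exceptional orbit infinite, so each contains three distinct marked points and the balanced extension class $[\omega_{\mathcal{O}}]$ is defined. Under comparison, Definition~\ref{def:AH-terms} and \cite[Remark~6.16]{Li25} give the equivalence of the strong AH property with $\theta_{\G}=0$. Corollary~\ref{cor:minimal-balanced-parity} then decomposes $\theta_{\G}$ as $\sum_{\mathcal{O}}\kappa^{\mathrm{bal}}_{\mathcal{O}}$ over the direct-sum decomposition of Theorem~\ref{thm:higher-germ-homology}. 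So (1) holds if and only if every $\kappa^{\mathrm{bal}}_{\mathcal{O}}$ vanishes.

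For (1)$\Leftrightarrow$(3), I will use Theorem~\ref{thm:balanced-surface-return-parity}. It states that $\kappa^{\mathrm{bal}}_{\mathcal{O}}(c)=q_2(\varepsilon_{\mathsf{B}}(r^{\mathrm{bal}}_f))$ for every surface representative $f$ of every $c$. Every class $c\in H_2(H_{\mathcal{O}};\mathbb{Z})$ is represented by some surface, and discarding spherical components is harmless. Hence the vanishing of all residual parities of balanced relators is exactly the vanishing of all $\kappa^{\mathrm{bal}}_{\mathcal{O}}$.

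For (2)$\Leftrightarrow$(1), I apply Theorem~\ref{thm:cohomological-balanced-parity} to each connected surface representative, which gives
\[
\kappa^{\mathrm{bal}}_{\mathcal{O}}(c)=\langle[\overline\omega_{\mathcal{O}}],\widehat\rho_*[\Sigma_g]\rangle .
\]
Here I use naturality of the Kronecker pairing under the coefficient map $q_2$, which gives $q_2\langle[\omega],z\rangle=\langle q_{2*}[\omega],z\rangle$. The generators of $\mathcal{B}_2(H_{\mathcal{O}})$ are exactly the classes $\widehat\rho_*[\Sigma_g]$, and the pairing is additive in the homology variable. So trivial pairing on $\mathcal{B}_2$ is equivalent to the vanishing of $\kappa^{\mathrm{bal}}_{\mathcal{O}}$ on all represented classes, and hence on all of $H_2(H_{\mathcal{O}};\mathbb{Z})$. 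Disconnected representatives are handled by summing over their components.

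Independence of the choices follows because every condition has been rewritten as vanishing of the intrinsic map $\theta_{\G}$. The step needing most care is the one just described: making sure the generating set of $\mathcal{B}_2$ is exhausted by the pairings computed in Theorem~\ref{thm:cohomological-balanced-parity}. That theorem is stated for connected genus $g\geq1$ surfaces with fixed marked points. Here the marked points are fixed per orbit in the statement, so this matches. The sufficiency statement is immediate, since a zero class pairs trivially with every homology class. The phrase ``generally stronger'' needs no proof.
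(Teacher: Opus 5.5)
Your proposal is correct and follows essentially the same route as the paper: the direct-sum decomposition of Theorem~\ref{thm:higher-germ-homology}, the identification of $\theta_{\G}$ on each orbit summand with $\kappa^{\mathrm{bal}}_{\mathcal{O}}$ from Corollary~\ref{cor:minimal-balanced-parity}, and Theorems~\ref{thm:balanced-surface-return-parity} and~\ref{thm:cohomological-balanced-parity}, which identify $\kappa^{\mathrm{bal}}_{\mathcal{O}}$ with the residual parities of balanced relators and with the pairing of $[\overline\omega_{\mathcal{O}}]$ against balanced surface classes. Your remarks on naturality of the Kronecker pairing under $q_2$, and on reducing disconnected or spherical representatives to connected surfaces of genus $g\geq1$, spell out steps that the paper's short proof leaves implicit.
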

\begin{proof}
By Theorem~\ref{thm:higher-germ-homology},
$H_2(\G;\mathbb{Z})$ is the direct sum of the groups
$H_2(H_{\mathcal{O}};\mathbb{Z})$. Corollary~\ref{cor:minimal-balanced-parity} identifies the restriction of $\theta_{\G}$ to the $\mathcal{O}$-summand with
$\kappa_{\mathcal{O}}^{\mathrm{bal}}$, and Theorem~\ref{thm:cohomological-balanced-parity} identifies this homomorphism with evaluation of
$[\overline\omega_{\mathcal{O}}]$ on the balanced surface classes. This proves the equivalence of (1)--(3). If the entire pushed-forward cohomology class vanishes, then all such pairings vanish, proving the final assertion.
\end{proof}

Minimality removes the obstruction from singleton orbits to the kernel of the index map and ensures that every exceptional orbit contains three distinct points. The remaining strong AH question is whether the pairings in Corollary~\ref{cor:boundary-extension-criterion} vanish.

The lifting property for finite relations supplies a local realization of every finite system of relations occurring in a surface class, and Theorem~\ref{thm:finite-relations-split-AH} therefore makes all of these pairings zero. The Houghton--Cantor example exhibits the analogous parity of an element measuring the failure of chosen lifts to multiply exactly in a nonminimal setting. The construction of the balanced boundary extension class does not apply there, since its exceptional orbit is a singleton. Whether recurrence forces this vanishing in every minimal AF-by-discrete groupoid remains the question of Problem~\ref{prob:minimal-AFbd-strong-AH}.

The Kapoudjian class of a balanced near action is likewise obtained by lifting near permutations and measuring the parity of the finitely supported permutation produced when the chosen lifts fail to multiply exactly. See \cite[Section~8.C]{Cornulier19}. Here that finitely supported permutation is replaced by an element of $\F(\T)$, and its parity takes values in the dimension group of $\mathsf{B}$ modulo two. The analogy is structural. The present class is defined from the fixed AF inclusion, and we identify it with Cornulier's class only when an actual near action realizes the same lifts.

\section{Applications and remaining questions}\label{sec:applications}

We apply the boundary formulas to near full groups and to minimal Cantor systems, and then record the remaining questions.

\subsection{Near full groups}
\label{sec:nearfull}
A subgroup $G\leq\F(\G)$ is \emph{near full} when it has finite index (index of a subgroup; not to confuse with the index map) in the topological full group of its groupoid of germs. We use the finite-singular-germ setting of \cite{kua26a,kua26b}. The completion identity determines the image of the index map. Finite index alone gives the subgroup consequence below.

\keepwithstatement
\begin{cor}
\label{cor:near-full-index}
Let $G\leq\F(\G)$ and suppose that
\begin{equation*}
\F(\G)=\Sym(\T)G.
\end{equation*}
Equivalently, since $\T$ is AF, $\F(\G)=\F(\T)G$. Then $I(G)=I(\F(\G))$. In particular, if the AF core $\T$ is minimal, then the restriction
$I|_G:G\to H_1(\G)$ is surjective.
\end{cor}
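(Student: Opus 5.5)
The plan is to exploit the fact that the index map is a homomorphism that vanishes on the AF full group. First I will record the equivalence $\Sym(\T)=\F(\T)$. This holds because $\T$ is AF: every full bisection of $\T$ lies in one elementary stage, and Lemma~\ref{lemma:AF-two-transpositions} factors it into at most two dynamical transpositions inside $\F(\T)$. Hence the two forms of the completion identity coincide.

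Next I will show that $I$ vanishes on $\F(\T)$. For $f\in\F(\T)$, the full bisection $U_f$ lies in $\T$, so $\Sigma(f)=\varnothing$. Theorem~\ref{thm:index-formula} then gives $\iota I(f)=0$ in $H_1(\G,\T)$. Injectivity of $\iota$ in~\eqref{eq:relative}, which uses $H_1(\T)=0$, yields $I(f)=0$. Alternatively, $[1_{U_f}]$ is already the image of a class in $H_1(\T)=0$.

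Now take any $g\in\F(\G)$ and write $g=fk$ with $f\in\F(\T)$ and $k\in G$. Since $I$ is a homomorphism, $I(g)=I(f)+I(k)=I(k)$. Therefore $I(\F(\G))\subseteq I(G)$, and the reverse inclusion is trivial. Thus $I(G)=I(\F(\G))$.

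For the final assertion, minimality of $\T$ gives surjectivity of $I:\F(\G)\to H_1(\G)$ by Corollary~\ref{cor:index-surjective}, and hence $I|_G$ is onto. No step is a real obstacle. The only point needing care is the vanishing of $I$ on $\F(\T)$, and that follows directly from the index formula together with injectivity of $\iota$.
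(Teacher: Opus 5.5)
Your proof is correct and follows the paper's argument essentially verbatim. You identify $\Sym(\T)=\F(\T)$, show that $I$ vanishes on $\F(\T)$ because $H_1(\T)=0$, write $g=fk$ and use that $I$ is a homomorphism, and then invoke Corollary~\ref{cor:index-surjective} when $\T$ is minimal. Your ``alternative'' justification for the vanishing of $I$ is exactly the paper's, and your route through Theorem~\ref{thm:index-formula} and injectivity of $\iota$ is equally valid.
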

\begin{proof}
Since $\T$ is AF, $\Sym(\T)=\F(\T)$ by Subsection~\ref{subsec:full-groups-generation}. Moreover, $H_1(\T)=0$, and thus the index vanishes on $\F(\T)$. Hence every $g\in\F(\G)$ can be written as $g=ah$ with $a\in\F(\T)$ and $h\in G$, and thus $I(g)=I(h)$. This proves $I(\F(\G))\subseteq I(G)$, and the reverse inclusion is immediate. If $\T$ is minimal, Corollary~\ref{cor:index-surjective} gives $I(\F(\G))=H_1(\G)$.
\end{proof}

Finite index by itself gives only that $I(G)$ has finite index in $I(\F(\G))$. Equality in Corollary~\ref{cor:near-full-index} uses the completion identity. In \cite[Theorem~4.10]{kua26a}, localization of the finite singular germs gives generation of the full group after adjoining representatives of the missing AF parity classes. The product identity used here additionally requires the parity condition on AF truncations in that theorem. Thus the present index formula can be applied to a near full group without first replacing its generators by arbitrary full-group elements: once completion has been verified, the subgroup already realizes every index class realized by the full group.

There is a separate finite-index consequence for lamplighter subgroups. Let $U$ be a nonempty clopen subset of the unit space and let $t:U\to U$ be a minimal Cantor homeomorphism. We write $[[t]]$ for its topological full group, extended by the identity on the complement of $U$ when $U$ is proper.

\keepwithstatement
\begin{cor}
\label{cor:near-full-lamplighter}
Let $G\leq\F(\G)$ have finite index. Suppose that $[[t]]\leq\F(\G)$ for a minimal non-odometer Cantor homeomorphism $t$ as above. Then $G$ contains a subgroup isomorphic to $C_2\wr\mathbb{Z}$.
\end{cor}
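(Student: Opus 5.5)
The plan is to reduce to a statement about the topological full group $[[t]]$ of a minimal non-odometer Cantor system, where lamplighter subgroups are known, and then pass to a finite-index subgroup by taking powers.

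First, intersect with the subgroup. Put $G_0=G\cap[[t]]$. Since $G$ has finite index in $\F(\G)$, the subgroup $G_0$ has finite index in $[[t]]$. Moreover, the normal core $N=\bigcap_{g\in[[t]]}gG_0g^{-1}$ is a finite-index normal subgroup of $[[t]]$ contained in $G$. It therefore suffices to find a copy of $C_2\wr\mathbb{Z}$ inside $N$.

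Second, use the structure of $[[t]]$. By Matui's simplicity theorem, the derived subgroup $[[t]]'$ of the full group of a minimal Cantor homeomorphism is simple. It is also infinite, so it has no proper finite-index subgroups. Hence $N\supseteq [[t]]'$; indeed $N\cap[[t]]'$ has finite index in the simple infinite group $[[t]]'$, so it equals $[[t]]'$.

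Third, produce the lamplighter inside $[[t]]'$. Because $t$ is not an odometer, the known embedding results show that $[[t]]$ contains $C_2\wr\mathbb{Z}$. This is the standard fact that minimal subshift-type, i.e. non-odometer, systems have full groups containing lamplighters, as in the de Cornulier/Matui/Grigorchuk–Medynets line of work. The statement's non-odometer hypothesis is exactly what this citation requires. It remains to move such a copy into $[[t]]'$. Take a copy $W=\langle \sigma,\ell\rangle$, with $\ell$ an involution whose translates $\sigma^n\ell\sigma^{-n}$ commute and generate $\bigoplus C_2$. Replace $W$ by a subgroup of the same type lying in the commutator subgroup:
\begin{itemize}
\item use $\sigma^2$ and $\ell'=\ell\,\sigma\ell\sigma^{-1}$, which still give a lamplighter subgroup, since $C_2\wr\mathbb{Z}$ contains $C_2\wr\mathbb{Z}$ on even-indexed pairs;
\item alternatively, since $[[t]]_{\ab}$ is (the index map with values in $\mathbb{Z}$) $\oplus$ (a group of exponent two), the squares $\sigma^2$ lie in $\ker I$. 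They lie in the commutator subgroup modulo the exponent-two parity part. Passing to $\sigma^4$ and products of pairs of lamps lands everything in $[[t]]'$.
\end{itemize}
If that is cumbersome, it is simpler to skip this step: intersect $W$ directly with $N$. $W\cap N$ has finite index in $W$, and every finite-index subgroup of $C_2\wr\mathbb{Z}$ contains a subgroup isomorphic to $C_2\wr\mathbb{Z}$ (take $\sigma^m$ in it, and a lamp product in the finite-index subgroup of $\bigoplus C_2$ that is invariant under $\sigma^m$ and has an infinite-dimensional $\langle\sigma^m\rangle$-orbit). This last route avoids simplicity entirely and is the one I would write.

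The main obstacle is the third step's citation: ensuring that $[[t]]$ contains $C_2\wr\mathbb{Z}$ when $t$ is minimal and not an odometer. The verification that it sits inside $\F(\G)$ comes from the hypothesis $[[t]]\leq\F(\G)$, with extension by the identity off $U$. The finite-index descent is routine.
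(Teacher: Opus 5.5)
Your final route, which intersects a lamplighter $W\le[[t]]$ with a finite-index subgroup, is correct, but it differs from the paper's.

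The paper argues as follows. It takes the normal core $N$ of $G$ in $\F(\G)$ and notes that $D([[t]])\cap N$ is a finite-index normal subgroup of the infinite simple group $D([[t]])$ \cite{Matui06}. It concludes $D([[t]])\le N\le G$. It then uses \cite[Theorem~2.4]{Matui13} in the form that $D([[t]])$ itself contains $C_2\wr\mathbb{Z}$ when $t$ is not an odometer.

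Your argument needs less.
\begin{itemize}
\item You only need that $[[t]]$ contains some copy $W$ of $C_2\wr\mathbb{Z}$.
\item $W\cap G$ has finite index in $W$ directly; the normal core is not needed.
\item Lamplighters pass to finite-index subgroups. If $K\le A\rtimes\langle\sigma\rangle=C_2\wr\mathbb{Z}$ has finite index, pigeonhole on the cosets $\sigma^nK$ gives $\sigma^m\in K$ for some $m\ge1$. Also $K\cap A$ has finite index in the infinite group $A$, so it contains some $\ell\ne0$. The $\sigma^m$-translates of $\ell$ are $\mathbb{F}_2$-linearly independent because $\mathbb{F}_2[x^{\pm1}]$ is a domain. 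Hence $\langle\sigma^m,\ell\rangle\cong C_2\wr\mathbb{Z}$.
\end{itemize}
Your version thus avoids the simplicity theorem and uses a weaker form of Matui's embedding result. The paper's version gives the stronger conclusion $D([[t]])\le G$, so $G$ contains every subgroup of that simple group, not merely a lamplighter.

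Two corrections to parts you set aside or paraphrased:
\begin{itemize}
\item The bullets you discarded are wrong as written. Since $[[t]]'\subseteq\ker I$ and $I(\sigma^k)=kI(\sigma)$, no power of the shift generator lies in $[[t]]'$ when that generator has nonzero index.
\item ``Subshift-type, i.e.\ non-odometer'' conflates two classes: some minimal non-odometer systems are not conjugate to subshifts. The result you must cite is Matui's theorem for all minimal non-odometer systems, which is what the paper uses.
\end{itemize}
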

\begin{proof}
Let $N$ be the normal core of $G$ in $\F(\G)$. Then $N$ is a finite-index normal subgroup of $\F(\G)$. Put $D=D([[t]])$. Matui proved that $D$ is infinite and simple for a minimal Cantor system \cite{Matui06}, and that $D$ contains $C_2\wr\mathbb{Z}$ exactly when $t$ is not an odometer \cite[Theorem~2.4]{Matui13}. Since $D\cap N$ is a finite-index normal subgroup of the infinite simple group $D$, it is nontrivial and therefore equals $D$. Hence $D\leq N\leq G$, and the lamplighter subgroup is contained in $G$.
\end{proof}

The inclusion $[[t]]\leq\F(\G)$, on the whole unit space or after extension from a clopen reduction, is a separate dynamical input. Once that input is established, finite index alone is enough. The completion identity is not needed.

We use the near full groups of \cite{kua26a,kua26b} only in the finite-singular-germ setting treated there. No extension of near fullness to infinite groups of germs, and no implication from the lifting property for finite relations to near fullness, is used in this paper.

\subsection{Minimal Cantor systems and other boundary models}
\label{sec:examples}
Let $\tau$ be a minimal Cantor homeomorphism and cut its orbit at one point as in Example~\ref{exmp:one-border-index}. The exceptional orbit then contains exactly two $\T$-orbits, namely, the two half-orbits of the AF core, all groups of germs are trivial, and the only degree-one boundary information is transport across the cut. The geometric index is therefore the signed migration number of~\eqref{eq:one-border-flux}, recovering the classical index. Theorem~\ref{thm:higher-germ-homology} also gives $H_n(\G;\mathbb{Z})=0$ for $n\geq2$, since the exceptional groups of germs are trivial. Hence the secondary differential vanishes automatically, and Theorem~\ref{thm:finite-germ-split-AH} recovers the split AH description of the abelianization in this case.

In the standard binary adding-machine presentation, the same picture is visible directly in the Bratteli diagram: the cut produces two infinite tiles with trivial groups of germs, while the carry generator supplies the transport across the unique boundary cut. In the odometer example there is one cut and no return coordinate from isotropy. In contrast, Corollary~\ref{cor:near-full-lamplighter} does not apply to it, exactly in agreement with Matui's odometer exception.

The finite-singular-germ examples motivating \cite{kua26a,kua26b} fit a different part of the general theory. The modified Grigorchuk construction of \cite[Section~5.3.5]{nek22}, the fragmentation group of the modified LMS-group associated with the Penrose tiling in \cite[Section~5]{kua26a}, and the fragmented Fabrykowski--Gupta construction of \cite[Section~6]{kua26b} are boundary models in which the groups of germs at the relevant exceptional orbits are finite. Once minimality, effectiveness, and comparison are verified for the groupoid under consideration, Theorem~\ref{thm:finite-germ-split-AH} applies. The structural calculation then reduces to the following data: the groups $H_{\mathcal{O}}^{\ab}$, the transport lattices $L_{\mathcal{O}}$, and the transport defect $\tau$. In particular, the secondary differential vanishes in this finite-germ regime and the full-group abelianization is given by~\eqref{eq:finite-germ-full-abelianization}.

For each model, we must identify the persistent boundary data, compute the connector coordinates and $\tau$, and, for statements about a prescribed acting subgroup rather than $\F(\G)$, verify that the required AF permutations and the elements realizing the prescribed singular germs belong to that subgroup. The selector and localization mechanisms of \cite{kua26a,kua26b} address precisely this last membership problem. 

\subsection{Remaining questions}
\label{sec:remaining}
\paragraph{Strong AH.}
The principal open problem remains Problem~\ref{prob:minimal-AFbd-strong-AH}. Sections~\ref{sec:surface-parity} and~\ref{sec:boundary-extensions} reduce it to an exact boundary statement: for every exceptional orbit $\mathcal{O}$, the pushed-forward balanced boundary extension class must annihilate the balanced surface subgroup $\mathcal{B}_2(H_{\mathcal{O}})$. Full vanishing of the cohomology class is stronger than necessary. A positive solution therefore requires a recurrence mechanism forcing these particular pairings to vanish. A negative solution requires a surface relation for which the product of commutators obtained after balancing and lifting has nonzero image under $q_2\circ\varepsilon_{\mathsf{B}}$ while remaining compatible with the recurrence and invariant-measure constraints of a minimal AF-by-discrete groupoid.

\paragraph{Nonsplit abelianization after vanishing of the secondary differential.}
Even if $\theta_{\G}=0$, the resulting short exact sequence
$0\to H_0(\G;\mathbb{Z}/2\mathbb{Z})\to\F(\G)_{\ab}\to H_1(\G)\to0$
need not split for formal homological reasons alone. The present results, however, exclude nonsplitting in two large regimes: finite groups of germs by Theorem~\ref{thm:finite-germ-split-AH}, and, more generally, the lifting property for finite relations by Theorem~\ref{thm:finite-relations-split-AH}. Consequently, under minimality and comparison, any nonsplit example must have at least one infinite group of germs and must fail the finite-relation lifting property at some exceptional orbit. Whether such an AF-by-discrete example exists is open.

\paragraph{Almost finiteness without compact generation.}
Theorem~\ref{thm:AF-exhaustion-criterion} completely settles approximation by the fixed AF core: it works exactly when no exceptional arrow starts on a finite $\T$-orbit. Theorem~\ref{thm:minimal-AF-by-discrete-almost-finite} shows that, under minimality and compact generation, failure of that fixed-core criterion can be repaired by replacing the approximating elementary subgroupoids. The remaining minimal case is when $\G$ is not compactly generated and the chosen AF core has finite orbits carrying exceptional arrows. If the AF core has no such finite orbit, the fixed-core criterion already gives almost finiteness without compact generation.

\paragraph{Effective reconstruction.}
The constructive kernel theorem expresses an element of $\ker I$ using finite AF matchings, exact germ relations, dynamical transpositions realizing the prescribed germs, and a final factor in $\F(\T)$. These data form a certificate, but the theorem gives no algorithm for finding them. Section~\ref{sec:certified} gives terminating procedures for the displayed finite-state model and a stabilization bound for verified periodic incidence data. For more general presentations, we still need to determine complete germ relations, verify the future incidence rule, and find a sufficiently deep AF stage. For a prescribed subgroup $G\leq\F(\G)$, we must also express the transpositions and the required elements of $\F(\T)$ as words in its generators. Neither task follows from the relative normal form alone.



\end{document}